\newtheorem{definition}{Definition}
\newtheorem{remark}{Remark}
\newtheorem{proposition}{Proposition}[section]
\newcommand{\Gammatop}{\Gamma_{\text{top}}}
\newtheorem{theorem}{Theorem}[section]
\newtheorem{lemma}{Lemma}[section]
\numberwithin{equation}{section}
\newenvironment{proof}{\smallskip\noindent\emph{Proof.}\hspace{1pt}}%
{\hspace{-5pt}{\nobreak\quad\nobreak\hfill\nobreak$\square$\vspace{8pt}%
		\par}\smallskip\goodbreak}
\newcommand{\bbGamma}{{\mathpalette\makebbGamma\relax}}
\newcommand{\makebbGamma}[2]{%
  \raisebox{\depth}{\scalebox{1}[-1]{$\mathsurround=0pt#1\mathbb{L}$}}%
}
\newcommand{\Yub}{\mathcal{Y}_{\ubar}}
\newcommand{\Lbar}{\underline{L}}
\newcommand{\Hbar}{\underline{H}}
\newcommand{\psig}{\psi_g}
\newcommand{\aln}{(\al \nabla)^i}
\newcommand{\ali}{a^{\frac{i}{2}}}
\newcommand{\scaletwoHprime}[1]{\lVert #1 \rVert_{L^2_{(sc)}(H_{u^{\prime}}^{(0,\ubar)}) }}
\newcommand{\twoSuprime}[1]{\lVert #1 \rVert_{L^2(S_{u^{\prime},\ubar})}}
\newcommand{\scaletwoHbarprime}[1]{\lVert #1 \rVert_{L^2_{(sc)}(\Hbar_{\ubar^{\prime}}^{(u_{\infty},u)}) }}
\newcommand{\tbeta}{\beta^R}
\newcommand{\nablaF}{\nabla^{i_5}}
\newcommand{\hnablaF}{\hnabla^{i_5}}
\newcommand{\tbetabar}{\betabar^R}
\newcommand{\hsp}{\hspace{.5mm}}
\newcommand{\omegabar}{\underline{\omega}}
\newcommand{\gslash}{\slashed{g}}
\newcommand{\dubarprime}{\hspace{.5mm} \text{d} \ubar^{\prime}}
\newcommand{\hnabla}{\widehat{\nabla}}
\newcommand{\intu}{\int_{u_{\infty}}^u}
\newcommand{\intubar}{\int_0^{\ubar}}
\newcommand{\F}{{{F}^P}_Q}
\newcommand{\chihat}{\hat{\chi}}
\newcommand{\upr}{\lvert u^\prime \rvert}
\newcommand{\al}{a^{\frac{1}{2}}}
\newcommand{\chibar}{\underline{\chi}}
\newcommand{\chibarhat}{\underline{\hat{\chi}}}
\newcommand{\ubar}{\underline{u}}
\newcommand{\sumit}{\sum_{i_1+i_2+i_3=i}}
\newcommand{\sumitm}{\sum_{i_1+i_2+i_3=i-1}}
\newcommand{\sumiF}{\sum_{i_1+i_2+i_3+i_4+i_5=i}}
\newcommand{\sumiFi}{\sum_{i_1+i_2+i_3+i_4+i_5=i-1}}
\newcommand{\sumif}{\sum_{i_1+i_2+i_3+i_4=i}}
\newcommand{\sumifi}{\sum_{i_1+i_2+i_3+i_4=i-1}}
\newcommand{\sumifim}{\sum_{i_1+i_2+i_3+i_4=i-2}}
\newcommand{\be}{\begin{equation}}
\newcommand{\ee}{\end{equation}}
\newcommand{\bm}{\begin{align}*}
\newcommand{\enm}{\end{align}*}
\newcommand{\scaleinfinitySu}[1]{\lVert{#1} \rVert_{L^\infty_{(sc)}(S_{u,\ubar})}}
\newcommand{\scaleinfinitySuprimeubarprime}[1]{\lVert{#1} \rVert_{L^\infty_{(sc)}(S_{u^{\prime},\ubar^\prime})}}
\newcommand{\tildetr}{\widetilde{\tr \chibar}}
\newcommand{\Y}{\mathrm{\Upsilon}}
\newcommand{\nablap}{\nabla^{i_1}\psi_g^{i_2}}
\newcommand{\nablapp}{\nabla^{i_1}\psi_g^{i_2+1}}
\newcommand{\twoSu}[1]{\lVert{#1} \rVert_{L^2(S_{u,\ubar})}}
\newcommand{\inftySu}[1]{\lVert{#1} \rVert_{L^{\infty}(S_{u,\ubar})}}
\newcommand{\oneSu}[1]{\lVert{#1}\rVert_{L^1(S_{u,\ubar})}}
\newcommand{\bespeq}{\begin{equation}\begin{split}}
\newcommand{\espeq}{\end{split}\end{equation}}
\newcommand{\scaletwoSu}[1]{\lVert{#1} \rVert_{L^2_{(sc)}(S_{u,\ubar})}}
\newcommand{\scaleoneSu}[1]{\lVert{#1} \rVert_{L^1_{(sc)}(S_{u,\ubar})}}
\newcommand{\scaletwoSuprime}[1]{\lVert{#1} \rVert_{L^2_{(sc)}(S_{u^\prime,\ubar})}}
\newcommand{\scaletwoSuzprime}[1]{\lVert{#1} \rVert_{L^2_{(sc)}(S_{u^\prime,0})}}
\newcommand{\scaleoneSuprimeubarprime}[1]{\lVert{#1} \rVert_{L^1_{(sc)}(S_{u^\prime,\ubar^\prime})}}
\newcommand{\scaletwoSuprimeubarprime}[1]{\lVert{#1} \rVert_{L^2_{(sc)}(S_{u^\prime,\ubar^\prime})}}
\newcommand{\scaletwoSuubarprime}[1]{\lVert{#1} \rVert_{L^2_{(sc)}(S_{u,\ubar^\prime})}}
\newcommand{\scaletwoSuzubarprime}[1]{\lVert{#1} \rVert_{L^2_{(sc)}(S_{u_{\infty},\ubar^\prime})}}
\newcommand{\LpSu}[1]{\lVert #1 \rVert_{L^p(\Suu)}}
\newcommand{\LinftySu}[1]{\lVert #1 \rVert_{L^{\infty}(\Suu)}}
\newcommand{\LpHu}[1]{\lVert #1 \rVert_{L^p(\Hu)}}
\newcommand{\LpHbaru}[1]{\lVert #1 \rVert_{L^p(\Hbu)}}
\newcommand{\alphabar}{\underline{\alpha}}
\newcommand{\betabar}{\underline{\beta}}
\newcommand{\etabar}{\underline{\eta}}
\newcommand{\scaletwoHzero}[1]{\lVert{#1} \rVert_{L^2_{(sc)}(H_{u_{\infty}}^{(0,\underline{u})})}}
\newcommand{\scaletwoHu}[1]{\lVert{#1} \rVert_{L^2_{(sc)}(H_u^{(0,\underline{u})})}}
\newcommand{\scaletwoHbarzero}[1]{\lVert{#1} \rVert_{L^2_{(sc)}(\underline{H}_{0}^{(u_{\infty},u)})}}
\newcommand{\scaletwoHbaru}[1]{\lVert{#1} \rVert_{L^2_{(sc)}(\underline{H}_{\underline{u}}^{(u_{\infty},u)})}}
\newcommand{\omegad}{\omega^{\dagger}}
\newcommand{\duprime}{\hspace{.5mm} \text{d}u^{\prime}}
\newcommand{\mubar}{\underline{\mu}}
\newcommand{\kappabar}{\underline{\kappa}}
\newcommand{\Suu}{S_{u,\ubar}}
\newcommand{\aupr}{\frac{a}{\upr^2}}
\newcommand{\haln}{(\al \hnabla)^i}
\newcommand{\nablat}{\nabla^{i_3}}
\newcommand{\hnablat}{\hnabla^{i_3}}
\newcommand{\nablaf}{\nabla^{i_4}}
\newcommand{\hnablaf}{\hnabla^{i_4}}
\newcommand{\Hodge}[1]{\prescript{*}{}{#1}}
\newcommand{\alphabarF}{\alphabar^F}
\newcommand{\rhoF}{\rho^F}
\newcommand{\sigmaF}{\sigma^F}
\newcommand{\alphaF}{\alpha^F}
\newcommand{\YM}{\mathcal{Y}}
\newcommand{\ub}{\underline{u}}
\newcommand{\Lb}{\underline{L}}
\def\a {\alpha}
\def\b {\beta}
\def\ub {\underline{u}}
\def\Lb {\underline{L}}
\def\Hb {\underline{H}}
\def\Hu{H_u^{(0,\underline{u})}}
\def\Hbu{\underline{H}_{\underline{u}}^{(u_{\infty},u)}}
\renewcommand{\div}{\mbox{div }}
\newcommand{\curl}{\mbox{curl }}
\newcommand{\tr}{\mbox{tr}}
\newcommand{\scaletwoHuprime}[1]{\lVert #1 \rVert_{L^2_{(sc)}(H_{u^{\prime}}^{(0,\ubar)}    ) }  }
\newcommand{\m}{\mu}
\newcommand{\n}{\nu}
\newcommand\restri[2]{{
		\left.\kern-\nulldelimiterspace 
		#1 
		\right|_{#2} 
}}
\definecolor{ffqqqq}{rgb}{1.,0.,0.}
\definecolor{uuuuuu}{rgb}{0.26666666666666666,0.26666666666666666,0.26666666666666666}
\def\ps@pprintTitle{%
  \let\@oddhead\@empty
  \let\@evenhead\@empty
  \let\@oddfoot\@empty
  \let\@evenfoot\@oddfoot
}
\def\@author#1{\g@addto@macro\elsauthors{\normalsize%
    \def\baselinestretch{1}%
    \upshape\authorsep#1\unskip\textsuperscript{%
      \ifx\@fnmark\@empty\else\unskip\sep\@fnmark\let\sep=,\fi
      \ifx\@corref\@empty\else\unskip\sep\@corref\let\sep=,\fi
      }%
    \def\authorsep{\unskip,\space}%
    \global\let\@fnmark\@empty
    \global\let\@corref\@empty  
    \global\let\sep\@empty}%
    \@eadauthor={#1}
}
\begin{document}
\begin{frontmatter}
	
\title{Formation of trapped surfaces in the Einstein-Yang-Mills system}

\author{Nikolaos Athanasiou\fnref{fn2}}
\ead{n.athanasiou@uoc.gr}
\fntext[fn2]{University of Crete}

\author{Puskar Mondal\corref{cor1}\fnref{fn1}}
\ead{puskar_mondal@fas.harvard.edu}
\cortext[cor1]{Corresponding author}
\fntext[fn1]{Harvard University}

\author{Shing-Tung Yau\corref{cor1}\fnref{fn3}}
\ead{yau@maths.harvard.edu}
\fntext[fn3]{Harvard University, Tsinghua University}
\begin{abstract}
We prove a scale-invariant, semi-global existence result and a trapped surface formation result in the context of coupled Einstein-Yang-Mills theory, without symmetry assumptions. More precisely, we prove a scale-invariant semi-global existence theorem from past null-infinity and show that the focusing of the gravitational and/or chromoelectric-chromomagnetic waves could lead to the formation of a trapped surface. Adopting the signature for decay rates approach introduced in \cite{A19}, we develop a novel gauge (and scale) invariant hierarchy of non-linear estimates for the Yang-Mills curvature which, together with the estimates for the gravitational degrees of freedom, yields the desired semi-global existence result. Once semi-global existence has been established, the formation of a trapped surface follows from a standard ODE argument.    	
\par \noindent 
\end{abstract}
\end{frontmatter}
	\vspace{0.2cm}
	
	\setlength{\voffset}{-0in} \setlength{\textheight}{0.9\textheight}
	
	\setcounter{page}{1} \setcounter{equation}{0}
	\tableofcontents
	\section{Introduction}
	\noindent In the present work, we investigate the question of dynamical trapped surface formation for the Einstein-Yang-Mills system (E-Y-M system for short) for a $(3+1)-$dimensional Lorentzian manifold $\left(\mathcal{M},g\right)$ and a (matrix-valued) Yang-Mills curvature two-form ${F^P}_{Q\alpha\beta}$ (to be defined rigorously in Section \ref{section23}):	\begin{gather}
	 \label{Ein}   R_{\mu\nu} - \frac{1}{2}Rg_{\m\n}= \mathfrak{T}_{\m\n},\\
	 \label{Yan}
	 \mathfrak{T}_{\m\n} = \frac{1}{2} \left( {F^P}_{Q\m\alpha}{{F^Q}_{P\n}}^{\alpha} + {\Hodge{F}^P}_{Q\m\alpha}{{\Hodge{F}^Q}_{P\n}}^{\alpha} \right),
	\end{gather}
	where by ${\Hodge{F}^{P}}_Q$ we denote the Hodge dual of ${F^P}_Q$.

	\subsection{Historical background} \noindent Trapped surfaces have been an object of significant interest within the classical theory of General Relativity for almost sixty years. After Schwarzschild's discovery of his eponymous metric in 1915, it took almost twenty years before researchers came to realize the existence, within it, of a region $\mathcal{B}$ with the following surprising yet salient features: First of all, observers situated inside $\mathcal{B}$ cannot send signals to observers situated at an ideal \textit{conformal boundary at infinity}, called $\mathcal{I}^+$.
Furthermore, any observer located inside $\mathcal{B}$ lives only for finite proper time\footnote{Sbierski \cite{Sb} moreover showed that the termination of the observer's proper time manifests in a particularly ferocious way, as they, in fact, get torn apart by infinitely strong tidal forces.} (geodesic incompleteness). 
The characteristics of this region (which later came to be known as a black hole) took most of the researchers of the time aback. The consensus seemed to be that these observed phenomena have to be accidents; pathologies, only present because of the strong (spherical) symmetry inherent in the Schwarzschild solution and that, in general solutions to the Einstein equations\footnote{The meaning of this phrase was not rigorous at the time, as the setup for the initial value problem in General Relativity had not yet been discovered.}, such phenomena would not arise. However, in the 60's, this belief was spectacularly falsified by Roger Penrose through his celebrated incompleteness theorem\footnote{This theorem, in fact, was the main reason why he was awarded the Nobel Prize in Physics back in 2020, "for the discovery that black hole formation is a robust prediction of the general theory of relativity".}. It was Penrose \cite{P73} who introduced the notion of a \textit{trapped suface}, without which one cannot state the incompleteness theorem :

\begin{definition}
Given a $(3+1)$- dimensional Lorentzian manifold $(\mathcal{M}, g)$, a closed spacelike $2-$surface $S$ is caled \textbf{trapped} if the following two fundamental forms $\chi$ and $\chibar$ have everywhere pointwise negative expansions on $S$:

\[  \chi(X,Y) := g(D_X L, Y), \hspace{2mm} \chibar(X, Y) := g(D_X \Lbar, Y).    \]Here $D$ denotes the Levi-Civita connection of $g$, $L$ and $\Lbar$ denote a null basis of the 2-dimensional orthogonal complement of $T_p S$ in $T_p \mathcal{M}$, extended as smooth vector fields and $X, Y$ are arbitrary $S-$tangent vector fields. 
\end{definition}In other words, a surface is called trapped if both $\tr\chi$ and $\tr\chibar$ are pointwise negative everywhere on $S$. These traces signify the infinitesimal changes in area along the null generators normal to $S$, whence one can interpret trapped surfaces as closed, spacelike $2-$surfaces that infinitesimally decrease in area "along any possible future direction".

\vspace{3mm}

\par\noindent The incompleteness theorem is now presented.

\begin{theorem}[Penrose Incompleteness] Let $(\mathcal{M} ,g)$ be a spacetime containing a non-compact Cauchy hypersurface. If $(\mathcal{M}, g)$ moreover satisfies the null energy codition and contains a closed trapped surface, it is geodesically incomplete. 

\end{theorem}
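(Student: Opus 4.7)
The plan is to argue by contradiction, assuming $(\mathcal{M}, g)$ is future null geodesically complete. Three ingredients enter: focusing of null geodesics via the Raychaudhuri equation combined with the null energy condition; the compactness of the achronal boundary $\partial J^+(S)$; and a topological contradiction with the non-compactness of the Cauchy hypersurface $\Sigma$.

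First I would study the two congruences of null geodesics emanating orthogonally from the compact trapped surface $S$, in the directions $L$ and $\Lbar$. By hypothesis $\tr\chi$ and $\tr\chibar$ are strictly negative on the compact $S$, hence uniformly bounded above by some $-\theta_0 < 0$. Along each null generator the Raychaudhuri equation reads $\tfrac{d\theta}{ds} = -\tfrac{1}{2}\theta^2 - |\hat{\sigma}|^2 - R_{\mu\nu}\ell^{\mu}\ell^{\nu}$, where $\ell$ is the affine null tangent. The shear term is non-positive and the Ricci term is non-negative by the null energy condition, so $\tfrac{d\theta}{ds} \leq -\tfrac{1}{2}\theta^2$. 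A Riccati comparison then forces $\theta \to -\infty$ within affine parameter bounded by $2/\theta_0$, proving the existence of a focal point along every generator in both null directions.

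With focusing in hand, I would invoke the classical causal-theoretic fact that the future boundary $\partial J^+(S)$ is a closed achronal topological hypersurface ruled by null geodesic segments issued orthogonally from $S$, and that each such segment must leave $\partial J^+(S)$ at or before its first focal point, since beyond it the geodesic enters the chronological future $I^+(S)$. Parametrizing the generators by the compact set $S$ together with a uniformly bounded affine parameter interval, one concludes that $\partial J^+(S)$ is the continuous image of a compact set, hence itself compact.

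For the final step I would pick a global timelike vector field $T$ (available since $(\mathcal{M}, g)$ is time-orientable and globally hyperbolic, the latter following from the existence of a Cauchy hypersurface) and use its flow to define a continuous projection $\pi : \partial J^+(S) \to \Sigma$; each integral curve of $T$ meets $\Sigma$ exactly once, and achronality of $\partial J^+(S)$ makes $\pi$ injective. By invariance of domain applied to the topological $3$-manifolds $\partial J^+(S)$ and $\Sigma$, the image $\pi(\partial J^+(S))$ is open in $\Sigma$; being compact, it is closed as well. This non-empty clopen subset therefore exhausts the connected component of $\Sigma$ it meets, forcing $\Sigma$ to be compact (in its standard connected formulation) and contradicting the hypothesis. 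The most delicate part of the argument is the causal-theoretic input feeding the second paragraph: rigorously establishing that $\partial J^+(S)$ is an achronal topological hypersurface generated by null geodesics terminating at focal (or cut) points requires global hyperbolicity together with careful limit curve and compactness lemmas in Lorentzian geometry, and is where most of the technical weight sits.
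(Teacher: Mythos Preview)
Your proof proposal is essentially correct and is the classical argument due to Penrose. However, the paper does not actually prove this theorem: it is stated in the introduction purely as background, attributed to Penrose, with no proof given. There is therefore nothing in the paper to compare your argument against.

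For what it is worth, your outline reproduces the standard proof faithfully: the Raychaudhuri/Riccati step yielding focal points within uniformly bounded affine parameter, the causal-theoretic structure of $\partial J^+(S)$ as a compact achronal hypersurface under the completeness assumption, and the invariance-of-domain contradiction with the non-compact Cauchy hypersurface. Your caveat at the end is well placed: the genuinely technical work lies in the properties of $\partial J^+(S)$ (closed, achronal, an embedded $C^0$ hypersurface, generated by null geodesics from $S$ that leave the boundary no later than their first focal point), and these require global hyperbolicity and the standard limit-curve machinery. One minor sharpening: rather than invoking invariance of domain, it is cleaner to note that the timelike flow gives a continuous bijection from the compact $\partial J^+(S)$ onto its image in $\Sigma$, hence a homeomorphism, and that $\partial J^+(S)$ being a $3$-manifold without boundary forces the image to be open in $\Sigma$; this avoids any subtlety about whether $\partial J^+(S)$ is a priori a manifold.
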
The existence of a trapped surface is a stable feature in the context of dynamics. Indeed, sufficiently small perturbations of Schwarzschild initial data must also contain such surfaces, by Cauchy stability. As such, incompleteness is not an accident, but rather a recurring theme in the dynamics of the Einstein equations. 

\vspace{3mm}

\par\noindent At the time, the existence of a trapped surface in a spacetime was too strong an assumption to begin with. In fact, the only way back then to guarantee its existence was to assume it at the level of initial data, but this itself can be a highly non-trivial question. The first trapped surface formation result at the level of initial data was given 
 in \cite{SY83}. This however, begged the question of whether trapped surfaces are dynamical objects, meaning whether they can be formed in evolution starting with data devoid of trapped surfaces. This problem bears high physical significance and serves as a \textit{test of reality} of black holes, in the following sense. The mathematical definition of a black hole region would be without physical meaning if it did not accurately capture what physicists perceive as black holes (this is more meaningful now than ever, as scientists recently succeeded in capturing the first-ever image of a black hole). Hence if "mathematical" black holes describe "physical" black holes, they should mathematically verify certain physical properties, one of which is dynamical formation. 

\vspace{3mm}The first results along this direction were obtained by Christodoulou for the Einstein equations coupled to a massless scalar field in spherical symmetry. Through a series of works \cite{C91}, \cite{C93}, \cite{C94} and \cite{C99}, Christodoulou managed to not only prove trapped surface formation, but to understand the picture of gravitational collapse in its entirety for the given model and under the given symmetry. The breakthrough in the absence of symmetry came in \cite{C09} by the same author. In this work, Christodoulou introduced a hierarchy of small and large components in the initial data which (almost) persists under the evolution of the Einstein equations. He termed his method the \textit{short pulse} method. After Christodoulou, the work \cite{Kl-Rod} by Klainerman-Rodnianski reduces the size of Christodoulou's work from about 600 to approximately 120 pages, by using a slightly different hierarchy. Moreover, it reduces the number of derivatives of curvature required to prove semi-global existence from two to one. A few years later, An  \cite{AnThesis} introduces the signature for decay rates $s_2$ on his way to proving an extension of \cite{Kl-Rod} from a finite region to a region close to past null infinity. In 2014, An and Luk \cite{AL17} prove the first \textit{scale-critical} trapped surface formation criterion for the vacuum equations in the absence of symmetry. While Christodoulou's data in \cite{C09} were  large in $H^1(\mathbb{R}^3)$, An and Luk give data which only have to be large in $H^{\frac{3}{2}}(\mathbb{R}^3)$, which is a scale-critical norm for the initial data. Taking advantage of the scale criticality in \cite{AL17}, An \cite{A17} constructs initial data that give rise not merely to trapped surfaces, but an \textit{apparent horizon}, a smooth 3-dimensional hypersurface consisting of marginally outer trapped surfaces. In 2019, An \cite{A19} produces a 55-page proof of trapped surface formation for the vacuum equations, making use of the signature for decay rates and obtaining an existence result from a region close to past null infinity. In \cite{AnAth}, An and the first author extend \cite{A19} to the case of the Einstein-Maxwell system. 

\subsection{Yang-Mills}
\noindent The Non-abelian Yang-Mills dynamics has long been an area of high interest and intense investigation \cite{eardley1982global, eardley1982global2,klainerman1995finite, satah, ghanem}. Classically, Yang-Mills fields correspond to special types of interacting waves and their rich non-linear structure  due to the non-abelian characteristics at the level of partial differential equations has led to development of sophisticated techniques. Of course, at the quantized level Yang-Mills theory plays the most important role in the contemporary high energy particle physics. More specifically, non-abelian Yang-Mills fields describe Strong and weak interactions, two of the four fundamental forces present in nature. The pure Yang-Mills theory while quantized, however, exhibits a non-trivial mass gap \cite{jaffe2006quantum, douglas2004report, mondal2023geometric} i.e., the excitation of the vacuum requires a non-zero energy and this fundamentally separates it from the U(1) Yang-Mills theory the electromagnetism where both classical and quantum spectra are gap-less. Classical Yang-Mills fields on the other hand, do not exhibit any spectral gap and therefore its existence is not expected. The scenario is substantially different while Yang-Mills fields are coupled to other sources. For example, the Yang-Mills fields that acquire mass at the classical level through interaction with scalar fields can exist in nature. For example, the carrier of the weak force $W$ and $Z$ bosons may exist classically due to the fact that they obtain mass through the spontaneous symmetry breaking (Higgs mechanism). In other words, while pure Yang-Mills fields can not exist in nature all by itself, coupling to other source field may lead to a different outcome. In this article, we want to investigate the dynamics of Yang-Mills field while coupled to gravity and possible condensation property.

Contrary to the Einstein-Maxwell system or Einstein-scalar field system, the Einstein-Yang-Mills equations are tremendously rich even in spherical symmetry and exhibit non-trivial dynamics. The numerical result of Bartnik \cite{bartnik1988particlelike} first showed the existence of a countable family of soliton-type solutions that are globally regular. Later Yau, Wasserman, and Smoller \cite{smoller1991smooth} rigorously proved the existence of such soliton-like solutions. However, such solutions were proven to be unstable against perturbations \cite{straumann1990instability}. \cite{smoller1993existence} first rigorously proved the existence of an infinite number of black hole solutions to the Einstein-Yang-Mills equations with gauge group SU(2) in spherical symmetry. The existence of these nontrivial solutions essentially unfolds the rich characteristics of the Einstein-Yang-Mills system. Due to the non-linear characteristics of the Yang-Mills fields, the fully coupled Einstein-Yang-Mills system is dynamically flexible i.e., both the possibility of the existence of regular solutions and the formation of singularities (naked or trapped behind a horizon) are open. This is precisely due to the fact that the non-linearity of Yang-Mills fields can counterbalance the non-linearity of gravity. The important feature, however, is that the nature of this balance is delicate and the possibility of collapse is open.

Motivated by these previous studies on spherically symmetric static Einstein-Yang-Mills system, one would naturally ask the following question "\textit{given suitable initial data, can one prove the formation of an Einstein-Yang-Mills black hole in an evolutionary manner?}". Moreover, it is desirable to consider a generic data i.e., remove any symmetry assumption on the spacetime. The first step towards answering such a question would be prove the formation of a trapped surface. In a recent pre-print \cite{M-Y}, the second and third authors provided a semi-global existence result for the characteristic initial value problem for the Einstein-Yang-Mills equations for generic initial data. A natural next step would be to investigate if one can form trapped surface in an evolutionary manner. A naive though would be to incorporate a certain `largeness' in the initial data. However, in the presence of gravity, this issue is far from obvious due to the possibility of a naked singularity. In addition, the gravity-Yang-Mills competition is delicate and one ought to identify the exact Yang-Mills curvature component (suitable gauge-invariant norm) on which to impose the largeness condition. The initial data should be chosen appropriately to be able to evolve the spacetime long enough to form a trapped surface. The most important issue that arises in the context of Yang-Mills theory is the choice of gauge for the Yang-Mills fields. Since Yang-Mills theory is a gauge theory after all, one ought to work in a particular choice of gauge (or equivalently descend to the `orbit space' of the theory). Unfortunately, there does not exist a global gauge choice in Yang-Mills theory (in topological terms, one can not find a single chart to cover the entire orbit space \cite{singer1978some}). The traditional choice of Lorentz gauge is known to develop finite time coordinate singularities in non-abelian theory contrary, to the linear Maxwell theory where such a breakdown is absent.  In fact, the geometry of the orbit space of the theory (i.e., the space of connections modulo the bundle automorphisms) has a non-trivial effect in the matter of gauge choice \cite{babelon1981riemannian, narasimhan1979geometry}. The positivity of sectional curvature \cite{babelon1981riemannian} of the orbit space leads to the development of the so-called `Gribov horizon'  \cite{moncrief1979gribov} which essentially indicates the breakdown of the so called `Coulomb' gauge.

 We circumvent the problem of Yang-Mills gauge choice in the current context by developing a novel gauge invariant framework for the Yang-Mills fields, where we write down the Yang-Mills equations in the double null gauge in terms of the fully gauge covariant derivative. By the virtue of the compatibility of the fiber metrics with the fully gauge covariant derivative, we are able to obtain the necessary gauge-invariant energy estimates \footnote{To our knowledge this gauge-invariant framework is not utilized in any previous work related to Yang-Mills equations.}. The true non-linear characteristics of the Yang-Mills theory manifest themselves in the higher-order energy estimates when we commute the fully gauge covariant derivatives. The estimates pass through to the orbit space (the true configuration space of the theory) by the virtue of its gauge-invariant property. The task, therefore, is to control these non-linear terms or, more precisely, preserve the smallness of appropriate weighted norms of them compared to the linear terms throughout the domain of existence. As it turns out, among the non-linearities present, the Yang-Mills non-liearities are the borderline ones, due to slow decay of Yang-Mills fields in the direction of past null infinity. In addition, we require control on certain additional null derivatives of the Yang-Mills curvature components. We discuss in due course how to handle this precisely. Once we obtain the necessary gauge invariant estimates, we can work in a particular collection of gauge choices to yield a semi-global existence result for the coupled theory. The latter part is standard and therefore we omit its discussion, focusing instead on the trapped surface formation mechanism.    

\subsection{Main Results}
\noindent The main contribution of this paper is the following pair of theorems:

\begin{theorem}{\textbf{(Semi-global existence result)}} \label{main1}Given $\mathcal{I} > 0$ and a positive number $N\geq 3$, there exists an $a_0= a_0(\mathcal{I})$ sufficiently large so that the following holds. Any smooth initial data set $(\chihat,\alpha^F)$ to the Einstein-Yang-Mills system \eqref{Ein}-\eqref{Yan} satisfying the following conditions:

\[1)\hspace{5mm} \sup_{0\leq \ubar\leq 1} \sum_{\substack{i \leq N+7\\ m\leq 3}}a^{-\frac{1}{2}} \lVert \nabla_4^m(\lvert u_{\infty} \rvert\nabla)^i \chihat \rVert_{L^\infty_{(S_{u_{\infty},\ubar})}} \leq \mathcal{I},   \]\[2)\hspace{5mm} \sup_{0\leq \ubar\leq 1} \sum_{\substack{i \leq N+7\\ m\leq 3}}a^{-\frac{1}{2}} \lVert \hnabla_4^m(\lvert u_{\infty} \rvert\hnabla)^i \alpha^F \rVert_{L^2_{(S_{u_{\infty},\ubar})}} \leq \mathcal{I},  \]\[ 3)\hspace{5mm} \text{The initial data on} \hspace{2mm} \ubar =0 \hspace{2mm}\text{are trivial (Minkowski)}, \]gives rise to a unique smooth solution to the Einstein-Yang-Mills system in the region \[ u_{\infty}\leq u \leq - \frac{a}{4}, \hspace{2mm} 0\leq \ubar \leq 1. \]
\end{theorem}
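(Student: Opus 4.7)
The plan is to run a bootstrap argument in the double null region $\mathcal{D} := \{u_\infty \le u \le -a/4,\, 0 \le \underline{u} \le 1\}$ using the signature for decay rates framework of \cite{A19}, adapted to include the Yang-Mills sector in a gauge-covariant fashion. First I would fix a double null foliation by functions $u,\underline{u}$ with associated null frame $(L,\underline{L},e_1,e_2)$, decompose the Weyl curvature into null components $\alpha,\beta,\rho,\sigma,\underline{\beta},\underline{\alpha}$ and the Yang-Mills curvature ${F^P}_Q$ into $\alpha^F,\beta^F,\rho^F,\sigma^F,\underline{\beta}^F,\underline{\alpha}^F$, and assign to each geometric and Yang-Mills quantity its signature $s_2$ so that the scale-invariant norms $L^p_{(sc)}(S_{u,\underline{u}}), L^2_{(sc)}(H_u), L^2_{(sc)}(\underline{H}_{\underline{u}})$ are properly normalised by powers of $a$ and $|u|$. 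The bootstrap hypothesis would then posit smallness (in the appropriate $(sc)$-norms, with a large constant depending only on $\mathcal{I}$) of all Ricci coefficients, the Weyl null components, and the Yang-Mills null components up to $N+5$ angular derivatives and a few $\hat{\nabla}_4$ derivatives, with $\alpha^F$ singled out at the top order since it controls the chromoelectric-chromomagnetic initial flux.

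Next I would run the scheme in the usual three layers. Layer one: estimate the Ricci coefficients from their null transport equations, integrating along $L$ or $\underline{L}$ and using the trivial data on $\underline{u}=0$ together with the initial bound on $\hat{\chi}$ on $H_{u_\infty}$; the Yang-Mills stress-energy $\mathfrak{T}_{\mu\nu}$ enters as a source, but because every component of $\mathfrak{T}$ is bilinear in $F$ with small $(sc)$-weight by the large-$a$ choice, these contributions are subcritical. Layer two: derive $L^4(S)$ and $L^\infty(S)$ estimates via scale-invariant Sobolev embeddings and codimension-one elliptic div-curl-type systems on $S_{u,\underline{u}}$, allowing one to upgrade integrated bounds to pointwise bounds. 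Layer three: close energy estimates for the Weyl components from the Bianchi pairs, and simultaneously for the Yang-Mills components from the Bianchi-type pairs satisfied by $F$ under the fully gauge-covariant derivative $\hat{\nabla}$. The key point here is that $\hat{\nabla}$ is metric for the fibre inner product, so integration by parts yields a sign-definite boundary term on $H_u \cup \underline{H}_{\underline{u}}$ and the nonlinearities $[\hat{\nabla},\hat{\nabla}]F \sim F\cdot F$ reduce to products already controlled by the bootstrap. Higher-order versions follow by commuting $(|u|\hat{\nabla})^i$ and $\hat{\nabla}_4^m$ with the equations up to $i \le N+7$, $m\le 3$, matching the regularity of the initial data in hypotheses (1)--(2).

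The serious obstacle, flagged by the authors in the introduction, is that the Yang-Mills curvature decays only slowly toward past null infinity, so the products $F\cdot\hat\nabla F$ and the commutator terms $[\hat{\nabla}_4,\hat{\nabla}^i]F$ are borderline with respect to the $s_2$ signature: they carry the same weight as the linear terms, and naive Gr\"onwall would lose in the $u_\infty \to -\infty$ limit. I would overcome this exactly as the authors indicate, by demanding control on additional null derivatives of selected Yang-Mills components (in particular $\hat\nabla_4 \alpha^F, \hat\nabla_3 \underline\alpha^F$), which is why the initial hypothesis (2) permits $m\le 3$ transversal derivatives; this extra regularity lets one trade a bad $\hat{\nabla}_4$ onto a good direction via the Yang-Mills Bianchi pairs, producing a gain of a power of $a^{-1/2}$ at each such trade and rendering the relevant nonlinear terms strictly subcritical once $a_0(\mathcal I)$ is chosen large enough.

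Finally, having closed the bootstrap, one concludes semi-global existence by a continuity argument: the set of $(u,\underline{u}) \in \mathcal{D}$ up to which the solution exists with the improved bounds is non-empty (contains a neighbourhood of $H_{u_\infty}\cup\{\underline u = 0\}$ by local well-posedness for the characteristic initial value problem for the coupled system), open (again by local well-posedness from the improved bounds on the last slice), and closed (by the quantitative bounds, which prevent any curvature blow-up). The smallness needed to absorb the nonlinear error terms is supplied uniformly by choosing $a_0$ sufficiently large in terms of $\mathcal{I}$, and working in a suitable local Yang-Mills gauge (valid on each coordinate patch) one descends the gauge-invariant estimates to a genuine smooth solution, thereby establishing existence throughout the region $u_\infty \le u \le -a/4$, $0\le \underline{u}\le 1$.
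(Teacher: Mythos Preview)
Your outline is essentially correct and tracks the paper's own architecture closely: bootstrap on scale-invariant norms built from the $s_2$ signature, transport estimates for Ricci coefficients and Yang-Mills components, Sobolev and div--curl elliptic estimates to reach top order, and paired energy estimates for the Bianchi and null Yang-Mills systems exploiting metric compatibility of the gauge-covariant connection. You have also correctly isolated the genuinely new difficulty---the borderline Yang-Mills nonlinearities---and the remedy via auxiliary control of $\hat\nabla_4\alpha^F$ and $\hat\nabla_3\underline\alpha^F$, which the paper implements through the separate $\mathbb{YM}^{\mathcal C}$ norms and dedicated energy estimates in Section~6.2.

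One small correction: the null decomposition of the Yang-Mills two-form $F$ has only four independent components $\alpha^F,\underline\alpha^F,\rho^F,\sigma^F$ (see \eqref{nullc1}--\eqref{nullc2}); there are no $\beta^F,\underline\beta^F$ since $F$ is a two-form rather than a Weyl-type four-tensor. The Bianchi-type pairs for Yang-Mills are therefore $(\alpha^F,(\rho^F,\sigma^F))$ and $((\rho^F,\sigma^F),\underline\alpha^F)$. This does not affect the logic of your argument.
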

\begin{theorem}{\textbf{(Formation of trapped surfaces)}}\label{main2} With the notation of Theorem 1.1, if in addition to the assumptions of the previous Theorem, the initial data also satisfy the following isotropic condition:

\[ 4)\hspace{5mm} \int_0^1 \lvert u_{\infty}\rvert^2 \left(\lvert \chihat\rvert^2 + \lvert \alpha^F \rvert_{\gamma, \delta}^2\right)(u_{\infty}, \ubar^{\prime}, \theta^1, \theta^2) \dubarprime \geq a,\hspace{2mm}\text{uniformly for all}\hspace{2mm} (\theta^1, \theta^2) \in \mathbb{S}^2,\]then the spacetime arising as a solution from Theorem 1.1 has a trapped surface at $S_{-\frac{a}{4},1}$.
\end{theorem}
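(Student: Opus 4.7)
The plan is to verify both $\tr\chi < 0$ and $\tr\chibar < 0$ pointwise on $S_{-\frac{a}{4},1}$. The incoming expansion $\tr\chibar$ is handled perturbatively: since the data are Minkowskian on $\ubar = 0$, one has $\tr\chibar(u,0,\theta) = -2/|u|$, and integrating the $\nabla_4$-transport equation for $\tr\chibar$ using the scale-invariant a priori estimates of Theorem \ref{main1} yields $\tr\chibar(u,\ubar,\theta) = -2/|u| + O(\mathcal{I}^2 a^{\frac{1}{2}}|u|^{-2})$, which is strictly negative at $u = -\frac{a}{4}$ once $a_0(\mathcal{I})$ is taken large. The substantive task is therefore to show $\tr\chi < 0$ pointwise on $S_{-\frac{a}{4},1}$.

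For this I would integrate the Raychaudhuri equation along $L$ at $u = -\frac{a}{4}$, which in double null gauge coupled to the Yang--Mills stress--energy \eqref{Yan} reads schematically
\[
\nabla_4 \tr\chi + \tfrac{1}{2}(\tr\chi)^2 + 2\omega\,\tr\chi \;=\; -|\chihat|^2 - |\alphaF|_{\gamma,\delta}^2,
\]
the matter contribution being $\mathfrak{T}_{LL} = |\alphaF|_{\gamma,\delta}^2$ (the $F$ and $\Hodge F$ contributions producing the same extremal-component quadratic at the $LL$ level). With Minkowskian initial value $\tr\chi(-\frac{a}{4},0,\theta) = 8/a$ and the a priori bounds from Theorem \ref{main1} absorbing the terms $(\tr\chi)^2$ and $\omega\,\tr\chi$ into an $O(a^{-\frac{3}{2}})$ remainder (the $(\tr\chi)^2$ contribution moreover has the favourable sign), integrating in $\ubar$ produces
\[
\tr\chi\big(-\tfrac{a}{4},1,\theta\big) \;\leq\; \frac{8}{a} \;-\; \int_0^1 \big(|\chihat|^2 + |\alphaF|_{\gamma,\delta}^2\big)\big(-\tfrac{a}{4},\ubar',\theta\big)\,\dubarprime \;+\; O(a^{-\frac{3}{2}}).
\]
Strict pointwise negativity of $\tr\chi$ on $S_{-\frac{a}{4},1}$ is thus reduced to establishing the pointwise lower bound $\int_0^1(|\chihat|^2+|\alphaF|_{\gamma,\delta}^2)(-\tfrac{a}{4},\ubar',\theta)\,\dubarprime \geq 16/a$, which, in view of $|u|^2 = a^2/16$, is a pointwise-in-$\theta$ propagation of assumption (4) from $u_\infty$ to $-\frac{a}{4}$.

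The heart of the argument, and its main obstacle, is precisely this propagation. I would use the null transport equations in the $\Lbar$-direction for $\chihat$ (coming from the null Ricci/Codazzi system) and for $\alphaF$ (from the fully gauge-covariant Yang--Mills Bianchi identity projected into the null frame, written with the gauge-covariant derivative $\hnabla$ introduced in the paper so that $|\alphaF|_{\gamma,\delta}^2$ is well-defined pointwise and gauge invariant), together with the scale-invariant estimates of Theorem \ref{main1} at a regularity level sufficient to Sobolev-embed into $L^\infty(\mathbb{S}^2)$ (which is the reason the theorem is stated up to $N+7$ angular derivatives), to obtain
\[
\big||u|^2\big(|\chihat|^2 + |\alphaF|_{\gamma,\delta}^2\big)(u,\ubar,\theta) - |u_\infty|^2\big(|\chihat|^2+|\alphaF|_{\gamma,\delta}^2\big)(u_\infty,\ubar,\theta)\big| \;\leq\; \frac{C(\mathcal{I})}{a^{\frac{1}{2}}},
\]
uniformly in $(\ubar,\theta)$. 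Integrating in $\ubar$ and invoking hypothesis (4) yields $\int_0^1 |u|^2(|\chihat|^2+|\alphaF|_{\gamma,\delta}^2)(-\tfrac{a}{4},\ubar',\theta)\,\dubarprime \geq a - C(\mathcal{I})a^{-\frac{1}{2}} \geq a/2$, which fed back into the Raychaudhuri estimate gives $\tr\chi(-\tfrac{a}{4},1,\theta) \leq -\frac{8}{a} + O(a^{-\frac{3}{2}}) < 0$ for $a_0(\mathcal{I})$ sufficiently large. The main difficulty is the pointwise-in-$\theta$ character of both hypothesis (4) and the desired conclusion, which forces the propagation to be run in an $L^\infty$ norm on $\mathbb{S}^2$; combined with the fact that the $\hnabla_3 \alphaF$ equation contains borderline commutator non-linearities $[\hnabla,\hnabla_3]$ decaying only slowly in $|u|$, this is precisely the regime in which the gauge-invariant hierarchy of non-linear Yang--Mills estimates constructed in the paper (and used to prove Theorem \ref{main1}) is essential to close the argument; once closed, the pointwise lower bound on $\int(|\chihat|^2 + |\alphaF|_{\gamma,\delta}^2)\,\dubarprime$ propagates with room and $S_{-\frac{a}{4},1}$ is trapped.
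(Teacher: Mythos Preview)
Your proposal is essentially the paper's argument: propagate $|u|^2(|\chihat|^2+|\alphaF|_{\gamma,\delta}^2)$ pointwise along $\Lbar$ via the $\nabla_3\chihat$ and $\hnabla_3\alphaF$ transport equations, then feed the resulting lower bound into Raychaudhuri at $u=-a/4$. Two quantitative details to tighten: the paper weights by $\Omega^2$ so that the $\omegabar\chihat$ term is absorbed exactly rather than estimated, and the propagation loss after integrating to $u=-a/4$ is of order $a^{3/4}$ (not $a^{-1/2}$)---note that your intermediate weakening to $\geq a/2$ gives only $\int\geq 8/a$, which merely ties the initial value $\tr\chi(-a/4,0)=8/a$ and does not close; retaining the sharper $a-O(a^{3/4})$ yields $\int\geq 12/a$ as in the paper and forces $\tr\chi<0$.
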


\subsection{Acknowledgements}
\noindent N. A. gratefully acknowledges support by an H.F.R.I. grant for postdoctoral researchers (3rd call, no. 7126). Moreover, most of this work was carried out while N.A. was based at the American College of Thessaloniki, which he also wishes to gratefully acknowledge. P.M. is supported by the Center of Mathematical Sciences and Applications at Harvard University.
	\section{Setup}
	
	\subsection{Construction of the double null gauge}

\noindent Denote by $L\mathcal{M}$ the \textit{frame bundle} of $\mathcal{M}$. We construct a double null gauge, meaning a smooth section of this bundle such that, through it, each point $p \in \mathcal{M}$ maps to a renormalized frame $(e_1, e_2, e_3, e_4) \in L\mathcal{M}$ with $g(e_3, e_4) =-2$, $g(e_A, e_B) = \delta_{AB}$ and $g(e_3, e_A) = g(e_4, e_A) =0$.
    
    \vspace{3mm}
    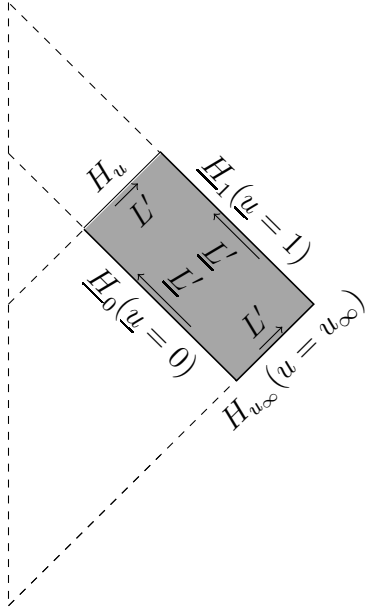
\begin{figure}
    \begin{center}
    \begin{tikzpicture}
		\draw [white](3,-1)-- node[midway, sloped, below,black]{$H_{u_{\infty}}(u=u_{\infty})$}(4,0);
		
		\draw [white](2,2)--node [midway,sloped,above,black] {$\Hb_1(\ub=1)$}(4,0);
		\draw [white](1,1)--node [midway,sloped, below,black] {$\Hb_{0}(\ub=0)$}(3,-1);
		\draw [dashed] (0, 4)--(0, -4);
		\draw [dashed] (0, -4)--(4,0)--(0,4);
		\draw [dashed] (0,0)--(2,2);
		\draw [dashed] (0,-4)--(2,-2);
		\draw [dashed] (0,2)--(3,-1);
		\draw [very thick] (1,1)--(3,-1)--(4,0)--(2,2)--(1,1);
		\fill [black!35!white]  (1,1)--(3,-1)--(4,0)--(2,2)--(1,1);
		\draw [white](1,1)-- node[midway,sloped,above,black]{$H_{u}$}(2,2);
		\draw [->] (3.3,-0.6)-- node[midway, sloped, above,black]{$L'$}(3.6,-0.3);
		\draw [->] (1.4,1.3)-- node[midway, sloped, below,black]{$L'$}(1.7,1.6);
		\draw [->] (3.3,0.6)-- node[midway, sloped, below,black]{$\Lb'$}(2.7,1.2);
		\draw [->] (2.4,-0.3)-- node[midway, sloped, above,black]{$\Lb'$}(1.7,0.4);
		\end{tikzpicture}
			\caption{Schematic depiction of the spacetime region of existence}
	\end{center}
	\end{figure}We begin with two null hypersurfaces $H_{u_{\infty}}, \Hbar_0$ and their intersection $S_{u_{\infty},0}$, a topological 2-sphere. For any point $q$ on this $2-$sphere, the tangent space $T_qS_{u_{\infty},u}$ is 2-dimensional and admits a $2-$dimensional orthogonal complement $T_q^{\perp}{S_{u_{\infty},0}}$, on which we can find two future-directed null vectors $L^{\prime}_q$ and $\Lbar^{\prime}_q$, normalized so that  \[ g(L^{\prime}_q, \Lbar^{\prime}_q) = -2. \]The pair $\begin{Bmatrix} L^{\prime}_q, \Lbar^{\prime}_q\end{Bmatrix}$ is uniquely determined up to a scaling factor $s >0$ \[\begin{Bmatrix} L^{\prime}_q, \Lbar^{\prime}_q\end{Bmatrix} \mapsto \begin{Bmatrix} s L^{\prime}_q, s^{-1}\Lbar^{\prime}_q\end{Bmatrix}.\]
	Starting from $q$ and initially tangent to $L^{\prime}_q$, a unique geodesic is sent out. Call this geodesic, $l_q$. We extend the vectorfield $L^{\prime}$ along $l_q$ by parallel transport: $D_{L^{\prime}}L^{\prime} =0$. It then follows by simple calculation that $l_q$ is null, so that $g(L^{\prime}, L^{\prime}) =0$ along $l_q$. Gathering the $\begin{Bmatrix} l_q \end{Bmatrix}$ together we get a null hypersurface $H_{u_{\infty}}$. The null hypersurface $\Hbar_0$ is obtained similarly.
	Note that, by construction, given a point $p$ on $H_{u_{\infty}}$ or $\Hbar_0$, in the corresponding tangent spaces, there is a preferred null vector $L^{\prime}_p$ or $\Lbar^{\prime}_p$.
	
	\vspace{3mm}
	We next choose a lapse function $\Omega$, which we define to be equal to 1 on $S_{u_{\infty},0}$ and then extend as a continuous function along both initial null hypersurfaces\footnote{Indeed, there is a gauge freedom in choosing $\Omega$ on the initial hypersurfaces.}. Define the vector fields
	
	\[ L := \Omega^2 L^{\prime} \hspace{2mm} \text{along}\hspace{2mm} H_{u_{\infty}} \hspace{2mm} \text{and} \hspace{2mm} \Lbar := \Omega^2 \Lbar^{\prime} \hspace{2mm} \text{along}
	\hspace{2mm} 
\Hbar_0.\]We use these vector fields to define two functions \[ \ubar \hspace{2mm} \text{on} \hspace{2mm} H_{u_{\infty}} \hspace{2mm} \text{satisfying} \hspace{2mm} L\ubar=1 \hspace{2mm} \text{on} \hspace{2mm}  H_{u_{\infty}} \hspace{2mm} \text{and} \hspace{2mm} \ubar =0 \hspace{2mm} \text{on} \hspace{2mm} S_{u_{\infty},0},\]\[ u \hspace{2mm} \text{on} \hspace{2mm} \Hbar_0 \hspace{2mm} \text{satisfying} \hspace{2mm} \Lbar u=1 \hspace{2mm} \text{on} \hspace{2mm}  \Hbar_0 \hspace{2mm} \text{and} \hspace{2mm} u =0 \hspace{2mm} \text{on} \hspace{2mm} S_{u_{\infty},0}.\]We now use these (so-called \textit{optical}) functions to proceed further with the construction. Let $S_{u_{\infty}, \ubar^{\prime}}$ be the embedded $2-$surface on $H_{u_{\infty}}$ on which $\ubar = \ubar^{\prime}$ and define $S_{u,0}$ similarly. At each point $p \in S_{u_{\infty}, \ubar^{\prime}}$ we have constructed a preferred null vector $L^{\prime}_p$. It follows that we can uniquely determine an incoming $g-$null vector $\Lbar^{\prime}_p$ satisfying $g(L^{\prime}_p, \Lbar^{\prime}_p)=-2$. Let $\underline{l}_p$ be the unique geodesic emanating from $p$ with tangent vector $\Lbar^{\prime}_p$. We extend the definition of $\Lbar^{\prime}$ along $\underline{l}_q$ by parallel transport, so that $D_{\underline{L}^{\prime}}\Lbar^{\prime}=0$. Gathering all the $\begin{Bmatrix} \underline{l}_p\end{Bmatrix}$ on $S_{u_{\infty}, \ubar^{\prime}}$, we thus obtain the null hypersurface $\Hbar_{\ubar^{\prime}}$. We obtain the null hypersurface $H_{u^{\prime}}$ in an analogous way and define $S_{u,\ubar}:= H_u\cap \Hbar_{\ubar}$. Having constructed the vector fields $L^{\prime}$ and $\Lbar^{\prime}$ in all of the spacetime region, we extend the definition of the lapse function $\Omega$ by requiring, at each point $p \in S_{u, \ubar}$ that \[ g(L^{\prime}_p, \Lbar^{\prime}_p) = -2 \restri{\Omega^{-2}}{p}. \]The incoming null hypersurfaces $\begin{Bmatrix} \Hbar_{\ubar} \end{Bmatrix}_{0\leq \ubar \leq 1}$ and outgoing null hypersurfaces $\begin{Bmatrix} H_u \end{Bmatrix}_{-\frac{a}{4} \leq u \leq u_{\infty}}$ along with their pairwise intersections $S_{u, \ubar}$ together define a \textit{double null foliation} on the spacetime. On a given $S_{u,\ubar}$, we have $g(\Omega L^{\prime}, \Omega \Lbar^{\prime}) = -2$ and hence the vectors \[e_3:= \Omega \Lbar^{\prime}, e_4:= \Omega L^{\prime}\] define a normalized null pair at each point on the sphere. We make the gauge choice $\Omega \equiv 1$ along both initial hypersurfaces.
	
\subsection{Choice of coordinates and expression of the metric}	
\noindent To define angular coordinates on each $S_{u,\ubar}$ in a smooth way, we begin by defining angular coordinates	on $S_{u_{\infty},0}$. Since this is a standard 2-sphere in Minkowki space, we can use the stereographic projection coordinates $(\theta^1, \hsp \theta^2)$ on $S_{u_{\infty},0}$. We first extend this coordinate to the whole of $\Hbar_0$ by insisting that $\slashed{\mathcal{L}}_{\Lbar}\theta^A = 0$ on $\Hbar_0$ for $A=1, \hsp 2$ and then to the whole spacetime by insisting that, for all $u$, $\slashed{\mathcal{L}}_L \theta^A =0$, where $L$ initially starts normal to some $S_{u,0}$. As such we have established a coordinate system $(u,\ubar, \theta^1, \theta^2)$ in a neighbourhood of the initial sphere. In these coordinates, the vectors $e_3, \hsp e_4$ become

\[e_3 = \Omega^{-1}\left( \frac{\partial}{\partial u} + b^A \hsp \frac{\partial}{\partial \theta^A}\right), \hspace{2mm} e_4 = \Omega^{-1} \frac{\partial}{\partial \ubar} \] and the metric now takes the following form:

\be g= -2\Omega^2 \left(\text{d}u \otimes \text{d}\ubar + \text{d}\ubar \otimes \text{d}u \right) + \gslash_{AB} \left(\text{d}\theta^A - b^A \text{d}u\right)\otimes\left(\text{d}\theta^B- b^B \text{d}u\right)  \ee It is a requirement that $b^A \equiv 0 $ on $\Hbar_0$. The section that maps $p\in \mathcal{M} \mapsto \left(\restri{\theta^1}{p},\restri{\theta^2}{p} \restri{e_3}{p}, \restri{e_4}{p}\right)$ is the double null gauge we wanted to construct.

\subsection{Yang-Mills Theory} \label{section23}
\noindent Now we define a Yang-Mills theory on $(\mathcal{M},g)$. We denote by $\mathfrak{P}$ a $C^{\infty}$ principal bundle with base a $3+1$ dimensional Lorentzian manifold $M$ and a Lie group $G$. We assume that $G$ is compact semi-simple (for physical purposes) and therefore admits a positive definite non-degenerate bi-invariant metric. Its Lie algebra $\mathfrak{g}$ by construction admits an adjoint invariant, positive definite scalar product denoted by  $\langle~,~\rangle$ which enjoys the property: for $A,B,C\in \mathfrak{g}$,
\begin{eqnarray}
\label{eq:adinvpp}
\langle[A,B],C\rangle=\langle A,[B,C]\rangle.
\end{eqnarray}
as a consequence of adjoint invariance.
A Yang-Mills connection is defined as a $1-$form $w$ on $\mathfrak{P}$ with values in $\mathfrak{g}$ endowed with compatibility properties. Its representative in a local trivialization of $\mathfrak{P}$ over $U\subset \mathcal{M}$, 
\begin{eqnarray}
\varphi: p\mapsto (x,a),~p\in \mathfrak{P},~x\in U,~a\in G
\end{eqnarray}
is the $1-$form $s^{*}w$ on $U$, where $s$ is the local section of $\mathfrak{P}$ corresponding canonically to the local trivialization  $s(x)=\varphi^{-1}(x,e)$, called a \textit{gauge}. Let $A_{1}$ and $A_{2}$ be representatives of $w$ in gauges $s_{1}$ and $s_{2}$ over $U_{1}\subset \mathcal{M}$ and $U_{2}\in \mathcal{M}$. In $U_{1}\cap U_{2}$, one has 
\begin{eqnarray}
\label{eq:gauge}
A_{1}=Ad(u^{-1}_{12})A_{2}+u_{12}\Theta_{MC},
\end{eqnarray}
where $\Theta_{MC}$ is the Maurer-Cartan form on $G$, and $u_{12}:U_{1}\cap U_{2}\to G$ generates the transformation between the two local trivializations: \begin{eqnarray}
s_{1}=R_{u_{12}}s_{2},
\end{eqnarray}
$R_{u_{12}}$ is the right translation on $\mathfrak{P}$ by $u_{12}$. Given the principal bundle $\mathfrak{P}\to \mathcal{M}$, a Yang-Mills potential $A$ on $\mathcal{M}$ is a section of the fibered tensor product $T^{*}M\otimes_{M}\mathfrak{P}_{Affine,\mathfrak{g}}$ where $\mathfrak{P}_{Affine,\mathfrak{g}}$ is the affine bundle with base $M$ and typical fibre $\mathfrak{g}$, associated to $\mathfrak{P}$ via relation (\ref{eq:gauge}). If $\widehat{A}$ is another Yang-Mills potential on $M$, then $A-\widehat{A}$ is a section of the tensor product of vector bundles $T^{*}M\otimes_{M}P_{Ad,\mathfrak{g}}$, where $\mathfrak{P}_{Ad,\mathfrak{g}}:=\mathfrak{P}\times _{Ad}\mathfrak{g}$ is the vector bundle associated to $\mathfrak{P}$ by the adjoint representation of $G$ on $\mathfrak{g}$. There is a inner product in the fibres of $\mathfrak{P}_{Ad,\mathfrak{g}}$, induced from that on $\mathfrak{g}$. The curvature $\mathbf{C}$ of the connection $w$ considered as a $1-$ form on $\mathfrak{P}$ is a $\mathfrak{g}$-valued $2-$form on $\mathfrak{P}$. Its representative in a gauge where $w$ is represented by $A$ is given by 
\begin{eqnarray}
F:=dA+[A,A],
\end{eqnarray}
and relation between two representatives $F_{1}$ and $F_{2}$ on $U_{1}\cap U_{2}$ is $F_{1}=Ad(u^{-1}_{12})F_{2}$ and therefore $F$ is a section of the vector bundle $\Lambda^{2}T^{*}M\otimes _{M}\mathfrak{P}_{Ad,\mathfrak{g}}$. For a section $\mathfrak{O}$ of the vector bundle $\otimes^{k}T^{*}M\otimes _{M}\mathfrak{P}_{Ad,\mathfrak{g}}$, a natural covariant derivative is defined as follows 
\begin{eqnarray}
\label{eq:covariant}
\widehat{D}\mathfrak{O}:=D\mathfrak{O}+[A,\mathfrak{O}],
\end{eqnarray}
where $D$ is the usual covariant derivative induced by the Lorentzian structure of $M$ and by construction $\widehat{D}\mathfrak{O}$ is a section of the vector bundle $\otimes^{k+1}T^{*}M\otimes _{M}\mathfrak{P}_{Ad,\mathfrak{g}}$. The Yang-Mills coupling constant $g_{YM}$ is set to 1. If the space of connections in a particular Sobolev class is denoted by $\mathcal{A}$ and $\mathcal{G}$ is the automorphism group of the bundle $\mathfrak{P}$, then the true configuration space (or the orbit space) of the theory is $\mathcal{A}/\mathcal{G}$\footnote{$\mathcal{A}/\mathcal{G}$ is an infinite dimensional manifold modulo certain additional criteria}. 

\noindent The classical Yang-Mills equations (in the absence of sources)
correspond to setting the natural (spacetime and gauge as defined in \ref{eq:covariant}) covariant divergence of this curvature
two-form $F$ to zero. By virtue of its definition in terms of the connection, this curvature also satisfies
the Bianchi identity that asserts the vanishing of its gauge covariant exterior derivative. Taken
together, these equations provide a geometrically natural nonlinear generalization of Maxwell's
equations (when the latter are written in terms of a `vector potential') and of course play a
fundamental role in modern elementary particle physics. If nontrivial bundles are considered
or nontrivial spacetime topologies are involved, then the foregoing so-called `local trivializations' of the bundles in question must be patched together to give global descriptions but, by virtue of the covariance of the formalism, there is a natural way of carrying out this patching procedure, at least over those regions of spacetime where the connections are well-defined. 

We choose a vector space $V$ and a matrix representation for the action of $G$ on $V$. For simplicity, let us confine our attention to real representations though in fact this restriction is inessential. We now consider vector bundles over spacetime (so-called ‘associated’ bundles) with standard fiber $\simeq V$ . Cross sections of such bundles would represent, in physical terms, multiplets of Higgs fields. To formulate field equations for such Higgs fields that are naturally covariant with respect to automorphisms of the associated vector bundle (i.e., with respect to gauge transformations acting on the Higgs fields) one needs a covariant derivative operator $\widehat{D}$ or connection defined on this bundle. Such an object is naturally induced from a ‘fundamental’ connection on the principal $G$-bundle described above and in turn induces (when expressed relative to a local trivialization) a one-form on (some local chart for) the base manifold with values in the chosen matrix representation for the Lie algebra $\mathfrak{g}$. Let us consider the dimension of the group $G$ to be $dim_{G}$ and since $\mathfrak{g}:=T_{e}G$, it has a natural vector space structure. Assume that the vector space $\mathfrak{g}$ has a basis $\{\chi_{A}\}_{A=1}^{dim_{G}}$ given by a set of $k\times k$ real valued matrices ($k$ being the dimension of the representation $V$ of the Lie algebra $\mathfrak{g}$). The connection $1-$form field is then defined to be 
\begin{eqnarray}
A:=A^{A}_{\mu}\chi_{A}dx^{\mu}=A^{A}_{\mu}(\chi_{A})^{P}_{Q}dx^{\mu}=A^{P}~_{Q\mu}dx^{\mu},~P,Q=1,2,3,...,k.
\end{eqnarray}
From now on by the connection 1-form field $A_{\mu}$, we will always mean $A^{P}~_{Q\mu}$. In the current setting $A\in \Omega^{1}(M;End(V))$, where $End(V)$ denotes the space of endomorphisms of the vector space $V$. The curvature of this connection is defined to be the Yang-Mills field $F\in \Omega^{2}(M;End(V))$
\begin{eqnarray}
F^{P}~_{Q\mu\nu}:=\partial_{\mu}A^{P}~_{Q\nu}-\partial_{\nu}A^{P}~_{Q\mu}+[A,A]^{P}~_{Q\mu\nu},
\end{eqnarray}
where the bracket is defined on the Lie algebra $\mathfrak{g}$ and given by the commutator of matrices under multiplication. The Yang-Mills coupling constant is set to unity. Since $G$ is compact, it admits a positive definite adjoint invariant metric on $\mathfrak{g}$. We choose a basis of $\mathfrak{g}$ such that this adjoint invariant metric takes the Cartesian form $\delta_{AB}$ and work with representations for which the bases satisfy 
\begin{eqnarray}
-tr(\chi_{A}\chi_{B})=(\chi_{A})^{P}_{Q}(\chi_{A})^{Q}_{P}=\delta_{AB}.
\end{eqnarray}

\noindent Under a gauge transformation by $\mathcal{U}$, the $\mathfrak{g}$-valued 1-form field $A$ transforms as 
\begin{eqnarray}
A_{\mu}\mapsto \mathcal{U}^{-1}A_{\mu}\mathcal{U}+\mathcal{U}\partial_{\mu}\mathcal{U}^{-1} 
\end{eqnarray}
and therefore $A_{\mu}$ is not a tensor in the sense that it is \textit{not} a $(1,1)$ section of the associated $V-$bundle over $\mathcal{D}_{u,\bar{u}}$. For any $\mathfrak{g}$-valued section $\mathcal{K}^{P}~_{Q\mu_{1}\mu_{2}\mu_{3}....\mu_{k}}$ of a vector bundle over $\mathcal{D}_{u,\bar{u}}$ that transforms as a tensor under the gauge transformation, the gauge covariant derivative is defined to be  
\begin{align}
\label{eq:gaugecov}
\nonumber &\hat{D}_{\alpha}\mathcal{K}^{P}~_{Q\mu_{1}\mu_{2}\mu_{3}....\mu_{k}}
:= D_{\alpha}\mathcal{K}^{P}~_{Q\mu_{1}\mu_{2}\mu_{3}....\mu_{k}}+A^{P}~_{R\alpha}\mathcal{K}^{R}~_{Q\mu_{1}\mu_{2}\mu_{3}....\mu_{k}}-A^{R}~_{Q\alpha}\mathcal{K}^{P}~_{R\mu_{1}\mu_{2}\mu_{3}....\mu_{k}}\\
=& D_{\alpha}\mathcal{K}^{P}~_{Q\mu_{1}\mu_{2}\mu_{3}....\mu_{k}}+[A
,\mathcal{K}]^{P}~_{Q\mu_{1}\mu_{2}\mu_{3}....\mu_{k}},
\end{align}
where $D_{\alpha}$ is the ordinary spacetime covariant derivative with respect to a Lorentzian metric on $\mathcal{D}_{u,\bar{u}}$. Even though the connection of the gauge bundle appears in the definition of the gauge covariant derivative, we will never make explicit use of it in the current context, but rather work with the fully gauge covariant derivative $\hat{D}$. More specifically, in our analysis, we  will encounter the commutator of the fully gauge covariant derivative which yields Riemann curvature and Yang-Mills curvature components. In other words, using the fully gauge covariant derivative, we do not encounter any connection terms, allowing us to obtain estimates in a gauge-invariant way. The commutator of the fully gauge covariant derivative while acting on a $\mathfrak{g}$-valued section of a vector bundle $\mathcal{K}^{P}~_{Q\mu_{1}\mu_{2}\mu_{3}....\mu_{k}}$ (or a section of the mixed bundle) yields 
\begin{align} \nonumber 
[\hat{D}_{\alpha},\hat{D}_{\beta}]\mathcal{K}^{P}~_{Q\mu_{1}\mu_{2}\mu_{3}....\mu_{k}}=& F^{P}~_{R\alpha\beta}\mathcal{K}^{R}~_{Q\mu_{1}\mu_{2}\mu_{3}....\mu_{k}}-F^{R}~_{Q\alpha\beta}\mathcal{K}^{P}~_{R\mu_{1}\mu_{2}\mu_{3}....\mu_{k}}\\ -&\sum_{i}R^{\gamma}~_{\mu_{i}\alpha\beta}\mathcal{K}^{P}~_{Q\mu_{1}\mu_{2}\mu_{3}...\hat{\gamma}....\mu_{k}},
\end{align}
where $\hat{\gamma}$ indicates the removal of the index $\hat{\mu}_{i}$ and replacing by $\gamma$. Note that $\hat{D}$ is compatible with both the metrics and therefore the commutator produces curvature of the mixed bundle (Yang-Mills curvature and spacetime curvature). Action of $\hat{D}$ is only well-defined on \textit{sections} of the mixed bundle.

	\subsection{The E-Y-M system expressed in the double null gauge}
	\noindent In this section we are going to express the Einstein-Yang-Mills system in the double null gauge given above. We begin by decomposing curvature components and Ricci coefficients with respect to the frame $(e_1, e_2,e_3,e_4)$. 
    Let $A, B$ take values in $\begin{Bmatrix} 1,2 \end{Bmatrix}$. We define the following (Weyl) null curvature components:

    \[ \alpha_{AB} := W(e_A, e_4, e_B, e_4), \hspace{2mm} \alphabar_{AB}:= W(e_A, e_3, e_B, e_3),       \] \[ \beta_A := \frac{1}{2} W(e_A, e_4 , e_3 ,e_4), \hspace{2mm} \betabar_A := \frac{1}{2} W(e_A, e_3, e_3, e_4),     \] \[  \rho := \frac{1}{4} W(e_3, e_4, e_3, e_4), \hspace{2mm} \sigma  = \frac{1}{4} \Hodge{W}(e_3, e_4, e_3 ,e_4).      \]Moreover, for reasons to be explained later on, we define the \textit{renormalized curvature components }

    \be \label{tbetatbetabar} \tbeta_A = \beta_A - \frac{1}{2}R_{A4}, \hspace{2mm} \tbetabar_A = \betabar_A + \frac{1}{2} R_{A3}.       \ee For the Ricci coefficients, we decompose as follows:

    \[  \chi_{AB} := g(D_A e_4, \hsp e_B), \hspace{2mm} \chibar_{AB} := g(D_A e_3, e_B),            \] \[   \eta _A := -\frac{1}{2} g(D_A e_3 , \hsp e_4), \hspace{2mm} \etabar_A := -\frac{1}{2} g(D_A e_4, e_3), \]\[  \omega := -\frac{1}{4} g(D_4 e_3 ,\hsp e_4),  \hspace{2mm} \omegabar := -\frac{1}{4} g(D_3 e_4, \hsp e_3),                \]\[ \zeta_A := \frac{1}{2} g(D_A e_4, \hsp e_3).  \]Moreover, if $\gamma$ denotes the induced metric on $S_{u,\ubar}$, we make the following further decomposition:

    \[ \chi = \chihat + \frac{1}{2} \tr\chi  \gamma, \hspace{2mm} \chibar = \chibarhat + \frac{1}{2} \tr\chibar \hsp \gamma.    \]

\noindent The Yang-Mills curvature 2-form $F$ is also decomposed into null components as follows:
	
	\begin{gather}
	    \alpha_{A}^F := \F(e_A, e_4), \hspace{2mm}
	    \alphabar_{A}^F := \F(e_A,e_3),\label{nullc1} \\ \rho^F := \frac{1}{2}\F(e_3,e_4), \hspace{2mm} \sigma^F := \frac{1}{2}\Hodge{\F}(e_3,e_4) = \F(e_1,e_2). \label{nullc2}
	\end{gather}
	
	\vspace{3mm}
	
	\par\noindent For the Yang-Mills energy-momentum tensor $\mathfrak{T}_{\mu\nu}$, there holds $g^{\alpha \beta}\mathfrak{T}_{\alpha \beta} = 0$, whence it follows that the scalar curvature vanishes and the Einstein equations reduce to $R_{\mu\nu}= \mathfrak{T}_{\mu\nu}$.

 \vspace{3mm}

 \par\noindent Before we are ready to present the equations, we introduce a few basic definitions. First of all, denote by $\nabla$ and $\hnabla$ the covariant derivative operators induced by $D$ and $\hat{D}$ on $S_{u,\ubar}$ respectively. Let $\nabla_3, \nabla_4$ denote the projections of the covariant derivatives $D_3$ and $D_4$ to $S_{u,\ubar}$ and define, in a similar fashion, $\hnabla_3$ and $ \hnabla_4$ using $\hat{D}$. For two $1-$forms $\phi_A^{1}, \hsp \phi_A^{2}$, we define 

 \[ (\phi_1 \hat{\otimes} \phi_2)_{AB} := \phi_A^1 \hsp \phi_B^2 + \phi_B^1 \hsp \phi_A^2 - \gamma_{AB}\hsp (\phi^1 \cdot \phi^2),   \]while for symmetric $2-$tensors $\phi_{AB}^1, \hsp \phi_{AB}^2$, we define 
 \[  (\phi^1 \wedge \phi^2)_{AB} := \slashed{\epsilon}^{AB} \hsp(\gamma^{-1})^{CD} \hsp\phi_{AB}^1 \hsp \phi_{CD}^2.    \]Here $\slashed{\epsilon}$ is the volume form associated with the metric $\gamma$. Moreover, by $\phi^1 \cdot \phi^2$ we mean an arbitrary contraction of the tensor product of $\phi^1$ and $\phi^2$ with respect to the metric $\gamma$. We also define suitable trace, divergence and curl operators. For totally symmetric tensors $\phi$, we define these operators as follows:
 \[  (\div \phi)_{A_1 \dots A_r}:= \nabla^{B}\phi_{B A_1 \dots A_r},   \] \[   (\curl \phi)_{A_1\dots A_r} := \slashed{\epsilon}^{BC}\nabla_B \phi_{C A_1\dots A_r}, 
   \] \[  (\tr\phi)_{A_1 \dots A_{r-1}} :=    (\gamma^{-1})^{BC} \phi_{BC A_1 \dots A_{r-1}}.        \]Be it noted that the operators $\widehat{\div}$ and $\widehat{\curl}$ can be defined similarly on sections of the mixed bundle. Furthermore, we introduce the $*$ operator on $1-$forms and $2-$tensors:
 \[ \Hodge{\phi}_A := \gamma_{AC}\slashed{\epsilon}^{CB}\hsp \phi_B,        \] \[ \Hodge{\phi}_{AB} := \gamma_{BD} \hsp \slashed{\epsilon}^{DC}\hsp \phi_{AC}.  \]
 Finally, on a $1-$form $\phi$, the operator $\nabla \hat{\otimes}$ is defined as follows:  \[(\nabla \hat{\otimes} \phi)_{A} := \nabla_B \phi_A + \nabla_A \phi_B - \gamma_{AB} \hsp \div \phi. \] 
Expressed in the double null gauge, the Einstein-Yang-Mills system turns into a system of null structure, null Bianchi and null Yang-Mills equations, which take the following form:	
	\begin{gather}
\nabla_{4}\tr\chi+\frac{1}{2}(\tr\chi)^{2}=-|\hat{\chi}|^{2}_{\gamma}-2\omega \tr\chi-\mathfrak{T}_{44}\\
\nabla_{4}\hat{\chi}+\tr\chi \hat{\chi}=-2\omega\hat{\chi}-\alpha\\
\nabla_{3}tr\underline{\chi}+\frac{1}{2}(tr\underline{\chi})^{2}=-|\hat{\underline{\chi}}|^{2}_{\gamma}-2\underline{\omega}tr\underline{\chi}-\mathfrak{T}_{33}\\
\nabla_{3}\hat{\underline{\chi}}+tr\underline{\chi}\hat{\underline{\chi}}=-2\underline{\omega}\hat{\underline{\chi}}-\underline{\alpha}\\
\label{eq:eta}
\nabla_{4}\eta_{a}=-\chi\cdot(\eta-\underline{\eta})-\beta-\frac{1}{2}\mathfrak{T}_{a4}\\
\nabla_{3}\underline{\eta}_{a}=-\underline{\chi}\cdot(\underline{\eta}-\eta)+\underline{\beta}+\frac{1}{2}\mathfrak{T}_{a3} \\
\nabla_{4}\underline{\omega}=2\omega\underline{\omega}+\frac{3}{4}|\eta-\underline{\eta}|^{2}-\frac{1}{4}(\eta-\underline{\eta})\cdot(\eta+\underline{\eta})-\frac{1}{8}|\eta+\underline{\eta}|^{2}+\frac{1}{2}\rho+\frac{1}{4}\mathfrak{T}_{43} \\
\nabla_{3}\omega=2\omega\underline{\omega}+\frac{3}{4}|\eta-\underline{\eta}|^{2}+\frac{1}{4}(\eta-\underline{\eta})\cdot(\eta+\underline{\eta})-\frac{1}{8}|\eta+\underline{\eta}|^{2}+\frac{1}{2}\rho+\frac{1}{4}\mathfrak{T}_{43} \\
\nabla_{4}tr\underline{\chi}+\frac{1}{2}\tr\chi tr\underline{\chi}=2\omega tr\underline{\chi}+2\text{div}\underline{\eta}+2|\underline{\eta}|^{2}_{\gamma}+2\rho-\hat{\chi}\cdot\hat{\underline{\chi}}\\
\nabla_{3}\tr\chi+\frac{1}{2}tr\underline{\chi}\tr\chi=2\underline{\omega}\tr\chi+2\text{div}\eta+2|\eta|^{2}+2\rho-\hat{\chi}\cdot \hat{\underline{\chi}}\\
\nabla_{4}\hat{\underline{\chi}}+\frac{1}{2}\tr\chi\hat{\underline{\chi}}=\nabla\hat{\otimes}\underline{\eta}+2\omega\hat{\underline{\chi}}-\frac{1}{2}tr\underline{\chi}\hat{\chi}+\underline{\eta}\hat{\otimes}\underline{\eta}+\hat{\mathfrak{T}}_{ab}\\
\nabla_{3}\hat{\chi}+\frac{1}{2}tr\underline{\chi}\hat{\chi}=\nabla\hat{\otimes}\eta+2\underline{\omega}\hat{\chi}-\frac{1}{2}\tr\chi\hat{\underline{\chi}}+\eta\hat{\otimes}\eta+\hat{\mathfrak{T}}_{ab}\\
\label{eq:1}
\text{div}\hat{\chi}=\frac{1}{2}\nabla \tr\chi-\frac{1}{2}(\eta-\underline{\eta})\cdot(\hat{\chi}-\frac{1}{2}\tr\chi\gamma)-\beta+\frac{1}{2}\mathfrak{T}(e_{4},\cdot)\\
\text{div}\hat{\underline{\chi}}=\frac{1}{2}\nabla tr\underline{\chi}-\frac{1}{2}(\underline{\eta}-\eta)\cdot(\hat{\underline{\chi}}-\frac{1}{2}tr\underline{\chi}\gamma)-\underline{\beta}+\frac{1}{2}\mathfrak{T}(e_{3},\cdot)\\
\text{curl}\eta=\hat{\underline{\chi}}\wedge\hat{\chi}+\sigma\epsilon=-\text{curl} \underline{\eta}\\
\label{eq:4}
K-\frac{1}{2}\hat{\chi}\cdot \hat{\underline{\chi}}+\frac{1}{4}\tr\chi tr\underline{\chi}=-\rho+\frac{1}{4}\mathfrak{T}_{43},\\
\nabla_{4}\zeta=2\nabla\omega+\chi\cdot(\etabar-\zeta)+2\omega(\zeta+\etabar)-\beta-\sigma^{F}\epsilon\cdot\alpha^{F}-\rho^{F}\cdot\alpha^{F},\\
\nabla_{3}\zeta=-2\nabla\omega-\chibar\cdot(\zeta+\eta)+2\omegabar(\zeta-\eta)-\betabar+\epsilon\sigma^{F}\cdot\alphabar^{F}-\rho^{F}\cdot\alphabar^{F}.
\end{gather}
	
	

	\vspace{3mm}
	
\par\noindent	The Bianchi equations then read as follows:
	
	\begin{equation}
	    \begin{split}
	        \nabla_3 \alpha + \frac{1}{2}\tr\chibar \alpha = &\nabla \hat{\otimes}\beta + 4 \omegabar \alpha - 3\left(\chihat \rho + \Hodge{\chihat} \sigma \right)+ (\zeta+4\eta)\hat{\otimes}\beta \\ & - D_4 R_{AB} + \frac{1}{2}\left(D_B R_{4A} + D_A R_{4B}\right) + \frac{1}{2}\left(D_4 R_{43} - D_3 R_{44}\right)\gslash_{AB},
	    \end{split}
	\end{equation}
	\begin{equation}
	        \nabla_4 \beta + 2 \tr\chi \beta = \text{div} \alpha -2 \omega \beta + \left(\eta - 2 \zeta\right)\cdot \alpha + \frac{1}{2} \left(D_4 R_{4A} - D_A R_{44}\right),
	\end{equation}
	\begin{equation}
	    \nabla_3 \beta + \tr\chibar \beta = \nabla \rho+ \Hodge{\nabla}\sigma + 2\omegabar \beta +2 \chihat \cdot \betabar + 3 \left(\eta \rho+ \Hodge{\eta}\sigma \right) + \frac{1}{2} \left( D_A R_{43} - D_4 R_{3A}\right),
	\end{equation}
	
	\begin{equation}
	    \nabla_4 \sigma + \frac{3}{2} \tr\chi \sigma = - \text{div} \Hodge{\beta} + \frac{1}{2}\hsp \chibarhat \cdot \Hodge{\alpha} - (\zeta+2 \etabar) \cdot \Hodge{\beta} - \frac{1}{4}\left( D_{\mu} R_{4\nu} - D_{\nu}R_{4\mu}\right){\epsilon^{\mu\nu}}_{34},
	\end{equation}
	\begin{equation}
	    \nabla_3 \sigma + \frac{3}{2}\tr\chibar \sigma = -\text{div}\Hodge{\betabar} + \frac{1}{2}\hsp\chihat \cdot \Hodge{\alphabar} -(\zeta + 2\eta)\cdot \Hodge{\betabar} +\frac{1}{4}\left( D_{\mu} R_{3\nu} -0 D_{\nu}R_{3\mu}\right){\epsilon^{\mu\nu}}_{34},
	\end{equation}
	\begin{equation}
	    \nabla_4 \rho + \frac{3}{2}\tr\chi \rho = \text{\div}\beta - \frac{1}{2}\chibarhat \cdot \alpha + (\zeta + 2\etabar) \cdot \beta - \frac{1}{4}\left(D_3 R_{44} - D_4 R_{43} \right), 
	\end{equation}
	\begin{equation}
	    \nabla_3 \rho + \frac{3}{2}\tr\chibar \rho = -\text{div}\betabar - \frac{1}{2}\chihat \cdot \alphabar +(\zeta-2\eta)\cdot \betabar + \frac{1}{4} \left(D_3 R_{34} - D_4 R_{33}\right), 
	\end{equation}
	
	\begin{equation}
	    \nabla_4 \betabar + \tr\chi \betabar = -\nabla \rho + \Hodge{\nabla} \sigma + 2 \omega \betabar + 2\chibarhat \cdot \beta -3\left(\etabar \rho - \Hodge{\etabar}\sigma \right) - \frac{1}{2}\left(D_A R_{43} - D_3R_{4A} \right), 
	\end{equation}
	\begin{equation}
	    \nabla_3 \betabar + 2 \tr\chibar \betabar = -\text{div}\alphabar -2 \omegabar \betabar + \etabar\cdot\alphabar +\frac{1}{2}\left(D_A R_{33} - D_3 R_{3A}\right),
	\end{equation}
	\begin{equation}
	    \begin{split}
	        \nabla_4 \alphabar + \frac{1}{2}\tr\chi \hsp \alphabar = &-\nabla \hat{\otimes}\betabar +4 \omega\hsp \alphabar -3\left(\chibarhat \rho - \Hodge{\chibarhat}\sigma \right) +\left(\zeta - 4\etabar\right)\hat{\otimes}\betabar \\ &- D_3 R_{AB} + \frac{1}{2}\left(D_A R_{3B} + D_B R_{3A}\right) + \frac{1}{2}\left(D_3 R_{34} - D_4 R_{43}\right) \slashed{g}_{AB}.
	    \end{split}
	\end{equation}
	Moreover, the Yang-Mills equations,
	
	\begin{equation}
	    \widehat{D}_{\m} {{F^P}}_{Q\n \lambda} +\widehat{D}_{\n} {{F^P}}_{Q\lambda \m}+\widehat{D}_{\lambda} {{F}^P}_{Q\m\n}=0, \hspace{2mm}  g^{\a\b}\widehat{D}_{\a}{F^P}_{Q\b\m}=0,
	\end{equation}
	take the following form:
	
\begin{equation}
    \hnabla_4 \alphabar^F +\frac{1}{2}\tr\chi\alphabar^F = - \hnabla \rho^F + \Hodge{\hnabla}\sigma^F -2 \Hodge{\etabar} \sigma^F - 2 \etabar \rho^F + 2 \omega \alphabar^F - \chibarhat \cdot \alpha^F,
\end{equation}
\begin{equation}
    \hnabla_3 \alpha^F + \frac{1}{2}\tr\chibar\alpha^F = \hnabla \rho^F + \Hodge{\hnabla}\sigma^F -2 \Hodge{\eta}\sigma^F+ 2 \eta \rho^F +2 \omegabar \alpha^F - \chihat \cdot \alphabar^F,
\end{equation}
\begin{equation}
    \hnabla_4 \rho^F = \widehat{\text{div}}\alpha^F -\tr\chi\rho^F -\left(\eta-\etabar\right)\cdot \alpha^F,
\end{equation}
\begin{equation}
    \hnabla_4 \sigma^F = -\widehat{\text{curl}}\alpha^F -\tr\chi\sigma^F + \left(\eta-\etabar\right) \cdot\Hodge{\alpha}^F,
\end{equation}
\begin{equation}
    \hnabla_3 \rho^F + \tr\chibar \rho^F = - \widehat{\text{div}} \alphabar^F + \left( \eta-\etabar \right)\cdot \alphabar^F,
\end{equation}
\begin{equation}
    \hnabla_3 \sigma^F + \tr\chibar \sigma^F = -\widehat{\text{curl}}\alphabar^F+\left(\eta-\etabar\right)\cdot\Hodge{\alphabar}^F.
\end{equation}
\begin{remark}[An important renormalization] As in \cite{AnAth}, the analysis will take place using the renormalized quantities $\tbeta$ and $\tbetabar$ introduced in \eqref{tbetatbetabar}. The benefit of using these, as also explained in \cite{AnAth}, is that the Bianchi equations can then be expressed in a way so
that the right-hand sides of the equations are directly expressible in terms of the Ricci coefficients
and the curvature and Yang-Mills components.

\end{remark}
\subsection{Integration}
Let $U$ be a coordinate patch on a $2-$sphere $S_{u,\ubar}$ and let $p_U$ be a partition of unity subordinate to $U$. For a function $\phi$, we define its integral on a $2-$sphere as well as on the null hypersurfaces $H_u$ and $\Hbar_{\ubar}$. 

\begin{equation}
    \int_{S_{u,\ubar}} \phi := \sum_{U} \int_{-\infty}^{\infty}\int_{-\infty}^{\infty} \phi \cdot p_U \cdot \sqrt{\text{det}\gamma}\hsp \text{d}\theta^1 \text{d}\theta^2,
\end{equation}
\begin{equation}
    \int_{\Hu} := \sum_{U} \int_{0}^{\ubar} \int_{-\infty}^{\infty}\int_{-\infty}^{\infty} \phi \cdot2 \hsp p_U \cdot \Omega \cdot \sqrt{\text{det}\gamma}\hsp \text{d}\theta^1 \text{d}\theta^2 \dubarprime,
\end{equation}
\begin{equation}
    \int_{\underline{H}_{\ubar}^{(u_{\infty},u)}} := \sum_{U} \int_{u_{\infty}}^{u} \int_{-\infty}^{\infty}\int_{-\infty}^{\infty} \phi \cdot2 \hsp p_U \cdot \Omega \cdot \sqrt{\text{det}g}\hsp \text{d}\theta^1 \text{d}\theta^2 \duprime,
\end{equation}For a spacetime region $\mathcal{D}_{u,\ubar} : =  \begin{Bmatrix} \left( u^{\prime}, \ubar^{\prime}, \theta^1, \theta^2 \right) \hsp \mid \hsp u_{\infty} \leq u^{\prime} \leq u, 0\leq \ubar^{\prime} \leq \ubar \end{Bmatrix}$, we define the spacetime integral

\begin{equation}
      \int_{\mathcal{D}_{u,\ubar}} \phi := \sum_{U} \int_{u_{\infty}}^{u} \int_{0}^{\ubar}  \int_{-\infty}^{\infty}\int_{-\infty}^{\infty} \phi \cdot  p_U \cdot \Omega^2 \cdot \sqrt{-\text{det}g}\hsp \text{d}\theta^1 \text{d}\theta^2 \dubarprime\duprime.
\end{equation}We proceed with the definition of $L^p$ norms $(1\leq p < \infty)$ for an arbitrary tensorfield $\phi$:

\begin{equation}
 \LpSu{\phi}^p :=   \int_{\Suu} \langle \phi, \phi \rangle_{\gamma}^{\frac{p}{2}}
\end{equation}
\begin{equation}
    \LpHu{\phi}^p := \int_{\Hu}\langle \phi, \phi \rangle_{\gamma}^{\frac{p}{2}}
\end{equation}
\begin{equation}
    \LpHbaru{\phi}^p := \int_{\Hbu}\langle \phi, \phi \rangle_{\gamma}^{\frac{p}{2}}.
\end{equation}For the case $p=\infty$, we separately define 
\begin{equation}
    \LinftySu{\phi} := \underset{(\theta^1, \theta^2) \in \Suu}{\text{sup}}\langle \phi, \phi \rangle_{\gamma}^{\frac{1}{2}}(\theta^1,\theta^2).
\end{equation}

\subsection{Signature for decay rates and scale-invariant norms}

\noindent Perhaps the most challenging aspect of trapped surface formation results, historically, has been the attempt to find initial data that are, in an appropriate sense, large (this is by necessity, as is implied by the monumental work of \cite{ChrKl}) but also small enough to allow for an existence result of a spacetime region that gives trapped surfaces the time they would require to form. The first such initial data set, in the absence of symmetries, was given by \cite{C09}. Later contributions include \cite{Kl-Rod}, \cite{AL17} and \cite{A17}. Moreover, one would have to construct norms which preserve, at least approximately, the hierarchy present in the initial data upon evolution of the Einstein equations. The signature for decay rates, which was first introduced in \cite{AnThesis}, is the tool we will use in the present paper to build \textit{scale-invariant norms}. These will be norms that, upon evolution of the initial data, remain bounded above by a uniform constant (with the exception of a few anomalous terms). For  another application of this framework, see \cite{AnAth}.

\vspace{3mm}

To each $\phi \in \begin{Bmatrix}
\alpha, \alphabar, \tbeta, \tbetabar, \rho, \sigma, \eta, \etabar, \chi, \chibar, \omega, \omegabar, \zeta,\gamma
\end{Bmatrix}$ we associate its \textit{signature for decay rates} $s_2(\phi)$:

\[ s_2(\phi) = 0\cdot N_4(\phi) + \frac{1}{2}N_A(\phi) + 1\cdot N_3(\phi)-1.\]Here $N_\alpha(\phi)$ $(\alpha = 1,2,3,4)$ denotes the number of times $e_\alpha$ appears in the definition of $\phi$. We also extend the definition of $s_2$ in the same way to the set of null components of the curvature $2-$form $F$, which we will schematically  denote by $\mathcal{Y}$. We get the following tables of signatures:

{\renewcommand{\arraystretch}{1.25}
\begin{center}
\begin{tabular}{||c || c c c c c c c c c c c c c c||} 
 \hline$\phi$ &
 $\alpha$ & $\alphabar$ & $\tbeta$ & $\tbetabar$ &$\rho$ &$\sigma$ & $\eta$ & $\etabar$ & $\chi$ &$\chibar$ & $\omega$ & $\omegabar$ & $\zeta$ & $\gamma$  \\ [1ex]  \hline\hline
 $s_2(\phi)$ & 0 & 2 & 0.5 & 1.5 & 1  & 1 & 0.5 & 0.5  &0 & 1&0 & 1&0.5 &0 \\ 
 \hline
\end{tabular}
\end{center}

\begin{center}
\begin{tabular}{||c || c c c c||} 
 \hline$\mathcal{Y}$ &
 $\alpha^F$ & $\alphabar^F$ & $\rho^F$ & $\sigma^F$  \\ [1ex]  \hline\hline
 $s_2(\mathcal{Y})$ & 0 & 1 & 0.5 & 0.5 \\ 
 \hline
\end{tabular}
\end{center}

\par\noindent Several properties of $s_2$ follow:

\[ s_2(\nabla_4 \phi) = s_2(\phi), \hspace{2mm} s_2(\hnabla_4 \mathcal{Y}) = s_2(\mathcal{Y}), \]\[s_2(\nabla \phi) = s_2(\phi) +\frac{1}{2}, \hspace{2mm} s_2(\hnabla \mathcal{Y}) = s_2(\mathcal{Y}) + \frac{1}{2},\]\[s_2(\nabla_3 \phi) = s_2(\phi) +1, \hspace{2mm} s_2(\hnabla_3 \mathcal{Y}) = s_2(\mathcal{Y}) + 1.\]Finally, perhaps the most important property of $s_2$ is \textit{signature conservation}: \be \label{sc} s_2(\phi_1 \cdot\phi_2) = s_2(\phi_1)+ s_2(\phi_2), \hspace{2mm} s_2(\phi \cdot \mathcal{Y}) = s_2(\phi)+ s_2(\mathcal{Y}) \hspace{2mm} \text{and}\hspace{2mm} s_2(\mathcal{Y}_1 \cdot \mathcal{Y}_2) = s_2(\mathcal{Y}_1)+ s_2(\mathcal{Y}_2). \ee This allows for the (almost)-preservation of the scale-invariant norms upon evolution, as we shall see. 

\vspace{3mm}

For any horizontal tensor-field $\phi$ and Yang-Mills component $\mathcal{Y}$, we define the following norms:

\begin{equation}
    \scaleinfinitySu{\phi \vee \mathcal{Y}} := a^{-s_2(\phi \vee \mathcal{Y})} \lvert u \rvert^{2s_2(\phi \vee \mathcal{Y})+1}\inftySu{\phi \vee \mathcal{Y}},
\end{equation}
\begin{equation}
       \scaletwoSu{\phi \vee \mathcal{Y}} := a^{-s_2(\phi \vee \mathcal{Y})} \lvert u \rvert^{2s_2(\phi \vee \mathcal{Y})}\twoSu{\phi \vee \mathcal{Y}},
\end{equation}
\begin{equation}
       \scaleoneSu{\phi \vee \mathcal{Y}} := a^{-s_2(\phi \vee \mathcal{Y})} \lvert u \rvert^{2s_2(\phi \vee \mathcal{Y})-1}\oneSu{\phi \vee \mathcal{Y}},
\end{equation}Notice the difference in the $u$-weights amongst the definitions. 

\vspace{3mm}

A crucial property of the above norms is the \textit{scale-invariant H\"older's inequalities} that they satisfy. For $\Y$ denoting an arbitrary $\phi$ or $\YM$, there hold:
\begin{equation}
\scaleoneSu{\Y_1 \cdot \Y_2} \leq \frac{1}{\lvert u \rvert} \scaletwoSu{\Y_1}\scaletwoSu{\Y_2},
\end{equation}
\begin{equation}
    \scaleoneSu{\Y_1 \cdot \Y_2} \leq \frac{1}{\lvert u \rvert} \scaleinfinitySu{\Y_1}\scaleoneSu{\Y_2},
\end{equation}
\begin{equation}
    \label{257}\scaletwoSu{\Y_1 \cdot \Y_2} \leq \frac{1}{\lvert u \rvert} \scaleinfinitySu{\Y_1}\scaletwoSu{\Y_2}.
\end{equation}Notice that this is possible partly thanks to the signature conservation property \eqref{sc}. In the region of study, the factor $\frac{1}{\lvert u\rvert}$ plays the role that $\delta^{\frac{1}{2}}$ plays in the definition of the corresponding norms\footnote{See Section 2.17 in [Kl-Rod].} in [Kl-Rod], namely that of measuring the \textit{smallness} of the nonlinear terms. The above inequalities are the primary tools that will be used to close the bootstrap argument required for the existence part.

\subsection{Norms}\label{Norms}
\noindent Let $N \geq 3$ be a natural number. Let $\psi_g \in \begin{Bmatrix} \omega, \tr\chi, \omegabar, \eta,\etabar \end{Bmatrix}$, $\Psi_u \in \begin{Bmatrix} \tbeta, \rho, \sigma,\tbetabar \end{Bmatrix}$ and $\Psi_{\ubar} \in \begin{Bmatrix} \rho, \sigma, \tbetabar, \alphabar \end{Bmatrix}$. Moreover, we will sometimes use $\Psi$ to denote an arbitrary $\Psi_u$ or a $\Psi_{\ubar}$. Also, define $\tildetr := \tr\chibar + \frac{2}{\lvert u\rvert}$ and let $\mathcal{Y}_{\ubar}\in \begin{Bmatrix}\rho^F, \sigma^F,\alphabar^F\end{Bmatrix}$. For $0\leq i \leq N$, we define

\begin{equation}
    \begin{split}
        \bbGamma_{i,\infty}(u,\ubar) := \frac{1}{\al}\scaleinfinitySu{\aln \chihat} &+ \scaleinfinitySu{\aln \psi_g} + \frac{\al}{\lvert u \rvert} \scaleinfinitySu{\aln \chibarhat}\\ &+ \frac{a}{\lvert u \rvert^2}\scaleinfinitySu{\aln \tr\chibar} + \frac{a}{\lvert u \rvert} \scaleinfinitySu{\aln \tildetr},
    \end{split}
 \end{equation}
   \be \mathcal{R}_{i, \infty}(u,\ubar) := \frac{1}{\al}\scaleinfinitySu{\aln \alpha} + \scaleinfinitySu{\aln \Psi_{\ubar}}, \ee
    \be \begin{split} \mathbb{YM}_{\hsp i,\infty}(u,\ubar) := &\frac{1}{\al}\scaleinfinitySu{(\al\hnabla)^i \alpha_F} + \scaleinfinitySu{(\al\hnabla)^i \mathcal{Y}_{\ubar}} 
.\end{split} \ee
For $0 \leq i \leq N-1$, define
\begin{equation}
\mathbb{YM}^{\hsp \mathcal{C}}_{j,\infty}(u,\ubar):=  \frac{1}{\al}\scaleinfinitySu{\haln \hnabla_4 \alphaF} + \frac{a}{\lvert u \rvert} \scaleinfinitySu{\haln \hnabla_3 \alphabarF}. \ee
Furthermore, for $0\leq i \leq N+3$ and $0\leq j \leq N+4$, we define

\begin{equation}
      \begin{split}
        \bbGamma_{j,2}(u,\ubar) := \frac{1}{\al}\scaletwoSu{\aln \chihat} &+ \scaletwoSu{\aln \psi_g} + \frac{\al}{\lvert u \rvert} \scaletwoSu{\aln \chibarhat}\\ &+ \frac{a}{\lvert u \rvert^2}\scaletwoSu{\aln \tr\chibar} + \frac{a}{\lvert u \rvert} \scaletwoSu{\aln \tildetr},
    \end{split}
\end{equation}
\begin{equation}
    \mathcal{R}_{i,2}(u,\ubar):= \frac{1}{\al}\scaletwoSu{\aln \alpha} + \scaletwoSu{\aln \Psi_{\ubar}}, \ee
    \be \mathbb{YM}_{\hsp j,2}(u,\ubar) := \frac{1}{\al}\scaletwoSu{(\al\hnabla)^j \alpha_F} + \scaletwoSu{(\al\hnabla)^j \mathcal{Y}_{\ubar}} .
\end{equation}For $0 \leq j \leq N+2$, define
\begin{equation}
\mathbb{YM}^{\hsp \mathcal{C}}_{j,2}(u,\ubar):=  \frac{1}{\al}\scaletwoSu{\haln \hnabla_4 \alphaF} + \frac{a}{\lvert u \rvert} \scaletwoSu{\haln \hnabla_3 \alphabarF}. \ee
Finally, for $0\leq i \leq N+4$ and $0\leq j \leq N+5$, we define the norms along the null hypersurfaces:

\begin{equation}
    \mathcal{R}_i(u,\ubar) := \frac{1}{\al}\scaletwoHu{\aln \alpha} + \scaletwoHu{\aln \Psi_{u}},
\end{equation}

\begin{equation}
    \underline{\mathcal{R}}_i(u,\ubar) := \frac{1}{\al}\scaletwoHbaru{\aln \tbeta} + \scaletwoHbaru{\aln \Psi_{\ubar}}
\end{equation}
\begin{equation}
\mathbb{YM}^{\mathcal{C}}_{j} := \frac{1}{\al}\scaletwoHu{\haln \hnabla_4 \alphaF},   
\end{equation}
\begin{equation}
\underline{\mathbb{YM}}^{\mathcal{C}}_{j} :=\intu \frac{a^3}{\upr^4}\scaletwoSuprimeubarprime{\haln \hnabla_3 \alphabarF}^2 \duprime,
\end{equation}
\begin{equation}
    \mathbb{YM}_j := \frac{1}{\al}\scaletwoHu{(\al)^{j-1} \hnabla^j \alpha^F} + \scaletwoHu{(\al)^{j-1} \hnabla^j(\rho^F, \sigma^F)}
\end{equation}  
\begin{equation}
    \underline{\mathbb{YM}}_j := \frac{1}{\al}\scaletwoHbaru{(\al)^{j-1} \hnabla^j (\rho^F, \sigma^F)} + \scaletwoHbaru{(\al)^{j-1} \hnabla^j \alphabar^F}
\end{equation}  
 We now set $\bbGamma_{i,\infty},\bbGamma_{i,2}, \mathcal{R}_{i,\infty}, \mathcal{R}_{i,2}, \mathbb{YM}_{\hsp i,\infty}, \mathbb{YM}_{\hsp i,2}$ to be the suprema over $(u,\ubar)$ of the corresponding norms. Finally, we define the total norms
 
 \begin{equation}
     \bbGamma := \sum_{0\leq i \leq N} \left( \bbGamma_{i,\infty}+\mathcal{R}_{i,\infty} + \mathbb{YM}_{\hsp i,\infty}\right) +\sum_{0\leq i \leq N+3}\mathcal{R}_{i,2} +\sum_{0\leq j \leq N+4} \left( \bbGamma_{j,2} +\mathbb{YM}_{j,2}\right) 
 \end{equation}
   \begin{equation}
       \mathcal{R}:= \sum_{0\leq i \leq N+4} \mathcal{R}_i + \underline{\mathcal{R}}_i, \hspace{2mm}\mathbb{YM}:= \sum_{0 \leq j \leq N+5} \mathbb{YM}_j + \underline{\mathbb{YM}}_j.
   \end{equation}
   
\noindent In addition, we also define the following top-order total norm for connection coefficients, that is to be controlled by means of separate elliptic estimates in terms of $\bbGamma, \mathcal{R}$ and $\mathbb{YM}$:
\begin{align}
 \nonumber \mathcal{O}_{N+5,2}:=&\frac{1}{a^{\frac{1}{2}}}\|(a^{\frac{1}{2}})^{N+4}\nabla^{N+5}\widehat{\chi}\|_{\mathcal{L}^{2}_{sc}(H^{(0,\underline{u})}_{u})}+\|(a^{\frac{1}{2}})^{N+4}\nabla^{N+5}(\tr\chi,\omega)\|_{\mathcal{L}^{2}_{sc}(H^{(0,\underline{u})}_{u})}\\\nonumber 
+&\|(a^{\frac{1}{2}})^{N+4}\nabla^{N+5}\eta\|_{\mathcal{L}^{2}_{sc}(\underline{H}^{(u_{\infty},u)}_{\underline{u}})}+\frac{a}{|u|}\|(a^{\frac{1}{2}})^{N+4}\nabla^{N+5}(\eta,\underline{\eta})\|_{\mathcal{L}^{2}_{sc}(H^{(0,\underline{u})}_{u})}\\\nonumber 
+&\|(a^{\frac{1}{2}})^{N+4}\nabla^{N+5}\underline{\omega}\|_{\mathcal{L}^{2}_{sc}(\underline{H}^{(u_{\infty},u)}_{\underline{u}})}+ \int_{u_{\infty}}^{u}\frac{a^{\frac{3}{2}}}{|u^{'}|^{3}}\|(a^{\frac{1}{2}})^{N+4}\nabla^{N+5}\widehat{\underline{\chi}}\|_{\mathcal{L}^{2}_{sc}(S_{u^{'},\underline{u}})}du^{'}\\
+&\int_{u_{\infty}}^{u}\frac{a^{2}}{|u^{'}|^{3}}\|(a^{\frac{1}{2}})^{N+4}\nabla^{N+5}\tr\underline{\chi}\|_{\mathcal{L}^{2}_{sc}(S_{u^{'},\underline{u}})}du^{'}.
\end{align}
\subsection{Commutation Formulae}
\noindent Use the definition of the gauge-covariant derivative (\ref{eq:gaugecov}) and project it onto the topological 2-sphere $S_{u,\ubar}$ to yield
\begin{eqnarray}
[\hnabla_{4},\hnabla_{B}]\mathcal{G}^{P}~_{QA_{1}A_{2}A_{3}\cdot\cdot\cdot\cdot A_{n}}=[\widehat{D}_{4},\widehat{D}_{B}]\mathcal{G}^{P}~_{QA_{1}A_{2}A_{3}\cdot\cdot\cdot\cdot A_{n}}\nonumber+(\nabla_{B}\log\Omega)\hnabla_{4}\mathcal{G}^{P}~_{QA_{1}A_{2}A_{3}\cdot\cdot\cdot\cdot A_{n}}\\\nonumber 
-\gamma^{CD}\chi_{BD}\hnabla_{C}\mathcal{G}^{P}~_{QA_{1}A_{2}A_{3}\cdot\cdot\cdot\cdot A_{n}}-\sum_{i=1}^{n}\gamma^{CD}\chi_{BD}\underline{\eta}_{A_{i}}\mathcal{G}^{P}~_{QA_{1}A_{2}A_{3}\cdot\cdot\hat{A}_{i}C\cdot\cdot A_{n}}\\\nonumber 
+\sum_{i=1}^{n}\gamma^{CD}\chi_{A_{i}B}\underline{\eta}_{D}\mathcal{G}^{P}~_{QA_{1}A_{2}A_{3}\cdot\cdot\hat{A}_{i}C\cdot\cdot A_{n}}
\end{eqnarray}
\begin{eqnarray}
[\widehat{D}_{4},\widehat{D}_{A}]\mathcal{G}^{P}~_{QA_{1}A_{2}\cdot\cdot\cdot\cdot A_{n}}=-\sum_{i}R(e_{C},e_{A_{i}},e_{4},e_{A})\mathcal{G}^{P}~_{QA_{1}\cdot\cdot\hat{A}_{i},\cdot\cdot A_{n}}
\nonumber+F^{P}~_{R4A}\mathcal{G}^{R}~_{QA_{1}A_{2}\cdot\cdot\cdot A_{n}}\\-F^{R}~_{Q4A}\mathcal{G}^{P}~_{RA_{1}A_{2}\cdot\cdot\cdot A_{n}}\nonumber +(\nabla_{A}\log\Omega)\hnabla_{4}\mathcal{G}^{P}~_{QA_{1}A_{2}\cdot\cdot\cdot\cdot A_{n}}.
\end{eqnarray}
Notice that the last term is redundant since it already appears in the previous expression. We need to take care of the curvature terms.

\begin{eqnarray}
[\hnabla_{4},\hnabla_{A}]\mathcal{G}\sim(\beta+\alpha^{F}\cdot(\rho^{F}+\sigma^{F}))\mathcal{G}+\alpha^{F}\mathcal{G}\nonumber+(\eta+\underline{\eta})\hnabla_{4}\mathcal{G}-\chi\hnabla\mathcal{G}+\chi\underline{\eta}\mathcal{G}.
\end{eqnarray}
Notice that if $\mathcal{G}$ is not a section of the vector bundle $~^{k}\otimes T^{*}S\otimes P_{Ad,\mathfrak{g}}$ ($P_{AD,\mathfrak{g}}$ is the vector bundle associated to $P$ by the adjoint representation of the gauge group $G$ on $\mathfrak{g}$), then $\alpha^{F}\mathcal{G}$ does not appear and the gauge covariant derivatives are simply ordinary covariant derivatives. For higher order commutation, we have the following lemma:\\
\begin{lemma} 
\label{commutation}
\textit{Suppose $\mathcal{G}$ is a section of the product vector bundle $~^{k}\otimes T^{*}\mathbb{S}^{2}\otimes P_{Ad,\rho}$, $k\geq 1$,  that satisfies $\hnabla_{4}\mathcal{G}=\mathcal{F}_{1}$ and $\hnabla_{4}\hnabla^{I}\mathcal{G}=\mathcal{F}^{I}_{1}$, then $\mathcal{F}^{I}_{1}$ verifies the following schematic expression:}
\begin{equation}
\begin{split}
\mathcal{F}^{I}_{1}\sim&\sum_{J_{1}+J_{2}+J_{3}+J_{4}=I-1}\nabla^{J_{1}}(\eta+\underline{\eta})^{J_{2}}\nabla^{J_{3}}\beta\hnabla^{J_{4}}\mathcal{G}\\+&\sum_{J_{1}+J_{2}+J_{3}+J_{4}+J_{5}=I-1}\nabla^{J_{1}}(\eta+\underline{\eta})^{J_{2}}\hnabla^{J_{3}}\alpha^{F}\hnabla^{J_{4}}(\rho^{F},\sigma^{F})\hnabla^{J_{5}}\mathcal{G}\\
+&\sum_{J_{1}+J_{2}+J_{3}+J_{4}=I-1}\nabla^{J_{1}}(\eta+\underline{\eta})^{J_{2}}\hnabla^{J_{3}}\alpha^{F}\hnabla^{J_{4}}\mathcal{G}+\sum_{J_{1}+J_{2}+J_{3}=I}\nabla^{J_{1}}(\eta+\underline{\eta})^{J_{2}}\hnabla^{J_{3}}\mathcal{F}_{1}\\+&\sum_{J_{1}+J_{2}+J_{3} +J_{4}=I}\nabla^{J_{1}}(\eta+\underline{\eta})^{J_{2}}\hat{\nabla}^{J_{3}}\chi\hnabla^{J_{4}}\mathcal{G}.
\end{split}
\end{equation}
Similarly, for $\hnabla_{3}\mathcal{G}=\mathcal{F}_{2}$, and $\hnabla_{3}\hnabla^{I}\mathcal{G}=\mathcal{F}^{I}_{2}$, 
\begin{equation}
\begin{split}
\mathcal{F}^{I}_{2}\sim &\sum_{J_{1}+J_{2}+J_{3}+J_{4}=I-1}\nabla^{J_{1}}(\eta+\underline{\eta})^{J_{2}}\nabla^{J_{3}}\underline{\beta}\hnabla^{J_{4}}\mathcal{G}\\+&\sum_{J_{1}+J_{2}+J_{3}+J_{4}+J_{5}=I-1}\nabla^{J_{1}}(\eta+\underline{\eta})^{J_{2}}\hnabla^{J_{3}}\underline{\alpha}^{F}\hnabla^{J_{4}}(\rho^{F},\sigma^{F})\hnabla^{J_{5}}\mathcal{G}\\
+&\sum_{J_{1}+J_{2}+J_{3}+J_{4}=I-1}\nabla^{J_{1}}(\eta+\underline{\eta})^{J_{2}}\hnabla^{J_{3}}\underline{\alpha}^{F}\hnabla^{J_{4}}\mathcal{G}+\sum_{J_{1}+J_{2}+J_{3}=I}\nabla^{J_{1}}(\eta+\underline{\eta})^{J_{2}}\hnabla^{J_{3}}\mathcal{F}_{2}\\+&\sum_{J_{1}+J_{2}+J_{3} +J_{4}=I}\nabla^{J_{1}}(\eta+\underline{\eta})^{J_{2}}\hat{\nabla}^{J_{3}}\underline{\chi}\hnabla^{J_{4}}\mathcal{G}.
\end{split}
\end{equation}
\end{lemma}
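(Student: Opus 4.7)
The plan is to prove both schematic identities by induction on $I$, using the first-order commutator $[\hnabla_4,\hnabla_A]\mathcal{G}$ (and its analogue for $\hnabla_3$) already displayed just above the lemma as the base case. For the inductive step, I write
\[
\hnabla_4\hnabla^{I+1}\mathcal{G}=\hnabla_4\hnabla(\hnabla^I\mathcal{G})=\hnabla(\hnabla_4\hnabla^I\mathcal{G})+[\hnabla_4,\hnabla]\hnabla^I\mathcal{G},
\]
substitute the inductive expression for $\hnabla_4\hnabla^I\mathcal{G}=\mathcal{F}_1^{\,I}$ in the first term on the right, and expand the commutator $[\hnabla_4,\hnabla]$ acting on the tensor $\hnabla^I\mathcal{G}$ via the base case. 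Each derivative $\hnabla$ that then falls on a product is distributed by the Leibniz rule, and after collecting I relabel the index multi-sets $(J_1,\dots,J_k)$ in the schematic form stated in the lemma.

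The second step is to eliminate $\hnabla$-derivatives that land on Ricci coefficients or curvature components so that only the schematic terms written in the lemma appear. Concretely, whenever $\hnabla^{J}\chi$ appears with $J\ge 1$, I use the Codazzi equation \eqref{eq:1} together with the transport equations for $\chi$ to trade one tangential derivative of $\chi$ for a combination of $\beta$, $(\eta,\underline{\eta})$, $\tr\chi$ and $\chi$ itself (and similarly for $\chib$ via the analogous Codazzi/transport system). Derivatives landing on $\beta$, $\underline\beta$, $\alpha^F$, $(\rho^F,\sigma^F)$ are left as such, since these precisely match the terms in the schematic expression; what one must check is that no genuine $\alpha$ or $\underline\alpha$ shows up on the right side (this is guaranteed because the base-case commutator involves only $\beta$ and $\alpha^F(\rho^F,\sigma^F)$ for $\hnabla_4$, and only $\underline\beta$ and $\underline\alpha^F(\rho^F,\sigma^F)$ for $\hnabla_3$).

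The third step is the bookkeeping. I keep track of the total number of $\hnabla$ and $\nabla$ derivatives appearing and insist that each sum is indexed so that the partition $(J_1,\dots,J_k)$ of the total number of derivatives minus one (or minus zero, where $\mathcal{F}_1$ itself is differentiated) equals $I-1$ (respectively $I$) in the various sums. Because the base case already contains the schematic types $(\eta+\underline\eta)\hnabla_4\mathcal{G}$, $\beta\cdot\mathcal{G}$, $\alpha^F(\rho^F+\sigma^F)\mathcal{G}$, $\alpha^F\mathcal{G}$, $\chi\hnabla\mathcal{G}$ and $\chi\underline\eta\cdot\mathcal{G}$, each inductive application produces exactly the five families on the right-hand side of the claimed identity, with the $\underline\eta$ being absorbed into the generic factor $(\eta+\underline\eta)^{J_2}$. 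The proof for $\hnabla_3$ is identical after replacing $\chi\to\chib$, $\beta\to\underline\beta$, $\alpha^F\to\underline\alpha^F$.

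The main obstacle is not conceptual but purely combinatorial: at each inductive step one must argue that \emph{no new} schematic type appears beyond those already listed, which amounts to showing that the operations (i) Leibniz distribution of an extra $\hnabla$, (ii) substitution of $\nabla\chi$ or $\hnabla F^P{}_Q$ via the structure/Bianchi equations, and (iii) use of the first-order commutator, all preserve the closed list of schematic building blocks. This closure property, together with the fact that $\hnabla$ is compatible with both the fibre and spacetime metrics (so no stray connection-$1$-form $A$ survives in the end), is what makes the claim work and what I would verify explicitly on one or two sample terms before invoking induction to conclude.
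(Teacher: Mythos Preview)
Your core approach---induction on $I$ with the base case given by the first-order commutator $[\hnabla_4,\hnabla_A]\mathcal{G}$ displayed just before the lemma, and the inductive step via $\hnabla_4\hnabla^{I+1}\mathcal{G}=\hnabla(\mathcal{F}_1^I)+[\hnabla_4,\hnabla]\hnabla^I\mathcal{G}$ followed by Leibniz---is correct and is exactly what the paper does (the paper's proof is just ``base case holds by the calculation above; induct; see \cite{AnAth}'').

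However, your ``second step'' is both unnecessary and not quite right. The lemma's schematic expression \emph{already contains} the term $\sum_{J_1+J_2+J_3+J_4=I}\nabla^{J_1}(\eta+\underline\eta)^{J_2}\hat\nabla^{J_3}\chi\,\hnabla^{J_4}\mathcal{G}$ with $J_3$ allowed to range up to $I$, so there is nothing to eliminate: when Leibniz drops a derivative onto $\chi$, you simply keep $\nabla^{J_3}\chi$ as is. No Codazzi substitution is required to prove the lemma as stated. (Moreover, Codazzi only relates $\text{div}\,\hat\chi$ to $\nabla\tr\chi$ and $\beta$; it does not eliminate $\nabla\chi$ outright, so the substitution you describe would not close.) The manipulation you have in mind---trading $\beta$ for $\beta^R$ and reorganizing the $\chi$, $\chib$ terms---is carried out in the \emph{Remark following} the lemma, where it produces the refined forms \eqref{c1}--\eqref{c2}; it is not part of the proof of the lemma itself. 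Drop that step and your argument is complete.
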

\begin{proof}
For $I=1$, this identity is clearly satisfied due to the calculations above. Assume, it holds for $J=I-1$ and show that it holds for $J=I$. We omit the proof and refer to \cite{AnAth}. 
\end{proof} 

\begin{remark}
In both of the expressions, one can replace $\beta$ ($\underline{\beta}$ resp.) by the re-normalized curvature $\beta^{R}_{A}:=\beta_{A}-\frac{1}{2}R_{4A}$ ($\underline{\beta}^{R}_{A}:=\underline{\beta}_{A}+\frac{1}{2}R_{3A}$ resp.) and remove the quadratic terms $\alpha^{F}(\rho^{F},\sigma^{F})$ ($\underline{\alpha}^{F}(\rho^{F},\sigma^{F})$ resp.). By moving the top derivatives of $\mathcal{G}$ multiplied by $\tr\underline{\chi}$ from the right hand side to the left hand side, one may also obtain 
\begin{equation}
\begin{split}
\mathcal{F}^{I}_{2}+\frac{I}{2}\tr\underline{\chi}\hat{\nabla}^{I}\mathcal{G}\sim&\sum_{J_{1}\nonumber+J_{2}+J_{3}+J_{4}=I-1}\nabla^{J_{1}}(\eta+\underline{\eta})^{J_{2}}\nabla^{J_{3}}\underline{\beta}^{R}\hnabla^{J_{4}}\mathcal{G}
\\+&\sum_{J_{1}+J_{2}+J_{3}+J_{4}=I-1}\nabla^{J_{1}}(\eta+\underline{\eta})^{J_{2}}\hnabla^{J_{3}}\underline{\alpha}^{F}\hnabla^{J_{4}}\mathcal{G}\\\nonumber+&\sum_{J_{1}+J_{2}\nonumber+J_{3}=I}\nabla^{J_{1}}(\eta+\underline{\eta})^{J_{2}}\hnabla^{J_{3}}\mathcal{F}_{2}\\ +&\sum_{J_{1}+J_{2}+J_{3} +J_{4}=I}\nabla^{J_{1}}(\eta+\underline{\eta})^{J_{2}}\hat{\nabla}^{J_{3}}\widehat{\underline{\chi}}\hnabla^{J_{4}}\mathcal{G}\\\nonumber+&\sum_{J_{1}+J_{2}+J_{3} +J_{4}=I-1}\nabla^{J_{1}}(\eta+\underline{\eta})^{J_{2}+1}\hat{\nabla}^{J_{3}}\tr\underline{\chi}\hnabla^{J_{4}}\mathcal{G}.
\end{split}
\end{equation}
Finally, if one takes into account the identities

\[ \beta^R_A = -(\text{div}\chihat)_A + \frac{1}{2}\nabla_A \tr\chi - \frac{1}{2}\left((\eta - \etabar)\cdot (\chihat - \frac{1}{2}\tr\chi \gamma) \right)_A - \frac{1}{2} R_{4A},\]
\[ \betabar^R_A = (\text{\div}\chibarhat)_A -\frac{1}{2}\nabla_A \tr\chibar -\frac{1}{2} \left( (\eta-\etabar)\cdot (\chibarhat-\frac{1}{2}\tr\chibar \gamma) \right)_A + \frac{1}{2}R_{3A},          \]we arrive at the following identities:
\begin{equation}
    \begin{split}
    \label{c1}    \mathcal{F}_1^I \sim& \sum_{J_1+J_2+J_3=I} \nabla^{J_1}(\eta+\etabar)^{J_2}\hnabla^{J_3}\mathcal{F}_1 \\&+\sum_{J_1+J_2+J_3 +J_4=I} \nabla^{J_1}(\eta+\etabar)^{J_2}\nabla^{J_3}(\chihat,\tr\chi)\hnabla^{J_4} \mathcal{G} \\ &+\sum_{J_1+J_2+J_3+J_4=I-1 }\nabla^{J_1} (\eta+\etabar)^{J_2+1}\nabla^{J_3}(\chihat,\tr\chi)\hnabla^{J_4}\mathcal{G}\\&+ \sum_{J_{1}+J_{2}+J_{3}+J_{4}+J_{5}=I-1}\nabla^{J_{1}}(\eta+\underline{\eta})^{J_{2}}\hnabla^{J_{3}}\alpha^{F}\hnabla^{J_{4}}(\rho^{F},\sigma^{F})\hnabla^{J_{5}}\mathcal{G} 
        \\&+ \underbrace{\sum_{J_1+J_2+J_3+J_4=I-1}\nabla^{J_1}(\eta+\etabar)^{J_2}\hnabla^{J_3}\alpha^F \hnabla^{J_4}\mathcal{G}}_{C_{1}}
    \end{split}
\end{equation}and 

\begin{equation}
    \begin{split}
   \label{c2}     \mathcal{F}_2^I + \frac{I}{2}\tr\chibar \hnabla^I\mathcal{G} &\sim \sum_{J_1+J_2+J_3=I}\nabla^{J_1}(\eta+\etabar)^{J_2}\hnabla^{J_3}\mathcal{F}_2\\ &+ \sum_{J_1+J_2+J_3+J_4=I}\nabla^{J_1}(\eta+\etabar)^{J_2}\nabla^{J_3}(\chibarhat,\hsp \tildetr)\hnabla^{J_4}\mathcal{G}\\&+\sum_{J_{1}+J_{2}+J_{3} +J_{4}=I-1}\nabla^{J_{1}}(\eta+\underline{\eta})^{J_{2} +1}\hat{\nabla}^{J_{3}}\tr\underline{\chi}\hnabla^{J_{4}}\mathcal{G}\\&+\sum_{J_1+J_2+J_3+J_4=I-1}\nabla^{J_1} (\eta+\etabar)^{J_2+1}\nabla^{J_3}(\chibarhat,\tr\chibar)\hnabla^{J_4}\mathcal{G}       \\&+ \sum_{J_{1}+J_{2}+J_{3}+J_{4}+J_{5}=I-1}\nabla^{J_{1}}(\eta+\underline{\eta})^{J_{2}}\hnabla^{J_{3}}\alphabar^{F}\hnabla^{J_{4}}(\rho^{F},\sigma^{F})\hnabla^{J_{5}}\mathcal{G}\\&+ \underbrace{\sum_{J_1+J_2+J_3+J_4=I-1}\nabla^{J_1}(\eta+\etabar)^{J_2}\hnabla^{J_3}\alphabar^F \hnabla^{J_4}\mathcal{G}}_{C_{2}}.
    \end{split}
\end{equation}
\end{remark} 

\begin{remark}
Note that the commutation terms $C_{1}$ and $C_{2}$ are produced because of the non-abelian (and hence non-linear) characteristic of the Yang-Mills theory. For any section of the bundle $\otimes^{k} T^{*}S_{u,\ubar}$, $C_{1}$ and $C_{2}$ drop out.
\end{remark}

\subsection{Gauge-invariant inequalities for scale-invariant norms}
\noindent Let the space-time gauge covariant derivative be denoted by $\widehat{D}$ and let its projection on the horizontal direction, i.e., on the topological $2-$sphere, be denoted by $\widehat{\nabla}$. More precisely $\widehat{D}_{\mu}:=D_{\mu}+[A_{\mu},\cdot]$ and $\widehat{\nabla}_{A}:=\nabla_{A}+[A(e_{A}),\cdot]$, where $\nabla$ is the horizontal component of the usual spacetime covariant derivative $D$. For $\mathcal{G}\in \Gamma(^{N}\otimes T^{*}S\otimes P_{AD,\mathfrak{g}})$, we define the gauge-invariant norm as follows:
\begin{eqnarray}
|\mathcal{G}|^{2}_{\gamma,\delta}:=\mathcal{G}^{P}~_{QA_{1}A_{2}A_{3}\cdot\cdot\cdot\cdot A_{N}}\mathcal{G}^{Q}~_{PB_{1}B_{2}B_{3}\cdot\cdot\cdot\cdot B_{N}}\gamma^{A_{1}B_{1}}\gamma^{A_{2}B_{2}}\gamma^{A_{3}B_{3}}\cdot\cdot\cdot\cdot \gamma^{A_{N}B_{N}},
\end{eqnarray}
where repeated indices $P$ and $Q$ represent the inner product with respect to the metric $\delta$ on the fibres of the vector bundle $P_{AD,\mathfrak{g}}$. Notice that due to the compatibility of the fibre metric $\delta$ with the connection $\widehat{\nabla}$, we have the following 
\begin{eqnarray}
\nabla |\mathcal{G}|^{2}_{\gamma,\delta}=2\langle \widehat{\nabla}\mathcal{G},\mathcal{G}\rangle_{\gamma,\delta},
\end{eqnarray}
where $\langle~,\rangle_{\gamma,\delta}$ denotes the joint inner product induced by the metrics $\gamma$ and $\delta$. Clearly $|\mathcal{G}|^{2}_{\gamma,\delta}=\langle\mathcal{G},\mathcal{G}\rangle_{\gamma,\delta}$. Now we proceed to define the $L^{p}$ ($1\leq p<\infty$) norms of an arbitrary section $\mathcal{G}\in \Gamma(^{N}\otimes T^{*}S\otimes P_{AD,\mathfrak{g}})$:
\begin{eqnarray}
||\mathcal{G}||^{p}_{L^{p}(S_{u,\underline{u}})}:=\int_{S_{u,\underline{u}}}\langle\mathcal{G},\mathcal{G}\rangle^{\frac{p}{2}}_{\gamma,\delta},~||\mathcal{G}||^{p}_{L^{p}(H_{u})}:=\int_{H_{u}}\langle\mathcal{G},\mathcal{G}\rangle^{\frac{p}{2}}_{\gamma,\delta},~||\mathcal{G}||^{p}_{L^{p}(\underline{H}_{\underline{u}})}:=\int_{\underline{H}_{\underline{u}}}\langle\mathcal{G},\mathcal{G}\rangle^{\frac{p}{2}}_{\gamma,\delta}.
\end{eqnarray}
In the case $p=\infty$, we define the $L^{\infty}$ norm as follows: 
\begin{eqnarray}
||\mathcal{G}||_{L^{\infty}(S_{u,\underline{u}})}:=\sup_{\theta\in S_{u,\underline{u}}}\langle\mathcal{G},\mathcal{G}\rangle^{\frac{1}{2}}_{\gamma,\delta}.
\end{eqnarray}
In order to establish Sobolev inequalities on $S_{u,\underline{u}}$ (note that these topological $2-$spheres are evolving in spacetime), we need to have estimates for the metric $\gamma$. These are carried out in Section \ref{s3}.

\subsection{Yang-Mills pair integration}
\noindent To close the energy estimates for curvature and thus establish semi-global existence, we ought to obtain energy estimates for the Yang-Mills fields. To that end, instead of using the Yang-Mills stress energy tensor to construct suitable currents, integrate over the bulk $\mathcal{D}_{u,\underline{u}}$, and control the error terms, we directly utilize the manifestly hyperbolic character of the null Yang-Mills equations. However, since the gauge covariant derivative appears in the equations instead of the ordinary covariant derivative, we need to be careful while performing a direct integration by parts procedure by means of the equations. As we shall see, we once again utilize the compatibility of the gauge covariant derivative with the metric on the fibres of the bundle $P_{AD,\mathfrak{g}}$. We briefly sketch the procedure here, even though we will perform the actual energy estimates later. First recall the following integration lemma:\\
\begin{lemma}
\label{integration}
\textit{Let $f:\mathcal{M}\to\mathbb
R$ be a gauge-invariant object on the spacetime. The following integration by parts identities hold for $f$:
\begin{eqnarray}
\int_{D_{u,\underline{u}}}\nabla_{4}f=\int_{\underline{H}_{\underline{u}}}f-\int_{\underline{H}_{\underline{u}_{0}}}f+\int_{D_{u,\underline{u}}}(2\omega-\tr\chi)f
\end{eqnarray}
and 
\begin{eqnarray}
\int_{D_{u,\underline{u}}}\nabla_{3}f=\int_{H_{u}}f-\int_{H_{u_{0}}}f+\int_{D_{u,\underline{u}}}(2\underline{\omega}-\tr\underline{\chi})f,
\end{eqnarray}
}
\end{lemma}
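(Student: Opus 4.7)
The plan is to work in the double null coordinate chart $(u,\ubar,\theta^1,\theta^2)$ and reduce both identities to elementary one-variable integrations by parts, exploiting the fact that, for a gauge-invariant scalar $f$, we simply have $\nabla_{4}f=e_{4}(f)=\Omega^{-1}\partial_{\ubar}f$ and $\nabla_{3}f=e_{3}(f)=\Omega^{-1}(\partial_{u}+b^{A}\partial_{\theta^{A}})f$, with no connection terms to worry about. The spacetime volume element is, by the double null block structure of $g$, a constant multiple of $\Omega^{2}\sqrt{\det\gamma}\,du\,d\ubar\,d\theta^{1}d\theta^{2}$, while the volume elements on $\underline{H}_{\ubar}$ and $H_{u}$ read $2\Omega\sqrt{\det\gamma}\,du\,d\theta^{1}d\theta^{2}$ and $2\Omega\sqrt{\det\gamma}\,d\ubar\,d\theta^{1}d\theta^{2}$, as already specified in the integration conventions.

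For the $\nabla_{4}$ identity, substituting $\nabla_{4}f=\Omega^{-1}\partial_{\ubar}f$ and cancelling one factor of $\Omega$ against the spacetime volume form converts the left-hand side into $\int\partial_{\ubar^{\prime}}f\cdot 2\Omega\sqrt{\det\gamma}\,d\theta^{1}d\theta^{2}d\ubar^{\prime}du^{\prime}$. I would then integrate by parts in $\ubar^{\prime}$ on $[0,\ubar]$. The boundary contributions at $\ubar^{\prime}=\ubar$ and $\ubar^{\prime}=0$ match exactly the weighted integrals of $f$ over $\underline{H}_{\ubar}^{(u_{\infty},u)}$ and $\underline{H}_{0}^{(u_{\infty},u)}$. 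The bulk remainder carries the factor $-\partial_{\ubar^{\prime}}(2\Omega\sqrt{\det\gamma})$, which I would compute using the two standard identities $e_{4}(\log\Omega)=-2\omega$ (a consequence of $D_{4}e_{4}=e_{4}(\log\Omega)\,e_{4}$ together with $\omega=-\tfrac{1}{4}g(D_{4}e_{3},e_{4})$) and $e_{4}(\log\sqrt{\det\gamma})=\tr\chi$ (from $\chi_{AB}=\tfrac{1}{2}(\mathcal{L}_{e_{4}}\gamma)_{AB}$). Combining them yields $\partial_{\ubar^{\prime}}(2\Omega\sqrt{\det\gamma})=2\Omega^{2}\sqrt{\det\gamma}\,(\tr\chi-2\omega)$, producing precisely $\int_{D_{u,\ubar}}(2\omega-\tr\chi)f$ as required.

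The $\nabla_{3}$ identity proceeds by the same philosophy, with integration by parts now in $u^{\prime}$. The main obstacle here will be the shift vector $b^{A}$ in $e_{3}=\Omega^{-1}(\partial_{u}+b^{A}\partial_{\theta^{A}})$: the extra angular term $b^{A}\partial_{\theta^{A}}f$ is not a priori in $u$-divergence form. The way I plan to handle it is to write $b^{A}\partial_{\theta^{A}}(fh)=\partial_{\theta^{A}}(b^{A}fh)-(\partial_{\theta^{A}}b^{A})fh$, where $h=2\Omega\sqrt{\det\gamma}$; the total $\theta$-derivative vanishes after integration over the closed angular sphere, and the remaining $(\partial_{\theta^{A}}b^{A})fh$ combines with $f\partial_{u}h+fb^{A}\partial_{\theta^{A}}h$ into $f\,(\partial_{u}+b^{A}\partial_{\theta^{A}})h+f(\partial_{\theta^{A}}b^{A})h$, whose leading piece is $f\,\Omega\,e_{3}(h)$. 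Using the analogous Ricci-coefficient identities $e_{3}(\log\Omega)=-2\omegabar$ and $e_{3}(\log\sqrt{\det\gamma})=\tr\chibar$, this produces the expected bulk correction $(2\omegabar-\tr\chibar)f$. The boundary contributions at $u^{\prime}=u$ and $u^{\prime}=u_{\infty}$ yield the $H_{u}$ and $H_{u_{0}}$ integrals with the correct $2\Omega\sqrt{\det\gamma}\,d\ubar^{\prime}d\theta^{1}d\theta^{2}$ measure.

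An alternative route, which is perhaps conceptually cleaner but equivalent, is to invoke Stokes' theorem together with the null-frame divergence identities $\mathrm{div}(e_{4})=\tr\chi-2\omega$ and $\mathrm{div}(e_{3})=\tr\chibar-2\omegabar$ (both direct frame computations from the definitions of the Ricci coefficients given earlier), writing $\int_{D}\nabla_{4}f=\int_{D}\mathrm{div}(fe_{4})-\int_{D}f\,\mathrm{div}(e_{4})$ and identifying the intrinsic null-hypersurface boundary measure. In either approach, no analytic subtlety arises beyond systematic bookkeeping of the $\Omega$-weights; the only non-routine point is the handling of the shift $b^{A}$ in the $\nabla_{3}$ computation, which is the step I would be most careful about.
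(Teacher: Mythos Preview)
Your proposal is correct and follows essentially the same route as the paper: write $\nabla_4 f=\Omega^{-1}\partial_{\ubar}f$, integrate by parts in $\ubar$ against the weighted sphere measure $\Omega\,\mu_\gamma$, and identify the bulk correction using $e_4(\log\Omega)=-2\omega$ and $e_4(\log\sqrt{\det\gamma})=\tr\chi$. Your discussion of the $\nabla_3$ case is in fact more careful than the paper's, which simply asserts that it ``follows in a similar fashion'' without mentioning the shift $b^A$; your handling of that term (integrating the angular divergence over the closed sphere) is the right way to fill that gap.
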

\begin{proof}
The proof is a simple consequence of the following integration by parts procedure:
\begin{equation}
\begin{split}
\int_{D_{u\underline{u}}}\nabla_{4}f=&\int_{u_{0}}^{u}du^{'}\int_{\underline{u}_{0}}^{\underline{u}}\left(\int_{S_{u^{'}\underline{u}^{'}}}\frac{\partial f}{\partial \underline{u}^{'}}\Omega \mu_{\gamma}\right)d\underline{u}^{'}\\\nonumber 
=&\int_{u_{0}}^{u}du^{'}\int_{\underline{u}_{0}}^{\underline{u}}\left\{\frac{d}{d\underline{u}^{'}}\int_{S_{u^{'}\underline{u}^{'}}}f\Omega \mu_{\gamma}-\int_{S_{u^{'}\underline{u}^{'}}}f(\frac{\partial\Omega}{\partial \underline{u}^{'}}+\frac{\Omega}{2}\tr_{\gamma}\partial_{u}\gamma)\mu_{\gamma}\right\}d\underline{u}^{'}\\\nonumber 
=&\int_{u_{0}}^{u}du^{'}\int_{S_{u^{'}\underline{u}}}f\Omega \mu_{\gamma}-\int_{u_{0}}^{u}\int_{S_{u^{'}\underline{u}_{0}}}f\Omega \mu_{\gamma}+\int_{D_{u\underline{u}}}f(2\omega-\tr\chi)\\\nonumber 
=&\int_{\underline{H}_{\underline{u}}}f-\int_{\underline{H}_{\underline{u}_{0}}}f+\int_{D_{u\underline{u}}}(2\omega-\tr\chi)f.
\end{split}
\end{equation}
The other part follows in a similar fashion.
\end{proof}
\begin{proposition}
\label{hyperbolic}
Null Yang-Mills equations are manifestly hyperbolic.
\end{proposition}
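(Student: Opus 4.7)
The plan is to show hyperbolicity by exhibiting a gauge-invariant energy identity obtained through direct pairing of the null Yang-Mills equations, in which the top-order derivative terms on each side cancel via gauge-covariant integration by parts on $S_{u,\ubar}$. The key structural fact to exploit is that the equation for $\hnabla_3\alpha^F$ has principal part $\hnabla\rho^F+\Hodge{\hnabla}\sigma^F$, while the equations for $\hnabla_4\rho^F$ and $\hnabla_4\sigma^F$ have principal parts $\widehat{\dv}\alpha^F$ and $-\widehat{\curl}\alpha^F$ respectively; these are formal adjoints with respect to the joint inner product $\langle\cdot,\cdot\rangle_{\gamma,\delta}$.

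First I would contract the $\hnabla_3$-equation for $\alpha^F$ with $\alpha^F$ itself in the joint metric, then separately contract the $\hnabla_4$-equations for $\rho^F,\sigma^F$ with $\rho^F,\sigma^F$. By compatibility of the gauge covariant derivative $\hnabla$ with both $\gamma$ and the fibre metric $\delta$ (a property highlighted in the excerpt right before the scale-invariant norms section), one has
\[
\nabla_3|\alpha^F|_{\gamma,\delta}^2=2\langle\hnabla_3\alpha^F,\alpha^F\rangle_{\gamma,\delta}+\text{terms from }\chibar,\omegabar,
\]
and an analogous identity with $\nabla_4$ acting on $|\rho^F|_{\delta}^2$ and $|\sigma^F|_{\delta}^2$. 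Substituting the right-hand sides of the null Yang-Mills equations produces, on $S_{u,\ubar}$, the pairings $\langle\hnabla\rho^F,\alpha^F\rangle_{\gamma,\delta}+\langle\Hodge{\hnabla}\sigma^F,\alpha^F\rangle_{\gamma,\delta}$ balanced against $\langle\widehat{\dv}\alpha^F,\rho^F\rangle_{\delta}-\langle\widehat{\curl}\alpha^F,\sigma^F\rangle_{\delta}$.

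The second step is to integrate these pointwise identities over a $2$-sphere $S_{u,\ubar}$ and use gauge-invariant divergence theorem on $S_{u,\ubar}$. Because $\widehat{\nabla}$ is metric and fibre-metric compatible, and the pairings contracted on $P_{Ad,\mathfrak{g}}$ are gauge-invariant scalars, the usual surface divergence theorem applies and the principal cross-terms cancel exactly, modulo terms involving at most the Ricci coefficient $\eta-\etabar$ times $(\rho^F,\sigma^F,\alpha^F)$. The resulting pointwise identity takes the schematic form
\[
\nabla_3\bigl(|\alpha^F|_{\gamma,\delta}^2\bigr)+\nabla_4\bigl(|\rho^F|_{\delta}^2+|\sigma^F|_{\delta}^2\bigr)+\text{(lower order)}=0,
\]
with the lower-order terms quadratic in the Yang-Mills components and linear in Ricci coefficients $\tr\chi,\tr\chibar,\omega,\omegabar,\chihat,\chibarhat,\eta,\etabar$. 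An entirely symmetric treatment, pairing $\alphabar^F$ with its $\hnabla_4$-equation and $(\rho^F,\sigma^F)$ with their $\hnabla_3$-equations, gives the conjugate identity.

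The third step is to integrate in spacetime over $\mathcal{D}_{u,\ubar}$ using Lemma \ref{integration}, which converts $\int_{\mathcal{D}}\nabla_4(\cdot)$ into boundary terms on $\underline{H}_{\ubar}$ minus an $\omega,\tr\chi$ bulk error, and similarly for $\nabla_3$. This yields an identity of the form
\[
\int_{H_u}|\alpha^F|_{\gamma,\delta}^2+\int_{\underline{H}_{\ubar}}\bigl(|\rho^F|_{\delta}^2+|\sigma^F|_{\delta}^2\bigr)=\text{initial data}+\text{bulk errors},
\]
which is precisely the structure of a hyperbolic energy estimate, establishing manifest hyperbolicity. The main obstacle I foresee is the book-keeping for the gauge-covariant integration by parts on $S_{u,\ubar}$: one must verify that although $\hnabla$ is not a derivation on individual $\mathfrak{g}$-valued factors, the scalar pairings used are gauge-invariant, so that the ordinary Stokes theorem applies with no connection-valued boundary contributions. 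Once this is in hand, the remaining error terms are lower order and pose no essential difficulty, since they can eventually be absorbed using the bootstrap and the scale-invariant Hölder inequalities introduced in the previous subsection.
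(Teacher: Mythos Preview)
Your proposal is correct and follows essentially the same approach as the paper: exploit compatibility of $\hnabla$ with both $\gamma$ and $\delta$ to write $\nabla_3|\alpha^F|^2=2\langle\hnabla_3\alpha^F,\alpha^F\rangle_{\gamma,\delta}$ (with no extra $\chibar,\omegabar$ terms, in fact), then use gauge-invariant integration by parts on $S_{u,\ubar}$ to cancel the principal cross-terms, and finally apply Lemma~\ref{integration} over $\mathcal{D}_{u,\ubar}$. The only cosmetic difference is that the paper illustrates the argument with the triple $(\alphabar^F,\rho^F,\sigma^F)$ (i.e., $\hnabla_4\alphabar^F$ paired with $\hnabla_3(\rho^F,\sigma^F)$) and then remarks that the conjugate triple $(\alpha^F,\rho^F,\sigma^F)$ works the same way, whereas you start from the latter.
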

\begin{remark}
Note that this is natural since the  Yang-Mills equations are derivable from a Lagrangian and as such they are equipped with a canonical stress-energy tensor.
\end{remark}
\begin{proof}
The integration lemma can be utilized to prove the manifestly hyperbolic characteristics of the Yang-Mills equations while expressed in the double null coordinates. First consider the null triple $(\underline{\alpha}^{F},\rho^{F},\sigma^{F})$ and recall their gauge covariant evolution equations 
\begin{equation}
    \hnabla_4 \alphabar^F +\frac{1}{2}\tr\chi\alphabar^F = - \hnabla \rho^F + \Hodge{\hnabla}\sigma^F -2 \Hodge{\etabar} \sigma^F - 2 \etabar \rho^F + 2 \omega \alphabar^F - \chibarhat \cdot \alpha^F,
\end{equation}
\begin{equation}
    \hnabla_3 \rho^F + \tr\chibar \rho^F = - \widehat{\text{div}} \alphabar^F + \left( \eta-\etabar \right)\cdot \alphabar^F,
\end{equation}
\begin{equation}
    \hnabla_3 \sigma^F + \tr\chibar \sigma^F = -\widehat{\text{curl}}\alphabar^F+\left(\eta-\etabar\right)\cdot\Hodge{\alphabar}^F.
\end{equation}
Now define $f_{1}:=|\underline{\alpha}^{F}|^{2}_{\gamma,\delta},~f_{2}=|\rho^{F}|^{2}_{\gamma,\delta}$, and $f_{3}:=|\sigma^{F}|^{2}_{\gamma,\delta}$. With these definitions in mind, let us apply the integration lemma to $f_{1}$, $f_{2}$, and $f_{3}$ to yield
\begin{eqnarray}
\int_{D_{u,\underline{u}}}\nabla_{4}f_{1}+\int_{D_{u,\underline{u}}}\nabla_{3}f_{2}+\int_{D_{u,\underline{u}}}\nabla_{3}f_{3}=\int_{\underline{H}_{\underline{u}}}f_{1}+\int_{H_{u}}f_{2}+\int_{H_{u}}f_{3}-\int_{\underline{H}_{\underline{u}_{0}}}f_{1}\\\nonumber 
-\int_{H_{u_{0}}}f_{2}-\int_{H_{u_{0}}}f_{3}+\int_{D_{u,\underline{u}}}(2\omega-\tr\chi)f_{1}+\int_{D_{u,\underline{u}}}(2\underline{\omega}-\tr\underline{\chi})(f_{2}+f_{3}).
\end{eqnarray}
In order for these equations to exhibit a hyperbolic characteristic, the left hand side should simplify to terms that are algebraic in $\underline{\alpha}^{F}, \rho^{F},$ and $\sigma^{F}$ upon using the null evolution equations. Now we note the most important point: $f_{1}$, $f_{2}$, and $f_{3}$ are gauge-invariant objects and therefore we have the following as a consequence of the compatibility of the gauge covariant connection $\hat{\nabla}$ with the metrics $\gamma$ and $\delta$ of the fibres:
\begin{eqnarray}
\nabla_{4}f_{1}=2\langle \underline{\alpha}^{F},\widehat{\nabla}_{4}\underline{\alpha}^{F}\rangle_{\gamma,\delta},~\nabla_{3}f_{2}=2\langle \rho^{F},\widehat{\nabla}_{3}\rho^{F}\rangle_{\gamma,\delta},~\nabla_{3}f_{3}=2\langle \sigma^{F},\widehat{\nabla}_{3}\sigma^{F}\rangle_{\gamma,\delta}.
\end{eqnarray}
Now we only focus on the principal terms for  the hyperbolicity argument.

\begin{equation}
\begin{split}
\langle\underline{\alpha}^{F},\widehat{\nabla}_{4}\underline{\alpha}^{F}\rangle_{\gamma,\delta}&=\langle \underline{\alpha}^{F},-\hnabla \rho^F + \Hodge{\hnabla}\sigma^F+\cdot\cdot\cdot\cdot\rangle_{\gamma,\delta}\\
&=-\text{div}\langle\underline{\alpha}^{F},\rho^{F}\rangle_{\gamma,\delta}+\langle\widehat{\text{div}}\underline{\alpha}^{F},\rho^{F}\rangle_{\gamma,\delta}+\text{div}~^{*}\langle\underline{\alpha}^{F},\sigma^{F}\rangle_{\gamma,\delta}+\langle\widehat{\text{curl}}\underline{\alpha}^{F},\sigma^{F}\rangle +\text{l.o.t}
\end{split}
\end{equation} 

\begin{equation} 
\langle \rho^{F},\hnabla _{3}\rho^{F}\rangle_{\gamma,\delta}=\langle \rho^{F},-\widehat{\text{div}} \alphabar^F+\cdot\cdot\cdot\cdot\rangle_{\gamma,\delta}\end{equation} \begin{equation}
\langle \sigma^{F},\hnabla _{3}\sigma^{F}\rangle_{\gamma,\delta}=\langle \sigma^{F}, -\widehat{\text{curl}}\underline{\alpha}^{F}+\cdot\cdot\cdot\cdot\rangle_{\gamma,\delta}.
\end{equation}
Now upon addition, we have 
\begin{equation}
\begin{split}
&\langle\underline{\alpha}^{F},\widehat{\nabla}_{4}\underline{\alpha}^{F}\rangle_{\gamma,\delta}+\langle \rho^{F},\hnabla _{3}\rho^{F}\rangle_{\gamma,\delta}+\langle \sigma^{F},\hnabla _{3}\sigma^{F}\rangle_{\gamma,\delta}\\ 
=&-\text{div}\langle\underline{\alpha}^{F},\rho^{F}\rangle_{\gamma,\delta}+\langle\widehat{\text{div}}\underline{\alpha}^{F},\rho^{F}\rangle_{\gamma,\delta}+\text{div}~^{*}\langle\underline{\alpha}^{F},\sigma^{F}\rangle_{\gamma,\delta}+\langle\widehat{\text{curl}}\underline{\alpha}^{F},\sigma^{F}\rangle\\ 
&\hspace{5cm}-\langle \rho^{F},\widehat{\text{div}} \alphabar^F\rangle_{\gamma,\delta}-\langle \sigma^{F}, \widehat{\text{curl}}\underline{\alpha}^{F}\rangle_{\gamma,\delta}+\text{l.o.t}\\ 
=&-\text{div}\langle\underline{\alpha}^{F},\rho^{F}\rangle_{\gamma,\delta}+\text{div}~^{*}\langle\underline{\alpha}^{F},\sigma^{F}\rangle_{\gamma,\delta}+\text{l.o.t}
\end{split}
\end{equation}
which, upon integration over the topological $2-$sphere, yields terms that are algebraic in $\underline{\alpha}^{F}, \rho^{F}$, and $\sigma^{F}$. Here $\langle\underline{\alpha}^{F},\rho^{F}\rangle_{\gamma,\delta}=(\underline{\alpha}^{F})^{P}~_{Qab}(\rho^{F})^{Q}~_{P}~^{b}$ and $~^{*}\langle \underline{\alpha}^{F},\sigma^{F}\rangle_{\gamma,\delta}=\epsilon^{ca}(\underline{\alpha}^{F})^{P}~_{Qab}\rho^{Q}~_{P}~^{b}$. The most vital property that is utilized here is the compatibility of the connection $\hnabla$ with the inner product $\langle~,~\rangle_{\gamma,\delta}$ induced by the fibre metrics $\gamma$ and $\delta$ together with the Hodge structure present in the null Yang-Mills equations. Notice that nowhere in the procedure did we require explicit information about the Yang-Mills connection $1-$form $A^{P}~_{Q\mu}dx^{\mu}$. The remaining Yang-Mills null evolution equations may be utilized in a  similar  manner to obtain energy identities associated with the triple $\alpha^{F},\rho^{F},\sigma^{F}$. This concludes the proof of the hyperbolic characteristics of the null Yang-Mills equations.
\end{proof}
\section{Preliminary estimates} \label{s3}
\subsection{Preliminary bootstrap assumptions}
\noindent We will be employing a bootstrap argument to obtain a priori bounds on $\bbGamma$, $\mathcal{R}$ and $\mathbb{YM}$. Along the initial hypersurfaces $H_{u_{\infty}}$ and $\Hbar_0$, an analysis of the initial data using transport equations (see for example, [LukLocal]) yields
 
 \[ \bbGamma_0 + \mathcal{R}_0 + \mathbb{YM}_0 \lesssim \mathcal{I}. \]Our goal is to show that in the entire region \[ \mathcal{D}:= \begin{Bmatrix} (u,\ubar,\theta^1, \theta^2) \hspace{1mm} \mid \hspace{1mm} u_{\infty}\leq u \leq -\frac{a}{4}, \hsp 0\leq \ubar \leq 1 \end{Bmatrix}\] there exists a constant $c(\mathcal{I}) = \mathcal{I}^4+\mathcal{I}^2 +\mathcal{I}+1$ such that \[ \bbGamma + \mathcal{R} + \mathbb{YM} \lesssim c(\mathcal{I}). \] We assume, as a bootstrap assumption the following:
 
 \begin{equation}\label{bootstrap}
 \bbGamma \leq \Gamma, \hspace{2mm}\mathcal{R}\leq R, \hspace{2mm} \mathbb{YM} \leq M,
 \end{equation}where $\Gamma, \hsp R$ and $M$ are large so that 
 
 \[ \mathcal{I}^4 +\mathcal{I}^2 +\mathcal{I}+1 \ll \text{min}\begin{Bmatrix}
 \Gamma, \hsp R,\hsp M\end{Bmatrix}\]but also such that \[ (\Gamma+R+M)^{20}\leq a^{\frac{1}{16}}.\]

\subsection{Estimates on the metric components}
\vspace{3mm}
For the metric component $\Omega$, the induced metric $\gamma$ of $S_{u,\ubar}$ and  for the area of $S_{u,\ubar}$ the following propositions hold:

\begin{proposition}\label{31}
Under the assumptions of Theorem \ref{main1} and the bootstrap assumptions \eqref{bootstrap}, we have 
\[\lVert \Omega-1 \rVert_{L^{\infty}(S_{u,\ubar})} \lesssim \frac{\Gamma}{\lvert u \rvert}. \]

\end{proposition}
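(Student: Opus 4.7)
The plan is to derive and integrate a first-order transport equation for $\log\Omega$ along the $e_4$ direction, exploiting the initial gauge condition $\Omega\equiv 1$ on $\Hbar_0$ (the hypersurface $\ubar=0$). Starting from the frame definitions: since $e_4=\Omega L'$ and $D_{L'}L'=0$ by construction, one computes $D_{e_4}e_4=e_4(\log\Omega)\,e_4$; combining this with the standard null-frame decomposition $D_4 e_4=-2\omega\,e_4$ yields the identity $\nabla_4\log\Omega=-2\omega$. Using $\nabla_4=\Omega^{-1}\partial_{\ubar}$ converts this into the scalar ODE
\[ \partial_{\ubar}\log\Omega=-2\,\Omega\,\omega \]
along each integral curve of $e_4$.

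Next, I integrate from $\ubar'=0$, where $\log\Omega=0$ by the initial condition, to obtain
\[ \log\Omega(u,\ubar,\theta^1,\theta^2)=-2\int_0^{\ubar}(\Omega\,\omega)(u,\ubar',\theta^1,\theta^2)\,d\ubar'. \]
Since $\omega\in\psi_g$ and $s_2(\omega)=0$, the bootstrap assumption \eqref{bootstrap} gives $\lVert\omega\rVert_{L^{\infty}(S_{u,\ubar'})}\leq\Gamma/\lvert u\rvert$ uniformly in $\ubar'\in[0,1]$. Working under an auxiliary continuity assumption $\lVert\Omega\rVert_{L^{\infty}}\leq 2$ (to be verified a posteriori by a standard continuity/bootstrap argument, using that $\Omega=1$ on $\Hbar_0$), I take absolute values, pass to the supremum over $(\theta^1,\theta^2)$, and use $\ubar\leq 1$ to obtain $\lVert\log\Omega\rVert_{L^{\infty}(S_{u,\ubar})}\leq 4\Gamma/\lvert u\rvert$.

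Finally, in the region of interest one has $\lvert u\rvert\geq a/4$, and the smallness condition $(\Gamma+R+M)^{20}\leq a^{1/16}$ makes the right-hand side of the above bound far less than $1$. Consequently $\lvert\Omega-1\rvert\leq 2\lvert\log\Omega\rvert\lesssim\Gamma/\lvert u\rvert$, which both proves the claimed estimate and closes the auxiliary bootstrap $\lVert\Omega\rVert_{L^{\infty}}\leq 2$. The principal point to verify carefully is the identity $\nabla_4\log\Omega=-2\omega$, which is purely notational and follows from the frame definitions in Section 2; once this is in hand, the remainder of the argument is a straightforward ODE integration combined with the scale-invariant $L^{\infty}$ control on $\omega$ afforded by the bootstrap assumption.
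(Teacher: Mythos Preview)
Your argument is correct and follows the standard approach (the paper defers to [An] for this proof, which uses essentially the same transport-equation method you describe). One minor simplification: from $\omega=-\tfrac{1}{2}\nabla_4\log\Omega$ and $\nabla_4=\Omega^{-1}\partial_{\ubar}$ you get directly $\partial_{\ubar}(\Omega^{-1})=2\omega$, so integrating gives $|\Omega^{-1}-1|\leq 2\Gamma/|u|$ without any auxiliary bootstrap on $\|\Omega\|_{L^\infty}$; the bound on $|\Omega-1|$ then follows immediately.
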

\begin{proposition}
Under the assumptions of Theorem \ref{main1} and the bootstrap assumptions \eqref{bootstrap},  there exist two constants $c$ and $C$ depending only on the initial data such that the bounds 
\[ c \leq \text{det}\hsp \gamma \leq C.\]and \[  \lvert \gamma_{AB} \rvert + \lvert \gamma^{-1}_{AB}\rvert \leq C  \]hold throughout the slab of existence $\mathcal{D}$.\label{32}
\end{proposition}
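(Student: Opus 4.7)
The plan is to integrate the first-variation formula for $\gamma$ along the outgoing null direction $\partial_{\ubar}$, using the bootstrap assumption and Proposition \ref{31} to show that $\gamma$ deviates only negligibly from its explicitly known Minkowski value on $\Hbar_{0}$. Since the coordinate construction gives $L = \Omega e_{4} = \partial_{\ubar}$, the definition $\chi_{AB} = g(D_{A}e_{4}, e_{B})$ translates, via the torsion-freeness of $D$, into the transport equation
\begin{equation*}
\frac{\partial \gamma_{AB}}{\partial \ubar} \,=\, 2\Omega\,\chi_{AB} \,=\, 2\Omega\,\chihat_{AB} \,+\, \Omega\,\tr\chi\,\gamma_{AB},
\end{equation*}
whose trace with respect to $\gamma$ yields the area-element equation $\partial_{\ubar}\log\det\gamma = 2\Omega\,\tr\chi$.

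I would then control the right-hand sides in $L^{\infty}(S_{u,\ubar})$. From the bootstrap $\bbGamma \leq \Gamma$ and $s_{2}(\tr\chi) = 0$ one gets $\|\tr\chi\|_{L^{\infty}(S_{u,\ubar})} \lesssim \Gamma/|u|$, and from $s_{2}(\chihat) = 0$ together with the $a^{-1/2}$ prefactor in $\bbGamma$ one gets $\|\chihat\|_{L^{\infty}(S_{u,\ubar})} \lesssim a^{1/2}\Gamma/|u|$; Proposition \ref{31} gives $\Omega = 1 + O(\Gamma/|u|)$. By assumption (3) of Theorem \ref{main1}, the data on $\Hbar_{0}$ are trivial, so $\gamma(u,0)$ is the explicit round metric of radius $|u|$ on $\mathbb{S}^{2}$, with known, strictly positive $\det\gamma(u,0)$ in the stereographic chart. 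Integrating the area-element equation from $\ubar=0$ over $\ubar \in [0,1]$ then gives
\begin{equation*}
\bigl|\log\det\gamma(u,\ubar) \,-\, \log\det\gamma(u,0)\bigr| \,\lesssim\, \Gamma/|u| \,\leq\, 4\Gamma/a \,\ll\, 1,
\end{equation*}
so $\det\gamma$ is comparable to its initial value throughout the slab. Integrating the full transport equation in $\ubar$ produces the componentwise deviation bound $|\gamma_{AB}(u,\ubar) - \gamma_{AB}(u,0)| \lesssim \Gamma/a^{1/2} \ll 1$, so $\gamma_{AB}$ stays uniformly close to its Minkowski value; the upper bound on $|\gamma^{-1}_{AB}|$ then follows from the cofactor identity $\gamma^{-1} = (\det\gamma)^{-1}\,\mathrm{adj}(\gamma)$, using the upper bound on $|\gamma_{AB}|$ and the lower bound on $\det\gamma$.

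The main (and essentially only) subtlety is interpretative: the constants $c,C$ in the statement must be understood modulo the natural $|u|^{2}$ rescaling built into the stereographic chart on a sphere of radius $|u|$, since the initial values $\gamma_{AB}(u,0)$ themselves are not uniformly bounded in $u$ without this normalization. Once that is fixed, no Gronwall loop or bootstrap iteration on $\gamma$ is needed — the deviation $\Gamma/a^{1/2}$ is manifestly absorbed by the stronger bootstrap inequality $(\Gamma+R+M)^{20} \leq a^{1/16}$, and neither the shift $b^{A}$ nor the $\partial_{u}$ evolution enters the argument, since $S_{u,0}$ lies on $\Hbar_{0}$ where $b^{A} \equiv 0$ and the metric is explicitly Minkowski.
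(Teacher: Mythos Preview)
Your proposal is correct and follows the standard argument (which is precisely what the paper defers to [An] for): transport $\gamma$ along $\partial_{\ubar}$ via $\partial_{\ubar}\gamma_{AB} = 2\Omega\chi_{AB}$, first control $\det\gamma$ through the trace, then control the components, and recover $\gamma^{-1}$ by cofactors. Your identification of the $|u|^{2}$-normalization issue is also the right reading.

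One small correction: your claim that ``no Gronwall loop or bootstrap iteration on $\gamma$ is needed'' is not literally true. The bootstrap bound gives you $|\chihat|_{\gamma} \lesssim a^{1/2}\Gamma/|u|$, but to integrate the componentwise equation you need the coordinate components $|\chihat_{AB}|$, and converting one to the other requires a bound on $\gamma_{AB}$ itself --- exactly what you are proving. The standard fix is a one-step continuity argument: assume $|\gamma_{AB}| \leq 2C_{0}$ on $[0,\ubar_{*}]$ (with $C_{0}$ the initial bound), use this to get $|\chihat_{AB}| \lesssim C_{0}\,a^{1/2}\Gamma/|u|$, integrate to find the deviation is $\lesssim C_{0}\Gamma/a^{1/2} \ll C_{0}$, and conclude the assumption improves. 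This closes trivially given $(\Gamma+R+M)^{20} \leq a^{1/16}$, so it costs nothing, but it should be acknowledged rather than denied.
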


\begin{proposition}\label{33}
Under the assumptions of Theorem \ref{main1} and the bootstrap assumptions \eqref{bootstrap}, fix a point $(u, \theta)$ on the initial hypersurface $\Hbar_0$. Let $\Lambda(u)$ and $\lambda(u)$ be the largest and smallest eigenvalues of $\gamma^{-1}(u,0,\theta)\hsp \gamma(u,\ubar,\theta) $ respectively, along the outgoing null geodesics emanating from $(u,\theta)$. There holds 

\[    \lvert \Lambda(u)-1 \rvert + \lvert \lambda(u)-1 \rvert \lesssim \frac{1}{\al}. \]
\end{proposition}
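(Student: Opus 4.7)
The plan is to control the pointwise deviation $\gamma(u,\ubar,\theta)-\gamma(u,0,\theta)$ by integrating the null structure equation along the outgoing null direction, and then to translate this into a bound on the eigenvalues of $N(u,\ubar,\theta):=\gamma^{-1}(u,0,\theta)\gamma(u,\ubar,\theta)$ via standard matrix perturbation.

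In the double null gauge, the induced metric on the $2$-spheres $S_{u,\ubar}$ satisfies the transport equation $\partial_{\ubar}\gamma_{AB}=2\Omega\,\chi_{AB}$. Integrating along the outgoing null generator emanating from $(u,0,\theta)$ gives
\[
\gamma_{AB}(u,\ubar,\theta)-\gamma_{AB}(u,0,\theta)=2\int_0^{\ubar}(\Omega\,\chi)_{AB}(u,\ubar',\theta)\,d\ubar'.
\]
I would then decompose $\chi=\chihat+\tfrac{1}{2}(\tr\chi)\gamma$. Unpacking the definition of the scale-invariant norms with $s_2(\chihat)=s_2(\tr\chi)=0$, the bootstrap assumption $\bbGamma\leq \Gamma$ yields
\[
\|\chihat\|_{L^{\infty}(S_{u,\ubar'})}\lesssim \frac{\al\,\Gamma}{|u|},\qquad \|\tr\chi\|_{L^{\infty}(S_{u,\ubar'})}\lesssim \frac{\Gamma}{|u|}.
\]
Combining with $\|\Omega\|_{L^{\infty}}\lesssim 1$ from Proposition \ref{31} and the uniform equivalence of $\gamma$ with a fixed reference metric from Proposition \ref{32}, integrating in $\ubar'\in[0,1]$ and using $|u|\geq a/4=\al^2/4$ produces
\[
\bigl|\gamma(u,\ubar,\theta)-\gamma(u,0,\theta)\bigr|\lesssim \frac{\al\,\Gamma}{|u|}\lesssim \frac{\Gamma}{\al}\lesssim \frac{1}{\al},
\]
where in the final step the remaining $\Gamma$ factor is absorbed using the bootstrap constraint $(\Gamma+R+M)^{20}\leq a^{1/16}$, which keeps $\Gamma$ negligibly small relative to $\al$.

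Finally, writing $N(u,\ubar,\theta)-I=\gamma^{-1}(u,0,\theta)\bigl[\gamma(u,\ubar,\theta)-\gamma(u,0,\theta)\bigr]$ and invoking the uniform bound on $\gamma^{-1}(u,0,\theta)$ from Proposition \ref{32}, one obtains $|N-I|\lesssim 1/\al$. Since $N$ is a $2\times 2$ matrix symmetric with respect to the $\gamma(u,0,\theta)$ inner product, the standard eigenvalue perturbation inequality then yields $|\Lambda(u)-1|+|\lambda(u)-1|\lesssim 1/\al$. The principal obstacle in the plan is the anomalous $\al$ factor present in the $L^{\infty}$ bound on $\chihat$; this is absorbed only because we work in the far region $|u|\geq a/4=\al^2/4$, whose compensating $|u|^{-1}$ factor brings the loss back down to the desired scale.
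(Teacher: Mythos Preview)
Your approach is correct and is precisely the standard one; the paper does not give its own argument here but defers to \cite{A19}, where the same transport-equation method (integrate $\partial_{\ubar}\gamma_{AB}=2\Omega\chi_{AB}$, use the bootstrap $L^\infty$ bounds on $\chihat,\tr\chi$, then read off the eigenvalue deviations of $\gamma_0^{-1}\gamma$) is carried out.

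Two small remarks. First, your invocation of Proposition~\ref{32} to pass between $|\chi|_\gamma$ and $|\chi|_{\gamma_0}$ is slightly circular in spirit, since Propositions~\ref{32} and~\ref{33} are proved together; the clean way is a continuity argument in $\ubar$ (assume $\tfrac12\gamma_0\leq\gamma\leq 2\gamma_0$, derive the bound, verify the assumption improves). Second, your absorption of the residual $\Gamma$ is not quite what you claim: the constraint $(\Gamma+R+M)^{20}\leq a^{1/16}$ gives only $\Gamma\leq a^{1/320}$, so $\tfrac{\Gamma}{\al}$ is not $\lesssim\tfrac{1}{\al}$ with an $a$-independent constant. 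The honest bound is $\lesssim\tfrac{\Gamma}{\al}$ (compare Proposition~\ref{34}, where $\Gamma^{1/2}$ is kept explicit); this is harmless for every downstream application, and the paper's stated $\tfrac{1}{\al}$ should be read with the bootstrap constant implicit.
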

\begin{proposition}\label{34}
Under the assumptions of Theorem \ref{main1} and the bootstrap assumptions \eqref{bootstrap}, for the area of the $2-$sphere $S_{u,\ubar}$, there holds  \[ \sup_{\ubar} \lvert \text{Area}(S_{u,\ubar} )- \text{Area}(S_{u,0}) \rvert \lesssim \frac{\Gamma^{\frac{1}{2}}}{\al}\lvert u \rvert^2.  \] 
\end{proposition}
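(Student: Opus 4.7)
The plan is to reduce the area difference to a pointwise deviation estimate for the volume element $\sqrt{\det \gamma}$, which is essentially what Proposition \ref{33} already supplies. Fixing $(u, \theta^1, \theta^2)$ and letting $\Lambda(u), \lambda(u)$ denote the two eigenvalues of $\gamma^{-1}(u,0,\theta)\gamma(u,\ubar,\theta)$, the multiplicative identity
\[ \det \gamma(u, \ubar, \theta) \;=\; \Lambda(u)\lambda(u)\, \det \gamma(u, 0, \theta) \]
holds at every point along the outgoing null geodesic through $(u,0,\theta)$.

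Taking square roots, the bound $|\Lambda - 1| + |\lambda - 1| \lesssim 1/\al$ from Proposition \ref{33}, together with the elementary Taylor estimate $|\sqrt{(1+\epsilon_1)(1+\epsilon_2)} - 1| \lesssim |\epsilon_1| + |\epsilon_2|$ valid whenever $|\epsilon_i| \leq \tfrac{1}{2}$, gives the pointwise bound
\[ \left|\sqrt{\det \gamma(u, \ubar, \theta)} - \sqrt{\det \gamma(u, 0, \theta)}\right| \;\lesssim\; \frac{1}{\al}\sqrt{\det \gamma(u, 0, \theta)}. \]
Since by assumption (3) of Theorem \ref{main1} the initial data along $\Hbar_0$ are Minkowskian, the sphere $S_{u,0}$ is a standard round $2$-sphere of radius $|u|$ with $\mathrm{Area}(S_{u,0}) = 4\pi|u|^2$. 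Integrating the pointwise inequality against $d\theta^1 d\theta^2$ over the coordinate patch and summing over a partition of unity on $S_{u,0}$ therefore yields
\[ |\mathrm{Area}(S_{u,\ubar}) - \mathrm{Area}(S_{u,0})| \;\lesssim\; \frac{1}{\al}\,\mathrm{Area}(S_{u,0}) \;\lesssim\; \frac{|u|^2}{\al}, \]
uniformly in $\ubar \in [0,1]$. Because $\Gamma \geq 1$ under the bootstrap hypothesis \eqref{bootstrap}, this is in fact strictly stronger than the claimed bound $\tfrac{\Gamma^{1/2}}{\al}|u|^2$, which follows a fortiori.

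In short, the entire argument is a short post-processing of Proposition \ref{33}, so the real work lies in that earlier result rather than here. The main obstacle for the whole chain is therefore the proof of Proposition \ref{33}, which would proceed by integrating the ODE $\tfrac{\partial}{\partial \ubar} \gamma_{AB} = 2\Omega \chi_{AB}$ along outgoing null generators, controlling $\chihat$ and $\tr\chi$ through the $\bbGamma$-component of the bootstrap \eqref{bootstrap}, and analyzing the resulting matrix ODE for the eigenvalues $\Lambda, \lambda$ of $\gamma^{-1}(u,0,\cdot)\gamma(u,\ubar,\cdot)$.
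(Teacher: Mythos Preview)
Your argument is correct and is precisely the standard route: pass from the eigenvalue control of Proposition~\ref{33} to a pointwise bound on $\sqrt{\det\gamma}$ and integrate over the sphere. The paper itself does not spell out a proof here but defers to \cite{A19}, where the same reduction is carried out; your observation that the resulting bound $|u|^2/\al$ is actually sharper than the stated $\Gamma^{1/2}|u|^2/\al$ is also accurate.
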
The proofs of Propositions \ref{31} to \ref{34} are the same as in [An]

\subsection{Estimates for transport equations}
We shall be using two fundamental  bounds on transport equations throughout this work. 

 \begin{proposition} \label{3.5}
Under the assumptions of Theorem \ref{main1} and the bootstrap assumptions \eqref{bootstrap}, the following hold for an arbitrary $\mathcal{G} \in \Gamma(^{N}\otimes T^{*}S\otimes P_{AD,\mathfrak{g}}):$
\begin{equation}
\scaletwoSu{\mathcal{G}} \lesssim \lVert \mathcal{G} \rVert_{L^2_{(sc)}(S_{u,\ubar^{\prime\prime}})} + \int_{\ubar^{\prime\prime}}^{\ubar}  \scaletwoSuubarprime{\hnabla_4 \mathcal{G}} \dubarprime 
\end{equation}

\begin{equation}
\scaletwoSu{\mathcal{G}} \lesssim \lVert \mathcal{G} \rVert_{L^2_{(sc)}(S_{u^{\prime\prime},\ubar})} + \int_{u^{\prime\prime}}^{u} \aupr  \scaletwoSuprime{\hnabla_4 \mathcal{G}} \duprime
\end{equation}
\end{proposition}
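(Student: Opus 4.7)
The plan is to reduce the vector-bundle-valued estimate to the standard transport estimate for scalar functions by exploiting the compatibility of $\hnabla$ with both the horizontal metric $\gamma$ and the fiber metric $\delta$. The key observation that makes the gauge-invariant framework pay off here is the pointwise identity
\begin{equation}
\nabla_4 |\mathcal{G}|^2_{\gamma,\delta} = 2\langle \hnabla_4 \mathcal{G},\mathcal{G}\rangle_{\gamma,\delta}, \qquad \nabla_3 |\mathcal{G}|^2_{\gamma,\delta} = 2\langle \hnabla_3 \mathcal{G},\mathcal{G}\rangle_{\gamma,\delta},
\end{equation}
whose right-hand sides are manifestly gauge invariant. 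This lets us differentiate a gauge-invariant scalar quantity without the connection 1-form $A^P{}_{Q\mu}$ ever appearing explicitly, which is precisely what was unavailable in the usual scalar transport lemma.

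First I would apply Lemma \ref{integration} to $f := |\mathcal{G}|^2_{\gamma,\delta}$. Combining it with the identity above gives a first-order ODE for $\|\mathcal{G}\|^2_{L^2(S_{u,\ubar})}$ in $\ubar$: using $\partial_{\ubar} = \Omega e_4$ and the standard formula $\partial_{\ubar}\sqrt{\det \gamma} = \Omega\, \tr\chi \sqrt{\det \gamma}$, one obtains
\begin{equation}
\frac{d}{d\ubar}\int_{S_{u,\ubar}} |\mathcal{G}|^2_{\gamma,\delta} = \int_{S_{u,\ubar}}\Omega\bigl(2\langle \hnabla_4 \mathcal{G},\mathcal{G}\rangle_{\gamma,\delta} + \tr\chi\,|\mathcal{G}|^2_{\gamma,\delta}\bigr).
\end{equation}
A Cauchy--Schwarz on the first term together with the bound $\|\Omega\|_{L^\infty} \lesssim 1$ from Proposition \ref{31} and the bootstrap bound on $\tr\chi$ yields, after dividing through by $2\|\mathcal{G}\|_{L^2(S)}$,
\begin{equation}
\frac{d}{d\ubar}\|\mathcal{G}\|_{L^2(S_{u,\ubar})} \lesssim \|\hnabla_4 \mathcal{G}\|_{L^2(S_{u,\ubar})} + \|\tr\chi\|_{L^\infty(S_{u,\ubar})}\|\mathcal{G}\|_{L^2(S_{u,\ubar})}.
\end{equation}
Since $\ubar\in[0,1]$, a direct Grönwall argument absorbs the $\tr\chi$ term into the implicit constant and produces the first inequality in the unweighted $L^2$ norm. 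The analogous computation with $\nabla_3$ in place of $\nabla_4$, using $\partial_u - b^A \partial_{\theta^A} = \Omega e_3$ (the $b^A\partial_{\theta^A}$ piece is tangential to $S$ and drops out of the integrated identity), yields the corresponding unweighted estimate in the $u$-direction.

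Finally, I would translate each unweighted estimate into the scale-invariant language. For the $\nabla_4$ inequality this is essentially trivial: $u$ is fixed along the integration, $s_2(\hnabla_4 \mathcal{G}) = s_2(\mathcal{G})$, so the weights $a^{-s_2(\mathcal{G})}|u|^{2s_2(\mathcal{G})}$ pass through the integral unchanged. For the $\nabla_3$ inequality the weight $|u|$ does change, and because $s_2(\hnabla_3 \mathcal{G}) = s_2(\mathcal{G})+1$, the correct conversion factor between $\|\hnabla_3 \mathcal{G}\|_{L^2(S)}$ and $\scaletwoSuprime{\hnabla_3 \mathcal{G}}$ is $a^{s_2(\mathcal{G})+1}|u'|^{-2s_2(\mathcal{G})-2}$. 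Combining this with the $a^{-s_2(\mathcal{G})}|u|^{2s_2(\mathcal{G})}$ prefactor, and using monotonicity of $|u'|$ (together with $s_2(\mathcal{G})\geq 0$ for all relevant tensors) to bound $(|u|/|u'|)^{2s_2(\mathcal{G})}\leq 1$, produces exactly the weight $\aupr = a/|u'|^2$ appearing in the statement.

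The main (and only genuinely new) obstacle is handling the fact that $\mathcal{G}$ is a section of the mixed bundle rather than an ordinary horizontal tensor; the rest of the argument is standard. That obstacle is resolved at the very first step by the compatibility identity, which is why the preceding subsection's emphasis on metric compatibility of $\hnabla$ is critical. Once that identity is in hand, the remainder of the proof is essentially a careful bookkeeping of weights.
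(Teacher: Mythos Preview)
The paper does not include a proof of Proposition \ref{3.5}; it is stated without argument as the standard transport lemma, merely extended to sections of the mixed bundle. Your proof is correct and is precisely the expected one: metric compatibility of $\hnabla$ reduces the problem to a scalar transport estimate for $|\mathcal{G}|^2_{\gamma,\delta}$, after which the variation-of-area identity, Cauchy--Schwarz, and Gr\"onwall give the unweighted estimate, and the signature bookkeeping produces the scale-invariant weights. Your weight calculation in the $u$-direction (producing the factor $a/|u'|^2$ via $s_2(\hnabla_3\mathcal{G})=s_2(\mathcal{G})+1$ and $(|u|/|u'|)^{2s_2}\le 1$) is accurate.

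One small point deserves a sentence of clarification. In the $\ubar$-direction your Gr\"onwall step is fine because $\|\tr\chi\|_{L^\infty}\lesssim |u|^{-1}$ is bounded and $\ubar\in[0,1]$. In the $u$-direction, however, the literal ``analogous computation'' would require $\int_{u_\infty}^u \|\tr\chibar\|_{L^\infty}\,du'$ to be bounded, and this fails since $\tr\chibar\sim -2/|u|$ integrates to a logarithm. What actually makes the estimate go through is that the dominant part $-2/|u|$ has a \emph{favorable sign} (it drives $\|\mathcal{G}\|_{L^2}^2$ down), so only the remainder $\widetilde{\tr\chibar}=\tr\chibar+2/|u|=O(|u|^{-2})$ enters the Gr\"onwall factor, and that is integrable. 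This is exactly the mechanism made explicit in the paper's proof of Proposition \ref{3.6}; it would be worth saying one word about it here rather than leaving it as ``analogous''.
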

There are, however, cases which are  borderline and require more delicate control than what the above Proposition provides. These have to do with components $X$ satisfying an equation of the form $\hnabla_3 X= - \lambda \tr\chibar X + \dots$, where $\lambda >0$. Keeping in mind that $\tr\chibar$ is the worst Ricci coefficient in terms of peeling, one would hope to be able to get rid of its appearance and thus obtain stronger bounds regarding the peeling properties of $X$. The following weighted transport inequality achieves this.

\begin{proposition} \label{3.6}
Let $\mathcal{G}, \mathcal{H} \in \Gamma(^{N}\otimes T^{*}S\otimes P_{AD,\mathfrak{g}})$ and assume that the following equation holds:

\[ \hnabla_3 \mathcal{G} + \lambda_0 \hsp \tr\chibar \hsp \mathcal{G} = \mathcal{H}.\]\label{36}
\noindent Then, under the assumptions of Theorem \ref{main1} and the bootstrap assumptions \eqref{bootstrap}, the following is true:
\[  \lvert u \rvert^{\lambda_1}\twoSu{\mathcal{G}}  \lesssim \lvert u_{\infty}\rvert^{\lambda_1}\lVert \mathcal{G} \rVert_{L^2(S_{u_{\infty}, \ubar})} + \intu \lvert u^{\prime}\rvert^{\lambda_1 } \twoSuprime{\mathcal{H}}\duprime      \]
for $\lambda_{1}=2\lambda_{0}-1$.
\end{proposition}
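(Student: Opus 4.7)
The plan is to derive an ordinary differential inequality for $\twoSu{\mathcal{G}}$ as a function of $u$, exploiting the compatibility of $\hnabla$ with the fibre metrics $\gamma$ and $\delta$ so that the transport equation can be contracted with $\mathcal{G}$ without incurring any uncontrolled gauge terms. Specifically, since $\langle\mathcal{G},\mathcal{G}\rangle_{\gamma,\delta}$ is a scalar gauge-invariant function, we have
\[
\nabla_3 |\mathcal{G}|^2_{\gamma,\delta}=2\langle \hnabla_3\mathcal{G},\mathcal{G}\rangle_{\gamma,\delta},
\]
and combining this with the standard formula for the $u$-derivative of an integral over $S_{u,\ubar}$ (which produces a $\tr\chibar\,|\mathcal{G}|^2$ term) gives
\[
\frac{d}{du}\int_{S_{u,\ubar}}|\mathcal{G}|^2\;=\;\int_{S_{u,\ubar}}\Omega\bigl(2\langle \hnabla_3\mathcal{G},\mathcal{G}\rangle_{\gamma,\delta}+\tr\chibar\,|\mathcal{G}|^2\bigr).
\]

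Substituting $\hnabla_3\mathcal{G}=\mathcal{H}-\lambda_0\tr\chibar\,\mathcal{G}$ and applying Cauchy--Schwarz yields
\[
\frac{d}{du}\twoSu{\mathcal{G}}^2\;\leq\;\int_{S_{u,\ubar}}\Omega\bigl(2\langle\mathcal{H},\mathcal{G}\rangle_{\gamma,\delta}-(2\lambda_0-1)\tr\chibar\,|\mathcal{G}|^2\bigr).
\]
Using $\Omega=1+O(\Gamma/|u|)$ from Proposition~\ref{31} and the decomposition $\tr\chibar=-\tfrac{2}{|u|}+\tildetr$, where $\tildetr$ is small by the bootstrap assumption, the right-hand side becomes
\[
2\twoSu{\mathcal{H}}\twoSu{\mathcal{G}}+\frac{2\lambda_1}{|u|}\twoSu{\mathcal{G}}^2+\text{(acceptable error)}.
\]
Dividing through by $2\twoSu{\mathcal{G}}$ produces the Gr\"onwall-type inequality
\[
\frac{d}{du}\twoSu{\mathcal{G}}\;\leq\;\twoSu{\mathcal{H}}+\frac{\lambda_1}{|u|}\twoSu{\mathcal{G}}+\text{(error)}.
\]

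Multiplying by the integrating factor $|u|^{\lambda_1}$ and noting that $\tfrac{d}{du}|u|=-1$ since $u<0$, the factors of $\lambda_1|u|^{\lambda_1-1}\twoSu{\mathcal{G}}$ cancel exactly, leaving
\[
\frac{d}{du}\bigl(|u|^{\lambda_1}\twoSu{\mathcal{G}}\bigr)\;\leq\;|u|^{\lambda_1}\twoSu{\mathcal{H}}+\text{(lower order)}.
\]
Integrating from $u_\infty$ to $u$ and absorbing the lower-order terms (which depend on $\Gamma/|u|$ contributions and can be handled either by a standard Gr\"onwall step or absorbed into the implicit constant in $\lesssim$) yields the claimed estimate.

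The main obstacle is bookkeeping: we must ensure that the $\Omega$-deviation from $1$ and the $\tildetr$-error genuinely generate only subcritical contributions that can be integrated against the $|u|^{\lambda_1}$ weight without destroying the exact cancellation between $\lambda_0\tr\chibar$ and the sphere-derivative term. This is precisely where the quantitative bounds on the metric and Ricci components from Propositions~\ref{31}--\ref{34} enter: they guarantee that the error terms are of size $\Gamma/|u|$ relative to the principal terms, hence integrable in $u$ and controllable by a final Gr\"onwall argument.
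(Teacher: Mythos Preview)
Your proposal is correct and follows essentially the same route as the paper: both use the variation-of-area formula for $\int_{S_{u,\ubar}}|\mathcal{G}|^2_{\gamma,\delta}$, exploit the compatibility of $\hnabla$ with the fibre metrics to write $\nabla_3|\mathcal{G}|^2=2\langle\hnabla_3\mathcal{G},\mathcal{G}\rangle$, substitute the transport equation, decompose $\tr\chibar=-\tfrac{2}{|u|}+\tildetr$ to isolate the exact weight $\lambda_1$, and close with Gr\"onwall using the $\Gamma/|u|^2$ bound on the error. The only stylistic difference is that the paper inserts the weight $|u|^{2\lambda_1}$ into the integrand from the outset and differentiates $\int|u|^{2\lambda_1}|\mathcal{G}|^2$ directly, whereas you first derive an inequality for the unweighted $\twoSu{\mathcal{G}}^2$, divide by $2\twoSu{\mathcal{G}}$, and then apply $|u|^{\lambda_1}$ as an integrating factor; the paper's ordering avoids the (easily handled) division-by-zero technicality and makes the cancellation $-2\lambda_1 e_3(u)/|u|+(1-2\lambda_0)\tr\chibar\lesssim\Gamma/|u|^2$ appear in a single line.
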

\begin{proof}
The variation of area formula for a scalar function $f$ reads:

\begin{equation}
\underline{L} \int_{S_{u,\ubar}} f = \int_{S_{u,\ubar}} \Lbar f + \Omega \hsp \tr\chibar \hsp f = \int_{S_{u,\ubar}} \Omega \hsp \left(e_3(f) + \tr\chibar \hsp f \right).
\end{equation}Plugging in $f= \lvert u \rvert^{2 \lambda_1} \lvert \mathcal{G} \rvert_{\gamma,\delta}^2$ and exploiting the compatibility of the gauge-covariant derivative with the fibre metrics $\gamma, \delta$, we calculate:

\begin{equation}
\begin{split}
&\underline{L} \int_{S_{u,\ubar}}\lvert u \rvert^{2 \lambda_1} \lvert \mathcal{G} \rvert_{\gamma,\delta}^2 \\ = &\int_{S_{u,\ubar}} \Omega \left( -2 \hsp \lambda_1 \lvert u \rvert^{2 \hsp \lambda_1 -1} e_3(u) \lvert \mathcal{G} \rvert_{\gamma,\delta}^2 + 2 \lvert u \rvert^{2\hsp \lambda_1} \langle \mathcal{G}, \hnabla_3 \mathcal{G} \rangle_{\gamma, \hsp \delta} + \tr\chibar \hsp \lvert u \rvert^{2\lambda_1}\hsp \lvert \mathcal{G} \rvert_{\gamma,\delta}^2 \right) \\ =& \int_{S_{u,\ubar}} \Omega \left( 2 \lvert u \rvert^{2\lambda_1} \langle \mathcal{G}, \hnabla_3 \mathcal{G} + \lambda_0 \tr\chibar \mathcal{G} \rangle_{\gamma,\delta }\right) + \int_{S_{u,\ubar}} \Omega \lvert u \rvert^{2\lambda_1} \left( \frac{-2\lambda_1 e_3(u)}{\lvert u \rvert} + (1-2\lambda_0)\hsp \tr\chibar \right) \lvert \mathcal{G} \rvert_{\gamma,\delta}^2. \label{3.3}
\end{split}
\end{equation}Notice that

\begin{equation}
\begin{split}
&\frac{-2\lambda_1 e_3(u)}{\lvert u \rvert} + (1-2\lambda_0)\hsp \tr\chibar \\ = & \frac{-2\lambda_1 (\Omega^{-1}-1)}{\lvert u \rvert} +(1-2\lambda_0)(\tr\chibar + \frac{2}{\lvert u \rvert}) - \frac{2\lambda_1 + 2 - 4 \lambda_0}{\lvert u \rvert} \\ \leq& \frac{\Gamma}{\lvert u \rvert^2},
\end{split}
\end{equation}where we have used the bootstrap assumption $\inftySu{\tr\chibar + \frac{2}{\lvert u \rvert}} \leq\frac{\Gamma}{\lvert u \rvert^2}$ and the definition of $\lambda_1$. For the first term in the last line of \eqref{3.3} we then use Cauchy-Schwartz and for the second we apply Gr\"onwall's inequality to get:

\begin{equation}
\begin{split}
&\lvert u \rvert^{2 \lambda_1} \twoSu{ \mathcal{G} } \\ \lesssim &\mathrm{e}^{\Gamma \lVert u^{-2} \rVert_{L_u^1}}\left( \lvert u_{\infty}\rvert^{\lambda_1}\lVert \mathcal{G} \rVert_{L^2(S_{u_{\infty}, \ubar})} + \intu \lvert u^{\prime}\rvert^{\lambda_1 } \twoSuprime{\mathcal{H}}\duprime \right) \\ \lesssim &\lvert u_{\infty}\rvert^{\lambda_1}\lVert \mathcal{G} \rVert_{L^2(S_{u_{\infty}, \ubar})} + \intu \lvert u^{\prime}\rvert^{\lambda_1 } \twoSuprime{\mathcal{H}}\duprime, 
\end{split} 
\end{equation}where we have used the fact that $\mathrm{e}^{\Gamma \lVert u^{-2} \rVert_{L_u^1}} \lesssim \mathrm{e}^{\Gamma/ a} \lesssim  1$.

\end{proof}

\subsection{Sobolev embedding}
\noindent With the derived estimates for the metric $\gamma$, we can obtain a bound on the isoperimetric constant for a topological $2-$sphere $S$: 
\begin{eqnarray}
I(S):=\sup_{U\subset S,~\partial U\in C^{1}}\frac{\min\left\{Area(U),Area(U^{c})\right\}}{[Perimeter(\partial U)]^{2}}.
\end{eqnarray}
The following proposition yields an upper bound for $I(S)$.\\
\begin{proposition}\textit{Under the assumption on the initial data and the bootstrap assumption (2.10), the isoperimetric constant obeys the following bound 
\begin{eqnarray} \label{37}
I(S_{u,\underline{u}})\leq \frac{1}{\pi}
\end{eqnarray}
for $u\in [u_{\infty},-\frac{a}{4}]$ and $\underline{u}\in [0,1]$.
}\end{proposition}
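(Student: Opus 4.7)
The plan is to transport the isoperimetric inequality from the initial sphere $S_{u,0}$ (which is a standard round sphere sitting inside Minkowski space, since by Assumption 3 of Theorem \ref{main1} the data on $\underline{H}_0$ are trivial) to the evolved sphere $S_{u,\ubar}$ via the diffeomorphism generated by outgoing null geodesics, controlling the distortion pointwise by means of the eigenvalue bounds of Proposition \ref{33}.

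First, I would fix $u \in [u_\infty, -\tfrac{a}{4}]$ and parametrize both $S_{u,0}$ and $S_{u,\ubar}$ by the same angular coordinates $(\theta^1,\theta^2)$ obtained by transporting $\slashed{\mathcal{L}}_L \theta^A = 0$ along the outgoing null generators of $H_u$. Then $\gamma(u,0,\theta)$ and $\gamma(u,\ubar,\theta)$ live on the same coordinate patch, and Proposition \ref{33} gives, for every $\theta$ and every $\ubar' \in [0,\ubar]$,
\[
\lvert \Lambda(u,\ubar',\theta) - 1\rvert + \lvert \lambda(u,\ubar',\theta) - 1\rvert \lesssim \frac{1}{a^{\frac{1}{2}}},
\]
where $\Lambda, \lambda$ are the eigenvalues of $\gamma^{-1}(u,0,\theta)\gamma(u,\ubar',\theta)$. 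For any tangent vector $v$ at $\theta$, this yields $\lambda \,\gamma(u,0)(v,v) \leq \gamma(u,\ubar)(v,v) \leq \Lambda\, \gamma(u,0)(v,v)$, while $\sqrt{\det\gamma(u,\ubar)/\det\gamma(u,0)} = \sqrt{\lambda\Lambda}$. Setting $\Lambda_{\max} := \sup_\theta \Lambda$ and $\lambda_{\min} := \inf_\theta \lambda$, one gets $\Lambda_{\max}/\lambda_{\min} \leq 1 + C a^{-1/2}$ for some universal $C$.

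Next, for any $U \subset S_{u,\ubar}$ with $\partial U \in C^1$, viewed as a subset of the underlying coordinate 2-sphere,
\[
\mathrm{Area}_{u,\ubar}(U) \leq \Lambda_{\max}\,\mathrm{Area}_{u,0}(U), \qquad \mathrm{Perimeter}_{u,\ubar}(\partial U)^2 \geq \lambda_{\min}\,\mathrm{Perimeter}_{u,0}(\partial U)^2,
\]
and the same inequality for the complement $U^c$. Dividing and taking the supremum,
\[
I(S_{u,\ubar}) \leq \frac{\Lambda_{\max}}{\lambda_{\min}}\, I(S_{u,0}).
\]
Since $S_{u,0}$ is a Euclidean round sphere of radius $\lvert u\rvert$ in Minkowski space, a direct computation (the supremum in the definition of $I$ being attained by hemispheres) gives $I(S_{u,0}) = \frac{1}{2\pi}$, independently of $\lvert u\rvert$. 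Combining,
\[
I(S_{u,\ubar}) \leq \Bigl(1 + \frac{C}{a^{1/2}}\Bigr)\, \frac{1}{2\pi},
\]
which for $a_0 = a_0(\mathcal{I})$ sufficiently large yields $I(S_{u,\ubar}) \leq \frac{1}{\pi}$ on all of $\mathcal{D}$.

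The only nonroutine step is the distortion estimate, but this is already encapsulated by Proposition \ref{33}; the rest is a bookkeeping comparison of area and length elements between two conformally close metrics on a fixed underlying 2-sphere. In particular, no curvature bounds enter directly: the scaling gain $a^{-1/2}$ is precisely what guarantees that the ratio $\Lambda_{\max}/\lambda_{\min}$ stays so close to $1$ that the round-sphere constant $\tfrac{1}{2\pi}$ degrades by strictly less than a factor of $2$.
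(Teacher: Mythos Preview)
Your proposal is correct and follows essentially the same approach as the paper: transport the isoperimetric inequality from the round sphere $S_{u,0}$ to $S_{u,\ubar}$ along outgoing null generators, use the eigenvalue bounds of Proposition \ref{33} to compare length and area elements, and invoke $I(S_{u,0})=\tfrac{1}{2\pi}$. The only cosmetic difference is that the paper controls the area ratio via $\sup \det\gamma_{\ubar}/\det\gamma_0$ rather than $\Lambda_{\max}$, but since $\det\gamma_{\ubar}/\det\gamma_0=\lambda\Lambda\leq\Lambda_{\max}$ under the same eigenvalue bound, the two are equivalent here.
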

\begin{proof}  Fix a $u$. For $U_{\ubar}$ a subset of $S_{u,\ubar}$, denote by $U_0 \subset S_{u,0}$ the backward image of $U_{\ubar}$ under the diffeomorphism generated by the equivariant vector field $L.$ Using Propositions \ref{32} and \ref{33} and their proof, we can obtain the estimates

\[      \frac{\text{Perimeter}(\partial U_{\ubar}) }{\text{Perimeter}(\partial U_0)}\geq \sqrt{\inf_{S_{u,0}}\lambda(\ubar) }  \]and

\[ \frac{\text{Area}(U_{\ubar})}{\text{Area}(U_0)} \leq \sup_{S_{u,0}}\frac{\det(\gamma_{\ubar})}{\det(\gamma_0)}, \hspace{2mm}\frac{\text{Area}(U_{\ubar}^c)}{\text{Area}(U_0^c)} \leq \sup_{S_{u,0}}\frac{\det(\gamma_{\ubar})}{\det(\gamma_0)}. \]The conclusion then follows from the fact that $I(S_{u,0}) = \frac{1}{2\pi}$ and the bounds from Propositions \ref{32}, \ref{33} .       \end{proof}
Throughout this work, we will be using an $L^2-L^{\infty}$ Sobolev estimate. To obtain it, utilizing the basic estimates above, we may proceed to write down the following gauge-invariant Sobolev inequalities for the topological $2-$ sphere $S$.\\
\begin{proposition} \textit{Let $(S,\gamma)$ be a Riemannian $2-$manifold with the isoperimetric constant $I(S)$. Then the following Sobolev embedding holds for any $\mathcal{G}\in \Gamma(^{N}\otimes T^{*}S\otimes P_{AD,\mathfrak{g}})$}
\begin{eqnarray}
\label{eq:Lp}
\left(Area(S)\right)^{-\frac{1}{p}}||\mathcal{G}||_{L^{p}(S)}\leq C_{p}\left(\max(1,I(S))\right)^{\frac{1}{2}}\left(||\widehat{\nabla}\mathcal{G}||_{L^{2}(S)}+Area(S)^{-\frac{1}{2}}||\mathcal{G}||_{L^{2}(S)}\right)
\end{eqnarray}
for any $p\in (2,\infty)$.
\end{proposition}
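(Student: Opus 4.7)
The plan is to reduce the gauge-invariant Sobolev inequality to the classical scalar Sobolev inequality on a Riemannian $2$-manifold via a Kato-type pointwise bound. First I would exploit the compatibility of $\hnabla$ with the fibre metrics $\gamma$ and $\delta$ recorded in the excerpt, namely $\nabla |\mathcal{G}|_{\gamma,\delta}^{2}=2\langle \hnabla \mathcal{G},\mathcal{G}\rangle_{\gamma,\delta}$. Writing the left-hand side as $2|\mathcal{G}|_{\gamma,\delta}\,\nabla |\mathcal{G}|_{\gamma,\delta}$ and applying the Cauchy--Schwarz inequality to the right-hand side, one obtains at every point where $\mathcal{G}$ does not vanish the Kato inequality
\[
\bigl|\nabla |\mathcal{G}|_{\gamma,\delta}\bigr|\,\leq\,|\hnabla \mathcal{G}|_{\gamma,\delta}.
\]
A standard approximation (replacing $|\mathcal{G}|_{\gamma,\delta}$ by $\sqrt{|\mathcal{G}|_{\gamma,\delta}^{2}+\varepsilon^{2}}$ and sending $\varepsilon\downarrow 0$) promotes this to an almost-everywhere inequality for the non-negative Lipschitz scalar $f:=|\mathcal{G}|_{\gamma,\delta}$ on all of $S$, including across the zero set of $\mathcal{G}$.

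Next, the claimed estimate is equivalent to the scalar Sobolev inequality
\[
(\mathrm{Area}(S))^{-1/p}\|f\|_{L^{p}(S)}\leq C_{p}(\max(1,I(S)))^{1/2}\bigl(\|\nabla f\|_{L^{2}(S)}+\mathrm{Area}(S)^{-1/2}\|f\|_{L^{2}(S)}\bigr),
\]
since $\|f\|_{L^{p}(S)}=\|\mathcal{G}\|_{L^{p}(S)}$ by definition of the gauge-invariant norm, and the Kato bound gives $\|\nabla f\|_{L^{2}(S)}\leq \|\hnabla \mathcal{G}\|_{L^{2}(S)}$. This scalar Sobolev inequality is classical and follows from the isoperimetric inequality on $(S,\gamma)$ via the Federer--Fleming argument: applying the $L^{1}$-isoperimetric-type estimate $\|h-\bar{h}\|_{L^{2}(S)}\leq 2\sqrt{I(S)}\,\|\nabla h\|_{L^{1}(S)}$ to $h=f^{q}$ for an appropriately chosen exponent $q=q(p)>1$, and invoking H\"older's inequality, yields a bound of the form $\|f\|_{L^{2q}(S)}\lesssim \sqrt{I(S)}\,\|f\|_{L^{2q-2}(S)}^{(2q-2)/2q}\|\nabla f\|_{L^{2}(S)}^{1/q}$; interpolating the $L^{2q-2}$ norm between $L^{2}$ and $L^{2q}$ and then absorbing, with the area-normalised $L^{2}$ term handling the zero-frequency mode, gives the estimate for any $p\in(2,\infty)$.

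The main obstacle is essentially bookkeeping. One must carefully track all area-weights so as to arrive at the precisely scale-invariant form displayed, and verify that the Kato inequality passes to the zero set of $\mathcal{G}$ via the regularisation above (this is routine but necessary since $f$ is only Lipschitz, not smooth). Modulo these technicalities, the proof proceeds exactly as in the scalar case, with the compatibility of $\hnabla$ with the bundle metric $\delta$ being the sole new ingredient required to upgrade the estimate from scalar functions to arbitrary sections of $^{N}\otimes T^{*}S\otimes P_{AD,\mathfrak{g}}$.
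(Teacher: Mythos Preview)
Your proof is correct and follows essentially the same route as the paper: both reduce to the standard scalar Sobolev inequality by applying it to the regularised scalar $\sqrt{|\mathcal{G}|_{\gamma,\delta}^{2}+\epsilon}$, using the compatibility of $\hnabla$ with the fibre metrics together with Cauchy--Schwarz to bound $\|\nabla f\|_{L^{2}}$ by $\|\hnabla\mathcal{G}\|_{L^{2}}$, and then sending $\epsilon\to 0$. The paper simply invokes the scalar inequality as known, whereas you additionally sketch its derivation via the isoperimetric inequality, but the reduction step---which is the point of the proposition---is identical.
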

\begin{proof} We know that the standard Sobolev inequality with $p\in (2,\infty)$
\begin{eqnarray}
\label{eq:standard}
\left(Area(S)\right)^{-\frac{1}{p}}||f||_{L^{p}(S)}\leq C_{p}\left(\max(1,I(S))\right)^{\frac{1}{2}}\left(||\nabla f||_{L^{2}(S)}+Area(S)^{-\frac{1}{2}}||f||_{L^{2}(S)}\right)
\end{eqnarray}
holds for a function $f$. Now we set 
\begin{eqnarray}
f=\sqrt{\mathcal{G}^{P}~_{QA_{1}A_{2}A_{3}\cdot\cdot\cdot\cdot A_{N}}\mathcal{G}^{Q}~_{PB_{1}B_{2}B_{3}\cdot\cdot\cdot\cdot B_{N}}\gamma^{A_{1}B_{1}}\gamma^{A_{2}B_{2}}\gamma^{A_{3}B_{3}}\cdot\cdot\cdot\cdot \gamma^{A_{N}B_{N}}+\epsilon}
\end{eqnarray}
with $\epsilon>0$ to yield 
\begin{eqnarray}
||\nabla f||_{L^{2}(S)}=||\frac{\langle \mathcal{G},\widehat{\nabla}\mathcal{G}\nonumber\rangle_{\gamma,\delta}}{\sqrt{\mathcal{G}^{P}~_{QA_{1}A_{2}A_{3}\cdot\cdot\cdot\cdot A_{N}}\mathcal{G}^{Q}~_{PB_{1}B_{2}B_{3}\cdot\cdot\cdot\cdot B_{N}}\gamma^{A_{1}B_{1}}\gamma^{A_{2}B_{2}}\gamma^{A_{3}B_{3}}\cdot\cdot\cdot\cdot \gamma^{A_{N}B_{N}}+\epsilon}}||_{L^{2}(S)}\\\nonumber \leq ||\frac{|\mathcal{G}|_{\gamma,\delta}|\widehat{\nabla}\mathcal{G}|_{\gamma,\delta}}{\sqrt{\mathcal{G}^{P}~_{QA_{1}A_{2}A_{3}\cdot\cdot\cdot\cdot A_{N}}\mathcal{G}^{Q}~_{PB_{1}B_{2}B_{3}\cdot\cdot\cdot\cdot B_{N}}\gamma^{A_{1}B_{1}}\gamma^{A_{2}B_{2}}\gamma^{A_{3}B_{3}}\cdot\cdot\cdot\cdot \gamma^{A_{N}B_{N}}+\epsilon}}||_{L^{2}(S)},
\end{eqnarray}
since $f$ is a gauge-invariant object and the gauge covariant derivative is compatible with fibre metrics $\gamma$ and $\delta$. Substituting this inequality in (\ref{eq:standard}) and taking the limit $\epsilon\to 0$ yields the desired gauge-invariant Sobolev inequality. \end{proof}
\begin{proposition} \textit{Let $(S,\gamma)$ be a Riemannian $2-$manifold with the isoperimetric constant $I(S)$. Then the following Sobolev embedding holds for any $\mathcal{G}\in \Gamma(^{N}\otimes T^{*}S\otimes P_{AD,\mathfrak{g}})$}
\begin{eqnarray}
\label{eq:Linfty}
||\mathcal{G}||_{L^{\infty}(S)}\leq C_{p}\left(\max(1,I(S))\right)^{\frac{1}{2}}[Area(S)]^{\frac{1}{2}-\frac{1}{p}}\left(||\widehat{\nabla}\mathcal{G}||_{L^{p}(S)}+Area(S)^{-\frac{1}{2}}||\mathcal{G}||_{L^{p}(S)}\right)
\end{eqnarray}
for any $p\in (2,\infty)$.\end{proposition}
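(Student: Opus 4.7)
The plan is to mimic exactly the argument used in the preceding $L^p$-Sobolev proposition, but starting from the scalar $L^\infty$-Sobolev embedding for functions on $(S,\gamma)$ instead of the scalar $L^p$ version. That is, I will first recall (or take as an input from standard Riemannian geometry, see e.g.\ the discussion of isoperimetric Sobolev embeddings in Chapter 2 of \cite{Kl-Rod} or \cite{C09}) the scalar inequality
\begin{equation*}
\|f\|_{L^\infty(S)} \;\leq\; C_p \bigl(\max(1,I(S))\bigr)^{\frac{1}{2}} [\mathrm{Area}(S)]^{\frac{1}{2}-\frac{1}{p}} \Bigl(\|\nabla f\|_{L^p(S)} + \mathrm{Area}(S)^{-\frac{1}{2}}\|f\|_{L^p(S)}\Bigr),
\end{equation*}
valid for any sufficiently regular scalar function $f$ on $(S,\gamma)$ with $p\in (2,\infty)$. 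This is the analogue on the $L^\infty$ side of the scalar bound used in the previous proposition, and is the only non-trivial analytic input; the rest of the proof is a Kato-type reduction to the scalar case.

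Next, I would plug in the specific scalar $f_\epsilon := \sqrt{\langle \mathcal{G},\mathcal{G}\rangle_{\gamma,\delta}+\epsilon}$ for $\epsilon>0$, exactly as in the proof of the $L^p$ Sobolev embedding. Because $f_\epsilon$ is gauge-invariant and because $\widehat{\nabla}$ is compatible with both the horizontal metric $\gamma$ and the fibre metric $\delta$, one has $\nabla f_\epsilon^2 = 2\langle \mathcal{G},\widehat{\nabla}\mathcal{G}\rangle_{\gamma,\delta}$, whence
\begin{equation*}
|\nabla f_\epsilon| \;=\; \frac{|\langle \mathcal{G},\widehat{\nabla}\mathcal{G}\rangle_{\gamma,\delta}|}{\sqrt{\langle\mathcal{G},\mathcal{G}\rangle_{\gamma,\delta}+\epsilon}} \;\leq\; \frac{|\mathcal{G}|_{\gamma,\delta}\,|\widehat{\nabla}\mathcal{G}|_{\gamma,\delta}}{\sqrt{|\mathcal{G}|_{\gamma,\delta}^{2}+\epsilon}} \;\leq\; |\widehat{\nabla}\mathcal{G}|_{\gamma,\delta}
\end{equation*}
pointwise on $S$. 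Moreover $\|f_\epsilon\|_{L^p(S)}\leq \|\,|\mathcal{G}|_{\gamma,\delta}\|_{L^p(S)}+\epsilon^{1/2}\mathrm{Area}(S)^{1/p}$ and $\|f_\epsilon\|_{L^\infty(S)}\geq \|\,|\mathcal{G}|_{\gamma,\delta}\|_{L^\infty(S)}$, so substituting into the scalar $L^\infty$-Sobolev estimate yields
\begin{equation*}
\|\mathcal{G}\|_{L^\infty(S)} \;\leq\; C_p (\max(1,I(S)))^{\frac{1}{2}}[\mathrm{Area}(S)]^{\frac{1}{2}-\frac{1}{p}} \Bigl(\|\widehat{\nabla}\mathcal{G}\|_{L^p(S)} + \mathrm{Area}(S)^{-\frac{1}{2}}\|\mathcal{G}\|_{L^p(S)}+ O(\epsilon^{1/2})\Bigr),
\end{equation*}
and then sending $\epsilon\to 0^{+}$ gives the claimed bound.

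The only subtlety, which I expect to be the main (and really only) technical point, is the pointwise Kato-type inequality $|\nabla f_\epsilon|\leq |\widehat{\nabla}\mathcal{G}|_{\gamma,\delta}$. Its validity rests on the compatibility of $\widehat{\nabla}$ with both $\gamma$ and the fibre metric $\delta$ on $P_{AD,\mathfrak{g}}$ (which in turn uses the adjoint-invariance property \eqref{eq:adinvpp}), so that $\nabla |\mathcal{G}|^{2}_{\gamma,\delta}=2\langle \widehat{\nabla}\mathcal{G},\mathcal{G}\rangle_{\gamma,\delta}$ and the Cauchy--Schwarz estimate goes through with no hidden connection terms. This is exactly the same mechanism that was emphasised in the preceding proposition; the present proof is essentially a cosmetic rewrite, with the scalar $L^p$ Sobolev bound replaced by the scalar $L^\infty$ Sobolev bound.
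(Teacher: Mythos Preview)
Your proposal is correct and follows exactly the approach indicated in the paper: the paper's proof is a single sentence stating that one applies the same calculation as in the preceding $L^p$ proposition, this time using the standard scalar $L^{\infty}$--$L^{p}$ Sobolev inequality on $(S,\gamma)$. You have simply written out in full what the paper compresses into one line, including the Kato-type step $|\nabla f_\epsilon|\leq |\widehat{\nabla}\mathcal{G}|_{\gamma,\delta}$ and the $\epsilon\to 0$ limit.
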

\begin{proof} A calculation similar to the previous one and the standard $L^{\infty}-L^{p}$ Sobolev inequality on the Riemannian manifold $(S,\gamma)$ yield the result. \end{proof}

The two inequalities above, together with Propositions \ref{31}-\ref{34}, allow us to control the $L^{2}$-norm of $\mathcal{G}$ in terms of its $H^{2}$-norm. Following the area estimates, we have $Area(S_{u,\underline{u}})\approx u^{2}$. Therefore, we obtain the following important inequality.\\ 
\begin{proposition} \label{Sobolev}Under the assumptions of Theorem \ref{main1} and the bootstrap assumptions \eqref{bootstrap}, the following Sobolev embedding holds for any $\mathcal{G}\in \Gamma(^{N}\otimes T^{*}S_{u,\underline{u}}\otimes P_{AD,\mathfrak{g}}):$
\begin{eqnarray}
||\mathcal{G}||_{L^{\infty}(S_{u,\underline{u}})}\lesssim \sum_{0\leq I\leq 2}|||u|^{I-1}\widehat{\nabla}^{I}\mathcal{G}||_{L^{2}(S_{u,\underline{u}})},
\end{eqnarray}
which, in scale-invariant  norms, reads
\begin{eqnarray}
||\mathcal{G}||_{{L}^{\infty}_{sc}(S_{u,\underline{u}})}\lesssim \sum_{0\leq I\leq 2}||(a^{\frac{1}{2}}\widehat{\nabla})^{I}\mathcal{G}||_{{L}^{2}_{sc}(S_{u,\underline{u}})}.
\end{eqnarray}
\end{proposition}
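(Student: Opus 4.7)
The plan is to iterate the two gauge-invariant Sobolev inequalities that were just established, combining the $L^\infty$--$L^p$ embedding with the $L^p$--$L^2$ embedding and then translating the result into scale-invariant norms. Because both estimates are gauge-invariant and the constants depend on the $2$-surface only through $I(S_{u,\ubar})$ and $\mathrm{Area}(S_{u,\ubar})$, the inequalities from Propositions \ref{31}--\ref{34} and bound \eqref{37} on the isoperimetric constant provide uniform control on these geometric quantities in our entire slab $\mathcal{D}$.

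First I would fix some $p\in(2,\infty)$, say $p=4$ for concreteness, and apply inequality \eqref{eq:Linfty} to $\mathcal{G}$. This produces
\begin{equation*}
\|\mathcal{G}\|_{L^\infty(S_{u,\ubar})} \lesssim \mathrm{Area}(S_{u,\ubar})^{\frac{1}{4}}\left(\|\widehat{\nabla}\mathcal{G}\|_{L^{4}(S_{u,\ubar})} + \mathrm{Area}(S_{u,\ubar})^{-\frac{1}{2}}\|\mathcal{G}\|_{L^{4}(S_{u,\ubar})}\right),
\end{equation*}
where I have already absorbed $(\max(1,I(S_{u,\ubar})))^{1/2}$ into the implicit constant using \eqref{37}. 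Next I apply \eqref{eq:Lp} (with $p=4$) once to $\mathcal{G}$ and once to $\widehat{\nabla}\mathcal{G}$ (the latter is itself a section of a bundle of the same type with $N$ raised by one, so the gauge-invariant inequality applies verbatim). This yields
\begin{equation*}
\mathrm{Area}(S_{u,\ubar})^{-\frac{1}{4}}\|\mathcal{G}\|_{L^{4}(S_{u,\ubar})} \lesssim \|\widehat{\nabla}\mathcal{G}\|_{L^{2}(S_{u,\ubar})} + \mathrm{Area}(S_{u,\ubar})^{-\frac{1}{2}}\|\mathcal{G}\|_{L^{2}(S_{u,\ubar})},
\end{equation*}
and the analogous inequality for $\widehat{\nabla}\mathcal{G}$ in terms of $\widehat{\nabla}^{2}\mathcal{G}$ and $\widehat{\nabla}\mathcal{G}$ in $L^{2}$. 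Substituting into the previous display collapses the $L^{4}$ norms and leaves
\begin{equation*}
\|\mathcal{G}\|_{L^\infty(S_{u,\ubar})} \lesssim \|\widehat{\nabla}^{2}\mathcal{G}\|_{L^{2}(S_{u,\ubar})} + \mathrm{Area}(S_{u,\ubar})^{-\frac{1}{2}}\|\widehat{\nabla}\mathcal{G}\|_{L^{2}(S_{u,\ubar})} + \mathrm{Area}(S_{u,\ubar})^{-1}\|\mathcal{G}\|_{L^{2}(S_{u,\ubar})}.
\end{equation*}

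At this point I invoke Proposition \ref{34}, which (together with the trivial initial area $\mathrm{Area}(S_{u,0})\approx |u|^{2}$ and the smallness of the correction $\Gamma^{1/2}/\al$) gives the two-sided bound $\mathrm{Area}(S_{u,\ubar}) \approx |u|^{2}$ throughout $\mathcal{D}$. Substituting $\mathrm{Area}(S_{u,\ubar})^{-1/2}\approx |u|^{-1}$ and $\mathrm{Area}(S_{u,\ubar})^{-1}\approx |u|^{-2}$ produces exactly the first inequality
\begin{equation*}
\|\mathcal{G}\|_{L^\infty(S_{u,\ubar})} \lesssim \sum_{0\leq I\leq 2}\||u|^{I-1}\widehat{\nabla}^{I}\mathcal{G}\|_{L^{2}(S_{u,\ubar})}.
\end{equation*}
To convert to scale-invariant norms I multiply both sides by $a^{-s_{2}(\mathcal{G})}|u|^{2s_{2}(\mathcal{G})+1}$. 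On the left this reproduces $\|\mathcal{G}\|_{\mathcal{L}^{\infty}_{sc}(S_{u,\ubar})}$. On the right, the factor $|u|^{I-1}$ combines with the weight to give precisely $a^{-s_{2}(\mathcal{G})}|u|^{2s_{2}(\mathcal{G})}\cdot |u|^{I}$, and after writing $|u|^{I} = (\al)^{I}\cdot(|u|/\al)^{I}$ and noting that $\widehat{\nabla}^{I}\mathcal{G}$ has signature $s_{2}(\mathcal{G})+I/2$, the remaining $a$-weights are absorbed exactly into the definition of $\|(\al\widehat{\nabla})^{I}\mathcal{G}\|_{L^{2}_{sc}(S_{u,\ubar})}$, yielding the second inequality.

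There is no substantive obstacle here: the proof is essentially a two-step iterated Sobolev embedding, and the only delicate point is the verification that the signature bookkeeping works out cleanly when passing to scale-invariant norms, which it does precisely because $s_{2}(\widehat{\nabla}\mathcal{G})-s_{2}(\mathcal{G})=\tfrac{1}{2}$ matches the power of $\al$ attached to each derivative on the right-hand side.
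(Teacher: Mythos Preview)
Your proof is correct and follows exactly the approach indicated in the paper: apply \eqref{eq:Linfty} with $p=4$, then feed the resulting $L^4$ norms through \eqref{eq:Lp}, and finally insert the area comparison $\mathrm{Area}(S_{u,\ubar})\approx |u|^2$ together with the isoperimetric bound \eqref{37}. The paper's own proof is a single sentence to the same effect, and your scale-invariant bookkeeping is accurate.
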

\begin{proof} Substitute $p=4$ in the previous proposition (\ref{eq:Linfty}) and estimate the right hand side by means of (\ref{eq:Lp}), the estimate on the area given by Proposition \ref{34} and on the isoperimetric constant of $S_{u,\underline{u}}$ given by Proposition \ref{37}. \end{proof}

\section{Scale-invariant estimates on the Ricci coefficients and Yang-Mills components}
\subsection{Preliminary estimates}

In this section we list some necessary and useful estimates, to be used extensively in the following sections. Let \[\Y \in \begin{Bmatrix} \frac{1}{\al}\alphaF, \rhoF, \sigmaF, \alphabarF\end{Bmatrix}, \hspace{2mm}  \Psi \in \begin{Bmatrix} \frac{\alpha}{\al},\tbeta, \rho, \sigma, \tbetabar, \alphabar \end{Bmatrix}, \hspace{2mm} \tilde{\Y} \in \begin{Bmatrix} \frac{1}{\al}\hnabla_4 \alphaF, \frac{a}{\lvert u \rvert} \hnabla_3 \alphabarF \end{Bmatrix}, \]\[ \psi_g  \in \begin{Bmatrix} \frac{a}{\lvert u \rvert^2}\tr\chibar, \frac{1}{\al}\chihat,\frac{\al}{\lvert u \rvert}\chibarhat, \frac{a}{\lvert u \rvert}\tildetr, \omegabar,\zeta, \etabar, \eta, \omega, \tr\chi\end{Bmatrix}. \]Finally, we note that by  $\nablap$ we mean an expression of the form \[ \nabla^{k_1}\psi_g \dots \nabla^{k_{i_2}} \psi_g,        \]where $k_1+\dots +k_{i_2} = i_1$.

\begin{proposition}
Under the assumptions of Theorem \ref{main1} and the bootstrap assumptions \eqref{bootstrap}, the following hold:
\[ \sum_{i_1+i_2\leq N+4} \scaletwoSu{(\al)^{i_1+i_2}\nabla^{i_1}\psi_g^{i_2}}\leq \lvert u \rvert,        \]\[ \sum_{i_1+i_2+i_3 \leq N+4} \scaletwoSu{(\al)^{i_1+i_2+i_3}\nabla^{i_1}\psi_g^{i_2}\nabla^{i_3}(\psi_g, \Y) } \leq \Gamma,   \] \[   \sum_{i_1+i_2+i_3 +i_4 \leq N+4} \scaletwoSu{(\al)^{i_1+i_2+i_3+i_4}\nabla^{i_1}\psi_g^{i_2}\hnabla^{i_3}(\psi_g, \Y)\hnablaf (\psi_g, \Y)   } \leq \frac{\Gamma^2}{\lvert u \rvert},     \]\[   \sum_{i_1+i_2+i_3 +i_4+i_5 \leq N+5} \scaletwoSu{(\al)^{i_1+i_2+i_3+i_4+i_5}\nabla^{i_1}\psi_g^{i_2}\hnabla^{i_3}(\psi_g, \Y)\hnablaf (\psi_g, \Y) \hnablaF (\psi_g, \Y)  } \leq \frac{\Gamma^3}{\lvert u \rvert^2},
 \] \[\sum_{i_1+i_2+i_3\leq N+3} \scaletwoSu{(\al)^{i_1+i_2+i_3} \nablap \nablat \Psi} \leq \Gamma, \], \[ \sum_{i_1+i_2+i_3 +i_4\leq N+3}  \scaletwoSu{ (\al)^{i_1+i_2+i_3+i_4} \nablap \hnablat(\psi_g, \Y)  \nablaf \Psi } \leq \frac{\Gamma^2}{\lvert u \rvert}, \]
\[ \sum_{i_1+i_2+i_3 +i_4+i_5 \leq N+3}  \scaletwoSu{ (\al)^{i_1+i_2+i_3+i_4+i_5} \nablap \hnablat(\psi_g, \Y)  \hnablaf(\psi_g,\Y)\nablaF \Psi } \leq \frac{\Gamma^3}{\lvert u \rvert^2}, \]\[\sum_{i_1+i_2+i_3\leq N+2} \scaletwoSu{(\al)^{i_1+i_2+i_3} \nablap \hnablat \tilde{\Y}} \leq \Gamma ,\]\[ \sum_{i_1+i_2+i_3+i_4 \leq N+2} \scaletwoSu{(\al)^{i_1+i_2+i_3+i_4} \nablap \hnablat(\psi_g, \Y, \Psi) \hnablaf \tilde{\Y}} \leq \frac{\Gamma^2}{\lvert u \rvert}.\]
\end{proposition}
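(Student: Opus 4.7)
The plan is to establish each of the six bounds by iterating the scale-invariant H\"older inequality \eqref{257} together with the $L^{2}$-$L^{\infty}$ Sobolev embedding of Proposition \ref{Sobolev}, driven by the bootstrap hypothesis \eqref{bootstrap}. Under that hypothesis, $\bbGamma\le\Gamma$ furnishes $L^{\infty}_{(sc)}$ control on $\psi_g$ and $\Y$ up to $N$ derivatives and $L^{2}_{(sc)}$ control up to $N+4$ derivatives, whereas $\mathcal{R}\le R$ and $\mathbb{YM}\le M$ furnish only $L^{2}_{(sc)}$ control on $\Psi$, $\alpha^{F}$ and $\tilde{\Y}$ (up to $N+3$ and $N+2$ derivatives respectively). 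The signature conservation identity \eqref{sc} ensures that the $a$- and $|u|$-weights on the two sides of every inequality automatically match, so the only quantities one must track are the $|u|^{-1}$ factors gained with each application of H\"older.

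After Leibniz-expansion, each product on the left becomes a finite sum of monomials $\nabla^{k_1}\phi_1\cdots\nabla^{k_m}\phi_m$ with $\sum_j k_j$ equal to the total derivative count. For every such monomial I would place one distinguished factor in $L^{2}_{(sc)}$ and the remaining $m-1$ in $L^{\infty}_{(sc)}$, each use of \eqref{257} contributing a factor of $|u|^{-1}$. When a curvature component $\Psi$ or a top-derivative Yang-Mills term $\tilde{\Y}$ appears, that factor is forced to be the distinguished one, since only $L^{2}_{(sc)}$ bounds are available for it; otherwise the distinguished factor is the one carrying the largest $k_j$. The $L^{\infty}_{(sc)}$ bounds on the non-distinguished factors are then read off either directly from $\bbGamma_{i,\infty}$ (when $k_j\le N$) or via Proposition \ref{Sobolev} applied to $\bbGamma_{i,2}$ (when $N<k_j\le N+2$, using two spare derivatives). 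A pigeonhole count shows that with $\sum k_j\le N+4$ (respectively $N+3$ or $N+2$) and $m\ge 2$, at most one $k_j$ can exceed $N+2$, and that factor is precisely the one we have placed in $L^{2}_{(sc)}$.

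Running this procedure through, every monomial contributes at most $\Gamma^{m}/|u|^{m-1}$ in the pure $\psi_g\vee\Y$ cases and at most $\Gamma^{m-1}R/|u|^{m-1}$ (respectively $\Gamma^{m-1}M/|u|^{m-1}$) when a curvature or top-derivative Yang-Mills factor is present, matching the stated right-hand sides term by term. The extra $|u|$ appearing on the right of the first inequality originates entirely from the degenerate monomial $i_1=i_2=0$, where $\|1\|_{L^{2}_{(sc)}(S_{u,\ubar})}\sim|u|$. The main obstacle is the combinatorial bookkeeping: for each of the six inequalities one has to verify, case by case, that the thresholds $(N,\,N+2,\,N+3,\,N+4)$ are mutually compatible with the admissible distributions of derivatives, and in particular that when a $\Psi$-factor of lower regularity is present the total derivative count never forces Sobolev onto it. The smallness condition $\Gamma\ll|u|$, which follows from $|u|\ge a/4$ and the last line of \eqref{bootstrap}, is what ultimately allows the finitely many resulting contributions $\Gamma^{m}/|u|^{m-1}$ to be absorbed into the stated right-hand sides.
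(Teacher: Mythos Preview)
Your approach is essentially the same as the paper's: both prove the inequalities by iterating the scale-invariant H\"older inequality \eqref{257}, placing one factor (the one with the most derivatives, or the curvature/Yang-Mills factor when present) in $L^2_{(sc)}$ and the rest in $L^\infty_{(sc)}$, then invoking the bootstrap assumption \eqref{bootstrap}. The paper explicitly works out only the first two inequalities and declares the rest analogous; your pigeonhole bookkeeping and the observation that the degenerate term $i_1=i_2=0$ produces the $|u|$ on the right of the first inequality match the paper's computation. One minor difference: when the $\psi_g^{i_2}$ block itself carries many derivatives ($i_1\ge N+3$), the paper bounds the entire block in $L^2_{(sc)}$ by appealing to the already-proved first inequality and puts the extra $(\psi_g,\Y)$ factor in $L^\infty_{(sc)}$, rather than isolating a single highest-derivative $\psi_g$; but this is a cosmetic variant of the same distribution argument.
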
\begin{proof}
We only prove the first two statements and the rest follow in the same way (see also [An] for more details), taking into account the bootstrap assumptions \eqref{bootstrap} and the fact that these assumptions help us control up to $N+4$ derivatives of $(\psi_g, \Y)$, $N+3$ derivatives of $\Psi$ and $N+2$ derivatives of $\tilde{\Y}$. Working on the first inequality, we see that if $i_2=0$ then it trivially holds, since \[  \scaletwoSu{1}= \lvert u\rvert.  \] If $i_2 \geq 1$, we write $\nablap$ as  $\nabla^{k_1}\psi_g \dots \nabla^{k_{i_2}} \psi_g,      $ for $k_1+\dots +k_{i_2}= i_1$. Assume, without loss of generality, that $k_{i_2}$ is the largest among the coefficients.  We then write \[ (\al)^{i_1+i_2}\nablap = (\al)^{i_2} (\al \nabla)^{k_{i_2}}\psi_g \Pi_{\lambda=1}^{i_2-1}(\al \nabla)^{k_{\lambda}} \psi_g \]and proceed to bound $(\al \nabla)^{k_{i_2}}\psi_g$ in $L^2_{(sc)}$ and the rest of the terms in $L^{\infty}_{(sc)}$. By iterating \eqref{257} a total of $(i_2-1)$ times, we obtain

\begin{equation}
\begin{split}
&\frac{1}{\lvert u \rvert}\sum_{i_1+i_2 \leq N+4} \scaletwoSu{(\al)^{i_1+i_2}\nablap} \\ \leq & \frac{1}{\lvert u \rvert} \sum_{i_1+i_2 \leq N+4} \frac{(\al)^{i_2}}{\lvert u \rvert^{i_2-1}} \scaletwoSu{(\al \nabla)^{k_{i_2}}\psi_g}\Pi_{\lambda=1}^{i_2-1}\scaleinfinitySu{(\al \nabla)^{k_{\lambda}} \psi_g}\\ \leq &\left(\frac{\al \Gamma}{\lvert u \rvert}\right)^{i_2}\leq 1.
\end{split}
\end{equation}Moving on to the second inequality, if $i_2=0$, it trivially holds by the definition of $\Gamma$. If $i_2 \geq 1$, in the case where $i_1 \leq N+2$, we bound $(\al \nabla)^{i_4}(\psi_g, \Y) $ in $L^2_{(sc)}$ and the rest of the terms in $L^{\infty}_{(sc)}$, thus getting a bound of \[\frac{1}{\lvert u \rvert} \cdot \scaleinfinitySu{(\al)^{i_1+i_2} \nablap} \cdot \scaletwoSu{(\al \nabla)^{i_3}(\psi_g, \Y)}\leq\frac{(\al)^{i_2}\Gamma^{i_2+1}}{\lvert u \rvert^{i_2}}\leq \Gamma. \]If, on the other hand, $i_1 \geq N+3$, we bound $(\al \nabla)^{i_4}(\psi_g, \Y) $ in $L^{\infty}_{(sc)}$ and the rest of the expression in $L^{2}_{(sc)}$. Using the bound obtained in the first inequality of the proposition, we then arrive at a bound of $\frac{1}{\lvert u \rvert} \cdot \lvert u \rvert \cdot \Gamma = \Gamma$. The result follows.
\end{proof}

\subsection{Estimates on the Ricci coefficients}

\begin{proposition}\label{omegaprop}
Under the assumptions of Theorem \ref{main1} and the bootstrap assumptions \eqref{bootstrap}, there holds \[ \sum_{i\leq N+4} \scaletwoSu{\aln \omega} \lesssim \frac{\al}{\lvert u \rvert^{\frac{1}{2}}}\left(1+\hsp \underline{\mathcal{R}}[\rho]\right). \]
\end{proposition}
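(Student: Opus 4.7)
\smallskip\noindent\textbf{Proof plan.} The starting point is the null structure equation
\[ \nabla_{3}\omega=2\omega\underline{\omega}+\frac{3}{4}|\eta-\underline{\eta}|^{2}+\frac{1}{4}(\eta-\underline{\eta})\cdot(\eta+\underline{\eta})-\frac{1}{8}|\eta+\underline{\eta}|^{2}+\frac{1}{2}\rho+\frac{1}{4}\mathfrak{T}_{43}, \]
which is a transport equation for $\omega$ along $\nabla_3$. For each $0\le i\le N+4$ I would commute $\nabla^{i}$ through this equation via Lemma \ref{commutation} and the Remark following it; since $\omega$ is a scalar under the Yang-Mills bundle, the non-abelian commutator terms $C_{1},C_{2}$ drop out, and the result takes the schematic form
\[ \nabla_{3}\nabla^{i}\omega+\tfrac{i}{2}\,\tr\underline{\chi}\,\nabla^{i}\omega\;\sim\;\sumif\nablap\nablat(\psi_{g},\Y)\,\nablaf(\psi_{g},\Y)\;+\;\nabla^{i}\rho\;+\;\nabla^{i}\mathfrak{T}_{43}, \]
with $\mathfrak{T}_{43}$ itself quadratic in the Yang-Mills null components.

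\smallskip\noindent With this identity in hand, I would apply the weighted transport inequality of Proposition \ref{36} with $\lambda_{0}=i/2$ (hence $\lambda_{1}=i-1$), integrating in $u'$ from $u_{\infty}$ to $u$. At $u'=u_{\infty}$ the initial-data assumption of Theorem \ref{main1} controls the boundary term by $\mathcal{I}$, which in scale-invariant norms yields an acceptable $\al$-contribution. The nonlinear quadratic error terms are handled by the preliminary product estimates of Section~4.1: they produce bounds of size $\Gamma^{2}/|u'|$ in $\scaletwoSuprime{\cdot}$, and integrating these against the weight $a/|u'|^{2}$ over $[u_{\infty},u]$, while exploiting $|u|\le|u_{\infty}|\lesssim a$ to absorb one factor of $a$, returns a contribution $\lesssim\al/|u|^{1/2}$ that is absorbed into the right-hand side.

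\smallskip\noindent The genuinely delicate step is the treatment of the linear $\rho$ term: its scale-invariant $L^{2}(S_{u',\ubar})$-norm cannot be bounded pointwise in $u'$ strongly enough to survive the $u'$-integration. Instead I would split it off and apply Cauchy-Schwartz in $u'$:
\[ \int_{u_{\infty}}^{u}\frac{a}{|u'|^{2}}\scaletwoSuprime{\aln\rho}\,\duprime\;\le\;\left(\int_{u_{\infty}}^{u}\frac{a^{2}}{|u'|^{3}}\,\duprime\right)^{\!1/2}\left(\int_{u_{\infty}}^{u}\frac{1}{|u'|}\scaletwoSuprime{\aln\rho}^{2}\,\duprime\right)^{\!1/2}. \]
The first factor is $\lesssim \al/|u|^{1/2}$, while the second, after reconciling the $|u'|$-weight with the definition of the hypersurface norm $\shb$, is exactly $\underline{\mathcal{R}}_{i}[\rho]$. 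This produces the announced factor $\al|u|^{-1/2}(1+\underline{\mathcal{R}}[\rho])$.

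\smallskip\noindent\textbf{Main obstacle.} The critical subtlety is the scale-invariant weight bookkeeping in the Cauchy-Schwartz step, so that the $u'$-weight implicit in the definition of $\scaletwoHbaru{\cdot}$ matches the factor $1/|u'|$ produced by the splitting, placing the $\rho$-integral genuinely inside the $\underline{\mathcal{R}}$-norm (rather than an auxiliary weighted variant). A secondary difficulty arises at top order $i=N+4$, where one of the nonlinear factors in the first sum must carry nearly all the derivatives; here one must use the $L^{2}_{sc}$--$L^{\infty}_{sc}$ (rather than the $L^{\infty}_{sc}$--$L^{\infty}_{sc}$) version of the product estimates from Proposition~4.1 so as to stay within the derivative budget controlled by $\Gamma$.
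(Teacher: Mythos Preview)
Your overall approach matches the paper's: commute the $\nabla_3$-equation for $\omega$ with $\nabla^i$, apply the weighted transport inequality of Proposition~\ref{36}, and control the linear $\rho$-term by Cauchy--Schwartz in $u'$ against the incoming-hypersurface norm $\underline{\mathcal{R}}[\rho]$. Two points deserve correction.

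First, the boundary term at $u'=u_\infty$ actually \emph{vanishes}: since $\Omega\equiv 1$ along $H_{u_\infty}$ by the gauge choice, one has $\omega=-\tfrac12\nabla_4(\log\Omega)=0$ there, so $\nabla^i\omega=0$ on all of $H_{u_\infty}$. This is a cleaner observation than appealing to $\mathcal I$.

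Second, and more importantly, the Cauchy--Schwartz split you wrote is mis-weighted. Your second factor is $\bigl(\int_{u_\infty}^u |u'|^{-1}\scaletwoSuprime{\aln\rho}^2\,du'\bigr)^{1/2}$, whereas the definition of the incoming-hypersurface norm carries the weight $a/|u'|^2$, i.e.\ $\underline{\mathcal{R}}_i[\rho]^2=\int_{u_\infty}^u \frac{a}{|u'|^2}\scaletwoSuprime{\aln\rho}^2\,du'$. The ratio of these weights is $|u'|/a$, which is \emph{unbounded} as $u'\to u_\infty\to-\infty$, so your second factor is not controlled by $\underline{\mathcal R}[\rho]$. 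The fix is to use the symmetric split
\[
\int_{u_\infty}^u \frac{a}{|u'|^2}\scaletwoSuprime{\aln\rho}\,du' \;\le\; \left(\int_{u_\infty}^u \frac{a}{|u'|^2}\,du'\right)^{1/2}\left(\int_{u_\infty}^u \frac{a}{|u'|^2}\scaletwoSuprime{\aln\rho}^2\,du'\right)^{1/2},
\]
whose first factor is $\lesssim\al/|u|^{1/2}$ and whose second factor is exactly $\underline{\mathcal R}_i[\rho]$. With this correction your argument goes through.
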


\begin{proof}
We begin by recalling that $\omega$ satisfies the schematic equation

\[\nabla_3 \omega =\frac{1}{2}\rho+ \psi_g \psi_g + \mathcal{Y}_{\ubar} \mathcal{Y}_{\ubar}.\]Crucially, the schematic product $\mathcal{Y}_{\ubar} \mathcal{Y}_{\ubar}$ is \textbf{gauge-invariant} (in this case, it equals $\lvert \rho^F \rvert^2 + \lvert \sigma^F \rvert^2$). Using the commutation formula \eqref{c2} and the notation of Section \ref{Norms}, we have, for a general $i$:

\begin{equation}
    \begin{split}
        \nabla_3\nabla^i\omega + \frac{i}2\tr\chibar \nabla^i \omega = &\nabla^i\rho + \sum_{i_1+i_2+i_3=i-1}\nabla^{i_1}\psi_g^{i_2+1}\nabla^{i_3}\rho+ \sum_{i_1+i_2+i_3+i_4=i} \nabla^{i_1}\psi_g^{i_2}\nabla^{i_3}\psi_g \nabla^{i_4}\psi_g  \\ &+ \sum_{i_1+i_2+i_3+i_4=i} \nabla^{i_1}\psi_g^{i_2}\hnabla^{i_3}\mathcal{Y}_{\ubar} \hnabla^{i_4}\mathcal{Y}_{\ubar} + \sum_{i_1+i_2+i_3+i_4=i} \nabla^{i_1}\psi_g^{i_2}\nabla^{i_3}(\chibarhat,\tildetr)\nabla^{i_4}\omega\\&+ \sum_{i_1+i_2+i_3+i_4=i-1}\nabla^{i_1}\psi_g^{i_2+1}\nabla^{i_3}\tr\chibar \nabla^{i_4}\omega \\&+ \sum_{i_1+i_2+i_3+i_4=i-2}\nabla^{i_1}\psi_g^{i_2+1}\nabla^{i_3}(\chibarhat,\tr\chibar)\nabla^{i_4}\omega \\&+\sum_{i_1+i_2+i_3+i_4+i_5=i-1}\nabla^{i_1}\psi_g^{i_2}\hnabla^{i_3}\mathcal{Y}_{\ubar}\hnabla^{i_4}\mathcal{Y}_{\ubar}\nabla^{i_5}\omega.
    \end{split}
\end{equation}Note that since $\omega$ is not a section of the vector bundle $~^{k}\otimes T^{*}S\otimes P_{Ad,\mathfrak{g}}$, the last term on the right-hand side of \eqref{c2} does not appear. Passing to scale-invariant norms, we get 
\begin{align}
     \nonumber     &\scaletwoSu{\aln \omega}\\  \nonumber \lesssim &\lVert (\al\nabla)^i\omega \rVert_{L^2_{(sc)}(S_{u_\infty},0)} + \intu \frac{a}{\upr^2}\scaletwoSuprime{\aln \rho}\duprime\\  \nonumber &+ \intu \sum_{i_1+i_2+i_3=i-1} \frac{a}{\upr^2}\scaletwoSuprime{(\al)^i \nabla^{i_1}\psi_g^{i_2+1}\nabla^{i_3}\rho}\duprime \\  \nonumber &+ \intu \sum_{i_1+i_2+i_3+i_4=i} \frac{a}{\upr^2}\scaletwoSuprime{(\al)^i \nabla^{i_1}\psi_g^{i_2}\nabla^{i_3} \psi_g \nabla^{i_4}\psi_g} \duprime \\  \nonumber &+ \intu \sum_{i_1+i_2+i_3+i_4=i} \frac{a}{\upr^2}\scaletwoSuprime{(\al)^i \nabla^{i_1}\psi_g^{i_2}\hnabla^{i_3} \mathcal{Y}_{\ubar} \hnabla^{i_4}\mathcal{Y}_{\ubar}} \duprime \\  \nonumber &+\intu \sum_{i_1+i_2+i_3+i_4=i} \frac{a}{\upr^2}\scaletwoSuprime{(\al)^i \nabla^{i_1}\psi_g^{i_2}\nabla^{i_3}(\chibarhat, \tildetr) \nabla^{i_4}\omega} \duprime \\  \nonumber &+\intu \sum_{i_1+i_2+i_3+i_4=i-1} \frac{a}{\upr^2}\scaletwoSuprime{(\al)^i \nabla^{i_1}\psi_g^{i_2+1}\nabla^{i_3}\tr\chibar \nabla^{i_4}\omega} \duprime \\  \nonumber &+\intu \sum_{i_1+i_2+i_3+i_4=i-2} \frac{a}{\upr^2}\scaletwoSuprime{(\al)^i \nabla^{i_1}\psi_g^{i_2+1}\nabla^{i_3}(\chibarhat, \tr\chibar) \nabla^{i_4}\omega} \duprime \\ &+\intu \sum_{i_1+i_2+i_3+i_4+i_5=i-1} \frac{a}{\upr^2}\scaletwoSuprime{(\al)^i \nabla^{i_1}\psi_g^{i_2}\hnabla^{i_3}\mathcal{Y}_{\ubar}\hnabla^{i_4}\mathcal{Y}_{\ubar} \nabla^{i_5}\omega} \duprime.
    \end{align} For $0\leq i \leq N+4$, the first term, by virtue of the fact that $\Omega\equiv 1$ initially, vanishes. The second term can be bounded, using H\"older's inequality, by $ \frac{\al}{\lvert u \rvert^{\frac{1}{2}}} \underline{\mathcal{R}}[\rho].$ The third, fourth and fifth terms can be bounded above by
\[\intu \frac{a}{\upr^2}\frac{\Gamma^2}{\upr}\duprime \leq \frac{a\hsp  \Gamma^2}{\lvert u \rvert^2}\leq \frac{\al}{\lvert u \rvert^{\frac{1}{2}}}.\]The sixth term is controlled as follows:

\begin{equation}
    \begin{split}
        &\intu \sum_{i_1+i_2+i_3+i_4=i} \frac{a}{\upr^2}\scaletwoSuprime{(\al)^i \nabla^{i_1}\psi_g^{i_2}\nabla^{i_3}(\chibarhat, \tildetr) \nabla^{i_4}\omega} \duprime \\ \lesssim &\intu \frac{a}{\upr^2} \frac{\upr}{\al} \frac{\Gamma^2}{\upr}\duprime
 \lesssim \frac{\al \hsp \Gamma^2}{\lvert u \rvert} \lesssim \frac{\al}{\lvert u \rvert^{\frac{1}{2}}}.  \end{split}
\end{equation}The seventh term is controlled as follows:

\begin{align}
     \nonumber     &\intu \sum_{i_1+i_2+i_3+i_4=i-1} \frac{a}{\upr^2}\scaletwoSuprime{(\al)^i \nabla^{i_1}\psi_g^{i_2+1}\nabla^{i_3}\tr\chibar \nabla^{i_4}\omega} \duprime \\ \lesssim &\intu \frac{a}{\upr^2} \frac{\upr^2}{a} \frac{\Gamma^3}{\upr^2}\duprime
 \lesssim \frac{\Gamma^3}{\lvert u \rvert} \lesssim \frac{\al}{\lvert u \rvert^{\frac{1}{2}}}.  \end{align} For the eighth and most borderline term, we estimate:

\begin{align}
       \nonumber   &\intu \sum_{i_1+i_2+i_3+i_4=i-2} \frac{a}{\upr^2}\scaletwoSuprime{(\al)^i \nabla^{i_1}\psi_g^{i_2+1}\nabla^{i_3}(\chibarhat,\tr\chibar) \nabla^{i_4}\omega} \duprime \\ \nonumber  =  &\intu \sum_{i_1+i_2+i_3+i_4=i-2} \frac{a^{\frac{3}{2}}}{\upr^2}\scaletwoSuprime{(\al)^{i-1} \nabla^{i_1}\psi_g^{i_2+1}\nabla^{i_3}(\chibarhat,\tr\chibar) \nabla^{i_4}\omega} \duprime \\ \lesssim &\intu \frac{a^{\frac{3}{2}}}{\upr^2} \frac{\upr^2}{a} \frac{\Gamma^3}{\upr^2}\duprime
 \lesssim \frac{\al\hsp \Gamma^3}{\lvert u \rvert} \lesssim \frac{\al}{\lvert u \rvert^{\frac{1}{2}}}.  \end{align}Finally, the ninth term can be estimated as follows:

\begin{align}
       \nonumber   &\intu \sum_{i_1+i_2+i_3+i_4+i_5=i-1} \frac{a}{\upr^2}\scaletwoSuprime{(\al)^i \nabla^{i_1}\psi_g^{i_2}\hnabla^{i_3}\mathcal{Y}_{\ubar}\hnabla^{i_4}\mathcal{Y}_{\ubar} \nabla^{i_5}\omega} \duprime\\\lesssim &\intu\frac{a^{\frac{3}{2}}}{\upr^2}\frac{\Gamma^3}{\upr^2}\duprime \lesssim  \frac{\al}{\lvert u \rvert^{\frac{1}{2}}}.
    \end{align}
The result follows.
\end{proof}

We move on to estimates for $\chibarhat$ and $\chihat$, which are the same as in [AnAth] or [An2019], because of the absence of matter terms in the corresponding structure equations.

\begin{proposition}\label{chihats}
Under the assumptions of Theorem \ref{main1} and the bootstrap assumptions \eqref{bootstrap}, there hold

\[ \sum_{i\leq N+4} \frac{\al}{\lvert u \rvert}\scaletwoSu{\aln \chibarhat} \lesssim 1, \hspace{2mm} \sum_{i\leq N+4}\frac{1}{\al}\scaletwoSu{\aln\chihat} \lesssim \mathcal{R}[\alpha]+1.\]
\end{proposition}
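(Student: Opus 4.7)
The strategy is parallel to that employed for the vacuum equations in [An2019] and the Einstein--Maxwell system in [AnAth], since the specific null structure equations one exploits for $\chihat$ and $\chibarhat$ do not contain any direct Weyl matter couplings beyond what appears in vacuum. In each case the plan is to commute the appropriate $\nabla_4$ transport equation with $(\al\nabla)^i$, integrate from $\ubar=0$ where the trivial initial data cause $\chihat$ and $\chibarhat$ to vanish, and then control the resulting error terms through the Preliminary Estimates of the previous subsection.

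For the bound on $\chihat$, I would commute
\[
\nabla_4 \chihat + \tr\chi\, \chihat = -2\omega \chihat - \alpha
\]
with $(\al\nabla)^i$ using Lemma \ref{commutation} and apply Proposition \ref{3.5} along $e_4$. The principal inhomogeneity $(\al\nabla)^i \alpha$ contributes $\mathcal{R}_i[\alpha]$ after Cauchy--Schwarz in $\ubar^{\prime}$, while the commutator and nonlinear terms---schematically of the form $\sum_{i_1+i_2+i_3=i}(\al)^{i_1+i_2+i_3}\nabla^{i_1}\psi_g^{i_2+1}\nabla^{i_3}(\chihat,\psi_g)$---are controlled by the Preliminary Estimates Proposition and yield a contribution bounded by $\tfrac{|u|}{\al}$. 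Multiplying by $\tfrac{1}{\al}$ gives the claimed $\lesssim \mathcal{R}[\alpha]+1$.

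For $\chibarhat$, since the target inequality must be independent of $\mathcal{R}[\alphabar]$, one cannot use the $\nabla_3$ equation for $\chibarhat$. Instead I would commute the $\nabla_4$ equation
\[
\nabla_4 \chibarhat + \tfrac{1}{2}\tr\chi\, \chibarhat = \nabla \hot \etabar + 2\omega \chibarhat - \tfrac{1}{2}\tr\chibar\, \chihat + \etabar \hot \etabar + \hat{\mathfrak{T}}_{ab}
\]
with $(\al\nabla)^i$, integrate from $\ubar=0$, and handle each right-hand-side term via the Preliminary Estimates. The only genuine matter term here is $\hat{\mathfrak{T}}_{ab}$, which is a gauge-invariant quadratic expression in the Yang-Mills null components; under the scale-invariant framework this is entirely analogous to the electromagnetic quadratic term treated in [AnAth] and is absorbed using the $\mathbb{YM}$ bootstrap. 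Assembling the contributions produces a bound of order $\tfrac{|u|}{\al}$, which upon multiplication by $\tfrac{\al}{|u|}$ yields the stated $\lesssim 1$.

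The main obstacle, as usual, is at the top level $i=N+4$: the term $\nabla\hot\etabar$ on the right-hand side of the $\nabla_4\chibarhat$ equation produces $\nabla^{N+5}\etabar$ after commutation, which is one derivative above what $\bbGamma$ controls. This loss of derivative is recovered through the elliptic top-order norm $\mathcal{O}_{N+5,2}$ introduced in Section \ref{Norms}, which furnishes an $L^2_{(sc)}$ bound on $\nabla^{N+5}\etabar$ in terms of $\bbGamma$, $\mathcal{R}$ and $\mathbb{YM}$; with this input the closure of the transport estimate is routine and proceeds identically to the treatment in [An2019, AnAth].
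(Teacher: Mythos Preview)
Your treatment of $\chihat$ via its $\nabla_4$ equation is correct and matches the paper. The paper's sentence ``because of the absence of matter terms in the corresponding structure equations'' is the tell for $\chibarhat$: the matter-free equation for $\chibarhat$ is the $\nabla_3$ equation
\[
\nabla_3\chibarhat+\tr\chibar\,\chibarhat=-2\omegabar\chibarhat-\alphabar,
\]
not the $\nabla_4$ equation you chose (which does contain $\hat{\mathfrak{T}}_{ab}$). Your assertion that ``one cannot use the $\nabla_3$ equation'' is mistaken: applying Proposition~\ref{3.6} and Cauchy--Schwarz in $u'$, the $\nabla^i\alphabar$ contribution at $i=N+4$ is
\[
\int_{u_\infty}^{u}\frac{a^{3/2}}{|u'|^{3}}\scaletwoSuprime{(\al\nabla)^{N+4}\alphabar}\,du'
\;\lesssim\;\frac{a}{|u|^{3/2}}\,\underline{\mathcal{R}}[\alphabar]\;\lesssim\;\frac{R}{a^{1/2}}\;\ll\;1,
\]
so the $\underline{\mathcal{R}}[\alphabar]$ dependence is absorbed and the final bound is indeed $\lesssim 1$.

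Your $\nabla_4$ route for $\chibarhat$, by contrast, has a genuine gap that you did not flag. The term $-\tfrac12\tr\chibar\,\chihat$ is a double--single anomaly: under only the bootstrap \eqref{bootstrap} one gets
\[
\frac{\al}{|u|}\int_0^{\ubar}\scaletwoSuubarprime{\tr\chibar\cdot\chihat}\,d\ubar'
\;\lesssim\;\frac{\al}{|u|}\cdot\frac{1}{|u|}\cdot\frac{|u|^2}{a}\Gamma\cdot \al\Gamma
\;=\;\Gamma^{2},
\]
which does not close. Closing it would require the improved $\tr\chibar$ bound of Proposition~\ref{trchibarbound}, but that is proved \emph{after} the present proposition in the paper's logical order, so invoking it here is circular. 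Similarly, your appeal to $\mathcal{O}_{N+5,2}$ for $\nabla^{N+5}\etabar$ is out of order: the bootstrap \eqref{ellboot} on $\mathcal{O}_{N+5,2}$ is introduced only in Section~5 and is not among the assumptions \eqref{bootstrap} available in Section~4. The paper's $\nabla_3$ approach sidesteps both issues entirely---no $\nabla\etabar$ term, no $\tr\chibar\,\chihat$ term---which is why it reduces verbatim to the vacuum case [An2019].
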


\begin{proof}
This is the same as in [An2019]. 
\end{proof}
\begin{remark}\label{remarkhat}
As will be shown later on, there holds \[ \sum_{0\leq i \leq N+3}\frac{1}{\al}\scaletwoSu{\aln \alpha}\lesssim 1.\] As a consequence, when $i\neq N+4$, the result of Proposition \ref{chihats} can be improved to 
\[ \sum_{0\leq i \leq N+3} \frac{1}{\al}\scaletwoSu{\aln \chihat} \lesssim 1. \]By Sobolev embedding, this also implies the pointwise estimate

\[ \sum_{0\leq i \leq N+1} \frac{1}{\al}\scaleinfinitySu{\aln \chihat}\lesssim 1.\]This will be useful later on, for example in Proposition \ref{trchiprop}.
\end{remark}
The estimates for $\omegabar$ are, in a sense, dual to those for $\omega$.
\begin{proposition}
Under the assumptions of Theorem \ref{main1} and the bootstrap assumptions \eqref{bootstrap}, there holds
\[ \sum_{i\leq N+4}\scaletwoSu{\aln \omegabar}\leq  \mathcal{R}[\rho]+1.\]
\end{proposition}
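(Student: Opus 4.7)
The plan is to mirror the proof of Proposition \ref{omegaprop}, but with the roles of the two null directions interchanged. The governing transport equation is
\[ \nabla_4 \omegabar = \tfrac{1}{2}\rho + \psi_g\psi_g + \mathcal{Y}_{\ubar}\mathcal{Y}_{\ubar}, \]
where the matter contribution $\tfrac{1}{4}\mathfrak{T}_{43} = \tfrac{1}{4}(|\rho^F|^2+|\sigma^F|^2)$ is manifestly gauge-invariant, so $\omegabar$ can be treated as an ordinary scalar on $S_{u,\ubar}$ without additional care for the Yang-Mills connection.

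First I would commute the equation with $\nabla^i$ using the identity \eqref{c1}; since $\omegabar$ is not a section of the mixed bundle $\otimes^{k}T^*S\otimes P_{Ad,\mathfrak{g}}$, the commutator term $C_1$ drops out and the structure of the source terms exactly parallels that obtained for $\omega$ in Proposition \ref{omegaprop} under the substitutions $(\chibarhat,\tr\chibar,\tildetr,\alphabar^F)\leftrightarrow(\chihat,\tr\chi,\tr\chi,\alpha^F)$. Then I would apply the first transport inequality of Proposition \ref{3.5}, integrating from $\ubar''=0$, where assumption $3)$ of Theorem \ref{main1} forces the initial term to vanish.

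The principal contribution from $\tfrac{1}{2}\rho$ is then handled by Cauchy--Schwarz in $\ubar'$:
\[ \int_0^{\ubar} \scaletwoSuubarprime{\aln \rho}\, d\ubar' \leq \scaletwoHu{\aln \rho} \leq \mathcal{R}[\rho], \]
which produces the $\mathcal{R}[\rho]$ summand on the right-hand side. All other contributions are nonlinear and will be estimated by combining the scale-invariant product bounds of Proposition 4.1 with the scale-invariant H\"older inequalities of Section 2.7. After scaling, each such term carries a positive power of $|u'|^{-1}$ times a bounded power of $\Gamma$, $R$, and $M$; integration in $\ubar'\in[0,1]$ together with $|u'|\geq a/4$ and the bootstrap smallness $(\Gamma+R+M)^{20}\leq a^{1/16}$ then reduces each such term to a negative power of $a$ and absorbs it into the constant $1$.

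The main obstacle I anticipate is purely bookkeeping the derivative distribution in the higher-order terms of type $\nabla^{i_1}\psi_g^{i_2}\nabla^{i_3}(\chihat,\tr\chi)\nabla^{i_4}\omegabar$ and the Yang-Mills commutator contribution involving $\alpha^F$, which carry the weakest $u'$-decay. However, these mirror exactly the corresponding $(\chibarhat,\tr\chibar,\alphabar^F)$-terms in the $\omega$ proof and are absorbed by the identical mechanism used there.
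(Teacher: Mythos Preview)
Your proposal is correct and follows essentially the same approach as the paper: the same schematic $\nabla_4$-equation, the same commutation via \eqref{c1} (with $C_1$ absent since $\omegabar$ is not gauge-valued), the same transport inequality from Proposition \ref{3.5} with vanishing initial data on $\ubar=0$, and the same term-by-term estimates with the $\rho$ contribution controlled by $\mathcal{R}[\rho]$ via Cauchy--Schwarz. Your identification of the borderline commutator terms and their treatment by analogy with Proposition \ref{omegaprop} is also accurate.
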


\begin{proof}
We have, schematically, 

\[ \nabla_4 \omegabar = \frac{1}{2}\rho + \psi_g \psi_g + \mathcal{Y}_{\ubar}\mathcal{Y}_{\ubar}.\] As before, the schematic product of the Yang-Mills components is gauge-invariant. Using the commutation formula \eqref{c1}, we have, for a general $i$:

\begin{equation}
    \begin{split}
        \nabla_4 \nabla^i \omegabar  = &\nabla^i\rho + \sum_{i_1+i_2+i_3=i-1}\nabla^{i_1}\psi_g^{i_2+1}\nabla^{i_3}\rho+ \sum_{i_1+i_2+i_3+i_4=i} \nabla^{i_1}\psi_g^{i_2}\nabla^{i_3}\psi_g \nabla^{i_4}\psi_g  \\ &+ \sum_{i_1+i_2+i_3+i_4=i} \nabla^{i_1}\psi_g^{i_2}\hnabla^{i_3}\mathcal{Y}_{\ubar} \hnabla^{i_4}\mathcal{Y}_{\ubar} +\sum_{i_1+i_2+i_3+i_4=i} \nabla^{i_1}\psi_g^{i_2} \nabla^{i_3}(\chihat,\tr\chi)\nabla^{i_4}\omegabar\\&+ \sum_{i_1+i_2+i_3+i_4=i-2}\nabla^{i_1}\psi_g^{i_2+1}\nabla^{i_3}(\chihat,\tr\chi)\nabla^{i_4}\omegabar \\&+ \sum_{i_1+i_2+i_3+i_4+i_5=i-1}\nabla^{i_1}\psi_g^{i_2}\hnabla^{i_3}\alpha^F \hnabla^{i_4}(\rho^F,\sigma^F)\nabla^{i_5}\omegabar. 
    \end{split}
\end{equation}
Passing to scale-invariant norms, we have, since $\lVert \aln \omegabar \rVert_{L^2_{(sc)}(S_{u,0})}$ vanishes,

\begin{equation}
    \begin{split}
        &\scaletwoSu{\aln\omegabar}\\ \lesssim &\intubar \scaletwoSuubarprime{\aln  \rho} \dubarprime \\+ &\intubar \sum_{i_1+i_2+i_3=i-1} \scaletwoSuubarprime{(\al)^i\nabla^{i_1}\psi_g^{i_2+1}\nabla^{i_3}\rho} \dubarprime \\+&\intubar \sumif \scaletwoSuubarprime{(\al)^i\nabla^{i_1}\psi_g^{i_2}\nabla^{i_3}\psi_g \nabla^{i_4}\psi_g}\dubarprime \\+&\intubar\sumif \scaletwoSuubarprime{(\al)^i\nabla^{i_1}\psi_g^{i_2}\hnabla^{i_3}\mathcal{Y}_{\ubar} \hnabla^{i_4}\mathcal{Y}_{\ubar}}\dubarprime\\+&\intubar\sumif \scaletwoSuubarprime{(\al)^i\nabla^{i_1}\psi_g^{i_2}\hnabla^{i_3}(\chihat,\tr\chi)\nabla^{i_4}\omegabar }\dubarprime \\+&\intubar\sumifim \scaletwoSuubarprime{(\al)^i\nabla^{i_1}\psi_g^{i_2+1}\nabla^{i_3}(\chihat,\tr\chi)\nabla^{i_4}\omegabar}\dubarprime\\+&\intubar\sumiFi \scaletwoSuubarprime{(\al)^i\nabla^{i_1}\psi_g^{i_2}\hnabla^{i_3}\alpha^F\hnabla^{i_4}(\rho^F,\sigma^F)\nabla^{i_5}\omegabar}\dubarprime.
    \end{split}
\end{equation}

For $0\leq i \leq N+4$, the first four terms are controlled as in Proposition \ref{omegaprop} and are bounded above by $\mathcal{R}[\rho]+1$. For the next terms, there holds
\begin{equation}
    \begin{split}
        &\intubar\sumif \scaletwoSuubarprime{(\al)^i\nabla^{i_1}\psi_g^{i_2}\hnabla^{i_3}(\chihat,\tr\chi)\nabla^{i_4}\omegabar }\dubarprime \\ \lesssim &\intubar \al \sumif \scaletwoSuubarprime{(\al)^i\nabla^{i_1}\psi_g^{i_2}\hnabla^{i_3}(\frac{\chihat}{\al},\frac{\tr\chi}{\al})\nabla^{i_4}\omegabar }\dubarprime \\ \lesssim &\frac{\al \Gamma^2}{\lvert u \rvert}\lesssim 1.
    \end{split}
\end{equation}Working similarly, there hold

\begin{equation}
    \intubar\sumifim \scaletwoSuubarprime{(\al)^i\nabla^{i_1}\psi_g^{i_2+1}\nabla^{i_3}(\chihat,\tr\chi)\nabla^{i_4}\omegabar}\dubarprime \lesssim \frac{a\hsp \Gamma^3}{\lvert u \rvert^2}\lesssim 1,
\end{equation}
\begin{equation}
    \intubar\sumiFi \scaletwoSuubarprime{(\al)^i\nabla^{i_1}\psi_g^{i_2}\hnabla^{i_3}\alpha^F\hnabla^{i_4}(\rho^F,\sigma^F)\nabla^{i_5}\omegabar}\dubarprime \lesssim \frac{\al\hsp  \Gamma^3}{\lvert u \rvert^2}\lesssim 1.
\end{equation}The claim follows.
\end{proof}
We move on to estimates for $\eta$.
\begin{proposition}\label{etabound}
Under the assumptions of Theorem \ref{main1} and the bootstrap assumptions \eqref{bootstrap}, there holds

\[ \sum_{0\leq i \leq N+4} \scaletwoSu{\aln\eta}\lesssim \mathcal{R}[\tbeta]+1.\]
\end{proposition}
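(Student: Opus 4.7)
The plan is to mimic the strategy used in the preceding $\nabla_4$-transport estimates for $\omegabar$, since $\eta$ likewise satisfies a $\nabla_4$-equation with a curvature source. Starting from \eqref{eq:eta} and using the renormalization $\tbeta_A = \beta_A - \tfrac{1}{2} R_{A4}$ together with $R_{\mu\nu} = \mathfrak{T}_{\mu\nu}$, the equation rewrites schematically as
\begin{equation*}
\nabla_4 \eta = -\tbeta + \psi_g \cdot \psi_g + \alpha^F \cdot (\rho^F, \sigma^F),
\end{equation*}
where the Yang-Mills bilinear is manifestly gauge-invariant. This is precisely the purpose of introducing $\tbeta$ in place of $\beta$: the non-gauge-invariant part of the matter source is absorbed into the renormalized curvature, leaving only a clean gauge-invariant remainder.

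Next I commute $i$ times with $\nabla$ via the commutation formula \eqref{c1}; because $\eta$ is a Ricci coefficient and not a section of the mixed gauge bundle $\otimes^{k} T^{*}S \otimes P_{Ad,\mathfrak{g}}$, the non-abelian commutator term $C_{1}$ drops out. The resulting schematic equation for $\aln \eta$ mirrors the one obtained in the $\omegabar$ proof, with $\tbeta$ playing the role of $\rho$ and $(\chihat, \tr\chi)$ playing the role of $(\chibarhat, \tildetr)$. Passing to scale-invariant norms, applying Proposition~\ref{3.5}, and using assumption~3) of Theorem~\ref{main1} (which kills the boundary term at $\ubar=0$) produces an integrated inequality whose right-hand side is the $\ubar^{\prime}$-integral of scale-invariant $L^{2}$-norms of the source together with the commutator tail.

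The principal contribution, coming from $\tbeta$, is bounded by $\lesssim \mathcal{R}[\tbeta]$ after Cauchy--Schwartz in $\ubar^{\prime}$. The pure Ricci-coefficient nonlinearities, the Yang--Mills bilinears $\nablap \cdot \hnablat \mathcal{Y}_{\ubar} \cdot \hnablaf \mathcal{Y}_{\ubar}$, and the commutator tail $\nablap \cdot \nablat(\chihat, \tr\chi) \cdot \nablaf \eta$ together with its lower-derivative variants are all handled using the product bounds of the preliminary estimates proposition in Section~4.1 and the bootstrap assumptions \eqref{bootstrap}; each reduces to an expression of the form $\al \hsp \Gamma^{k}/\lvert u\rvert$ or $a\hsp \Gamma^{k}/\lvert u \rvert^{2}$, which is $\lesssim 1$ under $(\Gamma+R+M)^{20}\leq a^{\frac{1}{16}}$. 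The main obstacle I anticipate is the top-order commutator term in which $\chihat$ carries all $N+4$ derivatives: there the bootstrap $L^{\infty}_{(sc)}$ estimate for $\chihat$ is unavailable and one must instead place $\chihat$ in $L^{2}_{(sc)}$ via Proposition~\ref{chihats} (using Remark~\ref{remarkhat} where applicable), controlling the other factors in $L^{\infty}_{(sc)}$ via the Sobolev embedding of Proposition~\ref{Sobolev}. Summing over $0\leq i \leq N+4$ yields the claimed bound.
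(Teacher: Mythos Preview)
Your approach is essentially the same as the paper's. Two minor remarks: first, the source nonlinearity coming from $-\chi\cdot(\eta-\etabar)$ is schematically $(\chihat,\tr\chi)\cdot(\eta,\etabar)$ rather than $\psi_g\cdot\psi_g$, since $\chihat$ is anomalous and not part of $\psi_g$; after commuting this produces exactly the same type of term as the commutator tail you discuss, so your treatment still covers it. Second, your top-order $\chihat$ concern is over-elaborate: the bootstrap norm $\bbGamma_{j,2}$ already controls $\tfrac{1}{\al}\scaletwoSu{(\al\nabla)^{j}\chihat}$ up to $j=N+4$, so the paper simply absorbs the resulting extra $\al$ factor into the estimate $\intubar \al\cdot\tfrac{\Gamma^2}{\lvert u\rvert}\,\dubarprime\lesssim 1$ without invoking Proposition~\ref{chihats} or Sobolev embedding.
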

\begin{proof}
We begin with the schematic structure equation for $\eta$:

\[ \nabla_4 \eta = \tbeta + (\chihat,\tr\chi)\cdot(\eta,\etabar) + \alpha^F \Yub,\]with the product of Yang-Mills curvature terms being once again gauge-invariant. Here, the $\Yub$ term is $(\rho^F, \sigma^F)$. Using the commutation formula \eqref{c1} for the $\nabla_4-$direction, we have

\begin{equation}
    \begin{split}
        \nabla_4\nabla^i \eta = &\nabla^i\tbeta + \sum_{i_1+i_2+i_3=i-1}\nabla^{i_1}\psi_g^{i_2+1}\nabla^{i_3}\tbeta +\sumif \nabla^{i_1}\psi_g^{i_2}\nabla^{i_3}(\chihat,\tr\chi)\nabla^{i_4}(\eta,\etabar)\\&+\sumif \nabla^{i_1}\psi_g^{i_2}\hnabla^{i_3}\alpha^F \hnabla^{i_4}\Yub + \sumiFi \nabla^{i_1}\psi_g^{i_2}\hnabla^{i_3}\alpha^F \hnabla^{i_4}(\rho^F, \sigma^F)\nabla^{i_5}\eta.
    \end{split}
\end{equation}Estimating in scale-invariant norms, we have 
\begin{equation}
    \begin{split}
    \scaletwoSu{\aln\eta} &\lesssim \intubar \scaletwoSuubarprime{\aln \tbeta} \dubarprime \\&+\intubar \sumitm \scaletwoSuubarprime{(\al)^i\nabla^{i_1}\psi_g^{i_2+1}\nabla^{i_3}\tbeta}\dubarprime \\&+\intubar \sumif \scaletwoSuubarprime{(\al)^i\nabla^{i_1}\psi_g^{i_2}\nabla^{i_3}(\chihat,\tr\chi)\nabla^{i_4}(\eta,\etabar) }\dubarprime \\&+ \intubar \sumif \scaletwoSuubarprime{(\al)^i \nabla^{i_1}\psi_g^{i_2}\hnabla^{i_3}\alpha^F \hnabla^{i_4}\Yub}\\&+ \intubar \sumiFi \scaletwoSuubarprime{(\al)^i \nabla^{i_1}\psi_g^{i_2}\hnabla^{i_3}\alpha^F\hnabla^{i_4}(\rho^F, \sigma^F)\nabla^{i_5}\eta}\dubarprime .
    \end{split}
\end{equation}For $0\leq i \leq N+4$, the first term is bounded by $\mathcal{R}[\tbeta]$. The second term is bounded by

\begin{equation}
    \begin{split}
        \intubar \sumitm \scaletwoSuubarprime{(\al)^i\nabla^{i_1}\psi_g^{i_2+1}\nabla^{i_3}\tbeta}\dubarprime \lesssim \frac{\Gamma^2}{\lvert u\rvert}\lesssim 1.
    \end{split}
\end{equation}Notice that, since $i_3\leq i-1\leq N+3$, we can bound $i_3$ derivatives of $\tbeta$ using the bootstrap assumption \eqref{bootstrap} on the total norm $\bbGamma$. For the third term there holds
\[ \intubar \sumif \scaletwoSuubarprime{(\al)^i\nabla^{i_1}\psi_g^{i_2}\nabla^{i_3}(\chihat,\tr\chi)\nabla^{i_4}(\eta,\etabar) }\dubarprime \lesssim \frac{\al\hsp \Gamma^2}{\lvert u \rvert}\lesssim 1. \]Here we have picked up an $\al$-term because of the existence of $\chihat$ in the expression. For the fourth term, we have \[  \intubar \sumif \scaletwoSuubarprime{(\al)^i \nabla^{i_1}\psi_g^{i_2}\hnabla^{i_3}\alpha^F \hnabla^{i_4}(\rho^F, \sigma^F)} \lesssim \frac{\al O^2}{\lvert u \rvert}\lesssim 1\]and finally, for the fifth term, we have 

\begin{equation}
    \begin{split}
        &\intubar \sumiFi \scaletwoSuubarprime{(\al)^i \nabla^{i_1}\psi_g^{i_2}\hnabla^{i_3}\alpha^F\hnabla^{i_4}(\rho^F, \sigma^F)\nabla^{i_5}\eta}\dubarprime \\= &\intubar a \sumiFi \bigg\lVert (\al)^{i-1}\nabla^{i_1}\psi_g^{i_2}\hnabla^{i_3}\left(\frac{\alpha^F}{\al}\right) \hnabla^{i_4}(\rho^F, \sigma^F)\nabla^{i_5}\eta \bigg\rVert_{L^2_{(sc)}(S_{u,\ubar^{\prime}})}\dubarprime \\ \lesssim& \frac{a \Gamma^3}{\lvert u \rvert^2}\lesssim 1.
    \end{split}
\end{equation}The result follows.
\end{proof}

\begin{proposition}
\label{trchiprop}
Under the assumptions of Theorem \ref{main1} and the bootstrap assumptions \eqref{bootstrap}, there holds

\[ \sum_{0\leq i \leq N+4} \scaletwoSu{\aln \tr\chi}\lesssim \mathcal{R}[\alpha] +\underline{\mathbb{YM}}[\rho^F, \sigma^F]+1. \]
\end{proposition}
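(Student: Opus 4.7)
The plan is to work with the $\nabla_{4}$-Raychaudhuri equation
\[\nabla_4\tr\chi + \tfrac{1}{2}(\tr\chi)^2 = -|\chihat|^2_\gamma - 2\omega\tr\chi - \mathfrak{T}_{44},\]
in which the matter term is the gauge-invariant scalar $\mathfrak{T}_{44}\sim |\alpha^F|^2_{\gamma,\delta}$: only the angular components of $F$ survive the double contraction with $e_4$, and the normalization $-\mathrm{tr}(\chi_A\chi_B)=\delta_{AB}$ of the Lie-algebra basis turns the quadratic expression into a genuine gauge-invariant scalar. Applying $(\al\nabla)^i$ for $0\le i\le N+4$ and invoking the commutation identity \eqref{c1}, I would obtain the schematic transport equation
\[\nabla_4(\al\nabla)^i\tr\chi \sim \sum\nabla^{i_1}\psi_g^{i_2}\nabla^{i_3}\chihat\,\nabla^{i_4}\chihat + \sum\nabla^{i_1}\psi_g^{i_2}\hnabla^{i_3}\alpha^F\,\hnabla^{i_4}\alpha^F + (\text{lower order}),\]
where the ``lower order'' bucket collects cubic terms in $\psi_g$, mixed $\psi_g\cdot\Psi$ contributions and the $C_1$ residue from \eqref{c1}. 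The first part of Proposition \ref{3.5} then yields
\[\scaletwoSu{\aln\tr\chi}\lesssim 1 + \intubar\scaletwoSuubarprime{\text{RHS}}\,d\ubar',\]
the ``$1$'' coming from the trivial Minkowskian initial data on $\Hbar_0$ together with the hypotheses of Theorem \ref{main1}.

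The two non-absorbable contributions on the right generate precisely the two norms of the claim. For the $|\chihat|^2$ quadratic, placing the top derivative on one $\chihat$ factor and using Proposition \ref{chihats} gives $\frac{1}{\al}\scaletwoSuubarprime{(\al\nabla)^i\chihat}\lesssim \mathcal{R}[\alpha]+1$, while the companion low-derivative factor is controlled in $L^\infty_{(sc)}$ by Sobolev embedding (Proposition \ref{Sobolev}) combined with Remark \ref{remarkhat}; a scale-invariant H\"older estimate and a trivial $\ubar'$-integration then deliver the $\mathcal{R}[\alpha]$ piece. For the $|\alpha^F|^2$ contribution the compatibility identity $\nabla|\alpha^F|^2_{\gamma,\delta}=2\langle\hnabla\alpha^F,\alpha^F\rangle_{\gamma,\delta}$, available because $\hnabla$ is metric-compatible on the mixed bundle, permits a fully gauge-invariant product expansion with no connection one-form appearing. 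Putting the top derivative into $L^2_{(sc)}(S)$ and the complementary factor in $L^\infty_{(sc)}(S)$ (controlled by the bootstrap), Cauchy--Schwarz in $\ubar'$ converts the resulting integral into the outgoing-hypersurface norm $\mathbb{YM}_i[\alpha^F]$ up to an absorbable bootstrap factor. To produce the incoming-hypersurface norm $\underline{\mathbb{YM}}[\rho^F,\sigma^F]$ that appears in the claim, I would then invoke the Yang--Mills energy identity furnished by Proposition \ref{hyperbolic}, applied to the triple $(\alpha^F,\rho^F,\sigma^F)$, which pairs $\int_{H_u}|\alpha^F|^2$ with $\int_{\Hbar_{\ubar}}(|\rho^F|^2+|\sigma^F|^2)$ modulo initial data and cubic error, thereby bounding $\mathbb{YM}[\alpha^F]$ by $\underline{\mathbb{YM}}[\rho^F,\sigma^F]+1$. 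All remaining terms are at worst cubic in $\Gamma+R+M$ and absorbed into the ``$+1$'' via the bootstrap smallness $(\Gamma+R+M)^{20}\le a^{1/16}$.

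The main obstacle is the simultaneous preservation of gauge-invariance and scale-invariance in the analysis of $\mathfrak{T}_{44}$. Arbitrarily high-order commutators of $\nabla^i$ through $|\alpha^F|^2$ threaten to generate uncontrollable connection terms $A^{P}{}_{Q\mu}$; this is averted by systematically using $\hnabla$ in place of $\nabla$ on every Yang--Mills factor and exploiting its joint compatibility with the fibre metrics $\gamma$ and $\delta$, exactly as in the pairing computations of Section 2.10. The second delicate point is the conversion from the outgoing norm $\mathbb{YM}[\alpha^F]$, which is what $\ubar'$-integration of the Raychaudhuri right-hand side directly produces, to the incoming norm $\underline{\mathbb{YM}}[\rho^F,\sigma^F]$ of the claim: this is handled by the Hodge-structure-respecting energy identity of Proposition \ref{hyperbolic}, whose applicability is precisely the reason the null Yang--Mills equations had to be cast in the manifestly hyperbolic form of Section 2.10.
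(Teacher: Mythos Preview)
Your handling of the $|\chihat|^2$ term is correct and matches the paper: Proposition~\ref{chihats} on the top-derivative factor and Remark~\ref{remarkhat} on the companion factor combine via scale-invariant H\"older to give $(\mathcal{R}[\alpha]+1)\cdot 1$.

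The $|\alpha^F|^2$ term, however, contains a genuine gap. The energy identity you invoke from Proposition~\ref{hyperbolic} does not bound $\mathbb{YM}[\alpha^F]$ by $\underline{\mathbb{YM}}[\rho^F,\sigma^F]+1$; it shows that the \emph{sum}
\[
\|(\al\hnabla)^i\alpha^F\|_{L^2(H_u)}^2+\|(\al\hnabla)^i(\rho^F,\sigma^F)\|_{L^2(\Hbar_{\ubar})}^2
\]
is controlled by initial data plus bulk error terms. Extracting a bound on one summand in terms of the other is not available from this, and the quantitative version of that identity (the Yang--Mills energy estimates of Section~6) lies downstream of the Ricci-coefficient estimates you are trying to prove, so invoking it here would be circular. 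Moreover, controlling the low-derivative $\alpha^F$ factor ``by the bootstrap'' only gives $\frac{1}{\al}\scaleinfinitySu{(\al\hnabla)^j\alpha^F}\leq\Gamma$, which leaves an unabsorbable factor of $\Gamma$ in the product.

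The paper's route is the direct analogue of what you correctly did for $\chihat$. Rather than integrating into an outgoing hypersurface norm, one uses Proposition~\ref{alphaFprop}, which is obtained from the $\hnabla_3$-transport equation for $\alpha^F$ via the weighted transport inequality (Proposition~\ref{3.6}) and already gives
\[
\frac{1}{\al}\scaletwoSu{(\al\hnabla)^i\alpha^F}\lesssim\underline{\mathbb{YM}}[\rho^F,\sigma^F]+1
\]
at the sphere level. The appearance of the \emph{incoming} norm $\underline{\mathbb{YM}}[\rho^F,\sigma^F]$ is thus a consequence of integrating the $\hnabla_3$-equation for $\alpha^F$ along $u'$ (which naturally produces an $L^2(\Hbar)$ norm of the source $\hnabla(\rho^F,\sigma^F)$ via Cauchy--Schwarz), not of any energy-identity conversion. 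For the companion low-derivative factor, Remark~\ref{remarkF} supplies the improved bound $\frac{1}{\al}\scaleinfinitySu{(\al\hnabla)^j\alpha^F}\lesssim 1$ for $j\leq N+1$, and the estimate closes as $(\underline{\mathbb{YM}}[\rho^F,\sigma^F]+1)\cdot 1$, exactly parallel to the $\chihat$ case.
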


\begin{proof}
We begin by recalling the schematic equation

\[ \nabla_4 \tr\chi = \lvert \chihat \rvert^2 + \psi_g\psi_g + \lvert \alpha^F \rvert^2. \]Commuting with $i$ angular derivatives using \eqref{c1}, we obtain

\begin{equation}
    \begin{split}
        \nabla_4 \nabla^i \tr\chi =& \sum_{i_1+i_2+i_3+i_4=i}\nabla^{i_1}\psi_g^{i_2}\nabla^{i_3}\chihat \nabla^{i_4}(\chihat, \tr\chi) +\sum_{i_1+i_2+i_3+i_4=i}\nabla^{i_1}\psi_g^{i_2}\nabla^{i_3}\psi_g \nabla^{i_4}\psi_g \\&+ \sumif \nabla^{i_1}\psi_g^{i_2}\hnabla^{i_3}\alpha^F \hnabla^{i_4}\alpha^F + \sumifim \nabla^{i_1}\psi_g^{i_2+1}\nabla^{i_3}(\chihat,\tr\chi)\nabla^{i_4}\psi_g \\ &+ \sumiFi \nabla^{i_1}\psi_g^{i_2}\hnabla^{i_3} \alpha^F \hnabla^{i_4}(\rho^F, \sigma^F) \nabla^{i_5}\psi_g.
    \end{split}
\end{equation}
Passing to scale-invariant norms, we have

\begin{equation}
    \begin{split}
        \scaletwoSu{\aln\tr\chi} \lesssim &\intubar\sum_{i_1+i_2+i_3+i_4=i} \scaletwoSuubarprime{(\al)^i\nabla^{i_1}\psi_g^{i_2}\nabla^{i_3}\chihat \nabla^{i_4}(\chihat, \tr\chi)} \dubarprime \\+& \intubar \sum_{i_1+i_2+i_3+i_4=i}\scaletwoSuubarprime{(\al)^i\nabla^{i_1}\psi_g^{i_2}\nabla^{i_3}\psi_g \nabla^{i_4}\psi_g}\dubarprime \\ +& \intubar \sum_{i_1+i_2+i_3+i_4=i}\scaletwoSuubarprime{(\al)^i\nabla^{i_1}\psi_g^{i_2}\hnabla^{i_3}\alpha^F \hnabla^{i_4}\alpha^F}\dubarprime \\+& \intubar \sum_{i_1+i_2+i_3+i_4=i-2}\scaletwoSuubarprime{(\al)^i\nabla^{i_1}\psi_g^{i_2+1}\nabla^{i_3}(\chihat,\tr\chi) \nabla^{i_4}\psi_g}\dubarprime\\ +& \intubar \sum_{i_1+\dots+i_5=i-1}\scaletwoSuubarprime{(\al)^i\nabla^{i_1}\psi_g^{i_2}\hnabla^{i_3}\alpha^F \hnabla^{i_4}(\rho^F,\sigma^F) \nabla^{i_5}\psi_g}\dubarprime  .  
    \end{split}
\end{equation}From the first term, the most dangerous case is when $i_4$ falls on $\chihat$, so we only give details for that. We distinguish two cases:

\begin{itemize}
    \item If in the term $\nabla^{i_1}\psi_g^{i_2}$ there exists some $\psi_g$ whose derivative is of order $>N+3$, we bound that term in $L^2_{(sc)}$ and the rest of the terms in $L^{\infty}_{(sc)}$. Notice, crucially, that from Remark \ref{remarkhat}, we can bound $\scaleinfinitySu{\aln \chihat} \lesssim \al$ for small $i$. As a consequence, we have the bound

    \[   \intubar\sum_{i_1+i_2+i_3+i_4=i} \scaletwoSuubarprime{(\al)^i\nabla^{i_1}\psi_g^{i_2}\nabla^{i_3}\chihat \nabla^{i_4}(\chihat, \tr\chi)} \dubarprime \lesssim \frac{\scaleinfinitySu{\aln \chihat}^2}{\lvert u \rvert}  \lesssim 1.    \]
    \item Otherwise, in the expression $\nabla^{i_3}\chihat \nabla^{i_4}\chihat$, at most one index $i_3, i_4$ is greater than $N+1$ (in which case, we cannot bound that term in $L^{\infty}_{(sc)}$). Say without loss of generality, that $i_3>N+1$. We bound $(\al)^{i_3-1}\nabla^{i_3}\chihat$ in $L^2_{(sc)}$ above by $\mathcal{R}[\alpha]+1$ and the rest of the terms in $L^{\infty}_{(sc)}$ above by $1$ (using the improvement mentioned in Remark \ref{remarkhat}), whence     \[   \intubar\sum_{i_1+i_2+i_3+i_4=i} \scaletwoSuubarprime{(\al)^i\nabla^{i_1}\psi_g^{i_2}\nabla^{i_3}\chihat \nabla^{i_4}(\chihat, \tr\chi)} \dubarprime  \lesssim \left(\mathcal{R}[\alpha]+1\right)\cdot1.    \]
\end{itemize}The second term is handled as in the previous propositions. For the third term, we can bound \[   \intubar\sum_{i_1+i_2+i_3+i_4=i} \scaletwoSuubarprime{(\al)^i\nabla^{i_1}\psi_g^{i_2}\hnabla^{i_3}\alpha^F \hnabla^{i_4}\alpha^F} \dubarprime  \lesssim \left(\underline{\mathbb{YM}}[\rho^F, \sigma^F]+1\right)\cdot1,    \]using the same idea as for $\chihat$ and the improvement given in Remark \ref{remarkF}. The last two terms are bounded above by $1$, as in previous Propositions.

\end{proof}
\begin{proposition}\label{trchibarbound}
Under the assumptions of Theorem \ref{main1} and the bootstrap assumptions \eqref{bootstrap}, the following estimates hold:

\[ \frac{a}{\lvert u \rvert} \sum_{0\leq i \leq N+4} \scaletwoSu{\aln \tildetr} \lesssim 1, \hspace{2mm} \frac{a}{\lvert u \rvert^2} \sum_{0\leq i \leq N+4} \scaletwoSu{\aln \tr\chibar} \lesssim 1.\]
\end{proposition}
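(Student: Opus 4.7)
The plan is to derive a $\nabla_{3}$-transport equation for $\tildetr$, commute with angular derivatives using \eqref{c2}, and then apply the weighted transport estimate Proposition \ref{3.6}.

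Starting from the structure equation $\nabla_{3}\tr\chibar+\frac{1}{2}(\tr\chibar)^{2}=-|\chibarhat|^{2}-2\omegabar\,\tr\chibar-|\alphabarF|^{2}$ and using $e_{3}(2/|u|)=2\Omega^{-1}/|u|^{2}$ together with $\tr\chibar=\tildetr-2/|u|$, I would derive the schematic identity
\[
\nabla_{3}\tildetr+\tr\chibar\,\tildetr \sim \tildetr^{2}+|\chibarhat|^{2}+\omegabar\,\tildetr+\frac{\omegabar}{|u|}+|\alphabarF|^{2}+\frac{\Omega^{-1}-1}{|u|^{2}},
\]
where the coefficient $1$ in front of $\tr\chibar\,\tildetr$ on the left is produced by splitting the naturally appearing source $\tildetr/|u|=-\tfrac{1}{2}\tr\chibar\,\tildetr+\tfrac{1}{2}\tildetr^{2}$ and absorbing the first term into the left-hand side. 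The Yang--Mills product $|\alphabarF|^{2}$ is gauge-invariant, exactly as in Propositions \ref{omegaprop}--\ref{trchiprop}.

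Commuting with $\nabla^{i}$ via \eqref{c2} yields schematically
\[
\nabla_{3}\nabla^{i}\tildetr+\tfrac{i+2}{2}\tr\chibar\,\nabla^{i}\tildetr \sim \text{(source)},
\]
where the source collects angular derivatives of the above together with the commutator terms of \eqref{c2}; the non-abelian piece $C_{2}$ is absent since $\tildetr$ is a scalar and only the gauge-invariant combination $|\alphabarF|^{2}$ intervenes. I would then apply Proposition \ref{3.6} with $\lambda_{0}=(i+2)/2$, hence $\lambda_{1}=i+1$, to get
\[
|u|^{i+1}\twoSu{\nabla^{i}\tildetr} \lesssim |u_{\infty}|^{i+1}\lVert\nabla^{i}\tildetr\rVert_{L^{2}(S_{u_{\infty},\ubar})}+\intu |u'|^{i+1}\twoSuprime{\text{source}}\,\duprime.
\]
A direct check from the definitions shows that the left-hand side equals $\tfrac{a}{|u|}\scaletwoSu{\aln\tildetr}$, which is precisely the desired quantity. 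The initial-data term vanishes at $\ubar=0$ by the Minkowski assumption 3) and is bounded by $\mathcal{I}$ for $\ubar>0$ via the initial bound $\bbGamma_{0}\lesssim\mathcal{I}$ along $H_{u_{\infty}}$ (which already contains the term $\tfrac{a}{|u|}\scaletwoSu{\aln\tildetr}$ in its definition, giving exactly $|u_{\infty}|^{i+1}\lVert\nabla^{i}\tildetr\rVert_{L^{2}(S_{u_{\infty},\ubar})}\lesssim\mathcal{I}$).

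The source integral is then estimated term-by-term as in the proofs of Propositions \ref{omegaprop}--\ref{etabound}, using the bootstrap \eqref{bootstrap} and the preliminary product estimates; the weight $|u'|^{i+1}$ exactly compensates the $a/|u'|^{2}$-factor arising in the conversion to scale-invariant norms, so each contribution closes. The main obstacle is the quadratic term $|\chibarhat|^{2}$ at the top order $i=N+4$: following the strategy used for $|\chihat|^{2}$ in the proof of Proposition \ref{trchiprop}, one of the factors $\nabla^{i_{3}}\chibarhat$ must be placed in $L^{2}_{(sc)}$ (where Proposition \ref{chihats} yields control by a quantity of the form $\mathcal{R}[\alpha]+1$), while the other factor $\nabla^{i_{4}}\chibarhat$ with $i_{4}$ small is placed in $L^{\infty}_{(sc)}$; the latter requires a $\chibarhat$-analogue of Remark \ref{remarkhat}, obtained by combining Proposition \ref{chihats} with the Sobolev embedding of Proposition \ref{Sobolev}. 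Finally, the second inequality for $\tr\chibar$ follows from the first: for $i\geq 1$ one has $\nabla^{i}\tr\chibar=\nabla^{i}\tildetr$ since $\nabla(2/|u|)=0$ on $S_{u,\ubar}$, while for $i=0$ the triangle inequality combined with $\|2/|u|\|_{L^{2}(S_{u,\ubar})}\lesssim 1$ (by the area estimate in Proposition \ref{34}) reduces to the already-established bound on $\tildetr$.
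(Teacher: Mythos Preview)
Your approach is essentially correct but reverses the paper's order: the paper first proves the $\tr\chibar$ bound directly from $\nabla_{3}\tr\chibar+\tfrac{1}{2}(\tr\chibar)^{2}=\dots$ (commuted to $\nabla_{3}\nabla^{i}\tr\chibar+\tfrac{i+1}{2}\tr\chibar\,\nabla^{i}\tr\chibar=G_{i}$ and fed into Proposition~\ref{3.6} with the more favourable weight $a^{2}/|u'|^{4}$), and only afterwards turns to $\tildetr$; you instead do $\tildetr$ first and read off $\tr\chibar$ at the end. Both strategies close, but the ordering matters for which borderline term is delicate. With the paper's weight $a^{2}/|u'|^{4}$, the source $\omegabar\,\tr\chibar$ in $G_{i}$ integrates to $\lesssim a\Gamma^{2}/|u|^{2}\lesssim 1$ using only the bootstrap; with your weight $a^{2}/|u'|^{3}$ for $\tildetr$, the corresponding piece $\omegabar/|u|$ integrates only to $\lesssim a\Gamma/|u|\lesssim \Gamma$ and requires the already-proven improvement for $\omegabar$ (yielding $\lesssim \mathcal{R}[\rho]+1$, the same bound the paper also records for $H_{0}$). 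You do not flag this term, focusing instead on $|\chibarhat|^{2}$ --- which is also borderline here, but whose improved bound from Proposition~\ref{chihats} is simply $\lesssim 1$, not the $\mathcal{R}[\alpha]+1$ you quote (that bound is for $\chihat$). Finally, while $C_{2}$ is indeed absent since $\tildetr$ is not a section of the gauge bundle, the commutator term $\sum\nabla^{i_{1}}\psi_{g}^{i_{2}}\hnabla^{i_{3}}\alphabarF\hnabla^{i_{4}}(\rhoF,\sigmaF)\nabla^{i_{5}}\tildetr$ from \eqref{c2} (arising via $\betabar^{R}$ and the Ricci tensor) is still present; it is harmless, but your remark that ``only the gauge-invariant combination $|\alphabarF|^{2}$ intervenes'' is slightly imprecise.
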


\begin{proof}
Notice that $\tr\chibar$ satisfies the following structure equation:

\[ \nabla_3 \tr\chibar + \frac{1}{2}(\tr\chibar)^2 = - \lvert \chibarhat \rvert^2 - 2 \omegabar \tr\chibar -\lvert \alphabar^F \rvert^2 .\]Commuting with $i$ angular derivatives using \eqref{c2}, we get 

\begin{equation}\begin{split}
    \nabla_3\nabla^i \tr\chibar + \frac{i+1}{2}\tr\chibar \nabla^i \tr\chibar = &\sumif \nabla^{i_1}\psi_g^{i_2}\nabla^{i_3}\chibarhat \nabla^{i_4}\chibarhat \\+&\sumif \nabla^{i_1}\psi_g^{i_2}\nabla^{i_3}\omegabar \nabla^{i_4}\tr\chibar \\+&\sumif \nabla^{i_1}\psi_g^{i_2}\hnabla^{i_3}\alphabar^F \hnabla^{i_4}\alphabar^F\\+&\sumif \nabla^{i_1}\psi_g^{i_2}\nabla^{i_3}(\chibarhat,
    \tildetr)\nabla^{i_4}\tr\chibar \\ +&\sumifi \nabla^{i_1}\psi_g^{i_2+1}\nabla^{i_3}\tr\chibar \nabla^{i_4}\tr\chibar\\+& \sumifim \nabla^{i_1}\psi_g^{i_2+1}\nabla^{i_3}(\chibarhat, \tr\chibar) \nabla^{i_4}\tr\chibar \\ +& \sumiFi \nabla^{i_1}\psi_g^{i_2}\hnabla^{i_3}\alphabar^F \hnabla^{i_4}(\rho^F, \sigma^F) \nabla^{i_5}\tr\chibar := G_i. 
\end{split}    
\end{equation}
Passing to scale invariant norms and using the weighted transport inequality from Proposition \ref{3.6}, we obtain  
\begin{align}
\frac{a}{\lvert u \rvert^2}\scaletwoSu{\aln \tr\chibar} \lesssim\hsp &\frac{a}{\lvert u_{\infty}\rvert^2}\lVert \aln \tr\chibar \rVert_{L^2_{(sc)}(S_{u_{\infty},\ubar})} \\&+\intu \frac{a^2}{\upr^4}\scaletwoSuprime{\aln G_i}\duprime .
\end{align} We focus on $0\leq i \leq N+4$. For the first term in $G_i$, there holds \begin{equation}
        \intu  \frac{a^2}{\upr^4}\scaletwoSuprime{a^{\frac{i}{2}} \sumif \nablap \nabla^{i_3}\chibarhat \nabla^{i_4}\chibarhat}\duprime \lesssim \intu \frac{a}{\upr^2}\cdot \frac{\Gamma^2}{\upr} \duprime \lesssim 1.
\end{equation}The second and third terms are handled in the same way. For the fourth term, there holds

\begin{equation}
\begin{split}
        &\intu  \frac{a^2}{\upr^4}\scaletwoSuprime{a^{\frac{i}{2}} \sumif \nablap \nabla^{i_3}(\chibarhat,\tildetr) \nabla^{i_4}\tr\chibar}\duprime\\ \lesssim &\intu \frac{a^2}{\upr^4}\cdot \frac{\upr^2}{a}\frac{\upr}{\al} \cdot \frac{\Gamma^2}{\upr} \duprime \lesssim \frac{\al\hsp \Gamma^2}{\lvert u \rvert}\lesssim 1.
\end{split}
\end{equation}For the fifth term, we bound

\begin{equation}
\begin{split}
        &\intu  \frac{a^2}{\upr^4}\scaletwoSuprime{a^{\frac{i}{2}} \sumifi \nabla^{i_1}\psi_g^{i_2+1} \nabla^{i_3}\tr\chibar \nabla^{i_4}\tr\chibar}\duprime\\ \lesssim& \intu\frac{a^2}{\upr^4}\cdot \frac{\upr^4}{a^2}\cdot \frac{\Gamma^3}{\upr^2}\duprime\lesssim \frac{\Gamma^3}{\lvert u \rvert}\lesssim 1.
\end{split}
\end{equation}For the sixth term, we can bound \begin{equation}
\begin{split}
        &\intu  \frac{a^2}{\upr^4}\scaletwoSuprime{a^{\frac{i}{2}} \sumifim \nabla^{i_1}\psi_g^{i_2+1} \nabla^{i_3}(\chibarhat, \tr\chibar) \nabla^{i_4}\tr\chibar}\duprime\\ =&\intu  \frac{a^\frac{5}{2}}{\upr^4}\scaletwoSuprime{a^{\frac{i-1}{2}} \sum_{i_1+i_2+i_3+i_4+1=i-1} \nabla^{i_1}\psi_g^{i_2+1} \nabla^{i_3}(\chibarhat, \tr\chibar) \nabla^{i_4}\tr\chibar}\duprime \\ \lesssim& \intu\frac{a^{\frac{5}{2}}}{\upr^4}\cdot \frac{\upr^4}{a^2}\cdot \frac{\Gamma^3}{\upr^2}\duprime\lesssim \frac{\al \hsp \Gamma^3}{\lvert u \rvert}\lesssim 1.
\end{split}
\end{equation}Finally, there holds \begin{align}&\intu  \frac{a^2}{\upr^4}\scaletwoSuprime{a^{\frac{i}{2}} \sumiFi \nabla^{i_1}\psi_g^{i_2} \hnabla^{i_3}\alphabar^F \hnabla^{i_4}(\rho^F,\sigma^F)\nabla^{i_5}\tr\chibar}\duprime \\= &\intu \frac{a^{\frac{5}{2}}}{\upr^4}\scaletwoSuprime{a^{\frac{i-1}{2}} \sumiFi \nabla^{i_1}\psi_g^{i_2} \hnabla^{i_3}\alphabar^F \hnabla^{i_4}(\rho^F,\sigma^F)\nabla^{i_5}\tr\chibar}\duprime \\ \lesssim &\intu \frac{a^{\frac{5}{2}}}{\upr^4} \cdot \frac{\upr^2}{a}\cdot\frac{\Gamma^3}{\upr^2} \duprime \lesssim \frac{a^{\frac{3}{2}}\hsp \Gamma^3}{\lvert u \rvert^3} \lesssim 1. \end{align}Crucially, this implies that \begin{equation}
    \frac{a}{\lvert u \rvert^2}\sum_{0\leq i \leq N+2}\scaleinfinitySu{\aln \tr\chibar}\lesssim \mathcal{I}+1,
\end{equation}by Sobolev embedding. This will prove useful in the estimates for $\tildetr$.	For this component, we have the following structure equation:

\[ \nabla_3 \tildetr + \tr\chibar \tildetr = \frac{2}{\lvert u \rvert^2}(\Omega^{-1}-1)+\tildetr \tildetr + \psi_g \tr\chibar -\lvert \chibarhat \rvert^2 - \lvert \alphabar^F \rvert^2:= H_0.  \]
Commuting with $i$ angular derivatives, we arrive at

\begin{align}
    \nabla_3\nabla^i\tildetr + \frac{i+2}{2}\tr\chibar \nabla^i \tildetr &= \sum_{i_1+i_2+i_3=i}\nablap \nabla^{i_3}H_0 \\&+ \sumif \nablap\nabla^{i_3}(\chibarhat,\tildetr)\nabla^{i_4}\tildetr \\ &+\sumifi \nablapp \nabla^{i_3}\tr\chibar \nabla^{i_4}\tildetr \\&+\sumifim \nablapp \nabla^{i_3}(\chibarhat,\tr\chibar)\nabla^{i_4}\tildetr \\&+\sumiFi \nablap \hnabla^{i_3}\alphabarF \hnabla^{i_4}(\rhoF,\sigmaF)\nabla^{i_5}\tildetr := H_i.
\end{align}Estimating in scale-invariant norms, we have

    \[ \frac{a}{\lvert u \rvert} \scaletwoSu{\aln\tildetr} \lesssim \frac{a}{\lvert u_{\infty} \rvert} \lVert \aln \tildetr\rVert_{L^2_{(sc)}(S_{u_{\infty},\ubar})} + \intu \frac{a^2}{\upr^3}\scaletwoSuprime{a^{\frac{i}{2}}H_i}\duprime. \]We restrict attention to $0\leq i \leq N+4$. The first term 
    
    \[\intu \frac{a^2}{\upr^3} \scaletwoSuprime{\sumit \nablap \nabla^{i_3}H_0}\duprime \]can be bounded above by $\mathcal{R}[\rho]+\underline{\mathcal{R}}[\rho]+1$, precisely as in [AnAth]. The second, third and fourth of the terms can be bounded as in previous propositions by $1$. For the last term, we have 
    
    \begin{align}
        &\intu \frac{a^2}{\upr^3}\scaletwoSuprime{\sumiFi \nablap\hnablat \alphabarF \hnablaf (\rhoF,\sigmaF) \nabla^{i_5} \tildetr} \\ \lesssim &\intu \frac{a^2}{\upr^3} \frac{\upr}{a}\frac{\Gamma^3}{\upr^2}\duprime \lesssim 1.
    \end{align}Here, once again, we have made use of the bootstrap assumptions on the gauge-covariantly differentiated Yang-Mills curvature components, bounded above by $\Gamma$. The result follows.
\end{proof}We conclude this section with the corresponding estimate on $\etabar$ and its angular derivatives.

\begin{proposition}
\label{etabarestimate}
    Under the assumptions of Theorem \ref{main1} and the bootstrap assumptions \eqref{bootstrap}, there holds
    
    \[ \sum_{0\leq i \leq N+4} \scaletwoSu{\aln \etabar} \lesssim \mathcal{R}[\tbeta]+\underline{\mathcal{R}}[\tbetabar]+1.\]
\end{proposition}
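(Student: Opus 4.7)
The plan is to adapt the $\nabla_4$-based argument for $\eta$ in Proposition \ref{etabound} to the $\nabla_3$-direction. The starting point is the structure equation $\nabla_3 \etabar_A = -\chibar\cdot(\etabar-\eta) + \betabar_A + \frac{1}{2}\mathfrak{T}_{A3}$, which, in view of $\tbetabar_A = \betabar_A + \frac{1}{2}R_{A3}$ and $R_{\mu\nu} = \mathfrak{T}_{\mu\nu}$, rewrites schematically as
\[ \nabla_3 \etabar = \tbetabar + \psi_g\cdot\psi_g, \]
with the $\chibarhat \cdot (\eta,\etabar)$ and $\tr\chibar \cdot (\eta,\etabar)$ pieces absorbed into the $\psi_g\cdot\psi_g$ product. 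No Yang-Mills quadratic term appears outside $\tbetabar$, since the $\mathfrak{T}_{A3}$ source is built into the renormalization. Next, I would commute with $i$ angular derivatives by means of formula \eqref{c2}. Because $\etabar$ is an ordinary $S_{u,\ubar}$-tangent $1$-form rather than a section of the associated gauge bundle, the commutator contribution $C_2$ in \eqref{c2} vanishes.

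After commutation, I would either move the left-hand side term $\frac{i}{2}\tr\chibar\,\nabla^i\etabar$ to the right-hand side (and control it using the scale-invariant $L^\infty$-bound $\frac{a}{|u|^2}\scaleinfinitySu{\aln\tr\chibar}\lesssim 1$ derived from Proposition \ref{trchibarbound} and Sobolev embedding), or absorb it via the weighted transport inequality of Proposition \ref{3.6}. Applying the $\nabla_3$-transport estimate of Proposition \ref{3.5}, the bound reduces to controlling the initial data on $H_{u_\infty}$, which is $\lesssim 1$ by the assumptions of Theorem \ref{main1}, together with the $u'$-integral against $\aupr$ of the commuted right-hand side. The principal term $\intu\aupr\scaletwoSuprime{\aln\tbetabar}\,\duprime$ is handled by Cauchy-Schwartz and contributes the $\underline{\mathcal{R}}[\tbetabar]$ in the conclusion. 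Whenever $\eta$ appears as a factor of the nonlinear products inherited from the $\chibar\cdot\eta$ piece, I would substitute the estimate from Proposition \ref{etabound}; this is precisely the source of the $\mathcal{R}[\tbeta]$ term. All other nonlinear expressions of the form $\sumit\nablap\nabla^{i_3}\tbetabar$, $\sumif\nablap\nabla^{i_3}(\chibarhat,\tildetr)\nabla^{i_4}\psi_g$, $\sumifi\nablapp\nabla^{i_3}\tr\chibar\nabla^{i_4}\psi_g$ and $\sumifim\nablapp\nabla^{i_3}(\chibarhat,\tr\chibar)\nabla^{i_4}\psi_g$ fall under the umbrella of Proposition 4.1 and yield harmless powers of $\Gamma$ absorbable into the constant $1$ via the bootstrap hierarchy $(\Gamma+R+M)^{20}\leq a^{\frac{1}{16}}$.

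The main delicacy, exactly as in Proposition \ref{trchibarbound}, is the anomalous decay of $\chibarhat$ and $\tr\chibar$ along the $e_3$-direction: the $a/|u'|^2$ weights in the $u'$-integration must be tracked carefully so that every nonlinear contribution closes with a genuine smallness factor. Apart from this bookkeeping, the estimate is structurally strictly simpler than the one for $\tildetr$ --- in particular, no top-order $\alphabar^F$-dependent term arises on the right-hand side, since the Yang-Mills contribution was absorbed into $\tbetabar$ at the outset --- so we defer the routine calculations.
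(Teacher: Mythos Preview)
Your approach is the same as the paper's in structure: use the $\nabla_3$-equation for $\etabar$, commute with $\nabla^i$, integrate along $e_3$, and close the principal term via Cauchy--Schwarz on $\tbetabar$ and the $\eta$-factor via Proposition~\ref{etabound}. Your observation that $\betabar+\tfrac12\mathfrak{T}_{A3}=\tbetabar$ absorbs the Yang--Mills source is correct; the paper's schematic keeps an $\alphabarF\cdot(\rhoF,\sigmaF)$ term that is in fact redundant. Note however that such quadratic Yang--Mills terms still reappear after commutation through the Riemann curvature in $[\nabla_3,\nabla]$ (the Ricci piece of $R_{CA3B}$), so a lower-order term of the form $\sumiFi\nablap\hnablat\alphabarF\hnablaf(\rhoF,\sigmaF)\nablaF\etabar$ is present in the commuted right-hand side; it is harmless but your claim that no $\alphabarF$-dependent term arises is not literally correct.

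The genuine gap is in the treatment of $\tfrac{i}{2}\tr\chibar\,\nabla^i\etabar$. Of the two options you list, the first --- move it to the right and apply the unweighted Proposition~\ref{3.5} --- fails. The leading behaviour $\tr\chibar\sim -2/|u|$ gives, after the scale-invariant H\"older inequality, $\scaletwoSuprime{\tr\chibar\cdot(\al\nabla)^i\etabar}\lesssim \tfrac{|u'|}{a}\scaletwoSuprime{\aln\etabar}$, so the $u'$-integrand becomes $\tfrac{1}{|u'|}\scaletwoSuprime{\aln\etabar}$. This is only logarithmically integrable, and Gr\"onwall produces a factor $(|u_\infty|/|u|)^C$ that is not uniform in $u_\infty$. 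You must use the weighted transport inequality of Proposition~\ref{3.6} with $\lambda_0=\tfrac{i+1}{2}$, which is what the paper does: this converts the integration weight to $\tfrac{a}{|u'|^3}$ and removes the loss. Since you then write ``Applying the $\nabla_3$-transport estimate of Proposition~\ref{3.5}'', this needs correction. Relatedly, writing $\tr\chibar\cdot(\eta,\etabar)$ as ``$\psi_g\cdot\psi_g$'' is inconsistent with the paper's schematic class, since only $\tfrac{a}{|u|^2}\tr\chibar$ belongs to $\psi_g$; the raw product carries an extra $|u|^2/a$, which is precisely why the paper isolates $\tr\chibar\,\eta$ and inputs the improved bounds from Propositions~\ref{trchibarbound} and~\ref{etabound}.
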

    \begin{proof}
    The schematic equation for $\etabar$ is as follows:
    
    \begin{equation} \nabla_3 \etabar + \frac{1}{2}\tr\chibar \hsp \etabar = \tbetabar + \tr\chibar \eta +\psi_g \chibarhat + \alphabarF\cdot(\rhoF, \hsp \sigmaF). \end{equation}
    
    \begin{align}
       \nonumber \nabla_3 \nabla^i \etabar + \frac{i+1}{2}\tr\chibar \nabla^i \etabar = &\nabla^i \tbetabar + \sum_{i_1+i_2+i_3=i-1} \nablapp \nablat \tbetabar \\ \nonumber &+\sumif \nablap \nablat \psi_g \nablaf (\chibarhat, \tr\chibar) \\ \nonumber &+\sumif \nablap \hnablat \alphabarF \hnablaf (\rhoF, \sigmaF) \\ \nonumber  &+ \sumif \nablap \nablat (\chibarhat, \tildetr)\nablaf \etabar \\ \nonumber &+\sumifi \nablapp \nablat \tr\chibar \nablaf \etabar \\\nonumber  &+ \sumifim \nablapp \nablat(\chibarhat,\tr\chibar)\nablaf \etabar \\ &+ \sumiFi \nablap \hnablat \alphabarF \hnablaf (\rhoF, \sigmaF) \nabla^{i_5}\etabar := H_i.
    \end{align}Calculating in scale-invariant norms, we arrive at
    
    \be \frac{1}{\lvert u \rvert}\scaletwoSu{\aln \etabar} \lesssim \frac{1}{\lvert u_{\infty} \rvert}\lVert \aln \etabar \rVert_{L^2_{(sc)}(S_{u_{\infty},\ubar})} + \intu \frac{a}{\upr^3} \scaletwoSuprime{a^{\frac{i}{2}}H_i}\duprime. \ee We restrict attention to $0\leq i \leq N+4$. For the first term, there holds
    \be \frac{1}{\lvert u_{\infty} \rvert}\lVert \aln \etabar \rVert_{L^2_{(sc)}(S_{u_{\infty},\ubar})} \lesssim \frac{\mathcal{I}}{\lvert u_{\infty}\rvert} \lesssim \frac{\mathcal{I}}{\lvert u \rvert}. \ee There holds
    
    \begin{align}
       \nonumber  &\intu \frac{a}{\upr^3}\scaletwoSuprime{\aln \tbetabar} \duprime  \\ \nonumber \lesssim  &\left(\intu \frac{a}{\upr^2}\scaletwoSuprime{\aln \tbetabar}^2 \duprime\right)^{\frac{1}{2}}\left( \intu \frac{a}{\upr^4}\duprime\right)^{\frac{1}{2}} \\   \lesssim &\underline{\mathcal{R}}[\tbetabar]\cdot \frac{\al}{\lvert u \rvert^{\frac{3}{2}}} \lesssim \frac{\underline{\mathcal{R}}[\tbetabar]}{\lvert u \rvert}.
    \end{align}For the next term, there holds   \begin{align}
       \nonumber  &\intu \frac{a}{\upr^3}\scaletwoSuprime{\aln \sumitm \nablapp \nablat \tbetabar} \duprime  \\\lesssim &\intu \frac{a}{\upr^3} \cdot \frac{\al \hsp \Gamma^2}{\upr}\duprime \lesssim \frac{1}{\lvert u \rvert}.
    \end{align}For the third term, there holds
     \begin{align}
     &\intu \frac{a}{\upr^3}\scaletwoSuprime{a^{\frac{i}{2}} \sumif \nablap \nablat \psi_g \nablaf (\chibarhat,\tr\chibar)} \duprime  \lesssim \frac{\mathcal{R}[\tbeta]+1}{\lvert u \rvert}.
    \end{align}This is done by further taking into account that the schematic product appearing is actually $\psi_g (\chibarhat, \tr\chibar) = \chibarhat \cdot \psi_g + \tr\chibar \eta$. As such, we use the improvement obtained in Proposition \ref{trchibarbound}  for the $\tr\chibar$-term as well as the improvement obtained in Proposition \ref{etabound} for $\eta$. The term $\chibarhat$ is less anomalous than $\tr\chibar$ and hence the above bound is easier to obtain. Continuing the estimates, there holds
    
    \begin{align}
    \nonumber      &\intu \frac{a}{\upr^3}\scaletwoSuprime{a^{\frac{i}{2}} \sumif \nablap \hnablat \alphabarF \hnablaf(\rhoF, \sigmaF)} \duprime \\ \lesssim& \intu\frac{a}{\upr^3} \cdot \frac{\Gamma^2}{\upr}\duprime \lesssim \frac{1}{\lvert u \rvert}.
    \end{align}For the fifth term, there holds
    
      \begin{align}
        \nonumber  &\intu \frac{a}{\upr^3}\scaletwoSuprime{a^{\frac{i}{2}} \sumif \nablap \nablat (\chibarhat,\tildetr) \nablaf \etabar} \duprime \\ \lesssim& \intu\frac{a}{\upr^3} \cdot \frac{\upr}{\al}\cdot \frac{\Gamma^2}{\upr} \duprime \lesssim \frac{1}{\lvert u \rvert}.
    \end{align}For the sixth term, there holds
    
        \begin{align}
      \nonumber   &\intu \frac{a}{\upr^3}\scaletwoSuprime{a^{\frac{i}{2}} \sumifi \nablapp \nablat \tr\chibar \nablaf \etabar} \duprime  \\ \lesssim &\intu \frac{a}{\upr^3}\cdot \frac{\upr^2}{a}\cdot\frac{\Gamma^3}{\upr^2}\duprime \lesssim \frac{1}{\lvert u\rvert}.
    \end{align}For the seventh term, there holds
    
        \begin{align}
      \nonumber   &\intu \frac{a}{\upr^3}\scaletwoSuprime{a^{\frac{i}{2}} \sumifim \nablapp \nablat (\chibarhat, \tr\chibar) \nablaf \etabar} \duprime  \\ \lesssim &\intu \frac{a^{\frac{3}{2}}}{\upr^3}\cdot \frac{\upr^2}{a}\cdot\frac{\Gamma^3}{\upr^2}\duprime \lesssim \frac{\al \hsp \Gamma^3}{\lvert u \rvert^2} \lesssim \frac{1}{\lvert u\rvert}.
    \end{align}Finally, for the last term, there holds
    
    \begin{align}
     \nonumber    &\intu \frac{a}{\upr^3}\scaletwoSuprime{\sumiFi \nablap \hnablat\alphabarF\hnablaf(\rhoF,\sigmaF)\nabla^{i_5}\etabar} \duprime \\\lesssim   &\intu \frac{a^{\frac{3}{2}}}{\upr^3}\cdot \frac{\Gamma^3}{\upr^2}\duprime \lesssim \frac{1}{\lvert u \rvert}.
    \end{align}Putting everything together, we have
    
    \[ \frac{1}{\lvert u \rvert} \scaletwoSu{\aln\etabar}\lesssim \frac{\mathcal{R}[\tbeta]+\underline{\mathcal{R}}[\tbetabar]+1}{\lvert u \rvert}, \]whence the result follows.

    \end{proof}
    This concludes the estimates on Ricci coefficients.

\subsection{Estimates on the Yang-Mills fields}
\begin{proposition}\label{alphaFprop}
Under the assumptions of Theorem \ref{main1} and the bootstrap assumptions \eqref{bootstrap}, there holds 

\[ \frac{1}{\al}\sum_{0\leq i \leq N+4} \scaletwoSu{(\al\hnabla)^i \alpha^F} \lesssim \underline{\mathbb{YM}}[\rho^F, \sigma^F]+1.\]
\end{proposition}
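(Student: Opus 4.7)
The plan is to treat the null Yang--Mills equation
\[\hnabla_3 \alpha^F + \tfrac{1}{2}\tr\chibar\,\alpha^F = \hnabla\rho^F + \Hodge{\hnabla}\sigma^F - 2\Hodge{\eta}\sigma^F + 2\eta\rho^F + 2\omegabar\,\alpha^F - \chihat\cdot\alphabar^F\]
as a transport equation in the $e_3$-direction, propagating bounds inward from the initial cone $\{u=u_\infty\}$. By assumption~(2) of Theorem~\ref{main1}, the initial scale-invariant norm satisfies $\tfrac{1}{\al}\scaletwoSu{(\al\hnabla)^i\alpha^F}\big|_{u=u_\infty} = \tfrac{|u_\infty|^i}{\al}\|\hnabla^i\alpha^F\|_{L^2(S_{u_\infty,\ubar})} \leq \mathcal{I}\lesssim 1$ for every $i\leq N+4$. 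Commuting with $\hnabla^i$ via the gauge-covariant formula \eqref{c2} yields $\hnabla_3 \hnabla^i\alpha^F + \tfrac{i+1}{2}\tr\chibar\,\hnabla^i\alpha^F = \mathcal{F}_2^i$, and applying Proposition~\ref{3.6} with $\lambda_0 = (i+1)/2$ (so $\lambda_1 = i$), then dividing by $\al$, reduces the claim to controlling $\tfrac{1}{\al}\intu |u'|^i \|\mathcal{F}_2^i\|_{L^2(S_{u',\ubar})}\duprime$ by $\underline{\mathbb{YM}}[\rho^F,\sigma^F]+1$.

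The dominant contribution to $\mathcal{F}_2^i$ is the top-order term $\hnabla^{i+1}(\rho^F,\sigma^F)$ arising from $\hnabla\rho^F + \Hodge{\hnabla}\sigma^F$. Using $s_2\!\left((\al\hnabla)^{i+1}(\rho^F,\sigma^F)\right) = (i+2)/2$ to convert to scale-invariant norms, this piece reduces to $\intu |u'|^{-2}\scaletwoSuprime{(\al\hnabla)^{i+1}(\rho^F,\sigma^F)}\duprime$, and Cauchy--Schwarz combined with $\intu \tfrac{1}{a|u'|^2}\duprime \lesssim \tfrac{1}{a^2}$ (since $|u|\geq a/4$) bounds it by $\underline{\mathbb{YM}}_{i+1}[\rho^F,\sigma^F] \lesssim \underline{\mathbb{YM}}[\rho^F,\sigma^F]$. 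Crucially, for $i\leq N+4$ this invokes at most $N+5$ gauge-covariant derivatives of $(\rho^F,\sigma^F)$, precisely the top order captured by the norm $\underline{\mathbb{YM}}_{N+5}$.

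All remaining terms in $\mathcal{F}_2^i$ prescribed by \eqref{c2}---the lower-order sources of the form $\sum \nabla^{i_1}\psi_g^{i_2}\hnabla^{i_3}(\Hodge{\eta}\sigma^F,\eta\rho^F,\omegabar\alpha^F,\chihat\alphabar^F)$; the Ricci commutator products $\sum \nabla^{i_1}\psi_g^{i_2}\nabla^{i_3}(\chibarhat,\tildetr)\hnabla^{i_4}\alpha^F$ and $\sum\nabla^{i_1}\psi_g^{i_2+1}\nabla^{i_3}(\chibarhat,\tr\chibar)\hnabla^{i_4}\alpha^F$; and the non-abelian commutator piece $C_2 = \sum \nabla^{J_1}(\eta+\etabar)^{J_2}\hnabla^{J_3}\alphabar^F \hnabla^{J_4}\alpha^F$ (present here because $\alpha^F$ is valued in a $P_{Ad,\mathfrak{g}}$-bundle)---are each bounded by $\lesssim 1$ via the bookkeeping already exhibited in Propositions~\ref{trchibarbound} and~\ref{etabarestimate}, using the bootstrap assumptions on $\bbGamma$ and $\mathbb{YM}$ and the preliminary estimates of Section~4.1. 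I expect the main obstacle to be the cross-term $\chihat\cdot\alphabar^F$, which is borderline in $a$-weights: its control requires the pointwise improvement $\scaleinfinitySu{(\al\nabla)^i\chihat}\lesssim\al$ (for $i\leq N+1$) from Remark~\ref{remarkhat} together with the favorable $a/|u|^2$ decay carried by $\alphabar^F$ (of signature~$1$); the non-abelian $C_2$ piece is absorbed analogously, exploiting the signature gap between $\alphabar^F$ and $\alpha^F$. Summing over $0\leq i\leq N+4$ then yields the claimed inequality.
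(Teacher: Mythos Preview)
Your approach is essentially the paper's: commute the $\hnabla_3$-transport equation for $\alpha^F$ with $\hnabla^i$, apply Proposition~\ref{3.6} with $\lambda_0=(i+1)/2$, isolate the top-order term $\hnabla^{i+1}(\rho^F,\sigma^F)$ and bound it by $\underline{\mathbb{YM}}[\rho^F,\sigma^F]$ via Cauchy--Schwarz, then handle all remaining sources with the bootstrap. One small correction: the cross-term $\chihat\cdot\alphabar^F$ is \emph{not} borderline and does not require Remark~\ref{remarkhat}; the raw bootstrap $\scaleinfinitySu{(\al\nabla)^j\chihat}\lesssim \al\Gamma$ already yields $\intu \frac{\al}{|u'|^2}\cdot\frac{\al\Gamma^2}{|u'|}\,du'\lesssim \frac{a\Gamma^2}{|u|^2}\lesssim 1$, exactly as in the paper's estimate of this term.
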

\begin{proof}
Recall the null evolution equation for $\underline{\alpha}^{F}$
\begin{eqnarray}
\hnabla_3 \alpha^F + \frac{1}{2}\tr\chibar\alpha^F = \hnabla \rho^F + \Hodge{\hnabla}\sigma^F -2 \Hodge{\eta}\sigma^F+ 2 \eta \rho^F +2 \omegabar \alpha^F - \chihat \cdot \alphabar^F.
\end{eqnarray}
An application of the commutation identity \eqref{c2} yields
\begin{equation}
\begin{split}
&\hnabla_{3}\hnabla^{I}\alpha^{F}+\frac{I+1}{2}\tr\underline{\chi}\alpha^{F}\\=&\hnabla^{I+1}(\rho^{F},\sigma^{F})+\sum_{J_{1}+J_{2}=I-1}\nabla^{J_{1}+1}\tr\underline{\chi}\hnabla^{J_{2}}\alpha^{F}\\
&+\sum_{J_1+J_2+J_3+J_4=I-1}\nabla^{J_1} (\eta+\etabar)^{J_2+1}\nabla^{J_3}(\chibarhat,\tr\chibar)\hnabla^{J_4}\alpha^{F} \hspace{2mm}      \\&+ \sum_{J_{1}+J_{2}+J_{3}+J_{4}+J_{5}=I-1}\nabla^{J_{1}}(\eta+\underline{\eta})^{J_{2}}\hnabla^{J_{3}}\alphabar^{F}\hnabla^{J_{4}}(\rho^{F},\sigma^{F})\hnabla^{J_{5}}\alpha^{F}\\&+ \sum_{J_1+J_2+J_3+J_4=I-1}\nabla^{J_1}(\eta+\etabar)^{J_2}\hnabla^{J_3}\alphabar^F \hnabla^{J_4}\alpha^{F}
+\sum_{J_{1}+J_{2}+J_{3}=I-1}\nabla^{J_{1}}(\eta+\underline{\eta})^{J_{2}+1}\hnabla^{J_{3}+1}(\rho^{F},\sigma^{F})\\
&+\sum_{J_{1}+J_{2}+J_{3}+J_{4}=I}\nabla^{J_{1}}(\eta+\underline{\eta})^{J_{2}}\nabla^{J_{3}}\eta\hnabla^{J_{4}}(\rho^{F},\sigma^{F})+\sum_{J_{1}+J_{2}+J_{3}+J_{4}=I}\nabla^{J_{1}}(\eta+\underline{\eta})^{J_{2}}\nabla^{J_{3}}\underline{\omega}\hnabla^{J_{4}}\alpha^{F}\\& 
+\sum_{J_{1}+J_{2}+J_{3}+J_{4}=I}\nabla^{J_{1}}(\eta+\underline{\eta})^{J_{2}}\nabla^{J_{3}}\widehat{\chi}\hnabla^{J_{4}}\underline{\alpha}^{F}+\sum_{J_{1}+J_{2}+J_{3} +J_{4}=I}\nabla^{J_{1}}(\eta+\underline{\eta})^{J_{2}}\hat{\nabla}^{J_{3}}\widehat{\underline{\chi}}\hnabla^{J_{4}}\alpha^{F}\\
=&\mathfrak{G}_{1},
\end{split}
\end{equation}
where we denote the right hand side by $\mathfrak{G}_{1}$ for convenience. An application of the weighted transport inequality (\ref{36}) yields 
\begin{eqnarray}
\label{eq:transport_u1}
|u|^{I}||\hnabla^{I}\alpha^{F}||_{L^{2}(S_{u,\underline{u}})}\leq |u_{\infty}|^{I}||\hnabla^{I}\alpha^{F}||_{L^{2}(S_{u_{\infty},\underline{u}})}+\int_{u_{\infty}}^{u}|u^{'}|^{I}||\mathfrak{G}_{1}||_{L^{2}(S_{u^{'},\underline{u}})}du^{'}.
\end{eqnarray}
Now we recall the definitions of the scale-invariant norms \[ ||\phi||_{\mathcal{L}^{2}_{sc}(S_{u,\underline{u}})}:=a^{-s_{2}(\phi)}|u|^{2s_{2}(\phi)}||\phi||_{L^{2}(S_{u,\underline{u}})}\] to yield 
\begin{eqnarray}
||\hnabla^{I}\alpha^{F}||_{\mathcal{L}^{2}_{sc}(S_{u,\underline{u}})}=a^{-\frac{I}{2}}|u|^{I}||\hnabla^{I}\alpha^{F}||_{L^{2}(S_{u,\underline{u}})},
\end{eqnarray}
since $s_{2}(\hnabla^{I}\alpha^{F})=s_{2}(\alpha^{F})+I\times\frac{1}{2}=0+\frac{I}{2}=\frac{I}{2}$. Similarly $s_{2}(\mathfrak{G}_{1})=s_{2}(\hnabla_{3}\hnabla^{I}\alpha^{F})=0+1+I\times \frac{1}{2}=\frac{I}{2}+1$ and therefore 
\begin{eqnarray}
||\mathfrak{G}_{1}||_{\mathcal{L}^{2}_{sc}(S_{u,\underline{u}})}=a^{-\frac{I+2}{2}}|u|^{I+2}||\mathfrak{G}_{1}||_{L^{2}(S_{u,\underline{u}})}.
\end{eqnarray}
In terms of the scale invariant norms, the weighted transport inequality (\ref{eq:transport_u1}) for $\alpha^{F}$ 
reads 
\begin{equation}
\begin{split}
&a^{-\frac{1}{2}}||(a^{\frac{1}{2}}\hnabla)^{I}\alpha^{F}||_{\mathcal{L}^{2}_{sc}(S_{u,\underline{u}})}\\ \leq & a^{-\frac{1}{2}}||(a^{\frac{1}{2}}\hnabla)^{I}\alpha^{F}||_{\mathcal{L}^{2}_{sc}(S_{u_{\infty},\underline{u}})}+\int_{u_{\infty}}^{u}\frac{a^{\frac{1}{2}}}{|u^{'}|^{2}}||a^{\frac{I}{2}}\mathfrak{G}_{1}||_{\mathcal{L}^{2}_{sc}(S_{u^{'},\underline{u}})}du^{'}\\ 
\leq &a^{-\frac{1}{2}}||(a^{\frac{1}{2}}\hnabla)^{I}\alpha^{F}||_{\mathcal{L}^{2}_{sc}(S_{u_{\infty},\underline{u}})} +\int_{u_{\infty}}^{u}\frac{a^{\frac{1}{2}}}{|u^{'}|^{2}}||a^{\frac{I}{2}}\hnabla^{I+1}(\rho^{F},\sigma^{F})||_{\mathcal{L}^{2}(S_{u^{'},\underline{u}})}du^{'}\\+&\int_{u_{\infty}}^{u}\frac{a^{\frac{1}{2}}}{|u^{'}|^{2}}||a^{\frac{I}{2}}\sum_{J_{1}+J_{2}=I-1}\nabla^{J_{1}+1}\tr\underline{\chi}\hnabla^{J_{2}}\alpha^{F}||_{\mathcal{L}^{2}(S_{u^{'},\underline{u}})}\\
+&\int_{u_{\infty}}^{u}\frac{a^{\frac{1}{2}}}{|u^{'}|^{2}}||a^{\frac{I}{2}}\sum_{J_1+J_2+J_3+J_4=I-1}\nabla^{J_1} (\eta+\etabar)^{J_2+1}\nabla^{J_3}(\chibarhat,\tr\chibar)\hnabla^{J_4}\alpha^{F}||_{\mathcal{L}^{2}(S_{u^{'},\underline{u}})} \\+& \int_{u_{\infty}}^{u}\frac{a^{\frac{1}{2}}}{|u^{'}|^{2}}||a^{\frac{I}{2}}\sum_{J_{1}+J_{2}+J_{3}+J_{4}+J_{5}=I-1}\nabla^{J_{1}}(\eta+\underline{\eta})^{J_{2}}\hnabla^{J_{3}}\alphabar^{F}\hnabla^{J_{4}}(\rho^{F},\sigma^{F})\hnabla^{J_{5}}\alpha^{F}||_{\mathcal{L}^{2}(S_{u^{'},\underline{u}})}\\ +& \int_{u_{\infty}}^{u}\frac{a^{\frac{1}{2}}}{|u^{'}|^{2}}||a^{\frac{I}{2}}\sum_{J_1+J_2+J_3+J_4=I-1}\nabla^{J_1}(\eta+\etabar)^{J_2}\hnabla^{J_3}\alphabar^F \hnabla^{J_4}\alpha^{F}||_{\mathcal{L}^{2}(S_{u^{'},\underline{u}})}\\
+&\int_{u_{\infty}}^{u}\frac{a^{\frac{1}{2}}}{|u^{'}|^{2}}||a^{\frac{I}{2}}\sum_{J_{1}+J_{2}+J_{3}=I-1}\nabla^{J_{1}}(\eta+\underline{\eta})^{J_{2}+1}\hnabla^{J_{3}+1}(\rho^{F},\sigma^{F})||_{\mathcal{L}^{2}(S_{u^{'},\underline{u}})}\\ 
+&\int_{u_{\infty}}^{u}\frac{a^{\frac{1}{2}}}{|u^{'}|^{2}}||a^{\frac{I}{2}}\sum_{J_{1}+J_{2}+J_{3}+J_{4}=I}\nabla^{J_{1}}(\eta+\underline{\eta})^{J_{2}}\nabla^{J_{3}}\eta\hnabla^{J_{4}}(\rho^{F},\sigma^{F})||_{\mathcal{L}^{2}(S_{u^{'},\underline{u}})}\\+&\int_{u_{\infty}}^{u}\frac{a^{\frac{1}{2}}}{|u^{'}|^{2}}||a^{\frac{I}{2}}\sum_{J_{1}+J_{2}+J_{3}+J_{4}=I}\nabla^{J_{1}}(\eta+\underline{\eta})^{J_{2}}\nabla^{J_{3}}\underline{\omega}\hnabla^{J_{4}}\alpha^{F}||_{\mathcal{L}^{2}(S_{u^{'},\underline{u}})}\\ 
+&\int_{u_{\infty}}^{u}\frac{a^{\frac{1}{2}}}{|u^{'}|^{2}}||a^{\frac{I}{2}}\sum_{J_{1}+J_{2}+J_{3}+J_{4}=I}\nabla^{J_{1}}(\eta+\underline{\eta})^{J_{2}}\nabla^{J_{3}}\widehat{\chi}\hnabla^{J_{4}}\underline{\alpha}^{F}||_{\mathcal{L}^{2}(S_{u^{'},\underline{u}})}\\ 
+&\int_{u_{\infty}}^{u}\frac{a^{\frac{1}{2}}}{|u^{'}|^{2}}||a^{\frac{I}{2}}\sum_{J_{1}+J_{2}+J_{3} +J_{4}=I}\nabla^{J_{1}}(\eta+\underline{\eta})^{J_{2}}\hat{\nabla}^{J_{3}}\widehat{\underline{\chi}}\hnabla^{J_{4}}\alpha^{F}||_{\mathcal{L}^{2}(S_{u^{'},\underline{u}})}.
\end{split}
\end{equation}
We now control each term by means of the bootstrap assumption (\ref{bootstrap}). Notice that the first term $\int_{u_{\infty}}^{u}\frac{a^{\frac{1}{2}}}{|u^{'}|^{2}}||a^{\frac{I}{2}}\hnabla^{I+1}(\rho^{F},\sigma^{F})||_{\mathcal{L}^{2}(S_{u^{'},\underline{u}})}du^{'}$ has the top-most derivative (i.e., $N+5$) and therefore we bound it in $\mathcal{L}^{2}_{sc}(\underline{H}^{(u_{\infty},u)})$. An application of Cauchy-Schwartz yields 
\begin{equation}
\begin{split}
&\int_{u_{\infty}}^{u}\frac{a^{\frac{1}{2}}}{|u^{'}|^{2}}||a^{\frac{I}{2}}\hnabla^{I+1}(\rho^{F},\sigma^{F})||_{\mathcal{L}^{2}_{sc}(S_{u^{'},\underline{u}})}du^{'}\\ \leq &\left(\int_{u_{\infty}}^{u}\frac{a}{|u^{'}|^{2}}||a^{\frac{I}{2}}\hnabla^{I+1}(\rho^{F},\sigma^{F})||^{2}_{\mathcal{L}^{2}_{sc}(S_{u^{'},\underline{u}})}\right)^{\frac{1}{2}}\left(\int_{u_{\infty}}^{u}\frac{1}{|u^{'}|^{2}}du^{'}\right)^{\frac{1}{2}}\\  
\lesssim &||a^{\frac{I}{2}}\hnabla^{I+1}(\rho^{F},\sigma^{F})||_{\mathcal{L}^{2}_{sc}(\underline{H}^{(u_{\infty},u)})}\frac{1}{|u|^{\frac{1}{2}}}\lesssim \frac{1}{a^{\frac{1}{2}}}||a^{\frac{I}{2}}\hnabla^{I+1}(\rho^{F},\sigma^{F})||_{\mathcal{L}^{2}_{sc}(\underline{H}^{(u_{\infty},u)})}\frac{a^{\frac{1}{2}}}{|u|^{\frac{1}{2}}}\lesssim \underline{\mathbb{YM}}[\rho^{F},\sigma^{F}].
\end{split}
\end{equation}
We estimate the next term: 
\begin{equation}
\begin{split}
&\int_{u_{\infty}}^{u}\frac{a^{\frac{1}{2}}}{|u^{'}|^{2}}||a^{\frac{I}{2}}\sum_{J_{1}+J_{2}=I-1}\nabla^{J_{1}+1}\tr\underline{\chi}\hnabla^{J_{2}}\alpha^{F}||_{\mathcal{L}^{2}(S_{u^{'},\underline{u}})}\\
=&\int_{u_{\infty}}^{u}\frac{a^{\frac{1}{2}}}{|u^{'}|^{2}}||a^{\frac{I}{2}}\sum_{J_{1}+J_{2}=I-1}\nabla^{J_{1}+1}(\tr\underline{\chi}+\frac{2}{|u^{'}|})\hnabla^{J_{2}}\alpha^{F}||_{\mathcal{L}^{2}(S_{u^{'},\underline{u}})}\lesssim \int_{u_{\infty}}^{u}\frac{1}{|u^{'}|^{2}}\bbGamma^{2}\lesssim \frac{\Gamma^{2}}{|u|} \lesssim 1
\end{split}
\end{equation}
due to integrability of $\frac{1}{|u^{'}|^{2}}$ away from zero. Now consider the next term:
\begin{equation}
\begin{split}
&\int_{u_{\infty}}^{u}\frac{a^{\frac{1}{2}}}{|u^{'}|^{2}}||a^{\frac{I}{2}}\sum_{J_1+J_2+J_3+J_4=I-1}\nabla^{J_1} (\eta+\etabar)^{J_2+1}\nabla^{J_3}(\chibarhat,\tr\chibar)\hnabla^{J_4}\alpha^{F}||_{\mathcal{L}^{2}(S_{u^{'},\underline{u}})}\\ 
=&\int_{u_{\infty}}^{u}\frac{a^{\frac{1}{2}}}{|u^{'}|^{2}}||a^{\frac{I}{2}}\sum_{J_1+J_2+J_3+J_4=I-1}\nabla^{J_1} (\eta+\etabar)^{J_2+1}\nabla^{J_3}(\chibarhat,(\tr\chibar+\frac{2}{|u^{'}|}-\frac{2}{|u^{'}|}))\hnabla^{J_4}\alpha^{F}||_{\mathcal{L}^{2}(S_{u^{'},\underline{u}})}\\ 
\lesssim &\int_{u_{\infty}}^{u}\frac{a^{\frac{1}{2}}}{|u^{'}|^{2}}\frac{a^{\frac{1}{2}}}{|u^{'}|}\bbGamma^{3}+ \int_{u_{\infty}}^{u}\frac{a^{\frac{1}{2}}}{|u^{'}|^{2}}\bbGamma^{3}\lesssim \frac{a}{|u|^{2}}\Gamma^{3}+\frac{a^{\frac{1}{2}}}{|u|}\Gamma^{3}\lesssim 1.
\end{split}
\end{equation}
Similarly, for the next term we have: 
\begin{eqnarray}
\nonumber \int_{u_{\infty}}^{u}\frac{a^{\frac{1}{2}}}{|u^{'}|^{2}}||a^{\frac{I}{2}}\sum_{J_{1}+J_{2}+J_{3}+J_{4}+J_{5}=I-1}\nabla^{J_{1}}(\eta+\underline{\eta})^{J_{2}}\hnabla^{J_{3}}\alphabar^{F}\hnabla^{J_{4}}(\rho^{F},\sigma^{F})\hnabla^{J_{5}}\alpha^{F}||_{\mathcal{L}^{2}(S_{u^{'},\underline{u}})}\\
\lesssim \int_{u_{\infty}}^{u}\frac{a^{\frac{1}{2}}}{|u'|^{2}}\frac{a^{\frac{3}{2}}}{|u^{'}|^{3}}\bbGamma^{4}+\int_{u_{\infty}}^{u}\frac{a^{\frac{1}{2}}}{|u^{'}|^{2}}\frac{a}{|u^{'}|^{2}}\bbGamma^{3}\lesssim \frac{a^{2}}{|u|^{4}}\Gamma^{4}+\frac{a^{\frac{3}{2}}}{|u|^{3}}\Gamma^{3}\lesssim 1.
\end{eqnarray}
For the next term: 
\begin{eqnarray}
\nonumber \int_{u_{\infty}}^{u}\frac{a^{\frac{1}{2}}}{|u^{'}|^{2}}||a^{\frac{I}{2}}\sum_{J_1+J_2+J_3+J_4=I-1}\nabla^{J_1}(\eta+\etabar)^{J_2}\hnabla^{J_3}\alphabar^F \hnabla^{J_4}\alpha^{F}||_{\mathcal{L}^{2}(S_{u^{'},\underline{u}})}\\
\lesssim \int_{u_{\infty}}^{u}\frac{a^{\frac{1}{2}}}{|u^{'}|^{2}}\frac{a^{\frac{3}{2}}}{|u^{'}|^{2}}\bbGamma^{3}+\int_{u_{\infty}}^{u}\frac{a^{\frac{1}{2}}}{|u^{'}|^{2}}\frac{a}{|u^{'}|}\bbGamma^{2}\lesssim \frac{a^{2}}{|u|^{3}}\Gamma^{3}+\frac{a^{\frac{3}{2}}}{|u|^{2}}\Gamma^{2}\lesssim 1.
\end{eqnarray}
We proceed with the seventh term:
\begin{eqnarray}
\nonumber \int_{u_{\infty}}^{u}\frac{a^{\frac{1}{2}}}{|u^{'}|^{2}}||a^{\frac{I}{2}}\sum_{J_{1}+J_{2}+J_{3}=I-1}\nabla^{J_{1}}(\eta+\underline{\eta})^{J_{2}+1}\hnabla^{J_{3}+1}(\rho^{F},\sigma^{F})||_{\mathcal{L}^{2}(S_{u^{'},\underline{u}})}\\
\lesssim \int_{u_{\infty}}^{u}\frac{a^{\frac{1}{2}}}{|u^{'}|^{3}}\bbGamma^{2}\lesssim \frac{a^{\frac{1}{2}}}{|u|^{2}}\Gamma^{2}\lesssim 1.
\end{eqnarray}
For the eighth term:
\begin{eqnarray}
\nonumber\int_{u_{\infty}}^{u}\frac{a^{\frac{1}{2}}}{|u^{'}|^{2}}||a^{\frac{I}{2}}\sum_{J_{1}+J_{2}+J_{3}+J_{4}=I}\nabla^{J_{1}}(\eta+\underline{\eta})^{J_{2}}\nabla^{J_{3}}\eta\hnabla^{J_{4}}(\rho^{F},\sigma^{F})||_{\mathcal{L}^{2}(S_{u^{'},\underline{u}})}\\ \lesssim \int_{u_{\infty}}^{u}\frac{a^{\frac{1}{2}}}{|u^{'}|^{2}}\frac{1}{|u^{'}|}\bbGamma^{2}+\int_{u_{\infty}}^{u}\frac{a^{\frac{1}{2}}}{|u^{'}|^{2}}\frac{a^{\frac{1}{2}}}{|u^{'}|}\bbGamma^{3}\lesssim \frac{a^{\frac{1}{2}}}{|u|^{2}}\Gamma^{2}+\frac{a}{|u|^{3}}\Gamma^{3}\lesssim 1.
\end{eqnarray}
For the ninth term:
\begin{eqnarray}
\nonumber\int_{u_{\infty}}^{u}\frac{a^{\frac{1}{2}}}{|u^{'}|^{2}}||a^{\frac{I}{2}}\sum_{J_{1}+J_{2}+J_{3}+J_{4}=I}\nabla^{J_{1}}(\eta+\underline{\eta})^{J_{2}}\nabla^{J_{3}}\underline{\omega}\hnabla^{J_{4}}\alpha^{F}||_{\mathcal{L}^{2}(S_{u^{'},\underline{u}})}\\ \lesssim\int_{u_{\infty}}^{u}\frac{a^{\frac{1}{2}}}{|u^{'}|^{2}}\frac{a^{\frac{1}{2}}}{|u^{'}|}\bbGamma^{2}+\int_{u_{\infty}}^{u}\frac{a^{\frac{1}{2}}}{|u^{'}|^{2}}\frac{a}{|u^{'}|^{2}}\bbGamma^{3}\lesssim \frac{a}{|u|^{2}}\Gamma^{2}+\frac{a^{\frac{3}{2}}}{|u|^{3}}\Gamma^{3}\lesssim 1.
\end{eqnarray}
The penultimate term reads 
\begin{eqnarray}
\nonumber \int_{u_{\infty}}^{u}\frac{a^{\frac{1}{2}}}{|u^{'}|^{2}}||a^{\frac{I}{2}}\sum_{J_{1}+J_{2}+J_{3}+J_{4}=I}\nabla^{J_{1}}(\eta+\underline{\eta})^{J_{2}}\nabla^{J_{3}}\widehat{\chi}\hnabla^{J_{4}}\underline{\alpha}^{F}||_{\mathcal{L}^{2}(S_{u^{'},\underline{u}})}\\
\lesssim\int_{u_{\infty}}^{u}\frac{a^{\frac{1}{2}}}{|u^{'}|^{2}}\frac{a^{\frac{1}{2}}}{|u^{'}|}\bbGamma^{2}+\int_{u_{\infty}}^{u}\frac{a^{\frac{1}{2}}}{|u^{'}|^{2}}\frac{a}{|u^{'}|^{2}}\bbGamma^{3}\lesssim \frac{a}{|u|^{2}}\Gamma^{2}+\frac{a^{\frac{3}{2}}}{|u|^{3}}\Gamma^{3}\lesssim 1,
\end{eqnarray}
while the last term reads
\begin{eqnarray}\nonumber
\int_{u_{\infty}}^{u}\frac{a^{\frac{1}{2}}}{|u^{'}|^{2}}||a^{\frac{I}{2}}\sum_{J_{1}+J_{2}+J_{3} +J_{4}=I}\nabla^{J_{1}}(\eta+\underline{\eta})^{J_{2}}\hat{\nabla}^{J_{3}}\widehat{\underline{\chi}}\hnabla^{J_{4}}\alpha^{F}||_{\mathcal{L}^{2}(S_{u^{'},\underline{u}})}\\ 
\lesssim \int_{u_{\infty}}^{u}\frac{a^{\frac{1}{2}}}{|u^{'}|^{2}}\frac{|u^{'}|}{a^{\frac{1}{2}}}\frac{a}{|u^{'}|^{2}}\bbGamma^{3}+\int_{u_{\infty}}^{u}\frac{a^{\frac{1}{2}}}{|u^{'}|^{2}}\frac{|u^{'}|}{a^{\frac{1}{2}}}\frac{a^{\frac{1}{2}}}{|u^{'}|}\bbGamma^{2}\lesssim \frac{a}{|u|^{2}}\Gamma^{3}+\frac{a^{\frac{1}{2}}}{|u|}\Gamma^{2}\lesssim 1.
\end{eqnarray}
Collecting all the terms, we have
\begin{eqnarray}
a^{-\frac{1}{2}}||(a^{\frac{1}{2}}\hnabla)^{I}\alpha^{F}||_{\mathcal{L}^{2}_{sc}(S_{u,\underline{u}})}\lesssim \underline{\mathbb{YM}}[\rho^{F},\sigma^{F}]+1,
\end{eqnarray}
since $a^{-\frac{1}{2}}||(a^{\frac{1}{2}}\hnabla)^{I}\alpha^{F}||_{\mathcal{L}^{2}_{sc}(S_{u_{\infty},\underline{u}})}\lesssim 1$. This concludes the proof. 
\end{proof}
\begin{remark}\label{remarkF}
For the non-extremal cases $0\leq i \leq N+3$, we have the improved estimate since the topmost ($N+5$) derivatives of $(\rho^{F},\sigma^{F})$ do not appear in the commuted null evolution equation for $\alpha^{F}$
\[ \frac{1}{\al}\sum_{0\leq i \leq N+3} \scaletwoSu{(\al\hnabla)^i \alpha^F} \lesssim 1,\]which implies, by Sobolev embedding, the following $L^{\infty}$ estimate:

\[ \frac{1}{\al}\sum_{0\leq i \leq N+1} \scaleinfinitySu{(\al\hnabla)^i \alpha^F} \lesssim 1.\]This will be useful later on, for example in Proposition \ref{trchiprop}.
\end{remark}

\begin{proposition}
Under the assumptions of Theorem \ref{main1} and the bootstrap assumptions \eqref{bootstrap}, there holds 

\[\sum_{0\leq i \leq N+4} \scaletwoSu{(\al\hnabla)^i (\rho^F,\sigma^{F})} \lesssim \underline{\mathbb{YM}}[\underline{\alpha}^{F}]+1.\]
\end{proposition}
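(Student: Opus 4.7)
The plan is to proceed in direct analogy with Proposition \ref{alphaFprop}, exploiting the Hodge-like duality between the $\hnabla_{3}$-equations for the pair $(\rho^{F},\sigma^{F})$ and the $\hnabla_{3}$-equation for $\alpha^{F}$. Starting from the schematic system
\[
\hnabla_{3}\rho^{F}+\tr\chib\,\rho^{F}=-\widehat{\mbox{div}}\,\alphabar^{F}+(\eta-\etab)\cdot\alphabar^{F},\qquad \hnabla_{3}\sigma^{F}+\tr\chib\,\sigma^{F}=-\widehat{\mbox{curl}}\,\alphabar^{F}+(\eta-\etab)\cdot\Hodge{\alphabar}^{F},
\]
I would commute with $\hnabla^{i}$ using formula \eqref{c2} (so the top-order term on the right becomes $\hnabla^{i+1}\alphabar^{F}$), absorb the coefficient $\tr\chib\cdot\hnabla^{i}(\rho^{F},\sigma^{F})$ that is generated on the left into $\frac{i+2}{2}\tr\chib\,\hnabla^{i}(\rho^{F},\sigma^{F})$, and then apply the weighted transport inequality of Proposition \ref{3.6} with $\lambda_{0}=\tfrac{i+2}{2}$, i.e. $\lambda_{1}=i+1$. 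After passing to scale-invariant norms (noting $s_{2}(\hnabla^{i}(\rho^{F},\sigma^{F}))=\tfrac{i+1}{2}$) this yields, for each $0\le i\le N+4$,
\begin{align*}
\|(\al\hnabla)^{i}(\rho^{F},\sigma^{F})\|_{\mathcal{L}^{2}_{sc}(S_{u,\ub})} \lesssim{}& \|(\al\hnabla)^{i}(\rho^{F},\sigma^{F})\|_{\mathcal{L}^{2}_{sc}(S_{u_{\infty},\ub})} \\
&+\int_{u_{\infty}}^{u}\frac{\al}{|u'|^{2}}\bigl\|a^{\frac{i}{2}}\mathfrak{G}_{2}\bigr\|_{\mathcal{L}^{2}_{sc}(S_{u',\ub})}\,du',
\end{align*}
where $\mathfrak{G}_{2}$ is the (schematic) commuted right-hand side.

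The principal term $\int_{u_{\infty}}^{u}\frac{\al}{|u'|^{2}}\|a^{i/2}\hnabla^{i+1}\alphabar^{F}\|_{\mathcal{L}^{2}_{sc}(S_{u',\ub})}du'$ is the only term that requires reaching the top order $N+5$ derivatives of $\alphabar^{F}$. By Cauchy--Schwarz in $u'$ and the definition
\[
\underline{\mathbb{YM}}_{N+5}\supset \|a^{\frac{N+4}{2}}\hnabla^{N+5}\alphabar^{F}\|_{\mathcal{L}^{2}_{sc}(\Hb^{(u_{\infty},u)}_{\ub})},
\]
this contribution is controlled by $\underline{\mathbb{YM}}[\alphabar^{F}]\cdot|u|^{-1/2}\lesssim\underline{\mathbb{YM}}[\alphabar^{F}]$, which is precisely the right-hand side of the claim. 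All remaining terms from \eqref{c2} are lower-order and involve products of the form $\nabla^{J_{1}}(\eta+\etab)^{J_{2}}\nabla^{J_{3}}(\chibh,\tildetr,\tr\chib)\hnabla^{J_{4}}(\rho^{F},\sigma^{F})$, $\nabla^{J_{1}}(\eta+\etab)^{J_{2}}\hnabla^{J_{3}}\alphabar^{F}\hnabla^{J_{4}}(\rho^{F},\sigma^{F})\hnabla^{J_{5}}(\rho^{F},\sigma^{F})$, and similar cubic terms arising from the $(\eta-\etab)\cdot\alphabar^{F}$ inhomogeneity commuted with $\hnabla^{i}$.

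These are handled exactly as in the proof of Proposition \ref{alphaFprop}: each such term is estimated by H\"older in the scale-invariant norms using the bootstrap bound $\bbGamma+\mathcal{R}+\mathbb{YM}\le\Gamma+R+M$ (with $(\Gamma+R+M)^{20}\le a^{1/16}$), and the decay factors $\frac{\al}{|u'|^{2}}$ together with the powers of $\frac{\al}{|u'|}$ or $\frac{a}{|u'|^{2}}$ already present render each resulting $u'$-integral summable and bounded by a constant; I would just tabulate the worst powers and check that, after integration in $u'\in[u_{\infty},-a/4]$, every term is $\lesssim 1$.

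The main obstacle, as in Proposition \ref{alphaFprop}, is the cubic commutator term
\[
\sum_{J_{1}+J_{2}+J_{3}+J_{4}+J_{5}=i-1}\nabla^{J_{1}}(\eta+\etab)^{J_{2}}\hnabla^{J_{3}}\alphabar^{F}\hnabla^{J_{4}}(\rho^{F},\sigma^{F})\hnabla^{J_{5}}(\rho^{F},\sigma^{F}),
\]
which is genuinely non-linear and gauge-generated. Using Sobolev embedding together with Remark \ref{remarkF} and Proposition \ref{alphaFprop} one factor is placed in $L^{\infty}_{sc}$, another in $L^{2}_{sc}$, and the resulting bound $\frac{a^{3/2}}{|u'|^{3}}\Gamma^{4}$ is integrated against $\frac{\al}{|u'|^{2}}$ to produce $\frac{a^{2}}{|u|^{4}}\Gamma^{4}\lesssim 1$. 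Combining, we obtain
\[
\sum_{0\le i\le N+4}\scaletwoSu{(\al\hnabla)^{i}(\rho^{F},\sigma^{F})}\lesssim\underline{\mathbb{YM}}[\alphabar^{F}]+1,
\]
which is the desired estimate.
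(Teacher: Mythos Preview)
Your approach is essentially identical to the paper's: commute the $\hnabla_3$-equations for $(\rho^F,\sigma^F)$ with $\hnabla^i$, apply the weighted transport inequality of Proposition~\ref{3.6} with $\lambda_0=\tfrac{i+2}{2}$, isolate the principal term $\hnabla^{i+1}\alphabar^F$ and bound it via Cauchy--Schwarz by $\underline{\mathbb{YM}}[\alphabar^F]$, and handle the lower-order commutator terms using the bootstrap assumptions.

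There is one computational slip worth flagging: the scale-invariant transport inequality should carry the weight $\dfrac{a}{|u'|^{2}}$ rather than $\dfrac{\al}{|u'|^{2}}$. This is because $s_2(\rho^F,\sigma^F)=\tfrac{1}{2}$ (not $0$, as for $\alpha^F$), so when you convert the weighted estimate $|u|^{i+1}\|\hnabla^i(\rho^F,\sigma^F)\|_{L^2}\lesssim\ldots$ to scale-invariant form, the right-hand side picks up an extra factor of $\al$ compared to the $\alpha^F$ case. This does not affect the closure of any of your estimates (in fact the paper's bound on the principal term is $\underline{\mathbb{YM}}[\alphabar^F]\cdot a^{1/2}|u|^{-1/2}\lesssim\underline{\mathbb{YM}}[\alphabar^F]$, consistent with yours), but you should track the signature carefully when transcribing Proposition~\ref{alphaFprop}. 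Also, your appeal to Remark~\ref{remarkF} and Proposition~\ref{alphaFprop} for the cubic commutator term is slightly misplaced---those concern $\alpha^F$, whereas the factors here are $\alphabar^F$ and $(\rho^F,\sigma^F)$---but the bootstrap assumptions on $\bbGamma$ already cover these, so the estimate goes through regardless.
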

\begin{proof}
We use the null evolution equation 
\begin{equation}
    \hnabla_3 \rho^F + \tr\chibar \rho^F = - \widehat{\text{div}} \alphabar^F + \left( \eta-\etabar \right)\cdot \alphabar^F,
\end{equation}
\begin{equation}
    \hnabla_3 \sigma^F + \tr\chibar \sigma^F = -\widehat{\text{curl}}\alphabar^F+\left(\eta-\etabar\right)\cdot\Hodge{\alphabar}^F.
\end{equation}
and the commutation relation to get the following schematic null evolution equation for the pair $(\rho^{F},\sigma^{F})$:
\begin{equation}
\begin{split}
&\hnabla_{3}\hnabla^{I}(\rho^{F},\sigma^{F})+(\frac{I}{2}+1)\tr\underline{\chi}\hnabla^{I}(\rho^{F},\sigma^{F})\\=&\hnabla^{I+1}\underline{\alpha}^{F}+\sum_{J_{1}+J_{2}=I-1}\nabla^{J_{1}+1}\tr\underline{\chi}\hnabla^{J_{2}}(\rho^{F},\sigma^{F})\\ 
+&\sum_{J_{1}+J_{2}+J_{3}+J_{4}=I-1}\nabla^{J_{1}}(\eta+\underline{\eta})^{J_{2}+1}\nabla^{J_{3}}(\widehat{\underline{\chi}},\tr\underline{\chi})\hnabla^{J_{4}}(\rho^{F},\sigma^{F})\\+&\sum_{J_{1}+J_{2}+J_{3}+J_{4}=I}\nabla^{J_{1}}(\eta+\underline{\eta})^{J_{2}}\nabla^{J_{3}}(\eta+\underline{\eta})
\hnabla^{J_{4}}\underline{\alpha}^{F}\\ 
+&\sum_{J_1+J_2+J_3+J_4=I}\nabla^{J_1}(\eta+\etabar)^{J_2}\nabla^{J_3}(\chibarhat,\hsp \tildetr)\hnabla^{J_4}(\rho^{F},\sigma^{F})\\+&\sum_{J_{1}+J_{2}+J_{3} +J_{4}=I-1}\nabla^{J_{1}}(\eta+\underline{\eta})^{J_{2} +1}\hat{\nabla}^{J_{3}}\tr\underline{\chi}\hnabla^{J_{4}}(\rho^{F},\sigma^{F})\\+&\sum_{J_1+J_2+J_3+J_4=I-1}\nabla^{J_1} (\eta+\etabar)^{J_2+1}\nabla^{J_3}(\chibarhat,\tr\chibar)\hnabla^{J_4}(\rho^{F},\sigma^{F})\\+& \sum_{J_1+J_2+J_3+J_4=I-1}\nabla^{J_1}(\eta+\etabar)^{J_2}\hnabla^{J_3}\alphabar^F \hnabla^{J_4}(\rho^{F},\sigma^{F})\\+ &\sum_{J_{1}+J_{2}+J_{3}+J_{4}+J_{5}=I-1}\nabla^{J_{1}}(\eta+\underline{\eta})^{J_{2}}\hnabla^{J_{3}}\alphabar^{F}\hnabla^{J_{4}}(\rho^{F},\sigma^{F})\hnabla^{J_{5}}(\rho^{F},\sigma^{F}),
\end{split}
\end{equation}
which is compactly written as follows 
\begin{eqnarray}
\hnabla_{3}\hnabla^{I}(\rho^{F},\sigma^{F})+(\frac{I}{2}+1)\tr\underline{\chi}\hnabla^{I}(\rho^{F},\sigma^{F})=\mathfrak{G}_{2}.
\end{eqnarray}
An application of the weighted transport inequality from Proposition \ref{3.6} yields 
\begin{equation}
|u|^{I+1}||\hnabla^{I}(\rho^{F},\sigma^{F})||_{L^{2}(S_{u,\underline{u}})}\lesssim |u_{\infty}|^{I+1}||\hnabla^{I}(\rho^{F},\sigma^{F})||_{L^{2}(S_{u_{\infty},\underline{u}})}+\int_{u_{\infty}}^{u}|u^{'}|^{I+1}||\mathfrak{G}_{2}||_{L^{2}(S_{u^{'},\underline{u}})}du^{'}.
\end{equation}
Recalling the definitions of the scale-invariant norms \[||\phi||_{\mathcal{L}^{2}_{sc}(S_{u,\underline{u}})}:=a^{-s_{2}(\phi)}|u|^{2s_{2}(\phi)}||\phi||_{L^{2}(S_{u,\underline{u}})}\] we arrive at
\begin{eqnarray}
||\hnabla^{I}(\rho^{F},\sigma^{F})||_{\mathcal{L}^{2}_{sc}(S_{u,\underline{u}})}=a^{-\frac{I+1}{2}}|u|^{I+1}||\hnabla^{I}\alpha^{F}||_{L^{2}(S_{u,\underline{u}})}
\end{eqnarray}
since $s_{2}(\hnabla^{I}(\rho^{F},\sigma^{F}))=s_{2}((\rho^{F},\sigma^{F}))+I\times\frac{1}{2}=\frac{I}{2}+\frac{1}{2}$. Similarly $s_{2}(\mathfrak{G}_{2})=s_{2}(\hnabla_{3}\hnabla^{I}\rho^{F})=\frac{1}{2}+1+I\times \frac{1}{2}=\frac{I}{2}+\frac{3}{2}$ and therefore 
\begin{eqnarray}
||\mathfrak{G}_{2}||_{\mathcal{L}^{2}_{sc}(S_{u,\underline{u}})}=a^{-\frac{I+3}{2}}|u|^{I+3}||\mathfrak{G}_{2}||_{L^{2}(S_{u,\underline{u}})}.
\end{eqnarray}
Therefore, the weighted transport inequality from Proposition \ref{3.6} yields

\begin{align} \nonumber
&||(a^{\frac{1}{2}}\hnabla)^{I}(\rho^{F},\sigma^{F})||_{\mathcal{L}^{2}_{sc}(S_{u,\underline{u}})}\\ \nonumber\lesssim &||(a^{\frac{1}{2}}\hnabla)^{I}(\rho^{F},\sigma^{F})||_{\mathcal{L}^{2}_{sc}(S_{u_{\infty},\underline{u}})} +\int_{u_{\infty}}^{u}\frac{a}{|u^{'}|^{2}}||a^{\frac{I}{2}}\mathfrak{G}_{2}||_{\mathcal{L}^{2}_{sc}(S_{u^{'},\underline{u}})}du^{'}\\ \nonumber
\lesssim &||(a^{\frac{1}{2}}\hnabla)^{I}(\rho^{F},\sigma^{F})||_{\mathcal{L}^{2}_{sc}(S_{u_{\infty},\underline{u}})} 
+\int_{u_{\infty}}^{u}\frac{a}{|u^{'}|^{2}}||a^{\frac{I}{2}}\hnabla^{I+1}\underline{\alpha}^{F}||_{L^{2}_{Sc}(S_{u^{'},\underline{u}})}du^{'}\\ \nonumber +&\int_{u_{\infty}}^{u}\frac{a}{|u^{'}|^{2}}||a^{\frac{I}{2}}\sum_{J_{1}+J_{2}=I-1}\nabla^{J_{1}+1}\tr\underline{\chi}\hnabla^{J_{2}}(\rho^{F},\sigma^{F}) ||_{L^{2}_{Sc}(S_{u^{'},\underline{u}})}du^{'}\\  \nonumber
+&\int_{u_{\infty}}^{u}\frac{a}{|u^{'}|^{2}}||a^{\frac{I}{2}}\sum_{J_{1}+J_{2}+J_{3}+J_{4}=I-1}\nabla^{J_{1}}(\eta+\underline{\eta})^{J_{2}+1}\nabla^{J_{3}}(\widehat{\underline{\chi}},\tr\underline{\chi})\hnabla^{J_{4}}(\rho^{F},\sigma^{F})||_{L^{2}_{sc}(S_{u^{'},\underline{u}})}du^{'}\\  \nonumber
+&\int_{u_{\infty}}^{u}\frac{a}{|u^{'}|^{2}}||a^{\frac{I}{2}} \sum_{J_{1}+J_{2}+J_{3}+J_{4}=I}\nabla^{J_{1}}(\eta+\underline{\eta})^{J_{2}}\nabla^{J_{3}}(\eta+\underline{\eta})
\hnabla^{J_{4}}\underline{\alpha}^{F} ||_{L^{2}_{sc}(S_{u^{'},\underline{u}})}du^{'}\\  \nonumber
+&\int_{u_{\infty}}^{u}\frac{a}{|u^{'}|^{2}}||a^{\frac{I}{2}} \sum_{J_1+J_2+J_3+J_4=I}\nabla^{J_1}(\eta+\etabar)^{J_2}\nabla^{J_3}(\chibarhat,\hsp \tildetr)\hnabla^{J_4}(\rho^{F},\sigma^{F}) ||_{L^{2}_{sc}(S_{u^{'},\underline{u}})}du^{'}\\  \nonumber
+&\int_{u_{\infty}}^{u}\frac{a}{|u^{'}|^{2}}||a^{\frac{I}{2}} \sum_{J_{1}+J_{2}+J_{3} +J_{4}=I-1}\nabla^{J_{1}}(\eta+\underline{\eta})^{J_{2} +1}\hat{\nabla}^{J_{3}}\tr\underline{\chi}\hnabla^{J_{4}}(\rho^{F},\sigma^{F}) ||_{L^{2}_{sc}(S_{u^{'},\underline{u}})}du^{'}\\ \nonumber
+&\int_{u_{\infty}}^{u}\frac{a}{|u^{'}|^{2}}||a^{\frac{I}{2}} \sum_{J_1+J_2+J_3+J_4=I-1}\nabla^{J_1} (\eta+\etabar)^{J_2+1}\nabla^{J_3}(\chibarhat,\tr\chibar)\hnabla^{J_4}(\rho^{F},\sigma^{F}) ||_{L^{2}_{sc}(S_{u^{'},\underline{u}})}du^{'}\\ \nonumber 
+&\int_{u_{\infty}}^{u}\frac{a}{|u^{'}|^{2}}||a^{\frac{I}{2}} \sum_{J_1+J_2+J_3+J_4=I-1}\nabla^{J_1}(\eta+\etabar)^{J_2}\hnabla^{J_3}\alphabar^F \hnabla^{J_4}(\rho^{F},\sigma^{F})||_{L^{2}_{sc}(S_{u^{'},\underline{u}})}du^{'}\\ 
+&\int_{u_{\infty}}^{u}\frac{a}{|u^{'}|^{2}}||a^{\frac{I}{2}} \sum_{J_{1}+J_{2}+J_{3}+J_{4}+J_{5}=I-1}\nabla^{J_{1}}(\eta+\underline{\eta})^{J_{2}}\hnabla^{J_{3}}\alphabar^{F}\hnabla^{J_{4}}(\rho^{F},\sigma^{F})\hnabla^{J_{5}}(\rho^{F},\sigma^{F})||_{L^{2}_{sc}(S_{u^{'},\underline{u}})}du^{'}.
\end{align}

Now we estimate each term separately. 
\begin{equation}
\begin{split}
&\int_{u_{\infty}}^{u}\frac{a}{|u^{'}|^{2}}||a^{\frac{I}{2}}\hnabla^{I+1}\underline{\alpha}^{F}||_{L^{2}_{Sc}(S_{u^{'},\underline{u}})}du^{'}\\ \lesssim&\left(\int_{u_{\infty}}^{u}\frac{a}{|u^{'}|^{2}}||a^{\frac{I}{2}}\widehat{\nabla}^{I+1}\underline{\alpha}^{F}||^{2}_{\mathcal{L}^{2}_{sc}(S_{u^{'},\underline{u}})}du^{'}\right)^{\frac{1}{2}}\left(\int_{u_{\infty}}^{u}\frac{a}{|u^{'}|^{2}}du^{'}\right)^{\frac{1}{2}}
\lesssim \underline{\mathbb{YM}}[\underline{\alpha}^{F}]\cdot \frac{a^{\frac{1}{2}}}{|u|^{\frac{1}{2}}}\lesssim  \underline{\mathbb{YM}}[\underline{\alpha}^{F}],
\end{split}
\end{equation}
while
\begin{align}
\nonumber &\int_{u_{\infty}}^{u}\frac{a}{|u^{'}|^{2}}||a^{\frac{I}{2}}\sum_{J_{1}+J_{2}=I-1}\nabla^{J_{1}+1}\tr\underline{\chi}\hnabla^{J_{2}}(\rho^{F},\sigma^{F}) ||_{L^{2}_{Sc}(S_{u^{'},\underline{u}})}du^{'}\\\nonumber 
=&\int_{u_{\infty}}^{u}\frac{a}{|u^{'}|^{2}}||a^{\frac{I}{2}}\sum_{J_{1}+J_{2}=I-1}\nabla^{J_{1}+1}(\tr\underline{\chi}+\frac{2}{|u|}-\frac{2}{|u|})\hnabla^{J_{2}}(\rho^{F},\sigma^{F}) ||_{L^{2}_{Sc}(S_{u^{'},\underline{u}})}du^{'}\\\nonumber 
=&\int_{u_{\infty}}^{u}\frac{a}{|u^{'}|^{2}}||a^{\frac{I}{2}}\sum_{J_{1}+J_{2}=I-1}\nabla^{J_{1}+1}\widetilde{\tr\underline{\chi}}\hnabla^{J_{2}}(\rho^{F},\sigma^{F}) ||_{L^{2}_{Sc}(S_{u^{'},\underline{u}})}du^{'}\lesssim \int_{u_{\infty}}^{u} \frac{a}{\upr^2} \cdot \frac{\upr}{a}\cdot\frac{\Gamma^2}{\upr} \duprime \\ \lesssim &\frac{1}{|u|}\Gamma^{2}\lesssim 1.
\end{align}
Here we note a crucial point. $\tr\underline{\chi}$ appears in a differentiated (angular) manner and therefore, we can simply replace it by $\widetilde{\tr\underline{\chi}}$ allowing us to obtain an integrable factor $\frac{1}{|u^{'}|^{2}}$. The next term is estimated as follows: 
\begin{align}\nonumber &\int_{u_{\infty}}^{u}\frac{a}{|u^{'}|^{2}}||a^{\frac{I}{2}}\sum_{J_{1}+J_{2}+J_{3}+J_{4}=I-1}\nabla^{J_{1}}(\eta+\underline{\eta})^{J_{2}+1}\nabla^{J_{3}}(\widehat{\underline{\chi}},\tr\underline{\chi})\hnabla^{J_{4}}(\rho^{F},\sigma^{F})||_{L^{2}_{sc}(S_{u^{'},\underline{u}})}du^{'}\\ 
\lesssim &\int_{u_{\infty}}^{u}\frac{a}{|u^{'}|^{3}}\bbGamma^{3}+\int_{u_{\infty}}^{u}\frac{a^{\frac{1}{2}}}{|u^{'}|^{2}}\bbGamma^{3}\lesssim \frac{a}{|u|^{2}}\Gamma^{3}+\frac{a^{\frac{1}{2}}}{|u|}\Gamma^{3}\lesssim 1.
\end{align}
The next term:
\begin{align}\nonumber 
&\int_{u_{\infty}}^{u}\frac{a}{|u^{'}|^{2}}||a^{\frac{I}{2}} \sum_{J_{1}+J_{2}+J_{3}+J_{4}=I}\nabla^{J_{1}}(\eta+\underline{\eta})^{J_{2}}\nabla^{J_{3}}(\eta+\underline{\eta})
\hnabla^{J_{4}}\underline{\alpha}^{F} ||_{L^{2}_{sc}(S_{u^{'},\underline{u}})}du^{'}\\
\lesssim &\int_{U_{\infty}}^{u}\frac{a}{|u^{'}|^{3}}\bbGamma^{2}\lesssim \frac{a}{|u|^{2}}\Gamma^{2}\lesssim 1.
\end{align}
The next term is estimated as follows:
\begin{align}\nonumber 
&\int_{u_{\infty}}^{u}\frac{a}{|u^{'}|^{2}}||a^{\frac{I}{2}} \sum_{J_1+J_2+J_3+J_4=I}\nabla^{J_1}(\eta+\etabar)^{J_2}\nabla^{J_3}(\chibarhat,\hsp \tildetr)\hnabla^{J_4}(\rho^{F},\sigma^{F}) ||_{L^{2}_{sc}(S_{u^{'},\underline{u}})}du^{'}\\
\lesssim &\int_{u_{\infty}}^{u}\frac{a^{\frac{1}{2}}}{|u^{'}|^{2}}\bbGamma^{2}+\int_{u_{\infty}}^{u}\frac{a}{|u^{'}|^{3}}\bbGamma^{3}\lesssim \frac{a^{\frac{1}{2}}}{|u|}\Gamma^{2}+\frac{a}{|u|^{2}}\Gamma^{3}\lesssim 1. 
\end{align}
For the next terms we have:
\begin{align}\nonumber
&\int_{u_{\infty}}^{u}\frac{a}{|u^{'}|^{2}}||a^{\frac{I}{2}} \sum_{J_{1}+J_{2}+J_{3} +J_{4}=I-1}\nabla^{J_{1}}(\eta+\underline{\eta})^{J_{2} +1}\hat{\nabla}^{J_{3}}\tr\underline{\chi}\hnabla^{J_{4}}(\rho^{F},\sigma^{F}) ||_{L^{2}_{sc}(S_{u^{'},\underline{u}})}du^{'}\\ 
\lesssim  &\int_{u_{\infty}}^{u}\frac{a^{\frac{1}{2}}}{|u^{'}|^{2}}\bbGamma^{3}\lesssim \frac{a^{\frac{1}{2}}}{|u|}\Gamma^3 \lesssim 1,
\end{align}
\begin{align}
&\int_{u_{\infty}}^{u}\frac{a}{|u^{'}|^{2}}||a^{\frac{I}{2}} \sum_{J_1+J_2+J_3+J_4=I-1}\nabla^{J_1} (\eta+\etabar)^{J_2+1}\nonumber\nabla^{J_3}(\chibarhat,\tr\chibar)\hnabla^{J_4}(\rho^{F},\sigma^{F}) ||_{L^{2}_{sc}(S_{u^{'},\underline{u}})}du^{'}\\ 
\lesssim  &\int_{u_{\infty}}^{u}\frac{a}{|u^{'}|^{3}}\bbGamma^{3}+ \int_{u_{\infty}}^{u}\frac{a^{\frac{1}{2}}}{|u^{'}|^{2}}\bbGamma^{3}\lesssim \frac{a^{\frac{1}{2}}}{|u|}\Gamma^3\lesssim 1,
\end{align}

\begin{align} 
&\int_{u_{\infty}}^{u}\frac{a}{|u^{'}|^{2}}||a^{\frac{I}{2}} \sum_{J_1+J_2+J_3+J_4=I-1}\nabla^{J_1}(\eta+\etabar)^{J_2}\hnabla^{J_3}\alphabar^F \hnabla^{J_4}(\rho^{F},\sigma^{F})||_{L^{2}_{sc}(S_{u^{'},\underline{u}})}du^{'}\\
\lesssim &\int_{u_{\infty}}^{u}\frac{a^{\frac{3}{2}}}{|u^{'}|^{3}}\bbGamma^{2}+\int_{u_{\infty}}^{u}\frac{a^{2}}{|u^{'}|^{4}}\bbGamma^{3}\lesssim \frac{a^{\frac{3}{2}}}{|u|^{2}}\Gamma^{2}+\frac{a^{2}}{|u|^{3}}\Gamma^{3}\lesssim 1.
\end{align}
Note that in this term, the true non-linear characteristics of the Yang-Mills equations show up. Finally, we have

\begin{align}\nonumber 
&\int_{u_{\infty}}^{u}\frac{a}{|u^{'}|^{2}}||a^{\frac{I}{2}} \sum_{J_{1}+J_{2}+J_{3}+J_{4}+J_{5}\nonumber=I-1}\nabla^{J_{1}}(\eta+\underline{\eta})^{J_{2}}\hnabla^{J_{3}}\alphabar^{F}\hnabla^{J_{4}}(\rho^{F},\sigma^{F})\hnabla^{J_{5}}(\rho^{F},\sigma^{F})||_{L^{2}_{sc}(S_{u^{'},\underline{u}})}du^{'}\\
\lesssim &\int_{u_{\infty}}^{u}\frac{a^{\frac{3}{2}}}{|u^{'}|^{4}}\bbGamma^{3}+\int_{u_{\infty}}^{u}\frac{a^{2}}{|u^{'}|^{5}}\bbGamma^{4}\lesssim\frac{a^{\frac{3}{2}}}{|u|^{3}}\Gamma^{3}+\frac{a^{2}}{|u|^{4}}\Gamma^{5}\lesssim 1.
\end{align}
Putting everything together yields the desired estimate, namely

\[\sum_{0\leq i \leq N+4} \scaletwoSu{(\al\hnabla)^i (\rho^F,\sigma^{F})} \lesssim \underline{\mathbb{YM}}[\underline{\alpha}^{F}]+1.\]
\end{proof}
\begin{remark}\label{remarkF2}
For the non-extremal cases $0\leq i \leq N+3$, we have the improved estimate since the topmost ($N+5$) derivatives of $\underline{\alpha}^{F}$ do not appear in the commuted null evolution equation for $(\rho^{F},\sigma^{F})$
\[ \sum_{0\leq i \leq N+3} \scaletwoSu{(\al\hnabla)^i (\rho^F,\sigma^{F})} \lesssim 1+\frac{a^{\frac{1}{2}}}{|u|}\lesssim 1,\]which implies, by Sobolev embedding, the following $L^{\infty}$ estimate:

\[\sum_{0\leq i \leq N+1} \scaleinfinitySu{(\al\hnabla)^i (\rho^F,\sigma^{F})} \lesssim 1.\]
\end{remark}

\begin{proposition}
Under the assumptions of Theorem \ref{main1} and the bootstrap assumptions \eqref{bootstrap}, there holds 

\[\sum_{0\leq i \leq N+4} \scaletwoSu{(\al\hnabla)^i \underline{\alpha}^{F}} \lesssim \mathbb{YM}[\rho^{F},\sigma^{F}]+\underline{\mathbb{YM}}[\rho^{F},\sigma^{F}]+1.\]
\end{proposition}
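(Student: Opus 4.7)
The plan is to mirror the strategy of Proposition \ref{alphaFprop}, except that this time the appropriate evolution equation is the $\hnabla_{4}$ equation for $\underline{\alpha}^{F}$,
\[
\hnabla_{4}\underline{\alpha}^{F}+\tfrac{1}{2}\tr\chi\,\underline{\alpha}^{F}=-\hnabla\rho^{F}+\Hodge{\hnabla}\sigma^{F}-2\Hodge{\underline{\eta}}\sigma^{F}-2\underline{\eta}\rho^{F}+2\omega\underline{\alpha}^{F}-\widehat{\underline{\chi}}\cdot\alpha^{F},
\]
and therefore the transport is performed in the $\underline{u}$-direction. First I would commute with $\hnabla^{I}$ using the schematic identity \eqref{c1} from Lemma \ref{commutation}, obtaining
\[
\hnabla_{4}\hnabla^{I}\underline{\alpha}^{F}=\hnabla^{I+1}(\rho^{F},\sigma^{F})+\mathfrak{L}_{I},
\]
where $\mathfrak{L}_{I}$ groups together products of Ricci coefficients with lower-order derivatives of $\underline{\alpha}^{F}$, $(\rho^{F},\sigma^{F})$, $\alpha^{F}$, and the characteristic non-abelian quintic terms of the form $\nabla^{J_{1}}(\eta+\underline{\eta})^{J_{2}}\hnabla^{J_{3}}\alpha^{F}\hnabla^{J_{4}}(\rho^{F},\sigma^{F})\hnabla^{J_{5}}\underline{\alpha}^{F}$ with $J_{1}+\cdots+J_{5}=I-1$.

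Next I would apply Proposition \ref{3.5} in the $\underline{u}$-direction with initial slice $S_{u,0}$. Thanks to condition 3 of Theorem \ref{main1}, the data on $\underline{u}=0$ are Minkowskian, so every angular derivative of $\underline{\alpha}^{F}$ vanishes there and the boundary term drops out. In scale-invariant norms one is left with
\[
\scaletwoSu{(\al\hnabla)^{I}\underline{\alpha}^{F}}\lesssim\int_{0}^{\underline{u}}\scaletwoSuubarprime{a^{\frac{I}{2}}\hnabla^{I+1}(\rho^{F},\sigma^{F})}\dubarprime+\int_{0}^{\underline{u}}\scaletwoSuubarprime{a^{\frac{I}{2}}\mathfrak{L}_{I}}\dubarprime.
\]
For the principal term, Cauchy--Schwarz in $\underline{u}'$ converts the $S$-norm into the $H_{u}$-norm, producing a contribution bounded by $\mathbb{YM}[\rho^{F},\sigma^{F}]$ (this is exactly why the $H_{u}$-adapted $\mathbb{YM}$ norm, rather than its $\underline{H}_{\underline{u}}$-adapted cousin, is the natural one for $\hnabla_{4}$-transport). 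The lower-order nonlinearities inside $\mathfrak{L}_{I}$ are then handled by the same arithmetic as in Propositions \ref{alphaFprop}--\ref{etabarestimate}: each such term carries at least one extra factor of $\al/|u|$ or $a/|u|^{2}$, which, combined with the smallness $(\Gamma+R+M)^{20}\le a^{\frac{1}{16}}$, returns a contribution $\lesssim 1$.

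The delicate point, and the main obstacle, lies in the error terms that contain a derivative of $(\rho^{F},\sigma^{F})$ at intermediate orders multiplied by Ricci coefficients such as $\chi$, $\widehat{\underline{\chi}}$ or $\underline{\eta}$ produced by the commutator and by the $-2\underline{\eta}\rho^{F}$, $-2\Hodge{\underline{\eta}}\sigma^{F}$ source terms. For these, the previous Proposition on $(\rho^{F},\sigma^{F})$ controls the $S$-norm only up to $\underline{\mathbb{YM}}[\underline{\alpha}^{F}]$, which would be circular; the clean way out is to instead bound the $(\rho^{F},\sigma^{F})$-factor directly via its defining $\underline{H}_{\underline{u}}$-norm, Cauchy-Schwarz this time in $u'$, and extract $\underline{\mathbb{YM}}[\rho^{F},\sigma^{F}]$. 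Once this bookkeeping between the $\mathbb{YM}$ and $\underline{\mathbb{YM}}$ routes is made consistent, summing over $0\le i\le N+4$ yields the advertised estimate. I will suppress the routine verifications, which follow the template already established for $\alpha^{F}$ and $(\rho^{F},\sigma^{F})$.
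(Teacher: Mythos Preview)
Your overall skeleton---transport $\underline{\alpha}^{F}$ along $\hnabla_{4}$, vanishing data at $\underline{u}=0$, principal term $\hnabla^{I+1}(\rho^{F},\sigma^{F})$ absorbed into $\mathbb{YM}[\rho^{F},\sigma^{F}]$ via Cauchy--Schwarz in $\underline{u}'$---matches the paper. But your diagnosis of the ``delicate point'' is off, and your proposed fix does not type-check.

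First, the intermediate-order $(\rho^{F},\sigma^{F})$ terms you worry about carry at most $N+4$ derivatives and are controlled directly by the bootstrap bound $\bbGamma\le\Gamma$; there is no circularity, and no need to invoke the improved $(\rho^{F},\sigma^{F})$ estimate at all. More importantly, your suggested ``Cauchy--Schwarz in $u'$'' cannot work: the transport integral here is $\int_{0}^{\underline{u}}\cdots\,d\underline{u}'$ with $u$ fixed, so there is no $u'$-integration to split. The $\underline{H}_{\underline{u}}$-norm $\underline{\mathbb{YM}}[\rho^{F},\sigma^{F}]$ simply cannot be manufactured from an $\underline{u}'$-integral of $S_{u,\underline{u}'}$-norms.

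The actual source of $\underline{\mathbb{YM}}[\rho^{F},\sigma^{F}]$ in the paper is the \emph{source term} $\widehat{\underline{\chi}}\cdot\alpha^{F}$: at top order one estimates $\widehat{\underline{\chi}}$ by Proposition \ref{chihats} and $\alpha^{F}$ by Proposition \ref{alphaFprop}, and it is the latter that outputs $\underline{\mathbb{YM}}[\rho^{F},\sigma^{F}]+1$. You also miss the genuinely delicate step: the non-abelian commutation term $\sum_{J_{1}+\cdots+J_{4}=I-1}\nabla^{J_{1}}(\eta+\underline{\eta})^{J_{2}}\hnabla^{J_{3}}\alpha^{F}\hnabla^{J_{4}}\underline{\alpha}^{F}$ is only borderline small. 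Using the improved $L^{\infty}$ bound on $\alpha^{F}$ from Remark \ref{remarkF} leaves a residual $\int_{0}^{\underline{u}}\scaletwoSu{(\al\hnabla)^{I-1}\underline{\alpha}^{F}}\,d\underline{u}'$, which the paper closes by Gr\"onwall in $\underline{u}$. Without this Gr\"onwall step the argument does not close.
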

\begin{proof}
Let us recall the null evolution equation for $\underline{\alpha}^{F}$:
\begin{eqnarray}
\hnabla_4 \alphabar^F +\frac{1}{2}\tr\chi\alphabar^F = - \hnabla \rho^F + \Hodge{\hnabla}\sigma^F -2 \Hodge{\etabar} \sigma^F - 2 \etabar \rho^F + 2 \omega \alphabar^F - \chibarhat \cdot \alpha^F.
\end{eqnarray}
Commuting with the gauge-covariant operator $\hnabla^{I}$ (we restrict to $0\leq I \leq N+4$ here), using Proposition \ref{3.6}, we get
\begin{align}\nonumber
\hnabla_{4}\hnabla^{I}\underline{\alpha}^{F}=&\hnabla^{I+1}(\rho^{F},\sigma^{F})\\\nonumber+&\sum_{J_{1}+J_{2}+J_{3}=I-1}\nabla^{J_{1}}(\eta+\underline{\eta})^{J_{2}+1}\hnabla^{J_{3}+1}(\rho^{F},\sigma^{F})\\ \nonumber
+&\sum_{J_{1}+J_{2}+J_{3}+J_{4}=I}\nabla^{J_{1}}(\eta+\underline{\eta})^{J_{2}}\nabla^{J_{3}}\tr\chi\hnabla^{J_{4}}\underline{\alpha}^{F}\\ \nonumber+&\sum_{J_{1}+J_{2}+J_{3}+J_{4}=I}\nabla^{J_{1}}(\eta+\underline{\eta})^{J_{2}}\nabla^{J_{3}}\underline{\eta}\hnabla^{J_{4}}(\sigma^{F},\rho^{F})\\ \nonumber
+&\sum_{J_{1}+J_{2}+J_{3}+J_{4}=I}\nabla^{J_{1}}(\eta+\underline{\eta})^{J_{2}}\nabla^{J_{3}}\omega\hnabla^{J_{4}}\underline{\alpha}^{F}\\ \nonumber +&\sum_{J_{1}+J_{2}+J_{3}+J_{4}=I}\nabla^{J_{1}}(\eta+\underline{\eta})^{J_{2}}\nabla^{J_{3}}\widehat{\underline{\chi}}\hnabla^{J_{4}}\alpha^{F}\\ \nonumber
+&\sum_{J_1+J_2+J_3 +J_4=I} \nabla^{J_1}(\eta+\etabar)^{J_2}\nabla^{J_3}(\chihat,\tr\chi)\hnabla^{J_4}\underline{\alpha}^{F} \\ \nonumber +&\sum_{J_1+J_2+J_3+J_4=I-1}\nabla^{J_1} (\eta+\etabar)^{J_2+1}\nabla^{J_3}(\chihat,\tr\chi)\hnabla^{J_4}\underline{\alpha}^{F}\\ \nonumber +& \sum_{J_{1}+J_{2}+J_{3}+J_{4}+J_{5}=I-1}\nabla^{J_{1}}(\eta+\underline{\eta})^{J_{2}}\hnabla^{J_{3}}\alpha^{F}\hnabla^{J_{4}}(\rho^{F},\sigma^{F})\hnabla^{J_{5}}\underline{\alpha}^{F} \\  + &\sum_{J_1+J_2+J_3+J_4=I-1}\nabla^{J_1}(\eta+\etabar)^{J_2}\hnabla^{J_3}\alpha^F \hnabla^{J_4}\underline{\alpha}^{F}.
\end{align}
Now recall that $s_{2}(\underline{\alpha}^{F})=1$ and therefore $s_{2}(\widehat{\nabla}^{I}\underline{\alpha}^{F})=\frac{I}{2}+1$. Similarly $\hnabla_{4}\hnabla^{I}\underline{\alpha}^{F}=\frac{I}{2}+1$. Definition of the scale-invariant norms yields
\begin{eqnarray}
||\hnabla^{I}\underline{\alpha}^{F}||_{\mathcal{L}^{2}_{sc}(S_{u,\underline{u}})}=a^{-\frac{I+2}{2}}|u|^{I+2}||\hnabla^{I}\underline{\alpha}^{F}||_{L^{2}(S_{u,\underline{u}})}.
\end{eqnarray}
Therefore an application of the scale-invariant transport inequality along $e_{4}$ yields
\begin{align}
\nonumber &||(a^{\frac{1}{2}}\hnabla)^{I}\underline{\alpha}^{F}||_{\mathcal{L}^{2}_{sc}(S_{u,\underline{u}})}\\ \nonumber \lesssim &||(a^{\frac{1}{2}}\hnabla)^{I}\underline{\alpha}^{F}||_{L^{2}_{sc}(S_{u,0})}+\int_{0}^{\underline{u}}||a^{\frac{I}{2}}\hnabla_{4}\hnabla^{I}\underline{\alpha}^{F}||_{L^{2}_{sc}(S_{u,\underline{u}^{'}})}d\underline{u}^{'}\\\nonumber 
\lesssim &||(a^{\frac{1}{2}}\hnabla)^{I}\underline{\alpha}^{F}||_{L^{2}_{sc}(S_{u,0})}+\int_{0}^{\underline{u}}||a^{\frac{I}{2}}\hnabla^{I+1}(\rho^{F},\sigma^{F})||_{L^{2}_{sc}(S_{u,\underline{u}^{'}})}d\underline{u}^{'}\\\nonumber
+&\int_{0}^{\underline{u}}||a^{\frac{I}{2}}\sum_{J_{1}+J_{2}+J_{3}=I-1}\nabla^{J_{1}}(\eta+\underline{\eta})^{J_{2}+1}\hnabla^{J_{3}+1}(\rho^{F},\sigma^{F})||_{L^{2}_{sc}(S_{u,\underline{u}^{'}})}d\underline{u}^{'}\\\nonumber 
+&\int_{0}^{\underline{u}}||a^{\frac{I}{2}}\sum_{J_{1}+J_{2}+J_{3}+J_{4}=I}\nabla^{J_{1}}(\eta+\underline{\eta})^{J_{2}}\nabla^{J_{3}}\tr\chi\hnabla^{J_{4}}\underline{\alpha}^{F}||_{L^{2}_{sc}(S_{u,\underline{u}^{'}})}d\underline{u}^{'}\\\nonumber 
+&\int_{0}^{\underline{u}}||a^{\frac{I}{2}}\sum_{J_{1}+J_{2}+J_{3}+J_{4}=I}\nabla^{J_{1}}(\eta+\underline{\eta})^{J_{2}}\nabla^{J_{3}}\underline{\eta}\hnabla^{J_{4}}(\sigma^{F},\rho^{F})||_{L^{2}_{sc}(S_{u,\underline{u}^{'}})}d\underline{u}^{'}\\\nonumber 
+&\int_{0}^{\underline{u}}||a^{\frac{I}{2}}\sum_{J_{1}+J_{2}+J_{3}+J_{4}=I}\nabla^{J_{1}}(\eta+\underline{\eta})^{J_{2}}\nabla^{J_{3}}\omega\hnabla^{J_{4}}\underline{\alpha}^{F}||_{L^{2}_{sc}(S_{u,\underline{u}^{'}})}d\underline{u}^{'}\\\
+&\int_{0}^{\underline{u}}||a^{\frac{I}{2}}\sum_{J_{1}+J_{2}+J_{3}+J_{4}=I}\nabla^{J_{1}}\nonumber(\eta+\underline{\eta})^{J_{2}}\nabla^{J_{3}}\widehat{\underline{\chi}}\hnabla^{J_{4}}\alpha^{F}||_{L^{2}_{sc}(S_{u,\underline{u}^{'}})}d\underline{u}^{'}\\\nonumber 
+&\int_{0}^{\underline{u}}||a^{\frac{I}{2}}\sum_{J_1+J_2+J_3 +J_4=I} \nabla^{J_1}(\eta+\etabar)^{J_2}\nabla^{J_3}(\chihat,\tr\chi)\hnabla^{J_4}\underline{\alpha}^{F}||_{L^{2}_{sc}(S_{u,\underline{u}^{'}})}d\underline{u}^{'}\\\nonumber 
+&\int_{0}^{\underline{u}}||a^{\frac{I}{2}}\sum_{J_1+J_2+J_3+J_4=I-1}\nabla^{J_1} (\eta+\etabar)^{J_2+1}\nabla^{J_3}(\chihat,\tr\chi)\hnabla^{J_4}\underline{\alpha}^{F}||_{L^{2}_{sc}(S_{u,\underline{u}^{'}})}d\underline{u}^{'}\\\nonumber 
+&\int_{0}^{\underline{u}}||a^{\frac{I}{2}}\sum_{J_{1}+J_{2}+J_{3}+J_{4}+J_{5}=I-1}\nabla^{J_{1}}(\eta+\underline{\eta})^{J_{2}}\hnabla^{J_{3}}\alpha^{F}\hnabla^{J_{4}}(\rho^{F},\sigma^{F})\hnabla^{J_{5}}\underline{\alpha}^{F}||_{L^{2}_{sc}(S_{u,\underline{u}^{'}})}d\underline{u}^{'}\\
+&\int_{0}^{\underline{u}}||a^{\frac{I}{2}}\sum_{J_1+J_2+J_3+J_4=I-1}\nabla^{J_1}(\eta+\etabar)^{J_2}\hnabla^{J_3}\alpha^F \hnabla^{J_4}\underline{\alpha}^{F}||_{L^{2}_{sc}(S_{u,\underline{u}^{'}})}d\underline{u}^{'}.
\end{align}
Now we estimate each term separately. For the first term there holds:
\begin{align}
\int_{0}^{\underline{u}}||a^{\frac{I}{2}}\hnabla^{I+1}(\rho^{F},\sigma^{F})||_{L^{2}_{sc}(S_{u,\underline{u}^{'}})}d\underline{u}^{'}\lesssim \mathbb{YM}[\rho^{F},\sigma^{F}].
\end{align}

\noindent The next term is estimated as follows:
\begin{align}\nonumber 
&\int_{0}^{\underline{u}}||a^{\frac{I}{2}}\sum_{J_{1}+J_{2}+J_{3}=I-1}\nabla^{J_{1}}(\eta+\underline{\eta})^{J_{2}+1}\hnabla^{J_{3}+1}(\rho^{F},\sigma^{F})||_{L^{2}_{sc}(S_{u,\underline{u}^{'}})}d\underline{u}^{'}\\
\lesssim &\int_{0}^{\underline{u}}\frac{1}{|u|}\bbGamma^{2}d\underline{u}^{'}\lesssim \frac{\Gamma^{2}}{|u|}|\underline{u}|\lesssim 1.
\end{align}
For the third term we have:
\begin{align} \nonumber
&\int_{0}^{\underline{u}}||a^{\frac{I}{2}}\sum_{J_{1}+J_{2}+J_{3}+J_{4}=I}\nabla^{J_{1}}(\eta+\underline{\eta})^{J_{2}}\nabla^{J_{3}}\tr\chi\hnabla^{J_{4}}\underline{\alpha}^{F}||_{L^{2}_{sc}(S_{u,\underline{u}^{'}})}d\underline{u}^{'}\\
\lesssim &\int_{0}^{\underline{u}}\frac{1}{|u|}\bbGamma^{2}d\underline{u}^{'}\lesssim  \frac{\Gamma^{2}}{|u|}|\underline{u}|\lesssim 1.
\end{align}
For the fourth term we have:
\begin{align}
&\int_{0}^{\underline{u}}||a^{\frac{I}{2}}\sum_{J_{1}+J_{2}+J_{3}+J_{4}=I}\nabla^{J_{1}}(\eta+\underline{\eta})^{J_{2}}\nabla^{J_{3}}\underline{\eta}\hnabla^{J_{4}}(\sigma^{F},\rho^{F})||_{L^{2}_{sc}(S_{u,\underline{u}^{'}})}d\underline{u}^{'}\nonumber\\ 
\lesssim &\int_{0}^{\underline{u}}\frac{1}{|u|}\bbGamma^{2}d\underline{u}^{'}\lesssim  \frac{\Gamma^{2}}{|u|}|\underline{u}|\lesssim 1.
\end{align}
For the fifth term there holds:
\begin{align}\nonumber 
&\int_{0}^{\underline{u}}||a^{\frac{I}{2}}\sum_{J_{1}+J_{2}+J_{3}+J_{4}=I}\nabla^{J_{1}}(\eta+\underline{\eta})^{J_{2}}\nabla^{J_{3}}\omega\hnabla^{J_{4}}\underline{\alpha}^{F}||_{L^{2}_{sc}(S_{u,\underline{u}^{'}})}d\underline{u}^{'}\\
\lesssim &\int_{0}^{\underline{u}}\frac{1}{|u|}\bbGamma^{2}d\underline{u}^{'}\lesssim  \frac{\Gamma^{2}}{|u|}|\underline{u}|\lesssim 1.
\end{align}
For the next term we have:
\begin{align}\nonumber
&\int_{0}^{\underline{u}}||a^{\frac{I}{2}}\sum_{J_{1}+J_{2}+J_{3}+J_{4}=I}\nabla^{J_{1}}\nonumber(\eta+\underline{\eta})^{J_{2}}\nabla^{J_{3}}\widehat{\underline{\chi}}\hnabla^{J_{4}}\alpha^{F}||_{L^{2}_{sc}(S_{u,\underline{u}^{'}})}d\underline{u}^{'}\\ 
\lesssim &\int_{0}^{\underline{u}}\frac{\al}{\lvert u \rvert}\scaletwoSu{(a^{\frac{1}{2}}\nabla)^{I} \chibarhat}a^{-\frac{1}{2}}||(a^{\frac{1}{2}}\hnabla)^{I}\alpha^{F}||_{\mathcal{L}^{2}_{sc}(S_{u,\underline{u}^{'}})}d\underline{u}^{'}\lesssim \underline{\mathbb{YM}}[\rho^{F},\sigma^{F}]+1,
\end{align}
where we have used Proposition \ref{chihats} to estimate the $\widehat{\underline{\chi}}$ term and Proposition \ref{alphaFprop} to estimate the $\alpha^{F}$ term.
For the seventh term:
\begin{align} \nonumber 
&\int_{0}^{\underline{u}}||a^{\frac{I}{2}}\sum_{J_1+J_2+J_3+J_4=I-1}\nabla^{J_1} (\eta+\etabar)^{J_2+1}\nabla^{J_3}(\chihat,\tr\chi)\hnabla^{J_4}\underline{\alpha}^{F}||_{L^{2}_{sc}(S_{u,\underline{u}^{'}})}d\underline{u}^{'}\\
\lesssim &\int_{0}^{\underline{u}}\frac{a^{\frac{1}{2}}}{|u|^{2}}\bbGamma^{3}d\underline{u}^{'}\lesssim \frac{a^{\frac{1}{2}}|\underline{u}|}{|u|^{2}}\Gamma^{3}\lesssim 1.
\end{align}
For the eighth term:
\begin{align}\nonumber
\int_{0}^{\underline{u}}||a^{\frac{I}{2}}\sum_{J_{1}+J_{2}+J_{3}+J_{4}+J_{5}=I-1}\nabla^{J_{1}}(\eta+\underline{\eta})^{J_{2}}\hnabla^{J_{3}}\alpha^{F}\hnabla^{J_{4}}(\rho^{F},\sigma^{F})\hnabla^{J_{5}}\underline{\alpha}^{F}||_{L^{2}_{sc}(S_{u,\underline{u}^{'}})}d\underline{u}^{'}\\ 
\lesssim \int_{0}^{\underline{u}}\frac{a}{|u|^{2}}\bbGamma^{3}d\underline{u}^{'}\lesssim \frac{a|\underline{u}|}{|u|^{2}}\Gamma^{3}\lesssim 1.
\end{align}
Finally, for the ninth term we have:
\begin{align}
\label{eq:impv}\nonumber 
&\int_{0}^{\underline{u}}||a^{\frac{I}{2}}\sum_{J_1+J_2+J_3+J_4=I-1}\nabla^{J_1}(\eta+\etabar)^{J_2}\hnabla^{J_3}\alpha^F \hnabla^{J_4}\underline{\alpha}^{F}||_{L^{2}_{sc}(S_{u,\underline{u}^{'}})}d\underline{u}^{'}\\\nonumber
\lesssim &\int_{0}^{\underline{u}}\frac{a}{|u|}a^{-\frac{1}{2}}||(a^{\frac{1}{2}}\hnabla)^{I-1}\alpha^{F}||_{\mathcal{L}^{2}_{sc}(S_{u,\underline{u}^{'}})}||(a^{\frac{1}{2}}\hnabla)^{I-1}\underline{\alpha}^{F}||_{\mathcal{L}^{2}_{sc}(S_{u,\underline{u}^{'}})}d\underline{u}^{'}+\frac{a^{\frac{3}{2}}}{|u|^{2}}\Gamma^{3}\\
\lesssim &\frac{a}{|u|}\int_{0}^{\underline{u}}||(a^{\frac{1}{2}}\hnabla)^{I-1}\underline{\alpha}^{F}||_{\mathcal{L}^{2}_{sc}(S_{u,\underline{u}^{'}})}d\underline{u}^{'}+1\lesssim\int_{0}^{\underline{u}}||(a^{\frac{1}{2}}\hnabla)^{I-1}\underline{\alpha}^{F}||_{\mathcal{L}^{2}_{sc}(S_{u,\underline{u}^{'}})}d\underline{u}^{'}+1
\end{align}
Here we have made use of the improved bounds from the remark on $\alpha^{F}$ from Proposition \ref{alphaFprop}, since \eqref{eq:impv} does not contain top-order derivatives. This is a crucial point, asserting that the  Yang-Mills non-linearities are under control. Collecting all the terms, we have:
\begin{align} &\sum_{0\leq i \leq N+4} \scaletwoSu{(\al\hnabla)^i \underline{\alpha}^{F}} \\ \lesssim &\mathbb{YM}[\rho^{F},\sigma^{F}]+\underline{\mathbb{YM}}[\rho^{F},\sigma^{F}]+1+\int_{0}^{\underline{u}}||\sum_{0\leq i \leq N+4} \scaletwoSu{(\al\hnabla)^i \underline{\alpha}^{F}}d\underline{u}^{'}.\end{align}An application of Gr\"onwall's inequality yields 
\[\sum_{0\leq i \leq N+4} \scaletwoSu{(\al\hnabla)^i \underline{\alpha}^{F}}\lesssim \mathbb{YM}[\rho^{F},\sigma^{F}]+\underline{\mathbb{YM}}[\rho^{F},\sigma^{F}]+1,\]
since $|\underline{u}|\lesssim 1$. This concludes the proof of the lemma. 
\end{proof}

\begin{proposition}\label{newprop}
Under the assumptions of Theorem \ref{main1} and the bootstrap assumptions \eqref{bootstrap}, there holds 

\[ \frac{1}{\al} \sum_{0\leq i \leq N+2} \scaletwoSu{(\al\hnabla)^i \hnabla_{4}\alpha^{F}} \lesssim 1.\]
\end{proposition}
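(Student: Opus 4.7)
The plan is to derive a transport equation for $\hnabla_4 \alpha^F$ in the incoming $e_3$ direction, since the Yang-Mills system does not furnish one directly, and then close estimates in scale-invariant norms exactly as in Proposition \ref{alphaFprop}. Starting from the known null evolution equation
\[ \hnabla_3 \alpha^F + \tfrac{1}{2}\tr\chibar\,\alpha^F = \hnabla\rho^F + \Hodge{\hnabla}\sigma^F - 2\Hodge{\eta}\sigma^F + 2\eta\rho^F + 2\omegabar\,\alpha^F - \chihat\cdot\alphabar^F, \]
I would apply $\hnabla_4$ to both sides and use $\hnabla_4\hnabla_3 \alpha^F = \hnabla_3\hnabla_4 \alpha^F - [\hnabla_3,\hnabla_4]\alpha^F$. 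The commutator acting on this mixed-bundle section produces a Riemann contribution $R(\cdot,\cdot,e_3,e_4)\cdot\alpha^F \sim (\rho,\sigma)\alpha^F$, a non-abelian Yang-Mills self-interaction $F(e_3,e_4)\cdot\alpha^F \sim (\rho^F,\sigma^F)\alpha^F$, and connection-coefficient corrections involving $\omega,\omegabar,\zeta,\eta,\etabar$. On the right-hand side, terms like $\hnabla_4\hnabla\rho^F$ and $\hnabla_4\Hodge{\hnabla}\sigma^F$ are rearranged to $\hnabla\hnabla_4\rho^F$ and $\Hodge{\hnabla}\hnabla_4\sigma^F$ via the $[\hnabla_4,\hnabla]$ commutator from Section 2.8, after which $\hnabla_4(\rho^F,\sigma^F)$ is eliminated using the null Yang-Mills equations $\hnabla_4 \rho^F = \widehat{\text{div}}\alpha^F - \tr\chi\,\rho^F - (\eta-\etabar)\cdot\alpha^F$ and its dual. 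The outcome is a schematic equation of the form
\[ \hnabla_3(\hnabla_4\alpha^F) + \lambda_0\,\tr\chibar\,(\hnabla_4\alpha^F) = \hnabla\,\widehat{\text{div}}\,\alpha^F + [\text{schematic l.o.t.}], \]
with lower-order terms polynomial in $\psi_g$, $\Psi$, $\Yub$, and $\alpha^F$ and their derivatives, all already controlled by the bootstrap.

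Next, I commute with $i \leq N+2$ gauge-covariant angular derivatives $(\al\hnabla)^i$ using the formula \eqref{c2}, then apply the weighted transport inequality of Proposition \ref{3.6} (the signature $s_2(\hnabla_4\alpha^F)=0$ fixes $\lambda_0$ as in Proposition \ref{alphaFprop}). This yields
\[ \frac{1}{\al}\scaletwoSu{(\al\hnabla)^i \hnabla_4\alpha^F} \lesssim \frac{1}{\al}\lVert(\al\hnabla)^i \hnabla_4\alpha^F\rVert_{L^2_{(sc)}(S_{u_\infty,\ubar})} + \intu \frac{a}{\upr^2}\lVert\mathfrak{G}\rVert_{L^2_{(sc)}(S_{u',\ubar})}\,du'. \]
The initial-data term is controlled by hypothesis (2) of Theorem \ref{main1}, which explicitly bounds $\hnabla_4^m$ derivatives of $\alpha^F$ on $S_{u_\infty,\ubar}$ for $m\leq 3$, contributing $\lesssim \mathcal{I}\lesssim 1$. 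Each term in the integrand is estimated exactly as in Propositions \ref{alphaFprop} and following: place the highest-derivative factor in $L^2_{(sc)}$ (controlled by $\mathbb{YM}$, $\mathcal{R}$ or $\bbGamma$), and put the remaining factors in $L^\infty_{(sc)}$ via Sobolev embedding (Proposition \ref{Sobolev}) and the improved pointwise bounds of Remarks \ref{remarkhat}, \ref{remarkF} and \ref{remarkF2}. Because $i \leq N+2$, the top angular order of $(\rho^F,\sigma^F)$ generated on the right-hand side is at most $N+4$, i.e.\ within the regime governed by $\mathbb{YM}_{j,2}$, so no top-order loss occurs; signature-counting powers of $a$ and $|u|$ are absorbed using $(\Gamma+R+M)^{20}\leq a^{1/16}$.

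The main obstacle I foresee is the bookkeeping of $[\hnabla_3,\hnabla_4]\alpha^F$, in particular the genuinely non-abelian self-interaction $F(e_3,e_4)\cdot\alpha^F \sim (\rho^F,\sigma^F)\cdot\alpha^F$. This is structurally the same borderline quadratic term that generated $C_1, C_2$ in \eqref{c1}--\eqref{c2}, and it is the reason the statement is restricted to $i \leq N+2$: only at this loss of two derivatives can $(\rho^F,\sigma^F)$ be placed in $L^\infty_{(sc)}$ via Remark \ref{remarkF2}, which furnishes the extra $1/|u|$-decay needed to close the estimate with room to spare. A secondary, purely technical nuisance is that the commutator swaps $\hnabla_4\hnabla(\rho^F,\sigma^F) \leftrightarrow \hnabla\hnabla_4(\rho^F,\sigma^F)$ produce further multilinear corrections through $[\hnabla_4,\hnabla]$; these are all of strictly lower scale-invariant weight and fall under the same bookkeeping scheme used for Propositions \ref{alphaFprop} onward.
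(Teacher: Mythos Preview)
Your approach is essentially the same as the paper's: derive a $\hnabla_3$-transport equation for $\hnabla_4\alpha^F$ by commuting the $\hnabla_3\alpha^F$ equation with $\hnabla_4$, substitute for $\hnabla_4(\rho^F,\sigma^F)$ via the null Yang--Mills equations, commute with $\hnabla^i$ using \eqref{c2}, and close via Proposition~\ref{3.6}. One small correction: the reason the statement is restricted to $i\le N+2$ is not the non-abelian commutator term $(\rho^F,\sigma^F)\cdot\alpha^F$---that term is harmless---but rather the principal part $\hnabla\,\widehat{\mathrm{div}}\,\alpha^F=\hnabla^2\alpha^F$ you correctly identified, which after commuting produces $\hnabla^{i+2}\alpha^F$ and hence requires $i+2\le N+4$ to stay inside the $\bbGamma$-controlled range.
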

\begin{proof}
Because of the fact that there is no null Yang-Mills equation for $\hnabla_4 \alphaF$, we commute with $\hnabla_3$. There holds

\begin{align}
    \nonumber \hnabla_3\hnabla_4 \alphaF+\frac{1}{2}\tr\chibar \hnabla_4 \alphaF &=\hsp  \psi_g\hsp \tr\chibar \hsp \alphaF + (\psi_g,\chihat)(\psi_g,\chibarhat)(\alphaF, 
    \mathcal{Y}_{\ubar})+ \nabla (\eta, \etabar) \alphaF +\psi_g \hnabla(\alphaF,\rhoF,\sigmaF)\\ \nonumber &+\hnabla^2 \alphaF + \nabla(\psi_g,\chihat) (\rhoF,\sigmaF) + (\chihat,\psi_g)\hnabla(\rhoF,\sigmaF) + \alphaF (\rhoF,\sigmaF)(\rhoF,\sigmaF) \\ \nonumber &+\alphaF(\rhoF,\sigmaF) + \alpha\hsp  \alphabarF + (\rho,\sigma)\alphaF + \tbeta (\rhoF,\sigmaF) + (\psi_g,\chihat)\hnabla_4 \alphaF \\:&= \sum_{j=1}^{13}T_j.
\end{align}We commute with $i$ angular derivatives, Using \eqref{c2}. We obtain:

\begin{align}
   \nonumber   \hnabla_3 \hnabla^i \hnabla_4 \alphaF + \frac{i+1}{2}\tr\chibar \hnabla^i \hnabla_4 \alphaF&= \sumiF \nablap \nablat \psi_g \nablaf \tr\chibar \hnablaF \alphaF \\ \nonumber &+ \sumiF \nablap \nablat(\psi_g,\chihat)\nablaf (\psi_g,\chibarhat)\hnablaF (\alphaF, \mathcal{Y}_{\ubar}) \\ \nonumber &+ \sumif \nablap \nabla^{i_3+1}(\eta,\etabar) \hnablaf \alphaF \\ \nonumber &+ \sumitm \nablapp \hnablat(\alphaF, \rhoF,\sigmaF) \\ \nonumber &+ \sumit \nablap \hnabla^{i_3+2}\alphaF \\ \nonumber &+ \sumif \nablap \nabla^{i_3+1}(\psi_g,\chihat)\hnablaf(\rhoF,\sigmaF) \\ \nonumber &+    \sumif \nablap \nabla^{i_3}(\psi_g,\chihat)\hnabla^{i_4+1}(\rhoF,\sigmaF) \\ \nonumber &+\sumiF \nablap \hnablat \alphaF \hnablaf (\rhoF,\sigmaF)\hnablaF (\rhoF,\sigmaF) \\ \nonumber &+\sumif \nablap \hnablat \alphaF \hnablaf(\rhoF,\sigmaF) \\ \nonumber &+ \sumif \nablap \nablat \alpha \hnablaf \alphabarF \\ \nonumber &+ \sumif \nablap \nablat (\rho,\sigma) \nablaf \alphaF \\ \nonumber &+ \sumif \nablap \nablat \tbeta \hnablaf (\rhoF, \sigmaF) \\ \nonumber &+ \sumif \nablap \nablat (\psi_g, \chihat)\hnablaf \hnabla_4 \alphaF \\ \nonumber &+ \sumif \nablap \nablat (\chibarhat,\tildetr) \hnablaf \hnabla_4 \alphaF \\ \nonumber &+ \sumifi \nablapp \nabla^{i_3+1}\tr\chibar \hnablaf \hnabla_4 \alphaF \\ \nonumber &+ \sumifim \nablapp \nablat (\chibarhat,\tr\chibar) \hnablaf \hnabla_4 \alphaF \\ \nonumber &+ \sumiFi \nablap \hnablat \alphabarF \hnablaf (\rhoF,\sigmaF) \hnablaF \hnabla_4 \alphaF \\&+ \sumif \nablap \hnablat \alphabarF \hnablaf \hnabla_4 \alphaF := G_i.
\end{align}An application of Proposition \ref{3.6}, expressed in scale-invariant norms, yields

\begin{equation}
    \begin{split}
        \frac{1}{\al}\scaletwoSu{\aln \hnabla_4 \alphaF} \lesssim \frac{1}{\al}\lVert \aln \hnabla_4 \alphaF \rVert_{L^2_{(sc)}(S_{u_{\infty},\ubar})}+\intu \frac{\al}{\upr^2}\scaletwoSuprime{\ali G_i} \duprime.
    \end{split}
\end{equation}We restrict attention to $0\leq i \leq N+2$ and treat each of the 18 terms separately. 

\par\noindent For the first term, we have

\begin{align}
    \nonumber  &\intu \frac{\al}{\upr^2} \scaletwoSuprime{\ali \sumifi \nablapp \nablat \tr\chibar \hnablaf \alphaF} \duprime \\ \lesssim &\intu \frac{\al}{\upr^2}\cdot \frac{\upr^2}{\al}\frac{\Gamma^3}{\upr^2} \duprime \lesssim \frac{\Gamma^3}{\lvert u \rvert} \lesssim 1.
\end{align}
\par\noindent For the second term, we have

\begin{align}
   \nonumber   &\intu \frac{\al}{\upr^2} \scaletwoSuprime{\ali \sumiF \nablap \nablat (\psi_g, \chihat) \nablaf (\psi_g,\chibarhat) \hnablaf(\alphaF, \Yub)} \duprime \\\lesssim &\intu \frac{\al}{\upr^2}\cdot \upr \cdot \al \cdot \frac{\Gamma^3}{\upr^2} \duprime \lesssim \frac{a \Gamma^3}{\lvert u \rvert^2} \lesssim 1.
\end{align}For the third term, there holds
\begin{align}
   \nonumber   &\intu \frac{\al}{\upr^2} \scaletwoSuprime{\ali \sumif \nablap \nabla^{i_3+1} (\eta,\etabar) \hnablaf \alphaF} \duprime \\\lesssim& \intu \frac{\al}{\upr^2}\frac{\Gamma^2}{\upr} \duprime \lesssim \frac{a \Gamma^2}{\lvert u \rvert^2}\lesssim 1.
\end{align}This is because, for $i_3$ up to $N+2$, we can control $\nabla^{i_3+1}(\eta,\etabar)$ in terms of the $\bbGamma$ norm. For the fourth term we have

\begin{align}
  \nonumber    &\intu \frac{\al}{\upr^2} \scaletwoSuprime{\ali \sumitm \nablapp \hnabla^{i_3} (\alphaF, \rhoF,\sigmaF)} \duprime \\ \lesssim& \intu \frac{a}{\upr^2}\cdot \frac{\Gamma^2}{\upr}\duprime \lesssim \frac{a\Gamma^2}{\lvert u \rvert^2}\lesssim 1.
\end{align}For the fifth term, there holds 

\begin{align}
      &\intu \frac{\al}{\upr^2} \scaletwoSuprime{\ali \sumit \nablap \hnabla^{i_3+2} \alphaF} \duprime \lesssim \intu \frac{a}{\upr^2} \frac{\Gamma}{a \upr} \duprime \lesssim 1.
\end{align}Note, crucially, that for  $0\leq i_3 \leq N+2$ we can bound $\hnabla^{i_3+2} \alphaF$ using the $\bbGamma$ norm. For the sixth and seventh terms, there holds

\begin{align}
  \nonumber     & \intu \frac{\al}{\upr^2} \scaletwoSuprime{\ali \sumif \nablap \hnabla^{i_3+1}(\psi_g,\chihat)\hnablaf ( \rhoF,\sigmaF)} \duprime\\+ \nonumber  &\intu \frac{\al}{\upr^2} \scaletwoSuprime{\ali \sumif \nablap \hnabla^{i_3}(\psi_g,\chihat)\hnabla^{i_4+1} ( \rhoF,\sigmaF)} \duprime \\ \lesssim& \intu \frac{a}{\upr^2}\frac{1}{\al}\frac{\Gamma^2}{\upr} \duprime \lesssim 1. 
\end{align}\noindent For the eighth term, we have 

\begin{align}
    &  \nonumber  \intu \frac{\al}{\upr^2} \scaletwoSuprime{\ali \sumiF \nablap \hnablat \alphaF \hnablaf ( \rhoF,\sigmaF)\hnablaF (\rhoF,\sigmaF)} \duprime \\ \lesssim & \intu \frac{\al}{\upr^2}\cdot \al \cdot \frac{\Gamma^3}{\upr^2}\duprime \lesssim \frac{a \Gamma^3}{\lvert u \rvert^3}\lesssim 1.
\end{align}Similarly, we have

\begin{align}
   &  \nonumber  \intu \frac{\al}{\upr^2} \scaletwoSuprime{\ali \sumif \nablap \hnablat \alphaF \hnablaf ( \rhoF,\sigmaF)} \duprime \\ \lesssim & \intu \frac{\al}{\upr^2}\cdot \al \cdot \frac{\Gamma^2}{\upr}\duprime \lesssim \frac{a \Gamma^3}{\lvert u \rvert^2}\lesssim 1.
\end{align}For the tenth and eleventh terms -and for $i$ always up to $N+2$- we have 
\begin{align}
    & \intu \frac{\al}{\upr^2} \scaletwoSuprime{\ali \sumif \nablap \nablat \alpha \hnablaf \alphabarF } \duprime \lesssim \intu \frac{\al}{\upr^2}\cdot \frac{\al \Gamma^2}{\upr} \duprime \lesssim \frac{a \Gamma^2}{\upr^2}\lesssim 1
\end{align}and
\begin{align}
    \intu &\frac{\al}{\upr^2} \scaletwoSuprime{\ali \sumif \nablap \nablat (\rho,\sigma) \hnablaf \alphaF } \duprime \lesssim \intu \frac{\al}{\upr^2}\cdot \frac{\al \Gamma^2}{\upr} \duprime \lesssim 1.
\end{align}For the twelfth term, we have 
\begin{align}
    &  \nonumber \intu \frac{\al}{\upr^2} \scaletwoSuprime{\ali \sumif \nablap \nablat \tbeta \hnablaf (\rhoF,\sigmaF) } \duprime \\ \lesssim & \intu \frac{\al}{\upr^2}\cdot \frac{\Gamma^2}{\upr} \duprime \lesssim \frac{\al \Gamma^2}{\lvert u \rvert^2}\lesssim 1.
\end{align} For the thirteenth and fourteenth terms, we have

\begin{align}
  \nonumber &   \intu \frac{\al}{\upr^2}\scaletwoSuprime{\ali \sumif \nablap \nablat(\psi_g,\chihat,\chibarhat,\tildetr)\hnablaf \hnabla_4 \alphaF} \duprime 
    \\ \lesssim &\intu \frac{\al}{\upr^2}a \frac{\Gamma^2}{\upr} \duprime + \intu \frac{a}{\upr^2} \frac{\upr}{\al}  \frac{\Gamma^2}{\upr} \duprime \lesssim \frac{a^{\frac{3}{2}} \Gamma^2}{\lvert u \rvert^2}+ \frac{\al\Gamma^2}{\lvert u \rvert}\lesssim 1.
\end{align}For the fifteenth and sixteenth terms, we have

\begin{align}
   \nonumber   &\intu \frac{\al}{\upr^2}\scaletwoSuprime{\sumifi\nablapp \nabla^{i_3+1}\tr\chibar \hnablaf \hnabla_4 \alphaF} \duprime \\  \nonumber +&\intu \frac{\al}{\upr^2}\scaletwoSuprime{\sumifim\nablapp \nabla^{i_3}(\chibarhat, \tr\chibar) \hnablaf \hnabla_4 \alphaF} \duprime \\  \lesssim &\intu \frac{a}{\upr^2} \cdot \al \cdot \frac{\upr^2}{a}\frac{\Gamma^3}{\upr^2} \duprime \lesssim \frac{\al\Gamma^3}{\lvert u \rvert}\lesssim 1.
\end{align}Finally, the seventeenth and eighteenth terms are estimated as follows:

\begin{align}
 \nonumber     &\intu \frac{\al}{\upr^2}\scaletwoSuprime{\sumiFi\nablap \hnablat \alphabarF  \hnablaf (\rhoF,\sigmaF) \hnablaF \hnabla_4 \alphaF} \duprime \\  \nonumber  +&\intu \frac{\al}{\upr^2}\scaletwoSuprime{\sumif\nablap \hnablat \alphabarF \hnablaf \hnabla_4 \alphaF} \duprime \\ \lesssim &\intu \frac{a}{\upr^2} \cdot \frac{\Gamma^3}{\upr^2}\duprime + \intu \frac{a}{\upr^2} \frac{\Gamma^2}{\upr}\duprime \lesssim 1.
\end{align}Adding the estimates together, we obtain the desired result.

\end{proof}


\begin{proposition}
Under the assumptions of Theorem \ref{main1} and the bootstrap assumptions \eqref{bootstrap}, there holds 

\[\sum_{ 0\leq i \leq N+2} \frac{a}{\lvert u \rvert}\scaletwoSu{(\al\hnabla)^i \hnabla_{3}\underline{\alpha}^{F}} \lesssim 1. \]
\end{proposition}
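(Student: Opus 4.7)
The argument mirrors that of Proposition \ref{newprop}, with the roles of the $\hnabla_3$ and $\hnabla_4$ directions interchanged. Since the null Yang-Mills system contains no direct evolution equation for $\hnabla_3\alphabarF$, we instead derive one for $\hnabla_4(\hnabla_3\alphabarF)$ via
\[\hnabla_4 \hnabla_3 \alphabarF = \hnabla_3 \hnabla_4 \alphabarF + [\hnabla_4, \hnabla_3]\alphabarF.\]
We apply $\hnabla_3$ to the null evolution equation for $\hnabla_4\alphabarF$ and substitute the null equations for $\hnabla_3(\rhoF, \sigmaF)$ to re-express the emerging top-order piece $\hnabla\hnabla_3(\rhoF,\sigmaF)$ in terms of $\hnabla^2\alphabarF$ plus admissible lower-order contributions. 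The commutator $[\hnabla_4, \hnabla_3]\alphabarF$ expands, via the identities of Section 2.9, into schematic terms of the form $\omega\hnabla_3\alphabarF$, $\omegabar\hnabla_4\alphabarF$, $(\eta-\etabar)\cdot\hnabla\alphabarF$, a Riemann-curvature piece $\rho\cdot\alphabarF$, and the gauge-curvature piece $(\rhoF,\sigmaF)\cdot\alphabarF$ coming from the gauge part of $\widehat{D}$.

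Commuting the resulting schematic equation with $i$ angular derivatives via Lemma \ref{commutation} yields $\hnabla_4\hnabla^i\hnabla_3\alphabarF = G_i$, where $G_i$ has the same structural form as the right-hand side appearing in Proposition \ref{newprop}, with the barred and unbarred Yang-Mills components interchanged and with $\hnabla^{i+2}\alphabarF$ replacing $\hnabla^{i+2}\alphaF$ as the top-order contribution. Since $s_2(\hnabla_3\alphabarF)=2$ the appropriate scale-invariant weight is $\frac{a}{|u|}$, and the first inequality of Proposition \ref{3.5} gives
\[\frac{a}{|u|}\scaletwoSu{\haln\hnabla_3\alphabarF} \lesssim \frac{a}{|u|}\lVert\haln\hnabla_3\alphabarF\rVert_{L^2_{(sc)}(S_{u,0})} + \intubar \frac{a}{|u|}\scaletwoSuubarprime{\ali G_i}\dubarprime.\]
The initial-data term vanishes because the data on $\ubar=0$ are Minkowski, by condition 3 of Theorem \ref{main1}.

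The estimation of each summand in $G_i$ then follows the pattern of Proposition \ref{newprop}: the bootstrap bounds on $\bbGamma$ and $\mathbb{YM}$ provide $L^2_{(sc)}$-control of $\hnabla^{i+2}\alphabarF$ for $i\leq N+2$; the improved $L^\infty_{(sc)}$-estimates of Remark \ref{remarkF} and Remark \ref{remarkF2} absorb the lower-order pointwise factors; and the borderline quadratic terms, notably $\chibarhat\cdot\hnabla_3\alphaF$ and $\omega\hnabla_3\alphabarF$ (inherited from differentiating the term $\chibarhat\cdot\alphaF$ and from the $[\hnabla_4,\hnabla_3]$-commutator respectively), are closed by a Gr\"onwall step in $\ubar$, which costs only a bounded factor since $|\ubar|\leq 1$ throughout $\mathcal{D}$. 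The principal obstacle is the non-abelian term $(\rhoF,\sigmaF)\cdot\alphabarF$ produced by the gauge-curvature part of $[\hnabla_4,\hnabla_3]$, together with its angular-commuted descendants, since $\alphabarF$ decays only like the other borderline chromoelectric-chromomagnetic components; these are controlled by exploiting the $\frac{1}{|u'|}$-gain from the scale-invariant H\"older inequality \eqref{257} and the $\frac{a}{|u|^2}$-peeling of the $\tr\chibar$-type coefficients established in Proposition \ref{trchibarbound}, exactly as in the analogous step of Proposition \ref{newprop}.
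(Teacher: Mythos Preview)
Your approach is correct and matches the paper's: both derive a $\hnabla_4$-transport equation for $\hnabla_3\alphabarF$ by commuting the null Yang-Mills system, then commute with angular derivatives and close via Proposition \ref{3.5} together with the bootstrap assumptions. However, you misidentify which terms are actually borderline. The non-abelian commutator piece $(\rhoF,\sigmaF)\cdot\alphabarF$ is \emph{not} the principal obstacle; in scale-invariant norms it closes with a comfortable factor of order $\tfrac{a\Gamma^2}{|u|^2}\lesssim 1$. The genuinely borderline term in the paper's proof is the \emph{triple anomaly} of schematic type $(\psig,\chihat,\chibarhat)(\psig,\chibarhat,\tildetr,\tr\chibar)(\alphaF,\Yub)$, which arises for instance from applying $\nabla_3$ to the term $\chibarhat\cdot\alphaF$ in the $\hnabla_4\alphabarF$-equation; in the worst case this produces $\chibarhat\cdot\tr\chibar\cdot\alphaF$, whose scale-invariant estimate yields exactly $\Gamma[\chibarhat]\,\Gamma[\tr\chibar]\,\Gamma[\alphaF]$ with no surplus decay, and closes only after invoking the improved bounds of Propositions \ref{chihats}, \ref{trchibarbound}, and \ref{alphaFprop}. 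A second nearly-borderline term is $(\psig,\chibarhat,\tildetr,\tr\chibar)\hnabla\Yub$, which closes with a bare $\tfrac{\Gamma^2}{a^{1/2}}$.

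Your remark that $\chibarhat\cdot\hnabla_3\alphaF$ is ``closed by a Gr\"onwall step in $\ubar$'' is also not right: this is not a self-term (it involves $\hnabla_3\alphaF$, not $\hnabla_3\alphabarF$), so Gr\"onwall does not apply to it. The correct treatment is to substitute the null equation for $\hnabla_3\alphaF$, which replaces it by $\tr\chibar\,\alphaF$, $\hnabla(\rhoF,\sigmaF)$, $\chihat\cdot\alphabarF$, etc.; multiplied back by $\chibarhat$ this feeds precisely into the triple-anomaly terms just described. The self-term $(\psig,\chibarhat)\hnabla_3\alphabarF$ does appear, but in fact closes directly under the bootstrap with a gain of $a^{-1/2}$, so no Gr\"onwall is needed (though using it would be harmless).
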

\begin{proof}
As with the preceding Proposition, there is no null Yang-Mills equation for $\hnabla_3 \alphabarF$ and so, to get an estimate, we commute with $\hnabla_4$. Schematically, we have 

\begin{align}
   \nonumber   \hnabla_4 \hnabla_3 \alphabarF = &(\psig,\chihat, \chibarhat)(\psig,\chibarhat,\tildetr,\tr\chibar)(\alphaF,\Yub) +\Yub \Yub\Yub +\Yub\Yub+(\psig,\chibarhat,\tildetr,\tr\chibar)\hnabla \Yub \\ +&\nabla(\psig,\chibarhat,\tildetr) \Yub  +(\alphabar, \tbetabar,\rho,\sigma)(\alphaF,\Yub) +(\psig,\chibarhat)\hnabla_3\alphabarF + \hnabla^2\alphabarF.
\end{align}Commuting with $i$ angular derivatives using \eqref{c1}, we have

\begin{align}
   \nonumber   \hnabla_4 \hnabla^i \hnabla_3\alphabarF &= \sumiF \nablap \nablat (\psig,\chihat,\chibarhat)\nablaf(\psig,\chibarhat,\tildetr,\tr\chibar)\hnablaF(\alphaF,\Yub) \\  \nonumber &+ \sumiF\nablap \hnablat \Yub \hnablaf\Yub \hnablaF\Yub + \sumif \nablap \hnablat \Yub \hnablaf \Yub \\  \nonumber &+ \sumif \nablap \nabla^{i_3}(\psig,\chibarhat,\tildetr,\tr\chibar)\hnabla^{i_4+1}\Yub \\  \nonumber &+ \sumif \nablap \nabla^{i_3+1}(\psig,\chibarhat,\tildetr)\hnablaf\Yub \\ \nonumber &+\sumif \nablap \nablat \Psi_{\ubar} \hnablaf (\alphaF,\Yub) \\  \nonumber &+ \sumif \nablap \nablat (\psig,\chibarhat)\hnablaf \hnabla_3\alphabarF + \sumit \nablap \hnabla^{i_3+2}\alphabarF\\  \nonumber  &+  \sumif \nablap \nablat (\chibarhat, \tildetr) \hnablaf \hnabla_3\alphabarF \\  \nonumber &+\sumifi \nablapp \nablat \tr\chibar \hnablaf \hnabla_3 \alphabarF \\  \nonumber  &+\sumifim \nablapp \nablat \tr\chibar \hnablaf \hnabla_3 \alphabarF\\  \nonumber &+\sumiFi \nablap \hnablat \alphabarF \hnablaf (\rhoF,\sigmaF) \hnablaF \hnabla_3\alphabarF \\ &+ \sumifi \nablap \hnablat \alphabarF \hnablaf \hnabla_3 \alphabarF := H_i.
\end{align}Passing to scale-invariant norms, there holds

\begin{align} \frac{a}{\lvert u \rvert} \scaletwoSu{\aln \hnabla_3 \alphabarF} \lesssim \frac{a}{\lvert u \rvert} \intubar \scaletwoSuubarprime{\ali H_i} \dubarprime.\end{align}

From now on, we restrict attention to $0\leq i \leq N+2$. The first term contains a triple anomaly:

\begin{align}
    \nonumber  &\frac{a}{\lvert u \rvert} \intubar \scaletwoSuubarprime{\ali \sumiF \nablap \nablat (\psig,\chihat,\chibarhat)\nablaf(\psig,\chibarhat,\tildetr,\tr\chibar)\hnablaF(\alphaF,\Yub)}\dubarprime \\  \lesssim &\frac{a}{\lvert u \rvert}   \intubar \frac{\lvert u \rvert}{\al}\frac{\lvert u \rvert^2}{a}\al \frac{\Gamma^3}{\lvert u \rvert^2}\dubarprime \lesssim \Gamma[\chibarhat]\Gamma[\tr\chibar]\Gamma[\alphaF].
\end{align}At this point, we can bound the product $\Gamma[\chibarhat]\Gamma[\tr\chibar]\Gamma[\alphaF]$ by $1$, using the improvements obtained  in Propositions \ref{chihats}, \ref{trchibarbound} and \ref{alphaFprop} respectively. 
\vspace{3mm}

For the fourth term, there holds 

\begin{align}
    \nonumber   &\frac{a}{\lvert u \rvert} \intubar \scaletwoSuubarprime{\ali \sumif \nablap \nablat (\psig,\chibarhat,\tildetr, \tr\chibar)\hnabla^{i_4+1} \Yub} \dubarprime \\  \nonumber   \lesssim &\frac{\al}{\lvert u \rvert}   \intubar \scaletwoSuubarprime{(\al)^{i+1} \sum_{i_1+i_2+i_3+i_4+1=i+1} \nablap \nablat (\psig,\chibarhat,\tildetr, \tr\chibar)\hnabla^{i_4+1} \Yub} \dubarprime \\  \lesssim &\frac{\al}{\lvert u \rvert} \intubar \frac{\lvert u \rvert^2}{a} \frac{\Gamma^2}{\lvert u \rvert}\dubarprime \lesssim \frac{\Gamma^2}{\al}\lesssim 1.
\end{align}The rest of the terms are not borderline and can be bounded above by 1 as before. The result follows.

\end{proof}

    \subsection{Estimates on the curvature components}\label{csec}
    \begin{proposition}
Under the assumptions of Theorem \ref{main1} and the bootstrap assumptions \eqref{bootstrap}, there holds 
\[ \sum_{0\leq i \leq N+2} \frac{1}{\al}\scaletwoSu{\aln \alpha} \lesssim 1. \]
\end{proposition}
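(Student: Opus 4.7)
The plan is to derive the estimate by commuting the Bianchi equation for $\alpha$ with angular derivatives and invoking the weighted transport inequality of Proposition \ref{36}. After exploiting the renormalization $\tbeta_A = \beta_A - \tfrac{1}{2}R_{4A}$ to eliminate pointwise Ricci contributions (which would otherwise produce borderline Yang-Mills quadratic terms), the Bianchi equation reads schematically
\begin{equation*}
\nabla_3 \alpha + \tfrac{1}{2}\tr\chibar\,\alpha = \nabla\hat{\otimes}\tbeta + \omegabar\,\alpha + \chihat\cdot(\rho,\sigma) + (\zeta+\eta)\hat{\otimes}\tbeta + \mathcal{N}^{\mathrm{YM}},
\end{equation*}
where $\mathcal{N}^{\mathrm{YM}}$ is a gauge-invariant collection of quadratic Yang-Mills expressions such as $\alphaF \cdot \hnabla\alphaF$, $\alphaF\cdot\alphabarF$ and $\alphaF\cdot(\rhoF,\sigmaF)$.

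After commuting with $\nabla^i$ for $0\leq i \leq N+2$ using the schematic formula \eqref{c2}, one obtains
\begin{equation*}
\nabla_3 \nabla^i \alpha + \tfrac{i+1}{2}\tr\chibar\,\nabla^i\alpha = \nabla^{i+1}\tbeta + \mathcal{E}_i,
\end{equation*}
where $\mathcal{E}_i$ consists of lower-order products of Ricci coefficients, curvature and Yang-Mills components. Applying Proposition \ref{36} with $\lambda_0 = (i+1)/2$ (so $\lambda_1=i$), and using that $\alpha$ vanishes identically on the Minkowskian hypersurface $\Hbar_0$, yields, in scale-invariant form,
\begin{equation*}
\frac{1}{\al}\scaletwoSu{\aln\alpha} \lesssim \intu \frac{a^{1/2}}{\upr^2}\scaletwoSuprime{\ali\nabla^{i+1}\tbeta}\duprime + \intu \frac{a^{1/2}}{\upr^2}\scaletwoSuprime{\ali\mathcal{E}_i}\duprime.
\end{equation*}

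The main obstacle is the first integral: for $i=N+2$ the quantity $\nabla^{i+1}\tbeta$ saturates at the $(N+3)$-rd derivative level, which is controlled only through the integrated flux $\underline{\mathcal{R}}_{i+1}[\tbeta]$ on $\Hbar_\ubar$, and no pointwise bound on individual spheres is available. I would recover it via Cauchy--Schwarz in $u'$,
\begin{equation*}
\intu \frac{a^{1/2}}{\upr^2}\scaletwoSuprime{\ali\nabla^{i+1}\tbeta}\duprime \leq \left(\intu \frac{a}{\upr^2}\scaletwoSuprime{\ali\nabla^{i+1}\tbeta}^2\duprime\right)^{1/2}\left(\intu\frac{1}{\upr^2}\duprime\right)^{1/2},
\end{equation*}
in which the first factor is controlled by $\underline{\mathcal{R}}[\tbeta]\leq R$ through the very definition of the $L^2_{sc}(\Hbar_\ubar)$ norm, while the second factor is bounded by $|u|^{-1/2}$. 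The resulting product closes by means of the bootstrap hierarchy $R^{20} \leq a^{1/16}$ combined with $|u|\geq a/4$ throughout $\mathcal{D}$.

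The lower-order terms in $\mathcal{E}_i$ are handled exactly as in the proofs of Propositions \ref{omegaprop}--\ref{etabarestimate}. Each schematic product is factorized so that one factor is held in $L^2_{sc}$ (controlled by an appropriate component of $\bbGamma$, $\mathcal{R}$ or $\mathbb{YM}$), while the remaining factors are estimated in $L^\infty_{sc}$ via Sobolev embedding (Proposition \ref{Sobolev}); the improved $L^\infty$ bounds of Remarks \ref{remarkhat} and \ref{remarkF} are invoked whenever $\chihat$ or $\alphaF$ appears, so as to avoid a quadruple anomaly. Each resulting $u'$-integrand decays at least like $a^\kappa/\upr^2$ for some $\kappa\geq 1/2$, producing the extra smallness that closes the estimate against the bootstrap constants. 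No new structural ingredient is needed beyond this pattern.
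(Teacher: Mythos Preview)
Your strategy matches the paper's: commute the $\nabla_3\alpha$ Bianchi equation (written via $\tbeta$) with $\nabla^i$, apply Proposition~\ref{36}, and control the top-order term $\nabla^{i+1}\tbeta$ by Cauchy--Schwarz against the $\underline{\mathcal{R}}[\tbeta]$ flux on $\Hbar_{\ubar}$. The paper actually refers to \cite{AnAth} for this vacuum part, so your explicit treatment is if anything more detailed than what is written there. Two corrections are needed, however.

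First, the transport is along $e_3$, so the initial-data term in Proposition~\ref{36} lives on $S_{u_\infty,\ubar}\subset H_{u_\infty}$, \emph{not} on $\Hbar_0$. On $H_{u_\infty}$ the component $\alpha$ does not vanish; it is controlled by $\mathcal{I}$ through the data hypotheses (via the $\nabla_4\chihat$ equation). Your sentence ``$\alpha$ vanishes identically on the Minkowskian hypersurface $\Hbar_0$'' is therefore misplaced, though the fix is immediate.

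Second, your schematic list $\mathcal{N}^{\mathrm{YM}}$ is incomplete: after substituting the null Yang--Mills equations into $D_4 R_{AB}$, the quantities $\hnabla_4\rhoF$, $\hnabla_4\sigmaF$, $\hnabla_4\alphabarF$ can be rewritten, but $\hnabla_4\alphaF$ cannot, so a genuine term of the form $\Yub\cdot\hnabla_4\alphaF$ remains (this is the paper's $T_i^8$). Its presence is precisely why the norms $\mathbb{YM}^{\mathcal{C}}_{j,2}$ and $\mathbb{YM}^{\mathcal{C}}_{j,\infty}$ are included in $\bbGamma$; once you invoke that part of the bootstrap, the term closes exactly like the others.
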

\begin{proof}
    Recall the Bianchi equation for $\alpha$:
    
    \begin{align}
	      \nonumber   \nabla_3 \alpha + \frac{1}{2}\tr\chibar \alpha = &\nabla \hat{\otimes}\beta + 4 \omegabar \alpha - 3\left(\chihat \rho + \Hodge{\chihat} \sigma \right)+ (\zeta+4\eta)\hat{\otimes}\beta \\ & - D_4 R_{AB} + \frac{1}{2}\left(D_B R_{4A} + D_A R_{4B}\right) + \frac{1}{2}\left(D_4 R_{43} - D_3 R_{44}\right)\gslash_{AB}.
    \end{align}Schematically, the above rewrites as 
    
    \begin{align}
	\nonumber         \nabla_3 \alpha + \frac{1}{2}\tr\chibar \alpha = &\nabla \tbeta+ \psi_g \alpha + \chihat(\rho,\sigma) + \psi_g \tbeta +\Yub \hnabla \alphaF + (\alphaF, \Yub) \hnabla \Yub \\&+ (\psi_g,\chihat,\chibarhat, \tr\chibar) (\alphaF, \rhoF,\sigmaF)(\alphaF, \Yub) + \Yub \hnabla_4 \alphaF.
    \end{align}Commuting with $i$ angular derivatives using \eqref{c2}, we arrive at
    
    \begin{align}
      \nonumber  & \nabla_3 \nabla^i \alpha + \frac{i+1}{2}\tr\chibar \nabla^i\alpha\\  = &\nabla^{i+1}\tbeta  \nonumber + \sumitm \nablapp \nabla^{i_3+1}\tbeta +\sumitm \nablapp \nablat \alpha\\ \nonumber  +& \sumif \nablap \nablat (\psi_g,\chihat) \nablaf (\rho,\sigma, \tbeta) \\ \nonumber +& \sumif \nablap \hnablat \Yub \hnabla^{i_4+1}\alphaF \\ \nonumber+  &\sum_{i_1+i_2+i_3+i_4=i}\nablap \hnablat (\alphaF, \Yub) \hnabla^{i_4+1}\Yub \\ \nonumber +&\sumiF \nablap \nablat(\psi_g,\chihat,\chibarhat,\tr\chibar)\hnablaf(\alphaF, \rhoF,\sigmaF)\hnablaF(\alphaF,\Yub)\\ \nonumber +& \sumif \nablap \hnablat\Yub \hnablaf \hnabla_4\alphaF \ \nonumber\\  \nonumber  + &\sumif \nablap \nablat (\chibarhat,\tildetr) \nablaf \alpha \\ \nonumber +& \sumifi \nablapp \nablat \tr\chibar \nablaf \alpha \\+&\sumifim \nablapp \nablat(\chibarhat,\tr\chibar)\nablaf \alpha:=T_i^1 +\dots + T_i^{11}.
    \end{align}Passing to scale-invariant norms and using the weighted transport equality from Proposition \ref{3.6}, we can estimate as follows.
    
    \begin{align}
    \frac{1}{\al}\scaletwoSu{\aln \alpha} &\lesssim \frac{1}{\al}\lVert \aln \alpha \rVert_{L^2_{(sc)}(S_{u_{\infty}, u})} + \sum_{1\leq j \leq 12} \intu \frac{\al}{\upr^2}\scaletwoSuprime{\ali T_i^{j}}\duprime.
    \end{align}The expressions for $T_i^1, T_i^2, T_i^3, T_i^4, T_i^9, T_i^{10}, T_i^{11}$ do not contain Yang-Mills components and their treatment is exactly the same as in \cite{AnAth}, bounded by $1$. For the rest of the terms, there holds:
    
    \begin{align}
        \intu \frac{\al}{\upr^2}\left(\scaletwoSuprime{\ali T_i^5}+ \scaletwoSuprime{\ali T_i^6}\right) \duprime \lesssim \intu \frac{\al\Gamma^2}{\upr^3}\duprime \lesssim 1.
    \end{align}Moreover, there holds
     \begin{align}
        \intu \frac{\al}{\upr^2}\scaletwoSuprime{\ali T_i^7} \duprime \lesssim \intu \frac{\al\Gamma^3}{\upr^2}\duprime \lesssim 1.
    \end{align}\noindent For the eighth term, we can bound
    
    \begin{align}
        \intu \frac{\al}{\upr^2}\scaletwoSuprime{\ali T_i^8} \duprime \lesssim \intu \frac{a \Gamma^2}{\upr^3}\lesssim 1.
    \end{align}The result follows.
    
\end{proof}
\begin{proposition}\label{psiuprop}
For $\Psi_u$ defined as in Section \ref{Norms}, there holds

\[ \sum_{0\leq i \leq N+3} \scaletwoSu{\aln \Psi_u} \lesssim \mathcal{R}[\alpha] +1.\]
\end{proposition}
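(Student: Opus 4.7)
The plan is to treat each $\Psi_u \in \{\tbeta, \rho, \sigma, \tbetabar\}$ by applying its null Bianchi equation in the $\nabla_4$ direction, commuting with $\aln$ via the commutation formula \eqref{c1}, and then integrating in $\ubar'$ using Proposition \ref{3.5}. The initial data on $\Hbar_0$ being trivial, the contribution from $\lVert \aln \Psi_u \rVert_{L^2_{(sc)}(S_{u,0})}$ vanishes (or is $\lesssim \mathcal I$), so the task reduces to controlling the $\ubar'$-integral of the scale-invariant $L^2$-norm of each term on the right-hand side of the commuted equation.

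The most delicate case is $\Psi_u = \tbeta$, because the RHS of its $\nabla_4$-equation contains $\div \alpha$, producing a term $\nabla^{i+1}\alpha$ at one order higher than the LHS. For $0 \le i \le N+3$, this term reaches order $N+4$, which is at the edge of what the bootstrap allows. The idea is to avoid attempting a pointwise $S$-bound on this term and instead bound it through its $H_u$-norm: by Cauchy--Schwarz in $\ubar'$,
\[
\int_0^{\ubar} \tfrac{1}{\al}\scaletwoSuubarprime{(\al\nabla)^{i+1}\alpha}\,\dubarprime \;\lesssim\; \ubar^{1/2}\cdot \tfrac{1}{\al}\scaletwoHu{(\al\nabla)^{i+1}\alpha} \;\lesssim\; \mathcal{R}[\alpha],
\]
where I use $i+1\le N+4$ (which lies in the range defining $\mathcal R_i$) and $\ubar\le 1$. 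This single term produces the $\mathcal R[\alpha]$ contribution in the proposition; every other term on the right-hand side should reduce to $O(1)$ via the signature conservation \eqref{sc}, the scale-invariant H\"older inequalities, and the bootstrap bound $\bbGamma \le \Gamma$, following the template used in Propositions \ref{omegaprop}--\ref{etabarestimate}.

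For $\Psi_u \in \{\rho, \sigma, \tbetabar\}$, the corresponding $\nabla_4$-equations contain $\div \tbeta$, $\curl \tbeta$, $\nabla\rho$, $\Hodge\nabla\sigma$, and $\chibarhat\cdot \alpha$ schematically; at order $i+1\le N+4$ these are controlled by the bootstrap on $\mathcal{R}_{i,2} \le \bbGamma$, giving $O(1)$ contributions. The chromoelectric--chromomagnetic curvature contributions that appear in the $D_\mu R_{\nu\lambda}$ correction terms of the Bianchi equations are of lower signature and, after commuting with $\aln$ via \eqref{c1}, decompose into products of $\psi_g$, $\mathcal Y$, and at most $N+4$ derivatives of $\alpha^F,\alphabar^F,\rho^F,\sigma^F$, each of which is tamed using the estimates derived in Propositions \ref{alphaFprop}--\ref{newprop}; the same bookkeeping ($a$-weight counting against $\lvert u\rvert$-weights with the $\int_{u_\infty}^u |u'|^{-2}du'\lesssim |u|^{-1}$ or $\int_0^{\ubar} d\ubar' \le 1$ integration) gives a uniform $O(1)$ bound.

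The main obstacle is book-keeping at top order: at $i=N+3$, the commutator terms from \eqref{c1} produce sums involving $\nabla^{N+3}(\chihat,\tr\chi)$ and $\nabla^{N+3}(\eta,\etabar)$, which sit at the top of what $\bbGamma$ controls. As in earlier propositions, the distinction has to be made between placing the high-derivative factor in $L^2_{(sc)}$ (and the remaining factors in $L^\infty_{(sc)}$) versus the reverse, to keep all contributions bounded. Once this is done carefully, summing over $0 \le i \le N+3$ yields the claimed bound $\mathcal{R}[\alpha] + 1$.
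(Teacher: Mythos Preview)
Your approach matches the paper's: integrate the $\nabla_4$-equation for each $\Psi_u$, commute via \eqref{c1}, apply Proposition~\ref{3.5}, and isolate $\nabla^{i+1}\alpha$ as the sole non-$O(1)$ contribution via Cauchy--Schwarz in $\ubar'$ against $\scaletwoHu{\cdot}$.

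Two small points where your write-up is imprecise. First, the claim that the principal terms $\nabla^{i+1}\tbeta$, $\nabla^{i+1}\rho$, $\nabla^{i+1}\sigma$ (for $\rho,\sigma,\tbetabar$ respectively) are controlled by $\mathcal{R}_{i,2}\le\bbGamma$ is not quite right at $i=N+3$: the sphere norm $\mathcal{R}_{i,2}$ is only defined for $i\le N+3$, so $\nabla^{N+4}\Psi_u$ lies outside its range. The paper handles these terms by the same Cauchy--Schwarz-plus-hypersurface argument you use for $\alpha$, observing that since $\Psi_u$ is non-anomalous one gains an extra $a^{-1/2}$, whence $\intubar \scaletwoSuubarprime{a^{i/2}\nabla^{i+1}\Psi_u}\,\dubarprime \lesssim a^{-1/2}\mathcal{R}\lesssim a^{-1/2}R\lesssim 1$. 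Second, among the stress-energy contributions the paper singles out the triple anomaly $\tr\chibar\,\lvert\alphaF\rvert^2$ as the borderline case: after the scale-invariant H\"older inequalities the bound reads $\bbGamma[\tr\chibar]\cdot\bbGamma[\alphaF]^2$ with \emph{no} residual smallness in $a$ or $\lvert u\rvert$, so closing requires the improved estimates $\bbGamma[\tr\chibar]\lesssim 1$ and $\bbGamma[\alphaF]\lesssim 1$ from Proposition~\ref{trchibarbound} and Remark~\ref{remarkF} (available here since $i\le N+3$), not the raw bootstrap constant $\Gamma$. Your references to Propositions~\ref{alphaFprop}--\ref{newprop} cover the latter, but the $\tr\chibar$ improvement should be called out explicitly.
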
 

\begin{proof}
Each of the $\Psi_u$ satisfies the following schematic equation:

\be \nabla_4 \Psi_u = \nabla\left(\Psi_u, \alpha\right) + (\psi, \chibarhat) \left(\Psi_u, \alpha\right) + (\alpha_F , \mathcal{Y}_{\ubar})\hnabla (\alpha_F, \mathcal{Y}_{\ubar}) + (\psi,\chihat,\chibarhat,\tr\chibar)(\alphaF, \mathcal{Y}_{\ubar})(\alphaF, \mathcal{Y}_{\ubar}). \ee Commuting with $i$ angular derivatives using \ref{c1}, we obtain

\begin{align}
 \nonumber   \nabla_4 \nabla^i \Psi_u =& \nabla^{i+1}(\Psi_u,\alpha) +\sum_{i_1+i_2+i_3=i-1} \nablapp \nabla^{i_3+1} (\Psi_u,\alpha)\\  \nonumber &+ \sumif \nablap\nablat(\psi,\chibarhat)\nablaf (\Psi_u,\alpha) \\  \nonumber &+ \sumif \nablap\hnablat(\alphaF, \mathcal{Y}_{\ubar}) \hnabla^{i_4+1}(\alphaF, \mathcal{Y}_{\ubar}) \\  \nonumber &+\sumiF \nablap \nablat (\psi,\chihat,\chibarhat,\tr\chibar)\hnablaf(\alphaF, \mathcal{Y}_{\ubar})\hnabla^{i_5}(\alphaF, \mathcal{Y}_{\ubar})\\  \nonumber &+\sumif \nablap \nablat(\chihat,\tr\chi)\nablaf \Psi_u \\  \nonumber &+\sumifim \nablapp\nablat(\chihat,\tr\chi)\nablaf \Psi_u\\&+ \sumiFi \nablap \hnablat \alphaF \hnablaf (\rhoF,\sigmaF)\nabla^{i_5}\Psi_u.
\end{align} Passing to scale-invariant norms and estimating, we have

\begin{align}
 \nonumber     &\hspace{7mm}\scaletwoSu{\aln \Psi_u} \\  \nonumber \lesssim &\intubar \scaletwoSuubarprime{a^{\frac{i}{2}} \nabla^{i+1}\alpha} \dubarprime +\intubar \scaletwoSuubarprime{a^{\frac{i}{2}} \nabla^{i+1}\Psi_u} \dubarprime \\  \nonumber + &\intubar \scaletwoSuubarprime{a^{\frac{i}{2}}\sum_{i_1+i_2+i_3=i-1}\nablapp \nabla^{i_3+1}(\Psi_u,\alpha)}\dubarprime \\  \nonumber +&\intubar \scaletwoSuubarprime{a^{\frac{i}{2}}\sumif\nablap \nablat(\psi_g, \chibarhat)\nablaf(\Psi_u,\alpha)} \dubarprime \\  \nonumber +&\intubar \scaletwoSuubarprime{a^{\frac{i}{2}}\sumif \nablap \hnablat(\alphaF, \mathcal{Y}_{\ubar})\hnabla^{i_4+1}(\alpha^F, \mathcal{Y}_{\ubar}) }\dubarprime \\  \nonumber +&\intubar \scaletwoSuubarprime{a^{\frac{i}{2}}\sumiF \nablap \nablat (\psi,\chihat,\chibarhat,\tr\chibar) \hnabla^{i_4}(\alphaF, \Yub) \hnablaF (\alphaF, \Yub)}\dubarprime\\  \nonumber +& \intubar \scaletwoSuubarprime{a^{\frac{i}{2}}\sumif \nablap \nablat(\chihat,\tr\chi)\nablaf\Psi_u }\dubarprime \\  \nonumber +& \intubar \scaletwoSuubarprime{a^{\frac{i}{2}}\sumifim \nablapp \nablat(\chihat,\tr\chi)\nablaf \Psi_u}\dubarprime \\ +&\intubar \scaletwoSuubarprime{a^{\frac{i}{2}}\sumiFi \nablap \hnablat \alphaF \hnabla^{i_4}(\rhoF, \sigmaF) \hnablaF \Psi_u}\dubarprime
\end{align}We restrict attention to $0\leq i \leq N+3$. For the first term, we have, 

\be \intubar \scaletwoSuubarprime{a^{\frac{i}{2}}\nabla^{i+1}\alpha} \dubarprime \lesssim \left(\intubar \scaletwoSuubarprime{a^{\frac{i}{2}}\nabla^{i+1}\alpha}^2 \dubarprime\right)^{1/2}= \mathcal{R}[\alpha],\ee by using H\"older's inequality. For the second term, since the $\Psi_u$ are regular with respect to scaling, we conclude that 
 \be \intubar \scaletwoSuubarprime{a^{\frac{i}{2}}\nabla^{i+1}\Psi_u} \dubarprime \lesssim \frac{1}{\al} \mathcal{R} \lesssim \frac{1}{\al}R\lesssim 1,\ee by the bootstrap assumptions \eqref{bootstrap}. For the third term, we have $i_3+1 \leq i$, hence everything can be closed using the $\bbGamma$ total norm. We have
 
 \be \intubar \scaletwoSuubarprime{\ali \sumitm \nablapp \nabla^{i_3+1}(\Psi_u, \alpha)} \lesssim \frac{\Gamma^2}{\lvert u \rvert} \lesssim 1.\ee The fourth term can be estimated by \be \intubar \scaletwoSuubarprime{a^{\frac{i}{2}}\sumif\nablap \nablat(\psi_g, \chibarhat)\nablaf(\Psi_u,\alpha)} \dubarprime\lesssim \intubar \frac{\lvert u \rvert}{a}\frac{\Gamma^2}{\lvert u \rvert} \dubarprime \lesssim 1.\ee For the fifth term, we have \begin{align} \nonumber &\intubar \scaletwoSuubarprime{a^{\frac{i}{2}}\sumif \nablap \hnablat(\alphaF, \mathcal{Y}_{\ubar})\hnabla^{i_4+1}(\alpha^F, \mathcal{Y}_{\ubar}) }\dubarprime \\= \nonumber  &\intubar \scaletwoSuubarprime{a^{\frac{i}{2}}\sum_{i_1+i_2+i_3+i_4+1=i+1} \nablap \hnablat(\alphaF, \mathcal{Y}_{\ubar})\hnabla^{i_4+1}(\alpha^F, \mathcal{Y}_{\ubar}) }\dubarprime\\\lesssim &\intubar \frac{1}{\al}\frac{a \Gamma^2}{\lvert u \rvert} \dubarprime \lesssim \frac{\al \Gamma^2}{\lvert u \rvert} \lesssim 1. \end{align}For the sixth term (which is the most borderline one), we have the following estimate (the worst possible scenario is when we have the triple anomaly $\tr\chibar\lvert \alphaF \rvert^2$, all other cases are bounded above by $1$ as before):
 
 \begin{align}
    \nonumber   &\intubar \scaletwoSuubarprime{a^{\frac{i}{2}}\sumiF \nablap \nablat (\psi,\chihat,\chibarhat,\tr\chibar) \hnabla^{i_4}(\alphaF, \Yub) \hnablaF (\alphaF, \Yub)}\dubarprime \\ \lesssim &\intubar \frac{\lvert u \rvert^2}{a}\cdot a \cdot \frac{\bbGamma(\tr\chibar)\bbGamma(\alphaF)\bbGamma(\alphaF)}{\lvert u \rvert^2}\dubarprime  \lesssim \bbGamma(\tr\chibar)\bbGamma(\alphaF)\bbGamma(\alphaF).
 \end{align}Using Remark \ref{remarkF} from Proposition \ref{alphaFprop}, since $0\leq i \leq N+3$, we see an improvement for $\bbGamma(\alphaF),$ whence we can bound $\bbGamma_{\infty}(\alphaF) + \bbGamma_{2}(\alphaF) \lesssim 1$. Moreover, for $\bbGamma(\tr\chi)$, we make use of Proposition \ref{trchibarbound} to bound $\bbGamma(\tr\chibar)\lesssim 1$. For the seventh term, there holds
 
 \begin{align}
     \intubar \scaletwoSuubarprime{\ali \sumif \nablap \nablat(\chihat, \tr\chi)\nablaf \Psi_u} \dubarprime \lesssim \intubar \frac{\al \Gamma^2}{\lvert u \rvert}\dubarprime \lesssim 1.
 \end{align}
 
\noindent For the eighth term, there holds
 \begin{align}
   \intubar \scaletwoSuubarprime{\ali \sumifim \nablapp \nablat(\chihat, \tr\chi)\nablaf \Psi_u} \dubarprime \lesssim \intubar \frac{a^{\frac{3}{2}} \Gamma^3}{\lvert u \rvert^2}\dubarprime \lesssim 1.
\end{align}Finally, for the ninth term, there holds

\begin{align}
    \intubar \scaletwoSuubarprime{\ali \sumiFi \nablap \hnablat\alphaF \hnablaf (\rhoF,\sigmaF)\hnablaF \Psi_u}\dubarprime \lesssim \intubar \frac{a \Gamma^3}{\lvert u \rvert^2}\dubarprime \lesssim 1.
\end{align}
Putting all of the above together, the result follows.
\end{proof}
\begin{proposition}
Under the assumptions of Theorem \ref{main1} and the bootstrap assumptions \eqref{bootstrap}, there holds

\[\sum_{0\leq i \leq N+2} \scaletwoSu{\aln \alphabar}\lesssim 1. \]
\end{proposition}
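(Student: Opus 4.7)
The plan is to adapt the strategy of Proposition \ref{psiuprop} to the extremal component $\alphabar$, which satisfies a $\nabla_4$-Bianchi equation and whose initial datum on $\Hbar_0$ vanishes thanks to the Minkowskian initial condition. First I would rewrite the Bianchi equation schematically as
\[
\nabla_4 \alphabar + \tfrac{1}{2}\tr\chi\, \alphabar \sim \nabla \tbetabar + \psi_g\,(\alphabar,\tbetabar) + \chibarhat\,(\rho,\sigma) + \mathcal{T},
\]
where the matter tensor $\mathcal{T}$, obtained from $D_3 R_{AB}$, $D_A R_{3B}$ and $D_3 R_{43}\slashed{g}_{AB}$, admits a schematic gauge-covariant decomposition of the form
\[
\mathcal{T} \sim \alphabarF\,\hnabla \Yub + \Yub\,\hnabla \alphabarF + \Yub\,\hnabla_3\alphabarF + (\psi_g,\chi,\chibar)\,(\alphaF,\alphabarF,\Yub)(\alphaF,\alphabarF,\Yub),
\]
the key point being that the products of Yang-Mills curvature components appearing on the right-hand side are gauge-invariant scalars, hence compatible with the fully gauge-covariant framework developed in Section \ref{section23}.

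Next I would commute $i$ times with $\nabla$ using the formula \eqref{c1}, pass to scale-invariant norms, and apply the $\nabla_4$-transport inequality of Proposition \ref{3.5}, integrating in $\ubar'\in[0,\ubar]$. The initial term on $S_{u,0}$ vanishes since the data on $\Hbar_0$ are Minkowskian, so everything is reduced to controlling the $\ubar'$-integral of the commuted right-hand side. The leading derivative term $\nabla^{i+1}\tbetabar$ is handled by Cauchy--Schwarz in $\ubar'$, producing a factor bounded by $\underline{\mathcal{R}}[\tbetabar]+1$; the nonlinear connection terms $\chibarhat(\rho,\sigma)$, $\psi_g\tbetabar$ and their higher-order descendants are estimated exactly as in Proposition \ref{psiuprop}, by combining the bootstrap bound $\bbGamma\leq \Gamma$ with the small $a^{-1/2}$ and $|u|^{-1}$ factors that naturally appear in the scale-invariant weights.

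The main obstacle, as usual in this setup, is a handful of triply-anomalous matter terms of the form $\tr\chibar\,|\alphabarF|^2$ or $\chibarhat\,\alphabarF\,\Yub$ arising from $\mathcal{T}$ after commutation with $\nabla^i$ for $i$ near the top. For these I would proceed exactly as at the end of the proof of Proposition \ref{psiuprop}: since $i\leq N+2$, the factor $(\al\hnabla)^{i_j}\alphabarF$ falls in the improved range, and we can invoke the sharpened bound $\bbGamma_2(\alphabarF) + \bbGamma_\infty(\alphabarF)\lesssim 1$ together with the sharpened bound $\bbGamma(\tr\chibar)\lesssim 1$ from Proposition \ref{trchibarbound} and the improvement given in Remark \ref{remarkF}, so that the product of three ``anomalous'' norms still closes to $\lesssim 1$. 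Collecting all contributions and absorbing $\mathcal{R}$-, $\underline{\mathcal{R}}$- and $\mathbb{YM}$-terms via the bootstrap $(\Gamma+R+M)^{20}\leq a^{1/16}$ gives the desired bound.

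Finally, the restriction to $i\leq N+2$ (rather than $N+3$ or $N+4$) is essential: it ensures that the top-order derivative $\hnabla_3\alphabarF$ appearing in $\mathcal{T}$ falls in the range controlled by Proposition \ref{newprop}, which is crucial for handling the mixed $\nabla_4$--$\nabla_3$ matter interaction. No genuinely new idea is required beyond those already used for the $\alphaF,\alphabarF$ and $\Psi_u$ estimates; the work is in bookkeeping the signatures and identifying each term with a previously established estimate.
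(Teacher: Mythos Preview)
Your proposal is correct and follows essentially the same route as the paper's proof: commute the $\nabla_4$-Bianchi equation for $\alphabar$, apply the transport inequality along $e_4$ with vanishing data on $\Hbar_0$, and control all terms using the bootstrap assumptions together with the improved estimates from earlier propositions, the restriction $i\leq N+2$ being precisely what allows control of the $\hnabla_3\alphabarF$ contribution via the $\mathbb{YM}^{\mathcal{C}}$-type bounds. One small correction: the coefficient of $\hnabla_3\alphabarF$ in the matter term is $\alphaF$ (arising from $D_3 R_{AB} \sim \alphaF\,\hnabla_3\alphabarF + \ldots$) rather than $\Yub$, which is slightly more anomalous but still closes to $\lesssim 1$ exactly as the paper computes.
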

\begin{proof}
Recall the Bianchi equation for $\alphabar$:
	\begin{equation}
	    \begin{split}
	        \nabla_4 \alphabar + \frac{1}{2}\tr\chi \hsp \alphabar = &-\nabla \hat{\otimes}\betabar +4 \omega\hsp \alphabar -3\left(\chibarhat \rho - \Hodge{\chibarhat}\sigma \right) +\left(\zeta - 4\etabar\right)\hat{\otimes}\betabar \\ &- D_3 R_{AB} + \frac{1}{2}\left(D_A R_{3B} + D_B R_{3A}\right) + \frac{1}{2}\left(D_3 R_{34} - D_4 R_{43}\right) \slashed{g}_{AB}.
	    \end{split}
\end{equation}
Schematically, the above rewrites as

	    \begin{align}
	        \nabla_4\alphabar = \nabla\Psi_u + \psi_g (\alphabar, \tbetabar) +\chibarhat(\rho,\sigma)+ \Yub \hnabla \Yub +  (\psi_g,\chihat,\chibarhat,\tr\chibar)(\alphaF,\Yub)\Yub + \alphaF \hnabla_3 \alphabarF.
	    \end{align}Commuting with $i$ angular derivatives, we arrive at 


\begin{align}
 \nonumber     &\hspace{7mm}\scaletwoSu{\aln \alphabar} \\  \nonumber \lesssim&\intubar \scaletwoSuubarprime{a^{\frac{i}{2}} \nabla^{i+1}\alphabar} \dubarprime +\intubar \scaletwoSuubarprime{a^{\frac{i}{2}} \nabla^{i+1}\Psi_u} \dubarprime \\  \nonumber + &\intubar \scaletwoSuubarprime{a^{\frac{i}{2}}\sum_{i_1+i_2+i_3=i-1}\nablapp \nabla^{i_3+1}\Psi_u}\dubarprime \\ \nonumber +&\intubar \scaletwoSuubarprime{a^{\frac{i}{2}}\sumif\nablap \nablat(\psi_g, \chibarhat)\nablaf(\Psi_{\ubar})} \dubarprime \\ \nonumber +&\intubar \scaletwoSuubarprime{a^{\frac{i}{2}}\sumif \nablap \hnablat \mathcal{Y}_{\ubar}\hnabla^{i_4+1} \mathcal{Y}_{\ubar} }\dubarprime \\ \nonumber +&\intubar \scaletwoSuubarprime{a^{\frac{i}{2}}\sumiF \nablap \nablat (\psi,\chihat,\chibarhat,\tr\chibar) \hnabla^{i_4}(\alphaF, \Yub) \hnablaF  \Yub}\dubarprime\\ \nonumber +&  \intubar \scaletwoSuubarprime{\ali \sumif \nablap \hnablat \alphaF \hnablaf \hnabla_3 \alphabarF} \dubarprime         \\ \nonumber +& \intubar \scaletwoSuubarprime{a^{\frac{i}{2}}\sumif \nablap \nablat(\chihat,\tr\chi)\nablaf\alphabar }\dubarprime \\ \nonumber +& \intubar \scaletwoSuubarprime{a^{\frac{i}{2}}\sumifim \nablapp \nablat(\chihat,\tr\chi)\nablaf \alphabar}\dubarprime \\+&\intubar \scaletwoSuubarprime{a^{\frac{i}{2}}\sumiFi \nablap \hnablat \alphaF \hnabla^{i_4}(\rhoF, \sigmaF) \hnablaF \alphabar}\dubarprime
\end{align}In the above expression, all terms except the seventh one can be bounded above by $1$, in the same way as in the preceding Proposition. For the seventh term, we have

\begin{equation}
    \intubar \scaletwoSuubarprime{\ali \sumif \nablap \hnablat \alphaF \hnablaf \hnabla_3 \alphabarF} \dubarprime \lesssim \intubar \al \frac{\lvert u \rvert}{a} \frac{ \Gamma^2}{\lvert u \rvert}\dubarprime \lesssim 1,
\end{equation}as the $\bbGamma$ norm bootstraps up to $N+2$ angular derivatives of $\hnabla_3 \alphabarF$. The result follows.

\end{proof}
\section{Elliptic Estimates for the top-order derivatives of the Ricci coefficients}
While estimating the energy norms for curvature, we will need control of the $\mathcal{L}^{2}_{sc}(H,\underline{H})$ norms for  $N+5$ derivatives of the Ricci coefficients. We obtain such estimates by means of the elliptic equations. Let us define the following norm on which we must make a bootstrap assumption: 
\begin{align}
 \nonumber \mathcal{O}_{N+5,2}:=&\frac{1}{a^{\frac{1}{2}}}\|(a^{\frac{1}{2}})^{N+4}\nabla^{N+5}\widehat{\chi}\|_{\mathcal{L}^{2}_{sc}(H^{(0,\underline{u})}_{u})}+\|(a^{\frac{1}{2}})^{N+4}\nabla^{N+5}(\tr\chi,\omega)\|_{\mathcal{L}^{2}_{sc}(H^{(0,\underline{u})}_{u})}\\\nonumber 
+&\|(a^{\frac{1}{2}})^{N+4}\nabla^{N+5}\eta\|_{\mathcal{L}^{2}_{sc}(\underline{H}^{(u_{\infty},u)}_{\underline{u}})}+\frac{a}{|u|}\|(a^{\frac{1}{2}})^{N+4}\nabla^{N+5}(\eta,\underline{\eta})\|_{\mathcal{L}^{2}_{sc}(H^{(0,\underline{u})}_{u})}\\\nonumber 
+&\|(a^{\frac{1}{2}})^{N+4}\nabla^{N+5}\underline{\omega}\|_{\mathcal{L}^{2}_{sc}(\underline{H}^{(u_{\infty},u)}_{\underline{u}})}+ \int_{u_{\infty}}^{u}\frac{a^{\frac{3}{2}}}{|u^{'}|^{3}}\|(a^{\frac{1}{2}})^{N+4}\nabla^{N+5}\widehat{\underline{\chi}}\|_{\mathcal{L}^{2}_{sc}(S_{u^{'},\underline{u}})}du^{'}\\
+&\int_{u_{\infty}}^{u}\frac{a^{2}}{|u^{'}|^{3}}\|(a^{\frac{1}{2}})^{N+4}\nabla^{N+5}\tr\underline{\chi}\|_{\mathcal{L}^{2}_{sc}(S_{u^{'},\underline{u}})}du^{'}
\end{align}
The bootstrap assumption reads \begin{equation} \label{ellboot} \mathcal{O}_{N+5,2}\lesssim  \Gammatop \lesssim  a^{1/1000}.\end{equation} Throughout this section, we shall be repeatedly invoking a pivotal Proposition, which lies at the heart of the rationale behind the elliptic estimates. We include a statement and refer the reader to \cite{AnAth} for a proof.

\begin{proposition}
\label{divcurlprop} Under the assumptions of Theorem \ref{main1} and the bootstrap assumptions \eqref{bootstrap}, let $\phi$ be a totally symmetric, $(r+1)-$covariant tensorfield on a two-sphere $(\mathbb{S}^2, \gamma)$ satisfying the following:

\[ \div \phi = f, \hspace{2mm} \curl \phi = g, \hspace{2mm} \tr\hsp \phi = h.\]Then, for $0\leq i \leq N+5$, there holds:

\begin{align*}
\scaletwoSu{\aln \phi} \lesssim \al \sum_{j=0}^{i-1}\scaletwoSu{\aln(f,g)} + \sum_{j=0}^{i-1}\scaletwoSu{\aln(\phi,h)}.
\end{align*}
\end{proposition}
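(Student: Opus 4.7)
The plan is to combine a Bochner--type $L^2$ identity for symmetric tensors on the $2$-sphere $S_{u,\underline{u}}$ with an induction on the derivative order $i$. The base case rests on the classical integration-by-parts identity which, for a totally symmetric $(r+1)$-tensor $\phi$, takes the schematic form
\begin{equation*}
\int_{S_{u,\underline{u}}} |\nabla \phi|^2_\gamma \lesssim \int_{S_{u,\underline{u}}} \Bigl( |\div \phi|^2 + |\curl \phi|^2 + K \cdot |\phi|^2 + |\tr \phi|^2 \cdot (\text{l.o.t.}) \Bigr),
\end{equation*}
where $K$ denotes the Gauss curvature of $(S_{u,\underline{u}},\gamma)$. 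Substituting the Hodge system $\div \phi = f$, $\curl \phi = g$, $\tr \phi = h$, and using the bound $|u|^2 K \lesssim 1$ (which follows from the Gauss equation $K = -\rho + \frac{1}{2}\widehat{\chi}\cdot\widehat{\underline{\chi}} - \frac{1}{4}\tr\chi\,\tr\underline{\chi} + \frac{1}{4}\mathfrak{T}_{43}$ together with the bootstrap control on the Ricci coefficients and curvature), one obtains the base estimate $\|\nabla \phi\|_{L^2(S_{u,\underline{u}})} \lesssim \|(f,g)\|_{L^2(S_{u,\underline{u}})} + |u|^{-1}\|(\phi,h)\|_{L^2(S_{u,\underline{u}})}$. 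Converting to scale-invariant norms and tracking the $a^{1/2}$ weight yields the $i=1$ case of the claim.

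For the inductive step, suppose the estimate holds for all $j \leq i - 1$. Applying $\nabla^{i-1}$ to the Hodge system and commuting past $\div,\curl,\tr$ produces a new Hodge system for the tensor $\nabla^{i-1}\phi$, namely
\begin{equation*}
\div(\nabla^{i-1}\phi) = \nabla^{i-1} f + [\div,\nabla^{i-1}]\phi, \qquad \curl(\nabla^{i-1}\phi) = \nabla^{i-1} g + [\curl,\nabla^{i-1}]\phi,
\end{equation*}
and similarly for $\tr$. The commutator terms are schematically of the form $\sum_{i_1+i_2 \leq i-1}\nabla^{i_1} K \cdot \nabla^{i_2} \phi$. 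Applying the base inequality to $\nabla^{i-1}\phi$ and then inserting the Gauss equation to rewrite $\nabla^{i_1} K$ in terms of angular derivatives of $\rho$, Ricci coefficients, and Yang-Mills components, one controls all intermediate-order contributions using the bootstrap assumptions \eqref{bootstrap} along with the scale-invariant H\"older inequalities \eqref{257} and the Sobolev embedding of Proposition \ref{Sobolev}.

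The main obstacle is the top-order case $i = N+5$, where the commutator term $\nabla^{i-1}K \cdot \phi$ requires $N+4$ derivatives of $K$, hence of $\rho$ and of the Ricci coefficients. The derivatives of $\rho$ at this order are controlled by the curvature norm $\mathcal{R}$, while the derivatives of the Ricci coefficients at this order fall within the elliptic norm $\mathcal{O}_{N+5,2}$ for which the bootstrap assumption \eqref{ellboot} has been imposed. The strategy is to put all but a few derivatives on the curvature/Ricci factor (placed in $L^2_{(sc)}$) and handle the remaining factor (in which $\phi$ appears together with few angular derivatives) in $L^\infty_{(sc)}$ via Proposition \ref{Sobolev}. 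The weight $a^{1/2}$ appearing in front of $\sum_{j=0}^{i-1}\scaletwoSu{\aln(f,g)}$ is precisely the one dictated by the signature $s_2$ shift between a derivative of $\phi$ and the inputs $(f,g) = (\div\phi,\curl\phi)$; conversely, the trace $h$ and the undifferentiated $\phi$ enter without the $a^{1/2}$ factor, matching their signature with $\nabla\phi$. Once the top order is closed, the remaining intermediate orders follow straightforwardly by the induction. For the detailed algebraic verification (which is standard and identical in structure to the vacuum/Einstein--Maxwell versions) we refer to the treatment in \cite{AnAth}.
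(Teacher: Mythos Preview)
Your sketch follows the standard Bochner--plus--induction route, which is exactly what the paper has in mind: the paper gives no proof at all for this proposition and simply refers the reader to \cite{AnAth}, as you do at the end. So there is no divergence in approach to report.

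One small correction to your top-order discussion: you invoke the elliptic bootstrap \eqref{ellboot} on $\mathcal{O}_{N+5,2}$, but the proposition as stated assumes only \eqref{bootstrap}, and that is all that is needed. When you apply the base identity to $\nabla^{i-1}\phi$ and commute, the commutator $[\div,\nabla^{i-1}]\phi$ is schematically $\sum_{j_1+j_2\leq i-2}\nabla^{j_1}K\cdot\nabla^{j_2}\phi$, so at $i=N+5$ the worst term involves at most $\nabla^{N+3}K$. Via the Gauss equation this requires at most $N+3$ angular derivatives of $\rho$ and at most $N+3$ derivatives of Ricci coefficients, all of which sit inside the $\bbGamma$ norm (recall $\mathcal{R}_{i,2}$ is included for $i\leq N+3$ and $\bbGamma_{j,2}$ for $j\leq N+4$). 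Invoking \eqref{ellboot} here would also be logically awkward, since this proposition is used \emph{inside} the section where \eqref{ellboot} is being improved; fortunately the dependence is not actually there.
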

We begin with the estimates for $\chihat, \hsp \tr\chi$.
\begin{proposition}
Under the assumptions of Theorem \ref{main1} and the bootstrap assumptions \eqref{bootstrap} the following estimates hold for the top derivatives of the ricci coefficients $(\tr\chi,\chihat)$
\begin{align}
 \nonumber  a^{-\frac{1}{2}}\|(a^{\frac{1}{2}})^{N+4}\nabla^{N+5}\widehat{\chi}\|_{\mathcal{L}^{2}_{sc}(S_{u,\ubar})}\lesssim 1+\mathcal{R}[\tilde{\beta}]+\mathcal{R}[\alpha]+\mathbb{YM}[\alpha^{F}]+\underline{\mathbb{YM}}[\rho^F, \sigma^F],\\ \nonumber 
\|(a^{\frac{1}{2}})^{N+4}\nabla^{N+5}\tr\chi\|_{\mathcal{L}^{2}_{sc}(S_{u,\ubar})}\lesssim 1+\mathcal{R}[\tilde{\beta}]+\mathcal{R}[\alpha]+\mathbb{YM}[\alpha^{F}]+\underline{\mathbb{YM}}[\rho^F, \sigma^F].
\end{align}
\end{proposition}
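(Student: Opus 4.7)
The plan is to use the Codazzi identity as an elliptic estimate to convert top-order angular derivatives of $\chihat$ into those of $\tr\chi$, and then bound $\nabla^{N+5}\tr\chi$ through its null transport equation along $e_4$. The circular coupling at top order will be closed via the smallness factor $\al/|u|$ coming from $\chihat$ itself.

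First I would apply the div-curl estimate of Proposition \ref{divcurlprop} to the renormalized Codazzi identity
\[
\div\chihat = \frac{1}{2}\nabla\tr\chi - \tbeta - \frac{1}{2}(\eta-\etabar)\cdot\Bigl(\chihat - \frac{1}{2}\tr\chi\,\gamma\Bigr),
\]
commuting with $N+4$ angular derivatives and exploiting the tracelessness of $\chihat$. This yields schematically
\[
a^{-\frac{1}{2}}\|(a^{\frac{1}{2}})^{N+4}\nabla^{N+5}\chihat\|_{\mathcal{L}^{2}_{sc}(S_{u,\ubar})} \lesssim \|(a^{\frac{1}{2}})^{N+4}\nabla^{N+5}\tr\chi\|_{\mathcal{L}^{2}_{sc}(S_{u,\ubar})} + a^{-\frac{1}{2}}\|(a^{\frac{1}{2}})^{N+4}\nabla^{N+4}\tbeta\|_{\mathcal{L}^{2}_{sc}(S_{u,\ubar})} + \text{l.o.t.},
\]
where the lower-order terms are products of $\nabla^{\le N+4}(\eta-\etabar)$ with $\nabla^{\le N+4}(\chihat,\tr\chi)$ and are controlled by the bootstrapped $\bbGamma$-norm from Section 4.

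Second, I would control $\nabla^{N+5}\tr\chi$ through the $e_4$ transport equation
\[
\nabla_4\tr\chi + \frac{1}{2}(\tr\chi)^2 = -|\chihat|^2 - 2\omega\tr\chi - |\alpha^F|^2,
\]
commuting $N+5$ angular derivatives via \eqref{c1} and integrating from $\ubar'=0$ (where the initial data are Minkowski, so the integrand vanishes). Since $\tr\chi$ is a scalar of the base bundle (not the mixed YM bundle), the commutator terms $C_1$ do not arise and only genuine $\beta$-type commutator terms appear; these, together with purely lower-order products, are handled using $\mathcal{R}[\tbeta]$ and $\bbGamma$. The dangerous top-order terms on the right are $\chihat\cdot\nabla^{N+5}\chihat$ and $\alpha^F\cdot\hnabla^{N+5}\alpha^F$.

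Third, I would treat these two borderline contributions as follows. For $\chihat\cdot\nabla^{N+5}\chihat$, I substitute the Codazzi bound from the first step, so that after multiplying by the $L^\infty_{sc}$-bound $\scaleinfinitySu{\chihat/\al}\lesssim 1$ (Remark \ref{remarkhat}) the integrand acquires a weight of $\al/|u^\prime|$. Upon integration in $\ubar$ this gives three contributions: a self-term $\lesssim \frac{\al}{|u|}\int_0^{\ubar}\|(a^{\frac{1}{2}})^{N+4}\nabla^{N+5}\tr\chi\|_{\mathcal{L}^2_{sc}(S_{u,\ubar'})}\,\mathrm{d}\ubar'$ to be closed by Gr\"onwall (or directly absorbed since $\al/|u|\ll 1$ in the region of study), a $\tbeta$-term bounded by $\mathcal{R}[\tbeta]$ via Cauchy-Schwarz in $\ubar'$, and controlled $\bbGamma$-terms. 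For $\alpha^F\cdot\hnabla^{N+5}\alpha^F$, Cauchy-Schwarz in $\ubar'$ combined with $\scaleinfinitySu{\alpha^F/\al}\lesssim 1$ (Remark \ref{remarkF}) produces the hypersurface norm $\mathbb{YM}[\alpha^F]$. Lower-order $\alpha^F$-powers in commutators invoke Proposition \ref{alphaFprop} and contribute $\underline{\mathbb{YM}}[\rho^F,\sigma^F]+1$. The $\alpha$-contribution enters through the commutator/Bianchi coupling at order $N+4$ (via the $\tbeta$ Codazzi error absorbing the $\mathcal{R}[\alpha]$ part from Propositions~\ref{omegaprop}, \ref{trchiprop}, and \ref{psiuprop}). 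Combining all contributions and finally inserting the resulting bound for $\nabla^{N+5}\tr\chi$ back into the Codazzi estimate from Step 1 produces both stated inequalities.

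The principal obstacle will be the coupled top-order loop between $\nabla^{N+5}\chihat$ and $\nabla^{N+5}\tr\chi$: the Codazzi estimate trades one derivative of $\chihat$ for one of $\tr\chi$ at the expense of a single factor of $\al$, which must then be paired exactly with the $\chihat/\al$ smallness coming from $|\chihat|^2$ in the transport equation, yielding the crucial $\al/|u|$ smallness that allows either absorption or Gr\"onwall closure. The secondary difficulty is to ensure that the Yang-Mills non-linearity $\alpha^F\cdot\hnabla^{N+5}\alpha^F$ closes against $\mathbb{YM}[\alpha^F]$ (the outgoing hypersurface norm, consistent with the $\ubar$-integration), rather than against any incoming norm that would be unavailable at this stage of the bootstrap.
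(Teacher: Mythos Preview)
Your proposal is correct and follows essentially the same approach as the paper: couple the Codazzi elliptic estimate for $\nabla^{N+5}\chihat$ with the $\nabla_4$-transport equation for $\nabla^{N+5}\tr\chi$, handle the top-order $\alpha^F\cdot\hnabla^{N+5}\alpha^F$ term via $\mathbb{YM}[\alpha^F]$, and close the $\chihat\cdot\nabla^{N+5}\chihat$ loop by Gr\"onwall using the $\al/|u|$ smallness. The paper presents the two ingredients in the opposite order (transport first, then Codazzi substituted in) but the argument is the same; the $\mathcal{R}[\alpha]$ and $\underline{\mathbb{YM}}[\rho^F,\sigma^F]$ contributions enter through the lower-order $\tr\chi$ and $\alpha^F$ estimates (Propositions~\ref{trchiprop} and~\ref{alphaFprop}), which is consistent with what you indicate.
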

\begin{proof}
Recall the null-transport equation for $\tr\chi$ 
\begin{eqnarray}
\nabla_{4}\tr\chi+\frac{1}{2}(\tr\chi)^{2}=-|\chihat|^{2}-|\alpha^{F}|^{2}-2\omega\tr\chi
\end{eqnarray}
and commute with $\hnabla^{I}$ to obtain 
\begin{align}
\nonumber \nabla_4\nabla^{I}\tr\chi=&\sum_{J_{1}+J_{2}+J_{3}+J_{4}=I}\nabla^{J_{1}}(\eta+\etabar)^{J_{2}}\nabla^{J_{3}}\tr\chi\nabla^{J_{4}}\tr\chi\\\nonumber+&\sum_{J_{1}+J_{2}+J_{3}+J_{4}=I}\nabla^{J_{1}}(\eta+\etabar)^{J_{2}}\nabla^{J_{3}}\chihat\nabla^{J_{4}}\chihat\\\nonumber +&\sum_{J_{1}+J_{2}+J_{3}+J_{4}=I}\nabla^{J_{1}}(\eta+\etabar)^{J_{2}}\hnabla^{J_{3}}\alpha^{F}\hnabla^{J_{4}}\alpha^{F}\\ \nonumber +&\sum_{J_{1}+J_{2}+J_{3}+J_{4}=I}\nabla^{J_{1}}(\eta+\etabar)^{J_{2}}\nabla^{J_{3}}\omega\nabla^{J_{4}}\tr\chi\\\nonumber 
+&\sum_{J_{1}+J_{2}+J_{3}+J_{4}=I}\nabla^{J_{1}}(\eta+\etabar)^{J_{2}}\nabla^{J_{3}}(\chihat,\tr\chi)\nabla^{J_{4}}\tr\chi\\ \nonumber +&\sum_{J_{1}+J_{2}+J_{3}+J_{4}=I-1}\nabla^{J_{1}}(\eta+\etabar)^{J_{2}+1}\nabla^{J_{3}}(\chihat,\tr\chi)\nabla^{J_{4}}\tr\chi\\ +&\sum_{J_{1}+J_{2}+J_{3}+J_{4}+J_{5}=I-2}\nabla^{J_{1}}(\eta+\etabar)^{J_{2}}\nabla^{J_{3}}\alpha^{F}\nabla^{J_{4}}(\rho^{F},\sigma^{F})\nabla^{J_{5}+1}\tr\chi.
\end{align}
Applying the transport inequality from Proposition \ref{3.5} with respect to the scale-invariant norm, we have:
\begin{align} \nonumber
&||a^{\frac{N+4}{2}}\nabla^{N+5}\tr\chi||_{L^{2}_{sc}(S_{u,\ubar})}\\ \lesssim  \nonumber&||a^{\frac{N+4}{2}}\nabla^{N+5}\tr\chi||_{L^{2}_{sc}(S_{u,0})}+\int_{0}^{\ubar}||a^{\frac{N+4}{2}}\sum_{J_{1}+J_{2}+J_{3}+J_{4}=N+5}\nabla^{J_{1}}(\eta+\etabar)^{J_{2}}\nabla^{J_{3}}\tr\chi\nabla^{J_{4}}\tr\chi||_{L^{2}_{sc}(S_{u,\ubar^{'}})}\\\nonumber+&\int_{0}^{\ubar}||a^{\frac{N+4}{2}}\sum_{J_{1}+J_{2}+J_{3}+J_{4}=N+5}\nabla^{J_{1}}(\eta+\etabar)^{J_{2}}\nabla^{J_{3}}\chihat\nabla^{J_{4}}\chihat||_{L^{2}_{sc}(S_{u,\ubar^{'}})}\\\nonumber 
+&\int_{0}^{\ubar}||a^{\frac{N+4}{2}}\sum_{J_{1}+J_{2}+J_{3}+J_{4}=N+5}\nabla^{J_{1}}(\eta+\etabar)^{J_{2}}\nabla^{J_{3}}\alpha^{F}\nabla^{J_{4}}\alpha^{F}||_{L^{2}_{sc}(S_{u,\ubar^{'}})}\\\nonumber 
+&\int_{0}^{\ubar}||a^{\frac{N+4}{2}}\sum_{J_{1}+J_{2}+J_{3}+J_{4}=N+5}\nabla^{J_{1}}(\eta+\etabar)^{J_{2}}\nabla^{J_{3}}\omega\nabla^{J_{4}}\tr\chi||_{L^{2}_{sc}(S_{u,\ubar^{'}})}\\\nonumber 
+&\int_{0}^{\ubar}||a^{\frac{N+4}{2}}\sum_{J_{1}+J_{2}+J_{3}+J_{4}=N+5}\nabla^{J_{1}}(\eta+\etabar)^{J_{2}}\nabla^{J_{3}}(\chihat,\tr\chi)\nabla^{J_{4}}\tr\chi||_{L^{2}_{sc}(S_{u,\ubar^{'}})}\\\nonumber 
+&\int_{0}^{\ubar}||a^{\frac{N+4}{2}}\sum_{J_{1}+J_{2}+J_{3}+J_{4}=N+4}\nabla^{J_{1}}(\eta+\etabar)^{J_{2}}\nabla^{J_{3}}(\chihat,\tr\chi)\nabla^{J_{4}}\tr\chi||_{L^{2}_{sc}(S_{u,\ubar^{'}})}\\\nonumber 
+&\int_{0}^{\ubar}||a^{\frac{N+4}{2}}\sum_{J_{1}+J_{2}+J_{3}+J_{4}+J_{5}=N+3}\nabla^{J_{1}}(\eta+\etabar)^{J_{2}}\nabla^{J_{3}}\alpha^{F}\nabla^{J_{4}}(\rho^{F},\sigma^{F})\nabla^{J_{5}+1}\tr\chi||_{L^{2}_{sc}(S_{u,\ubar^{'}})}\\\nonumber 
\lesssim & ||a^{\frac{N+4}{2}}\nabla^{N+5}\tr\chi||_{L^{2}_{sc}(S_{u,0})}
+\frac{a}{|u|}O(\alpha^{F})_{\infty}\int_{0}^{\ubar}a^{-\frac{1}{2}}||a^{\frac{N+4}{2}}\hnabla^{N+4}\alpha^{F}||_{L^{2}_{sc}(S_{u,\ubar^{'}})}d\ubar^{'}\\ +&\frac{a^{\frac{1}{2}}}{|u|}O(\chihat)_{\infty}\int_{0}^{\ubar}||a^{\frac{N+4}{2}}\nabla^{N+5}\chihat||_{L^{2}_{sc}(S_{u,\ubar^{'}})}d\ubar^{'}+1,
\end{align}
where we observe that the most dangerous terms are the ones containing top-order derivatives. The remaining terms are estimated by means of the $L^{2}$ connection and Yang-Mills estimates (Propositions \ref{etabarestimate}-\ref{alphaFprop}). Treating the initial data as $O(1)$, we write 
\begin{align}
\label{eq:chiestimate}\nonumber 
&||a^{\frac{N+4}{2}}\nabla^{N+5}\tr\chi||_{L^{2}_{sc}(S_{u,\ubar})}\\ \lesssim &1+\frac{a^{\frac{1}{2}}}{|u|}O(\chihat)_{\infty}\int_{0}^{\ubar}||a^{\frac{N+4}{2}}\nabla^{N+5}\chihat||_{L^{2}_{sc}(S_{u,\ubar^{'}})}d\ubar^{'}+\frac{a}{|u|}O(\alpha^{F})_{\infty}\mathbb{YM}[\alpha^{F}].
\end{align}
We now need the $L^{2}_{sc}$ estimate for the top derivative of $\chihat$. We utilize the elliptic equation for $\chihat$ to this end. Once we obtain an $L^{2}_{sc}$ estimate for $\chihat$, the idea is to utilize a Gr\"onwall inequality to complete the estimate for $\tr\chi$. Recall the elliptic equation for $\chihat$:
\begin{eqnarray}
\text{div}\hat{\chi}=\frac{1}{2}\nabla \tr\chi-\frac{1}{2}(\eta-\underline{\eta})\cdot(\hat{\chi}-\frac{1}{2}\tr\chi\gamma)-\tilde{\beta}+\frac{1}{2}\mathfrak{T}(e_{4},\cdot).
\end{eqnarray}
Apply the elliptic estimate from Proposition 
\ref{divcurlprop} to obtain
\begin{align}
\label{eq:chihatestimate}
||(a^{\frac{1}{2}})^{N+4}\nabla^{N+5}\chihat||_{L^{2}_{sc}(S_{u,\ubar})}\lesssim &\sum_{I\leq N+5}\frac{1}{a^{\frac{1}{2}}}\nonumber||(a^{\frac{1}{2}}\nabla)^{I}\tr\chi||_{L^{2}_{sc}(S_{u,\ubar})}+\sum_{I\leq N+4}||(a^{\frac{1}{2}}\nabla)^{I}\tilde{\beta}||_{L^{2}_{sc}(S_{u,\ubar})}\\\nonumber 
+&\sum_{I\leq N+4}\sum_{J_{1}+J_{2}= I}||a^{\frac{I}{2}}\nabla^{J_{1}}(\eta,\etabar)\nabla^{J_{2}}(\chihat,\tr\chi)||_{L^{2}_{sc}(S_{u,\ubar})}\\\nonumber 
+&\sum_{I\leq N+4}\sum_{J_{1}+J_{2}=I}||a^{\frac{I}{2}}\hnabla^{J_{1}}\alpha^{F}\cdot\hnabla^{J_{2}}(\rho^{F},\sigma^{F})||_{L^{2}_{sc}(S_{u,\ubar})}\\
+&\sum_{I\leq N+4}\frac{1}{a^{\frac{1}{2}}}||(a^{\frac{1}{2}}\nabla)^{I}\chihat||_{L^{2}_{sc}(S_{u,\ubar})}.
\end{align}
Now, after substituting (\ref{eq:chihatestimate}) into (\ref{eq:chiestimate}),  an application of Cauchy-Schwartz and Gr\"onwall's inequality together with the $L^{2}_{sc}(S_{u,\ubar})$ estimates for the connection and curvature components proved in Section 4,  we obtain 
\begin{align}\nonumber 
||a^{\frac{N+4}{2}}\nabla^{N+5}\tr\chi||_{L^{2}_{sc}(S_{u,\ubar})}\lesssim &(1+\mathcal{R}[\tilde{\beta}]+\mathcal{R}[\alpha]+\mathbb{YM}[\alpha^{F}]+\underline{\mathbb{YM}}[\rho^F, \sigma^F]) e^{|\ubar|\frac{a^{\frac{1}{2}}}{|u|}}\\
\lesssim& 1+\mathcal{R}[\tilde{\beta}]+\mathcal{R}[\alpha]+\mathbb{YM}[\alpha^{F}]+\underline{\mathbb{YM}}[\rho^F, \sigma^F],
\end{align}
since $|\ubar|\lesssim 1$.
Here we have made use of Proposition \ref{trchiprop} to estimate the lower order norm $\sum_{I\leq N+4}\frac{1}{a^{\frac{1}{2}}}\nonumber||(a^{\frac{1}{2}}\nabla)^{I}\tr\chi||_{L^{2}_{sc}(S_{u,\ubar})}$. Using this estimate, we obtain 
\begin{eqnarray}
a^{-\frac{1}{2}}\|(a^{\frac{1}{2}})^{N+4}\nabla^{N+5}\widehat{\chi}\|_{\mathcal{L}^{2}_{sc}(S_{u,\ubar})}\lesssim 1+\mathcal{R}[\tilde{\beta}]+\mathcal{R}[\alpha]+\mathbb{YM}[\alpha^{F}]+\underline{\mathbb{YM}}[\rho^F, \sigma^F].
\end{eqnarray}
This concludes the proof of the lemma.
\end{proof}

\begin{proposition}
\label{omegaestimate}
Under the assumption of Theorem \ref{main1} and the bootstrap assumption \eqref{bootstrap}-\eqref{ellboot}, the following estimates for $\omega$ hold:
\begin{eqnarray}
||a^{\frac{N+4}{2}}\nabla^{N+5}\omega||_{L^{2}_{sc}(H_{u}^{0,\ubar)})}\lesssim 1+\mathcal{R}[\tilde{\beta}].    
\end{eqnarray}
\end{proposition}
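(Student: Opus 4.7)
The plan is to commute the null transport equation
\begin{equation*}
\nabla_3 \omega = \tfrac{1}{2}\rho + \psi_g \psi_g + \Yub\Yub
\end{equation*}
with $\nabla^{N+5}$ using Lemma \ref{commutation}, and then apply the weighted transport inequality of Proposition \ref{3.6} with $\lambda_0=(N+5)/2$. Squaring and integrating in $\ubar'\in[0,\ubar]$ via Minkowski's integral inequality produces, schematically,
\begin{equation*}
\|(\al)^{N+4}\nabla^{N+5}\omega\|_{\mathcal{L}^2_{sc}(H_u^{(0,\ubar)})} \lesssim \text{(data)} + \int_{u_\infty}^u \frac{\al}{|u'|^2}\,\bigl\|(\al)^{N+4}\nabla^{N+5}\rho + \text{err}\bigr\|_{\mathcal{L}^2_{sc}(H_{u'}^{(0,\ubar)})}\, du',
\end{equation*}
where the error terms carry at most $N+4$ derivatives on curvature, Ricci and Yang-Mills components, and are estimated using the Section 4 bounds together with the scale-invariant H\"older inequality \eqref{257}, following the strategy already employed in the proof of Proposition \ref{omegaprop}.

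The only problematic contribution is the top-order $\nabla^{N+5}\rho$, which is not directly controlled by $\mathcal{R}$ (whose top derivative count is $N+4$). To eliminate it, I invoke the Bianchi identity for $\beta$, renormalized via $\beta=\tbeta+\tfrac{1}{2}R_{4A}$:
\begin{equation*}
\nabla_3 \tbeta + \tr\chibar\,\tbeta = \nabla\rho + \Hodge{\nabla}\sigma + \text{l.o.t.},
\end{equation*}
whose lower-order terms involve Ricci coefficients times curvature and Yang-Mills quadratics controlled by the $\mathbb{YM}$ bounds. Solving for $\nabla\rho$ and substituting yields $\nabla^{N+5}\rho \sim \nabla^{N+4}\nabla_3\tbeta + \nabla^{N+5}\sigma + \text{l.o.t.}$; a Cauchy--Schwarz in $u'$ then converts the $\nabla_3$-derivative integrated in $u'$ into the $\mathcal{L}^2_{sc}(\Hbar_{\ubar})$ norm of $\nabla^{N+4}\tbeta$, which is bounded by $\underline{\mathcal{R}}[\tbeta]\leq \mathcal{R}[\tbeta]$. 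The $\nabla^{N+5}\sigma$ contribution is absorbed symmetrically by pairing with the Hodge-dual Bianchi structure (and, if needed, the auxiliary equation for $\od$), while the lower-order Bianchi error terms close by the Section 4 estimates.

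The main technical obstacle is the systematic bookkeeping of the commutator $[\nabla^{N+4},\nabla_3]\tbeta$, which via Lemma \ref{commutation} spawns a cascade of top-order terms including $\nabla^{i_1}\psi_g^{i_2}\nabla^{i_3}(\chibarhat,\tr\chibar)\nabla^{i_4}\tbeta$ plus the non-abelian Yang-Mills analogues appearing in \eqref{c2}. These must each be absorbed either into the elliptic bootstrap $\mathcal{O}_{N+5,2}\lesssim a^{1/1000}$ (when a top derivative falls on a connection coefficient) or into the improved lower-order Yang-Mills bounds from Remarks \ref{remarkF} and \ref{remarkF2}. The smallness $a^{1/1000}$ in the elliptic bootstrap \eqref{ellboot}, combined with the weights $a^{1/2}/|u|\ll 1$ throughout $\mathcal{D}$, ensures each commutator term absorbs cleanly; a Gr\"onwall-type argument in $u$ then produces the final estimate $\lesssim 1+\mathcal{R}[\tbeta]$.
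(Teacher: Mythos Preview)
Your proposal has a genuine gap at the crucial step. After substituting the Bianchi equation for $\tbeta$ you arrive at
\[
\nabla^{N+5}\rho \sim \nabla^{N+4}\nabla_3\tbeta + \nabla^{N+5}\sigma + \text{l.o.t.},
\]
and you then claim that ``a Cauchy--Schwarz in $u'$ converts the $\nabla_3$-derivative integrated in $u'$ into the $\mathcal{L}^2_{sc}(\Hbar_{\ubar})$ norm of $\nabla^{N+4}\tbeta$''. This does not work: Cauchy--Schwarz in $u'$ applied to $\int_{u_\infty}^u w(u')\|\nabla^{N+4}\nabla_3\tbeta\|_{L^2_{sc}(S_{u',\ubar})}\,du'$ produces a weighted $L^2_{u'}$ norm of $\nabla^{N+4}\nabla_3\tbeta$, not of $\nabla^{N+4}\tbeta$; the $\nabla_3$ derivative remains, and no norm in the hierarchy controls $N+5$ mixed derivatives of curvature. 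You have simply traded one uncontrolled top-order term for another. The companion term $\nabla^{N+5}\sigma$ is in the same situation, and your parenthetical ``if needed, the auxiliary equation for $\omegad$'' is in fact the whole point --- not an afterthought. (Also, the inequality $\underline{\mathcal{R}}[\tbeta]\leq \mathcal{R}[\tbeta]$ is not meaningful: these are norms on $\Hbar_{\ubar}$ and $H_u$ respectively and are not comparable.)

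The paper's approach avoids this obstruction by a \emph{renormalization at the level of the unknown}, not by substitution after the fact. One introduces $\omegad$ via $\nabla_3\omegad=\tfrac{1}{2}\sigma$ and sets
\[
\kappa := \nabla\omega + {}^*\nabla\omegad - \tfrac{1}{2}\tbeta.
\]
The point is that in $\nabla_3\kappa$ the contribution $\tfrac{1}{2}(\nabla\rho+{}^*\nabla\sigma)$ coming from $\nabla_3({}^*\mathcal{D}\langle\omega\rangle)$ cancels \emph{pointwise} against $-\tfrac{1}{2}(\nabla\rho+{}^*\nabla\sigma)$ from $-\tfrac{1}{2}\nabla_3\tbeta$. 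Thus $\kappa$ satisfies a $\nabla_3$-transport equation with only zeroth-order curvature on the right, and one closes $\|(a^{1/2}\nabla)^{N+4}\kappa\|_{L^2_{sc}}$ by Proposition~\ref{3.6} and the Section~4 bounds. The top-order $\nabla^{N+5}\omega$ is then recovered from $\nabla^{N+4}\kappa$ and $\nabla^{N+4}\tbeta$ via the elliptic system $\text{div}\nabla\omega=\text{div}\kappa+\tfrac{1}{2}\text{div}\tbeta$, $\text{curl}\nabla\omega=0$ and Proposition~\ref{divcurlprop}. This is the missing idea: the derivative loss is absorbed by passing to a quantity one order lower whose transport equation is \emph{free of top-order curvature}, and the lost derivative is regained ellipticly.
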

\begin{proof}
We first introduce $\omega^{\dag}$, defined as the solution of the following equation 
\begin{eqnarray}
\label{eq:omegadag}
\nabla_{3}\omega^{\dag}=\frac{1}{2}\sigma,
\end{eqnarray}
with $\omegad=0$ on $H_{u_{\infty}}$. We now introduce the new notation $\langle\omega\rangle$ for the pair $(-\omega,\omega^{\dag})$ and define $\kappa$ as follows: 
\begin{eqnarray}
\kappa:=^{*}\mathcal{D}\langle\omega\rangle-\frac{1}{2}\tilde{\beta}=\nabla\omega+^{*}\nabla\omega^{\dag}-\frac{1}{2}\tilde{\beta}.
\end{eqnarray}
We intend to obtain a transport equation for $^{*}\mathcal{D}\langle\omega\rangle$. We have the following commutation relation:
\begin{align} \nonumber
[\nabla_{3},^{*}\mathcal{D}](-\omega,\omega^{\dag})&=\frac{1}{2}\tr\chibar(\nabla\omega+^{*}\nabla\omega^{\dag})+\chibarhat\cdot(-\nabla\omega+^{*}\nabla\omega^{\dag})\\+&\frac{1}{2}(\eta+\etabar)\nabla_{3}\omega +\frac{1}{2}(^{*}\eta+^{*}\etabar)\nabla_{3}\omega^{\dag}.
\end{align}
A $\nabla_{3}$ equation for $\omega^{\dag}$ (\ref{eq:omegadag}) has been defined. The $\nabla_{3}$ equation for $\omega$ reads 
\begin{eqnarray}
\nabla_{3}\omega=2\omega\underline{\omega}+\frac{3}{4}|\eta-\underline{\eta}|^{2}+\frac{1}{4}(\eta-\underline{\eta})\cdot(\eta+\underline{\eta})-\frac{1}{8}|\eta+\underline{\eta}|^{2}\nonumber+\frac{1}{2}\rho+\frac{1}{4}\mathfrak{T}_{43}
\end{eqnarray}
which we can write in the following schematic notation 
\begin{align}
\nabla_{3}\omega=\frac{1}{2}\rho+\psi_g\psi_g+|(\rho^{F},\sigma^{F})|^{2}.
\end{align}
 We can further denote the term $\psi_g\psi_g+|(\rho^{F},\sigma^{F})|^{2}$ by $F$. An explicit computation yields 
\begin{align}\nonumber 
\nabla_{3}~^{*}\mathcal{D}\langle\omega\rangle=&\frac{1}{2}(\nabla\rho+^{*}\nabla\sigma)\nonumber+\nabla F-\frac{1}{2}\tr\chibar ^{*}\mathcal{D}\langle\omega\rangle+\chibarhat\cdot(-\nabla\omega+^{*}\nabla\omega^{\dag})\\ -&\frac{1}{2}(\eta+\etabar)(\frac{1}{2}\rho+F) +\frac{1}{4}(^{*}\eta+^{*}\etabar)\sigma.
\end{align}
This transport equation contains derivatives of the Weyl curvatures $\rho$ and $\sigma$, which do not allows us to close the estimates on account of regularity. This is precisely the reason why a modified variable $\kappa$ was constructed. Indeed, note the following: \begin{eqnarray}
\nabla_{3}\kappa=\nabla_{3}~^{*}\mathcal{D}\langle\omega\rangle-\frac{1}{2}\nabla_{3}\tilde{\beta},
\end{eqnarray}
and therefore $\frac{1}{2}(\nabla\rho+^{*}\nabla\sigma)$ in $\nabla_{3}~^{*}\mathcal{D}_{1}\langle\omega\rangle$ is cancelled point-wise by $-\frac{1}{2}(\nabla\rho+^{*}\nabla\sigma)$ in $\frac{1}{2}\nabla_{3}\tilde{\beta}$. Now we denote the Weyl curvature components $(\rho,\sigma,\tilde{\beta})$ by $\Psi$ and Yang-Mills components $(\rho^{F},\sigma^{F})$ by $\Upsilon$. Collecting all the terms, we obtain the following evolution equation for $\kappa$:
\begin{align} \nonumber
&\nabla_{3}\kappa+\frac{1}{2}\tr\chibar\kappa=(\psi_g,\chihat)\Psi+(\psi_g,\chibarhat)\nabla\psi_g+\Upsilon\cdot\hnabla(\Upsilon,\alpha^{F})+(\alpha_{F},\Upsilon)\cdot\hnabla\Upsilon\\
+&(\psi_g,\chibarhat,\tr\chibar,\chihat)\cdot(\alpha_{F},\Upsilon)\cdot\Upsilon+\psi_g\psi_g\psi_g.
\end{align}
Using the commutation lemma (\ref{commutation}), we obtain for $I\leq N+4$ (since $\kappa\sim \nabla\omega+\tilde{\beta}$, we only need to estimate up to $N+4$ derivatives of $\kappa$):
\begin{align}\nonumber 
\nabla_{3}\nabla^{I}\kappa+\frac{I+1}{2}\tr\chibar\nabla^{I}\kappa &=\sum_{J_{1}+J_{2}+J_{3}+J_{4}=I}\nabla^{J_{1}}\psi_g^{J_{2}}\nabla^{J_{3}}(\psi_g,\chihat)\nabla^{J_{4}}\Psi\\\nonumber 
+&\sum_{J_{1}+J_{2}+J_{3}+J_{4}=I}\nabla^{J_{1}}\psi_g^{J_{2}}\nabla^{J_{3}}(\psi_g,\chibarhat)\nabla^{J_{4}+1}\psi_g\\\nonumber+&\sum_{J_{1}+J_{2}+J_{3}+J_{4}=I}\nabla^{J_{1}}\psi_g^{J_{2}}\nabla^{J_{3}}\Upsilon\nabla^{J_{4}+1}(\Upsilon,\alpha_{F})\\\nonumber 
+&\sum_{J_{1}+J_{2}+J_{3}+J_{4}=I}\nabla^{J_{1}}\psi_g^{J_{2}}\hnabla^{J_{3}}(\alpha_{F},\Upsilon)\cdot\hnabla^{J_{4}+1}\Upsilon\\\nonumber+&\sum_{J_{1}+J_{2}+J_{3}+J_{4}+J_{5}=I}\nabla^{J_{1}}\psi_g^{J_{2}}\nabla^{J_{3}}(\psi_g,\chibarhat,\tr\chibar,\chihat)\hnabla^{J_{4}}(\alpha_{F},\Upsilon)\cdot\hnabla^{J_{5}}\Upsilon\\\nonumber 
+&\sum_{J_{1}+J_{2}+J_{3}+J_{4}+J_{5}=I}\nabla^{J_{1}}\psi_g^{J_{2}}\nabla^{J_{3}}\psi_g\nabla^{J_{4}}\psi_g\nabla^{J_{5}}\psi_g\\\nonumber+&\sum_{J_{1}+J_{2}+J_{3}+J_{4}=I}\nabla^{J_{1}}\psi_g^{J_{2}}\nabla^{J_{3}}(\psi_g,\chibarhat,\widetilde{\tr\chibar})\nabla^{J_{4}}\kappa\\
+&\sum_{J_{1}+J_{2}+J_{3}+J_{4}+1=I}\nabla^{J_{1}}\psi_g^{J_{2}+1}\nabla^{J_{3}}\tr\chibar\nabla^{J_{4}}\kappa:=\mathcal{F}_{1}
\end{align}
Applying the weighted transort inequality from Proposition \ref{3.6} on $\kappa$ with $\lambda_{0}=\frac{I+1}{2}$, we get
\begin{eqnarray}
|u|^{I}||\nabla^{I}\kappa||_{L^{2}(S_{u,\ubar})}\lesssim |u_{\infty}|^{I}||\nabla^{I}\kappa||_{L^{2}(S_{u_{\infty},\ubar})}+\int_{u_{\infty}}^{u}|u^{'}|^{I}||\mathcal{F}_{1}||_{L^{2}(S_{u^{'},\ubar})}\hsp du^{'}.
\end{eqnarray}
We want to convert this to its scale-invariant form. Noting $s_{2}(\kappa)=s_{2}(\nabla\omega,\tilde{\beta})=\frac{1}{2}$, we have $s_{2}(\nabla^{I}\kappa)=\frac{I+1}{2}$. Therefore 
\begin{eqnarray}
||(a^{\frac{1}{2}}\nabla)^{I}\kappa||_{L^{2}_{sc}(S_{u,\ubar})}=a^{\frac{I-1}{2}}|u|^{I+1}||\nabla^{I}\kappa||_{L^{2}(u,\ubar)}
\end{eqnarray}
and hence
\begin{align} \nonumber
||(a^{\frac{1}{2}}\nabla)^{I}\kappa||_{L^{2}_{sc}(S_{u,\ubar})}\lesssim &a^{\frac{I-1}{2}}|u|\left(|u_{\infty}|^{I}||\nabla^{I}\kappa||_{L^{2}(S_{u_{\infty},\ubar})}+\int_{u_{\infty}}^{u}|u^{'}|^{I}||\mathcal{F}_{1}||_{L^{2}(S_{u^{'},\ubar})}du^{'}\right)\\\nonumber 
\lesssim & a^{\frac{I-1}{2}}|u_{\infty}|^{I+1}||\nabla^{I}\kappa||_{L^{2}(S_{u_{\infty},\ubar})}+a^{\frac{I-1}{2}}\int_{u_{\infty}}^{u}|u^{'}|^{I+1}||\mathcal{F}_{1}||_{L^{2}(S_{u^{'},\ubar})}du^{'}\\
\lesssim &||(a^{\frac{1}{2}}\nabla)^{I}\kappa||_{L^{2}_{sc}(S_{u_{\infty},\ubar})}+\int_{u_{\infty}}^{u}\frac{a}{|u^{'}|^{2}}||a^{\frac{I}{2}}\mathcal{F}_{1}||_{L^{2}_{sc}(S_{u^{'},u})}du^{'},
\end{align}
where we have used the fact that $|u^{'}|\in [u_{\infty},u]$ and $|u|\leq |u|_{\infty}$. We now use the expression for $\mathcal{F}_{1}$ to estimate term by term:  
\begin{align}
\nonumber &||(a^{\frac{1}{2}}\nabla)^{I}\kappa||_{L^{2}_{sc}(S_{u,\ubar})}\\ \lesssim & \nonumber ||(a^{\frac{1}{2}}\nabla)^{I}\kappa||_{L^{2}_{sc}(S_{u_{\infty},\ubar})}\ \\ \nonumber 
+&\int_{u_{\infty}}^{u}\frac{a}{|u^{'}|^{2}}||a^{\frac{I}{2}}\sum_{J_{1}+J_{2}+J_{3}+J_{4}+J_{5}+J_{6}=I}\nabla^{J_{1}}\psi_g^{J_{2}}\nabla^{J_{3}}\psi_g^{J_{4}}\nabla^{J_{5}}(\psi_g,\chihat)\nabla^{J_{6}}\Psi||_{L^{2}_{sc}(S_{u^{'},\ubar})}\\\nonumber 
+&\int_{u_{\infty}}^{u}\frac{a}{|u^{'}|^{2}}||a^{\frac{I}{2}}\sum_{J_{1}+J_{2}+J_{3}+J_{4}=I}\nabla^{J_{1}}\psi_g^{J_{2}}\nabla^{J_{3}}(\psi_g,\chibarhat)\nabla^{J_{4}+1}\psi_g||_{L^{2}_{sc}(S_{u^{'},\ubar})}\\\nonumber 
+&\int_{u_{\infty}}^{u}\frac{a}{|u^{'}|^{2}}||a^{\frac{I}{2}}\sum_{J_{1}+J_{2}+J_{3}+J_{4}=I}\nabla^{J_{1}}\psi_g^{J_{2}}\nabla^{J_{3}}\Upsilon\nabla^{J_{4}+1}(\Upsilon,\alpha_{F})||_{L^{2}_{sc}(S_{u^{'},\ubar})}\\\nonumber 
+&\int_{u_{\infty}}^{u}\frac{a}{|u^{'}|^{2}}||a^{\frac{I}{2}}\sum_{J_{1}+J_{2}+J_{3}+J_{4}=I}\nabla^{J_{1}}\psi_g^{J_{2}}\hnabla^{J_{3}}(\alpha_{F},\Upsilon)\cdot\hnabla^{J_{4}+1}\Upsilon||_{L^{2}_{sc}(S_{u^{'},\ubar})}\\\nonumber 
+&\int_{u_{\infty}}^{u}\frac{a}{|u^{'}|^{2}}||a^{\frac{I}{2}}\sum_{J_{1}+J_{2}+J_{3}+J_{4}+J_{5}=I}\nabla^{J_{1}}\psi_g^{J_{2}}\nabla^{J_{3}}(\psi_g,\chibarhat,\tr\chibar,\chihat)\hnabla^{J_{4}}(\alpha_{F},\Upsilon)\cdot\hnabla^{J_{5}}\Upsilon||_{L^{2}_{sc}(S_{u^{'},\ubar})}\\\nonumber 
+&\int_{u_{\infty}}^{u}\frac{a}{|u^{'}|^{2}}||a^{\frac{I}{2}}\sum_{J_{1}+J_{2}+J_{3}+J_{4}+J_{5}=I}\nabla^{J_{1}}\psi_g^{J_{2}}\nabla^{J_{3}}\psi_g\nabla^{J_{4}}\psi_g\nabla^{J_{5}}\psi_g||_{L^{2}_{sc}(S_{u^{'},\ubar})}\\\nonumber 
+&\int_{u_{\infty}}^{u}\frac{a}{|u^{'}|^{2}}||a^{\frac{I}{2}}\sum_{J_{1}+J_{2}+J_{3}+J_{4}=I}\nabla^{J_{1}}\psi_g^{J_{2}}\nabla^{J_{3}}(\psi_g,\chibarhat,\widetilde{\tr\chibar})\nabla^{J_{4}}\kappa||_{L^{2}_{sc}(S_{u^{'},\ubar})}\\
+&\int_{u_{\infty}}^{u}\frac{a}{|u^{'}|^{2}}||a^{\frac{I}{2}}\sum_{J_{1}+J_{2}+J_{3}+J_{4}+1=I}\nabla^{J_{1}}\psi_g^{J_{2}+1}\nabla^{J_{3}}\tr\chibar\nabla^{J_{4}}\kappa:=\mathcal{F}_{1}||_{L^{2}_{sc}(S_{u^{'},\ubar})}.
\end{align}
Integrating along $\ubar$, we have:
\begin{align} \nonumber 
&\int_{0}^{\ubar}||(a^{\frac{1}{2}}\nabla)^{I}\kappa||_{L^{2}_{sc}(S_{u,\ubar})}\\ \lesssim \nonumber  &\int_{0}^{\ubar}||(a^{\frac{1}{2}}\nabla)^{I}\kappa||_{L^{2}_{sc}(S_{u_{\infty},\ubar})}\\\nonumber 
+&\int_{0}^{\ubar}\int_{u_{\infty}}^{u}\frac{a}{|u^{'}|^{2}}||a^{\frac{I}{2}}\sum_{J_{1}+J_{2}+J_{3}+J_{4}+J_{5}+J_{6}=I}\nabla^{J_{1}}\psi_g^{J_{2}}\nabla^{J_{3}}\psi_g^{J_{4}}\nabla^{J_{5}}(\psi_g,\chihat)\nabla^{J_{6}}\Psi||_{L^{2}_{sc}(S_{u^{'},\ubar})}\\\nonumber 
+&\int_{0}^{\ubar}\int_{u_{\infty}}^{u}\frac{a}{|u^{'}|^{2}}||a^{\frac{I}{2}}\sum_{J_{1}+J_{2}+J_{3}+J_{4}=I}\nabla^{J_{1}}\psi_g^{J_{2}}\nabla^{J_{3}}(\psi_g,\chibarhat)\nabla^{J_{4}+1}\psi_g||_{L^{2}_{sc}(S_{u^{'},\ubar})}\\\nonumber 
+&\int_{0}^{\ubar}\int_{u_{\infty}}^{u}\frac{a}{|u^{'}|^{2}}||a^{\frac{I}{2}}\sum_{J_{1}+J_{2}+J_{3}+J_{4}=I}\nabla^{J_{1}}\psi_g^{J_{2}}\nabla^{J_{3}}\Upsilon\nabla^{J_{4}+1}(\Upsilon,\alpha_{F})||_{L^{2}_{sc}(S_{u^{'},\ubar})}\\\nonumber 
+&\int_{0}^{\ubar}\int_{u_{\infty}}^{u}\frac{a}{|u^{'}|^{2}}||a^{\frac{I}{2}}\sum_{J_{1}+J_{2}+J_{3}+J_{4}=I}\nabla^{J_{1}}\psi_g^{J_{2}}\hnabla^{J_{3}}(\alpha_{F},\Upsilon)\cdot\hnabla^{J_{4}+1}\Upsilon||_{L^{2}_{sc}(S_{u^{'},\ubar})}\\\nonumber 
+&\int_{0}^{\ubar}\int_{u_{\infty}}^{u}\frac{a}{|u^{'}|^{2}}||a^{\frac{I}{2}}\sum_{J_{1}+J_{2}+J_{3}+J_{4}+J_{5}=I}\nabla^{J_{1}}\psi_g^{J_{2}}\nabla^{J_{3}}(\psi_g,\chibarhat,\tr\chibar,\chihat)\hnabla^{J_{4}}(\alpha_{F},\Upsilon)\cdot\hnabla^{J_{5}}\Upsilon||_{L^{2}_{sc}(S_{u^{'},\ubar})}\\\nonumber 
+&\int_{0}^{\ubar}\int_{u_{\infty}}^{u}\frac{a}{|u^{'}|^{2}}||a^{\frac{I}{2}}\sum_{J_{1}+J_{2}+J_{3}+J_{4}+J_{5}=I}\nabla^{J_{1}}\psi_g^{J_{2}}\nabla^{J_{3}}\psi_g\nabla^{J_{4}}\psi_g\nabla^{J_{5}}\psi_g||_{L^{2}_{sc}(S_{u^{'},\ubar})}\\\nonumber 
+&\int_{0}^{\ubar}\int_{u_{\infty}}^{u}\frac{a}{|u^{'}|^{2}}||a^{\frac{I}{2}}\sum_{J_{1}+J_{2}+J_{3}+J_{4}=I}\nabla^{J_{1}}\psi_g^{J_{2}}\nabla^{J_{3}}(\psi_g,\chibarhat,\widetilde{\tr\chibar})\nabla^{J_{4}}\kappa||_{L^{2}_{sc}(S_{u^{'},\ubar})}\\ 
+&\int_{0}^{\ubar}\int_{u_{\infty}}^{u}\frac{a}{|u^{'}|^{2}}||a^{\frac{I}{2}}\sum_{J_{1}+J_{2}+J_{3}+J_{4}+1=I}\nabla^{J_{1}}\psi_g^{J_{2}+1}\nabla^{J_{3}}\tr\chibar\nabla^{J_{4}}\kappa:=\mathcal{F}_{1}||_{L^{2}_{sc}(S_{u^{'},\ubar})}.
\end{align}

Now we estimate term by term, as follows (we shall often omit, when there is no confusion, the $\duprime, \dubarprime$ at the end of integrals, for brevity): 
\begin{align}  \nonumber
&\int_{0}^{\ubar}||(a^{\frac{1}{2}}\nabla)^{I}\kappa||_{L^{2}_{sc}(S_{u_{\infty},\ubar^{'}})}d\ubar^{'}\\ \lesssim &\int_{0}^{\ubar}||(a^{\frac{1}{2}}\nabla)^{I}\widetilde{\beta}||_{L^{2}_{sc}(S_{u_{\infty},\ubar^{'}})}d\ubar^{'}\nonumber+\int_{0}^{\ubar}\frac{1}{a^{\frac{1}{2}}}||(a^{\frac{1}{2}}\nabla)^{I+1}\omega||_{L^{2}_{sc}(S_{u_{\infty},\ubar^{'}})}d\ubar^{'}\\
\lesssim& \mathcal{R}[\tilde{\beta}]+\mathcal{I}_{0}\lesssim 1+\mathcal{R}[\widetilde{\beta}],
\end{align}
by Cauchy-Schwartz, where $I\leq N+5$. The next terms read
\begin{align}
\int_{0}^{\ubar}\int_{u_{\infty}}^{u}\frac{a}{|u^{'}|^{2}}||a^{\frac{I}{2}}\sum_{J_{1}+J_{2}+J_{3}+J_{4}+J_{5}+J_{6}=I}\nabla^{J_{1}}\psi_g^{J_{2}}\nabla^{J_{3}}\psi_g^{J_{4}}\nabla^{J_{5}}(\psi_g,\chihat)\nabla^{J_{6}}\Psi||_{L^{2}_{sc}(S_{u^{'},\ubar})}
\lesssim \frac{a^{\frac{3}{2}}}{|u|^{2}}\mathcal{R},
\end{align}
\begin{align}
\int_{0}^{\ubar}\int_{u_{\infty}}^{u}\frac{a}{|u^{'}|^{2}}||a^{\frac{I}{2}}\sum_{J_{1}+J_{2}+J_{3}+J_{4}=I}\nabla^{J_{1}}\psi_g^{J_{2}}\nabla^{J_{3}}(\psi_g,\chibarhat)\nabla^{J_{4}+1}\psi_g||_{L^{2}_{sc}(S_{u^{'},\ubar})}\lesssim &\nonumber \frac{a^{\frac{1}{2}}}{|u|}\Gamma_{top}+\frac{a^{\frac{3}{2}}}{|u|^{2}}\Gamma_{top}\\  \lesssim &\frac{a^{\frac{1}{2}+\frac{1}{1000}}}{|u|}+\frac{a^{\frac{3}{2}+\frac{1}{1000}}}{|u|^{2}}\lesssim 1,
\end{align}
since $|u|<a$. Continuing, we have:
\begin{align} \nonumber 
&\int_{0}^{\ubar}\int_{u_{\infty}}^{u}\frac{a}{|u^{'}|^{2}}||a^{\frac{I}{2}}\sum_{J_{1}+J_{2}+J_{3}+J_{4}=I}\nabla^{J_{1}}\psi_g^{J_{2}}\nabla^{J_{3}}\Upsilon\nabla^{J_{4}+1}(\Upsilon,\alpha_{F})||_{L^{2}_{sc}(S_{u^{'},\ubar})}\\
\lesssim &\frac{a}{|u|^{2}}\mathbb{YM}[\Upsilon]+\frac{a^{\frac{3}{2}}}{|u|^{2}}\mathbb{YM}[\alpha^{F}],
\end{align}
\begin{align}
\int_{0}^{\ubar}\int_{u_{\infty}}^{u}\frac{a}{|u^{'}|^{2}}||a^{\frac{I}{2}}\sum_{J_{1}+J_{2}+J_{3}+J_{4}=I}\nabla^{J_{1}}\psi_g^{J_{2}}\hnabla^{J_{3}}(\alpha_{F},\Upsilon)\cdot\hnabla^{J_{4}+1}\Upsilon||_{L^{2}_{sc}(S_{u^{'},\ubar})}
\lesssim \frac{a^{\frac{3}{2}}}{|u|^{2}}\mathbb{YM}[\Upsilon],
\end{align}
\begin{align}\nonumber 
&\int_{0}^{\ubar}\int_{u_{\infty}}^{u}\frac{a}{|u^{'}|^{2}}||a^{\frac{I}{2}}\sum_{J_{1}+J_{2}+J_{3}+J_{4}+J_{5}=I}\nabla^{J_{1}}\psi_g^{J_{2}}\nabla^{J_{3}}(\psi_g,\chibarhat,\tr\chibar,\chihat)\hnabla^{J_{4}}(\alpha_{F},\Upsilon)\cdot\hnabla^{J_{5}}\Upsilon||_{L^{2}_{sc}(S_{u^{'},\ubar})}\\
\lesssim &\frac{a^{\frac{1}{2}}}{|u|}\Gamma\lesssim 1,
\end{align}
\begin{align} \int_{0}^{\ubar}\int_{u_{\infty}}^{u}\frac{a}{|u^{'}|^{2}}||a^{\frac{I}{2}}\sum_{J_{1}+J_{2}+J_{3}+J_{4}+J_{5}=I}\nabla^{J_{1}}\psi_g^{J_{2}}\nabla^{J_{3}}\psi_g\nabla^{J_{4}}\psi_g\nabla^{J_{5}}\psi_g||_{L^{2}_{sc}(S_{u^{'},\ubar})}\lesssim \frac{a}{|u|^{3}}\Gamma\lesssim 1,
\end{align}
\begin{align}
& \nonumber \int_{0}^{\ubar}\int_{u_{\infty}}^{u}\frac{a}{|u^{'}|^{2}}||a^{\frac{I}{2}}\sum_{J_{1}+J_{2}+J_{3}+J_{4}=I}\nabla^{J_{1}}\psi_g^{J_{2}}\nabla^{J_{3}}(\psi_g,\chibarhat,\widetilde{\tr\chibar})\nabla^{J_{4}}\kappa||_{L^{2}_{sc}(S_{u^{'},\ubar})}\\ \lesssim &\int_{u_{\infty}}^{u}\frac{a^{\frac{1}{2}}}{|u^{'}|^{2}}\left(\int_{0}^{\ubar}||a^{\frac{I}{2}}\nabla^{I}\kappa||_{L^{2}_{sc}(S_{u^{'},\ubar^{'}})}d\ubar^{'}\right)du^{'},
\end{align}while for the final term we have 
\begin{align} \nonumber 
&\int_{0}^{\ubar}\int_{u_{\infty}}^{u}\frac{a}{|u^{'}|^{2}}||a^{\frac{I}{2}}\sum_{J_{1}+J_{2}+J_{3}+J_{4}+1=I}\nabla^{J_{1}}\psi_g^{J_{2}+1}\nabla^{J_{3}}\tr\chibar\nabla^{J_{4}}\kappa||_{L^{2}_{sc}(S_{u^{'},\ubar})}\\
\lesssim  &\int_{u_{\infty}}^{u}\frac{1}{|u^{'}|^{2}}\left(\int_{0}^{\ubar}||a^{\frac{I}{2}}\nabla^{I}\kappa||_{L^{2}_{sc}(S_{u^{'},\ubar^{'}})}d\ubar^{'}\right)du^{'}.
\end{align}
A collection of all the terms yields 
\begin{align} \nonumber 
\int_{0}^{\ubar}||(a^{\frac{1}{2}}\nabla)^{I}\kappa||_{L^{2}_{sc}(S_{u,\ubar})}d\ubar^{'}\lesssim &1+\mathcal{R}[\widetilde{\beta}]+\int_{u_{\infty}}^{u}\frac{a^{\frac{1}{2}}}{|u^{'}|^{2}}\left(\int_{0}^{\ubar}||a^{\frac{I}{2}}\nabla^{I}\kappa||_{L^{2}_{sc}(S_{u^{'},\ubar^{'}})}d\ubar^{'}\right)du^{'}\\
+&\int_{u_{\infty}}^{u}\frac{1}{|u^{'}|^{2}}\left(\int_{0}^{\ubar}||a^{\frac{I}{2}}\nabla^{I}\kappa||_{L^{2}_{sc}(S_{u^{'},\ubar^{'}})}d\ubar^{'}\right)du^{'},
\end{align}
which, after an application of Gr\"onwall's inequality, yields 
\begin{eqnarray}
\int_{0}^{\ubar}||(a^{\frac{1}{2}}\nabla)^{I}\kappa||_{L^{2}_{sc}(S_{u,\ubar})}d\ubar^{'}\lesssim 1+\mathcal{R}[\widetilde{\beta}].
\end{eqnarray}
Now recall the elliptic system 
\begin{eqnarray} \nonumber
\text{div}\nabla\omega=\text{div}\kappa+\frac{1}{2}\nabla\tilde{\beta},\\
\text{div}\nabla\omega^{\dag}=\text{curl}\kappa+\frac{1}{2}\text{curl}\tilde{\beta}.
\end{eqnarray}
We can apply the $\text{div}-\text{curl}$ estimates from Proposition \ref{divcurlprop} to obtain 
\begin{eqnarray} \nonumber
||a^{\frac{N+4}{2}}\nabla^{N+5}\omega||_{L^{2}_{sc}(S_{u,\ubar})}\lesssim \sum_{J=0}^{N+4}||(a^{\frac{1}{2}}\nabla)^{J}\kappa||_{L^{2}_{sc}(S_{u,\ubar})}+\sum_{J=0}^{N+4}||(a^{\frac{1}{2}}\nabla)^{J}||_{L^{2}_{sc}(S_{u,\ubar})}\\ +\frac{1}{a^{\frac{1}{2}}}\sum_{J=0}^{N+4}||(a^{\frac{1}{2}}\nabla)^{J}(\omega,\omega^{\dag})||_{L^{2}_{sc}(S_{u,\ubar})},
\end{eqnarray}
which, after integrating along the $\ubar$ direction and applying Gr\"onwall's inequality, yields
\begin{eqnarray}
||a^{\frac{N+4}{2}}\nabla^{N+5}\omega||_{L^{2}_{sc}(H_{u}^{0,\ubar)})}\lesssim 1+\mathcal{R}[\tilde{\beta}]. 
\end{eqnarray}
This concludes the proof of the lemma.
\end{proof}

\begin{proposition}
Under the assumptions of the main theorem (\ref{main1}) and the bootstrap assumptions (\ref{ellboot}), the following estimates for $\eta$ and $\etabar$ hold:
\begin{eqnarray} \nonumber
\frac{a}{|u|}||a^{\frac{N+4}{2}}\nabla^{N+5}\eta||_{L^{2}_{sc}(H^{(0,\ubar)}_{u})}\lesssim 1+\mathcal{R},\\ \nonumber
||a^{\frac{N+4}{2}}\nabla^{N+5}\eta||_{L^{2}_{sc}(\Hbar^{(u_{\infty},u)}_{\ubar})}\lesssim 1+\mathcal{R},\\ \nonumber
\frac{a}{|u|}||a^{\frac{N+4}{2}}\nabla^{N+5}\etabar||_{L^{2}_{sc}(H^{(0,\ubar)}_{u})}\lesssim 1+\mathcal{R}.
\end{eqnarray}
\end{proposition}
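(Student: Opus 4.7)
The plan is to mimic the strategy used for $\omega$ in Proposition \ref{omegaestimate}: I will introduce auxiliary ``mass aspect'' functions $\mu$, $\mubar$ on each $S_{u,\ubar}$ whose transport equations along $\nabla_4$ (resp.\ $\nabla_3$) no longer involve top-order derivatives of the Weyl or Yang-Mills curvature, estimate them first in $\mathcal{L}^2_{sc}(S_{u,\ubar})$, and then recover the desired $\nabla^{N+5}(\eta,\etabar)$ bounds through the Hodge-type elliptic estimate of Proposition \ref{divcurlprop}.

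Concretely, I will take
\[ \mu := -\text{div}\,\eta + \tfrac{1}{2}\chihat\cdot\chibarhat - \rho, \qquad \mubar := -\text{div}\,\etabar + \tfrac{1}{2}\chihat\cdot\chibarhat - \rho. \]
A direct computation, combining the null evolution equation \eqref{eq:eta} for $\eta$ with the Bianchi equation for $\nabla_4\rho$ and the structure equations for $\chihat$, $\chibarhat$, produces a schematic transport equation of the form
\[ \nabla_4\mu \sim \psi_g\,\Psi + \psi_g\,\nabla\psi_g + \Yub\,\hnabla(\alphaF,\Yub) + (\alphaF,\Yub)\,\hnabla\Yub + (\psi_g,\chihat,\chibarhat,\tr\chibar)(\alphaF,\Yub)^2 + \alphaF\,\hnabla_4\alphaF, \]
in which the dangerous $\nabla\tbeta$ term originating from $\text{div}(\nabla_4\eta)$ is cancelled by the $\nabla\tbeta$ term coming from the Bianchi equation for $\nabla_4\rho$, and the analogous cancellation holds for the Yang-Mills contribution (using the equations for $\hnabla_4\rhoF$ and $\hnabla_4\sigmaF$). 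The analogous equation for $\nabla_3\mubar$ holds with $\tbetabar$, $\alphabarF$, $\hnabla_3\alphabarF$ and the $\Psi_{\ubar}$-type curvature replacing their $\nabla_4$-counterparts. Commuting with $\nabla^I$ for $I\leq N+4$ by means of Lemma \ref{commutation}, and integrating via Proposition \ref{3.5} for $\mu$ along $\ubar$ on $H_u$ and via the weighted transport Proposition \ref{3.6} for $\mubar$ along $u$ on $\Hbar_{\ubar}$, the right-hand side terms are absorbable into $1+\mathcal{R}+\mathbb{YM}$ exactly as in the proofs of Propositions \ref{omegaestimate} and \ref{psiuprop}, with the crucial observation that $\hnabla_4\alphaF$ and $\hnabla_3\alphabarF$ appear at most with $N+2$ angular derivatives, which is permitted by Proposition \ref{newprop} and its dual.

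Once $\mu$ and $\mubar$ are controlled, the second step is purely elliptic. By construction,
\[ \text{div}\,\eta = -\mu - \rho + \tfrac{1}{2}\chihat\cdot\chibarhat, \qquad \text{curl}\,\eta = \sigma + \chibarhat\wedge\chihat, \qquad \text{curl}\,\etabar = -\text{curl}\,\eta, \]
so that Proposition \ref{divcurlprop} with $i=N+5$ yields the pointwise-in-$(u,\ubar)$ bound
\[ \|(\al\nabla)^{N+5}\eta\|_{\mathcal{L}^2_{sc}(S_{u,\ubar})} \lesssim \al\sum_{J\leq N+4}\|(\al\nabla)^{J}(\mu,\rho,\sigma,\chihat\cdot\chibarhat)\|_{\mathcal{L}^2_{sc}(S_{u,\ubar})} + \sum_{J\leq N+4}\|(\al\nabla)^{J}\eta\|_{\mathcal{L}^2_{sc}(S_{u,\ubar})}, \]
with the analogue for $\etabar$ using $\mubar$. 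Squaring and integrating over $[0,\ubar]$ and multiplying by $a/|u|$ produces the two $H_u$-estimates, whereas squaring and integrating over $[u_\infty,u]$ yields the $\Hbar_{\ubar}$-estimate for $\eta$; for this last estimate I will work with $\mubar$ rather than $\mu$ (exploiting $-\text{curl}\,\eta = \text{curl}\,\etabar$) so as to avoid integrating a $\nabla_4$-transport along $u$. The lower-order contributions $\sum_{J\leq N+4}\|(\al\nabla)^{J}\eta\|$ and $\sum_{J\leq N+4}\|(\al\nabla)^{J}\etabar\|$ are already controlled by Propositions \ref{etabound} and \ref{etabarestimate}, and the Weyl and Yang-Mills terms by $\mathcal{R},\underline{\mathcal{R}},\mathbb{YM},\underline{\mathbb{YM}}$. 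The principal technical obstacle will be the clean verification of the simultaneous cancellation of top-order derivatives of $\rho$, $\sigma$ and of $\rhoF,\sigmaF$ in the mass aspect equation; this hinges on a careful use of the renormalized curvature $\tbeta,\tbetabar$ and on tracking the quadratic Yang-Mills contributions so that all scale-invariant weights remain integrable in the $\mathcal{L}^2_{sc}(H_u)$ and $\mathcal{L}^2_{sc}(\Hbar_{\ubar})$ norms.
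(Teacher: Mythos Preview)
Your approach is essentially the same as the paper's: introduce mass aspect functions, exploit the cancellation of top-order $\nabla\tbeta$ (resp.\ $\nabla\tbetabar$) in the $\nabla_4\mu$ (resp.\ $\nabla_3\mubar$) equations, estimate $\nabla^{N+4}\mu,\nabla^{N+4}\mubar$ via transport inequalities, and then recover $\nabla^{N+5}(\eta,\etabar)$ through the div--curl elliptic estimate of Proposition~\ref{divcurlprop}. The paper uses the simpler definitions $\mu=-\text{div}\,\eta-\rho$ and $\mubar=-\text{div}\,\etabar-\rho$ without the $\tfrac12\chihat\cdot\chibarhat$ correction; your variant is equally valid and the extra term only contributes acceptable lower-order errors through $\nabla_4(\chihat\cdot\chibarhat)$. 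One inaccuracy: no $\alphaF\,\hnabla_4\alphaF$ term actually appears in $\nabla_4\mu$ --- the Yang-Mills contributions coming from $D_3R_{44}$ and $D_4R_{43}$ reduce, via the null Yang-Mills equations, to angular derivatives $\alphaF\hnabla(\rhoF,\sigmaF)$ and $(\rhoF,\sigmaF)\hnabla\alphaF$, so Proposition~\ref{newprop} is not needed here.

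There is one genuine confusion in your plan for the $\Hbar_{\ubar}$-estimate of $\eta$. You propose to ``work with $\mubar$ rather than $\mu$'' to avoid integrating a $\nabla_4$-transport along $u$, invoking $\text{curl}\,\etabar=-\text{curl}\,\eta$. But $\mubar$ encodes $\text{div}\,\etabar$, not $\text{div}\,\eta$, so the elliptic system for $\eta$ cannot be closed from $\mubar$ alone. The fix is simpler than your detour: the $\nabla_4$-transport equation for $\mu$, integrated from $\ubar=0$ where $\mu$ vanishes identically (Minkowski data), already yields a \emph{pointwise} bound $\|a^{I/2}\nabla^I\mu\|_{L^2_{sc}(S_{u,\ubar})}\lesssim 1+\mathcal{R}$ valid for every $(u,\ubar)$ in the slab. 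This pointwise sphere bound can then be squared and integrated against $\frac{a}{|u'|^2}\,du'$ along $\Hbar_{\ubar}$ directly --- no $\nabla_3$-transport for $\mu$ is required. This is precisely what the paper does.
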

\begin{proof}
In order to prove these estimates, we introduce the mass aspect functions 
\begin{eqnarray}
\mu=-\text{div}~\eta-\rho,\hspace{2mm} \mubar:=-\text{div}~\etabar-\rho.
\end{eqnarray}
We want to obtain a $\nabla_{4}$ evolution equation for $\mu$ and a $\nabla_{3}$ evolution equation for $\mubar$. We explicitly compute 
\begin{align}  \nonumber
\nabla_{4}\mu&=-\nabla_{4}\text{div}~\eta-\nabla_{4}\rho\\\nonumber 
&=-\text{div}\nabla_{4}\eta-\nabla_{4}\rho-[\nabla_{4},\text{div}]\eta\\\nonumber 
&=\text{div}(\chi\cdot(\eta-\etabar))+\text{div}\widetilde{\beta}-\text{div}\widetilde{\beta}-\frac{3}{2}\tr\chi \rho+\mathcal{E}_{1}\\ 
&= \nabla(\tr\chi,\chihat)(\eta-\etabar)+(\tr\chi,\chihat)\nabla(\eta,\etabar)-\tr\chi\rho+\mathcal{E}_{1},
\end{align}
where $\mathcal{E}_{1}$ reads 
\begin{align} \nonumber
\mathcal{E}_{1}\sim \left(\widetilde{\beta}+\alpha^{F}\cdot(\rho^{F}+\sigma^{F})\right)\eta+(\tr\chi,\chihat)(\eta+\etabar)(\eta+\etabar)+\alpha^{F}\cdot(\rho^{F}\nonumber+\sigma^{F})(\tr\chi,\chihat)\\+(\tr\chi,\chihat)\nabla\eta 
+\hnabla\alpha^{F}\cdot(\rho^{F}+\sigma^{F})+\alpha^{F}\cdot(\hnabla\rho^{F}+\hnabla\sigma^{F}).
\end{align}
Notice the reason behind constructing the function $\mu$ instead of directly working with $\eta$. Since the $\nabla_{4}$ equation for $\eta$ contains $\widetilde{\beta}$, one needs the same level of regularity of $\widetilde{\beta}$ as that of the Ricci coefficients in order to apply the transport inequality therefore the estimates do not close. However, as we have just observed, the spatial derivative of $\widetilde{\beta}$ cancels pointwise in the $\nabla_{4}$ evolution equation for $\mu$ i.e., $\nabla_{4}$ equation for $\mu$ only contains algebraic terms in Weyl curvatures. We now commute the $\nabla_{4}$ equation for $\mu$ with $\nabla^{I},~I=N+4$ and get:
\begin{align} \nonumber
\nabla_{4}\nabla^{I}\mu= & \sum_{J_1+J_2+J_3+J_{4}=I} \nabla^{J_1}(\eta+\etabar)^{J_2}\hnabla^{J_3+1}(\tr\chi,\chihat)\nabla^{J_{4}}(\eta,\etabar)\\\nonumber
+&\sum_{J_1+J_2+J_3+J_{4}=I} \nabla^{J_1}(\eta+\etabar)^{J_2}\nabla^{J_{3}}(\tr\chi,\chihat)\nabla^{J_{4}+1}(\eta,\etabar)\\\nonumber
+&\sum_{J_1+J_2+J_3+J_{4}=I}\nabla^{J_1}(\eta+\etabar)^{J_2}\nabla^{J_{3}}\tr\chi\nabla^{J_{4}}\rho
\\\nonumber 
+&\sum_{J_1+J_2+J_3+J_{4}=I}\nabla^{J_1}(\eta+\etabar)^{J_2}\nabla^{J_{3}}\widetilde{\beta}\nabla^{J_{4}}\eta \\\nonumber
+&\sum_{J_1+J_2+J_3+J_{4}+J_{5}=I}\nabla^{J_1}(\eta+\etabar)^{J_2}\hnabla^{J_{3}}\alpha^{F}\hnabla^{J_{4}}(\rho^{F},\sigma^{F})\nabla^{J_{5}}\eta\\\nonumber
+&\sum_{J_1+J_2+J_3+J_{4}+J_{5}=I}\nabla^{J_1}(\eta+\etabar)^{J_2}\hnabla^{J_{3}}(\tr\chi,\chihat)\hnabla^{J_{4}}(\eta,\etabar)\nabla^{J_{5}}(\eta,\etabar)\\\nonumber
+&\sum_{J_1+J_2+J_3+J_{4}+J_{5}=I}\nabla^{J_1}(\eta+\etabar)^{J_2}\hnabla^{J_{3}}\alpha^{F}\hnabla^{J_{4}}(\rho^{F},\sigma^{F})\nabla^{J_{5}}(\tr\chi,\chihat)\\\nonumber
+&\sum_{J_1+J_2+J_3+J_{4}=I}\nabla^{J_1}(\eta+\etabar)^{J_2}\hnabla^{J_{3}+1}\alpha^{F}\hnabla^{J_{4}}(\rho^{F},\sigma^{F})\\\nonumber
+&\sum_{J_1+J_2+J_3+J_{4}=I}\nabla^{J_1}(\eta+\etabar)^{J_2}\hnabla^{J_{3}}\alpha^{F}\hnabla^{J_{4}+1}(\rho^{F},\sigma^{F})\\\nonumber
+&\sum_{J_1+J_2+J_3 +J_4=I} \nabla^{J_1}(\eta+\etabar)^{J_2}\nabla^{J_3}(\chihat,\tr\chi)\nabla^{J_4} \mathcal{\eta} \\\nonumber +&\sum_{J_1+J_2+J_3+J_4=I-1 }\nabla^{J_1} (\eta+\etabar)^{J_2+1}\nabla^{J_3}(\chihat,\tr\chi)\nabla^{J_4}\mathcal{\eta}\\\nonumber+& \sum_{J_{1}+J_{2}+J_{3}+J_{4}+J_{5}=I-1}\nabla^{J_{1}}(\eta+\underline{\eta})^{J_{2}}\hnabla^{J_{3}}\alpha^{F}\hnabla^{J_{4}}(\rho^{F},\sigma^{F})\nabla^{J_{5}}\eta
\\+& \sum_{J_1+J_2+J_3+J_4=I-1}\nabla^{J_1}(\eta+\etabar)^{J_2}\hnabla^{J_3}\alpha^F \nabla^{J_4}\mathcal{\eta}.
\end{align}
We now apply the $\nabla_{4}$ transport inequality to obtain 
\begin{align} \nonumber
||a^{\frac{I}{2}}\nabla^{I}\mu||_{L^{2}_{sc}(S_{u,\ubar})}&\lesssim ||a^{\frac{I}{2}}\nabla^{I}\mu||_{L^{2}_{sc}(S_{u,0})}+\int_{0}^{\ubar}||a^{\frac{I}{2}}\nabla_{4}\nabla^{I}\mu||_{L^{2}_{sc}(S_{u,\ubar^{'}})}d\ubar^{'}\\ 
&=\int_{0}^{\ubar}||a^{\frac{I}{2}}\nabla_{4}\nabla^{I}\mu||_{L^{2}_{sc}(S_{u,\ubar^{'}})}d\ubar^{'},
\end{align}
since $\nabla^{I}\mu|_{\Hbar_{\ubar=0}}=0$ due to the data being Minkowski on $\Hbar_{\ubar=0}$ (notice that $\nabla_{4}\mu=0$ on $\Hbar_{\ubar=0}$). We therefore obtain 
\begin{align} \nonumber
&||a^{\frac{I}{2}}\nabla^{I}\mu||_{L^{2}_{sc}(S_{u,\ubar})}\\ \lesssim &\int_{0}^{\ubar}||a^{\frac{I}{2}} \sum_{J_1+J_2+J_3+J_{4}=I} \nonumber\nabla^{J_1}(\eta+\etabar)^{J_2}\hnabla^{J_3+1}(\tr\chi,\chihat)\nabla^{J_{4}}(\eta,\etabar)||_{L^{2}_{sc}(S_{u,\ubar^{'}})}d\ubar^{'}\\ \nonumber 
+&\int_{0}^{\ubar}||a^{\frac{I}{2}} \sum_{J_1+J_2+J_3+J_{4}=I} \nabla^{J_1}(\eta+\etabar)^{J_2}\nabla^{J_{3}}(\tr\chi,\chihat)\nabla^{J_{4}+1}(\eta,\etabar)||_{L^{2}_{sc}(S_{u,\ubar^{'}})}d\ubar^{'}\\\nonumber 
+&\int_{0}^{\ubar}||a^{\frac{I}{2}} \sum_{J_1+J_2+J_3+J_{4}=I}\nabla^{J_1}(\eta+\etabar)^{J_2}\nabla^{J_{3}}\tr\chi\nabla^{J_{4}}\rho||_{L^{2}_{sc}(S_{u,\ubar^{'}})}d\ubar^{'}\\\nonumber 
+&\int_{0}^{\ubar}||a^{\frac{I}{2}} \sum_{J_1+J_2+J_3+J_{4}=I}\nabla^{J_1}(\eta+\etabar)^{J_2}\nabla^{J_{3}}\widetilde{\beta}\nabla^{J_{4}}\eta||_{L^{2}_{sc}(S_{u,\ubar^{'}})}d\ubar^{'}\\\nonumber 
+&\int_{0}^{\ubar}||a^{\frac{I}{2}} \sum_{J_1+J_2+J_3+J_{4}+J_{5}=I}\nabla^{J_1}(\eta+\etabar)^{J_2}\hnabla^{J_{3}}\alpha^{F}\hnabla^{J_{4}}(\rho^{F},\sigma^{F})\nabla^{J_{5}}\eta||_{L^{2}_{sc}(S_{u,\ubar^{'}})}d\ubar^{'}\\\nonumber 
+&\int_{0}^{\ubar}||a^{\frac{I}{2}} \sum_{J_1+J_2+J_3+J_{4}+J_{5}=I}\nabla^{J_1}(\eta+\etabar)^{J_2}\hnabla^{J_{3}}(\tr\chi,\chihat)\hnabla^{J_{4}}(\eta,\etabar)\nabla^{J_{5}}(\eta,\etabar)||_{L^{2}_{sc}(S_{u,\ubar^{'}})}d\ubar^{'}\\\nonumber 
+&\int_{0}^{\ubar}||a^{\frac{I}{2}} \sum_{J_1+J_2+J_3+J_{4}+J_{5}=I}\nabla^{J_1}(\eta+\etabar)^{J_2}\hnabla^{J_{3}}\alpha^{F}\hnabla^{J_{4}}(\rho^{F},\sigma^{F})\nabla^{J_{5}}(\tr\chi,\chihat)||_{L^{2}_{sc}(S_{u,\ubar^{'}})}d\ubar^{'}\\\nonumber 
+&\int_{0}^{\ubar}||a^{\frac{I}{2}} \sum_{J_1+J_2+J_3+J_{4}=I}\nabla^{J_1}(\eta+\etabar)^{J_2}\hnabla^{J_{3}+1}\alpha^{F}\hnabla^{J_{4}}(\rho^{F},\sigma^{F})||_{L^{2}_{sc}(S_{u,\ubar^{'}})}d\ubar^{'}\\\nonumber 
+&\int_{0}^{\ubar}||a^{\frac{I}{2}} \sum_{J_1+J_2+J_3+J_{4}=I}\nabla^{J_1}(\eta+\etabar)^{J_2}\hnabla^{J_{3}}\alpha^{F}\hnabla^{J_{4}+1}(\rho^{F},\sigma^{F})||_{L^{2}_{sc}(S_{u,\ubar^{'}})}d\ubar^{'}\\\nonumber 
+&\int_{0}^{\ubar}||a^{\frac{I}{2}} \sum_{J_1+J_2+J_3 +J_4=I} \nabla^{J_1}(\eta+\etabar)^{J_2}\nabla^{J_3}(\chihat,\tr\chi)\nabla^{J_4} \mu ||_{L^{2}_{sc}(S_{u,\ubar^{'}})}d\ubar^{'}\\\nonumber 
+&\int_{0}^{\ubar}||a^{\frac{I}{2}} \sum_{J_1+J_2+J_3+J_4=I-1 }\nabla^{J_1} (\eta+\etabar)^{J_2+1}\nabla^{J_3}(\chihat,\tr\chi)\nabla^{J_4}\mu ||_{L^{2}_{sc}(S_{u,\ubar^{'}})}d\ubar^{'}\\\nonumber 
+&\int_{0}^{\ubar}||a^{\frac{I}{2}} \sum_{J_{1}+J_{2}+J_{3}+J_{4}+J_{5}=I-1}\nabla^{J_{1}}(\eta+\underline{\eta})^{J_{2}}\hnabla^{J_{3}}\alpha^{F}\hnabla^{J_{4}}(\rho^{F},\sigma^{F})\nabla^{J_{5}}\mu ||_{L^{2}_{sc}(S_{u,\ubar^{'}})}d\ubar^{'}\\ 
+&\int_{0}^{\ubar}||a^{\frac{I}{2}} \sum_{J_1+J_2+J_3+J_4=I-1}\nabla^{J_1}(\eta+\etabar)^{J_2}\hnabla^{J_3}\alpha^F \nabla^{J_4}\mu||_{L^{2}_{sc}(S_{u,\ubar^{'}})}d\ubar^{'}.
\end{align}
We estimate each term separately as follows: 
\begin{align}
&\int_{0}^{\ubar}||a^{\frac{I}{2}} \sum_{J_1+J_2+J_3+J_{4}=I} \nonumber\nabla^{J_1}(\eta+\etabar)^{J_2}\hnabla^{J_3+1}(\tr\chi,\chihat)\nabla^{J_{4}}(\eta,\etabar)||_{L^{2}_{sc}(S_{u,\ubar^{'}})}d\ubar^{'}\\ 
\lesssim &\frac{\Gamma^{2}}{|u|a^{\frac{1}{2}}}+\frac{\Gamma^{2}}{|u|}\lesssim 1,
\end{align}
\begin{align} \nonumber
&\int_{0}^{\ubar}||a^{\frac{I}{2}} \sum_{J_1+J_2+J_3+J_{4}=I} \nabla^{J_1}(\eta+\etabar)^{J_2}\nabla^{J_{3}}(\tr\chi,\chihat)\nabla^{J_{4}+1}(\eta,\etabar)||_{L^{2}_{sc}(S_{u,\ubar^{'}})}d\ubar^{'}\\ 
\lesssim &\frac{\Gamma\Gamma_{top}}{a}+\frac{\Gamma\Gamma_{top}}{a^{\frac{1}{2}}}\lesssim a^{-1+\frac{1}{1000}}+a^{-\frac{1}{2}+\frac{1}{1000}}\lesssim 1, 
\end{align}
\begin{align}
\int_{0}^{\ubar}||a^{\frac{I}{2}} \sum_{J_1+J_2+J_3+J_{4}=I}\nabla^{J_1}(\eta+\etabar)^{J_2}\nabla^{J_{3}}\tr\chi\nabla^{J_{4}}\rho||_{L^{2}_{sc}(S_{u,\ubar^{'}})}d\ubar^{'}\lesssim \frac{1}{|u|}\Gamma^{2}\lesssim 1,
\end{align}
\begin{eqnarray}
\int_{0}^{\ubar}||a^{\frac{I}{2}} \sum_{J_1+J_2+J_3+J_{4}=I}\nabla^{J_1}(\eta+\etabar)^{J_2}\nabla^{J_{3}}\widetilde{\beta}\nabla^{J_{4}}\eta||_{L^{2}_{sc}(S_{u,\ubar^{'}})}d\ubar^{'}\lesssim \frac{\Gamma^{2}}{|u|}\lesssim 1,
\end{eqnarray}
\begin{align}\nonumber
&\int_{0}^{\ubar}||a^{\frac{I}{2}} \sum_{J_1+J_2+J_3+J_{4}+J_{5}=I}\nabla^{J_1}(\eta+\etabar)^{J_2}\hnabla^{J_{3}}\alpha^{F}\hnabla^{J_{4}}(\rho^{F},\sigma^{F})\nabla^{J_{5}}\eta||_{L^{2}_{sc}(S_{u,\ubar^{'}})}d\ubar^{'}\\
\lesssim & \frac{a^{\frac{1}{2}}}{|u|^{2}}\Gamma^{3}\lesssim 1,
\end{align}
\begin{align}
&\int_{0}^{\ubar}||a^{\frac{I}{2}} \sum_{J_1+J_2+J_3+J_{4}+J_{5}=I}\nabla^{J_1}(\eta+\etabar)^{J_2}\hnabla^{J_{3}}(\tr\chi,\chihat)\hnabla^{J_{4}}(\eta,\etabar)\nabla^{J_{5}}(\eta,\etabar)||_{L^{2}_{sc}(S_{u,\ubar^{'}})}d\ubar^{'}\\
\lesssim &\frac{\Gamma^{3}}{|u|^{2}}+\frac{a^{\frac{1}{2}}\Gamma^{3}}{|u|^{2}}\lesssim 1,
\end{align}
\begin{align}
&\int_{0}^{\ubar}||a^{\frac{I}{2}} \sum_{J_1+J_2+J_3+J_{4}+J_{5}=I}\nabla^{J_1}(\eta+\etabar)^{J_2}\hnabla^{J_{3}}\alpha^{F}\hnabla^{J_{4}}(\rho^{F},\sigma^{F})\nabla^{J_{5}}(\tr\chi,\chihat)||_{L^{2}_{sc}(S_{u,\ubar^{'}})}d\ubar^{'}\\
\lesssim &\frac{a^{\frac{1}{2}}\Gamma^{3}}{|u|^{2}}+\frac{a\Gamma^{3}}{|u|^{2}}\lesssim 1,
\end{align}
\begin{align}\nonumber
&\int_{0}^{\ubar}||a^{\frac{I}{2}} \sum_{J_1+J_2+J_3+J_{4}=I}\nabla^{J_1}(\eta+\etabar)^{J_2}\hnabla^{J_{3}+1}\alpha^{F}\hnabla^{J_{4}}(\rho^{F},\sigma^{F})||_{L^{2}_{sc}(S_{u,\ubar^{'}})}d\ubar^{'}\\
\lesssim &\frac{a^{\frac{1}{2}}}{|u|}\mathbb{YM}~\Gamma\lesssim 1,
\end{align}
\begin{align}\nonumber
&\int_{0}^{\ubar}||a^{\frac{I}{2}} \sum_{J_1+J_2+J_3+J_{4}=I}\nabla^{J_1}(\eta+\etabar)^{J_2}\hnabla^{J_{3}}\alpha^{F}\hnabla^{J_{4}+1}(\rho^{F},\sigma^{F})||_{L^{2}_{sc}(S_{u,\ubar^{'}})}d\ubar^{'}\\ \lesssim &
\frac{a^{\frac{1}{2}}}{|u|}\mathbb{YM}~\Gamma\lesssim 1,
\end{align}
\begin{align}\nonumber&
\int_{0}^{\ubar}||a^{\frac{I}{2}} \sum_{J_1+J_2+J_3 +J_4=I} \nabla^{J_1}(\eta+\etabar)^{J_2}\nabla^{J_3}(\chihat,\tr\chi)\nabla^{J_4} \mu ||_{L^{2}_{sc}(S_{u,\ubar^{'}})}d\ubar^{'}\\ \lesssim & (\frac{a^{\frac{1}{2}}}{|u|}+\frac{1}{|u|})\int_{0}^{\ubar}||\nabla^{I}\mu||_{L^{2}_{sc}(S_{u,\ubar^{'}})}\lesssim \int_{0}^{\ubar}||\nabla^{I}\mu||_{L^{2}_{sc}(S_{u,\ubar^{'}})},
\end{align}
\begin{align} \nonumber
&\int_{0}^{\ubar}||a^{\frac{I}{2}} \sum_{J_1+J_2+J_3+J_4=I-1 }\nabla^{J_1} (\eta+\etabar)^{J_2+1}\nabla^{J_3}(\chihat,\tr\chi)\nabla^{J_4}\mu ||_{L^{2}_{sc}(S_{u,\ubar^{'}})}d\ubar^{'}\\
\lesssim & \int_{0}^{\ubar}||a^{\frac{I}{2}} \sum_{J_1+J_2+J_3+J_4=I-1 }\nabla^{J_1} (\eta+\etabar)^{J_2+1}\nabla^{J_3}(\chihat,\tr\chi)(\nabla^{J_4+1}\eta+\nabla^{J_{4}}\rho)||_{L^{2}_{sc}(S_{u,\ubar^{'}})}d\ubar^{'}\\\nonumber 
\lesssim &\frac{1+a^{\frac{1}{2}}}{|u|^{2}}\Gamma^{3}+\frac{a}{|u|^{2}}\Gamma^{3}\lesssim 1,
\end{align}
\begin{align} \nonumber
&\int_{0}^{\ubar}||a^{\frac{I}{2}} \sum_{J_{1}+J_{2}+J_{3}+J_{4}+J_{5}=I-1}\nabla^{J_{1}}(\eta+\underline{\eta})^{J_{2}}\hnabla^{J_{3}}\alpha^{F}\hnabla^{J_{4}}(\rho^{F},\sigma^{F})\nabla^{J_{5}}\mu ||_{L^{2}_{sc}(S_{u,\ubar^{'}})}d\ubar^{'}\\\nonumber 
\lesssim &\int_{0}^{\ubar}||a^{\frac{I}{2}} \sum_{J_{1}+J_{2}+J_{3}+J_{4}+J_{5}=I-1}\nabla^{J_{1}}(\eta+\underline{\eta})^{J_{2}}\hnabla^{J_{3}}\alpha^{F}\hnabla^{J_{4}}(\rho^{F},\sigma^{F})(\nabla^{J_5+1}\eta+\nabla^{J_{5}}\rho) ||_{L^{2}_{sc}(S_{u,\ubar^{'}})}d\ubar^{'}\\ 
\lesssim &\frac{a^{\frac{1}{2}}}{|u|^{2}}\Gamma^{3}+\frac{a}{|u|^{2}}\Gamma^{3}\lesssim 1
\end{align}and finally
\begin{align} \nonumber
&\int_{0}^{\ubar}||a^{\frac{I}{2}} \sum_{J_1+J_2+J_3+J_4=I-1}\nabla^{J_1}(\eta+\etabar)^{J_2}\hnabla^{J_3}\alpha^F \nabla^{J_4}\mu||_{L^{2}_{sc}(S_{u,\ubar^{'}})}d\ubar^{'}\\ \nonumber
\lesssim &\int_{0}^{\ubar}||a^{\frac{I}{2}} \sum_{J_1+J_2+J_3+J_4=I-1}\nabla^{J_1}(\eta+\etabar)^{J_2}\hnabla^{J_3}\alpha^F (\nabla^{J_4+1}\eta+\nabla^{J_{4}}\rho)||_{L^{2}_{sc}(S_{u,\ubar^{'}})}d\ubar^{'}\\\nonumber 
\lesssim &\frac{a^{\frac{1}{2}}}{|u|}\Gamma^{2}+\frac{a}{|u|}\mathcal{R}[\alpha]\lesssim 1+\mathcal{R}[\alpha].
\end{align}
Collecting all the terms together and an application of the Gr\"onwall's inequality yields 
\begin{eqnarray}
\label{eq:muestimate}
||a^{\frac{I}{2}}\nabla^{I}\mu||_{L^{2}_{sc}(S_{u,\ubar})}\lesssim 1+\mathcal{R}[\alpha].
\end{eqnarray}
Now the elliptic equation
\begin{eqnarray}
\text{div}~\eta=-\mu-\rho, \hspace{2mm}\text{curl}~\eta=\sigma \epsilon+\frac{1}{2}\chibarhat\wedge\chihat
\end{eqnarray} along with Proposition \ref{divcurlprop}
yield: 
\begin{align} \nonumber
\label{eq:elliptic} \nonumber
&||a^{\frac{N+5}{2}}\nabla^{N+5}\eta||_{L^{2}_{sc}(S_{u,\ubar})}\lesssim a^{\frac{1}{2}}\sum_{I=0}^{N+4}||a^{\frac{I}{2}}\nabla^{I}\mu||_{L^{2}_{sc}(S_{u,\ubar})}+a^{\frac{1}{2}}\sum_{I=0}^{N+4}||a^{\frac{I}{2}}\nabla^{I}(\rho,\sigma)||_{L^{2}_{sc}(S_{u,\ubar})}\\&
+a^{\frac{1}{2}}\sum_{I=0}^{N+4}||a^{\frac{I}{2}}\nabla^{I}(\chibarhat \chihat)||_{L^{2}_{sc}(S_{u,\ubar})}+\sum_{I=0}^{N+4}||a^{\frac{I}{2}}\nabla^{I}\eta||_{L^{2}_{sc}(S_{u,\ubar})}
\end{align}
which, upon integrating along $\ubar$, applying Cauchy-Schwartz and taking into account the estimates from Section 4 for $I\leq N+4$, yield the result
\begin{eqnarray}
\frac{a}{|u|}||a^{\frac{N+4}{2}}\nabla^{N+5}\eta||_{L^{2}_{sc}(H^{(0,\ubar)}_{u})}\lesssim 1+\mathcal{R}.
\end{eqnarray}
Similarly, the elliptic estimate (\ref{eq:elliptic}) together with the estimate on the mass aspect function and integration along $\Hbar$ yield 
\begin{eqnarray}
||a^{\frac{N+4}{2}}\nabla^{N+5}\eta||_{L^{2}_{sc}(\Hbar^{(u_{\infty},u)}_{\ubar})}\lesssim 1+\mathcal{R}.
\end{eqnarray}

\noindent We now consider the $\nabla_{3}$ evolution equation for $\mubar$. Explicit calculations yield 
\begin{align}\nonumber
\nabla_{3}\mubar&=-\text{div}\nabla_{3}\etabar-\nabla_{3}\rho+[\text{div},\nabla_{3}]\etabar\\\nonumber 
=&\text{div}(\chibar\cdot(\etabar-\eta))-\text{div}\widetilde{\betabar}+\text{div}\widetilde{\betabar}+\frac{3}{2}\tr\chibar\rho+\mathcal{E}_{2}\\\nonumber 
=&-\tr\chibar(-\text{div}\etabar-\rho)+\frac{1}{2}\tr\chibar\rho+\mathcal{E}_{2}\\
=&-\tr\chibar \mubar+\frac{1}{2}\tr\chibar\rho+\mathcal{E}_{2},
\end{align}
where $\mathcal{E}_{2}$ reads 
\begin{eqnarray}
\mathcal{E}_{2}\sim (\eta+\etabar)\widetilde{\betabar}+\hnabla\alphabar^{F}\cdot(\rho^{F},\sigma^{F})+\alphabar^{F}\cdot\hnabla(\rho^{F},\sigma^{F})+\tr\chibar (\rho^{F}\cdot\rho^{F}\nonumber+\sigma^{F}\cdot\sigma^{F})\\
+\widetilde{\betabar}\etabar+\alphabar^{F}\cdot(\rho^{F},\sigma^{F})(\etabar,\eta)+(\chibarhat,\tr\chibar)(\eta-\etabar)(\eta-\etabar)+(\eta,\etabar)\widetilde{\betabar}.
\end{eqnarray}
Commuting the operator $\nabla^{I}$ for $I=N+4$ with the $\nabla_{3}$ evolution equation for $\mu$ yields
\begin{align}\nonumber
\nabla_{3}\nabla^{I}\mubar+\frac{I+2}{2}\tr\chibar\nabla^{I}\mu= &\sum_{J_1+J_2+J_3+J_{4}=I}\nabla^{J_1}(\eta+\etabar)^{J_2}\nabla^{J_3}(\eta,\etabar)\nabla^{J_{4}}\widetilde{\betabar}\\\nonumber 
+&\sum_{J_1+J_2+J_3+J_{4}=I}\nabla^{J_1}(\eta+\etabar)^{J_2}\nabla^{J_3}\tr\chibar\nabla^{J_{4}}\rho\\\nonumber
+&\sum_{J_1+J_2+J_3+J_{4}=I}\nabla^{J_1}(\eta+\etabar)^{J_2}\hnabla^{J_3+1}\alphabar^{F}\nabla^{J_{4}}(\rho^{F},\sigma^{F})\\\nonumber 
+&\sum_{J_1+J_2+J_3+J_{4}=I}\nabla^{J_1}(\eta+\etabar)^{J_2}\hnabla^{J_3}\alphabar^{F}\nabla^{J_{4}+1}(\rho^{F},\sigma^{F})\\\nonumber 
+&\sum_{J_1+J_2+J_3+J_{4}+J_{5}=I}\nabla^{J_1}(\eta+\etabar)^{J_2}\nabla^{J_3}\tr\chibar\nabla^{J_{4}}(\rho^{F},\sigma^{F})\nabla^{J_{5}}(\rho^{F},\sigma^{F})\\\nonumber 
+&\sum_{J_1+J_2+J_3+J_{4}+J_{5}=I}\nabla^{J_1}(\eta+\etabar)^{J_2}\nabla^{J_3}(\eta,\etabar)\nabla^{J_{4}}(\rho^{F},\sigma^{F})\nabla^{J_{5}}\alphabar^{F}\\\nonumber 
+&\sum_{J_1+J_2+J_3+J_{4}+J_{5}=I}\nabla^{J_1}(\eta+\etabar)^{J_2}\nabla^{J_3}(\tr\chibar,\chibarhat)\nabla^{J_{4}}(\eta,\etabar)\nabla^{J_{5}}(\eta,\etabar)\\\nonumber 
+&\sum_{J_1+J_2+J_3+J_{4}=I}\nabla^{J_1}(\eta+\etabar)^{J_2}\nabla^{J_3}(\eta,\etabar)\nabla^{J_{4}}\widetilde{\betabar}\\\nonumber 
+&\sum_{J_1+J_2+J_3+J_4=I}\nabla^{J_1}(\eta+\etabar)^{J_2}\nabla^{J_3}(\chibarhat,\hsp \tildetr)\nabla^{J_4}\mubar\\\nonumber+&\sum_{J_{1}+J_{2}+J_{3} +J_{4}=I-1}\nabla^{J_{1}}(\eta+\underline{\eta})^{J_{2} +1}\hat{\nabla}^{J_{3}}\tr\underline{\chi}\nabla^{J_{4}}\mubar\\\nonumber+& \sum_{J_1+J_2+J_3+J_4=I-1}\nabla^{J_1} (\eta+\etabar)^{J_2+1}\nabla^{J_3}(\chibarhat,\tr\chibar)\hnabla^{J_4}\mubar\\\nonumber+& \sum_{J_{1}+J_{2}+J_{3}+J_{4}+J_{5}=I-1}\nabla^{J_{1}}(\eta+\underline{\eta})^{J_{2}}\hnabla^{J_{3}}\alphabar^{F}\hnabla^{J_{4}}(\rho^{F},\sigma^{F})\hnabla^{J_{5}}\mubar\\+ &\sum_{J_1+J_2+J_3+J_4=I-1}\nabla^{J_1}(\eta+\etabar)^{J_2}\hnabla^{J_3}\alphabar^F \hnabla^{J_4}\mubar.
\end{align}
We now apply the transport inequality in the $e_{3}$ direction with $\lambda_{0}=\frac{I+2}{2}$ (equivalently $\lambda_{1}=I+1$) which gives
\begin{eqnarray}
|u|^{I+1}||\nabla^{I}\mubar||_{L^{2}(S_{u,\ubar})}\lesssim |u_{\infty}|^{I+1}||\nabla^{I}\mubar||_{L^{2}(S_{u_{\infty},\ubar})}+\int_{u_{\infty}}^{u}|u^{'}|^{I+1}||\nabla_{3}\nabla^{I}\mubar||_{L^{2}(S_{u^{'},\ubar})}du^{'}.
\end{eqnarray}
Now recall that $s_{2}(\mubar)=\frac{1}{2}+\frac{1}{2}=1,$ $s_{2}(\nabla^{I}\mubar)=\frac{I}{2}+1$, and $s_{2}(\nabla_{3}\nabla^{I}\mubar)=\frac{I}{2}+2$ and therefore 
\begin{eqnarray}
||\nabla^{I}\mubar||_{L^{2}_{sc}(S_{u,\ubar})}=a^{-\frac{I}{2}-1}|u|^{I+2}||\nabla^{I}\mubar||_{L^{2}(S_{u,\ubar})}.
\end{eqnarray}
The scale invariant transport inequality reads 
\begin{align}\nonumber &
\frac{a}{|u|}||a^{\frac{I}{2}}\nabla^{I}\mubar||_{L^{2}_{sc}(S_{u,\ubar})}\\ \nonumber\lesssim &\frac{a}{|u_{\infty}|}||a^{\frac{I}{2}}\nabla^{I}\mubar||_{L^{2}_{sc}(S_{u,\ubar})}+\int_{u_{\infty}}^{u}\frac{a^{2}}{|u^{'}|^{3}}||a^{\frac{I}{2}}\nabla_{3}\nabla^{I}\mubar||_{L^{2}_{sc}(S_{u^{'},\ubar})}\\\nonumber 
\lesssim& \frac{a}{|u_{\infty}|}||a^{\frac{I}{2}}\nabla^{I}\mubar||_{L^{2}_{sc}(S_{u,\ubar})}+\int_{u_{\infty}}^{u}\frac{a^{2}}{|u^{'}|^{3}}||a^{\frac{I}{2}}\sum_{J_1+J_2+J_3+J_{4}=I}\nabla^{J_1}(\eta+\etabar)^{J_2}\nabla^{J_3}(\eta,\etabar)\nabla^{J_{4}}\widetilde{\betabar}||_{L^{2}_{sc}(S_{u^{'},\ubar})}\\\nonumber 
+&\int_{u_{\infty}}^{u}\frac{a^{2}}{|u^{'}|^{3}}||a^{\frac{I}{2}}\sum_{J_1+J_2+J_3+J_{4}=I}\nabla^{J_1}(\eta+\etabar)^{J_2}\nabla^{J_3}\tr\chibar\nabla^{J_{4}}\rho||_{L^{2}_{sc}(S_{u^{'},\ubar})}\\\nonumber 
+&\int_{u_{\infty}}^{u}\frac{a^{2}}{|u^{'}|^{3}}||a^{\frac{I}{2}}\sum_{J_1+J_2+J_3+J_{4}=I}\nabla^{J_1}(\eta+\etabar)^{J_2}\hnabla^{J_3+1}\alphabar^{F}\nabla^{J_{4}}(\rho^{F},\sigma^{F})||_{L^{2}_{sc}(S_{u^{'},\ubar})}\\\nonumber +&\int_{u_{\infty}}^{u}\frac{a^{2}}{|u^{'}|^{3}}||a^{\frac{I}{2}}\sum_{J_1+J_2+J_3+J_{4}=I}\nabla^{J_1}(\eta+\etabar)^{J_2}\hnabla^{J_3}\alphabar^{F}\nabla^{J_{4}+1}(\rho^{F},\sigma^{F})||_{L^{2}_{sc}(S_{u^{'},\ubar})}\\\nonumber +&\int_{u_{\infty}}^{u}\frac{a^{2}}{|u^{'}|^{3}}||a^{\frac{I}{2}}\sum_{J_1+J_2+J_3+J_{4}+J_{5}=I}\nabla^{J_1}(\eta+\etabar)^{J_2}\nabla^{J_3}\tr\chibar\nabla^{J_{4}}(\rho^{F},\sigma^{F})\nabla^{J_{5}}(\rho^{F},\sigma^{F})||_{L^{2}_{sc}(S_{u^{'},\ubar})}\\\nonumber 
+&\int_{u_{\infty}}^{u}\frac{a^{2}}{|u^{'}|^{3}}||a^{\frac{I}{2}}\sum_{J_1+J_2+J_3+J_{4}+J_{5}=I}\nabla^{J_1}(\eta+\etabar)^{J_2}\nabla^{J_3}(\eta,\etabar)\nabla^{J_{4}}(\rho^{F},\sigma^{F})\nabla^{J_{5}}\alphabar^{F}||_{L^{2}_{sc}(S_{u^{'},\ubar})}\\\nonumber 
+&\int_{u_{\infty}}^{u}\frac{a^{2}}{|u^{'}|^{3}}||a^{\frac{I}{2}}\sum_{J_1+J_2+J_3+J_{4}+J_{5}=I}\nabla^{J_1}(\eta+\etabar)^{J_2}\nabla^{J_3}(\tr\chibar,\chibarhat)\nabla^{J_{4}}(\eta,\etabar)\nabla^{J_{5}}(\eta,\etabar)||_{L^{2}_{sc}(S_{u^{'},\ubar})}\\\nonumber 
+&\int_{u_{\infty}}^{u}\frac{a^{2}}{|u^{'}|^{3}}||a^{\frac{I}{2}}\sum_{J_1+J_2+J_3+J_{4}=I}\nabla^{J_1}(\eta+\etabar)^{J_2}\nabla^{J_3}(\eta,\etabar)\nabla^{J_{4}}\widetilde{\betabar}||_{L^{2}_{sc}(S_{u^{'},\ubar})}\\\nonumber 
+&\int_{u_{\infty}}^{u}\frac{a^{2}}{|u^{'}|^{3}}||a^{\frac{I}{2}}\sum_{J_1+J_2+J_3+J_4=I}\nabla^{J_1}(\eta+\etabar)^{J_2}\nabla^{J_3}(\chibarhat,\hsp \tildetr)\nabla^{J_4}\mubar||_{L^{2}_{sc}(S_{u^{'},\ubar})}\\\nonumber 
+&\int_{u_{\infty}}^{u}\frac{a^{2}}{|u^{'}|^{3}}||a^{\frac{I}{2}}\sum_{J_{1}+J_{2}+J_{3} +J_{4}=I-1}\nabla^{J_{1}}(\eta+\underline{\eta})^{J_{2} +1}\hat{\nabla}^{J_{3}}\tr\underline{\chi}\nabla^{J_{4}}\mubar||_{L^{2}_{sc}(S_{u^{'},\ubar})}\\\nonumber 
+&\int_{u_{\infty}}^{u}\frac{a^{2}}{|u^{'}|^{3}}||a^{\frac{I}{2}}\sum_{J_1+J_2+J_3+J_4=I-1}\nabla^{J_1} (\eta+\etabar)^{J_2+1}\nabla^{J_3}(\chibarhat,\tr\chibar)\hnabla^{J_4}\mubar||_{L^{2}_{sc}(S_{u^{'},\ubar})}\\\nonumber 
+&\int_{u_{\infty}}^{u}\frac{a^{2}}{|u^{'}|^{3}}||a^{\frac{I}{2}}\sum_{J_{1}+J_{2}+J_{3}+J_{4}+J_{5}=I-1}\nabla^{J_{1}}(\eta+\underline{\eta})^{J_{2}}\hnabla^{J_{3}}\alphabar^{F}\hnabla^{J_{4}}(\rho^{F},\sigma^{F})\hnabla^{J_{5}}\mubar||_{L^{2}_{sc}(S_{u^{'},\ubar})}\\
+&\int_{u_{\infty}}^{u}\frac{a^{2}}{|u^{'}|^{3}}||a^{\frac{I}{2}}\sum_{J_1+J_2+J_3+J_4=I-1}\nabla^{J_1}(\eta+\etabar)^{J_2}\hnabla^{J_3}\alphabar^F \hnabla^{J_4}\mubar||_{L^{2}_{sc}(S_{u^{'},\ubar})}.
\end{align}
We now estimate each term separately as follows:
\begin{eqnarray} \nonumber
\int_{u_{\infty}}^{u}\frac{a^{2}}{|u^{'}|^{3}}||a^{\frac{I}{2}}\sum_{J_1+J_2+J_3+J_{4}=I}\nabla^{J_1}(\eta+\etabar)^{J_2}\nabla^{J_3}(\eta,\etabar)\nabla^{J_{4}}\widetilde{\betabar}||_{L^{2}_{sc}(S_{u^{'},\ubar})}\\
\lesssim 1+\left(\int_{u_{\infty}}^{u}\frac{a^{3}}{|u^{'}|^{6}}du^{'}\right)^{\frac{1}{2}}\left(\int_{u_{\infty}}^{u}\frac{a}{|u^{'}|^{2}}||a^{\frac{I}{2}}\nabla^{I}\widetilde{\betabar}||^{2}_{L^{2}_{sc}(S_{u^{'}\ubar})}du^{'}\right)^{\frac{1}{2}}\lesssim 1,
\end{eqnarray}
\begin{align} \nonumber
&\int_{u_{\infty}}^{u}\frac{a^{2}}{|u^{'}|^{3}}||a^{\frac{I}{2}}\sum_{J_1+J_2+J_3+J_{4}=I}\nabla^{J_1}(\eta+\etabar)^{J_2}\nabla^{J_3}\tr\chibar\nabla^{J_{4}}\rho||_{L^{2}_{sc}(S_{u^{'},\ubar})}\\\nonumber 
\lesssim  &1+\int_{u_{\infty}}^{u}\frac{a}{|u^{'}|^{2}}||a^{\frac{I}{2}}\nabla^{I}\rho||_{L^{2}_{sc}(S_{u^{'},\ubar})}du^{'}\lesssim 1+\frac{a}{|u|}\left(\int_{u_{\infty}}^{u}\frac{a}{|u^{'}|^{2}}||a^{\frac{I}{2}}\nabla^{I}\rho||^{2}_{L^{2}_{sc}(S_{u^{'},\ubar})}du^{'}\right)^{\frac{1}{2}}\\ 
\lesssim &1+\mathcal{R}[\rho],
\end{align}
\begin{align}\nonumber
&\int_{u_{\infty}}^{u}\frac{a^{2}}{|u^{'}|^{3}}||a^{\frac{I}{2}}\sum_{J_1+J_2+J_3+J_{4}=I}\nabla^{J_1}(\eta+\etabar)^{J_2}\hnabla^{J_3+1}\alphabar^{F}\nabla^{J_{4}}(\rho^{F},\sigma^{F})||_{L^{2}_{sc}(S_{u^{'},\ubar})}\\ 
\lesssim & 1+\left(\int_{u_{\infty}}^{u}\frac{a^{3}}{|u^{'}|^{6}}du^{'}\right)^{\frac{1}{2}}\left(\int_{u_{\infty}}^{u}\frac{a}{|u^{'}|^{2}}||a^{\frac{I}{2}}\nabla^{I+1}\alpha^{F}||^{2}_{L^{2}_{sc}(S_{u^{'},\ubar})}\right)^{\frac{1}{2}}\lesssim 1,
\end{align}
\begin{align} \nonumber
&\int_{u_{\infty}}^{u}\frac{a^{2}}{|u^{'}|^{3}}||a^{\frac{I}{2}}\sum_{J_1+J_2+J_3+J_{4}=I}\nabla^{J_1}(\eta+\etabar)^{J_2}\hnabla^{J_3}\alphabar^{F}\nabla^{J_{4}+1}(\rho^{F},\sigma^{F})||_{L^{2}_{sc}(S_{u^{'},\ubar})}\\
\lesssim &\frac{a^{\frac{3}{2}}}{|u|^{\frac{5}{2}}}\mathbb{YM}[\rho^{F},\sigma^{F}]\lesssim 1,
\end{align}
\begin{align} \nonumber
\int_{u_{\infty}}^{u}\frac{a^{2}}{|u^{'}|^{3}}||a^{\frac{I}{2}}\sum_{J_1+J_2+J_3+J_{4}+J_{5}=I}\nabla^{J_1}(\eta+\etabar)^{J_2}\nabla^{J_3}\tr\chibar\nabla^{J_{4}}(\rho^{F},\sigma^{F})\nabla^{J_{5}}(\rho^{F},\sigma^{F})||_{L^{2}_{sc}(S_{u^{'},\ubar})}\\ 
\lesssim \frac{a}{|u|^{2}}\Gamma\lesssim 1,
\end{align}
\begin{align} \nonumber
&\int_{u_{\infty}}^{u}\frac{a^{2}}{|u^{'}|^{3}}||a^{\frac{I}{2}}\sum_{J_1+J_2+J_3+J_{4}+J_{5}=I}\nabla^{J_1}(\eta+\etabar)^{J_2}\nabla^{J_3}(\eta,\etabar)\nabla^{J_{4}}(\rho^{F},\sigma^{F})\nabla^{J_{5}}\alphabar^{F}||_{L^{2}_{sc}(S_{u^{'},\ubar})}\\
\lesssim & \frac{a^{2}}{|u|^{4}}\Gamma\lesssim 1,
\end{align}
\begin{align} \nonumber
&\int_{u_{\infty}}^{u}\frac{a^{2}}{|u^{'}|^{3}}||a^{\frac{I}{2}}\sum_{J_1+J_2+J_3+J_{4}+J_{5}=I}\nabla^{J_1}(\eta+\etabar)^{J_2}\nabla^{J_3}(\tr\chibar,\chibarhat)\nabla^{J_{4}}(\eta,\etabar)\nabla^{J_{5}}(\eta,\etabar)||_{L^{2}_{sc}(S_{u^{'},\ubar})}\\
\lesssim &\frac{a}{|u|^{2}}\Gamma\lesssim 1,
\end{align}
\begin{align}  \nonumber
&\int_{u_{\infty}}^{u}\frac{a^{2}}{|u^{'}|^{3}}||a^{\frac{I}{2}}\sum_{J_1+J_2+J_3+J_{4}=I}\nabla^{J_1}(\eta+\etabar)^{J_2}\nabla^{J_3}(\eta,\etabar)\nabla^{J_{4}}\widetilde{\betabar}||_{L^{2}_{sc}(S_{u^{'},\ubar})}\\ \lesssim &\frac{a^{\frac{3}{2}}}{|u|^{\frac{5}{2}}}\mathcal{R}[\widetilde{\betabar}]\lesssim 1,
\end{align}
\begin{align}  \nonumber&
\int_{u_{\infty}}^{u}\frac{a^{2}}{|u^{'}|^{3}}||a^{\frac{I}{2}}\sum_{J_1+J_2+J_3+J_4=I}\nabla^{J_1}(\eta+\etabar)^{J_2}\nabla^{J_3}(\chibarhat,\hsp \tildetr)\nabla^{J_4}\mubar||_{L^{2}_{sc}(S_{u^{'},\ubar})}\\
\lesssim &\int_{u_{\infty}}^{u}\frac{a^{\frac{1}{2}}}{|u^{'}|^{2}}\frac{a}{|u^{'}|}||a^{\frac{I}{2}}\nabla^{I}\mubar||||_{L^{2}_{sc}(S_{u^{'},\ubar})}du^{'},
\end{align}
\begin{align} \nonumber &
\int_{u_{\infty}}^{u}\frac{a^{2}}{|u^{'}|^{3}}||a^{\frac{I}{2}}\sum_{J_{1}+J_{2}+J_{3} +J_{4}=I-1}\nabla^{J_{1}}(\eta+\underline{\eta})^{J_{2} +1}\hat{\nabla}^{J_{3}}\tr\underline{\chi}\nabla^{J_{4}}\mubar||_{L^{2}_{sc}(S_{u^{'},\ubar})}\\
\lesssim  \nonumber &\int_{u_{\infty}}^{u}\frac{a}{|u^{'}|^{3}}\left(||a^{\frac{I}{2}}\nabla^{I}\etabar||_{L^{2}_{sc}(S_{u^{'},\ubar})}+||a^{\frac{I}{2}}\nabla^{I-1}\rho||_{L^{2}_{sc}(S_{u^{'},\ubar})}\right)du^{'}\\ 
\lesssim& \frac{a}{|u|^{2}}\Gamma+\frac{a^{\frac{3}{2}}}{|u|^{2}}\Gamma\lesssim 1,
\end{align}
\begin{align}&  \nonumber
\int_{u_{\infty}}^{u}\frac{a^{2}}{|u^{'}|^{3}}||a^{\frac{I}{2}}\sum_{J_1+J_2+J_3+J_4=I-1}\nabla^{J_1} (\eta+\etabar)^{J_2+1}\nabla^{J_3}(\chibarhat,\tr\chibar)\hnabla^{J_4}\mubar||_{L^{2}_{sc}(S_{u^{'},\ubar})}\\
\lesssim \frac{a}{|u|^{2}}\Gamma+\frac{a^{\frac{3}{2}}}{|u|^{2}}\Gamma\lesssim 1,
\end{align}
\begin{align}  \nonumber
&\int_{u_{\infty}}^{u}\frac{a^{2}}{|u^{'}|^{3}}||a^{\frac{I}{2}}\sum_{J_{1}+J_{2}+J_{3}+J_{4}+J_{5}=I-1}\nabla^{J_{1}}(\eta+\underline{\eta})^{J_{2}}\hnabla^{J_{3}}\alphabar^{F}\hnabla^{J_{4}}(\rho^{F},\sigma^{F})\hnabla^{J_{5}}\mubar||_{L^{2}_{sc}(S_{u^{'},\ubar})}\\
\lesssim &\frac{a^{2}}{|u|^{4}}+\frac{a^{\frac{5}{2}}}{|u|^{4}}\Gamma\lesssim 1.
\end{align}
Collecting all the terms and an application of Gr\"onwall yields 
\begin{align}\nonumber 
\frac{a}{|u|}||a^{\frac{I}{2}}\nabla^{I}\mubar||_{L^{2}_{sc}(S_{u,\ubar})}\lesssim &(1+\frac{a}{|u_{\infty}|}||a^{\frac{I}{2}}\nabla^{I}\mubar||_{L^{2}_{sc}(S_{u,\ubar})}+\mathcal{R}[\rho])e^{\frac{a^{\frac{1}{2}}}{|u|}}\\\nonumber 
\lesssim& 1+\frac{a}{|u_{\infty}|}||a^{\frac{I}{2}}\nabla^{I}\mubar||_{L^{2}_{sc}(S_{u,\ubar})}+\mathcal{R}[\rho]\\
\lesssim& 1+\mathcal{R}[\rho].
\end{align}
Now we utilize the elliptic equations
\begin{eqnarray}
\text{div}~\etabar=-\mubar-\rho,~\text{curl}~\etabar=-\sigma\epsilon-\chibarhat\wedge\chihat
\end{eqnarray}
to obtain, using Proposition \ref{divcurlprop},
\begin{align} \nonumber 
\frac{a}{|u|}||a^{\frac{N+4}{2}}\nabla^{N+5}\etabar||_{L^{2}_{sc}(S_{u,\ubar})}\lesssim &\sum_{I=0}^{N+4}\left(\frac{a}{|u|}||a^{\frac{I}{2}}\nabla^{I}\mubar||_{L^{2}_{sc}(S_{u,\ubar})}+||a^{\frac{I}{2}}\nabla^{I}(\rho,\sigma)||_{L^{2}_{sc}(S_{u,\ubar})}\right.\\ 
&\left.+||a^{\frac{I}{2}}\nabla^{I}(\chibarhat\chibar)||_{L^{2}_{sc}(S_{u,\ubar})}+a^{-\frac{1}{2}}||a^{\frac{I}{2}}\nabla^{I}\etabar||_{L^{2}_{sc}(S_{u,\ubar})}\right),
\end{align}
which, after integrating along the $e_{4}$ direction, yields 
\begin{eqnarray}
\\\nonumber 
\frac{a}{|u|}||a^{\frac{N+4}{2}}\nabla^{N+5}\etabar||_{L^{2}_{sc}(H^{(0,\ubar)}_{u})}\lesssim 1+\mathcal{R}.
\end{eqnarray}
The claim follows.
\end{proof}

\begin{proposition}
Under the assumptions of Theorem \ref{main1} and the bootstrap assumptions \eqref{bootstrap}-\eqref{ellboot}, there hold:
\begin{eqnarray}
\|(a^{\frac{1}{2}})^{N+4}\nabla^{N+5}\underline{\omega}\|_{\mathcal{L}^{2}_{sc}(\underline{H}^{(u_{\infty},u)}_{\underline{u}})}\lesssim 1+\mathcal{R}+\mathbb{YM}.
\end{eqnarray}
\end{proposition}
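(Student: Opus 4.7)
The plan is to mimic the structure of Proposition \ref{omegaestimate} but with every null direction swapped. First I would introduce an auxiliary scalar $\underline{\omega}^{\dagger}$ as the unique solution along $e_4$ of a transport equation of the form $\nabla_4 \underline{\omega}^{\dagger} = \pm\tfrac{1}{2}\sigma$ (with the sign inherited from the null Bianchi identity for $\nabla_4 \tbetabar$), imposing $\underline{\omega}^{\dagger}=0$ on $\underline{H}_0$, and form the renormalized variable
\[ \underline{\kappa} := \nabla\underline{\omega} + {}^{*}\nabla\underline{\omega}^{\dagger} \pm \tfrac{1}{2}\tbetabar. \]
The purpose of this construction is exactly the one exploited for $\kappa$ in Proposition \ref{omegaestimate}: it cancels pointwise the dangerous top-order terms $\nabla\rho$ and $\nabla\sigma$ that would otherwise appear in a transport equation for $\nabla\underline{\omega}$ and exceed the available regularity of the Weyl curvature. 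The outcome is a $\nabla_4$ transport equation for $\underline{\kappa}$ of the schematic form
\[ \nabla_4 \underline{\kappa} + \tfrac{1}{2}\tr\chi\,\underline{\kappa} = (\psi_g,\chihat)\Psi + (\psi_g,\chihat)\nabla\psi_g + \Upsilon\cdot\hnabla(\Upsilon,\alphaF) + (\alphaF,\Upsilon)\cdot\hnabla\Upsilon + \text{l.o.t.}, \]
with $\Upsilon = (\rho^F,\sigma^F)$ and the lower-order pieces at most quartic in the Ricci, curvature and Yang-Mills components.

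Next I would commute with $\nabla^I$ for $0\leq I\leq N+4$ by means of Lemma \ref{commutation}, pass to scale-invariant norms, and apply the unweighted transport inequality of Proposition \ref{3.5} in the $e_4$ direction from $\underline{H}_0$, where triviality of the initial data makes the boundary contribution vanish. Each nonlinear error is estimated term by term in $\scaletwoSu{\cdot}$ in exactly the same fashion as for $\kappa$, using the Section 4 bounds on the Ricci coefficients and Yang-Mills fields together with the bootstraps \eqref{bootstrap} and \eqref{ellboot}. The genuinely new feature compared to the $\omega$-case is that the matter content of the $\nabla_4$-Bianchi identity for $\tbetabar$ brings derivatives of $\alphaF$ (rather than $\alphabarF$) into the source, so the Yang-Mills top-order derivatives to be absorbed live in $\mathbb{YM}$ rather than $\underline{\mathbb{YM}}$. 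Squaring the resulting $S_{u,\underline{u}}$-bound on $(a^{1/2}\nabla)^{I}\underline{\kappa}$ and integrating along $u$ from $u_\infty$ to $u$, using Cauchy--Schwartz on the top-order $\tbetabar$ and Yang-Mills contributions, yields the intermediate estimate
\[ \|(a^{1/2}\nabla)^{I}\underline{\kappa}\|_{\shb} \lesssim 1 + \mathcal{R} + \mathbb{YM}. \]

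Finally, from the definition of $\underline{\kappa}$ the pair $(\underline{\omega},\underline{\omega}^{\dagger})$ satisfies the Hodge system
\begin{align*}
\operatorname{div}\nabla\underline{\omega} &= \operatorname{div}\underline{\kappa} \mp \tfrac{1}{2}\operatorname{div}\tbetabar, \\
\operatorname{div}\nabla\underline{\omega}^{\dagger} &= \operatorname{curl}\underline{\kappa} \mp \tfrac{1}{2}\operatorname{curl}\tbetabar,
\end{align*}
to which I would apply Proposition \ref{divcurlprop} at order $N+5$ on each $S_{u,\underline{u}}$, thereby expressing $\scaletwoSu{(a^{1/2}\nabla)^{N+5}\underline{\omega}}$ in terms of $N+4$ angular derivatives of $\underline{\kappa}$ and of $\tbetabar$ together with lower-order pieces that are already controlled by the Section 4 estimate for $\underline{\omega}$. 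Squaring and integrating along $u$, followed by Gr\"onwall in the $\underline{u}$-variable to absorb the $\underline{\omega}$-norm on the right, delivers the claimed $\shb$ bound. The main obstacle is the top-order Yang-Mills piece $\hnabla^{N+4}(\alphaF,\rho^F,\sigma^F)$ entering the source of $\underline{\kappa}$ through the matter term $\tfrac{1}{2}(D_A R_{3B}+D_B R_{3A})$ of the $\nabla_4$-Bianchi identity for $\tbetabar$: since the pointwise renormalization cancels only the gravitational $\nabla\rho,\nabla\sigma$, these matter derivatives must be absorbed directly by the $\mathbb{YM}$ norm on the right, with careful accounting of the $a/|u|^2$ weights produced by the $u$-integration in order to convert them, via Cauchy--Schwartz, into the appropriate scale-invariant fluxes along $\underline{H}_{\underline{u}}^{(u_\infty,u)}$.
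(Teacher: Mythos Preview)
Your proposal is correct and follows essentially the same route as the paper: introduce $\underline{\omega}^{\dagger}$ via a $\nabla_4$-transport equation sourced by $\tfrac12\sigma$, form the renormalized quantity $\underline{\kappa}$ so that the top-order $\nabla\rho,\nabla\sigma$ cancel pointwise, commute with $\nabla^{N+4}$, estimate the errors, and close through the Hodge system of Proposition~\ref{divcurlprop}.

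There is one ordering difference worth flagging. In the paper, the $\nabla_4$-transport estimate for $\underline{\kappa}$ at the $S$-level still carries the term $\frac{a^{1/2}}{|u|}\int_0^{\ubar}\|(a^{1/2})^{N+4}\nabla^{N+5}(\omegabar,\omegabar^{\dagger})\|_{L^2_{sc}(S_{u,\ubar'})}\,d\ubar'$, which cannot be bounded ``in exactly the same fashion as for $\kappa$'': the elliptic bootstrap \eqref{ellboot} controls $\nabla^{N+5}\omegabar$ only along $\underline{H}_{\ubar}$, not along $H_u$, so the analogue of the $\Gammatop$-absorption used for $\kappa$ is not directly available. The paper therefore couples the $S$-level $\underline{\kappa}$-estimate with the elliptic estimate for $\nabla^{N+5}(\omegabar,\omegabar^{\dagger})$, substitutes one into the other, closes via Gr\"onwall in $\ubar$ at the $S$-level, and only then integrates in $u$. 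Your alternative---integrate in $u$ first and invoke Gr\"onwall in $\ubar$ afterwards---also works, but then your ``intermediate estimate'' $\|\underline{\kappa}\|_{\shb}\lesssim 1+\mathcal{R}+\mathbb{YM}$ is not yet closed: it still contains $\int_0^{\ubar}\|\nabla^{N+5}\omegabar\|_{\underline{H}_{\ubar'}}^2\,d\ubar'$ on the right (after Fubini), and it is precisely this term that the final Gr\"onwall removes. So either keep that feedback term explicit through to the last step, or exploit \eqref{ellboot} together with the extra $a/|u|^2$ gained from the $u$-integration to bound it outright; just do not claim the $\underline{\kappa}$-flux closes at the sphere level by the Section~4 bounds alone.
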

\begin{proof}
Similarly to the $\omega$ estimates, we define the auxiliary entity $\omegabar^{\dag}$ to be zero on $\underline{H}_{0}$ whilst verifying the following $\nabla_{4}$ equation:
\begin{eqnarray}
\label{eq:omegabardag}
\nabla_{4}\omegabar^{\dag}=\frac{1}{2}\sigma.
\end{eqnarray}
Now we define $\kappabar$ as follows 
\begin{eqnarray}
\kappabar=-\nabla\omegabar+~^{*}\nabla\omegabar^{\dag}-\frac{1}{2}\widetilde{\betabar}.
\end{eqnarray}
Recall the transport equation for $\omegabar$
\begin{eqnarray}
\nabla_{4}\underline{\omega}=2\omega\underline{\omega}+\frac{3}{4}|\eta-\underline{\eta}|^{2}-\frac{1}{4}(\eta-\underline{\eta})\cdot(\eta+\underline{\eta})-\frac{1}{8}|\eta+\underline{\eta}|^{2}+\frac{1}{2}\rho+\frac{1}{4}\mathfrak{T}_{43}.
\end{eqnarray}
An explicit computation yields 
\begin{align} \nonumber 
\nabla_{4}\kappabar&=-\nabla\nabla_{4}\omegabar+^{*}\nabla\nabla_{4}\omegabar^{\dag}-\frac{1}{2}\nabla_{4}\widetilde{\betabar}+[\nabla,\nabla_{4}]\omegabar+[\nabla_{4},~^{*}\nabla]\omegabar^{\dag}\\\nonumber 
&=-\frac{1}{2}\nabla\rho+\frac{1}{2}\nabla\sigma-\frac{1}{2}( -\nabla \rho + \Hodge{\nabla} \sigma)+\mathcal{E}\\
&=\mathcal{E},
\end{align}
where $\mathcal{E}$ reads 
\begin{align}
\mathcal{E}\sim &\omegabar\nabla\omega+\omega\nabla\omegabar+(\eta,\etabar)\nabla(\eta,\etabar)+\rho^{F}\cdot\hnabla\rho^{F}+\sigma^{F}\cdot\hnabla\sigma^{F}+\widetilde{\beta}(\omegabar,\omegabar^{\dag})\nonumber+\alpha^{F}\cdot(\rho^{F},\sigma^{F})(\omegabar,\omegabar^{\dag})\\\nonumber+&(\eta,\etabar)\omega\omegabar(\eta\eta+\eta\etabar+\etabar\etabar)+(\eta,\etabar)(\rho,\sigma)+(\eta,\etabar)|\rho^{F},\sigma^{F}|^{2}+(\tr\chi,\chihat)\nabla(\omegabar,\omegabar^{\dag})\\ +&(\tr\chi,\chihat)\etabar(\omegabar,\omegabar^{\dag})+(\tr\chibar,\chibarhat)\alpha^{F}\cdot(\rho^{F},\sigma^{F})+(\tr\chi,\chihat)\alphabar^{F}\cdot(\rho^{F},\sigma^{F})+\tr\chi\widetilde{\betabar}+\omega\widetilde{\betabar}+\chibarhat\widetilde{\beta}.
\end{align}
We now commute $\nabla^{I},I=N+4$ with the $\nabla_{4}$ evolution equation for $\kappabar$
to obtain 
\begin{align}
\nabla_{4}\nabla^{I}\kappabar=& \sum_{J_1+J_2+J_3+J_{4}=I} \nabla^{J_1}(\eta+\etabar)^{J_2}\nabla^{J_3}\omegabar\nabla^{J_{4}+1}\omega
\\\nonumber +&
\sum_{J_1+J_2+J_3+J_{4}=I} \nabla^{J_1}(\eta+\etabar)^{J_2}\nabla^{J_3+1}\omegabar\nabla^{J_{4}}\omega\\\nonumber 
+&\sum_{J_1+J_2+J_3+J_{4}=I} \nabla^{J_1}(\eta+\etabar)^{J_2}\nabla^{J_3}(\eta,\etabar)\nabla^{J_{4}+1}(\eta,\etabar)\\\nonumber 
+&\sum_{J_1+J_2+J_3+J_{4}=I} \nabla^{J_1}(\eta+\etabar)^{J_2}\hnabla^{J_3}\rho^{F}\hnabla^{J_{4}+1}\rho^{F}\\\nonumber 
+&\sum_{J_1+J_2+J_3+J_{4}=I} \nabla^{J_1}(\eta+\etabar)^{J_2}\hnabla^{J_3}\sigma^{F}\hnabla^{J_{4}+1}\sigma^{F}\\\nonumber 
+&\sum_{J_1+J_2+J_3+J_{4}=I} \nabla^{J_1}(\eta+\etabar)^{J_2}\nabla^{J_3}\widetilde{\beta}\nabla^{J_{4}}(\omegabar,\omegabar^{\dag})\\\nonumber 
+&\sum_{J_1+J_2+J_3+J_{4}+J_{5}=I} \nabla^{J_1}(\eta+\etabar)^{J_2}\hnabla^{J_3}\alpha^{F}\hnabla^{J_{4}}(\rho^{F},\sigma^{F})\nabla^{J_{5}}(\omegabar,\omegabar^{\dag})\\\nonumber +&\sum_{J_1+J_2+J_3+J_{4}+J_{5}=I} \nabla^{J_1}(\eta+\etabar)^{J_2}\nabla^{J_3}(\eta,\etabar)\nabla^{J_{4}}(\omega,\omegabar)\nabla^{J_{5}}(\eta,\etabar)\\\nonumber 
+&\sum_{J_1+J_2+J_3+J_{4}=I} \nabla^{J_1}(\eta+\etabar)^{J_2}\hnabla^{J_3}(\eta,\etabar)\nabla^{J_{4}}(\rho,\sigma)\\\nonumber 
+&\sum_{J_1+J_2+J_3+J_{4}+J_{5}=I} \nabla^{J_1}(\eta+\etabar)^{J_2}\hnabla^{J_3}(\eta,\etabar)\hnabla^{J_{4}}(\rho^{F},\sigma^{F})\hnabla^{J_{5}}(\rho^{F},\sigma^{F})\\\nonumber 
+&\sum_{J_1+J_2+J_3+J_{4}=I} \nabla^{J_1}(\eta+\etabar)^{J_2}\nabla^{J_3}(\tr\chi,\chihat)\hnabla^{J_{4}+1}(\omegabar,\omegabar^{\dag})\\\nonumber 
+&\sum_{J_1+J_2+J_3+J_{4}+J_{5}=I} \nabla^{J_1}(\eta+\etabar)^{J_2}\nabla^{J_3}(\tr\chi,\chihat)\nabla^{J_{4}}\etabar\nabla^{J_{5}}(\omegabar,\omegabar^{\dag})\\\nonumber 
+&\sum_{J_1+J_2+J_3+J_{4}+J_{5}=I} \nabla^{J_1}(\eta+\etabar)^{J_2}\nabla^{J_3}(\tr\chibar,\chibarhat)\hnabla^{J_{4}}\alpha^{F}\hnabla^{J_{5}}(\rho^{F},\sigma^{F})\\\nonumber 
+&\sum_{J_1+J_2+J_3+J_{4}+J_{5}=I} \nabla^{J_1}(\eta+\etabar)^{J_2}\nabla^{J_3}(\tr\chi,\chihat)\hnabla^{J_{4}}\alphabar^{F}\hnabla^{J_{5}}(\rho^{F},\sigma^{F})\\\nonumber 
+&\sum_{J_1+J_2+J_3+J_{4}=I} \nabla^{J_1}(\eta+\etabar)^{J_2}\nabla^{J_3}\tr\chi\hnabla^{J_{4}}\widetilde{\betabar}\\\nonumber 
+&\sum_{J_1+J_2+J_3+J_{4}=I} \nabla^{J_1}(\eta+\etabar)^{J_2}\nabla^{J_3}\omega\hnabla^{J_{4}}\widetilde{\betabar}\\\nonumber +&\sum_{J_1+J_2+J_3+J_{4}=I} \nabla^{J_1}(\eta+\etabar)^{J_2}\nabla^{J_3}\chibarhat\hnabla^{J_{4}}\widetilde{\betabar}
\\\nonumber 
+&\sum_{J_1+J_2+J_3 +J_4=I} \nabla^{J_1}(\eta+\etabar)^{J_2}\nabla^{J_3}(\chihat,\tr\chi)\nabla^{J_4} \kappabar \\ \nonumber+&\sum_{J_1+J_2+J_3+J_4=I-1 }\nabla^{J_1} (\eta+\etabar)^{J_2+1}\nabla^{J_3}(\chihat,\tr\chi)\nabla^{J_4}\kappabar\\+ &\sum_{J_{1}+J_{2}+J_{3}+J_{4}+J_{5}=I-1}\nabla^{J_{1}}(\eta+\underline{\eta})^{J_{2}}\hnabla^{J_{3}}\alpha^{F}\hnabla^{J_{4}}(\rho^{F},\sigma^{F})\nabla^{J_{5}}\kappabar.
\end{align}
Now we apply the scale invariant $\nabla_{4}$ transport inequality 
\begin{align}
&||a^{\frac{I}{2}}\nabla^{I}\kappa||_{L^{2}_{sc}(S_{u,\ubar})}\\ \nonumber \lesssim & ||a^{\frac{I}{2}}\nabla^{I}\kappa||_{L^{2}_{sc}(S_{u,0})}\\\nonumber +&\int_{0}^{\ubar}||a^{\frac{I}{2}}\sum_{J_1+J_2+J_3+J_{4}=I} \nabla^{J_1}(\eta+\etabar)^{J_2}\nabla^{J_3}\omegabar\nabla^{J_{4}+1}\omega||_{L^{2}_{sc}(S_{u,\ubar^{'}})}d\ubar^{'}\\\nonumber
+&\int_{0}^{\ubar}||a^{\frac{I}{2}}\sum_{J_1+J_2+J_3+J_{4}=I} \nabla^{J_1}(\eta+\etabar)^{J_2}\nabla^{J_3+1}\omegabar\nabla^{J_{4}}\omega||_{L^{2}_{sc}(S_{u,\ubar^{'}})}d\ubar^{'}\\\nonumber 
+&\int_{0}^{\ubar}||a^{\frac{I}{2}}\sum_{J_1+J_2+J_3+J_{4}=I} \nabla^{J_1}(\eta+\etabar)^{J_2}\nabla^{J_3}(\eta,\etabar)\nabla^{J_{4}+1}(\eta,\etabar)||_{L^{2}_{sc}(S_{u,\ubar^{'}})}d\ubar^{'}\\\nonumber 
+&\int_{0}^{\ubar}||a^{\frac{I}{2}}\sum_{J_1+J_2+J_3+J_{4}=I} \nabla^{J_1}(\eta+\etabar)^{J_2}\hnabla^{J_3}\rho^{F}\hnabla^{J_{4}+1}\rho^{F}||_{L^{2}_{sc}(S_{u,\ubar^{'}})}d\ubar^{'}\\\nonumber +&\int_{0}^{\ubar}||a^{\frac{I}{2}}\sum_{J_1+J_2+J_3+J_{4}=I} \nabla^{J_1}(\eta+\etabar)^{J_2}\hnabla^{J_3}\sigma^{F}\hnabla^{J_{4}+1}\sigma^{F}||_{L^{2}_{sc}(S_{u,\ubar^{'}})}d\ubar^{'}\\\nonumber 
+&\int_{0}^{\ubar}||a^{\frac{I}{2}}\sum_{J_1+J_2+J_3+J_{4}=I} \nabla^{J_1}(\eta+\etabar)^{J_2}\nabla^{J_3}\widetilde{\beta}\nabla^{J_{4}}(\omegabar,\omegabar^{\dag})||_{L^{2}_{sc}(S_{u,\ubar^{'}})}d\ubar^{'}\\\nonumber 
+&\int_{0}^{\ubar}||a^{\frac{I}{2}}\sum_{J_1+J_2+J_3+J_{4}+J_{5}=I} \nabla^{J_1}(\eta+\etabar)^{J_2}\hnabla^{J_3}\alpha^{F}\hnabla^{J_{4}}(\rho^{F},\sigma^{F})\nabla^{J_{5}}(\omegabar,\omegabar^{\dag})||_{L^{2}_{sc}(S_{u,\ubar^{'}})}d\ubar^{'}\\\nonumber 
+&\int_{0}^{\ubar}||a^{\frac{I}{2}}\sum_{J_1+J_2+J_3+J_{4}+J_{5}=I} \nabla^{J_1}(\eta+\etabar)^{J_2}\nabla^{J_3}(\eta,\etabar)\nabla^{J_{4}}(\omega,\omegabar)\nabla^{J_{5}}(\eta,\etabar)||_{L^{2}_{sc}(S_{u,\ubar^{'}})}d\ubar^{'}\\\nonumber 
+&\int_{0}^{\ubar}||a^{\frac{I}{2}}\sum_{J_1+J_2+J_3+J_{4}=I} \nabla^{J_1}(\eta+\etabar)^{J_2}\hnabla^{J_3}(\eta,\etabar)\nabla^{J_{4}}(\rho,\sigma)||_{L^{2}_{sc}(S_{u,\ubar^{'}})}d\ubar^{'}\\\nonumber 
+&\int_{0}^{\ubar}||a^{\frac{I}{2}}\sum_{J_1+J_2+J_3+J_{4}+J_{5}=I} \nabla^{J_1}(\eta+\etabar)^{J_2}\hnabla^{J_3}(\eta,\etabar)\hnabla^{J_{4}}(\rho^{F},\sigma^{F})\hnabla^{J_{5}}(\rho^{F},\sigma^{F})||_{L^{2}_{sc}(S_{u,\ubar^{'}})}d\ubar^{'}\\\nonumber 
+&\int_{0}^{\ubar}||a^{\frac{I}{2}}\sum_{J_1+J_2+J_3+J_{4}=I} \nabla^{J_1}(\eta+\etabar)^{J_2}\nabla^{J_3}(\tr\chi,\chihat)\hnabla^{J_{4}+1}(\omegabar,\omegabar^{\dag})||_{L^{2}_{sc}(S_{u,\ubar^{'}})}d\ubar^{'}\\\nonumber 
+&\int_{0}^{\ubar}||a^{\frac{I}{2}}\sum_{J_1+J_2+J_3+J_{4}+J_{5}=I} \nabla^{J_1}(\eta+\etabar)^{J_2}\nabla^{J_3}(\tr\chi,\chihat)\nabla^{J_{4}}\etabar\nabla^{J_{5}}(\omegabar,\omegabar^{\dag})||_{L^{2}_{sc}(S_{u,\ubar^{'}})}d\ubar^{'}\\\nonumber 
+&\int_{0}^{\ubar}||a^{\frac{I}{2}}\sum_{J_1+J_2+J_3+J_{4}+J_{5}=I} \nabla^{J_1}(\eta+\etabar)^{J_2}\nabla^{J_3}(\tr\chibar,\chibarhat)\hnabla^{J_{4}}\alpha^{F}\hnabla^{J_{5}}(\rho^{F},\sigma^{F})||_{L^{2}_{sc}(S_{u,\ubar^{'}})}d\ubar^{'}\\\nonumber 
+&\int_{0}^{\ubar}||a^{\frac{I}{2}}\sum_{J_1+J_2+J_3+J_{4}+J_{5}=I} \nabla^{J_1}(\eta+\etabar)^{J_2}\nabla^{J_3}(\tr\chi,\chihat)\hnabla^{J_{4}}\alphabar^{F}\hnabla^{J_{5}}(\rho^{F},\sigma^{F})||_{L^{2}_{sc}(S_{u,\ubar^{'}})}d\ubar^{'}\\\nonumber 
+&\int_{0}^{\ubar}||a^{\frac{I}{2}}\sum_{J_1+J_2+J_3+J_{4}=I} \nabla^{J_1}(\eta+\etabar)^{J_2}\nabla^{J_3}\tr\chi\hnabla^{J_{4}}\widetilde{\betabar})||_{L^{2}_{sc}(S_{u,\ubar^{'}})}d\ubar^{'}\\\nonumber 
+&\int_{0}^{\ubar}||a^{\frac{I}{2}}\sum_{J_1+J_2+J_3+J_{4}=I} \nabla^{J_1}(\eta+\etabar)^{J_2}\nabla^{J_3}\omega\hnabla^{J_{4}}\widetilde{\betabar}||_{L^{2}_{sc}(S_{u,\ubar^{'}})}d\ubar^{'}\\\nonumber 
+&\int_{0}^{\ubar}||a^{\frac{I}{2}}\sum_{J_1+J_2+J_3+J_{4}=I} \nabla^{J_1}(\eta+\etabar)^{J_2}\nabla^{J_3}\chibarhat\hnabla^{J_{4}}\widetilde{\betabar}||_{L^{2}_{sc}(S_{u,\ubar^{'}})}d\ubar^{'}\\\nonumber 
+&\int_{0}^{\ubar}||a^{\frac{I}{2}}\sum_{J_1+J_2+J_3 +J_4=I} \nabla^{J_1}(\eta+\etabar)^{J_2}\nabla^{J_3}(\chihat,\tr\chi)\nabla^{J_4} \kappabar ||_{L^{2}_{sc}(S_{u,\ubar^{'}})}d\ubar^{'}\\\nonumber 
+&\int_{0}^{\ubar}||a^{\frac{I}{2}}\sum_{J_1+J_2+J_3+J_4=I-1 }\nabla^{J_1} (\eta+\etabar)^{J_2+1}\nabla^{J_3}(\chihat,\tr\chi)\nabla^{J_4}\kappabar||_{L^{2}_{sc}(S_{u,\ubar^{'}})}d\ubar^{'}\\
+&\int_{0}^{\ubar}||a^{\frac{I}{2}}\sum_{J_{1}+J_{2}+J_{3}+J_{4}+J_{5}=I-1}\nabla^{J_{1}}(\eta+\underline{\eta})^{J_{2}}\hnabla^{J_{3}}\alpha^{F}\hnabla^{J_{4}}(\rho^{F},\sigma^{F})\nabla^{J_{5}}\kappabar||_{L^{2}_{sc}(S_{u,\ubar^{'}})}d\ubar^{'}.
\end{align}
Now we estimate each term separately. We have 
\begin{eqnarray}
||a^{\frac{I}{2}}\nabla^{I}\kappa||_{L^{2}_{sc}(S_{u,0})}\lesssim \mathcal{I}^{0}\lesssim 1,
\end{eqnarray}
\begin{align} \nonumber &
\int_{0}^{\ubar}||a^{\frac{I}{2}}\sum_{J_1+J_2+J_3+J_{4}=I} \nabla^{J_1}(\eta+\etabar)^{J_2}\nabla^{J_3}\omegabar\nabla^{J_{4}+1}\omega||_{L^{2}_{sc}(S_{u,\ubar^{'}})}d\ubar^{'}\\
\lesssim &1+\frac{1}{|u|}||a^{\frac{I}{2}}\nabla^{I+1}\omega||_{L^{2}_{sc}(S_{u,\ubar^{'}})}d\ubar^{'}\lesssim 1,
\end{align}
\begin{align}\nonumber & 
\int_{0}^{\ubar}||a^{\frac{I}{2}}\sum_{J_1+J_2+J_3+J_{4}=I} \nabla^{J_1}(\eta+\etabar)^{J_2}\nabla^{J_3+1}\omegabar\nabla^{J_{4}}\omega||_{L^{2}_{sc}(S_{u,\ubar^{'}})}d\ubar^{'}\\
\lesssim &1+\frac{1}{|u|}\int_{0}^{\ubar}||a^{\frac{I}{2}}\nabla^{I+1}\omegabar||_{L^{2}_{sc}(S_{u,\ubar^{'}})}d\ubar^{'},
\end{align}
\begin{align}\nonumber
&\int_{0}^{\ubar}||a^{\frac{I}{2}}\sum_{J_1+J_2+J_3+J_{4}=I} \nabla^{J_1}(\eta+\etabar)^{J_2}\nabla^{J_3}(\eta,\etabar)\nabla^{J_{4}+1}\eta||_{L^{2}_{sc}(S_{u,\ubar^{'}})}d\ubar^{'}\\
\lesssim &1+\frac{1}{a}\frac{a}{|u|}||a^{\frac{N+4}{2}}\nabla^{N+5}\eta||_{L^{2}_{sc}(H^{(0,\ubar)}_{u})}\lesssim 1+\frac{1}{a}\mathcal{R},
\end{align}
\begin{align}\nonumber &
\int_{0}^{\ubar}||a^{\frac{I}{2}}\sum_{J_1+J_2+J_3+J_{4}=I} \nabla^{J_1}(\eta+\etabar)^{J_2}\nabla^{J_3}(\eta,\etabar)\nabla^{J_{4}+1}\etabar||_{L^{2}_{sc}(S_{u,\ubar^{'}})}d\ubar^{'}\\
\lesssim &1+\frac{1}{a}\frac{a}{|u|}||a^{\frac{N+4}{2}}\nabla^{N+5}\etabar||_{L^{2}_{sc}(H^{(0,\ubar)}_{u})}\lesssim 1+\frac{1}{a}\mathcal{R},
\end{align}

\begin{align}\nonumber &
\int_{0}^{\ubar}||a^{\frac{I}{2}}\sum_{J_1+J_2+J_3+J_{4}=I} \nabla^{J_1}(\eta+\etabar)^{J_2}\hnabla^{J_3}\rho^{F}\hnabla^{J_{4}+1}\rho^{F}||_{L^{2}_{sc}(S_{u,\ubar^{'}})}d\ubar^{'}\\
\lesssim & 1+\frac{1}{|u|}||a^{\frac{I}{2}}\hnabla^{I+1}\rho^{F}||_{L^{2}_{sc}(H^{(0,\ubar)}_{u})}\lesssim 1+\frac{\mathbb{YM}}{|u|}, 
\end{align}
\begin{align}\nonumber & 
\int_{0}^{\ubar}||a^{\frac{I}{2}}\sum_{J_1+J_2+J_3+J_{4}=I} \nabla^{J_1}(\eta+\etabar)^{J_2}\hnabla^{J_3}\sigma^{F}\hnabla^{J_{4}+1}\sigma^{F}||_{L^{2}_{sc}(S_{u,\ubar^{'}})}d\ubar^{'}\\ \lesssim & 1+\frac{1}{|u|}||a^{\frac{I}{2}}\hnabla^{I+1}\sigma^{F}||_{L^{2}_{sc}(H^{(0,\ubar)}_{u})}\lesssim 1+\frac{\mathbb{YM}}{|u|},
\end{align}
\begin{align}\nonumber& 
\int_{0}^{\ubar}||a^{\frac{I}{2}}\sum_{J_1+J_2+J_3+J_{4}=I} \nabla^{J_1}(\eta+\etabar)^{J_2}\nabla^{J_3}\widetilde{\beta}\nabla^{J_{4}}(\omegabar,\omegabar^{\dag})||_{L^{2}_{sc}(S_{u,\ubar^{'}})}d\ubar^{'}\\ 
\lesssim &1+\frac{1}{|u|}(\mathcal{R}+\Gamma)\lesssim 1+\frac{\mathcal{R}}{|u|},
\end{align}
\begin{align} \nonumber
&\int_{0}^{\ubar}||a^{\frac{I}{2}}\sum_{J_1+J_2+J_3+J_{4}+J_{5}=I} \nabla^{J_1}(\eta+\etabar)^{J_2}\hnabla^{J_3}\alpha^{F}\hnabla^{J_{4}}(\rho^{F},\sigma^{F})\nabla^{J_{5}}(\omegabar,\omegabar^{\dag})||_{L^{2}_{sc}(S_{u,\ubar^{'}})}d\ubar^{'}\\
\lesssim & \frac{a^{\frac{1}{2}}}{|u|^{2}}\Gamma\lesssim 1,
\end{align}
\begin{align}\nonumber &
\int_{0}^{\ubar}||a^{\frac{I}{2}}\sum_{J_1+J_2+J_3+J_{4}+J_{5}=I} \nabla^{J_1}(\eta+\etabar)^{J_2}\nabla^{J_3}(\eta,\etabar)\nabla^{J_{4}}(\omega,\omegabar)\nabla^{J_{5}}(\eta,\etabar)||_{L^{2}_{sc}(S_{u,\ubar^{'}})}d\ubar^{'}\\ \lesssim &\frac{1}{|u|^{2}}\Gamma\lesssim 1,
\end{align}
\begin{align} \nonumber
&\int_{0}^{\ubar}||a^{\frac{I}{2}}\sum_{J_1+J_2+J_3+J_{4}=I} \nabla^{J_1}(\eta+\etabar)^{J_2}\hnabla^{J_3}(\eta,\etabar)\nabla^{J_{4}}(\rho,\sigma)||_{L^{2}_{sc}(S_{u,\ubar^{'}})}d\ubar^{'}\\
\lesssim &1+\frac{1}{|u|}||a^{\frac{I}{2}}\nabla^{I}(\rho,\sigma)||_{L^{2}_{sc}(H^{(0,\ubar)}_{u})}\lesssim 1+\frac{1}{|u|}\mathcal{R}[\rho,\sigma],
\end{align}
\begin{align} \nonumber
&\int_{0}^{\ubar}||a^{\frac{I}{2}}\sum_{J_1+J_2+J_3+J_{4}+J_{5}=I} \nabla^{J_1}(\eta+\etabar)^{J_2}\hnabla^{J_3}(\eta,\etabar)\hnabla^{J_{4}}(\rho^{F},\sigma^{F})\hnabla^{J_{5}}(\rho^{F},\sigma^{F})||_{L^{2}_{sc}(S_{u,\ubar^{'}})}d\ubar^{'}\\
\lesssim &\frac{1}{|u|^{2}}\Gamma\lesssim 1,
\end{align}
\begin{align}\nonumber &
\int_{0}^{\ubar}||a^{\frac{I}{2}}\sum_{J_1+J_2+J_3+J_{4}=I} \nabla^{J_1}(\eta+\etabar)^{J_2}\nabla^{J_3}(\tr\chi,\chihat)\hnabla^{J_{4}+1}(\omegabar,\omegabar^{\dag})||_{L^{2}_{sc}(S_{u,\ubar^{'}})}d\ubar^{'} \\ 
\lesssim & 1+\frac{a^{\frac{1}{2}}}{|u|}\int_{0}^{\ubar}||a^{\frac{I}{2}}\nabla^{I+1}(\omegabar,\omegabar^{\dag})||_{L^{2}_{sc}(S_{u,\ubar^{'}})}d\ubar^{'},
\end{align}
\begin{align} \nonumber &
\int_{0}^{\ubar}||a^{\frac{I}{2}}\sum_{J_1+J_2+J_3+J_{4}+J_{5}=I} \nabla^{J_1}(\eta+\etabar)^{J_2}\nabla^{J_3}(\tr\chi,\chihat)\nabla^{J_{4}}\etabar\nabla^{J_{5}}(\omegabar,\omegabar^{\dag})||_{L^{2}_{sc}(S_{u,\ubar^{'}})}d\ubar^{'}\\
\lesssim &\frac{a^{\frac{1}{2}}}{|u|^{2}}\Gamma\lesssim 1,
\end{align}
\begin{align}
\nonumber &\int_{0}^{\ubar}||a^{\frac{I}{2}}\sum_{J_1+J_2+J_3+J_{4}+J_{5}=I} \nabla^{J_1}(\eta+\etabar)^{J_2}\nabla^{J_3}(\tr\chibar,\chibarhat)\hnabla^{J_{4}}\alpha^{F}\hnabla^{J_{5}}(\rho^{F},\sigma^{F})||_{L^{2}_{sc}(S_{u,\ubar^{'}})}d\ubar^{'}\\
\lesssim & \frac{1}{a^{\frac{1}{2}}}\Gamma\lesssim 1,
\end{align}
\begin{align} \nonumber
&\int_{0}^{\ubar}||a^{\frac{I}{2}}\sum_{J_1+J_2+J_3+J_{4}+J_{5}=I} \nabla^{J_1}(\eta+\etabar)^{J_2}\nabla^{J_3}(\tr\chi,\chihat)\hnabla^{J_{4}}\alphabar^{F}\hnabla^{J_{5}}(\rho^{F},\sigma^{F})||_{L^{2}_{sc}(S_{u,\ubar^{'}})}d\ubar^{'}\\
\lesssim &
\frac{a^{\frac{1}{2}}}{|u|}\Gamma\lesssim 1,
\end{align}
\begin{align}
\int_{0}^{\ubar}||a^{\frac{I}{2}}\sum_{J_1+J_2+J_3+J_{4}=I} \nabla^{J_1}(\eta+\etabar)^{J_2}\nabla^{J_3}\tr\chi\hnabla^{J_{4}}\widetilde{\betabar})||_{L^{2}_{sc}(S_{u,\ubar^{'}})}d\ubar^{'}
\lesssim 1+\frac{1}{|u|}\mathcal{R}[\widetilde{\betabar}]
\end{align}
\begin{align} 
&\int_{0}^{\ubar}||a^{\frac{I}{2}}\sum_{J_1+J_2+J_3+J_{4}=I} \nabla^{J_1}(\eta+\etabar)^{J_2}\nabla^{J_3}\omega\hnabla^{J_{4}}\widetilde{\betabar}||_{L^{2}_{sc}(S_{u,\ubar^{'}})}d\ubar^{'}
\lesssim 1+\frac{1}{|u|}\mathcal{R}[\widetilde{\betabar}],
\end{align}
\begin{eqnarray}
\int_{0}^{\ubar}||a^{\frac{I}{2}}\sum_{J_1+J_2+J_3+J_{4}=I} \nabla^{J_1}(\eta+\etabar)^{J_2}\nabla^{J_3}\chibarhat\hnabla^{J_{4}}\widetilde{\betabar}||_{L^{2}_{sc}(S_{u,\ubar^{'}})}d\ubar^{'}
\lesssim 1+\frac{1}{a^{\frac{1}{2}}}\mathcal{R}[\widetilde{\betabar}],
\end{eqnarray}
\begin{align}\nonumber&
\int_{0}^{\ubar}||a^{\frac{I}{2}}\sum_{J_1+J_2+J_3 +J_4=I} \nabla^{J_1}(\eta+\etabar)^{J_2}\nabla^{J_3}(\chihat,\tr\chi)\nabla^{J_4} \kappabar ||_{L^{2}_{sc}(S_{u,\ubar^{'}})}d\ubar^{'}\\
\lesssim & \frac{a^{\frac{1}{2}}}{|u|}\int_{0}^{\ubar}||a^{\frac{I}{2}}\nabla^{I}\kappabar||||_{L^{2}_{sc}(S_{u,\ubar^{'}})}d\ubar^{'},
\end{align}
\begin{align} \nonumber
&\int_{0}^{\ubar}||a^{\frac{I}{2}}\sum_{J_1+J_2+J_3+J_4=I-1 }\nabla^{J_1} (\eta+\etabar)^{J_2+1}\nabla^{J_3}(\chihat,\tr\chi)\nabla^{J_4}\kappabar||_{L^{2}_{sc}(S_{u,\ubar^{'}})}d\ubar^{'}\\
\lesssim &\frac{a}{|u|^{2}}\int_{0}^{\ubar}||a^{\frac{I}{2}}\nabla^{I}\kappabar||||_{L^{2}_{sc}(S_{u,\ubar^{'}})}d\ubar^{'}
\end{align}and finally 
\begin{align}\nonumber &\int_{0}^{\ubar}||a^{\frac{I}{2}}\sum_{J_{1}+J_{2}+J_{3}+J_{4}+J_{5}=I-1}\nabla^{J_{1}}(\eta+\underline{\eta})^{J_{2}}\hnabla^{J_{3}}\alpha^{F}\hnabla^{J_{4}}(\rho^{F},\sigma^{F})\nabla^{J_{5}}\kappabar||_{L^{2}_{sc}(S_{u,\ubar^{'}})}d\ubar^{'}\\
\lesssim &\frac{a}{|u|^{2}}\int_{0}^{\ubar}||a^{\frac{I}{2}}\nabla^{I}\kappabar||||_{L^{2}_{sc}(S_{u,\ubar^{'}})}d\ubar^{'}.
\end{align}
Here we have utilized the elliptic estimate for $\omega$ from proposition (\ref{omegaestimate}). In addition, the $L^{2}_{sc}(S_{u,\ubar})$ norm of $\nabla^{J}\omegabar^{\dag}$ for $J\leq N+4$ may be estimated directly from the evolution equation (\ref{eq:omegabardag}) along with the trivial boundary condition for $\omegabar^{\dag}$ along $\Hbar_{0}$. Collecting all the terms together, we have 
\begin{align}\nonumber
||a^{\frac{I}{2}}\nabla^{I}\kappabar||_{L^{2}_{sc}(S_{u,\ubar})}\lesssim& 1+\frac{a^{\frac{1}{2}}}{|u|}\int_{0}^{\ubar}||a^{\frac{I}{2}}\nabla^{I+1}(\omegabar,\omegabar^{\dag})||_{L^{2}_{sc}(S_{u,\ubar^{'}})}d\ubar^{'}\nonumber+\frac{a^{\frac{1}{2}}}{|u|}\int_{0}^{\ubar}||a^{\frac{I}{2}}\nabla^{I}\kappabar||_{L^{2}_{sc}(S_{u,\ubar^{'}})}d\ubar^{'}\\
+&\frac{a}{|u|^{2}}\int_{0}^{\ubar}||a^{\frac{I}{2}}\nabla^{I}\kappabar||||_{L^{2}_{sc}(S_{u,\ubar^{'}})}d\ubar^{'}+\mathbb{YM}+\mathcal{R},
\end{align}
which, upon using Gr\"onwall's inequality, yields 
\begin{align}
||a^{\frac{I}{2}}\nabla^{I}\kappabar||_{L^{2}_{sc}(S_{u,\ubar})}\lesssim 1+\mathbb{YM}+\mathcal{R}+\frac{a^{\frac{1}{2}}}{|u|}\int_{0}^{\ubar}||a^{\frac{I}{2}}\nabla^{I+1}(\omegabar,\omegabar^{\dag})||_{L^{2}_{sc}(S_{u,\ubar^{'}})}d\ubar^{'},
\end{align}
due to the fact that $|\ubar|\lesssim 1$. We now notice that the elliptic equations
\begin{align}
-\nabla\omegabar+~^{*}\nabla\omegabar^{\dag}=-\kappabar-\frac{1}{2}\widetilde{\betabar}
\end{align}
yield the $\text{div}-\text{curl}$ system 
\begin{align}
\text{div}\nabla\omegabar=-\text{div}\kappabar-\frac{1}{2}\text{div}\widetilde{\betabar},~
\text{curl}\nabla\omegabar=0,\\
\text{curl}\omegabar^{\dag}=\text{curl}\kappabar+\frac{1}{2}\text{curl}\widetilde{\betabar},~
\text{div}\nabla\omegabar^{\dag}=0.
\end{align}
An application of the elliptic estimates and transport estimates for $\kappabar$ yield
\begin{align}\nonumber
||a^{\frac{I+1}{2}}\nabla^{I+1}(\omegabar,\omegabar^{\dag})||_{L^{2}_{sc}(S_{u,\ubar})}\lesssim& \sum_{J\leq N+4}||a^{\frac{J}{2}}\nabla^{J}\kappabar||_{L^{2}_{sc}(S_{u,\ubar})}+\sum_{J\leq N+4}||a^{\frac{J}{2}}\nabla^{J}\widetilde{\betabar}||_{L^{2}_{sc}(S_{u,\ubar})}\\\nonumber 
+&\sum_{J\leq N+4}||a^{\frac{J}{2}}\nabla^{J}(\omegabar,\omegabar^{\dag})||_{L^{2}_{sc}(S_{u,\ubar})}\\
\lesssim &1+\mathbb{YM}+\mathcal{R}+\frac{a^{\frac{1}{2}}}{|u|}\int_{0}^{\ubar}||a^{\frac{I}{2}}\nabla^{I+1}(\omegabar,\omegabar^{\dag})||_{L^{2}_{sc}(S_{u,\ubar^{'}})}d\ubar^{'}
\end{align}
and therefore 
\begin{align}
\label{eq:omegabar}
||a^{\frac{I+1}{2}}\nabla^{I+1}(\omegabar,\omegabar^{\dag})||_{L^{2}_{sc}(S_{u,\ubar})}
\lesssim 1+\mathbb{YM}+\mathcal{R}.
\end{align}
Now an integration of the square of the previous inequality (\ref{eq:omegabar}) with respect to the measure $\frac{a}{|u^{'}|^{2}}d\ubar^{'}$ yields 
\begin{eqnarray}
\|(a^{\frac{1}{2}})^{N+4}\nabla^{N+5}\underline{\omega}\|_{\mathcal{L}^{2}_{sc}(\underline{H}^{(u_{\infty},u)}_{\underline{u}})}\lesssim 1+\mathcal{R}+\mathbb{YM}.
\end{eqnarray}
This concludes the proof of the lemma.
\end{proof}

\begin{proposition}
Under the assumptions of Theorem \ref{main1} and the bootstrap assumptions \eqref{bootstrap}-\eqref{ellboot}, there hold:

\[ \intu \frac{a^{\frac{3}{2}}}{\upr^3}\scaletwoSuprime{(\al)^{N+4}\nabla^{N+5}\chibarhat} \duprime \lesssim 1,\]
\[ \intu \frac{a^2}{\upr^3} \scaletwoSuprime{(\al)^{N+4}\nabla^{N+5}\tr\chibar}\duprime \lesssim \mathcal{R}+\underline{\mathcal{R}}+\mathbb{YM}+1.\]
\end{proposition}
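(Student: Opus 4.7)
The plan is to establish the two bounds by a coupled elliptic--transport argument along $e_3$. At the level of a single sphere, the Codazzi relation controls $\nabla^{N+5}\chibarhat$ by $\nabla^{N+5}\tr\chibar$ together with $\nabla^{N+4}\tbetabar$, while the null structure equation for $\tr\chibar$ controls $\nabla^{N+5}\tr\chibar$ in an integrated sense. Feeding the elliptic estimate into the transport estimate and closing via Gr\"onwall will yield the $\tr\chibar$ bound; the $\chibarhat$ bound then follows from the elliptic step together with the weight asymmetry $\frac{a^{3/2}}{|u'|^3}$ versus $\frac{a^2}{|u'|^3}$, which differ by precisely the factor $\al$ that the Codazzi loss produces.

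For the elliptic step, apply Proposition \ref{divcurlprop} to the traceless symmetric $2$-tensor $\chibarhat$, using
\[
\div\chibarhat = \tfrac{1}{2}\nabla\tr\chibar - \tfrac{1}{2}(\etabar-\eta)\cdot(\chibarhat - \tfrac{1}{2}\tr\chibar\,\gamma) - \tbetabar + \tfrac{1}{2}\mathfrak{T}(e_3,\cdot)
\]
together with the standard expression for $\curl\chibarhat$ (schematically $\sigma + \chihat\wedge\chibarhat$ plus Yang-Mills contributions). After commuting with up to $N+4$ angular derivatives, passing to scale-invariant norms and absorbing lower-order contributions into the norms $\bbGamma,\mathcal{R},\mathbb{YM}$ already controlled in Sections~4--5, this yields the pointwise-in-$(u,\ubar)$ estimate
\[
\|(\al)^{N+4}\nabla^{N+5}\chibarhat\|_{L^2_{sc}(S_{u,\ubar})} \lesssim \|(\al)^{N+4}\nabla^{N+5}\tr\chibar\|_{L^2_{sc}(S_{u,\ubar})} + \al\,\|(\al)^{N+4}\nabla^{N+4}\tbetabar\|_{L^2_{sc}(S_{u,\ubar})} + 1 + \mathcal{R} + \mathbb{YM}.
\]

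For the transport step, commute
\[
\nabla_3\tr\chibar + \tfrac{1}{2}(\tr\chibar)^2 = -|\chibarhat|^2 - 2\omegabar\,\tr\chibar - |\alphabarF|^2
\]
with $\nabla^{N+5}$ using \eqref{c2} and apply the weighted transport inequality of Proposition \ref{3.6} with $\lambda_0 = (N+6)/2$. Only three source terms are genuinely top-order: the self-coupling $\chibarhat\cdot\nabla^{N+5}\chibarhat$, the damping-like $\omegabar\cdot\nabla^{N+5}\tr\chibar$, and the Yang-Mills pairing $\alphabarF\cdot\hnabla^{N+5}\alphabarF$. For the first, substitute the elliptic estimate to trade $\nabla^{N+5}\chibarhat$ for $\nabla^{N+5}\tr\chibar + \al\,\nabla^{N+4}\tbetabar$; the $\tbetabar$ piece, integrated against $\frac{a^2}{|u'|^3}du'$ via Cauchy--Schwarz, produces $\underline{\mathcal{R}}[\tbetabar]$, while the Sobolev-embedded $L^\infty$ bound $\|\chibarhat\|_{L^\infty_{sc}} \lesssim |u|/\al$ (from Proposition \ref{chihats}) leaves a coefficient $\sim \frac{1}{|u'|}$ in front of the self-coupled $\nabla^{N+5}\tr\chibar$ integrand, which is Gr\"onwall-integrable on $[u_\infty,-a/4]$. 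The Yang-Mills term is treated identically, producing $\mathbb{YM}[\alphabarF]$ via $\|\alphabarF\|_{L^\infty_{sc}}\lesssim 1$ implied by the bootstrap. All remaining commutator contributions are non-borderline and bounded by $1 + \mathcal{R} + \underline{\mathcal{R}} + \mathbb{YM}$ exactly as in the preceding elliptic propositions of this section.

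Gr\"onwall's inequality in $u'$ then closes the $\tr\chibar$ estimate with the claimed right-hand side. Re-substituting into the elliptic bound and integrating against $\frac{a^{3/2}}{|u'|^3}du'$ gives the $\chibarhat$ estimate; here the weight is precisely $\al^{-1}$ times the $\tr\chibar$ weight, so the extra $\al$ produced by the Codazzi estimate is harmless, and in fact the $\chibarhat$ bound has slack. The principal obstacle is the circular coupling between the two top-order unknowns: $\nabla^{N+5}\chibarhat$ and $\nabla^{N+5}\tr\chibar$ sit at the same peeling order, so naively each estimate depends on the other. The resolution is exactly the asymmetry of $L^\infty$ versus $L^2$ control on $\chibarhat$: the Sobolev-improved bound from Remark \ref{remarkhat} gives a $|u'|$-integrable prefactor in front of the feedback term $\int\frac{1}{|u'|}\|(\al)^{N+4}\nabla^{N+5}\tr\chibar\|_{L^2_{sc}}du'$, which is what makes Gr\"onwall applicable on the slab $u'\in[u_\infty,-a/4]$.
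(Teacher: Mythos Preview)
Your proposal is correct and follows essentially the same route as the paper: the coupled Codazzi elliptic estimate for $\chibarhat$ plus the $e_3$-weighted transport for (the renormalization of) $\tr\chibar$, closed via Gr\"onwall, with the $\tbetabar$ term handled by Cauchy--Schwarz against $\underline{H}$. Two small remarks: the paper runs the transport step on $\widetilde{\tr\chibar}=\tr\chibar+2/|u|$ rather than on $\tr\chibar$ itself (at the $\nabla^{N+5}$ level this is cosmetic, but it makes the $(\tr\chibar)^2$ source manifestly lower order), and you should also flag the companion top-order piece $\tr\chibar\cdot\nabla^{N+5}\omegabar$ from the $\omegabar\tr\chibar$ product, which the paper dispatches via Cauchy--Schwarz to the already-established $\underline{H}$ bound on $\nabla^{N+5}\omegabar$; finally, the $L^\infty$ control on $\chibarhat$ you invoke comes from Proposition~\ref{chihats}, not Remark~\ref{remarkhat} (which concerns $\chihat$).
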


\begin{proof}
We start the proof by recalling the $\nabla_{3}$ transport equation for $\widetilde{\tr\chibar}$
\begin{eqnarray}
\nabla_{3}\widetilde{\tr\chibar}+\tr\chibar\widetilde{\tr\chibar}=\frac{2}{|u|^{2}}(\Omega^{-1}-1)+|\widetilde{\tr\chibar}|^{2}+2\omegabar\tr\chibar-|\chibarhat|^{2}-|\alphabar^{F}|^{2}.
\end{eqnarray}
We now commute $\nabla^{I},I=N+5$ with $\nabla_3$: 
\begin{align}\nonumber &
\nabla_{3}\nabla^{I}\widetilde{\tr\chibar}+(\frac{I}{2}+1)\tr\chibar\widetilde{\tr\chibar}\\ =&\frac{1}{|u|^{2}}\sum_{J_1+J_2+J_3=I}\nabla^{J_1}(\eta+\etabar)^{J_2}\nabla^{J_3}(\Omega^{-1}-1)\\\nonumber
+&\sum_{J_1+J_2+J_3+J_4=I}\nabla^{J_1}(\eta+\etabar)^{J_2}\nabla^{J_3}\widetilde{\tr\chibar}\nabla^{J_4}\widetilde{\tr\chibar}\\\nonumber 
+&\sum_{J_1+J_2+J_3+J_4=I}\nabla^{J_1}(\eta+\etabar)^{J_2}\nabla^{J_3}\omegabar\nabla^{J_4}\tr\chibar\\\nonumber +&\sum_{J_1+J_2+J_3+J_4=I}\nabla^{J_1}(\eta+\etabar)^{J_2}\nabla^{J_3}\chibarhat\nabla^{J_4}\chibarhat\\\nonumber 
+&\sum_{J_1+J_2+J_3+J_4=I}\nabla^{J_1}(\eta+\etabar)^{J_2}\hnabla^{J_3}\alphabar^{F}\hnabla^{J_4}\alphabar^{F}
\\\nonumber 
+&\sum_{J_1+J_2+J_3+J_4=I}\nabla^{J_1}(\eta+\etabar)^{J_2}\nabla^{J_3}(\chibarhat,\hsp \tildetr)\nabla^{J_4}\widetilde{\tr\chibar}\\\nonumber+&\sum_{J_{1}+J_{2}+J_{3} +J_{4}=I-1}\nabla^{J_{1}}(\eta+\underline{\eta})^{J_{2} +1}\hnabla^{J_{3}}\tr\underline{\chi}\hnabla^{J_{4}}\widetilde{\tr\chibar}\\\nonumber+&\sum_{J_1+J_2+J_3+J_4=I-1}\nabla^{J_1} (\eta+\etabar)^{J_2+1}\nabla^{J_3}(\chibarhat,\tr\chibar)\nabla^{J_4}\widetilde{\tr\chibar}\\\nonumber+ &\sum_{J_{1}+J_{2}+J_{3}+J_{4}+J_{5}=I-1}\nabla^{J_{1}}(\eta+\underline{\eta})^{J_{2}}\hnabla^{J_{3}}\alphabar^{F}\hnabla^{J_{4}}(\rho^{F},\sigma^{F})\nabla^{J_{5}}\widetilde{\tr\chibar}\\+ &\sum_{J_1+J_2+J_3+J_4=I-1}\nabla^{J_1}(\eta+\etabar)^{J_2}\hnabla^{J_3}\alphabar^F \nabla^{J_4}\widetilde{\tr\chibar}:=\mathcal{E}.
\end{align}
An application of the weighted transport inequality from Proposition \ref{3.6} yields
\begin{eqnarray}
|u|^{I+1}||\nabla^{I}\widetilde{\tr\chibar}||_{L^{2}(S_{u,\ubar})}\lesssim |u_{\infty}|^{I+1}||\nabla^{I}\widetilde{\tr\chibar}||_{L^{2}(S_{u_{\infty},\ubar})}+\int_{u_{\infty}}^{u}|u^{'}|^{I+1}||\mathcal{E}||_{L^{2}(S_{u^{'},\ubar})}du^{'}.
\end{eqnarray}
We now calculate the relevant signatures.  We have $s_{2}(\widetilde{\tr\chibar})=1$ and therefore $s_{2}(\nabla^{I}\widetilde{\tr\chibar})=\frac{I}{2}+1$ and $s_{2}(\mathcal{E})=s_{2}(\nabla_{3}\nabla^{I}\widetilde{\tr\chibar})=\frac{I}{2}+2$. Therefore 
\begin{eqnarray}
|u|^{I+1}||\nabla^{I}\widetilde{\tr\chibar}||_{L^{2}(S_{u,\ubar})}&=&a^{\frac{I}{2}+1}|u|^{-(I+2)}||\nabla^{I}\widetilde{\tr\chibar}||_{L^{2}_{sc}(S_{u,\ubar})},\\
|u^{'}|^{I+1}||\mathcal{E}||_{L^{2}(S_{u,\ubar})}&=&a^{\frac{I}{2}+2}|u^{'}|^{-(I+4)}||\mathcal{E}||_{L^{2}_{sc}(S_{u^{'},\ubar})}.
\end{eqnarray}
The transport inequality in terms of the scale  invariant norm reads \footnote{The factor of $a/|u|$ is kept with $||\nabla^{I}\widetilde{\tr\chibar}||_{L^{2}_{sc}(S_{u,\ubar})}$ since we need to integrate in the $u$ direction with respect to the measure $\frac{a}{|u|^{2}}du$}
\begin{align} \nonumber
&\frac{a}{|u|}a^{\frac{I-1}{2}}||\nabla^{I}\widetilde{\tr\chibar}||_{L^{2}_{sc}(S_{u,\ubar})} \\ \lesssim & \frac{a}{|u_{\infty}|}a^{\frac{I-1}{2}}||\nabla^{I}\widetilde{\tr\chibar}||_{L^{2}_{sc}(S_{u_{\infty},\ubar})}\\\nonumber 
+&\int_{u_{\infty}}^{u}\frac{a^{2}}{|u^{'}|^{3}}||a^{\frac{I-1}{2}}\frac{1}{|u^{'}|^{2}}\sum_{J_1+J_2+J_3=I}\nabla^{J_1}(\eta+\etabar)^{J_2}\nabla^{J_3}(\Omega^{-1}-1)||_{L^{2}_{sc}(S_{u^{'},\ubar})}\\\nonumber 
+&\int_{u_{\infty}}^{u}\frac{a^{2}}{|u^{'}|^{3}}||a^{\frac{I-1}{2}}\sum_{J_1+J_2+J_3+J_4=I}\nabla^{J_1}(\eta+\etabar)^{J_2}\nabla^{J_3}\widetilde{\tr\chibar}\nabla^{J_4}\widetilde{\tr\chibar}||_{L^{2}_{sc}(S_{u^{'},\ubar})}\\\nonumber 
+&\int_{u_{\infty}}^{u}\frac{a^{2}}{|u^{'}|^{3}}||a^{\frac{I-1}{2}}\sum_{J_1+J_2+J_3+J_4=I}\nabla^{J_1}(\eta+\etabar)^{J_2}\nabla^{J_3}\omegabar\nabla^{J_4}\tr\chibar||_{L^{2}_{sc}(S_{u^{'},\ubar})}\\\nonumber 
+&\int_{u_{\infty}}^{u}\frac{a^{2}}{|u^{'}|^{3}}||a^{\frac{I-1}{2}}\sum_{J_1+J_2+J_3+J_4=I}\nabla^{J_1}(\eta+\etabar)^{J_2}\nabla^{J_3}\chibarhat\nabla^{J_4}\chibarhat||_{L^{2}_{sc}(S_{u^{'},\ubar})}\\\nonumber 
+&\int_{u_{\infty}}^{u}\frac{a^{2}}{|u^{'}|^{3}}||a^{\frac{I-1}{2}}\sum_{J_1+J_2+J_3+J_4=I}\nabla^{J_1}(\eta+\etabar)^{J_2}\hnabla^{J_3}\alphabar^{F}\hnabla^{J_4}\alphabar^{F}||_{L^{2}_{sc}(S_{u^{'},\ubar})}\\\nonumber 
+&\int_{u_{\infty}}^{u}\frac{a^{2}}{|u^{'}|^{3}}||a^{\frac{I-1}{2}}\sum_{J_1+J_2+J_3+J_4=I}\nabla^{J_1}(\eta+\etabar)^{J_2}\nabla^{J_3}(\chibarhat,\hsp \tildetr)\nabla^{J_4}\widetilde{\tr\chibar}||_{L^{2}_{sc}(S_{u^{'},\ubar})}\\\nonumber 
+&\int_{u_{\infty}}^{u}\frac{a^{2}}{|u^{'}|^{3}}||a^{\frac{I-1}{2}}\sum_{J_{1}+J_{2}+J_{3} +J_{4}=I-1}\nabla^{J_{1}}(\eta+\underline{\eta})^{J_{2} +1}\hnabla^{J_{3}}\tr\underline{\chi}\hnabla^{J_{4}}\widetilde{\tr\chibar}||_{L^{2}_{sc}(S_{u^{'},\ubar})}\\\nonumber 
+&\int_{u_{\infty}}^{u}\frac{a^{2}}{|u^{'}|^{3}}||a^{\frac{I-1}{2}}\sum_{J_1+J_2+J_3+J_4=I-1}\nabla^{J_1} (\eta+\etabar)^{J_2+1}\nabla^{J_3}(\chibarhat,\tr\chibar)\nabla^{J_4}\widetilde{\tr\chibar}||_{L^{2}_{sc}(S_{u^{'},\ubar})}\\ 
+&\int_{u_{\infty}}^{u}\frac{a^{2}}{|u^{'}|^{3}}||a^{\frac{I-1}{2}}\sum_{J_{1}+J_{2}+J_{3}+J_{4}+J_{5}=I-1}\nabla^{J_{1}}(\eta+\underline{\eta})^{J_{2}}\hnabla^{J_{3}}\alphabar^{F}\hnabla^{J_{4}}(\rho^{F},\sigma^{F})\nabla^{J_{5}}\widetilde{\tr\chibar}||_{L^{2}_{sc}(S_{u^{'},\ubar})}.
\end{align}
\noindent Now we estimate each term separately as follows:
\begin{eqnarray}
\frac{a}{|u_{\infty}|}a^{\frac{I-1}{2}}||\nabla^{I}\widetilde{\tr\chibar}||_{L^{2}_{sc}(S_{u_{\infty},\ubar})}\lesssim 1,
\end{eqnarray}
\begin{align}\nonumber &
\int_{u_{\infty}}^{u}\frac{a^{2}}{|u^{'}|^{3}}||a^{\frac{I-1}{2}}\frac{1}{|u^{'}|^{2}}\sum_{J_1+J_2+J_3=I}\nabla^{J_1}(\eta+\etabar)^{J_2}\nabla^{J_3}(\Omega^{-1}-1)||_{L^{2}_{sc}(S_{u^{'},\ubar})}\\\nonumber 
=& \int_{u_{\infty}}^{u}\frac{a^{2}}{|u^{'}|^{3}}||a^{\frac{I-1}{2}}\frac{1}{|u^{'}|^{2}}\sum_{J_1+J_2+J_3=I}\nabla^{J_1}(\eta+\etabar)^{J_2}\nabla^{J_3}(\int_{0}^{\ubar}\omega \hsp \text{d}\ubar^{'})||_{L^{2}_{sc}(S_{u^{'},\ubar})}\\ 
\lesssim& 1+\mathcal{\underline{\mathcal{R}}},
\end{align}
\begin{align} \nonumber
&\int_{u_{\infty}}^{u}\frac{a^{2}}{|u^{'}|^{3}}||a^{\frac{I-1}{2}}\sum_{J_1+J_2+J_3+J_4=I}\nabla^{J_1}(\eta+\etabar)^{J_2}\nabla^{J_3}\widetilde{\tr\chibar}\nabla^{J_4}\widetilde{\tr\chibar}||_{L^{2}_{sc}(S_{u^{'},\ubar})}\\
\lesssim & \int_{u_{\infty}}^{u}\frac{1}{|u^{'}|^{2}}\frac{a}{|u^{'}|}||a^{\frac{I-1}{2}}\nabla^{I}\widetilde{\tr\chibar}||_{L^{2}_{sc}(S_{u,\ubar})}du^{'},
\end{align}
\begin{align}\nonumber &
\int_{u_{\infty}}^{u}\frac{a^{2}}{|u^{'}|^{3}}||a^{\frac{I-1}{2}}\sum_{J_1+J_2+J_3+J_4=I}\nabla^{J_1}(\eta+\etabar)^{J_2}\nabla^{J_3}\omegabar\nabla^{J_4}\tr\chibar||_{L^{2}_{sc}(S_{u^{'},\ubar})}\\\nonumber 
\lesssim& \int_{u_{\infty}}^{u}\frac{a}{|u^{'}|^{3}}\frac{a}{|u^{'}|}||a^{\frac{I-1}{2}}\nabla^{I}\widetilde{\tr\chibar}||_{L^{2}_{sc}(S_{u,\ubar})}du^{'}+\int_{u_{\infty}}^{u}\frac{a}{|u^{'}|^{2}}||\nabla^{I}\omegabar||_{L^{2}_{sc}(S_{u^{'},\ubar})}\\\nonumber 
\lesssim & \int_{u_{\infty}}^{u}\frac{a}{|u^{'}|^{3}}\frac{a}{|u^{'}|}||a^{\frac{I-1}{2}}\nabla^{I}\widetilde{\tr\chibar}||_{L^{2}_{sc}(S_{u,\ubar})}du^{'}\\\nonumber +&\left(\int_{u_{\infty}}^{u}\frac{a}{|u^{'}|^{2}}du^{'}\right)^{\frac{1}{2}}\left(\int_{u_{\infty}}^{u}\frac{a}{|u^{'}|^{2}}||a^{\frac{I-1}{2}}\nabla^{I}\omegabar||^{2}_{L^{2}_{sc}(S_{u^{'},\ubar)}}du^{'}\right)^{\frac{1}{2}}\\ 
\lesssim & \int_{u_{\infty}}^{u}\frac{a}{|u^{'}|^{3}}\frac{a}{|u^{'}|}||a^{\frac{I-1}{2}}\nabla^{I}\widetilde{\tr\chibar}||_{L^{2}_{sc}(S_{u,\ubar})}du^{'}+\mathcal{R}+\mathbb{YM},
\end{align}
\begin{align} \nonumber
&\int_{u_{\infty}}^{u}\frac{a^{2}}{|u^{'}|^{3}}||a^{\frac{I-1}{2}}\sum_{J_1+J_2+J_3+J_4=I}\nabla^{J_1}(\eta+\etabar)^{J_2}\nabla^{J_3}\chibarhat\nabla^{J_4}\chibarhat||_{L^{2}_{sc}(S_{u^{'},\ubar})}\\ 
\lesssim  &\int_{u_{\infty}}^{u}\frac{a}{|u^{'}|^{2}}\frac{a^{\frac{1}{2}}}{|u^{'}|}||a^{\frac{I-1}{2}}\nabla^{I}\chibarhat||_{L^{2}_{sc}(S_{u^{'},\ubar})},
\end{align}
\begin{align} \nonumber
&\int_{u_{\infty}}^{u}\frac{a^{2}}{|u^{'}|^{3}}||a^{\frac{I-1}{2}}\sum_{J_1+J_2+J_3+J_4=I}\nabla^{J_1}(\eta+\etabar)^{J_2}\hnabla^{J_3}\alphabar^{F}\hnabla^{J_4}\alphabar^{F}||_{L^{2}_{sc}(S_{u^{'},\ubar})}\\ 
\lesssim & \left(\int_{u_{\infty}}^{u}\frac{a^{3}}{|u^{'}|^{6}}du^{'}\right)^{\frac{1}{2}}\left(\int_{u_{\infty}}^{u}\frac{a}{|u^{'}|^{2}}||a^{\frac{I-1}{2}}\hnabla^{I}\alphabar^{F}||^{2}_{L^{2}_{sc}(S_{u^{'}\ubar)}}du^{'}\right)^{\frac{1}{2}}
\lesssim  \frac{a^{\frac{3}{2}}}{|u|^{\frac{5}{2}}}\mathbb{YM}[\alphabar^{F}]\lesssim 1,
\end{align}
\begin{align} \nonumber
&\int_{u_{\infty}}^{u}\frac{a^{2}}{|u^{'}|^{3}}||a^{\frac{I-1}{2}}\sum_{J_1+J_2+J_3+J_4=I}\nabla^{J_1}(\eta+\etabar)^{J_2}\nabla^{J_3}(\chibarhat,\hsp \tildetr)\nabla^{J_4}\widetilde{\tr\chibar}||_{L^{2}_{sc}(S_{u^{'},\ubar})}\\ \nonumber
\lesssim & \int_{u_{\infty}}^{u}\frac{a^\frac{1}{2}}{|u^{'}|^{2}}\frac{a}{|u^{'}|}||a^{\frac{I-1}{2}}\nabla^{I}\widetilde{\tr\chibar}||_{L^{2}_{sc}(S_{u,\ubar})}du^{'}+\int_{u_{\infty}}^{u}\frac{a^\frac{1}{2}}{|u^{'}|^{2}}\frac{a^{\frac{1}{2}}}{|u^{'}|}||a^{\frac{I-1}{2}}\nabla^{I}\chibarhat||_{L^{2}_{sc}(S_{u^{'},\ubar})}du^{'}\\ 
+&\int_{u_{\infty}}^{u}\frac{1}{|u^{'}|^{2}}\frac{a}{|u^{'}|}||a^{\frac{I-1}{2}}\nabla^{I}\widetilde{\tr\chibar}||_{L^{2}_{sc}(S_{u,\ubar})}du^{'},
\end{align}
\begin{align} \nonumber &
\int_{u_{\infty}}^{u}\frac{a^{2}}{|u^{'}|^{3}}||a^{\frac{I-1}{2}}\sum_{J_1+J_2+J_3+J_4=I-1}\nabla^{J_1} (\eta+\etabar)^{J_2+1}\nabla^{J_3}(\chibarhat,\tr\chibar)\nabla^{J_4}\widetilde{\tr\chibar}||_{L^{2}_{sc}(S_{u^{'},\ubar})}\\ 
\lesssim &\int_{u_{\infty}}^{u}|u^{'}|^{-2}\Gamma \duprime\lesssim 1,
\end{align}and for the last term we have
\begin{align} \nonumber &
\int_{u_{\infty}}^{u}\frac{a^{2}}{|u^{'}|^{3}}||a^{\frac{I-1}{2}}\sum_{J_{1}+J_{2}+J_{3}+J_{4}+J_{5}=I-1}\nabla^{J_{1}}(\eta+\underline{\eta})^{J_{2}}\hnabla^{J_{3}}\alphabar^{F}\hnabla^{J_{4}}(\rho^{F},\sigma^{F})\nabla^{J_{5}}\widetilde{\tr\chibar}||_{L^{2}_{sc}(S_{u^{'},\ubar})}\\ 
\lesssim &\int_{u_{\infty}}^{u}\frac{a}{|u^{'}|^{4}}\Gamma\lesssim\frac{a}{|u|^{3}}\Gamma\lesssim 1.
\end{align}
A collection of all the terms yields 
\begin{align}\nonumber &
\frac{a}{|u|}a^{\frac{I-1}{2}}||\nabla^{I}\widetilde{\tr\chibar}||_{L^{2}_{sc}(S_{u,\ubar})}\\ \lesssim &1+\mathcal{R}+\int_{u_{\infty}}^{u}\frac{a}{|u^{'}|^{3}}\frac{a}{|u^{'}|}||a^{\frac{I-1}{2}}\nabla^{I}\widetilde{\tr\chibar}||_{L^{2}_{sc}(S_{u,\ubar})}du^{'}\\\nonumber 
+&\int_{u_{\infty}}^{u}\frac{a}{|u^{'}|^{2}}\frac{a^{\frac{1}{2}}}{|u^{'}|}||a^{\frac{I-1}{2}}\nabla^{I}\chibarhat||_{L^{2}_{sc}(S_{u^{'},\ubar})}+\int_{u_{\infty}}^{u}\frac{a^\frac{1}{2}}{|u^{'}|^{2}}\frac{a}{|u^{'}|}||a^{\frac{I-1}{2}}\nabla^{I}\widetilde{\tr\chibar}||_{L^{2}_{sc}(S_{u,\ubar})}du^{'}\\ +&\int_{u_{\infty}}^{u}\frac{a^\frac{1}{2}}{|u^{'}|^{2}}\frac{a^{\frac{1}{2}}}{|u^{'}|}||a^{\frac{I-1}{2}}\nabla^{I}\chibarhat||_{L^{2}_{sc}(S_{u^{'},\ubar})}du^{'}
+\int_{u_{\infty}}^{u}\frac{1}{|u^{'}|^{2}}\frac{a}{|u^{'}|}||a^{\frac{I-1}{2}}\nabla^{I}\widetilde{\tr\chibar}||_{L^{2}_{sc}(S_{u,\ubar})}du^{'}.
\end{align}
An application of Gr\"onwall's inequality leads to 
\begin{align}
\label{eq:widechibar} \nonumber
& \frac{a}{|u|}a^{\frac{I-1}{2}}||\nabla^{I}\widetilde{\tr\chibar}||_{L^{2}_{sc}(S_{u,\ubar})}\\ \nonumber \lesssim &\left(1+\mathcal{R}+\underline{\mathcal{R}}+\mathbb{YM}+\int_{u_{\infty}}^{u}\frac{a}{|u^{'}|^{2}}\frac{a^{\frac{1}{2}}}{|u^{'}|}||a^{\frac{I-1}{2}}\nabla^{I}\chibarhat||_{L^{2}_{sc}(S_{u^{'},\ubar})}\right.\\\nonumber & 
\left.+\int_{u_{\infty}}^{u}\frac{a^\frac{1}{2}}{|u^{'}|^{2}}\frac{a^{\frac{1}{2}}}{|u^{'}|}||a^{\frac{I-1}{2}}\nabla^{I}\chibarhat||_{L^{2}_{sc}(S_{u^{'},\ubar})}du^{'}\right)e^{\frac{a}{|u|^{2}}+\frac{a^{\frac{1}{2}}}{|u|}+\frac{1}{|u|}}\\ \nonumber
\lesssim &1+\mathcal{R}+\underline{\mathcal{R}}+\mathbb{YM}+\int_{u_{\infty}}^{u}\frac{a}{|u^{'}|^{2}}\frac{a^{\frac{1}{2}}}{|u^{'}|}||a^{\frac{I-1}{2}}\nabla^{I}\chibarhat||_{L^{2}_{sc}(S_{u^{'},\ubar})}\\ +&\int_{u_{\infty}}^{u}\frac{a^\frac{1}{2}}{|u^{'}|^{2}}\frac{a^{\frac{1}{2}}}{|u^{'}|}||a^{\frac{I-1}{2}}\nabla^{I}\chibarhat||_{L^{2}_{sc}(S_{u^{'},\ubar})}du^{'}.
\end{align}
Now we return to the elliptic  constraint equation 
\begin{eqnarray}
\text{div}\hat{\underline{\chi}}\sim \frac{1}{2}\nabla \tr\underline{\chi}-\frac{1}{2}(\underline{\eta}-\eta)\cdot(\hat{\underline{\chi}}-\frac{1}{2}\tr\underline{\chi}\gamma)-\widetilde{\underline{\beta}}+\alphabar^{F}\cdot(\rho^{F},\sigma^{F})
\end{eqnarray}
which yields 
\begin{align}&
\label{eq:chibarhatest}
||a^{\frac{I-1}{2}}\nabla^{I}\chibarhat||_{L^{2}_{sc}(S_{u,\ubar})}\lesssim \sum_{J\leq I-1}||a^{\frac{J}{2}}\nabla^{I+1}\widetilde{\tr\chibar}||_{L^{2}_{sc}(S_{u,\ubar})}+\sum_{J\leq I-1}||a^{\frac{I}{2}}\nabla^{I}\widetilde{\betabar}||_{L^{2}_{sc}(S_{u,\ubar})}\\\nonumber 
+&\frac{1}{a^{\frac{1}{2}}}\sum_{J\leq I-1}||a^{\frac{J}{2}}\nabla^{J}\chibarhat||_{L^{2}_{sc}(S_{u,\ubar})}+\sum_{J\leq I-1}||a^{\frac{J}{2}}\sum_{J_{1}+J_{2}=J}\nabla^{J_{1}}(\eta,\etabar)\nabla^{J_{2}}(\tr\chibar,\chibarhat)||_{L^{2}_{sc}(S_{u,\ubar})}.
\end{align}
Now substitute the estimate for $\widetilde{\tr\chibar}$ from (\ref{eq:widechibar}) into (\ref{eq:chibarhatest}) and apply Gr\"onwall to obtain 
\begin{eqnarray}
\frac{a}{|u|}a^{\frac{I-1}{2}}||\nabla^{I}\widetilde{\tr\chibar}||_{L^{2}_{sc}(S_{u,\ubar})}\lesssim (1+\mathcal{R}+\underline{\mathcal{R}}+\mathbb{YM})e^{\frac{a^{\frac{1}{2}}}{|u|}+\frac{1}{|u|}}\lesssim 1+\mathcal{R}+\underline{\mathcal{R}}+\mathbb{YM}.
\end{eqnarray}
Now (\ref{eq:chibarhatest}) yields 
\begin{align}\nonumber &
\int_{u_{\infty}}^{u}\frac{a}{|u^{'}|^{2}}\frac{a^{\frac{1}{2}}}{|u^{'}|}||a^{\frac{I-1}{2}}\nabla^{I}\chibarhat||_{L^{2}_{sc}(S_{u^{'},\ubar})}\\ \lesssim \nonumber &\int_{u_{\infty}}^{u}\frac{a}{|u^{'}|^{2}}\frac{a^{\frac{1}{2}}}{|u^{'}|}\sum_{J\leq I-1}||a^{\frac{J}{2}}\nabla^{I+1}\widetilde{\tr\chibar}||_{L^{2}_{sc}(S_{u^{'},\ubar})}du^{'}
+\int_{u_{\infty}}^{u}\frac{a}{|u^{'}|^{2}}\frac{a^{\frac{1}{2}}}{|u^{'}|}\sum_{J\leq I-1}||a^{\frac{I}{2}}\nabla^{I}\widetilde{\betabar}||_{L^{2}_{sc}(S_{u^{'},\ubar})}du^{'}\\\nonumber 
\lesssim &(1+\mathcal{R}+\underline{\mathcal{R}}+\mathbb{YM})\int_{u_{\infty}}^{u}\frac{a^{\frac{1}{2}}}{|u^{'}|^{2}}du^{'}\\\nonumber +&\left(\int_{u_{\infty}}^{u}\frac{a^{2}}{|u^{'}|^{4}}du^{'}\right)^{\frac{1}{2}}\left(\int_{u_{\infty}}^{u}\frac{a}{|u^{'}|^{2}}\sum_{J\leq I-1}||a^{\frac{I}{2}}\nabla^{I}\widetilde{\betabar}||^{2}_{L^{2}_{sc}(S_{u^{'},\ubar})}du^{'}\right)^{\frac{1}{2}}+1
\lesssim 1.
\end{align}
This concludes the proof of the lemma. 
\end{proof}

\begin{remark}\label{usefulremark}
In the same way as above, one can obtain the following estimates:

\begin{equation}
\begin{split}
    \label{usefulhere} &\left(\intu \frac{a^2}{\upr^4} \scaletwoSuprime{(\al)^{N+4}\nabla^{N+5}\chibarhat}^2\duprime \right)^{\frac{1}{2}} \\+& \left(\intu \frac{a^3}{\upr^4} \scaletwoSuprime{(\al)^{N+4}\nabla^{N+5}\tr\chibar}^2\duprime \right)^{\frac{1}{2}} \lesssim \mathcal{R} + \underline{\mathcal{R}}+1.
\end{split}
\end{equation}
\end{remark}
\section{Energy Estimates Weyl and Yang-Mills curvature components}
\noindent In this section we finish the final estimates, namely those on the energy associated with the Weyl and Yang-Mills curvature. There are several ways to execute this step. Since both the Einstein and the Yang-Mills equations are derivable from a Lagrangian, one would expect the existence of stress-energy tensors. For pure gravity, however, one cannot simply find a canonical stress-energy tensor due to the equivalence principle. Nevertheless, one can utilize the Bel-Robinson tensor to construct suitable energy currents or use the quasi-local type energy constructed by \cite{Wang1,Wang2}. In the case of Yang-Mills theory, there exists a canonical stress-energy tensor that can be used directly to construct necessary energies. However, in the current context where we impose a double-null foliation, the use of Bel-Robionson or Yang-Mills stress energy tensors can be avoided altogether, since, in this setting, both the Bianchi and the Yang-Mills equations can be cast into a coupled symmetric hyperbolic system, as we have already shown in Proposition \ref{hyperbolic}. Therefore, we can use the direct integration by parts approach. To this end, we require the following pair of propositions which we prove using Lemma \ref{integration} and Proposition \ref{hyperbolic}. 
\begin{proposition}\label{integration2}
If $\varphi\in \text{section}(^{k}\otimes T^{*}S_{u,\ubar}\otimes \mathfrak{P}_{Ad,\mathfrak{g}})$ and  $\lambda_{1}=2\lambda_{0}-1$, then the following estimates hold:
\begin{align} \nonumber
2\int_{D_{u,\ubar}}|u^{'}|^{2\lambda_{1}}\langle\varphi,(\hnabla_{3}+\lambda_{0}\tr\chibar)\varphi\rangle &=\int_{H_{u}(0,\ubar)}|u^{'}|^{2\lambda_{1}}|\varphi|^{2}-\int_{H_{u_{\infty}}(0,\ubar)}|u_{\infty}|^{2\lambda_{1}}|\varphi|^{2}\\\nonumber 
&+\int_{D_{u,\ubar}}|u^{'}|^{2\lambda_{1}}f|\varphi|^{2},
\end{align}
where $f$ verifies the estimate $f\lesssim \Gamma|u|^{-2}$.
\end{proposition}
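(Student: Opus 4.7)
The plan is to reduce this identity to the scalar integration-by-parts Lemma \ref{integration} by choosing a gauge-invariant test function. The natural choice is
\[
F := |u'|^{2\lambda_{1}}\langle \varphi,\varphi\rangle_{\gamma,\delta},
\]
which is genuinely a scalar since the inner product $\langle \cdot,\cdot\rangle_{\gamma,\delta}$ contracts away both the tensorial and the bundle indices; hence Lemma \ref{integration} is directly applicable to $F$, yielding
\[
\int_{D_{u,\ubar}}\nabla_{3}F \;=\; \int_{H_{u}}F - \int_{H_{u_{\infty}}}F + \int_{D_{u,\ubar}}(2\omegabar-\tr\chibar)\,F.
\]

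Next I would compute $\nabla_{3}F$ directly by the Leibniz rule. Here the crucial point is the compatibility of $\hnabla$ with the fibre metrics $\gamma$ and $\delta$, exploited earlier in the paper (e.g.\ in Proposition \ref{3.6} and Proposition \ref{hyperbolic}), which gives $\nabla_{3}\langle\varphi,\varphi\rangle_{\gamma,\delta}=2\langle\varphi,\hnabla_{3}\varphi\rangle_{\gamma,\delta}$, and consequently
\[
\nabla_{3}F \;=\; 2|u'|^{2\lambda_{1}}\langle \varphi,\hnabla_{3}\varphi\rangle_{\gamma,\delta} \;+\; 2\lambda_{1}|u'|^{2\lambda_{1}-1}e_{3}(|u'|)\,|\varphi|^{2}_{\gamma,\delta}.
\]
Substituting this back, adding and subtracting $2\lambda_{0}\tr\chibar|u'|^{2\lambda_{1}}|\varphi|^{2}$ on both sides so as to recover the operator $\hnabla_{3}+\lambda_{0}\tr\chibar$ on the left-hand side, and solving for the resulting bulk integral produces the announced identity with
\[
f \;=\; 2\omegabar + (2\lambda_{0}-1)\tr\chibar - \frac{2\lambda_{1}\,e_{3}(|u'|)}{|u'|} \;=\; 2\omegabar + \lambda_{1}\tildetr + \frac{2\lambda_{1}(\Omega^{-1}-1)}{|u'|},
\]
using $\lambda_{1}=2\lambda_{0}-1$, $e_{3}(|u'|)=-\Omega^{-1}$, and the definition $\tildetr=\tr\chibar+\tfrac{2}{|u'|}$.

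The last step, which is essentially bookkeeping, is to check $|f|\lesssim \Gamma/|u|^{2}$. Each summand is already controlled by the bootstrap assumptions \eqref{bootstrap} together with Section~3: Proposition \ref{31} gives $\|\Omega^{-1}-1\|_{L^{\infty}}\lesssim \Gamma/|u|$, the definition of $\bbGamma_{i,\infty}$ yields the pointwise bound $\|\tildetr\|_{L^{\infty}}\lesssim \Gamma/|u|^{2}$, and the bootstrap on $\omegabar$ (with signature $s_{2}(\omegabar)=1$) combined with Sobolev embedding gives $\|\omegabar\|_{L^{\infty}}\lesssim \Gamma/|u|^{2}$. I do not anticipate any genuine obstacle: the only mildly delicate point is keeping track of the sign of $e_{3}(|u|)$ (since $u<0$ throughout $\mathcal{D}$) and verifying that the $2/|u|$ contribution coming from $\tr\chibar$ cancels against the leading part of $-2\lambda_{1}e_{3}(|u|)/|u|$, so that only the anomaly $\Omega^{-1}-1$ survives on the second term of $f$.
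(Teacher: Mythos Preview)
Your proof is correct and follows essentially the same approach as the paper. The paper's own proof simply refers to Proposition~7.3 of \cite{AnAth} and notes that the only new ingredient is the identity $\nabla_{3}|\varphi|^{2}=2\langle\varphi,\hnabla_{3}\varphi\rangle$ coming from the metric compatibility of the gauge-covariant connection; your argument carries out precisely these details, applying Lemma~\ref{integration} to the gauge-invariant scalar $|u'|^{2\lambda_{1}}|\varphi|^{2}$ and tracking the remainder $f$ explicitly (compare also the analogous computation in the proof of Proposition~\ref{3.6}).
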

\begin{proof}
The proof follows in an exact same way as that of proposition (7.3) of \cite{AnAth}. The only difference is that $\varphi$ is a section of the fibered vector bundle $^{k}\otimes T^{*}S_{u,\ubar}\otimes \mathfrak{P}_{Ad,\mathfrak{g}}$. However, note that $|\varphi|^{2}$ is a gauge-invariant object and therefore 
\begin{eqnarray}
\nabla_{3}|\varphi|^{2}=2\langle\varphi,\hnabla_{3}\varphi\rangle,
\end{eqnarray}
due to the fact that the gauge covariant connection $\hnabla_{3}$ is metrics compatible.
\end{proof}
\noindent We will need Proposition $6.1$ for proving the following proposition.

\begin{proposition}\label{42}
For a Bianchi pair $(\Psi_1, \Psi_2)\in \left\{(\alpha,\widetilde{\beta}),(\widetilde{\beta},(\rho,\sigma)),((\rho,\sigma),\widetilde{\betabar}),(\widetilde{\betabar},\alphabar)\right\}\cup\left\{(\alpha_{F},(\rho^{F},\sigma^{F})),((\rho^{F},\sigma^{F}),\alphabar^{F})\right\}$ satisfying
\[ \hnabla_3 \hnabla^i \Psi_1 + \left( \frac{i+1}{2} + s_2(\Psi_1) \right) \tr\chibar \hnabla^i \Psi_1 - \widehat{\mathcal{D}}\hnabla^i \Psi_2 = P_i,\]\[\hnabla_4 \hnabla^i \Psi_2 - \Hodge{\widehat{\mathcal{D}}} \hnabla^i \Psi_1 = Q_i, \]the following holds true:
\end{proposition}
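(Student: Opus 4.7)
My plan is to pair each of the two schematic Bianchi/Yang-Mills equations with the appropriate curvature quantity in the gauge-invariant inner product $\langle\cdot,\cdot\rangle_{\gamma,\delta}$ and then use the integration identities of Lemma~\ref{integration} together with Proposition~\ref{integration2} to convert bulk integrals on $\mathcal{D}_{u,\ubar}$ into fluxes on the outgoing null hypersurface $H_u$ and the incoming null hypersurface $\Hbar_{\ubar}$. Concretely, I would apply Proposition~\ref{integration2} (with $\lambda_0=\frac{i+1}{2}+s_2(\Psi_1)$) to the scalar $|\hnabla^{i}\Psi_1|^2_{\gamma,\delta}$, producing the $H_u$-flux $\int_{H_u}|u^{\prime}|^{2\lambda_1}|\hnabla^{i}\Psi_1|^2$, minus its data on $H_{u_\infty}$, plus a bulk error controlled by $\Gamma|u|^{-2}|\hnabla^{i}\Psi_1|^2$. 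Analogously, applying the $\nabla_4$ integration identity from Lemma~\ref{integration} to $|\hnabla^{i}\Psi_2|^2_{\gamma,\delta}$ yields the $\Hbar_{\ubar}$-flux minus data on $\Hbar_0$, plus a tame bulk term. The crucial point is that both $|\hnabla^{i}\Psi_1|^2_{\gamma,\delta}$ and $|\hnabla^{i}\Psi_2|^2_{\gamma,\delta}$ are gauge-invariant scalars, so $\nabla_\alpha |\hnabla^{i}\Psi|^2 = 2\langle \hnabla^{i}\Psi,\hnabla_\alpha \hnabla^{i}\Psi\rangle_{\gamma,\delta}$ holds exactly as in the abelian case, which is what allowed Proposition~\ref{hyperbolic} to go through.

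After adding the two identities, the left-hand sides contain the cross terms
\begin{equation*}
-2\int_{\mathcal{D}_{u,\ubar}}|u^{\prime}|^{2\lambda_1}\langle\hnabla^{i}\Psi_1,\widehat{\mathcal{D}}\hnabla^{i}\Psi_2\rangle_{\gamma,\delta} + 2\int_{\mathcal{D}_{u,\ubar}}|u^{\prime}|^{2\lambda_1}\langle\hnabla^{i}\Psi_2,\Hodge{\widehat{\mathcal{D}}}\hnabla^{i}\Psi_1\rangle_{\gamma,\delta}.
\end{equation*}
The key cancellation is the one already used in the proof of Proposition~\ref{hyperbolic}: because $\hnabla$ is compatible with both fibre metrics, the integrand can be rewritten on each $S_{u^{\prime},\ubar^{\prime}}$ as a total divergence $\sdiv\langle\hnabla^{i}\Psi_1,\hnabla^{i}\Psi_2\rangle_{\gamma,\delta}$ (up to a lower-order $\Gamma|u|^{-2}$ error arising from the $|u^{\prime}|^{2\lambda_1}$ weight and from commuting with $\Omega$), which vanishes upon integration over the closed surface. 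This reduces the cross terms to a bulk error that can be absorbed into the sum of the two flux quantities by Cauchy--Schwarz in conjunction with the smallness factor $|u|^{-2}$.

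The remaining bulk terms after the cancellation are the pairings with $P_i$ and $Q_i$. These are handled by Cauchy--Schwarz in the scale-invariant norms, giving estimates of the form
\begin{equation*}
\bigg|\int_{\mathcal{D}_{u,\ubar}}|u^{\prime}|^{2\lambda_1}\langle\hnabla^{i}\Psi_1,P_i\rangle_{\gamma,\delta}\bigg| \lesssim \scaletwoH{\hnabla^{i}\Psi_1}\,\scaletwoH{P_i} + \text{analogous}.
\end{equation*}
Combined with the absorbed error terms, these yield the expected identity controlling the sum of the two flux norms by the data on $H_{u_\infty}\cup\Hbar_0$ plus integral norms of $P_i,Q_i$. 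The main obstacle, which requires the most care, is verifying the divergence structure of the cross term when working with the gauge-covariant derivative $\widehat{\mathcal{D}}$ (as opposed to the ordinary covariant derivative): one must check that the commutators of $\hnabla$ with $\sdiv$ and $\scurl$ produce only Yang-Mills curvature contributions schematically of the type $F^P{}_Q\cdot\hnabla^{i}\Psi$, which are cubic in curvature after contraction and therefore subcritical, and thus can be absorbed into the general $\Gamma|u|^{-2}$-error framework already established in the preceding sections.
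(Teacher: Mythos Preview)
Your approach is essentially the same as the paper's: apply Proposition~\ref{integration2} with $\lambda_0=\frac{i+1}{2}+s_2(\Psi_1)$ to $|\hnabla^i\Psi_1|^2$, the $\nabla_4$-integration lemma to $|\hnabla^i\Psi_2|^2$, add, and cancel the principal cross terms via the adjoint relation between $\widehat{\mathcal{D}}$ and $\Hodge{\widehat{\mathcal{D}}}$ on $S_{u,\ubar}$. One clarification worth making, since you flag it as ``the main obstacle'': your worry about Yang--Mills curvature commutators in the spherical integration by parts is unfounded. Because $\hnabla$ is compatible with both $\gamma$ and $\delta$, the identity $\nabla\langle A,B\rangle_{\gamma,\delta}=\langle\hnabla A,B\rangle_{\gamma,\delta}+\langle A,\hnabla B\rangle_{\gamma,\delta}$ holds \emph{exactly}, so no $F^P{}_Q$ terms are produced; the only lower-order residue from the cross-term cancellation is $-\int_{S}(\eta+\etabar)\langle\hnabla^i\Psi_1,\hnabla^i\Psi_2\rangle$, coming from the spacetime adjoint relation (not from the weight or from $\Omega$). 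The paper then absorbs this together with the $(2\omega-\tr\chi)$ and $f\lesssim\Gamma|u|^{-2}$ bulk errors by Gr\"onwall successively in $u'$ and $\ubar$, and records the $P_i,Q_i$ contributions as $L^1_{(sc)}$-norms of the inner products $\langle\hnabla^i\Psi_1,P_i\rangle$, $\langle\hnabla^i\Psi_2,Q_i\rangle$ rather than splitting them by Cauchy--Schwarz at this stage.
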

\begin{equation}
\begin{split}
&\int_{\Hu}\scaletwoSuubarprime{\hnabla^i \Psi_1}^2 \dubarprime + \int_{\Hbu}\frac{a}{\upr^2} \scaletwoSuprime{\hnabla^i \Psi_2}^2 \duprime \\ \lesssim &\int_{H_{u_{\infty}}^{(0,\ubar)}}\scaletwoSuzubarprime{\hnabla^i \Psi_1}^2 \dubarprime + \int_{\Hbar_0^{(u_{\infty},u)}} \frac{a}{\upr^2} \scaletwoSuzprime{\hnabla^i \Psi_2}^2 \duprime \\&+  \iint_{\mathcal{D}_{u,\ubar}}\frac{a}{\upr} \scaleoneSuprimeubarprime{\langle\hnabla^i \Psi_1 , P_i\rangle}\duprime \dubarprime + \iint_{\mathcal{D}_{u,\ubar}}\frac{a}{\upr} \scaleoneSuprimeubarprime{\langle\hnabla^i \Psi_2 ,Q_i\rangle}\duprime \dubarprime.
\end{split}
\end{equation}
\begin{proof}
This property essentially indicates the symmetric hyperbolic character of the null Yang-Mills equations. First note the following relation:
\begin{align}
\nonumber &\int_{S_{u,\ubar}}\langle\Psi_{1},\widehat{\mathcal{D}}\Psi_{2}\rangle+\int_{S_{u,\ubar}}\langle\Psi_{2},~^{*}\widehat{\mathcal{D}}\Psi_{2}\rangle\\ \nonumber =& \int_{S_{u,\ubar}}\nabla\langle\Psi_{1},\Psi_{2}\rangle-\int_{S_{u,\ubar}}\langle^{*}\widehat{\mathcal{D}}\Psi_{1},\Psi_{2}\rangle\nonumber+\int_{S_{u,\ubar}}\langle\Psi_{2},^{*}\widehat{\mathcal{D}}\Psi_{1}\rangle
-\int_{S_{u,\ubar}}(\eta+\etabar)\langle\Psi_{1},\Psi_{2}\rangle\\\nonumber 
=&-\int_{S_{u,\ubar}}(\eta+\etabar)\langle\Psi_{1},\Psi_{2}\rangle,
\end{align}
due to the fact that $\langle\Psi_{1},\Psi_{2}\rangle$ is gauge-invariant (see Lemma \ref{integration}). Similar identities hold for the higher derivatives as well:
\begin{eqnarray}
\label{eq:bypartshodge}
\int_{S_{u,\ubar}}\langle\hnabla^{I}\Psi_{1},\widehat{\mathcal{D}}\hnabla^{I}\Psi_{2}\rangle+\int_{S_{u,\ubar}}\langle\hnabla^{I}\Psi_{2},~^{*}\widehat{\mathcal{D}}\hnabla^{I}\Psi_{2}\rangle=-\int_{S_{u,\ubar}}(\eta+\etabar)\langle\hnabla^{I}\Psi_{1},\hnabla^{I}\Psi_{2}\rangle.
\end{eqnarray}
With these identities in mind, we apply Proposition \ref{integration2} with $\lambda_{0}=\frac{I+1}{2}+s_{2}(\Psi_{1})$ and $\lambda_{1}=2\lambda_{0}-1=I+2s_{2}(\Psi_{2})$ to yield 
\begin{align} \nonumber
&2\int_{D_{u,\ubar^{'}}}|u^{'}|^{2I+4s_{2}(\Psi_{1})}\langle\hnabla^{I}\Psi_{1},(\hnabla_{3}+(\frac{I+1}{2}+s_{2}(\Psi_{1}))\tr\chibar)\hnabla^{I}\Psi_{1}\rangle=\\ &\int_{H_{u}(0,\ubar)}|u|^{2I+4s_{2}(\Psi_{1})}|\hnabla^{I}\Psi_{1}|^{2}
-\int_{H_{u_{\infty}}(0,\ubar)}|u_{\infty}|^{2I+4s_{2}(\Psi_{1})}|\nabla^{I}\Psi_{1}|^{2}+\int_{D_{u,\ubar}}|u^{'}|^{2I+4s_{2}(\Psi_{1})}f|\hnabla^{I}\psi_{1}|^{2},
\end{align}
where $f$ verifies $|f|\lesssim \Gamma|u|^{-2}$. For $\Psi_{2}$ we have a simpler expression using the integration lemma:
\begin{align} \nonumber
&2\int_{D_{u,\ubar}}|u^{'}|^{2I+4s_{2}(\Psi_{1})}\langle\hnabla^{I}\Psi_{2},\hnabla_{4}\hnabla^{I}\Psi_{2}\rangle= \nonumber \int_{\Hbar_{\ubar}(u_{\infty},u)}|u^{'}|^{2I+4s_{2}}|\hnabla^{I}\Psi_{2}|^{2}\nonumber\\ -& \int_{\Hbar_{0}(u_{\infty},u)}|u^{'}|^{2I+4s_{2}}|\hnabla^{I}\Psi_{2}|^{2}
+\int_{D_{u,\ubar}}|u^{'}|^{2I+4s_{2}(\Psi_{1})}(2\omega-\tr\chi)|\hnabla^{I}\Psi_{2}|^{2}.
\end{align}
Adding the previous two equations together we get
\begin{align}  \nonumber
&\int_{H_{u}(0,\ubar)}|u|^{2I+4s_{2}(\Psi_{1})}|\hnabla^{I}\Psi_{1}|^{2}+\int_{\Hbar_{\ubar}(u_{\infty},u)}|u^{'}|^{2I+4s_{2}}|\hnabla^{I}\Psi_{2}|^{2}\\   \nonumber =& \int_{H_{u_{\infty}}(0,\ubar)}|u_{\infty}|^{2I+4s_{2}(\Psi_{1})}|\nabla^{I}\Psi_{1}|^{2}
+ \int_{\Hbar_{0}(u_{\infty},u)}|u^{'}|^{2I+4s_{2}}|\hnabla^{I}\Psi_{2}|^{2}\\  \nonumber +& 2\int_{D_{u,\ubar^{'}}}|u^{'}|^{2I+4s_{2}(\Psi_{1})}\langle\hnabla^{I}\Psi_{1},(\hnabla_{3}+(\frac{I+1}{2}+s_{2}(\Psi_{1}))\tr\chibar)\hnabla^{I}\Psi_{1}\rangle\\\nonumber 
+&2\int_{D_{u,\ubar}}|u^{'}|^{2I+4s_{2}(\Psi_{1})}\langle\hnabla^{I}\Psi_{2},\hnabla_{4}\hnabla^{I}\Psi_{2}\rangle-\int_{D_{u,\ubar}}|u^{'}|^{2I+4s_{2}(\Psi_{1})}f|\hnabla^{I}\Psi_{1}|^{2}\\ 
-&\int_{D_{u,\ubar}}|u^{'}|^{2I+4s_{2}(\Psi_{1})}(2\omega-\tr\chi)|\hnabla^{I}\Psi_{2}|^{2}.
\end{align}
Now we utilize the equations of motion to yield 
\begin{align} \nonumber &
\int_{H_{u}(0,\ubar)}|u|^{2I+4s_{2}(\Psi_{1})}|\hnabla^{I}\Psi_{1}|^{2}+\int_{\Hbar_{\ubar}(u_{\infty},u)}|u^{'}|^{2I+4s_{2}}|\hnabla^{I}\Psi_{2}|^{2}\\  \nonumber =&\int_{H_{u_{\infty}}(0,\ubar)}|u_{\infty}|^{2I+4s_{2}(\Psi_{1})}|\nabla^{I}\Psi_{1}|^{2}\\\nonumber 
+&\int_{\Hbar_{0}(u_{\infty},u)}|u^{'}|^{2I+4s_{2}}|\hnabla^{I}\Psi_{2}|^{2}+2\int_{D_{u,\ubar^{'}}}|u^{'}|^{2I+4s_{2}(\Psi_{1})}\langle\hnabla^{I}\Psi_{1},\widehat{\mathcal{D}}\hnabla^{I} \Psi_{2}+P\rangle\\\nonumber 
+&2\int_{D_{u,\ubar}}|u^{'}|^{2I+4s_{2}(\Psi_{1})}\langle\hnabla^{I}\Psi_{2},~^{*}\widehat{\mathcal{D}}\hnabla^{I}\Psi_{1}+Q\rangle-\int_{D_{u,\ubar}}|u^{'}|^{2I+4s_{2}(\Psi_{1})}f|\hnabla^{I}\Psi_{1}|^{2}\\
-&\int_{D_{u,\ubar}}|u^{'}|^{2I+4s_{2}(\Psi_{1})}(2\omega-\tr\chi)|\hnabla^{I}\Psi_{2}|^{2}.
\end{align}
An application of \eqref{eq:bypartshodge} yields
\begin{align}
&\int_{H_{u}(0,\ubar)}|u|^{2I+4s_{2}(\Psi_{1})}|\hnabla^{I}\Psi_{1}|^{2}+\int_{\Hbar_{\ubar}(u_{\infty},u)}|u^{'}|^{2I+4s_{2}}|\hnabla^{I}\Psi_{2}|^{2}\nonumber\\  \nonumber =&\int_{H_{u_{\infty}}(0,\ubar)}|u_{\infty}|^{2I+4s_{2}(\Psi_{1})}|\nabla^{I}\Psi_{1}|^{2} 
+\int_{\Hbar_{0}(u_{\infty},u)}|u^{'}|^{2I+4s_{2}}|\hnabla^{I}\Psi_{2}|^{2}\\ \nonumber +& 2\int_{D_{u,\ubar^{'}}}|u^{'}|^{2I+4s_{2}(\Psi_{1})}\langle\hnabla^{I}\Psi_{1},P\rangle
\\  \nonumber+ & 2\int_{D_{u,\ubar^{'}}}|u^{'}|^{2I+4s_{2}(\Psi_{1})}\langle(\eta+\etabar)\hnabla^{I}\Psi_{1},\hnabla^{I}\Psi_{2}\rangle+2\int_{D_{u,\ubar}}|u^{'}|^{2I+4s_{2}(\Psi_{1})}\langle\hnabla^{I}\Psi_{2},Q\rangle\\ -&\int_{D_{u,\ubar}}|u^{'}|^{2I+4s_{2}(\Psi_{1})}f|\hnabla^{I}\Psi_{1}|^{2}
-\int_{D_{u,\ubar}}|u^{'}|^{2I+4s_{2}(\Psi_{1})}(2\omega-\tr\chi)|\hnabla^{I}\Psi_{2}|^{2}.
\end{align}
Now notice that $P$ and $Q$ are lower order terms. In this integration by parts procedure, all the principal terms are cancelled. Using the definition of the scale invariant norms and the boot-strap assumptions \eqref{bootstrap}, we observe that $|\omega|\lesssim |u|^{-1}, |\eta,\etabar|\lesssim |u|^{-2},~|f|\lesssim |u|^{-2}$, and $|\tr\chi|\lesssim |u|^{-1}$. We need to apply Gr\"onwall's inequality twice. Notice the following important points 
\begin{align}
|\int_{D_{u,\ubar}}|u^{'}|^{2I+4s_{2}(\Psi_{1})}(2\omega-\tr\chi)|\hnabla^{I}\Psi_{2}|^{2}|\lesssim &\int_{0}^{\ubar}|u|^{-1}\int_{\Hbar(u^{\infty},u)}|u^{'}|^{2I+4s_{2}(\Psi_{1})}|\hnabla^{I}\Psi_{2}|^{2}|,\\ \nonumber
|\int_{D_{u,\ubar^{'}}}|u^{'}|^{2I+4s_{2}(\Psi_{1})}\langle(\eta+\etabar)\hnabla^{I}\Psi_{1},\hnabla^{I}\Psi_{2}\rangle|\lesssim& \int_{u_{\infty}}^{u}|u^{'}|^{-2}\int_{H(0,\ubar)}|u^{'}|^{2I+4s_{2}(\Psi_{1})}|\hnabla^{I}\Psi_{1}|^{2}\\ 
+&\int_{0}^{\ubar}|u|^{-2}\int_{\Hbar(u_{\infty},u)}|u^{'}|^{2I+4s_{2}(\Psi_{1})}|\hnabla^{I}\Psi_{2}|^{2},\\
|\int_{D_{u,\ubar}}|u^{'}|^{2I+4s_{2}(\Psi_{1})}f|\hnabla^{I}\Psi_{1}|^{2}|\lesssim& \int_{u_{\infty}}^{u}|u^{'}|^{-2}\int_{H(0,\ubar)}|u^{'}|^{2I+4s_{2}(\Psi_{1})}|\hnabla^{I}\Psi_{1}|^{2}
\end{align}
since $0>u^{'}\in [u_{\infty},u]$. Therefore applying Gr\"onwall in $u^{'}$, we obtain 
\begin{align}\nonumber
& \int_{H_{u}(0,\ubar)}|u|^{2I+4s_{2}(\Psi_{1})}|\hnabla^{I}\Psi_{1}|^{2}\\ \lesssim &\bigg(\int_{H_{u_{\infty}}(0,\ubar)}|u_{\infty}|^{2I+4s_{2}(\Psi_{1})}|\nabla^{I}\Psi_{1}|^{2} 
+\int_{\Hbar_{0}(u_{\infty},u)}|u^{'}|^{2I+4s_{2}}|\hnabla^{I}\Psi_{2}|^{2}\\\nonumber +& 2|\int_{D_{u,\ubar^{'}}}|u^{'}|^{2I+4s_{2}(\Psi_{1})}\langle\hnabla^{I}\Psi_{1},P\rangle|
+2|\int_{D_{u,\ubar}}|u^{'}|^{2I+4s_{2}(\Psi_{1})}\langle\hnabla^{I}\Psi_{2},Q\rangle|\\\nonumber +& \int_{0}^{\ubar}|u|^{-1}\int_{\Hbar(u^{\infty},u)}|u^{'}|^{2I+4s_{2}(\Psi_{1})}|\hnabla^{I}\Psi_{2}|^{2}|\bigg)e^{\int_{u_{\infty}}^{u}|u^{'}|^{-2}du^{'}}\\\nonumber 
\lesssim &\bigg(\int_{H_{u_{\infty}}(0,\ubar)}|u_{\infty}|^{2I+4s_{2}(\Psi_{1})}|\nabla^{I}\Psi_{1}|^{2} 
+\int_{\Hbar_{0}(u_{\infty},u)}|u^{'}|^{2I+4s_{2}}|\hnabla^{I}\Psi_{2}|^{2}\\\nonumber +& 2|\int_{D_{u,\ubar^{'}}}|u^{'}|^{2I+4s_{2}(\Psi_{1})}\langle\hnabla^{I}\Psi_{1},P\rangle|
+2|\int_{D_{u,\ubar}}|u^{'}|^{2I+4s_{2}(\Psi_{1})}\langle\hnabla^{I}\Psi_{2},Q\rangle|\\\nonumber +&\int_{0}^{\ubar}|u|^{-1}\int_{\Hbar(u^{\infty},u)}|u^{'}|^{2I+4s_{2}(\Psi_{1})}|\hnabla^{I}\Psi_{2}|^{2}|\bigg),
\end{align}
due to integrability of $|u^{'}|^{-2}$ at infinity. Now we need to apply the Gr\"onwall in the $\ubar$ direction. We obtain 
\begin{align} \nonumber
&\int_{\Hbar_{\ubar}(u_{\infty},u)}|u^{'}|^{2I+4s_{2}}|\hnabla^{I}\Psi_{2}|^{2}\\ \nonumber  \lesssim & \bigg(\int_{H_{u_{\infty}}(0,\ubar)}|u_{\infty}|^{2I+4s_{2}(\Psi_{1})}|\nabla^{I}\Psi_{1}|^{2} 
+\int_{\Hbar_{0}(u_{\infty},u)}|u^{'}|^{2I+4s_{2}(\Psi_{1})}|\hnabla^{I}\Psi_{2}|^{2}\\\nonumber +& 2|\int_{D_{u,\ubar^{'}}}|u^{'}|^{2I+4s_{2}(\Psi_{1})}\langle\hnabla^{I}\Psi_{1},P\rangle|
+2|\int_{D_{u,\ubar}}|u^{'}|^{2I+4s_{2}(\Psi_{1})}\langle\hnabla^{I}\Psi_{2},Q\rangle|\bigg)e^{\frac{\ubar}{|u|}}\\\nonumber 
\lesssim &\int_{H_{u_{\infty}}(0,\ubar)}|u_{\infty}|^{2I+4s_{2}(\Psi_{1})}|\nabla^{I}\Psi_{1}|^{2} 
+\int_{\Hbar_{0}(u_{\infty},u)}|u^{'}|^{2I+4s_{2}(\Psi_{1})}|\hnabla^{I}\Psi_{2}|^{2}\\\nonumber +&2|\int_{D_{u,\ubar^{'}}}|u^{'}|^{2I+4s_{2}(\Psi_{1})}\langle\hnabla^{I}\Psi_{1},P\rangle|
+2|\int_{D_{u,\ubar}}|u^{'}|^{2I+4s_{2}(\Psi_{1})}\langle\hnabla^{I}\Psi_{2},Q\rangle|.
\end{align}
Putting everything together, we obtain 
\begin{align}
    \nonumber &\int_{H_{u}(0,\ubar)}|u|^{2I+4s_{2}(\Psi_{1})}|\hnabla^{I}\Psi_{1}|^{2}+\int_{\Hbar_{\ubar}(u_{\infty},u)}|u^{'}|^{2I+4s_{2}(\Psi_{1})}|\hnabla^{I}\Psi_{2}|^{2}\\\nonumber 
\lesssim &\int_{H_{u_{\infty}}(0,\ubar)}|u_{\infty}|^{2I\nonumber+4s_{2}(\Psi_{1})}|\nabla^{I}\Psi_{1}|^{2} 
+\int_{\Hbar_{0}(u_{\infty},u)}|u^{'}|^{2I+4s_{2}(\Psi_{1})}|\hnabla^{I}\Psi_{2}|^{2}\\ + &|\int_{D_{u,\ubar^{'}}}|u^{'}|^{2I+4s_{2}(\Psi_{1})}\langle\hnabla^{I}\Psi_{1},P\rangle|
+|\int_{D_{u,\ubar}}|u^{'}|^{2I+4s_{2}(\Psi_{1})}\langle\hnabla^{I}\Psi_{2},Q\rangle| .
\end{align}
In order to convert this inequality in terms of scale invariant norms, we need to recall the signature of each term involved and the signature difference between $\Psi_{1}$ and $\Psi_{2}$. Firstly note the following
\begin{align}
 \nonumber &s_{2}(\Psi_{2})=s_{2}(\Psi_{1})+\frac{1}{2},~s_{2}(\nabla^{I}\Psi_{1})\nonumber=\frac{I}{2}+s_{2}(\Psi_{1}),~s_{2}(P)=s_{2}(\hnabla_{3}\hnabla^{I}\Psi_{1})=\frac{I}{2}+1+s_{2}(\Psi_{1})\\ 
&s_{2}(Q)=s_{2}(~^{*}\widehat{\mathcal{D}}\hnabla^{I}\Psi_{1})=\frac{I+1}{2}+s_{2}(\Psi_{1}),
\end{align}
which imply 
\begin{eqnarray}
s_{2}(\nabla^{I}\Psi_{2})=\frac{I}{2}+s_{2}(\Psi_{2})=\frac{I+1}{2}+s_{2}(\Psi_{1}).
\end{eqnarray}
We consider each term separately: 
\begin{align} \nonumber
&\int_{H_{u}(0,\ubar)}|u|^{2I+4s_{2}(\Psi_{1})}|\hnabla^{I}\Psi_{1}|^{2}=\int_{0}^{\ubar}|u|^{2I+4s_{2}(\Psi_{1})}||\hnabla^{I}\Psi_{1}||^{2}_{L^{2}(S_{u,\ubar^{'}})}d\ubar^{'}\\\nonumber =&\int_{0}^{\ubar}|u|^{2I+4s_{2}(\Psi_{1})}a^{I+2s_{2}(\Psi_{1})}|u|^{-2I-4s_{2}(\Psi_{1})}||\hnabla^{I}\Psi_{1}||^{2}_{L^{2}_{sc}(S_{u,\ubar^{'}})}d\ubar^{'}\\
=&\int_{0}^{\ubar}a^{I+2s_{2}(\Psi_{1})}||\hnabla^{I}\Psi_{1}||^{2}_{L^{2}_{sc}(S_{u,\ubar^{'}})}d\ubar^{'},
\end{align}
\begin{align} \nonumber &
\int_{\Hbar_{\ubar}(u_{\infty},u)}|u^{'}|^{2I+4s_{2}(\Psi_{1})}|\hnabla^{I}\Psi_{2}|^{2}=\int_{u_{\infty}}^{u}|u^{'}|^{2I+4s_{2}(\Psi_{1})}||\hnabla^{I}\Psi_{2}||^{2}_{L^{2}(S_{u^{'},\ubar})}du^{'}\\\nonumber
=&\int_{u_{\infty}}^{u}|u^{'}|^{2I+4s_{2}(\Psi_{1})}a^{I+1+2s_{2}(\Psi_{1})}|u|^{-2(I+1)-4s_{2}(\Psi_{1})}||\hnabla^{I}\Psi_{2}||^{2}_{L^{2}_{sc}(S_{u^{'},\ubar})}du^{'}\\ 
=&\int_{u_{\infty}}^{u}a^{I+1+2s_{2}(\Psi_{1})}|u^{'}|^{-2}||\hnabla^{I}\Psi_{2}||^{2}_{L^{2}_{sc}(S_{u^{'},\ubar})}du^{'},
\end{align}
\begin{align} \nonumber
&\int_{D_{u,\ubar^{'}}}|u^{'}|^{2I+4s_{2}(\Psi_{1})}|\langle\hnabla^{I}\Psi_{1},P\rangle|=\int_{u,\ubar}|u^{'}|^{2I+4s_{2}(\Psi_{1})}||\langle\hnabla^{I}\Psi_{1},P\rangle||_{L^{1}(S_{u^{'},\ubar^{'}})}du^{'}d\ubar^{'}\\\nonumber 
=&\int_{u,\ubar}|u^{'}|^{2I+4s_{2}(\Psi_{1})}a^{I+1+2s_{2}(\Psi_{1})}|u^{'}|^{-2I-1-4s_{2}(\Psi_{1})}||\langle\hnabla^{I}\Psi_{1},P\rangle||_{L^{1}_{sc}(S_{u^{'},\ubar^{'}})}du^{'}d\ubar^{'}\\ 
=&\int_{u,\ubar}a^{I+1+2s_{2}(\Psi_{1})}|u^{'}|^{-1}||\langle\hnabla^{I}\Psi_{1},P\rangle||_{L^{1}_{sc}(S_{u^{'},\ubar^{'}})}du^{'}d\ubar^{'},
\end{align}
and 
\begin{align}  \nonumber
&\int_{D_{u,\ubar}}|u^{'}|^{2I+4s_{2}(\Psi_{1})}\langle\hnabla^{I}\Psi_{2},Q\rangle|=\int_{u,\ubar}|u^{'}|^{2I+4s_{2}(\Psi_{1})}||\langle\hnabla^{I}\Psi_{2},Q\rangle||_{L^{1}(S_{u^{'},\ubar^{'}})}du^{'}d\ubar^{'}\\\nonumber 
=&\int_{u,\ubar}|u^{'}|^{2I+4s_{2}(\Psi_{1})}a^{I+1+2s_{2}(\Psi_{1})}|u^{'}|^{-2I-1-4s_{2}(\Psi_{1})}||\langle\hnabla^{I}\Psi_{2},Q\rangle||_{L^{1}_{sc}(S_{u^{'},\ubar^{'}})}du^{'}d\ubar^{'}\\ 
=&\int_{u,\ubar}a^{I+1+2s_{2}(\Psi_{1})}|u^{'}|^{-1}||\langle\hnabla^{I}\Psi_{2},Q\rangle||_{L^{1}_{sc}(S_{u^{'},\ubar^{'}})}du^{'}d\ubar^{'}.
\end{align}
Collecting all the terms yield 
\begin{equation}
\begin{split}
&\int_{\Hu}\scaletwoSuubarprime{\hnabla^I \Psi_1}^2 \dubarprime + \int_{\Hbu}\frac{a}{\upr^2} \scaletwoSuprime{\hnabla^I \Psi_2}^2 \duprime \\ \lesssim &\int_{H_{u_{\infty}}^{(0,\ubar)}}\scaletwoSuzubarprime{\hnabla^I \Psi_1}^2 \dubarprime + \int_{\Hbar_0^{(u_{\infty},u)}} \frac{a}{\upr^2} \scaletwoSuzprime{\hnabla^I \Psi_2}^2 \duprime \\+&  \iint_{\mathcal{D}_{u,\ubar}}\frac{a}{\upr} \scaleoneSuprimeubarprime{\langle\hnabla^I \Psi_1 , P\rangle}\duprime \dubarprime + \iint_{\mathcal{D}_{u,\ubar}}\frac{a}{\upr} \scaleoneSuprimeubarprime{\langle\hnabla^I \Psi_2 ,Q\rangle}\duprime \dubarprime.
\end{split}
\end{equation}
\end{proof}

\noindent Before embarking on the energy estimates, we provide a final helpful proposition, which can be found for example in \cite{Kl-Rod}.

\begin{proposition}\label{prop54}
Let $f(x,y), \hsp g(x,y)$ be positive functions defined on a rectangle $U := \begin{Bmatrix} 0\leq x \leq x_0, \hsp 0\leq y \leq y_0 \end{Bmatrix}.       $ Suppose there exist constants $J ,c_1, c_2$ such that

\[ f(x,y)+g(x,y) \lesssim J + c_1 \int_0^{x} f(x^{\prime},y) \hsp \text{d}x^{\prime} +\int_0^{y} g(x,y^{\prime}) \hsp \text{d}y^{\prime},\]for all $(x,y) \in U$. Then there holds

\[ \forall (x,y) \in U: \hspace{5mm} f(x,y)+g(x,y) \lesssim J \mathrm{e}^{c_1 x + c_2y}. \]

\end{proposition}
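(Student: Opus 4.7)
The plan is to prove this standard two-variable Gr\"onwall inequality by the Picard-type iteration method. First I would reduce to a single inequality by setting $h(x,y) := f(x,y) + g(x,y)$. Since both $f$ and $g$ are nonnegative, we have $f \leq h$ and $g \leq h$ pointwise, and substituting these into the hypothesis yields
\[ h(x,y) \lesssim J + c_1 \int_0^x h(x',y)\,\mathrm{d}x' + c_2 \int_0^y h(x,y')\,\mathrm{d}y', \]
(reading, as the conclusion suggests, the missing $c_2$ in the statement as a typo). This replaces the coupled system by a single scalar inequality to which a clean iterative argument applies.

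Next, I would introduce the two integration operators $T_x \phi(x,y) := \int_0^x \phi(x',y)\,\mathrm{d}x'$ and $T_y \phi(x,y) := \int_0^y \phi(x,y')\,\mathrm{d}y'$, and write the inequality schematically as $h \lesssim J + (c_1 T_x + c_2 T_y)[h]$. Crucially $T_x$ and $T_y$ commute on continuous functions by Fubini's theorem, so the binomial theorem applies to the operator $c_1 T_x + c_2 T_y$. Iterating the inequality $n$ times produces
\[ h \lesssim J \sum_{k=0}^{n} (c_1 T_x + c_2 T_y)^k [1] + (c_1 T_x + c_2 T_y)^{n+1}[h]. \]
Using commutativity and the elementary identity $T_x^j T_y^{k-j}[1] = x^j y^{k-j}/(j!(k-j)!)$, the binomial theorem yields $(c_1 T_x + c_2 T_y)^k[1] = (c_1 x + c_2 y)^k / k!$, so the partial sum telescopes into the Taylor series of $J\hsp e^{c_1 x + c_2 y}$.

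Finally, I would verify that the remainder $(c_1 T_x + c_2 T_y)^{n+1}[h]$ vanishes as $n \to \infty$. Since $h$ is a positive function on the compact rectangle $U$, it is bounded by some $M$ on $U$ (this requires $h$ to be at least locally bounded, which in the intended applications is automatic since the curvature energies and the like are continuous), and thus $(c_1 T_x + c_2 T_y)^{n+1}[h] \leq M (c_1 x_0 + c_2 y_0)^{n+1}/(n+1)! \to 0$ by the factorial decay. Letting $n \to \infty$ yields $h(x,y) \lesssim J\hsp e^{c_1 x + c_2 y}$, which is the desired bound.

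The argument is essentially routine, so there is no serious obstacle; the only mild subtlety is justifying the boundedness of $h$ needed to kill the Picard remainder on $U$. In the applications to come (where $f$ and $g$ are finite scale-invariant energy norms on a precompact region), this is automatic, and the proposition could equivalently be proved by a local a priori boundedness argument combined with a standard one-dimensional Gr\"onwall step in each variable separately.
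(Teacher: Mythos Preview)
Your argument is correct and self-contained. The paper does not actually supply a proof of this proposition; it merely states the result and refers the reader to \cite{Kl-Rod}. Your reduction via $h = f + g$ (using nonnegativity) followed by the Picard iteration with the commuting operators $T_x, T_y$ is a standard and clean way to establish the two-variable Gr\"onwall bound, and the caveat you flag about local boundedness of $h$ is exactly the mild regularity needed, which is indeed automatic in the intended application to continuous energy norms.
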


\subsection{Energy Estimates for the Yang-Mills components}

\noindent We begin with the pair $(\alphaF,\ (\rhoF,\sigmaF))$.

\begin{proposition}
Under the assumptions of Theorem \ref{main1} and the bootstrap assumptions, there holds, for $0 \leq i \leq N+5$:

\begin{equation}
    \begin{split}
     &\frac{1}{\al} \scaletwoHu{(\al)^{i-1}\hnabla^i \alphaF} + \frac{1}{\al}\scaletwoHbaru{(\al)^{i-1} \hnabla^i (\rhoF,\sigmaF)} \\ \lesssim & \frac{1}{\al} \scaletwoHzero{(\al)^{i-1}\hnabla^i \alphaF} + \frac{1}{\al}\scaletwoHbarzero{(\al)^{i-1} \hnabla^i (\rhoF,\sigmaF)}  +1.
    \end{split}
\end{equation}

\end{proposition}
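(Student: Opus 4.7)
The plan is to apply Proposition \ref{42} to the Bianchi-like pair $(\Psi_{1},\Psi_{2})=(\alpha^{F},(\rho^{F},\sigma^{F}))$, whose signatures $s_{2}(\alpha^{F})=0$ and $s_{2}((\rho^{F},\sigma^{F}))=1/2$ satisfy the required relation $s_{2}(\Psi_{2})=s_{2}(\Psi_{1})+1/2$. Recall the null Yang--Mills evolution equations
\begin{align*}
\hnabla_{3}\alpha^{F}+\tfrac{1}{2}\tr\chibar\,\alpha^{F}&=\hnabla\rho^{F}+\Hodge{\hnabla}\sigma^{F}-2\Hodge{\etabar}\sigma^{F}-2\etabar\rho^{F}+2\omegabar\alpha^{F}-\chibarhat\cdot\alpha^{F},\\
\hnabla_{4}\rho^{F}+\tr\chi\rho^{F}&=\widehat{\dv}\alpha^{F}-(\eta-\etabar)\cdot\alpha^{F},\\
\hnabla_{4}\sigma^{F}+\tr\chi\sigma^{F}&=-\widehat{\curl}\alpha^{F}+(\eta-\etabar)\cdot\Hodge{\alpha}^{F}.
\end{align*}
The first step is to commute with $\hnabla^{i}$ by invoking the identities \eqref{c1} and \eqref{c2}. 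This produces, schematically, two equations of the form
\[
\hnabla_{3}\hnabla^{i}\alpha^{F}+\bigl(\tfrac{i+1}{2}\bigr)\tr\chibar\,\hnabla^{i}\alpha^{F}-\widehat{\mathcal{D}}\hnabla^{i}(\rho^{F},\sigma^{F})=P_{i},\qquad
\hnabla_{4}\hnabla^{i}(\rho^{F},\sigma^{F})-\Hodge{\widehat{\mathcal{D}}}\hnabla^{i}\alpha^{F}=Q_{i},
\]
where $P_{i}$ and $Q_{i}$ are collections of lower-order products built from Ricci coefficients, Weyl curvature components, and Yang--Mills components, exactly as displayed in the proofs of Propositions for $\hnabla^{i}\alpha^{F}$ and $\hnabla^{i}(\rho^{F},\sigma^{F})$ earlier. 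With the pair structure in place, Proposition \ref{42} converts the energy bookkeeping into the identity
\[
\int_{H_{u}}\!\!|\hnabla^{i}\alpha^{F}|^{2}_{(sc)}+\int_{\underline{H}_{\underline{u}}}\!\!\tfrac{a}{|u'|^{2}}|\hnabla^{i}(\rho^{F},\sigma^{F})|^{2}_{(sc)}\lesssim\mathrm{data}+\iint\tfrac{a}{|u'|}\bigl(\|\langle\hnabla^{i}\alpha^{F},P_{i}\rangle\|_{L^{1}_{(sc)}}+\|\langle\hnabla^{i}(\rho^{F},\sigma^{F}),Q_{i}\rangle\|_{L^{1}_{(sc)}}\bigr).
\]

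Second, the bulk is to estimate each error term appearing in $P_{i}$ and $Q_{i}$. For every summand one pairs the Yang--Mills factor in $L^{2}_{(sc)}$ against the error in $L^{2}_{(sc)}$ using the scale-invariant H\"older inequalities \eqref{257}. The non-anomalous pieces---products like $\nabla^{J_{1}}(\eta+\etabar)^{J_{2}}\hnabla^{J_{3}}\Upsilon\hnabla^{J_{4}}\Upsilon$ or those containing $\psi_{g},\,\widetilde{\tr\chibar},\,\chihat$---are controlled by the bootstrap assumption \eqref{bootstrap} and the $L^{2}_{(sc)}$ estimates for Ricci coefficients from Section 4, producing factors of $\Gamma,R,M$ multiplied by integrable weights $a/|u'|^{2}$ or $a/|u'|$, hence controlled by $1$. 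The truly borderline contributions occur when $i=N+5$ and the top derivative falls on a Ricci coefficient; here one must substitute the elliptic estimates from Section 5 together with Remark \ref{usefulremark}, in particular using the $L^{2}_{(sc)}(H_{u})$ or $L^{2}_{(sc)}(\underline{H}_{\underline{u}})$ norms of $\nabla^{N+5}\chihat$, $\nabla^{N+5}\tr\chi$, $\nabla^{N+5}\eta$, $\nabla^{N+5}\etabar$, $\nabla^{N+5}\omega$, $\nabla^{N+5}\omegabar$, and the weighted integrals for $\nabla^{N+5}\chibarhat,\nabla^{N+5}\tr\chibar$, all of which are bounded by $1+\mathcal{R}+\underline{\mathcal{R}}+\mathbb{YM}$.

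Third, among all the error contributions, two groups will be borderline and therefore deserve special care. The first group is the genuine Yang--Mills nonlinearity of the type $\hnabla^{J_{3}}\alpha^{F}\hnabla^{J_{4}}(\rho^{F},\sigma^{F})\hnabla^{J_{5}}(\text{Yang--Mills})$ appearing through the commutator terms $C_{1},C_{2}$ in \eqref{c1}--\eqref{c2}; these are handled using the $L^{\infty}_{(sc)}$ improvements of Remarks \ref{remarkF}--\ref{remarkF2} (since the factors not carrying the top derivative have order at most $N+1$), which supply integrable-in-$u'$ smallness. The second, and main, obstacle is the pair of ``cross'' terms where one derivative of $\tr\chibar$ is needed; the standard trick is to replace $\tr\chibar$ by $\widetilde{\tr\chibar}+\frac{2}{|u|}$ so as to restore integrability in $u'$ and absorb the constant piece into the algebraic structure of the pair. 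After these reductions, all remaining integrals either produce a power of $|u|^{-1}$ with a high enough power of $a^{1/2}/|u|$ factored out, or they reduce to bulk integrals of $|\hnabla^{i}\alpha^{F}|^{2}_{(sc)}$ and $|\hnabla^{i}(\rho^{F},\sigma^{F})|^{2}_{(sc)}$ over $H_{u'}$ and $\underline{H}_{\underline{u}'}$, respectively.

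Finally, the output of the estimation step has exactly the structure required by Proposition \ref{prop54}: the quantities
\[
f(\ubar,u)=\tfrac{1}{a}\|(\al)^{i-1}\hnabla^{i}\alpha^{F}\|^{2}_{\mathcal{L}^{2}_{sc}(H^{(0,\ubar)}_{u})},\qquad g(\ubar,u)=\tfrac{1}{a}\|(\al)^{i-1}\hnabla^{i}(\rho^{F},\sigma^{F})\|^{2}_{\mathcal{L}^{2}_{sc}(\underline{H}^{(u_{\infty},u)}_{\ubar})}
\]
satisfy $f+g\lesssim J+\int f\,du'+\int g\,d\ubar'$ with $J$ equal to the data plus $1$. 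An application of Proposition \ref{prop54} then yields the claim. The hardest single bookkeeping step will be verifying that the top-order cross-interaction $\hnabla^{N+5}(\tr\chibar,\chibarhat)\cdot\alpha^{F}$ and its $e_{4}$-counterpart close: these are precisely the spots where the elliptic gain provided by Remark \ref{usefulremark} is essential, since a na\"ive $L^{2}_{(sc)}(S)$ estimate on $\nabla^{N+5}\chibarhat$ does not integrate in $u'$, while the integrated-in-$u'$ bound does.
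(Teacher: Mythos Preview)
Your overall strategy matches the paper: apply Proposition~\ref{42} to the pair $(\alpha^F,(\rho^F,\sigma^F))$, commute via \eqref{c1}--\eqref{c2} to produce error terms $P_i,Q_i$, and estimate these using the scale-invariant H\"older inequalities, the bootstrap assumptions, and (at top order) the elliptic estimates of Section~5 together with Remark~\ref{usefulremark}. Your identification of the borderline contributions (top-order $\nabla^{N+5}$ of Ricci coefficients, the Yang--Mills commutator pieces, and the $\tr\chibar$ anomaly handled via $\widetilde{\tr\chibar}$) is accurate.

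The one substantive difference lies in the closing step. You propose to assemble the errors into the form $f+g\lesssim J+c_1\int f\,du'+c_2\int g\,d\ubar'$ and invoke Proposition~\ref{prop54}. That proposition, however, is stated for \emph{constant} $c_1,c_2$ on a bounded rectangle; here the $u$-interval $[u_\infty,-a/4]$ is unbounded as $u_\infty\to-\infty$, so a constant-coefficient Gr\"onwall would produce a divergent factor $e^{c_1|u-u_\infty|}$. What really happens is that the coefficients arising from the $\chibarhat,\tildetr,\omegabar$ commutation pieces are variable, of size $\sim a^{1/2}\Gamma/|u'|^2$, whose $u'$-integral is $\lesssim\Gamma/a^{1/2}\ll1$; so a variable-coefficient Gr\"onwall closes, but it is not literally Proposition~\ref{prop54}. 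The paper avoids this bookkeeping altogether: after Cauchy--Schwarz it bounds the bulk errors $\widetilde N,\widetilde M$ by expressions of the form $(\Gamma M+\Gamma R+1)\cdot\sup_{u'}\|(\al)^{i-1}\hnabla^i\alpha^F\|_{L^2_{sc}(H_{u'})}$ and $1\cdot\sup_{\ubar'}\|(\al)^{i-1}\hnabla^i(\rho^F,\sigma^F)\|_{L^2_{sc}(\underline H_{\ubar'})}$, then simply divides the entire inequality by $a$ and uses the bootstrap hypothesis $(\Gamma+R+M)^{20}\le a^{1/16}$ to conclude $a^{-1}(\widetilde N+\widetilde M)\lesssim1$. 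This is more direct, since one never has to separate terms into ``Gr\"onwall-able'' versus ``$J$-type'' and the unbounded $u$-range causes no trouble.
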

\begin{proof}
We consider the schematic equations for $\alphaF, (-\rhoF,\sigmaF)$:

\begin{equation}
    \hnabla_3 \alphaF +\frac{1}{2}\tr\chibar \alphaF - \mathcal{D}(-\rhoF,\sigmaF) = \eta (\rhoF,\sigmaF) + \omegabar \alphaF + \chihat \alphabarF, 
\end{equation}

\begin{equation}
    \hnabla_4 (-\rhoF,\sigmaF) -\Hodge{\mathcal{D}}\alphaF = \tr\chi(\rhoF,\sigmaF) +(\eta,\etabar)\alphaF.
\end{equation}Commuting with $i$ angular derivatives, we arrive at the following two:

\begin{equation}\label{alphafenexpression}
    \begin{split}
        &\hnabla_3 \hnabla^i \alphaF + \frac{i+1}{2}\tr\chibar \hnabla^i \alphaF - \mathcal{D} \hnabla^i (-\rhoF,\sigmaF) \\= &\sumitm \nablapp \hnabla^{i_3+1} (-\rhoF,\sigmaF) + \sumif \nablap \nablat \eta \hnablaf (\rhoF,\sigmaF) \\ &+ \sumif \nablap \nablat \omegabar \hnablaf \alphaF + \sumif \nablap \nablat \chihat \hnablat \alphabarF  \\&+\sumif \nablap \nablat (\chibarhat,\tildetr) \hnablaf \alphaF +\sumifi \nablapp \nablat \tr\chibar \hnablaf \alphaF \\ &+\sumifim \nablapp \nablat(\chibarhat,\tr\chibar) \hnablaf \alphaF \\& +\sumiFi \nablap \hnablat \alphaF \hnablaf (\rhoF,\sigmaF) \hnablaF \alphaF \\&+ \sumifi \nablap \hnablat \alphabarF \hnablaf \alphaF := A_i,
    \end{split}
\end{equation}

\begin{equation}\label{rhofenexpression}
    \begin{split}
        &\hnabla_4\hnabla^i (-\rhoF,\sigmaF)- \Hodge{\mathcal{D}}\hnabla^i \alphaF \\=&\sumitm \nablapp \hnabla^{i_3+1}\alphaF + \sumif \nablap \nablat \tr\chi \hnablaf(\rhoF,\sigmaF) \\&+ \sumif \nablap \nablat (\eta,\etabar)\hnablaf  \alphaF +\sumif \nablap \nablat (\chihat,\tr\chi) \hnablaF (\rhoF,\sigmaF) \\&+ \sumifim \nablapp \nablat (\chihat,\tr\chi)\hnablaf (\rhoF,\sigmaF) \\ &+ \sumifi \nablap \hnablat (\rhoF,\sigmaF) \hnablaf \alphaF \hnablaF (\rhoF,\sigmaF) \\&+\sumifi \nablap \hnablat \alphaF \hnablaf (\rhoF,\sigmaF) :=B_i .
    \end{split}
\end{equation}Making use of Proposition \ref{42}, we arrive at

\begin{equation}
    \begin{split}
          & \scaletwoHu{(\al)^{i-1}\hnabla^i \alphaF}^2 + \scaletwoHbaru{(\al)^{i-1} \hnabla^i (\rhoF,\sigmaF)}^2 \\ \lesssim &  \scaletwoHzero{(\al)^{i-1}\hnabla^i \alphaF}^2 + \scaletwoHbarzero{(\al)^{i-1} \hnabla^i (\rhoF,\sigmaF)}^2  \\ +& \iint_{\mathcal{D}_{u,\ubar}}\frac{a}{\upr} \scaleoneSuprimeubarprime{\langle(\al)^{i-1}\hnabla^i \alphaF, (\al)^{i-1}A_i\rangle}\duprime \dubarprime \\+& \iint_{\mathcal{D}_{u,\ubar}}\frac{a}{\upr} \scaleoneSuprimeubarprime{\langle(\al)^{i-1}\hnabla^i (\rhoF,\sigmaF) ,(\al)^{i-1}B_i\rangle}\duprime \dubarprime.
    \end{split}
\end{equation}Call the last two spacetime integrals above $\widetilde{N}$ and $\widetilde{M}$ respectively. For the first one, we have, using H\"older's inequality, 

\begin{equation}\label{48}
\begin{split}
    \widetilde{N} =& \iint_{\mathcal{D}_{u,\ubar}}\frac{a}{\upr} \scaleoneSuprimeubarprime{\langle(\al)^{i-1}\hnabla^i \alphaF, (\al)^{i-1}A_i\rangle}\duprime \dubarprime \\ \lesssim & \intu \frac{a}{\upr^2}\left( \intubar \scaletwoSuprimeubarprime{(\al)^{i-1}A_i}^2 \right)^{\frac{1}{2}} \duprime \cdot \sup_{u^{\prime}} \scaletwoHprime{(\al)^{i-1}\hnabla^i \alphaF}.
\end{split}
\end{equation}We shall work on the term
\[ H_i := \left( \intubar \scaletwoSuprimeubarprime{(\al)^{i-1}A_i}^2 \dubarprime \right)^{\frac{1}{2}}. \]The term $A_i$ comprises nine summands, as seen in \eqref{alphafenexpression}. We bound each of the corresponding terms in in $H_i$ individually.
\vspace{3mm}
For the first term:

\begin{equation}\label{herewego}
\begin{split}
    \intubar \scaletwoSuprimeubarprime{(\al)^{i-1}\sumitm \nablapp \hnabla^{i_3+1}(\rhoF,\sigmaF)}^2 \dubarprime, 
\end{split}
\end{equation}we observe the following. If $i_3+1= N+5$, we estimate $\psi_g$ in $L^{\infty}_{(sc)}$ and $(\al)^{N+4} \hnabla^{N+5}(\rhoF,\sigmaF)$ in $L^2_{(sc)}(H_{u^{\prime}}^{(0,\ubar)})$. Otherwise, we can bound the entire term using estimates on the $\bbGamma$ norm. Therefore we estimate, using the bootstrap assumptions:

\begin{equation}
    \intubar \scaletwoSuprimeubarprime{(\al)^{i-1}\sumitm \nablapp \hnabla^{i_3+1}(\rhoF,\sigmaF)}^2 \dubarprime \lesssim \frac{\Gamma^2 M^2}{\upr^2} + \frac{\Gamma^4}{a^2\upr^2}.
\end{equation}Similarly, for the second term, we have \begin{equation}
\begin{split}
    &\intubar \scaletwoSuprimeubarprime{(\al)^{i-1}\sumif \nablap \nablat \eta \hnabla^{i_4}(\rhoF,\sigmaF)}^2 \dubarprime \\ \lesssim& \intubar \scaletwoSuprimeubarprime{(\al)^{N+4} \eta \hnabla^{N+5}(\rhoF,\sigmaF)}^2 \dubarprime \\+ &\intubar \scaletwoSuprimeubarprime{(\al)^{N+4}(\rhoF,\sigmaF) \nabla^{N+5} \eta   }^2 \dubarprime+ \frac{\Gamma^4}{a \upr^2} \\ \lesssim &\frac{\Gamma^2 M^2}{\upr^2} + \frac{\Gamma^2}{\upr^2} \frac{\upr^2}{a^2} \cdot \Gammatop^2 +\frac{\Gamma^4}{a \upr^2}, \end{split}
\end{equation}where in the last line we have utilised the bootstrap assumption \[ \frac{a}{\upr}\scaletwoHprime{(\al)^{N+4}\nabla^{N+5}\eta} \lesssim \Gamma_{\text{top}}.\]For the third term, we have 
\begin{equation}
\begin{split}
    &\intubar \scaletwoSuprimeubarprime{(\al)^{i-1}\sumif \nablap \nablat \omegabar \hnabla^{i_4}\alphaF}^2 \dubarprime \\ \lesssim& \intubar \scaletwoSuprimeubarprime{(\al)^{N+4} \omegabar \hnabla^{N+5}\alphaF}^2 \dubarprime \\+ &\intubar \scaletwoSuprimeubarprime{(\al)^{N+4}\alphaF \nabla^{N+5} \omegabar   }^2 \dubarprime+ \frac{\Gamma^4}{ \upr^2} \\ \lesssim &\frac{a \Gamma^2 M^2}{\upr^2}+\frac{\Gamma^4}{\upr^2} + \frac{a\Gamma^2}{\upr^2}\intubar \scaletwoSuprimeubarprime{(\al)^{N+4}\nabla^{N+5}\omegabar}^2 \dubarprime. \end{split}
\end{equation}For the fourth term, we have 
\begin{equation}
\begin{split}
    &\intubar \scaletwoSuprimeubarprime{(\al)^{i-1}\sumif \nablap \nablat \chihat \hnabla^{i_4}\alphabarF}^2 \dubarprime \\ \lesssim& \intubar \scaletwoSuprimeubarprime{(\al)^{N+4} \chihat \hnabla^{N+5}\alphabarF}^2 \dubarprime \\+ &\intubar \scaletwoSuprimeubarprime{(\al)^{N+4}\alphabarF \nabla^{N+5} \chihat   }^2 \dubarprime+ \frac{\Gamma^4}{ \upr^2} \\ \lesssim &\frac{a \Gamma^2 \Gamma_{\text{top}}^2}{\upr^2}+\frac{\Gamma^4}{\upr^2} + \frac{a\Gamma^2}{\upr^2}\intubar \scaletwoSuprimeubarprime{(\al)^{N+4}\hnabla^{N+5}\alphabarF}^2 \dubarprime. \end{split}
\end{equation}In the fifth term, the worst term that is encountered is

\begin{equation}
\begin{split}
    &\intubar \scaletwoSuprimeubarprime{(\al)^{i-1}\sumif \nablap \nablat \chibarhat \hnabla^{i_4}\alphaF}^2 \dubarprime \\ \lesssim& \intubar \scaletwoSuprimeubarprime{(\al)^{N+4} \chibarhat \hnabla^{N+5}\alphaF}^2 \dubarprime \\+ &\intubar \scaletwoSuprimeubarprime{(\al)^{N+4}\alphaF \nabla^{N+5} \chibarhat   }^2 \dubarprime+ \frac{\Gamma^4}{ a} \\ \lesssim &\Gamma^2[\chibarhat]M^2[\alphaF]+\frac{\Gamma^4}{a} + \frac{a\Gamma^2}{\upr^2}\intubar \scaletwoSuprimeubarprime{(\al)^{N+4}\nabla^{N+5}\chibarhat}^2 \dubarprime. \end{split}
\end{equation}For the sixth term, we have

\begin{equation}
\begin{split}
    &\intubar \scaletwoSuprimeubarprime{(\al)^{i-1}\sumifi \nablapp \nablat \tr\chibar \hnabla^{i_4}\alphaF}^2 \dubarprime \\ \lesssim& \frac{\upr^4}{a^2}\cdot \frac{\Gamma^6}{\upr^4}\lesssim \frac{\Gamma^6}{a^2},
\end{split}
\end{equation}since there are at most $N+4$ derivatives in the expression, thus each term is controllable under the $\bbGamma-$norm. For the seventh term, we have

\begin{equation}
\begin{split}
    &\intubar \scaletwoSuprimeubarprime{(\al)^{i-1}\sumifim \nablapp \nablat (\chibarhat,\tr\chibar) \hnabla^{i_4}\alphaF}^2 \dubarprime \\ \lesssim& \intubar  \frac{\upr^4}{a^2}\cdot a \cdot \frac{\Gamma^6}{\upr^4} \lesssim \frac{\Gamma^6}{a} \lesssim 1.\end{split}
\end{equation}The eighth and ninth terms are similarly controlled as follows:

\begin{equation}\label{herewearrive}
\begin{split}
    &\intubar \scaletwoSuprimeubarprime{(\al)^{i-1}\sumiFi \nablap \hnablat \alphaF \hnabla^{i_4}(\rhoF,\sigmaF) \hnablaF \alphaF}^2 \dubarprime \\ +& \intubar \scaletwoSuprimeubarprime{(\al)^{i-1}\sumifi \nablap \hnablat \alphabarF \hnabla^{i_4}\alphaF }^2 \dubarprime \lesssim \frac{a^2 \hsp  \Gamma^6}{\upr^4} + \frac{a \hsp \Gamma^4}{\upr^2} . \end{split}
\end{equation}Putting equations \eqref{herewego}-\eqref{herewearrive} together, we arrive at

\begin{equation}
\begin{split}
    H_i \lesssim 1 + \Gamma[\chibarhat]M[\alphaF] &+ \frac{\al \hsp \Gamma}{\upr} \left(\intubar \scaletwoSuprimeubarprime{(\al)^{N+4} \nabla^{N+5}(\omegabar, \chibarhat)}^2 \dubarprime \right)^{\frac{1}{2}}\\&+ \frac{\al \hsp \Gamma}{\upr} \left(\intubar \scaletwoSuprimeubarprime{(\al)^{N+4} \hnabla^{N+5}\alphabarF}^2 \dubarprime\right)^{\frac{1}{2}}.
\end{split}
\end{equation}The terms above involving integrals cannot be controlled along outgoing null hypersurfaces, but nevertheless, going back to \eqref{48}, we see that 

\begin{equation}
    \begin{split}
  &\intu \frac{a}{\upr^2}\cdot \frac{\al\Gamma}{\upr}\left(\intubar  \scaletwoSuprimeubarprime{(\al)^{N+4} \nabla^{N+5}(\omegabar, \chibarhat)}^2\right)^{\frac{1}{2}}\duprime \\\lesssim & \left( \intu \frac{a^2}{\upr^4} \intubar \scaletwoSuprimeubarprime{(\al)^{N+4} \nabla^{N+5}(\omegabar, \chibarhat)}^2 \dubarprime \duprime \right)^{\frac{1}{2}} \left( \intu \frac{a\Gamma^2}{\upr^2} \duprime\right)^{\frac{1}{2}} \\ \lesssim & \frac{\al \Gamma}{\lvert u \rvert^{\frac{1}{2}}} \left( \mathcal{R} +\underline{\mathcal{R}}+1 \right) \lesssim \frac{\al \Gamma R}{\lvert u \rvert^{\frac{1}{2}}}, 
    \end{split}
\end{equation}where we have made use of H\"older's inequality, the bootstrap assumptions as well as \eqref{usefulhere}. Similarly, we have

\begin{equation}
    \begin{split}
  &\intu \frac{a}{\upr^2}\cdot \frac{\al\Gamma}{\upr}\left(\intubar  \scaletwoSuprimeubarprime{(\al)^{N+4} \hnabla^{N+5}\alphabarF}^2\right)^{\frac{1}{2}}\duprime \\\lesssim & \left( \intubar \intu \frac{a}{\upr^2}   \scaletwoSuprimeubarprime{(\al)^{N+4} \hnabla^{N+5}\alphabarF}^2 \duprime \dubarprime  \right)^{\frac{1}{2}} \left( \intu \frac{a^2 \Gamma^2}{\upr^4}\duprime \right)^{\frac{1}{2}} \\ \lesssim &\frac{a \hsp \Gamma}{\lvert u \rvert^{\frac{3}{2}}} \hsp  \sup_{0\leq\ubar \leq 1}\scaletwoHbaru{(\al)^{N+4} \hnabla^{N+5}\alphabarF} \lesssim \frac{a \hsp \Gamma \hsp M}{\lvert u \rvert^{\frac{3}{2}}} \lesssim 1.
  \end{split}
  \end{equation}Putting everything together, we get
  
  \begin{equation}\label{421}
      \widetilde{N} \lesssim ( \Gamma[\chibarhat] M[\alphaF] + \Gamma R +1 ) \cdot \sup_{u^{\prime}} \scaletwoHuprime{(\al)^{i-1}\hnabla^i \alphaF}.
  \end{equation}Moving on to $\widetilde{M}$, 
we  similarly obtain

\begin{equation}\label{422}
\begin{split}
    \widetilde{M} =& \iint_{\mathcal{D}_{u,\ubar}}\frac{a}{\upr} \scaleoneSuprimeubarprime{\langle(\al)^{i-1}\hnabla^i \alphaF, (\al)^{i-1}B_i\rangle}\duprime \dubarprime \\ \lesssim &\left( \intubar \intu \frac{a}{\upr^2} \scaletwoSuprimeubarprime{(\al)^{i-1}B_i}^2 \right)^{\frac{1}{2}} \duprime \cdot \sup_{u^{\prime}} \scaletwoHbarprime{(\al)^{i-1}\hnabla^i (\rhoF,\sigmaF)}.
\end{split}
\end{equation}We now work on the spacetime integral in the inequality above. Here, there holds

\begin{equation}
\begin{split}
    &\intubar \intu \frac{a}{\upr^2}\scaletwoSuprimeubarprime{(\al)^{i-1} \sumitm \nablapp \hnabla^{i_3+1}\alphaF}^2 \duprime \dubarprime \\ \lesssim &\intu \frac{a}{\upr^2} \intubar \scaletwoSuprimeubarprime{(\al)^{i-1} \sumifi \nablap \nablat (\eta,\etabar) \hnabla^{i_4+1}\alphaF}^2 \dubarprime \duprime\\ \lesssim &\intu \frac{a}{\upr^2} \cdot \frac{\Gamma^2}{\upr^2} \cdot \intubar \scaletwoSuprimeubarprime{(\al)^{N+4} \hnabla^{N+5}\alphaF}^2 \dubarprime \duprime  + \intubar\intu \frac{a}{\upr^2} \cdot \frac{\Gamma^4}{\upr^2} \duprime \dubarprime \\ \lesssim & \intu \frac{a^2 \hsp \Gamma^2 }{\upr^4} \cdot \left( \frac{1}{a} \scaletwoHuprime{(\al)^{N+4}\hnabla^{N+5}\alphaF}^2\right) +1 \lesssim \frac{a^2\hsp \Gamma^2\hsp M[\alphaF]^2}{\lvert u \rvert^3}+1 \lesssim 1.
\end{split}
\end{equation}The second term is contained in the fourth, so we skip it for now. For the third term, there holds 

\begin{equation}
    \begin{split}
          &\intubar \intu \frac{a}{\upr^2}\scaletwoSuprimeubarprime{(\al)^{i-1} \sumif \nablap \nabla^{i_3}(\eta,\etabar)\hnabla^{i_4}\alphaF}^2 \duprime \dubarprime \\ \lesssim &\intu \frac{a}{\upr^2} \cdot\frac{a \hsp \Gamma^2}{\upr^2}\intubar \scaletwoSuprimeubarprime{(\al)^{N+4}\hnabla^{N+5}(\eta,\etabar)}^2 \dubarprime \\ +&\intu \frac{a}{\upr^2} \cdot \frac{\Gamma^2}{\upr^2} \intubar \scaletwoSuprimeubarprime{(\al)^{N+4}\hnabla^{N+5} \alphaF}^2\duprime \dubarprime +\intubar\intu \frac{a}{\upr^2} \cdot \frac{\Gamma^4}{\upr^2}  \dubarprime \duprime \\ \lesssim& \intu \frac{a^2 \Gamma^2}{\upr^4} \cdot \frac{\upr^2}{a^2} \cdot \Gammatop^2 \duprime + \intu \frac{a \hsp \Gamma^2}{\upr^4} \cdot a \cdot M[\alphaF]^2 \duprime +1 \lesssim \frac{\Gamma^2 \Gammatop^2}{\lvert u \rvert} + \frac{a^2\Gamma^2 M[\alphaF]^2}{\lvert u \rvert^3} +1\\\lesssim & 1.
    \end{split}
\end{equation}For the fourth term, we have 

\begin{equation}
    \begin{split}
                  &\intubar \intu \frac{a}{\upr^2}\scaletwoSuprimeubarprime{(\al)^{i-1} \sumif \nablap \nabla^{i_3}(\chihat,\tr\chi)\hnabla^{i_4}(\rhoF,\sigmaF)}^2 \duprime \dubarprime \\ \lesssim &\intu \frac{a}{\upr^2} \cdot\frac{a \hsp \Gamma^2}{\upr^2}\left( \frac{1}{a}\intubar \scaletwoSuprimeubarprime{(\al)^{N+4}\hnabla^{N+5}\chihat}^2 \dubarprime \right) \duprime \\ +&\intu \frac{a}{\upr^2} \cdot\frac{ \hsp \Gamma^2}{\upr^2}\intubar \scaletwoSuprimeubarprime{(\al)^{N+4}\hnabla^{N+5}\tr\chi}^2 \dubarprime \duprime \\ +& \intu \frac{a^2 \hsp \Gamma^2}{\upr^4}\intubar \scaletwoSuprimeubarprime{(\al)^{N+4}\hnabla^{N+5}(\rhoF,\sigmaF)}^2 \dubarprime \duprime + \intubar \intu \frac{a}{\upr^2}\frac{\Gamma^4}{\upr^2} \duprime \dubarprime\\ \lesssim &\frac{a^2 \hsp \Gamma^2\hsp \Gammatop^2}{\lvert u \rvert^3} + \frac{a \Gamma^2 \Gammatop^2}{\lvert u \rvert^3} + \frac{a^2 \hsp \Gamma^2 \hsp M[\rhoF,\sigmaF]^2}{\lvert u \rvert^3} + \frac{a \hsp \Gamma^4}{\lvert u \rvert^3} \lesssim 1.
    \end{split}
\end{equation}For the fifth term, we have 

\begin{equation}
    \begin{split}
            &\intubar \intu \frac{a}{\upr^2}\scaletwoSuprimeubarprime{(\al)^{i-1} \sumifim \nablapp \nabla^{i_3}(\chihat,\tr\chi)\hnabla^{i_4}(\rhoF,\sigmaF)}^2 \duprime \dubarprime  \\ \lesssim& \intubar \intu \frac{a}{\upr^2}\cdot \frac{a \hsp \Gamma^6}{\upr^4}\duprime \dubarprime \lesssim 1.
    \end{split}
\end{equation}The last two terms are, similarly, bounded above by $1$. Consequently, one has

\begin{equation}
    \label{429}
    \widetilde{M} \lesssim \sup_{u^{\prime}} \scaletwoHbarprime{(\al)^{i-1}\hnabla^i (\rhoF,\sigmaF)} .
\end{equation} Taking \eqref{421} and \eqref{429} into account and multiplying throughout by $a^{-1}$, we get 

    \begin{equation}
    \begin{split}
          & a^{-1}\scaletwoHu{(\al)^{i-1}\hnabla^i \alphaF}^2 +a^{-1} \scaletwoHbaru{(\al)^{i-1} \hnabla^i (\rhoF,\sigmaF)}^2 \\ \lesssim &  a^{-1}\scaletwoHzero{(\al)^{i-1}\hnabla^i \alphaF}^2 + a^{-1}\scaletwoHbarzero{(\al)^{i-1} \hnabla^i (\rhoF,\sigmaF)}^2  + a^{-1}(\widetilde{N} + \widetilde{M})\\ \lesssim &  a^{-1}\scaletwoHzero{(\al)^{i-1}\hnabla^i \alphaF}^2 + a^{-1}\scaletwoHbarzero{(\al)^{i-1} \hnabla^i (\rhoF,\sigmaF)}^2  \\ +& a^{- \frac{1}{2}} (\Gamma \hsp M + \Gamma \hsp R + 1)\cdot M + a^{-\frac{1}{2}} \cdot M \\ \lesssim &a^{-1}\scaletwoHzero{(\al)^{i-1}\hnabla^i \alphaF}^2 + a^{-1}\scaletwoHbarzero{(\al)^{i-1} \hnabla^i (\rhoF,\sigmaF)}^2 + 1.
    \end{split}
\end{equation}By taking the roots, we translate the above into the desired conclusion in a straightforward fashion. The result follows.

\end{proof}
\begin{proposition}
Under the assumptions of Theorem \ref{main1} and the bootstrap assumptions \eqref{bootstrap}, \eqref{ellboot}, we have, for $0\leq i \leq N+5$:

\begin{equation}
    \begin{split}
      &\scaletwoHu{(\al)^{i-1} \hnabla^i (\rhoF,\sigmaF)} + \scaletwoHbaru{(\al)^{i-1}\hnabla^i \alphabarF} \\ \lesssim & \scaletwoHzero{ (\al)^{i-1} \hnabla^i (\rhoF,\sigmaF)} + \scaletwoHbarzero{(\al)^{i-1}\hnabla^i \alphabarF} +1.
    \end{split}
\end{equation}

\end{proposition}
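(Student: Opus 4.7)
The plan is to mirror the strategy of the preceding proposition, applying Proposition \ref{42} to the Bianchi pair $(\Psi_1, \Psi_2) = ((\rho^F, \sigma^F), \alphabar^F)$, which is one of the Yang-Mills pairs listed there with $s_2(\Psi_1) = 1/2$ and $s_2(\Psi_2) = 1$. First I would rewrite the null Yang-Mills equations for $(\rho^F, \sigma^F)$ and $\alphabar^F$ in the required Bianchi-pair form:
\begin{align*}
\hnabla_3 (\rho^F, \sigma^F) + \tr\chibar (\rho^F,\sigma^F) - \widehat{\mathcal{D}} \alphabar^F &= (\eta-\etabar)\cdot \alphabar^F, \\
\hnabla_4 \alphabar^F - \Hodge{\widehat{\mathcal{D}}}(\rho^F,\sigma^F) &= -\tfrac{1}{2}\tr\chi \alphabar^F - 2\Hodge{\etabar}\sigma^F - 2\etabar\rho^F + 2\omega\alphabar^F - \chibarhat\cdot\alpha^F.
\end{align*}
Then I would commute with $\hnabla^i$ using the commutation Lemma \ref{commutation} (specifically \eqref{c2} for the $\hnabla_3$-equation and \eqref{c1} for the $\hnabla_4$-equation), producing schematic source terms $P_i$ and $Q_i$ that are structurally analogous to the $A_i$, $B_i$ in the preceding proposition but with $(\rho^F,\sigma^F)$ and $\alphabar^F$ swapping their roles relative to $\alpha^F$ and $(\rho^F,\sigma^F)$ there.

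Next I would apply Proposition \ref{42} to obtain
\begin{equation*}
\scaletwoHu{(\al)^{i-1}\hnabla^i(\rho^F,\sigma^F)}^2 + \scaletwoHbaru{(\al)^{i-1}\hnabla^i \alphabar^F}^2 \lesssim \text{(initial data)} + \widetilde{N} + \widetilde{M},
\end{equation*}
where $\widetilde{N}, \widetilde{M}$ are the spacetime error integrals weighted by $\frac{a}{|u'|}$ against $P_i$ and $Q_i$ respectively. I would then apply Hölder's inequality in $\ubar'$ (for $\widetilde{N}$, integrating $P_i$ along $\Hbar_{\ubar'}$ and pairing with a $\sup$-norm of $\hnabla^i(\rho^F,\sigma^F)$ in $\mathcal{L}^2_{sc}(H_{u'})$) and analogously in $u'$ for $\widetilde{M}$. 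Most of the resulting terms will be treated exactly as in the previous proposition, using the $L^2_{sc}(S)$-bounds proved in Section 4 for the Ricci coefficients and Yang-Mills components, together with the bootstrap assumptions $\bbGamma \leq \Gamma$, $\mathcal{R} \leq R$, $\mathbb{YM} \leq M$ and the smallness condition $(\Gamma + R + M)^{20} \leq a^{1/16}$.

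The main obstacle, as in the preceding proposition, will be the terms in $P_i$ and $Q_i$ containing top-order $(N{+}5)$-derivatives of Ricci coefficients --- specifically $\nabla^{N+5}(\chihat, \tr\chi, \omega, \eta, \etabar, \omegabar, \chibarhat, \tr\chibar)$ arising when all derivatives land on a Ricci coefficient in products of the form $\nabla^{i_3}\psi_g \cdot \hnabla^{i_4}(\text{Yang-Mills})$. These cannot be controlled at the $L^2_{sc}(S)$ level and must instead be absorbed into integrated norms along null hypersurfaces using the elliptic estimates of Section 5 bounded by $\Gammatop \lesssim a^{1/1000}$, together with \eqref{usefulhere} for $\chibarhat$ and $\tr\chibar$. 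For the $\widetilde{M}$-integral in particular, the $\hnabla^{i_3+1}(\rho^F,\sigma^F)$ term arising from the commutation $[\hnabla_4,\hnabla^i]\alphabar^F$ contains $\hnabla^{N+6}(\rho^F,\sigma^F)$ which is not available; this is handled, as in \cite{AnAth}, by noting that it only appears multiplied by $(\eta+\etabar)^{J_2+1}$ with $J_2 \geq 0$, i.e.\ the top derivative is effectively on $(\rho^F,\sigma^F)$ at order $N+5$ only, absorbable by the left-hand side.

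Finally, the borderline terms involving the non-abelian Yang-Mills triple products (schematically $\alphabarF \cdot (\rhoF, \sigmaF) \cdot \hnabla^j(\rhoF, \sigmaF)$ and $(\psi_g, \chihat, \chibarhat, \tr\chibar)\cdot \alphabarF \cdot \hnabla^j(\rhoF,\sigmaF)$) will be estimated exactly as in the proof of Proposition \ref{alphaFprop}, using the improved estimates from Remark \ref{remarkF} and Remark \ref{remarkF2}, together with the bound $a \hsp \Gamma^3/|u|^2 \lesssim 1$. Gathering everything, dividing by $a$ where appropriate, and noting that both $\widetilde{N}, \widetilde{M} \lesssim 1 + $ (terms absorbable into the left-hand side after taking square roots), the desired estimate follows. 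The verification of each individual term is routine given the bootstrap assumptions and the previously established pointwise and $L^2_{sc}(S)$ bounds.
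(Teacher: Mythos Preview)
Your proposal is correct and follows essentially the same approach as the paper: apply Proposition~\ref{42} to the pair $((\rho^F,\sigma^F),\alphabar^F)$, commute using \eqref{c2} and \eqref{c1}, and estimate the resulting error integrals term by term, with a Gr\"onwall argument in $\ubar$ at the end. Two small clarifications: first, your worry about $\hnabla^{N+6}(\rho^F,\sigma^F)$ is unfounded---the principal term $\Hodge{\widehat{\mathcal{D}}}\hnabla^i(\rho^F,\sigma^F)$ sits on the left-hand side of the pair equation and is cancelled by the integration-by-parts in Proposition~\ref{42}, so the remainder $E_i$ carries at most $N+5$ derivatives; second, the genuinely borderline term in $E_i$ is not one of the non-abelian triple products you list but rather the double product $\chibarhat\cdot\alpha^F$ coming from the original $\hnabla_4\alphabar^F$ equation, which the paper handles by combining the improved $L^\infty$ bound on $\alpha^F$ (Remark~\ref{remarkF}) with the elliptic estimate \eqref{usefulhere} for $\nabla^{N+5}\chibarhat$, producing a contribution $(\mathcal{R}+\underline{\mathcal{R}}+1)^2$ and a residual $\tfrac{1}{4}\int_0^{\ubar}\|\cdot\|^2\,d\ubar'$ that is removed by Gr\"onwall.
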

\begin{proof}
The proof follows along the same lines as the preceding Proposition. We begin with the schematic equations for $((-\rhoF,\sigmaF), \alphabarF):$

\begin{equation}
    \begin{split}
        \hnabla_3 (-\rhoF,\sigmaF) + \tr\chibar (-\rhoF,\sigmaF) - \mathcal{D} \alphabarF = (\eta,\etabar) \cdot \alphabarF ,
    \end{split}
\end{equation}
\begin{equation}
    \hnabla_4 \alphabarF - \Hodge{\mathcal{D}}(-\rhoF,\sigmaF)= (\tr\chi, \omega) \alphabarF + (\eta,\etabar)(\rhoF,\sigmaF) +\chibarhat\hsp \alphaF.
\end{equation}
Commuting these two equations with $i$ gauge-covariant angular derivatives we arrive at

\begin{equation}
    \begin{split}
        &\hnabla_3 \hnabla^i (-\rhoF,\sigmaF) + \frac{i+1}{2}\tr\chibar \hnabla^i (-\rhoF,\sigmaF) - \mathcal{D} \hnabla^i \alphabarF =\\ &\sumitm \nablapp \hnabla^{i_3+1}\alphabarF + \sumif \nablap \nablat (\eta,\etabar) \hnablaf \alphabarF \\ &+ \sumif \nablap \nablat(\chibarhat,\tildetr) \hnablaf (-\rhoF,\sigmaF) + \sumifi \nablapp \nablat \tr\chibar \hnablaf (-\rhoF,\sigmaF) \\ &+ \sumifim \nablapp \nablat (\chibarhat, \tr\chibar) \hnablaf (-\rhoF,\sigmaF)  \\&+ \sumiFi \nablap \hnablat \alphabarF \hnablaf (\rhoF,\sigmaF) \hnablaF (-\rhoF,\sigmaF) \\ &+ \sumifi \nablap \hnablat \alphabarF \hnablaf (-\rhoF,\sigmaF)  := \Delta_i
    \end{split}
\end{equation}

\begin{equation}
    \begin{split}
        &\hnabla_4 \hnabla^i \alphabarF - \Hodge{\mathcal{D}}\hnabla^i(-\rhoF,\sigmaF) \\= &\sumitm \nablapp \hnabla^{i_3+1}(-\rhoF,\sigmaF) + \sumif \nablap \nablat (\tr\chi,\omega) \hnablaf \alphabarF \\ &+ \sumif \nablap \nabla(\eta,\etabar) \hnablaf (\rhoF,\sigmaF) + \sumif \nablap \nablat \chibarhat \nablaf \alphaF \\ &+ \sumif \nablap \nablat (\chihat,\tr\chi) \hnablaf \alphabarF \\ &+ \sumifim \nablapp \nablat (\chihat,\tr\chi) \hnablaf \alphabarF \\ &+ \sumiFi \nablap \hnablat \alphaF \hnablaf (\rhoF,\sigmaF) \hnablaF \alphabarF \\ &+ \sumifi \nablap \hnablat \alphaF \hnablaf \alphabarF := E_i .
    \end{split}
\end{equation}We thus arrive, using Proposition \ref{42}, at

\begin{equation}\label{435}
\begin{split}
&\scaletwoHu{ (\al)^{i-1} \hnabla^i (\rhoF,\sigmaF)}^2 + \scaletwoHbaru{(\al)^{i-1} \hnabla^i \alphabarF}^2 \\  \lesssim & \scaletwoHzero{ (\al)^{i-1} \hnabla^i (\rhoF,\sigmaF)}^2 + \scaletwoHbarzero{(\al)^{i-1} \hnabla^i \alphabarF}^2 + \Xi_i + O_i, 
\end{split}
\end{equation}where
\begin{equation}
    \Xi_i := \intubar \intu \frac{a}{\upr} \scaleoneSuprimeubarprime{ (\al)^{i-1} \Delta_i (\al)^{i-1} \hnabla^i (\rhoF,\sigmaF)} \duprime \dubarprime
\end{equation}and 

\begin{equation}
 O_i := \intubar \intu \frac{a}{\upr} \scaleoneSuprimeubarprime{ (\al)^{i-1} E_i (\al)^{i-1} \hnabla^i \alphabarF} \duprime \dubarprime. 
\end{equation}

Focusing on $\Xi_i$, we have

\begin{equation}
    \begin{split}
        & \hspace{3mm}\Xi_i  \\ \lesssim &\intubar \intu \frac{a}{\upr^2}\scaletwoSuprime{(\al)^{i-1} \Delta_i}\scaletwoSuprime{(\al)^{i-1} \hnabla^i (\rhoF,\sigmaF)} \duprime \dubarprime  \\ \lesssim &\intu \frac{a}{\upr^2} \left( \intubar \scaletwoSuprime{(\al)^{i-1} \Delta_i}^2 \dubarprime \right)^{\frac{1}{2}} \left(\intubar \scaletwoSuprime{(\al)^{i-1} \hnabla^i (\rhoF,\sigmaF)}^2 \dubarprime   \right)^{\frac{1}{2}} \duprime \\ \lesssim & \left( \intubar \intu \frac{a}{\upr^2} \scaletwoSuprimeubarprime{(\al)^{i-1}\Delta_i}^2 \duprime \dubarprime \right)^{\frac{1}{2}} \cdot \sup_{u_{\infty} \leq u^{\prime} \leq u} \scaletwoHprime{(\al)^{i-1} \hnabla^i (\rhoF,\sigmaF)}.
    \end{split}
\end{equation}For the first term of the spacetime integral in the line above, as in previous Propositions, we have

\begin{equation}
    \begin{split}
        &\intubar \intu \frac{a}{\upr^2} \scaletwoSuprimeubarprime{(\al)^{i-1}\sumitm \nablapp \hnabla^{i_3+1}\alphabarF}^2 \dubarprime \dubarprime \\ \lesssim &\intubar \intu \frac{a}{\upr^2} \cdot \frac{\Gamma^2}{\upr^2}\scaletwoSuprimeubarprime{(\al)^{i-1} \hnabla^{i}\alphabarF}^2 \duprime \dubarprime \\ +& \intubar \intu \frac{a}{\upr^2}\cdot \frac{\Gamma^2}{\upr^2} \scaletwoSuprimeubarprime{(\al)^{i-1}\nabla^{i-1}(\eta,\etabar)}^2 \duprime\dubarprime + \frac{a \Gamma^2}{\upr^3} \lesssim \frac{1}{\al}. 
    \end{split}
\end{equation}Secondly, we have 

\begin{equation}
\begin{split}
  &\intubar \intu \frac{a}{\upr^2} \scaletwoSuprimeubarprime{(\al)^{i-1}\sumif \nablap \hnabla^{i_3}(\eta,\etabar) \hnablaf \alphabarF}^2 \dubarprime \dubarprime \\ \lesssim &\intubar \intu \frac{a}{\upr^2} \cdot \frac{\Gamma^2}{\upr^2} \scaletwoSuprimeubarprime{(\al)^{i-1}\hnabla^i \alphabarF}^2 \duprime \dubarprime \\&+ \intubar \intu \frac{a}{\upr^2} \cdot \frac{\Gamma^2}{\upr^2} \scaletwoSuprimeubarprime{(\al)^{i-1}\hnabla^i (\eta,\etabar)}^2 \duprime \dubarprime  + \intubar \intu \frac{a}{\upr^2}\cdot \frac{\Gamma^4}{\upr^2} \duprime \dubarprime\\ &\lesssim  \frac{a \Gamma^2 (\Gamma^2 + \Gammatop^2)}{\lvert u \rvert^3} \lesssim \frac{1}{\al}.
\end{split}
\end{equation}For the third term, the most borderline case is when we have 

\begin{equation}
\begin{split}
&\intubar \intu \frac{a}{\upr^2}\scaletwoSuprimeubarprime{(\al)^{i-1} \sumif \nablap \nablat \chibarhat \hnablaf (-\rhoF,\sigmaF)}^2 \duprime \dubarprime \\ \lesssim& \intubar\intu \frac{a}{\upr^2}\cdot \frac{\upr^2}{a}\cdot \frac{\Gamma^2}{\upr^2} \scaletwoSuprimeubarprime{(\al)^{i-1} \hnabla^i (-\rhoF,\sigmaF)}^2 \duprime \dubarprime \\ +& \intubar \intu \frac{a \Gamma^2}{\upr^4} \scaletwoSuprimeubarprime{(\al)^{i-1}\nabla^i \chibarhat}^2 \duprime \dubarprime + 1 \lesssim \frac{1}{\al},
\end{split}
\end{equation}where we have used the bootstrap assumptions \eqref{bootstrap} as well as equation \eqref{usefulhere} from Remark \ref{usefulremark}.

Out of the fourth and fifth terms, the most borderline one is the fifth. We estimate it as follows:

\begin{equation}
\begin{split}
 &\intubar \intu \frac{a}{\upr^2} \scaletwoSuprimeubarprime{(\al)^{i-1}\sumifim \nablapp \hnabla^{i_3}(\chibarhat,\tr\chibar) \hnablaf (\rhoF,\sigmaF)}^2 \dubarprime \dubarprime \\ \lesssim & \intubar \intu \frac{a}{\upr^2}\cdot \frac{\Gamma^6}{a} \duprime \dubarprime \lesssim \frac{1}{\al}.
\end{split}
\end{equation}
The sixth and seventh terms are similarly bounded above by $a^{-\frac{1}{2}}$, noting that since they involve up to $i-1$ derivatives, no elliptic estimates are involved in their estimation. Finally, we have

\begin{equation} \label{43}
\Xi_i \lesssim \frac{1}{a^{\frac{1}{4}}} \cdot \sup_{u_{\infty} \leq u^{\prime} \leq u} \scaletwoHprime{(\al)^{i-1} \hnabla^i (\rhoF,\sigmaF)} \lesssim 1,
\end{equation}by the bootstrap assumptions \eqref{bootstrap}. Finally, for $O_i$, we have:

\begin{equation}
\left(\intubar \intu \frac{a}{\upr^2}\scaletwoSuprimeubarprime{(\al)^{i-1}E_i}^2 \duprime \dubarprime \right)^{\frac{1}{2}} \cdot \sup_{0\leq \ubar \leq 1} \scaletwoHbarprime{(\al)^{i-1} \hnabla^i \alphabarF}.
\end{equation}
For the first term in the spacetime integral above, we estimate

\begin{equation}
\begin{split}
&\intubar \intu \frac{a}{\upr^2}\scaletwoSuprimeubarprime{(\al)^{i-1}\sumitm \nablapp \hnabla^{i_3+1}(-\rhoF,\sigmaF)}^2 \duprime \dubarprime \\ \lesssim & \intubar\intu \frac{a}{\upr^2} \cdot \frac{\Gamma^2}{\upr^2} \scaletwoSuprimeubarprime{(\al)^{i-1}\hnabla^i(-\rhoF,\sigmaF)}^2 \duprime\dubarprime +1 \\ \lesssim & \frac{a M^2 \Gamma^2}{\lvert u \rvert^3}+1 \lesssim 1.
\end{split}
\end{equation}The second and third terms can be similarly bounded above by $1$ using just the bootstrap assumptions \eqref{bootstrap} and \eqref{ellboot}. The most dangerous term is the fourth one. Indeed, distinguishing between those terms which are top-order and those who are not, we have

\begin{equation}
\begin{split}
&\intubar \intu \frac{a}{\upr^2}\scaletwoSuprimeubarprime{(\al)^{i-1}\sumif \nablap \hnablat \chibarhat \hnablaf \alphaF}^2 \duprime \dubarprime \\ \lesssim & \intubar\intu \frac{a}{\upr^2} \cdot \frac{\upr^2}{a}\frac{\Gamma[\chibarhat]^2}{\upr^2} \scaletwoSuprimeubarprime{(\al)^{i-1}\hnabla^i \alphaF}^2 \duprime \dubarprime \\ +& \intubar \intu \frac{a^2 \Gamma[\alphaF]^2}{\upr^4}\scaletwoSuprimeubarprime{(\al)^{i-1}\hnabla^i \chibarhat}^2 \duprime \dubarprime  \\ + & \intubar\intubar \frac{a}{\upr^2} \cdot \frac{\upr^2}{a}\cdot \frac{\Gamma^2[\chibarhat]\Gamma^2[\alphaF]}{\upr^2}\duprime \dubarprime  \lesssim  \Gamma[\chibarhat]^2\underline{\mathbb{YM}}[\alphaF]^2 +  \Gamma[\alphaF]^2\left(\mathcal{R} + \underline{\mathcal{R}}+1\right) + 1 \\\lesssim &\underline{\mathbb{YM}}[\alphaF]^2 +  \left(\mathcal{R} + \underline{\mathcal{R}}+1\right)^2 + 1 \lesssim \left(\mathcal{R} + \underline{\mathcal{R}}+1\right)^2.
\end{split}
\end{equation}The remaining terms are not of top order and hence can be bounded above by 1 in a by now standard way. Ultimately, we have

\begin{equation}\label{447}
\begin{split}
O_i \lesssim& \left(\intubar \intu \frac{a}{\upr^2}\scaletwoSuprimeubarprime{(\al)^{i-1}E_i}^2 \duprime \dubarprime \right)^{\frac{1}{2}}  \left(\intubar \scaletwoHbarprime{(\al)^{i-1}\hnabla^i \alphabarF}^2 \dubarprime \right)^{\frac{1}{2}}\\ \lesssim & \intubar \intu \frac{a}{\upr^2}\scaletwoSuprimeubarprime{(\al)^{i-1}E_i}^2 \duprime \dubarprime + \frac{1}{4} \intubar \scaletwoHbarprime{(\al)^{i-1}\hnabla^i \alphabarF}^2 \dubarprime \\ \lesssim &(\mathcal{R}+\underline{\mathcal{R}}+1)^2 + \frac{1}{4} \intubar \scaletwoHbarprime{(\al)^{i-1}\hnabla^i \alphabarF}^2 \dubarprime.
\end{split}
\end{equation}Combining equations \eqref{435}, \eqref{43} and \eqref{447}, we arrive at 

\begin{equation}\label{4345}
\begin{split}
&\scaletwoHu{ (\al)^{i-1} \hnabla^i (\rhoF,\sigmaF)}^2 + \scaletwoHbaru{(\al)^{i-1} \hnabla^i \alphabarF}^2 \\  \lesssim & \scaletwoHzero{ (\al)^{i-1} \hnabla^i (\rhoF,\sigmaF)}^2 + \scaletwoHbarzero{(\al)^{i-1} \hnabla^i \alphabarF}^2 \\+& (\mathcal{R}+\underline{\mathcal{R}}+1)^2 +   \frac{1}{4} \intubar \scaletwoHbarprime{(\al)^{i-1}\hnabla^i \alphabarF}^2 \dubarprime,
\end{split}
\end{equation}from where an application of Gr\"onwall's inequality yields the desired result.

\end{proof}

\subsection{Energy estimates for $\hnabla_4\alphaF, \hnabla_3 \alphabarF$}
 In this section we obtain energy estimates for up to $N+4$ derivatives of $\hnabla_4 \alphaF$ and $\hnabla_3 \alphabarF$. These will be useful in the energy estimates for $\alpha$ and $\alphabar$ respectively.

\begin{proposition}
Under the assumptions of Theorem \ref{main1} and the bootstrap assumptions \eqref{bootstrap} and \eqref{ellboot}, there holds, for all $0\leq i \leq N+4$:

\begin{equation}
\begin{split}
&\frac{1}{\al}\scaletwoHu{(\al \hnabla)^i \hnabla_4 \alphaF} + \frac{1}{\al}\scaletwoHbaru{(\al \hnabla)^i (-\hnabla_4\rhoF, \hnabla_4 \sigmaF)} \\ & \frac{1}{\al}\scaletwoHzero{(\al \hnabla)^i \hnabla_4 \alphaF} + \frac{1}{\al}\scaletwoHbarzero{(\al \hnabla)^i (-\hnabla_4\rhoF, \hnabla_4 \sigmaF)}+ \frac{1}{a^{ \frac{1}{3} }}.  
\end{split}
\end{equation}
\end{proposition}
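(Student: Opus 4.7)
The plan is to adapt the strategy used in the two preceding propositions to the derived pair $(\hnabla_4 \alphaF, \hnabla_4(-\rhoF, \sigmaF))$. First I would commute $\hnabla_4$ into the null Yang-Mills equations for $\alphaF$ (the $\hnabla_3$-equation) and for $(-\rhoF, \sigmaF)$ (the $\hnabla_4$-equation). Using the commutator $[\hnabla_4, \hnabla_3]$ acting on a $\mathfrak{g}$-valued section of the bundle, which produces the Yang-Mills curvature $F^P{}_{Q34}\sim (\rhoF, \sigmaF)$, spacetime Riemann components $\sim \rho$, and the $\omega\hnabla_3 - \omegabar\hnabla_4$ corrections, together with the standard $[\hnabla_4, \mathcal{D}]$ commutator, one obtains schematically
\begin{align*}
\hnabla_3(\hnabla_4 \alphaF) + \frac{1}{2}\tr\chibar(\hnabla_4\alphaF) - \mathcal{D}(\hnabla_4(-\rhoF, \sigmaF)) &= \tilde{\mathcal{G}}_1, \\
\hnabla_4(\hnabla_4(-\rhoF, \sigmaF)) - \Hodge{\mathcal{D}}(\hnabla_4\alphaF) &= \tilde{\mathcal{G}}_2,
\end{align*}
where $\tilde{\mathcal{G}}_1, \tilde{\mathcal{G}}_2$ are quadratic and cubic combinations of Ricci coefficients, Yang-Mills components, Weyl curvature, and lower-order Yang-Mills derivatives. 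Since $s_2$ is preserved under $\hnabla_4$, this pair has signatures $0$ and $1/2$ respectively, so it fits the template required by Proposition \ref{42}.

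Next I would commute with $\hnabla^i$ for $0\leq i \leq N+4$ using Lemma \ref{commutation}, producing a system of the form
\begin{align*}
\hnabla_3\hnabla^i(\hnabla_4\alphaF) + \frac{i+1}{2}\tr\chibar \hnabla^i(\hnabla_4\alphaF) - \mathcal{D}\hnabla^i(\hnabla_4(-\rhoF,\sigmaF)) &= \mathcal{A}_i, \\
\hnabla_4 \hnabla^i(\hnabla_4(-\rhoF,\sigmaF)) - \Hodge{\mathcal{D}}\hnabla^i(\hnabla_4\alphaF) &= \mathcal{B}_i,
\end{align*}
whose right-hand sides admit expansions analogous to \eqref{alphafenexpression}--\eqref{rhofenexpression} but with one extra $\hnabla_4$ per term. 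Applying Proposition \ref{42} with the weight $(\al)^{i-1}$ gives the energy identity with spacetime-integral error terms, which I would bound by Cauchy-Schwartz exactly as in \eqref{48}--\eqref{447}, multiplying through by $a^{-1}$ at the end and taking square roots.

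The main obstacle will be the careful bookkeeping of top-order terms appearing in $\mathcal{A}_i, \mathcal{B}_i$: namely, terms with $N+5$ derivatives of Ricci coefficients (handled by the elliptic estimates of Section 5 and by Remark \ref{usefulremark}), and terms pushing $N+5$ derivatives onto $\alphabarF$ or $(\rhoF,\sigmaF)$ (absorbed using the two preceding energy estimates). The new structural contributions from the $[\hnabla_4,\hnabla_3]$ commutator include factors like $\chibarhat\cdot \hnabla_4\alphaF$, controlled via Proposition \ref{newprop}, and $\hnabla_4 \tr\chibar \cdot \alphaF \sim (\rho + |\chibarhat|^2 + |\alphabarF|^2)\alphaF$, which is handled by Proposition \ref{chihats} together with the $\alphabarF$ energy estimate just proved. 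Yang-Mills non-linear commutator contributions are subcritical thanks to the improved bounds in Remarks \ref{remarkF} and \ref{remarkF2}. Because the present statement only uses $i\leq N+4$ derivatives (one fewer than for $\alphaF$ itself), each borderline term carries an extra factor of $\upr^{-1}$ or $a^{-1/2}$ of slack relative to the previous propositions, and after Gr\"onwall (Proposition \ref{prop54}) this translates into the $a^{-1/3}$ smallness on the right-hand side.
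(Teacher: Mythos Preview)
Your plan is correct and is essentially the same approach as the paper's: derive the commuted Bianchi-type pair for $(\hnabla_4\alphaF,\hnabla_4(-\rhoF,\sigmaF))$, apply Proposition~\ref{42}, bound the error terms $G_i$ and $H_i$ term-by-term using the bootstrap assumptions and the elliptic estimates, then divide by $a$ and take square roots. Two small imprecisions: the weight here is $(\al)^i$ rather than $(\al)^{i-1}$ (since the norm in $\mathbb{YM}^{\mathcal{C}}_j$ is $(\al\hnabla)^j\hnabla_4\alphaF$), and the term $\nabla_4\tr\chibar\cdot\alphaF$ expands via the $\nabla_4\tr\chibar$ structure equation (so it contains $\rho$, $\text{div}\,\etabar$, $\chihat\cdot\chibarhat$, $\omega\tr\chibar$) rather than $|\chibarhat|^2+|\alphabarF|^2$; neither affects the argument.
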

\begin{proof}
We begin by noticing the equations
\begin{equation}
\begin{split}
&\hnabla_3 \hnabla_4 \alphaF + \frac{1}{2}\tr\chibar \hnabla_4 \alphaF -\widehat{\mathcal{D}}(\hnabla_4 \rhoF,\hnabla_4 \sigmaF) \\= &\psi_g \tr\chibar \alphaF + (\psi_g,\chihat)(\psi_g,\chibarhat)(\alphaF,\Yub) + \nabla(\eta,\etabar)\alphaF + \psi_g \hnabla(\alphaF,\rhoF,\sigmaF) + (\alphaF,\rhoF,\sigmaF)\nabla \psi_g \\ +& (\nabla \psi_g, \nabla \chihat)(\rhoF,\sigmaF) + (\chihat,\psi_g) \hnabla(\rhoF,\sigmaF) +\alphaF (\rhoF,\sigmaF)(\rhoF,\sigmaF) +\alphaF(\rhoF,\sigmaF) +\alpha \hsp  \alphabarF \\&+(\psi_g,\chihat)\hnabla_4\alphaF +(\rho,\sigma)\alphaF +\tbeta (\rhoF,\sigmaF),
\end{split}
\end{equation}
\begin{equation}
\begin{split}
&\hnabla_4 (\hnabla_4 \rhoF, \hnabla_4\sigmaF) - \Hodge{\widehat{\mathcal{D}}}\hnabla_4 \alphaF  =  \\&\tbeta \alphaF + \alphaF (\rhoF,\sigmaF) \alphaF + \alphaF\cdot \alphaF + \psi_g \hnabla_4 \alphaF \\&+ (\chihat,\psi_g)\hnabla \alphaF + (\psi_g,\chihat)\psi_g \alphaF + (\psi_g,\chihat)(\psi_g,\chihat)(\rhoF,\sigmaF)\\ &+ \psi_g \psi_g (\alphaF, \rhoF,\sigmaF) +(\nabla \omega) \alphaF.
\end{split}
\end{equation}Commuting with $i$ gauge-covariant angular derivatives, we arrive at

\begin{align} \nonumber
    &\hnabla_3 \hnabla^i \hnabla_4 \alphaF + \frac{i+1}{2}\tr\chibar \hnabla^i \hnabla_4 \alphaF - \widehat{\mathcal{D}}(\hnabla^i \hnabla_4 \rhoF, \hnabla^i \hnabla_4 \sigmaF)\\  \nonumber &= \sumitm \nablapp \hnabla^{i_3+1}(\hnabla_4 \rhoF,\hnabla_4 \sigmaF) \\  \nonumber &+ \sumiF \nablap \nablat \psi_g \nablaf \tr\chibar \hnablaF \alphaF \\  \nonumber &+ \sumiF \nablap \nablat(\psi_g,\chihat)\nablaf (\psi_g,\chibarhat)\hnablaF (\alphaF, \mathcal{Y}_{\ubar}) \\  \nonumber&+ \sumif \nablap \nabla^{i_3+1}(\eta,\etabar) \hnablaf \alphaF \\ \nonumber &+ \sumitm \nablapp \hnablat(\alphaF, \rhoF,\sigmaF)  \\ \nonumber &+ \sumif \nablap \nabla^{i_3+1}(\tr\chi,\chihat)\hnablaf(\rhoF,\sigmaF) \\ \nonumber &+    \sumif \nablap \nabla^{i_3}(\psi_g,\chihat)\hnabla^{i_4+1}(\rhoF,\sigmaF) \\ \nonumber &+\sumiF \nablap \hnablat \alphaF \hnablaf (\rhoF,\sigmaF)\hnablaF (\rhoF,\sigmaF) \\ \nonumber &+\sumif \nablap \hnablat \alphaF \hnablaf(\rhoF,\sigmaF) \\ \nonumber &+ \sumif \nablap \nablat \alpha \hnablaf \alphabarF \\ \nonumber &+ \sumif \nablap \nablat (\rho,\sigma) \nablaf \alphaF \\ \nonumber &+ \sumif \nablap \nablat \tbeta \hnablaf (\rhoF, \sigmaF) \\ \nonumber &+ \sumif \nablap \nablat (\psi_g, \chihat)\hnablaf \hnabla_4 \alphaF \\ \nonumber &+ \sumif \nablap \nablat (\chibarhat,\tildetr) \hnablaf \hnabla_4 \alphaF \\ \nonumber &+ \sumifi \nablapp \nabla^{i_3+1}\tr\chibar \hnablaf \hnabla_4 \alphaF \\ \nonumber &+ \sumifim \nablapp \nablat (\chibarhat,\tr\chibar) \hnablaf \hnabla_4 \alphaF \\ \nonumber &+ \sumiFi \nablap \hnablat \alphabarF \hnablaf (\rhoF,\sigmaF) \hnablaF \hnabla_4 \alphaF \\  &+ \sumif \nablap \hnablat \alphabarF \hnablaf \hnabla_4 \alphaF := G_i,
\end{align}

\begin{align} \nonumber
&\hnabla_4(\hnabla^i \hnabla_4 \rhoF, \hnabla^i \hnabla_4 \sigmaF) - \Hodge{\mathcal{D}}\hnabla^i \hnabla_4 \alphaF = \\ \nonumber &\sumitm \nablapp \hnabla^{i_3+1}\hnabla_4\alphaF \\  \nonumber&+ \sumif \nablap \nablat \tbeta \hnablaf \alphaF \\ \nonumber &+ \sumiF \nablap \hnablat \alphaF \hnablaf (\rhoF,\sigmaF) \hnablaF \alphaF \\ \nonumber &+ \sumif \nablap \hnablat \alphaF \hnablaf \alphaF \\  \nonumber &+ \sumif \nablap \nablat \psi_g \hnablaf \hnabla_4 \alphaF \\ \nonumber &+ \sumif \nablap \nablat (\chihat,\psi_g) \hnablaf \hnabla \alphaF \\ \nonumber &+ \sumiF \nablap \nablat (\psi_g,\chihat) \nablaf \psi_g \hnablaF \alphaF \\  \nonumber &+ \sumiF \nablap \nablat (\psi_g,\chihat)\nablaf (\psi_g,\chihat) \hnablaF (\rhoF,\sigmaF) \\  \nonumber &+ \sumiF \nablap \nablat \psi_g \nablaf \psi_g \hnablaF (\alphaF,\rhoF,\sigmaF) \\ \nonumber &+ \sumif \nablap \nabla^{i_3+1}\omega \hnablaf \alphaF \\ \nonumber &+ \sumit \nablap \hnablat (\hnabla_4 \rhoF,\hnabla_4 \sigmaF)\\ \nonumber &+ \sumif \nablap \nablat(\chibarhat,\tildetr)\hnablaf (\hnabla_4 \rhoF,\hnabla_4 \sigmaF) \\ \nonumber &+\sumifi \nablapp \nablat (\chibarhat, \tr\chibar) \hnablaf (\hnabla_4 \rhoF,\hnabla_4 \sigmaF)  \\ \nonumber &+ \sumiFi \nablap \hnablat \alphabarF \hnablaf (\rhoF,\sigmaF)\hnablaF (\hnabla_4 \rhoF,\hnabla_4 \sigmaF) \\ &+ \sumifi \nablap \hnablat \alphabarF \hnablaf (\hnabla_4 \rhoF,\hnabla_4 \sigmaF) := H_i.
\end{align}
We have 

\begin{equation}\label{05}
\begin{split}
&\scaletwoHu{(\al \hnabla)^i \hnabla_4 \alphaF}^2 + \scaletwoHbaru{(\al \hnabla)^i (-\hnabla_4\rhoF, \hnabla_4 \sigmaF)}^2 \\ &\scaletwoHzero{(\al \hnabla)^i \hnabla_4 \alphaF}^2 + \scaletwoHbarzero{(\al \hnabla)^i (-\hnabla_4\rhoF, \hnabla_4 \sigmaF)}^2+ X +Y,
\end{split}
\end{equation}where \begin{equation}
X:= \iint_{\mathcal{D}(u,\ubar)}\frac{a}{\lvert u^{\prime} \rvert} \scaleoneSuprimeubarprime{\langle (\al\hnabla)^i \hnabla_4 \alphaF , \ali G_i \rangle} \duprime \dubarprime,
\end{equation}
\begin{equation}
Y:= \iint_{\mathcal{D}(u,\ubar)}\frac{a}{\lvert u^{\prime} \rvert} \scaleoneSuprimeubarprime{\langle (\al\hnabla)^i (\hnabla_4 \rhoF, \hnabla_4 \sigmaF) , \ali H_i \rangle} \duprime \dubarprime,
\end{equation}

As before, we have the estimate

\begin{equation}\label{07}
X \leq \left(\intubar \intu \frac{a}{\upr^2} \scaletwoSuprimeubarprime{\ali G_i}^2 \duprime \dubarprime\right)^{\frac{1}{2}}\cdot \sup_{u_{\infty} \leq u^{\prime} \leq -\frac{a}{4}} \scaletwoHprime{ (\al \hnabla)^i \hnabla_4 \alphaF}.
\end{equation}
\end{proof}We begin with the first term. There holds

\begin{equation}
\begin{split}
&\intubar \intu \frac{a}{\upr^2} \scaletwoSuprimeubarprime{\ali \sumitm \nablapp \hnabla^{i_3+1}(\hnabla_4 \rhoF,\hnabla_4 \sigmaF)}^2 \duprime \dubarprime
\\ \lesssim & \intubar \intu \frac{a}{\upr^2} \scaletwoSuprimeubarprime{ \psi_g (\al \hnabla)^i (\hnabla_4 \rhoF,\ \hnabla_4 \sigmaF)}^2 \duprime \dubarprime \\+ & \intubar \intu \frac{a}{\upr^2} \scaletwoSuprimeubarprime{a^{\frac{i}{2}}  \sum_{\substack{i_1+i_2+i_3=i-1\\ i_3<i-1}}\nablapp \hnabla^{i_+1} (\hnabla_4 \rhoF,\ \hnabla_4 \sigmaF)}^2 \duprime \dubarprime \\ \lesssim & \intubar\intu \frac{a}{\upr^2} \frac{\Gamma^2}{\upr^2} \scaletwoSuprimeubarprime{\haln (\hnabla_4 \rhoF,\hnabla_4 \sigmaF)}^2 + \frac{1}{a^{\frac{1}{3}}} \lesssim \frac{1}{a^{\frac{1}{3}}},
\end{split}
\end{equation}where we have used the bootstrap assumptions \eqref{bootstrap}. For the second term, we have

\begin{equation}
\begin{split}
&\intubar \intu \frac{a}{\upr^2} \scaletwoSuprimeubarprime{\ali \sumif \nablap \nabla^{i_3}\psi_g \nablaf \tr\chibar \hnablaF \alphaF}^2 \duprime \dubarprime
\\ \lesssim & \intubar \intu \frac{a}{\upr^2} \cdot \left( \frac{\upr^2}{a}\cdot \al \cdot \frac{\Gamma^3}{\upr^2} \right)^2 \duprime \dubarprime \lesssim \frac{1}{a^{\frac{1}{3}}}.
\end{split}
\end{equation}For the third term, we have 

\begin{equation}
\begin{split}
&\intubar \intu \frac{a}{\upr^2} \scaletwoSuprimeubarprime{\ali \sumiF \nablap \nabla^{i_3}(\psi_g,\chihat) \nablaf (\psi_g,\chibarhat) \hnablaF (\alphaF, \Yub)}^2 \duprime \dubarprime
\\ \lesssim & \intubar \intu \frac{a}{\upr^2} \cdot \left( \al \cdot \frac{\upr}{\al}\cdot{\al}\cdot{\Gamma^3{\upr^2}} \right)^2 \duprime \dubarprime \lesssim  \frac{1}{a^{\frac{1}{3}}}. 
\end{split}
\end{equation}For the fourth term, we have

\begin{equation}
\begin{split}
&\intubar \intu \frac{a}{\upr^2} \scaletwoSuprimeubarprime{\ali \sumif \nablap \nabla^{i_3+1}(\eta, \etabar) \hnablaf \alphaF }^2 \duprime \dubarprime
\\ \lesssim & \intubar \intu \frac{a}{\upr^2} \frac{a \Gamma^2}{\upr^2}\scaletwoSuprimeubarprime{\ali \nabla^{i+1}(\eta, \etabar)}^2 \duprime \dubarprime \\ +& \intubar \intu \aupr \frac{\Gamma^4}{\upr^2} \duprime \dubarprime \lesssim \frac{1}{\al} \scaletwoHu{(\al)^{N+4}\nabla^{N+5} (\eta,\etabar)}^2 + \frac{1}{a^{\frac{1}{3}}} 
\\ \lesssim &\frac{1}{\al}\cdot \Gammatop^2 + \frac{1}{a^{\frac{1}{3}}}  \lesssim \frac{1}{a^{\frac{1}{3}}}.
\end{split}
\end{equation} Most of the other terms are bounded above by $a^{-\frac{1}{3}}$ in a similar fashion. We will give the details for the sixth and fifteenth terms, which present the most difficulties. For the sixth term, the most borderline term appears when $(\tr\chi, \chihat) = \chihat$. We then  have 

\begin{equation}
\begin{split}
&\intubar \intu \aupr \scaletwoSuprimeubarprime{\ali \sumif \nablap \nabla^{i_3+1}\chihat \hnabla^{i_4}(\rhoF,\sigmaF) }^2 \duprime \dubarprime \\ \lesssim & \intubar \intu \frac{a \Gamma^2}{\upr^2}\cdot \frac{a}{\upr^2}\scaletwoSuprimeubarprime{\ali \nabla^{i+1}\chihat}^2 \duprime \dubarprime  + \intubar \intu \frac{a}{\upr^2}\frac{\Gamma^4}{\upr^2} \duprime \dubarprime \\ \lesssim &\frac{1}{\al} \Gammatop[\chihat]^2 + \frac{a \Gamma^4}{\upr^3}\lesssim \frac{1}{a^{\frac{1}{3}}}.
\end{split}
\end{equation}Finally, the fifteenth term can be estimated as follows:

\begin{equation}
\begin{split}
&\intubar \intu \aupr \scaletwoSuprimeubarprime{\ali \sumifi \nablapp \nablat(\chibarhat,\tr\chibar)\hnablaf \hnabla_4 \alphaF}^2 \duprime \dubarprime \\ \lesssim &
\intubar \intu \aupr \frac{\upr^4}{a^2}\cdot a \cdot \frac{\Gamma^6}{\upr^4}\duprime \dubarprime \lesssim \frac{\Gamma^6}{\lvert u \rvert} \lesssim \frac{1}{a^{\frac{1}{3}}}.
\end{split}
\end{equation}As a consequence, using \eqref{07}, one can conclude that 

\begin{equation}\label{13}
X \lesssim \frac{1}{a^{\frac{1}{3}}} \cdot \sup_{u_{\infty} \leq u^{\prime} \leq -\frac{a}{4}} \scaletwoHprime{ (\al \hnabla)^i \hnabla_4 \alphaF} \lesssim 1.
\end{equation} For $Y$, we once again have 

\begin{equation}\label{015}
Y \leq \left(\intubar \intu \frac{a}{\upr^2} \scaletwoSuprimeubarprime{\ali H_i}^2 \duprime \dubarprime\right)^{\frac{1}{2}}\cdot \sup_{0 \leq \ubar \leq 1} \scaletwoHprime{ (\al \hnabla)^i (\hnabla_4 \rhoF,\hnabla_4 \sigmaF)}.
\end{equation}
We begin with the first term in $H_i$. We have 
\begin{equation}
\begin{split}
&\intubar \intu \aupr \scaletwoSuprimeubarprime{\ali \sumitm \nablapp \hnabla^{i_3+1}\hnabla_4 \alphaF} \duprime \dubarprime \\ \lesssim &\intubar \intu \aupr\cdot \frac{\Gamma^2}{\upr^2} \scaletwoSuprimeubarprime{\haln  \hnabla_4 \alphaF}^2 \duprime \dubarprime \\ &+ \intubar \intu \aupr \cdot \frac{a \Gamma^4}{\upr^2}\duprime \dubarprime \lesssim \frac{1}{a^{\frac{1}{3}}},
\end{split}
\end{equation}where we have used the bootstrap assumptions. Most of the rest of the terms can be bounded above by $a^{-\frac{1}{3}}$ similarly. The terms which are most borderline are the fourth, tenth and thirteenth. We treat them one by one. 
\begin{equation}
\begin{split}
&\intubar \intu \aupr \scaletwoSuprimeubarprime{\ali \sumif \nablap \hnablat \alphaF \hnablaf \alphaF}^2 \duprime \dubarprime \\ \lesssim & \intubar \intu \aupr \cdot \frac{a^2 \Gamma_2[\alphaF]^2\Gamma_{\infty}[\alphaF]^2}{\upr^2} \duprime \dubarprime \lesssim (\mathcal{R} + \underline{\mathcal{R}}+1)^2 \lesssim R^2,
\end{split}
\end{equation}where again we have used the bootstrap assumptions \eqref{bootstrap}.  For the tenth term, we have 

\begin{equation}
\begin{split}
&\intubar\intu \aupr \scaletwoSuprimeubarprime{\ali \sumif \nablap \nabla^{i_3+1}\omega \hnablaf \alphaF}^2 \duprime \dubarprime \\ \lesssim &\intubar\intu \aupr \cdot \frac{a\Gamma^2}{\upr^2}\scaletwoSuprimeubarprime{\ali \nabla^{i+1}\omega}^2 \duprime \dubarprime \\+& \intubar\intu \aupr \frac{\Gamma^4}{\upr^2}\duprime \dubarprime \lesssim \frac{1}{a^{\frac{1}{3}}},
\end{split}
\end{equation}where we have used the bootstrap assumptions \eqref{ellboot}. For the thirteenth term, there holds

\begin{equation}\label{20}
\begin{split}
&\intubar \intu \aupr \scaletwoSuprimeubarprime{\ali \sumifi \nablap \nablat \psi_g \nablaf (\chibarhat,\tr\chibar) \hnablaF(\hnabla_4 \rhoF,\hnablaF \sigmaF)}^2 \duprime \dubarprime \\ \lesssim & \intubar \intu \aupr \cdot a \cdot \frac{\upr^4}{a^2}\cdot \frac{\Gamma^6}{\upr^4} \lesssim \frac{1}{\lvert u \rvert}\lesssim \frac{1}{a^{\frac{1}{3}}}.
\end{split}
\end{equation}Combining everything, we arrive at

\begin{equation}
Y \leq (R+1) \cdot \sup_{0\leq \ubar \leq 1} \scaletwoHbarprime{\haln (\hnabla_4 \rhoF, \hnabla_4 \sigmaF)}.
\end{equation}Taking into account \eqref{05}, \eqref{13} and \eqref{20}, we 

\begin{equation}
\begin{split}
&\scaletwoHu{(\al \hnabla)^i \hnabla_4 \alphaF}^2 + \scaletwoHbaru{(\al \hnabla)^i (-\hnabla_4\rhoF, \hnabla_4 \sigmaF)}^2 \\ \lesssim &\scaletwoHzero{(\al \hnabla)^i \hnabla_4 \alphaF}^2 + \scaletwoHbarzero{(\al \hnabla)^i (-\hnabla_4\rhoF, \hnabla_4 \sigmaF)}^2\\+&  \frac{1}{a^{\frac{1}{3}}}\cdot \sup_{u_{\infty}\leq u \leq - \frac{a}{4}}\scaletwoHprime{\haln \nabla_4 \alphaF}  +(R+1) \cdot \sup_{0\leq \ubar \leq 1} \scaletwoHbarprime{\haln (\hnabla_4 \rhoF, \hnabla_4 \sigmaF)}.
\end{split}
\end{equation}Multiplying everything by $\frac{1}{a}$ and using the bootstrap assumptions \eqref{bootstrap},\eqref{ellboot} we arrive at 
\begin{equation}
\begin{split}
&\frac{1}{a}\scaletwoHu{(\al \hnabla)^i \hnabla_4 \alphaF}^2 + \frac{1}{a}\scaletwoHbaru{(\al \hnabla)^i (-\hnabla_4\rhoF, \hnabla_4 \sigmaF)}^2 \\\lesssim  &\frac{1}{a}\scaletwoHzero{(\al \hnabla)^i \hnabla_4 \alphaF}^2 +\frac{1}{a} \scaletwoHbarzero{(\al \hnabla)^i (-\hnabla_4\rhoF, \hnabla_4 \sigmaF)}^2\\+&  \frac{1}{a^{\frac{5}{6}}}\cdot \left(\frac{1}{\al} \sup_{u_{\infty}\leq u \leq - \frac{a}{4}}\scaletwoHprime{\haln \nabla_4 \alphaF} \right) \\ +&\frac{R+1}{\al} \cdot \left( \frac{1}{\al} \sup_{0\leq \ubar \leq 1} \scaletwoHbarprime{\haln (\hnabla_4 \rhoF, \hnabla_4 \sigmaF)}\right) \\ \lesssim& \frac{1}{a}\scaletwoHzero{(\al \hnabla)^i \hnabla_4 \alphaF}^2 +\frac{1}{a} \scaletwoHbarzero{(\al \hnabla)^i (-\hnabla_4\rhoF, \hnabla_4 \sigmaF)}^2 +1,
\end{split}
\end{equation}whence we obtain the result by taking the square root.\\

\noindent Now we need to estimate the energy associated with the mixed derivative of $\alphabar^{F}$ i.e. $\hnabla^{I}\hnabla_{3}\alphabar^{F}$. Before proceeding, we need the following integration lemma. 
\begin{proposition}
Let us define $\Psi_{1}:=\frac{a}{|u|}\hnabla_{3}(\rho^{F},\sigma^{F})$ and $\Psi_{2}:=\frac{a}{|u|}\hnabla_{3}\alphabar^{F}$. The following Hodge system 
\begin{align}  \nonumber
\label{eq:nabla31}
&\hnabla_{3}\hnabla^{I-1}\Psi_{1}+(\frac{I-1}{2}+1)\tr\chibar\hnabla^{I-1}\Psi_{1}=\widehat{\mathcal{D}}\hnabla^{I-1}\Psi_{2}+\frac{a}{|u|}\mathcal{E}^{I}_{1},\\ \nonumber
&\hnabla_{4}\hnabla^{I-1}\Psi_{2}=~^{*}\widehat{\mathcal{D}}\hnabla^{I-1}\Psi_{1}+\frac{a}{|u|}\mathcal{E}^{I}_{2}
\end{align}
 verifies the energy estimate 
\begin{align}  \nonumber
&\int_{\Hu}\scaletwoSuubarprime{\hnabla^{I-1} (\frac{a}{|u|}\hnabla_{3}\rho^{F},\frac{a}{|u|}\hnabla_{3}\sigma^{F})}^2 \dubarprime + \int_{u_{\infty}}^{u}\frac{a}{\upr^2} \scaletwoSuprime{\hnabla^{I-1}(\frac{a}{|u^{'}|} \hnabla_{3}\alphabar^{F})}^2 \duprime \\  \nonumber \lesssim  &\int_{H_{u_{\infty}}^{(0,\ubar)}}\scaletwoSuzubarprime{\hnabla^{I-1} (\frac{a}{|u_{\infty}|}\hnabla_{3}\rho^{F},\frac{a}{|u_{\infty}|}\hnabla_{3}\sigma^{F})}^2 \dubarprime \\  \nonumber +& \int_{u_{\infty}}^{u} \frac{a}{\upr^2} \scaletwoSuzprime{\hnabla^{I-1} (\frac{a}{|u|}\hnabla_{3}\alphabar^{F})}^2 \duprime \\+&   \nonumber \iint_{\mathcal{D}_{u,\ubar}}\frac{a}{\upr} \scaleoneSuprimeubarprime{\langle\hnabla^{I-1} (\frac{a}{|u^{'}|}\hnabla_{3}\rho^{F},\frac{a}{|u^{'}|}\hnabla_{3}\sigma^{F}),\frac{a}{|u^{'}|} \hnabla^{I-1}\mathcal{E}_{1}\rangle}\duprime \dubarprime \\\nonumber +& \iint_{\mathcal{D}_{u,\ubar}}\frac{a}{\upr} \scaleoneSuprimeubarprime{\langle\hnabla^{I-1} (\frac{a}{|u^{'}|}\hnabla_{3}\alphabar^{F}),\frac{a}{|u^{'}|}\hnabla^{I-1}\mathcal{E}_{2}\rangle}\duprime \dubarprime.
\end{align}
\end{proposition}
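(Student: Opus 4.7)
The plan is to treat this as a direct application of the energy estimate for a general Hodge pair (Proposition \ref{42}), specialized to $(\Psi_1, \Psi_2) = \bigl(\frac{a}{|u|}\hnabla_3(\rho^F,\sigma^F), \frac{a}{|u|}\hnabla_3 \alphabar^F\bigr)$ with $i = I-1$. The hypothesized Hodge system is already in the form required by that Proposition, so structurally the same machinery should apply; the only subtlety is keeping track of the extra $\frac{a}{|u|}$ weight and verifying that the signatures line up so that the weighted $L^2$ integrals claimed on the right-hand side are exactly the scale-invariant norms produced by the calculation.

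First, I would apply Proposition \ref{integration2} to $\varphi = \hnabla^{I-1}\Psi_1$ with $\lambda_0 = \frac{I-1}{2} + 1$ (so that $\lambda_1 = I$). This yields an identity expressing $2\int_{D_{u,\ubar}} |u'|^{2I}\langle \hnabla^{I-1}\Psi_1, (\hnabla_3 + \lambda_0 \tr\chibar)\hnabla^{I-1}\Psi_1\rangle$ as the flux $\int_{H_u}|u|^{2I}|\hnabla^{I-1}\Psi_1|^2$ minus its initial counterpart plus a harmless bulk term $\int |u'|^{2I} f|\hnabla^{I-1}\Psi_1|^2$ with $|f| \lesssim \Gamma |u|^{-2}$. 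In parallel, applying the $\nabla_4$-version of Lemma \ref{integration} to the gauge-invariant function $|u'|^{2I}|\hnabla^{I-1}\Psi_2|^2$ (using compatibility of $\hnabla$ with the fibre metric, just as in the proof of Proposition \ref{42}) gives the corresponding $\underline{H}_{\ubar}$ flux plus a bulk term weighted by $(2\underline\omega - \tr\chibar)$.

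Adding the two identities and substituting the hypothesized Hodge system $\hnabla_3 \hnabla^{I-1}\Psi_1 + (\frac{I-1}{2}+1)\tr\chibar \hnabla^{I-1}\Psi_1 = \widehat{\mathcal{D}}\hnabla^{I-1}\Psi_2 + \frac{a}{|u|}\mathcal{E}^I_1$ and $\hnabla_4 \hnabla^{I-1}\Psi_2 = \Hodge{\widehat{\mathcal{D}}}\hnabla^{I-1}\Psi_1 + \frac{a}{|u|}\mathcal{E}^I_2$, the principal divergence terms cancel thanks to the Hodge integration-by-parts identity \eqref{eq:bypartshodge}, which only leaves a harmless $-(\eta+\etabar)\langle \hnabla^{I-1}\Psi_1, \hnabla^{I-1}\Psi_2\rangle$ boundary artefact. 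What remains on the right-hand side are (i) the initial fluxes, (ii) the paired error integrals $\iint \frac{a}{|u'|}\langle\hnabla^{I-1}\Psi_1, \frac{a}{|u'|}\hnabla^{I-1}\mathcal{E}_1^I\rangle$ and $\iint \frac{a}{|u'|}\langle\hnabla^{I-1}\Psi_2, \frac{a}{|u'|}\hnabla^{I-1}\mathcal{E}_2^I\rangle$, and (iii) bulk terms of the form $|u'|^{-2}|\hnabla^{I-1}\Psi_1|^2$ or $|u'|^{-1}|\hnabla^{I-1}\Psi_2|^2$ that are absorbable by a double Gr\"onwall (first in $u'$, using integrability of $|u'|^{-2}$ away from zero, then in $\ubar' \in [0,1]$).

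Finally, converting to scale-invariant norms is a matter of multiplying through by $a^{-(I-1)-2s_2(\Psi_1)}$ and reading off the weights: the $H_u$-flux rewrites as $\int_0^{\ubar}\|\hnabla^{I-1}\Psi_1\|_{L^2_{(sc)}(S_{u,\ubar'})}^2 \, d\ubar'$ and the $\underline{H}_{\ubar}$-flux as $\int_{u_\infty}^u \frac{a}{|u'|^2}\|\hnabla^{I-1}\Psi_2\|_{L^2_{(sc)}(S_{u',\ubar})}^2 \, du'$, precisely as required, while the error integrals acquire the factor $\frac{a}{|u'|}$ in $L^1_{(sc)}$. The main obstacle is purely a bookkeeping one: one must check that the choice $\lambda_0 = \frac{I-1}{2}+1$ matches the coefficient of $\tr\chibar$ in the $\Psi_1$-equation (which is why the statement uses this specific weight) and that the $\frac{a}{|u'|}$ factors carried by $\mathcal{E}^I_1, \mathcal{E}^I_2$ are absorbed into the scale-invariant error norms without creating new anomalies; once this is done, the argument is a verbatim transcription of the proof of Proposition \ref{42}.
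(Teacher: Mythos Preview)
Your proposal is correct and follows essentially the same route as the paper: apply the weighted $\hnabla_3$ identity (Proposition~\ref{integration2}) to $\hnabla^{I-1}\Psi_1$ with $\lambda_0=\tfrac{I-1}{2}+1$, pair it with the $\hnabla_4$ integration lemma for $\hnabla^{I-1}\Psi_2$, cancel the principal terms via the Hodge integration-by-parts identity \eqref{eq:bypartshodge}, absorb the residual bulk terms by a double Gr\"onwall, and finally convert to scale-invariant norms using the signatures. One minor slip: the $\hnabla_4$ bulk term carries the weight $(2\omega-\tr\chi)$, not $(2\omegabar-\tr\chibar)$, but this is harmless since both are controlled by $|u|^{-1}$ in the slab.
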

\begin{proof}
First we denote $\frac{a}{|u|}\hnabla_{3}(\rho^{F},\sigma^{F})$ by $\Psi_{1}$ while $\frac{a}{|u|}\hnabla_{3}\alphabar^{F}$ by $\Psi_{2}$.
Now we commute the evolution equations in the proposition with $\hnabla^{I-1}$ to obtain  
\begin{align}
&\hnabla_{3}\hnabla^{I-1}\Psi_{1}+(\frac{I-1}{2}+1)\tr\chibar\hnabla^{I-1}\Psi_{1}=\widehat{\mathcal{D}}\hnabla^{I-1}\Psi_{2}+\frac{a}{|u|}\mathcal{E}^{I}_{1},\\&
\hnabla_{4}\hnabla^{I-1}\Psi_{2}=~^{*}\widehat{\mathcal{D}}\hnabla^{I-1}\Psi_{1}+\frac{a}{|u|}\mathcal{E}^{I}_{2}
\end{align}
Now apply Proposition \ref{integration2} with $J=I-2$ $\lambda_{0}=\frac{J}{2}+s_{2}(\Psi_{1})$, $\lambda_{1}=2\lambda_{0}-1$ to obtain the following 
\begin{align} \nonumber
&2\int_{D_{u,\ubar}}|u^{'}|^{2J-1+4s_{2}(\Psi_{1})}\langle\hnabla^{J}\Psi_{1},(\hnabla_{3}+(\frac{J}{2}+s_{2}(\Psi_{1}))\tr\chibar)\hnabla\Psi_{1}\rangle\nonumber=\int_{H_{u}(0,\ubar)}|u|^{2J-1+4s_{2}(\Psi_{1})}|\hnabla^{J}\Psi_{1}|^{2}\\ 
&-\int_{H_{u_{\infty}}(0,\ubar)}|u_{\infty}|^{2J-1+4s_{2}(\Psi_{1})}|\hnabla^{J}\Psi_{1}|^{2}+\int_{D_{u,\ubar}}|u^{'}|^{2J-1+4s_{2}(\Psi_{1})}f|\hnabla^{J}\Psi_{1}|^{2}
\end{align}
and for $\Psi_{2}$
\begin{eqnarray}
2\int_{D_{u,\ubar}}|u^{'}|^{2J-1+4s_{2}(\Psi_{1})}\langle\hnabla^{J}\Psi_{2},\hnabla_{4}\hnabla^{J}\Psi_{2}\rangle=\int_{\Hbar_{\ubar}(u_{\infty},u)}|u^{'}|^{2J-1+4s_{2}}|\hnabla^{J}\Psi_{2}|^{2}\\\nonumber-\int_{\Hbar_{0}(u_{\infty},u)}|u^{'}|^{2J-1+4s_{2}}|\hnabla^{J}\Psi_{2}|^{2}
+\int_{D_{u,\ubar}}|u^{'}|^{2J-1+4s_{2}(\Psi_{1})}(2\omega-\tr\chi)|\hnabla^{J}\Psi_{2}|^{2}.
\end{eqnarray}
Adding the previous two expressions we obtain 
\begin{align}
 \nonumber& 2\int_{D_{u,\ubar}}|u^{'}|^{2J-1+4s_{2}(\Psi_{1})}\langle\hnabla^{J}\Psi_{1},(\hnabla_{3} +(\frac{J}{2}+s_{2}(\Psi_{1}))\tr\chibar)\hnabla\Psi_{1}\rangle\nonumber\\ \nonumber +&2\int_{D_{u,\ubar}}|u^{'}|^{2J-1+4s_{2}(\Psi_{1})}\langle\hnabla^{J}\Psi_{2},\hnabla_{4}\hnabla^{J}\Psi_{2}\rangle\\\nonumber 
=&\int_{H_{u}(0,\ubar)}|u|^{2J-1+4s_{2}(\Psi_{1})}|\hnabla^{J}\Psi_{1}|^{2}+\int_{\Hbar_{\ubar}(u_{\infty},u)}|u^{'}|^{2J-1+4s_{2}}|\hnabla^{J}\Psi_{2}|^{2}\\  \nonumber -&\int_{H_{u_{\infty}}(0,\ubar)}|u_{\infty}|^{2J-1+4s_{2}(\Psi_{1})}|\hnabla^{J}\Psi_{1}|^{2}\\\nonumber 
-& \int_{\Hbar_{0}(u_{\infty},u)}|u^{'}|^{2J-1+4s_{2}}|\hnabla^{J}\Psi_{2}|^{2}+\int_{D_{u,\ubar}}|u^{'}|^{2J-1+4s_{2}(\Psi_{1})}f|\hnabla^{J}\Psi_{1}|^{2}\\\nonumber +&\int_{D_{u,\ubar}}|u^{'}|^{2J-1+4s_{2}(\Psi_{1})}(2\omega-\tr\chi)|\hnabla^{J}\Psi_{2}|^{2}.
\end{align}
Now use the equations of motion to yield 
\begin{align} \nonumber &
\int_{H_{u}(0,\ubar)}|u|^{2J-1+4s_{2}(\Psi_{1})}|\hnabla^{J}\Psi_{1}|^{2}+\int_{\Hbar_{\ubar}(u_{\infty},u)}|u^{'}|^{2J-1+4s_{2}(\Psi_{1})}|\hnabla^{J}\Psi_{2}|^{2}\\\nonumber 
=&\int_{H_{u_{\infty}}(0,\ubar)}|u_{\infty}|^{2J-1+4s_{2}(\Psi_{1})}|\hnabla^{J}\Psi_{1}|^{2} 
+\int_{\Hbar_{0}(u_{\infty},u)}|u^{'}|^{2J-1+4s_{2}(\Psi_{1})}|\hnabla^{J}\Psi_{2}|^{2}\\\nonumber +&2\int_{D_{u,\ubar}}|u^{'}|^{2J-1+4s_{2}(\Psi_{1})}\langle\hnabla^{J}\Psi_{1},\frac{a}{|u^{'}|}\mathcal{E}^{I}_{1}\rangle+2\int_{D_{u,\ubar}}|u^{'}|^{2J-1+4s_{2}(\Psi_{1})}\langle\hnabla^{J}\Psi_{2},\frac{a}{|u^{'}|}\mathcal{E}^{I}_{2}\rangle\\\nonumber 
+&\int_{D_{u,\ubar}}|u^{'}|^{2J-1+4s_{2}(\Psi_{1})}(\eta+\etabar)\langle\hnabla^{J}\Psi_{1},\hnabla^{J}\Psi_{2}\rangle-\int_{D_{u,\ubar}}|u^{'}|^{2J-1+4s_{2}(\Psi_{1})}f|\hnabla^{J}\Psi_{1}|^{2}\\ -&\int_{D_{u,\ubar}}|u^{'}|^{2J-1+4s_{2}(\Psi_{1})}(2\omega-\tr\chi)|\hnabla^{J}\Psi_{2}|^{2}.
\end{align}
After applying Gr\"onwall's inequality, we obtain 
\begin{align} \nonumber &
\int_{H_{u}(0,\ubar)}|u|^{2J-1+4s_{2}(\Psi_{1})}|\hnabla^{J}\Psi_{1}|^{2}+\int_{\Hbar_{\ubar}(u_{\infty},u)}|u^{'}|^{2J-1+4s_{2}(\Psi_{1})}|\hnabla^{J}\Psi_{2}|^{2}\\\nonumber 
\lesssim &\bigg(\int_{H_{u_{\infty}}(0,\ubar)}|u_{\infty}|^{2J-1\nonumber+4s_{2}(\Psi_{1})}|\nabla^{J}\Psi_{1}|^{2} 
+\int_{\Hbar_{0}(u_{\infty},u)}|u^{'}|^{2J-1+4s_{2}(\Psi_{1})}|\hnabla^{I}\Psi_{2}|^{2}.\\ + &|\int_{D_{u,\ubar^{'}}}|u^{'}|^{2J-1+4s_{2}(\Psi_{1})}\langle\hnabla^{J}\Psi_{1},\frac{a}{|u^{'}|}\mathcal{E}^{I}_{1}\rangle|
+|\int_{D_{u,\ubar}}|u^{'}|^{2J-1+4s_{2}(\Psi_{1})}\langle\hnabla^{J}\Psi_{2},\frac{a}{|u^{'}|}\mathcal{E}^{I}_{2}\rangle|\bigg).
\end{align}
Similarly,
\begin{align}  \nonumber &
\int_{D_{u,\ubar}}|u^{'}|^{2J-1+4s_{2}(\Psi_{1})}|\langle\hnabla^{J}\Psi_{1},\frac{a}{|u^{'}|}\mathcal{E}^{I}_{1}\rangle|\\\nonumber=& \int_{D_{u,\ubar}}|u^{'}|^{2J-1+4s_{2}(\Psi_{1})}a^{J+1+2s_{2}(\Psi_{1})}|u^{'}|^{-2J-1-4s_{2}(\Psi_{1})}|\langle\hnabla^{J}\Psi_{1},\frac{a}{|u^{'}|}\mathcal{E}^{I}_{1}\rangle|_{L^{1}_{sc}(S_{u^{'},\ubar^{'}})}du^{'}d\ubar^{'}\\
=&\int_{D_{u,\ubar}}a^{J+1+2s_{2}(\Psi_{1})}|u^{'}|^{-2}|\langle\hnabla^{J}\Psi_{1},\frac{a}{|u^{'}|}\mathcal{E}^{I}_{1}\rangle|_{L^{1}_{sc}(S_{u^{'},\ubar^{'}})}du^{'}d\ubar^{'}
\end{align}
and 
\begin{align}& \nonumber
\int_{D_{u,\ubar}}|u^{'}|^{2J-1+4s_{2}(\Psi_{1})}|\langle\hnabla^{J}\Psi_{2},\frac{a}{|u^{'}|}\hnabla^{J}\mathcal{E}_{2}\rangle|\\ =&\int_{u,\ubar}a^{J+1+2s_{2}(\Psi_{1})}|u^{'}|^{-2}|\langle\hnabla^{J}\Psi_{2},\frac{a}{|u^{'}|}\nonumber\hnabla^{J}\mathcal{E}_{2}\rangle|_{L^{1}_{sc}(S_{u^{'},\ubar^{'}})}du^{'}d\ubar^{'}.
\end{align}
Collecting all the terms together and noting that $|u|\leq |u^{'}|$ yields the result.
\end{proof}
\noindent The following proposition allows us to obtain  the necessary estimates. 
\begin{proposition}
Under the assumption of the main theorem \ref{main1} and the bootstrap assumptions (\ref{bootstrap}), the following energy estimate is satisfied for all $0\leq I\leq N+5$:
\begin{equation*}
\begin{split}
&\int_{\Hu}\scaletwoSuubarprime{\hnabla^{I-1} (\frac{a}{|u|}\hnabla_{3}\rho^{F},\frac{a}{|u|}\hnabla_{3}\sigma^{F})}^2 \dubarprime + \int_{u_{\infty}}^{u}\frac{a}{\upr^2} \scaletwoSuprime{\hnabla^{I-1}(\frac{a}{|u^{'}|} \hnabla_{3}\alphabar^{F})}^2 \duprime \\ \lesssim &\int_{H_{u_{\infty}}^{(0,\ubar)}}\scaletwoSuzubarprime{\hnabla^{I-1} (\frac{a}{|u_{\infty}|}\hnabla_{3}\rho^{F},\frac{a}{|u_{\infty}|}\hnabla_{3}\sigma^{F})}^2 \dubarprime \\\nonumber  +& \int_{u_{\infty}}^{u} \frac{a}{\upr^2} \scaletwoSuzprime{\hnabla^{I-1} (\frac{a}{|u^{'}|}\hnabla_{3}\alphabar^{F})}^2 \duprime +1.
\end{split}
\end{equation*}
\end{proposition}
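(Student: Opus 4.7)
The plan is to mimic the strategy used in the two preceding energy estimate propositions (those for the Bianchi pairs $(\alphaF,(\rhoF,\sigmaF))$ and $((\rhoF,\sigmaF),\alphabarF)$), but with the added subtlety that the quantities $\Psi_1=\frac{a}{|u|}\hnabla_3(\rhoF,\sigmaF)$ and $\Psi_2=\frac{a}{|u|}\hnabla_3\alphabarF$ involve an extra transversal derivative. First, I would produce the schematic null equations for $\Psi_1$ and $\Psi_2$ explicitly. For $\Psi_1$, one applies $\hnabla_3$ to the equation $\hnabla_3(\rhoF,\sigmaF)+\tr\chibar(\rhoF,\sigmaF)=-\widehat{\text{div}}\alphabarF+(\eta-\etabar)\cdot\alphabarF$, multiplies by $a/|u|$, and commutes $\hnabla_3$ past $\widehat{\text{div}}$ using the commutator identities from Section~2.8 (which generate terms of the form $\chibar\cdot\hnabla\alphabarF,\ \betabar^R\cdot\alphabarF,\ \alphabarF\cdot(\rhoF,\sigmaF)\cdot\alphabarF$). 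For $\Psi_2$, one applies $\hnabla_4$ to the $\hnabla_3$ equation for $\alphabarF$ using $[\hnabla_4,\hnabla_3]$, which generates Riemann and Yang-Mills curvature contributions ($\rho,\sigma,\tbeta,\tbetabar,\alphaF\cdot(\rhoF,\sigmaF)$ multiplied by $\alphabarF$). Commuting these two evolution equations further with $\hnabla^{I-1}$ by means of Lemma~\ref{commutation}, and isolating the principal Hodge pair $\widehat{\mathcal{D}}\hnabla^{I-1}\Psi_2$ and $\Hodge{\widehat{\mathcal{D}}}\hnabla^{I-1}\Psi_1$, produces the system stated above with explicit expressions for $\mathcal{E}_1^{I}$ and $\mathcal{E}_2^{I}$ as sums of multilinear expressions in $\psi_g$, $\chi$, $\chibar$, $\eta$, $\etabar$, Weyl components, Yang-Mills components, and their derivatives up to order $I$.

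Next I would invoke the preceding integration-by-parts Proposition to reduce the desired estimate to bounding the two spacetime integrals
\begin{equation*}
X_I:=\iint_{\mathcal{D}_{u,\ubar}}\tfrac{a}{|u'|}\scaleoneSuprimeubarprime{\langle \hnabla^{I-1}\Psi_1,\tfrac{a}{|u'|}\hnabla^{I-1}\mathcal{E}_1^{I}\rangle}\duprime\,\dubarprime,
\end{equation*}
\begin{equation*}
Y_I:=\iint_{\mathcal{D}_{u,\ubar}}\tfrac{a}{|u'|}\scaleoneSuprimeubarprime{\langle \hnabla^{I-1}\Psi_2,\tfrac{a}{|u'|}\hnabla^{I-1}\mathcal{E}_2^{I}\rangle}\duprime\,\dubarprime.
\end{equation*}
For each of these I would split off the principal term by Cauchy-Schwartz in the transverse variable, i.e.\ extract $\sup \lVert\hnabla^{I-1}\Psi_j\rVert_{L^2_{(sc)}}$ along the appropriate null hypersurface, and estimate what remains by a two-dimensional Cauchy-Schwartz of the form used in \eqref{48}--\eqref{429}. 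The resulting pointwise bounds on the error summands then proceed term-by-term in exactly the same style as in the prior two propositions, using the bootstrap assumptions \eqref{bootstrap} for the generic factors $\psi_g^{i_2}$ and \eqref{ellboot} for any $(N+5)$-th derivatives of the Ricci coefficients.

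The main obstacle, as usual, is the top-order anomalous terms. In $\mathcal{E}_1^{I}$ the most delicate contribution is of the schematic form $\nabla^{N+5}\chibarhat\cdot\alphabarF$ and $\nabla^{N+5}\tr\chibar\cdot\alphabarF$: these cannot be controlled on a single $S_{u',\ubar'}$ surface via $\bbGamma$ and must be absorbed into the $u'$-integrated norms of Remark~\ref{usefulremark} (the estimate \eqref{usefulhere} is tailor-made for precisely this). In $\mathcal{E}_2^{I}$ the borderline terms are $\nabla^{N+5}\eta\cdot(\rhoF,\sigmaF)$, $\nabla^{N+5}\omega\cdot\alphaF$, and the Yang-Mills self-interaction $\hnabla^{N+5}\alphaF\cdot\alphaF$; the first two are handled by the elliptic top-order norms $\mathcal{O}_{N+5,2}\lesssim a^{1/1000}$ along $H_u$ or $\Hbar_{\ubar}$, while the last is handled using the energy estimate already proved for $\hnabla^{i}\alphaF$ together with the fact that $\alphaF$ is bounded in $L^{\infty}_{(sc)}$ by Sobolev embedding and Remark~\ref{remarkF}. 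Similarly, every Yang-Mills triple-product such as $\hnablat\alphabarF\cdot\hnablaf(\rhoF,\sigmaF)\cdot\hnabla^{I-1}\Psi_j$ is controlled by placing two factors in $L^{\infty}_{(sc)}(S_{u',\ubar'})$ (using the improved bounds from Remarks~\ref{remarkF} and~\ref{remarkF2}) and one in $L^{2}_{(sc)}$.

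Having bounded $X_I$ and $Y_I$ by quantities of the form $(\mathcal{R}+\underline{\mathcal{R}}+\mathbb{YM}+1)\cdot\sup\lVert\hnabla^{I-1}\Psi_j\rVert$, a final absorption argument, identical in structure to the one used in \eqref{447}--\eqref{4345}, together with a Gr\"onwall iteration in $\ubar$ (to dispose of the $\Psi_2$ self-coupling coming from the angular commutator $[\hnabla_4,\hnabla^{I-1}]$), yields the stated inequality with the $+1$ constant absorbing every bulk error since each of them carries at least one factor of $a^{-1/c}$ by the inequality $(\Gamma+R+M)^{20}\leq a^{1/16}$ enforced by the bootstrap. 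The only step requiring genuine care beyond the template of the previous proofs is the appearance of the Yang-Mills curvature $F$ inside $[\hnabla_4,\hnabla_3]\alphabarF$, which produces quartic Yang-Mills interactions; these however are always controlled by three factors of $\Gamma$ against one top-order factor, and the $a^2/|u'|^4$ integrability forces them into the acceptable range.
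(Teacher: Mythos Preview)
Your overall strategy matches the paper's: derive $\hnabla_3$-commuted equations for $\Psi_1=\frac{a}{|u|}\hnabla_3(\rhoF,\sigmaF)$ and $\Psi_2=\frac{a}{|u|}\hnabla_3\alphabarF$, invoke the preceding integration-by-parts proposition, and bound the resulting bulk integrals term-by-term using the bootstrap, elliptic norms, and a final Gr\"onwall step. However, you misidentify the genuinely borderline terms, and miss the one structural rewriting that the paper actually needs.

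The paper's two key technical points are both about the anomalous scaling of $\tr\chibar$ at \emph{zeroth} derivative order, not about top-order Ricci coefficients. First, the error $\mathcal{E}_1$ contains $(\rhoF,\sigmaF)\,\nabla_3\tr\chibar$, and $\nabla_3\tr\chibar$ produces the double anomaly $(\tr\chibar)^2$; this must be rewritten via $\widetilde{\tr\chibar}$ and the structure equation before one can close. Second, the term the paper flags as ``potentially dangerous'' in $\mathcal{E}_2^I$ is $\nabla^{J_3}\chibar\,\hnabla^{J_4+1}(\rhoF,\sigmaF)$; at top order $J_4+1=I$ this gives $\tr\chibar\cdot\hnabla^I\rhoF$, which after pairing with $\hnabla^{I-1}\Psi_2$ yields a term that can only be bounded by $1+\iint \frac{a^3}{|u'|^4}\lVert\hnabla^{I-1}\Psi_2\rVert^2_{L^2_{sc}}$. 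It is \emph{this} term, not the $[\hnabla_4,\hnabla^{I-1}]$ commutator, that forces the Gr\"onwall step. The top-order Ricci terms you single out ($\nabla^{N+5}\chibarhat$, $\nabla^{N+5}\eta$, etc.) are present but are handled routinely through $\mathcal{O}_{N+5,2}$ and Remark~\ref{usefulremark}; they are not the obstruction. If you carry out your program you will encounter the $\tr\chibar$ anomalies and need the $\widetilde{\tr\chibar}$ substitution; as written, your proposal does not anticipate this and would stall there.
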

\begin{proof}
First we compute the error terms $\mathcal{E}_{1}$ and $\mathcal{E}_{2}$. An explicit calculation using the null Yang-Mills equations yields 
\begin{align} \nonumber 
\mathcal{E}_{1}\sim &\left(\nabla\omega-\chibar\cdot(\eta+\etabar)+\omegabar(\eta+\etabar)-\betabar\nonumber+\epsilon\sigma^{F}\cdot\alphabar^{F}-\rho^{F}\cdot\alphabar^{F}\right)\alphabar^{F} +(\eta-\etabar)\cdot\hnabla_{3}\alphabar^{F}\\\nonumber +&(\rho^{F},\sigma^{F})\nabla_{3}\tr\chibar +(\widetilde{\betabar}+\alphabar^{F}\cdot (\rho^{F}+\sigma^{F}))\alphabar^{F}
+\alphabar^{F}\alphabar^{F}+(\eta+\etabar)\hnabla_{3}\alphabar^{F}-\chibar\cdot\hnabla\alphabar^{F}+\chibar\eta\alphabar^{F},
\end{align}
\begin{align} \nonumber
\mathcal{E}_{2}\sim & \alphabar^{F}\rho^{F}+(\eta,\etabar)(-\tr\chibar (\rho^F,\sigma^{F})-\nonumber \widehat{\text{div}}\alphabar^F +( \eta,\etabar)\cdot \alphabar^F)-\chibar\hnabla\rho^{F}+\chibar\eta\rho^{F}\\\nonumber +&\alphabar^{F}\sigma^{F}-\chibar\hnabla\sigma^{F}+\chibar\eta\sigma^{F}
+\omega\hnabla_{3}\alphabar^{F}-\omegabar\hnabla\rho^{F}+\omegabar~^{*}\hnabla\sigma^{F}+(\eta-\etabar)\hnabla\alphabar^{F}\\\nonumber +&\sigma\alphabar^{F}+\rho^{F}\alphabar^{F}+(|\rho^{F}|^{2}+|\sigma^{F}|^{2})\alphabar^{F} +\omegabar(\tr\chi\alphabar^F-2 \Hodge{\etabar} \sigma^F- 2 \etabar \rho^F + 2 \omega \alphabar^F\\\nonumber -& \chibarhat \cdot \alpha^F)-\frac{1}{2}\tr\chi\hnabla_{3}\alphabar^{F}.
\end{align}
Note that the expression of $\mathcal{E}_{1}$ contains $\nabla_{3}\tr\chibar$, which contains $(\tr\chibar)^{2}$, a double anomaly that might cause problems in closing the estimates later. Therefore, we work with the re-normalized entity $\widetilde{\tr\chibar}:=\tr\chibar+\frac{2}{|u|}$. We write $\nabla_{3}\tr\chibar$ as follows:
\begin{align}\nonumber
\nabla_{3}(\tr\chibar)=& \nabla_{3}(\tr\chibar+\frac{2}{|u|}-\frac{2}{|u|})=\nabla_{3}\widetilde{\tr\chibar}+\frac{2\Omega^{-1}}{|u|^{2}}\\\nonumber 
\sim& \frac{2}{|u|^{2}}(\Omega^{-1}-1)+|\widetilde{\tr\chibar}|^{2}+2\omegabar\tr\chibar-|\chibarhat|^{2}-|\alphabar^{F}|^{2}+\frac{2\Omega^{-1}}{|u|^{2}}.
\end{align}
Therefore $\mathcal{E}_{1}$ reads 
\begin{align} \nonumber
\mathcal{E}_{1}\sim &\left(\nabla\omega-\chibar\cdot(\eta+\etabar)+\omegabar(\eta+\etabar)-\betabar\nonumber+\epsilon\sigma^{F}\cdot\alphabar^{F}-\rho^{F}\cdot\alphabar^{F}\right)\alphabar^{F}\\\nonumber 
+&(\eta-\etabar)\cdot\hnabla_{3}\alphabar^{F}+(\rho^{F},\sigma^{F})(\frac{2}{|u|^{2}}(\Omega^{-1}-1)+|\widetilde{\tr\chibar}|^{2}+2\omegabar\tr\chibar-|\chibarhat|^{2}-|\alphabar^{F}|^{2}+\frac{2\Omega^{-1}}{|u|^{2}})\\\nonumber +&(\widetilde{\betabar}+\alphabar^{F}\cdot (\rho^{F}+\sigma^{F}))\alphabar^{F}
+\alphabar^{F}\alphabar^{F}+(\eta+\etabar)\hnabla_{3}\alphabar^{F}-\chibar\cdot\hnabla\alphabar^{F}+\chibar\eta\alphabar^{F},
\end{align}
Now using the commutation formulae \eqref{c1}-\eqref{c2} as appropriate, we obtain 
\begin{align}  \nonumber
\mathcal{E}^{I}_{1}\sim &\sum_{J_{1}+J_{2}=I-1}\nabla^{J_{1}+1}\tr\underline{\chi}\hnabla^{J_{2}}\Psi_{1}\\\nonumber+& \sum_{J_{1}+J_{2}+J_{3}+J_{4}=I-1}\nabla^{J_{1}}(\eta+\underline{\eta})^{J_{2}+1}\nabla^{J_{3}}(\widehat{\underline{\chi}},\tr\underline{\chi})\hnabla^{J_{4}}\Psi_{1}
\\ \nonumber+&\sum_{J_1+J_2+J_3+J_4=I-1}\nabla^{J_1}(\eta+\etabar)^{J_2}\nabla^{J_3}(\chibarhat,\hsp \tildetr)\hnabla^{J_4}\Psi_{1}\\\nonumber +&\sum_{J_{1}+J_{2}+J_{3} +J_{4}=J-1}\nabla^{J_{1}}(\eta+\underline{\eta})^{J_{2} +1}\hat{\nabla}^{J_{3}}\tr\underline{\chi}\hnabla^{J_{4}}\Psi_{1}\\\nonumber+& \sum_{J_1+J_2+J_3+J_4=J-1}\nabla^{J_1} (\eta+\etabar)^{J_2+1}\nonumber\nabla^{J_3}(\chibarhat,\tr\chibar)\hnabla^{J_4}\Psi_{1}\\\nonumber+& \sum_{J_1+J_2+J_3+J_4=J-1}\nabla^{J_1}(\eta+\etabar)^{J_2}\hnabla^{J_3}\alphabar^F \hnabla^{J_4}\Psi_{1}\\\nonumber+& \sum_{J_{1}+J_{2}+J_{3}+J_{4}+J_{5}=J-1}\nabla^{J_{1}}(\eta+\underline{\eta})^{J_{2}}\hnabla^{J_{3}}\alphabar^{F}\hnabla^{J_{4}}(\rho^{F},\sigma^{F})\hnabla^{J_{5}}\Psi_{1}\\\nonumber 
+&\sum_{J_1+J_2+J_3+J_{4}=J}\nabla^{J_1}(\eta+\etabar)^{J_2}\nabla^{J_{3}+1}\omega\hnabla^{J_{4}}\alphabar^{F}\\\nonumber +&\sum_{J_1+J_2+J_3+J_{4}+J_{5}=J}\nabla^{J_1}(\eta+\etabar)^{J_2}\nabla^{J_{3}}\chibar\nabla^{J_{4}}(\eta,\etabar)\hnabla^{J_{5}}\alphabar^{F}\\\nonumber+& \sum_{J_1+J_2+J_3+J_{4}+J_{5}=J}\nabla^{J_1}(\eta+\etabar)^{J_2}\nabla^{J_{3}}\omegabar\nabla^{J_{4}}(\eta,\etabar)\hnabla^{J_{5}}\alphabar^{F}\\\nonumber 
+& \sum_{J_1+J_2+J_3+J_{4}=J}\nabla^{J_1}(\eta+\etabar)^{J_2}\nabla^{J_{3}}\widetilde{\betabar}\hnabla^{J_{4}}\alphabar^{F}\\\nonumber 
+& \sum_{J_1+J_2+J_3+J_{4}=J}\nabla^{J_1}(\eta+\etabar)^{J_2}\nabla^{J_{3}}(\eta,\etabar)\hnabla^{J_{4}}\hnabla_{3}\alphabar^{F}\\\nonumber +&\underbrace{\sum_{J_1+J_2+J_3+J_{4}+J_{5}=J}\nabla^{J_1}(\eta+\etabar)^{J_2}\nabla^{J_{3}}\widetilde{\tr\chibar}\nabla^{J_{4}}\widetilde{\tr\chibar}\hnabla^{J_{5}}\rho^{F}}_{I}\\\nonumber
+&\underbrace{\frac{1}{|u|^{2}}\sum_{J_1+J_2+J_3=I-1}\nabla^{J_1}(\eta+\etabar)^{J_2}\hnabla^{J_{3}}\rho^{F}}_{II}\\\nonumber
+&\frac{1}{|u|^{2}}\sum_{J_1+J_2+J_3+J_4=I-1}\nabla^{J_1}(\eta+\etabar)^{J_2}\nabla^{J_{3}}(\Omega^{-1}-1)\hnabla^{J_{4}}\rho^{F}\\\nonumber
+&\sum_{J_1+J_2+J_3+J_{4}+J_{5}=J}\nabla^{J_1}(\eta+\etabar)^{J_2}\nabla^{J_{3}}\chibarhat\nabla^{J_{4}}\chibarhat\hnabla^{J_{5}}\rho^{F}\\\nonumber 
+&\sum_{J_1+J_2+J_3+J_{4}+J_{5}=J}\nabla^{J_1}(\eta+\etabar)^{J_2}\nabla^{J_{3}}\omegabar\nabla^{J_{4}}\tr\chibar\hnabla^{J_{5}}\rho^{F}\\\nonumber 
+&\sum_{J_1+J_2+J_3+J_{4}+J_{5}=J}\nabla^{J_1}(\eta+\etabar)^{J_2}\hnabla^{J_{3}}\alphabar^{F}\hnabla^{J_{4}}\alphabar^{F}\hnabla^{J_{5}}\rho^{F}\\\nonumber 
+&\sum_{J_1+J_2+J_3+J_{4}+J_{5}=J}\nabla^{J_1}(\eta+\etabar)^{J_2}\hnabla^{J_{3}}\alphabar^{F}\hnabla^{J_{4}}\alphabar^{F}\hnabla^{J_{5}}\rho^{F}\\\nonumber 
+&\sum_{J_1+J_2+J_3+J_{4}+J_{5}=J}\nabla^{J_1}(\eta+\etabar)^{J_2}\hnabla^{J_{3}}\alphabar^{F}\hnabla^{J_{4}}(\rho^{F},\sigma^{F})\hnabla^{J_{5}}\alphabar^{F}\\\nonumber 
+&\sum_{J_1+J_2+J_3+J_{4}=J}\nabla^{J_1}(\eta+\etabar)^{J_2}\hnabla^{J_{3}}\alphabar^{F}\hnabla^{J_{4}}\alphabar^{F}\\\nonumber 
+&\sum_{J_1+J_2+J_3+J_{4}=J}\nabla^{J_1}(\eta+\etabar)^{J_2}\nabla^{J_{3}}(\eta,\etabar)\hnabla^{J_{4}}\hnabla_{3}\alphabar^{F}\\\nonumber 
+&\sum_{J_1+J_2+J_3+J_{4}+J_{5}=J}\nabla^{J_1}(\eta+\etabar)^{J_2}\nabla^{J_{3}}\chibar\nabla^{J_{4}}\eta\hnabla^{J_{5}}\alphabar^{F}\\ 
+&\underbrace{\sum_{J_1+J_2+J_3+J_{4}=J}\nabla^{J_1}(\eta+\etabar)^{J_2}\nabla^{J_{3}}\chibar\hnabla^{J_{4}+1}\alphabar^{F}}_{III}
\end{align}
and similarly
\begin{align}  \nonumber
\mathcal{E}^{I}_{2}\sim& \sum_{J_{1}+J_{2}+J_{3}+J_{4}=J-1}\nabla^{J_{1}}(\eta+\underline{\eta})^{J_{2}}\nabla^{J_{3}}\beta\hnabla^{J_{4}}\Psi_{2}\\\nonumber +&\sum_{J_{1}+J_{2}+J_{3}+J_{4}+J_{5}=J-1}\nabla^{J_{1}}(\eta+\underline{\eta})^{J_{2}}\hnabla^{J_{3}}\alpha^{F}\hnabla^{J_{4}}(\rho^{F},\sigma^{F})\hnabla^{J_{5}}\Psi_{2}\\  \nonumber
+&\sum_{J_{1}+J_{2}+J_{3}+J_{4}=I-1}\nabla^{J_{1}}(\eta+\underline{\eta})^{J_{2}}\hnabla^{J_{3}}\alpha^{F}\hnabla^{J_{4}}\Psi_{2}\\\nonumber+& \sum_{J_{1}+J_{2}+J_{3} +J_{4}=I-1}\nabla^{J_{1}}(\eta+\underline{\eta})^{J_{2}}\nabla^{J_{3}}\chi\hnabla^{J_{4}}\Psi_{2}\\\nonumber +&\sum_{J_{1}+J_{2}+J_{3}=I-1}\nabla^{J_{1}}(\eta+\underline{\eta})^{J_{2}}\hnabla^{J_{3}}\alphabar^{F}\hnabla^{J_{4}}\rho^{F}\\\nonumber 
+&\sum_{J_{1}+J_{2}+J_{3}=I-1}\nabla^{J_{1}}(\eta+\underline{\eta})^{J_{2}}\hnabla^{J_{3}}\alphabar^{F}\hnabla^{J_{4}}\rho^{F}\\\nonumber 
+&\sum_{J_{1}+J_{2}+J_{3}+J_{4}+J_{5}=I-1}\nabla^{J_{1}}(\eta+\underline{\eta})^{J_{2}}\nabla^{J_{3}}(\eta+\etabar)\nabla^{J_{4}}\tr\chibar\hnabla^{J_{5}}(\rho^{F},\sigma^{F})\\\nonumber 
+&\sum_{J_{1}+J_{2}+J_{3}+J_{4}=I-1}\nabla^{J_{1}}(\eta+\underline{\eta})^{J_{2}}\nabla^{J_{3}}(\eta+\etabar)\hnabla^{J_{4}+1}\alphabar^{F}\\\nonumber 
+&\sum_{J_{1}+J_{2}+J_{3}+J_{4}+J_{5}=I-1}\nabla^{J_{1}}(\eta+\underline{\eta})^{J_{2}}\nabla^{J_{3}}(\eta+\etabar)\nabla^{J_{4}}(\eta+\etabar)\hnabla^{J_{5}}\alphabar^{F}\\\nonumber 
+&\underbrace{\sum_{J_{1}+J_{2}+J_{3}+J_{4}=I-1}\nabla^{J_{1}}(\eta+\underline{\eta})^{J_{2}}\nabla^{J_{3}}\chibar\hnabla^{J_{4}+1}(\rho^{F},\sigma^{F})}_{potentially~dangerous~term}\\\nonumber 
+&\sum_{J_{1}+J_{2}+J_{3}+J_{4}+J_{5}=I-1}\nabla^{J_{1}}(\eta+\underline{\eta})^{J_{2}}\hnabla^{J_{3}}(\eta+\etabar)\hnabla^{J_{4}}\chibar \hnabla^{J_{5}}(\rho^{F},\sigma^{F})\\\nonumber 
+&\sum_{J_{1}+J_{2}+J_{3}+J_{4}+J_{5}=I-1}\nabla^{J_{1}}(\eta+\underline{\eta})^{J_{2}}\nabla^{J_{3}}(\eta+\etabar)\nabla^{J_{4}}\chibar\hnabla^{J_{5}}\sigma^{F}\\\nonumber
+&\sum_{J_{1}+J_{2}+J_{3}+J_{4}=I-1}\nabla^{J_{1}}(\eta+\underline{\eta})^{J_{2}}\nabla^{J_{3}}\omega\nabla^{J_{4}}\hnabla_{3}\alphabar^{F}\\\nonumber 
+&\sum_{J_{1}+J_{2}+J_{3}+J_{4}=I-1}\nabla^{J_{1}}(\eta+\underline{\eta})^{J_{2}}\nabla^{J_{3}}\omegabar\nabla^{J_{4}+1}(\rho^{F},\sigma^{F})\\\nonumber 
+&\sum_{J_{1}+J_{2}+J_{3}+J_{4}=I-1}\nabla^{J_{1}}(\eta+\underline{\eta})^{J_{2}}\nabla^{J_{3}}(\eta+\etabar)\nabla^{J_{4}+1}\alphabar^{F}\\\nonumber 
+&\sum_{J_{1}+J_{2}+J_{3}+J_{4}=I-1}\nabla^{J_{1}}(\eta+\underline{\eta})^{J_{2}}\nabla^{J_{3}}\alphabar^{F}\nabla^{J_{4}}(\rho^{F},\sigma^{F})\\\nonumber 
+&\sum_{J_{1}+J_{2}+J_{3}+J_{4}+J_{5}=I-1}\nabla^{J_{1}}(\eta+\underline{\eta})^{J_{2}}\nabla^{J_{3}}(\rho^{F},\sigma^{F})\nabla^{J_{4}}(\rho^{F},\sigma^{F})\hnabla^{J_{5}}\alphabar^{F}\\\nonumber 
+&\sum_{J_{1}+J_{2}+J_{3}+J_{4}+J_{5}=I-1}\nabla^{J_{1}}(\eta+\underline{\eta})^{J_{2}}\nabla^{J_{3}}\omegabar\nabla^{J_{4}}\tr\chibar\hnabla^{J_{5}}\alphabar^{F}\\\nonumber 
+&\sum_{J_{1}+J_{2}+J_{3}+J_{4}+J_{5}=I-1}\nabla^{J_{1}}(\eta+\underline{\eta})^{J_{2}}\nabla^{J_{3}}\omegabar\nabla^{J_{4}}\etabar\hnabla^{J_{5}}(\rho^{F},\sigma^{F})\\\nonumber 
+&\sum_{J_{1}+J_{2}+J_{3}+J_{4}+J_{5}=I-1}\nabla^{J_{1}}(\eta+\underline{\eta})^{J_{2}}\nabla^{J_{3}}\omegabar\nabla^{J_{4}}\omega\hnabla^{J_{5}}\alphabar^{F}\\\nonumber 
+&\sum_{J_{1}+J_{2}+J_{3}+J_{4}+J_{5}=I-1}\nabla^{J_{1}}(\eta+\underline{\eta})^{J_{2}}\nabla^{J_{3}}\omegabar\nabla^{J_{4}}\chibarhat\hnabla^{J_{5}}\alpha^{F}\\
+&\sum_{J_{1}+J_{2}+J_{3}+J_{4}=I-1}\nabla^{J_{1}}(\eta+\underline{\eta})^{J_{2}}\nabla^{J_{3}}\tr\chi\nabla^{J_{4}}\hnabla_{3}\alphabar^{F}.
\end{align} 
In the error term, we will only estimate the most dangerous and borderline terms. The most dangerous error term is estimated as follows:
\begin{align}  \nonumber
&\iint_{u,\ubar}\frac{a^{2}}{\upr^2} \scaleoneSuprimeubarprime{\langle a^{\frac{I-1}{2}}\hnabla^{I-1} \hnabla_{3}\alphabar^{F}, \frac{a}{|u^{'}|}a^{\frac{I-1}{2}}\tr\chibar\cdot \hnabla^{I}\rho^{F}\rangle}\duprime \dubarprime\\\nonumber 
\lesssim &\iint_{u,\ubar}\frac{a^{3}}{|u^{'}|^{5}}\frac{|u^{'}|^{2}}{a}||a^{\frac{I-1}{2}}\hnabla^{I-1} \hnabla_{3}\alphabar^{F}||_{L^{2}_{sc}}\frac{a}{|u^{'}|^{2}}||\tr\chibar||_{L^{\infty}_{sc}}||a^{\frac{I-1}{2}}\hnabla^{I}\rho^{F}||_{L^{2}_{sc}}\duprime \dubarprime\\\nonumber 
 \lesssim &\iint_{u,\ubar}\frac{a^{2}}{|u^{'}|^{3}}||a^{\frac{I-1}{2}}\hnabla^{I-1} \hnabla_{3}\alphabar^{F}||_{L^{2}_{sc}}||a^{\frac{I-1}{2}}\hnabla^{I}\rho^{F}||_{L^{2}_{sc}}\duprime \dubarprime\\\nonumber
 \lesssim &\iint_{u,\ubar}\frac{a}{|u^{'}|^{2}}(\frac{a}{|u^{'}|}||a^{\frac{I-1}{2}}\hnabla^{I-1} \hnabla_{3}\alphabar^{F}||_{L^{2}_{sc}})||a^{\frac{I-1}{2}}\hnabla^{I}\rho^{F}||_{L^{2}_{sc}}\duprime \dubarprime\\\nonumber
\lesssim & \iint_{u,\ubar}\frac{a}{|u^{'}|^{2}}\left(\frac{a^{2}}{|u^{'}|^{2}}||a^{\frac{I-1}{2}}\hnabla^{I-1} \hnabla_{3}\alphabar^{F}||^{2}_{L^{2}_{sc}}+||a^{\frac{I-1}{2}}\hnabla^{I}\rho^{F}||^{2}_{L^{2}_{sc}}\right)\duprime \dubarprime\\\nonumber 
\lesssim &\int_{\ubar}\int_{u}\frac{a^{3}}{|u^{'}|^{4}}||a^{\frac{I-1}{2}}\hnabla^{I-1} \hnabla_{3}\alphabar^{F}||^{2}_{L^{2}_{sc}}\duprime \dubarprime+\int_{u}\frac{a}{|u^{'}|^{2}}(\int_{\ubar}||a^{\frac{I-1}{2}}\hnabla^{I}\rho^{F}||^{2}_{L^{2}_{sc}})\duprime \dubarprime\\ 
\lesssim &1+\int_{\ubar}\int_{u}\frac{a^{3}}{|u^{'}|^{4}}||a^{\frac{I-1}{2}}\hnabla^{I-1} \hnabla_{3}\alphabar^{F}||^{2}_{L^{2}_{sc}}\duprime \dubarprime.
\end{align}
Other ill-behaved (but not necessarily borderline) terms include $\mathcal{E}^{I}_{2}$, for example, $\frac{a}{|u|}\chibarhat\cdot \hnabla^{I+1}\rho^{F}\subset \mathcal{E}^{I}_{2}$ and $\frac{1}{|u|}\hnabla^{I-1}\Psi_{1}\subset \mathcal{E}^{I}_{1}$ among others. These are estimated as follows: 
\begin{align}
\nonumber &\iint_{u,\ubar}\frac{a^{2}}{\upr^2} \scaleoneSuprimeubarprime{\langle a^{\frac{I-1}{2}}\hnabla^{I-1} \hnabla_{3}\alphabar^{F}, \frac{a}{|u^{'}|}a^{\frac{I-1}{2}}\chibarhat\cdot \hnabla^{I}\rho^{F}\rangle}\duprime \dubarprime\\\nonumber 
\lesssim &\iint_{u,\ubar}\frac{a^{\frac{5}{2}}}{|u^{'}|^{4}}||a^{\frac{I-1}{2}}\hnabla^{I-1} \hnabla_{3}\alphabar^{F}||_{L^{2}_{sc}(S_{u^{'},\ubar^{'})}}\frac{a^{\frac{1}{2}}}{|u^{'}|}||\chibarhat||_{L^{\infty}_{sc}(S_{u^{'},\ubar^{'}})}||a^{\frac{I-1}{2}}\hnabla^{I}\rho^{F}||_{L^{2}_{sc}(S_{u^{'},\ubar^{'})}}\duprime \dubarprime\\\nonumber 
\lesssim &\frac{1}{a^{\frac{1}{2}}}\iint_{u,\ubar}\frac{a^{3}}{|u^{'}|^{4}}||a^{\frac{I-1}{2}}\hnabla^{I-1} \hnabla_{3}\alphabar^{F}||^{2}_{L^{2}_{sc}(S_{u^{'},\ubar^{'})}}\duprime \dubarprime+\iint_{u,\ubar}\frac{a^{\frac{5}{2}}}{|u^{'}|^{4}}||a^{\frac{I-1}{2}}\hnabla^{I}\rho^{F}||^{2}_{L^{2}_{sc}(S_{u^{'},\ubar^{'})}}\duprime \dubarprime \\
\lesssim  &\left(\frac{1}{a^{\frac{1}{2}}}\mathbb{YM}^{2}+\frac{a^{\frac{5}{2}}}{|u|^{3}}\mathbb{YM}^{2}\right)
\lesssim 1,
\end{align}
\begin{align} \nonumber
&\iint_{u,\ubar}\frac{a^{2}}{\upr^2} \scaleoneSuprimeubarprime{\langle a^{\frac{I-1}{2}}\hnabla^{I-1} \hnabla_{3}(\rho^{F},\sigma^{F}) , \frac{1}{|u|}a^{\frac{I-1}{2}}\hnabla^{I-1}\hnabla_{3}(\rho^{F},\sigma^{F})\rangle}\duprime \dubarprime\\\nonumber 
\lesssim& \iint_{u,\ubar}\frac{a^{2}}{|u^{'}|^{4}}||a^{\frac{I-1}{2}}\hnabla^{I-1} \hnabla_{3}(\rho^{F},\sigma^{F})||^{2}_{L^{2}_{sc}(S_{u^{'},\ubar^{'}})}\duprime \dubarprime\\
\lesssim& \frac{1}{|u|}\sup_{u^{'}\in[u,u_{\infty}]}\int_{H^{(0,\ubar)}_{u^{'}}}\frac{a^{2}}{|u^{'}|^{2}}\scaletwoSuubarprime{a^{\frac{I-1}{2}}\hnabla^{I-1} (\hnabla_{3}(\rho^{F},\sigma^{F})}^2 d\ubar^{'}
\lesssim \frac{1}{|u|}\mathbb{YM}\lesssim 1,
\end{align}

\begin{align} 
\nonumber &\int_{u,\ubar}\frac{a}{\upr} \\  \nonumber&\scaleoneSuprimeubarprime{\langle a^{\frac{I-1}{2}}\hnabla^{I-1} \Psi_{1},\frac{a}{|u^{'}|}a^{\frac{I-1}{2}} \sum_{J_1+J_2+J_3+J_{4}=I-1}\nabla^{J_1}(\eta+\etabar)^{J_2}\nabla^{J_{3}}(\tr\chibar,\chibarhat)\hnabla^{J_{4}+1}\alphabar^{F}\rangle}\duprime \dubarprime\\\nonumber 
\lesssim &1+\int_{u,\ubar}\frac{a}{|u^{'}|^{2}}||a^{\frac{I-1}{2}}\hnabla^{I-1}\Psi_{1}||_{L^{2}_{sc}(S_{u^{'},\ubar})}||a^{\frac{I-1}{2}}\hnabla^{I}\alphabar^{F}||_{L^{2}_{sc}(S_{u^{'},\ubar})}\duprime \dubarprime \\ 
\lesssim &1+\mathbb{YM}^{2}[\alphabar^{F}]+\int_{\ubar}\int_{u_{\infty}}^{u}\frac{a}{|u^{'}|^{2}}||a^{\frac{I-1}{2}}\hnabla^{I-1}\Psi_{1}||^{2}_{L^{2}_{sc}(S_{u^{'},\ubar})}\duprime d\ubar^{'},
\end{align}
 
\begin{align}
\int_{u,\ubar}&\frac{a}{|u^{'}|} ||\langle\nonumber a^{\frac{I-1}{2}}\hnabla^{I-1} \Psi_{2},\frac{a}{|u^{'}|}a^{\frac{I-1}{2}}\sum_{J_{1}+J_{2}+J_{3}+J_{4}=J}\nabla^{J_{1}}(\eta+\underline{\eta})^{J_{2}}\nabla^{J_{3}}\tr\chi\nabla^{J_{4}}\hnabla_{3}\alphabar^{F}\rangle||_{L^{1}_{sc}(S_{u^{'},\ubar^{'}})}\duprime \dubarprime\\\nonumber 
\lesssim &1+\int_{u,\ubar}\frac{a^{2}}{|u^{'}|^{4}}\left(||a^{\frac{I-1}{2}}\hnabla^{I-1} \Psi_{2}||^{2}_{L^{2}_{sc}(S_{u^{'},\ubar})}+||a^{\frac{I-1}{2}}\hnabla^{I-1}\hnabla_{3}\alphabar^{F}||^{2}_{L^{2}_{sc}(S_{u^{'},\ubar^{'}})}\right)\\
\lesssim &1+\int_{\ubar}\int_{u}\frac{a^{3}}{|u^{'}|^{5}}||a^{\frac{I-1}{2}}\hnabla^{I-1} \hnabla_{3}\alphabar^{F}||^{2}_{L^{2}_{sc}(S_{u,\ubar})}du^{'}d\ubar^{'}+\frac{1}{a}\int_{\ubar}\int_{u}||a^{\frac{I-1}{2}}\hnabla^{I-1}\hnabla_{3}\alphabar^{F}||^{2}_{L^{2}_{sc}(S_{u^{'},\ubar})}du^{'}d\ubar^{'}\\\nonumber 
\lesssim& 1,
\end{align} 

\begin{align}
\int_{u,\ubar}&\frac{a}{|u^{'}|} \\  \nonumber &||\langle\nonumber a^{\frac{I-1}{2}}\hnabla^{I-1} \Psi_{1},\frac{a}{|u^{'}|}\sum_{J_1+J_2+J_3+J_{4}+J_{5}=J}\nabla^{J_1}(\eta+\etabar)^{J_2}\nabla^{J_{3}}\widetilde{\tr\chibar}\nabla^{J_{4}}\widetilde{\tr\chibar}\hnabla^{J_{5}}\rho^{F}\rangle||_{L^{1}_{sc}(S_{u^{'},\ubar^{'}})}\duprime \dubarprime\\\nonumber 
\lesssim& 1+\int_{u,\ubar}\frac{1}{|u^{'}|^{3}}\left(||\frac{a}{|u^{'}|}a^{\frac{I-1}{2}}\hnabla^{I-1}\hnabla_{3}\rho^{F}||^{2}_{L^{2}_{sc}(S_{u^{'},\ubar^{'}})}+||a^{\frac{I-1}{2}}\hnabla^{I-1}\rho^{F}||_{L^{2}_{sc}(S_{u^{'},\ubar^{'}})}\right)du^{'}d\ubar^{'}\\ 
\lesssim& 1+\int_{u_{\infty}}^{u}\frac{1}{|u^{'}|^{3}}\int_{0}^{\ubar}\frac{a^{2}}{|u^{'}|^{2}}||a^{\frac{I-1}{2}}\hnabla^{I-1}\hnabla_{3}\rho^{F}||^{2}_{L^{2}_{sc}(S_{u^{'},\ubar^{'}})}+\frac{1}{|u|^{2}}\mathbb{YM}[\alphabar^{F}]^{2},
\end{align} 
where we have used the $L^{2}$ estimate for the Yang-Mills curvature $\rho^{F}$. We estimate some of the remaining terms as follows: 
\begin{align}  \nonumber
&\int_{u,\ubar}\frac{a}{|u^{'}|} ||\langle\nonumber a^{\frac{I-1}{2}}\hnabla^{I-1} \Psi_{1},a^{\frac{I-1}{2}}\frac{a}{|u^{'}|}\frac{1}{|u^{'}|^{2}}\sum_{J_1+J_2+J_3=I-1}\nabla^{J_1}(\eta+\etabar)^{J_2}\hnabla^{J_{3}}\rho^{F}\rangle||_{L^{1}_{sc}(S_{u^{'},\ubar^{'}})}du^{'}d\ubar^{'}\\\nonumber 
\lesssim &1+\int_{u,\ubar}\frac{a^{2}}{|u^{'}|^{5}}\left(||a^{\frac{I-1}{2}}\hnabla^{I-1}\Psi||^{2}_{L^{2}_{sc}(S_{u^{'},\ubar^{'}})}+||a^{\frac{I-1}{2}}\hnabla^{I-1}\rho^{F}||_{L^{2}_{sc}(S_{u^{'},\ubar^{'}})}\right)du^{'}d\ubar^{'}\\\nonumber 
\lesssim & 1+\int_{u_{\infty}}^{u}\frac{a^{2}}{|u^{'}|^{5}}\int_{0}^{\ubar}||a^{\frac{I-1}{2}}\hnabla^{I-1}\Psi||^{2}_{L^{2}_{sc}(S_{u^{'},\ubar^{'}})}\dubarprime \duprime+\frac{a^{2}}{|u|^{4}}\mathbb{YM}[\alphabar^{F}]^{2}\\
\lesssim & 1+\int_{u_{\infty}}^{u}\frac{a^{2}}{|u^{'}|^{5}}\int_{0}^{\ubar}||a^{\frac{I-1}{2}}\hnabla^{I-1}\Psi||^{2}_{L^{2}_{sc}(S_{u^{'},\ubar^{'}})} \dubarprime \duprime,
\end{align}

\begin{align} \nonumber
&\int_{u,\ubar}\frac{a^I}{|u^{'}|}  ||\langle \frac{a}{|u^{'}|}\hnabla^{I-1}\hnabla_{3}\alphabar^{F},\frac{a}{|u^{'}|}\sum_{J_{1}+J_{2}+J_{3}+J_{4}=I-1}\nabla^{J_{1}}\nonumber(\eta+\underline{\eta})^{J_{2}}\nabla^{J_{3}}\chi\hnabla^{J_{4}}\hnabla_{3}\alphabar^{F}\rangle||_{L^{1}_{sc}(S_{u^{'},\ubar^{'}})}du^{'}d\ubar^{'}\\\nonumber 
\lesssim &1+\int_{u,\ubar}\frac{a^{3}}{|u^{'}|^{5}}||\hnabla^{I-1}\hnabla_{3}\alphabar^{F}||^{2}_{L^{2}_{sc}(S_{u^{'},\ubar^{'}})}||\chihat||_{L^{\infty}_{sc}(S_{u^{'},\ubar^{'}})}du^{'}d\ubar^{'}
\\ 
\lesssim &1+\frac{a^\frac{1}{2}}{|u|}\sup_{\ubar}\int_{u}\frac{a^{3}}{|u^{'}|^{4}}||\hnabla^{I-1}\hnabla_{3}\alphabar^{F}||^{2}_{L^{2}_{sc}(S_{u^{'},\ubar^{'}})}||\chihat||_{L^{\infty}_{sc}(S_{u^{'},\ubar^{'}})}du^{'},
\end{align}
\begin{align} \nonumber
\int_{u,\ubar} &\frac{a^I}{|u^{'}|}||\langle \frac{a}{|u^{'}|}\hnabla^{I-1}\hnabla_{3}\alphabar^{F},\frac{a}{|u^{'}|}\sum_{J_{1}+J_{2}+J_{3}+J_{4}=I-1}\nabla^{J_{1}}(\eta+\underline{\eta})^{J_{2}}\hnabla^{J_{3}}\alpha^{F}\hnabla^{J_{4}}\hnabla_{3}\alphabar^{F}\rangle||_{L^{1}_{sc}(S_{u^{'},\ubar^{'}})}du^{'}d\ubar^{'}\\\nonumber 
\lesssim & 1+\int_{u,\ubar}\frac{a^{3}}{|u^{'}|^{5}}||\hnabla^{I-1}\hnabla_{3}\alphabar^{F}||^{2}_{L^{2}_{sc}(S_{u^{'},\ubar^{'}})}||\alpha^{F}||_{L^{\infty}_{sc}(S_{u^{'},\ubar^{'}})}du^{'}d\ubar^{'}
\\\nonumber 
\lesssim &1+\frac{a^\frac{1}{2}}{|u|}\sup_{\ubar}\int_{u}\frac{a^{3}}{|u^{'}|^{4}}||\hnabla^{I-1}\hnabla_{3}\alphabar^{F}||^{2}_{L^{2}_{sc}(S_{u^{'},\ubar^{'}})}||\chihat||_{L^{\infty}_{sc}(S_{u^{'},\ubar^{'}})}du^{'}\\
\lesssim &
1+\frac{a^\frac{1}{2}}{|u|}\sup_{\ubar}\int_{u}\frac{a^{3}}{|u^{'}|^{4}}||\hnabla^{I-1}\hnabla_{3}\alphabar^{F}||^{2}_{L^{2}_{sc}(S_{u^{'},\ubar^{'}})}||\chihat||_{L^{\infty}_{sc}(S_{u^{'},\ubar^{'}})}du^{'}.
\end{align}
Collecting all the terms and using Gr\"onwall's inequality, we obtain 
\begin{equation*}
\begin{split}
&\int_{\Hu}\scaletwoSuubarprime{\hnabla^{I-1} (\frac{a}{|u|}\hnabla_{3}\rho^{F},\frac{a}{|u|}\hnabla_{3}\sigma^{F})}^2 \dubarprime + \int_{u_{\infty}}^{u}\frac{a}{\upr^2} \scaletwoSuprime{\hnabla^{I-1}(\frac{a}{|u^{'}|} \hnabla_{3}\alphabar^{F})}^2 \duprime \\ \lesssim &\int_{H_{u_{\infty}}^{(0,\ubar)}}\scaletwoSuzubarprime{\hnabla^{I-1} (\frac{a}{|u|_{\infty}}\hnabla_{3}\rho^{F},\frac{a}{|u|_{\infty}}\hnabla_{3}\sigma^{F})}^2 \dubarprime\\ +& \int_{u_{\infty}}^{u} \frac{a}{\upr^2} \scaletwoSuzprime{\hnabla^{I-1} (\frac{a}{|u^{'}|}\hnabla_{3}\alphabar^{F})}^2 \duprime +1.
\end{split}
\end{equation*}
This completes the energy estimates for $\hnabla_{3}\alphabar^{F}$.  
 
\end{proof}

\subsection{Energy Estimates for the Weyl/Riemann Curvature components}

\par \noindent For $(\Psi_1, \Psi_2) \in \begin{Bmatrix} (\alpha, \tbeta) , (\tbeta, (\rho,\sigma)), ((\rho,\sigma),\tbetabar), (\tbetabar, \alphabar) \end{Bmatrix}$ the energy estimates are carried out in Bianchi pairs, via the aid of the following proposition:

\begin{proposition}\label{curvprop}
Under the assumptions of Theorem \ref{main1} and the bootstrap assumptions \eqref{bootstrap}, for a Bianchi pair $(\Psi_1, \Psi_2)$ satisfying

\[ \nabla_3 \nabla^i \Psi_1 + \left( \frac{i+1}{2} + s_2(\Psi_1) \right) \tr\chibar \nabla^i \Psi_1 - \mathcal{D}\nabla^i \Psi_2 = P_i,\]\[\nabla_4 \nabla^i \Psi_2 - \Hodge{\mathcal{D}} \nabla^i \Psi_1 = Q_i, \]the following holds true:

\begin{equation}
\begin{split}
&\int_{\Hu}\scaletwoSuubarprime{\nabla^i \Psi_1}^2 \dubarprime + \int_{\Hbu}\frac{a}{\upr^2} \scaletwoSuprime{\nabla^i \Psi_2}^2 \duprime \\ \lesssim &\int_{H_{u_{\infty}}^{(0,\ubar)}}\scaletwoSuzubarprime{\nabla^i \Psi_1}^2 \dubarprime + \int_{\Hbar_0^{(u_{\infty},u)}} \frac{a}{\upr^2} \scaletwoSuzprime{\nabla^i \Psi_2}^2 \duprime \\&+  \iint_{\mathcal{D}_{u,\ubar}}\frac{a}{\upr} \scaleoneSuprimeubarprime{\nabla^i \Psi_1 \cdot P_i}\duprime \dubarprime + \iint_{\mathcal{D}_{u,\ubar}}\frac{a}{\upr} \scaleoneSuprimeubarprime{\nabla^i \Psi_2 \cdot Q_i}\duprime \dubarprime.
  \end{split}
\end{equation}
\end{proposition}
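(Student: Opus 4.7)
The plan is to mirror the strategy of Proposition \ref{42}, but now for ordinary (non-Lie-algebra-valued) horizontal tensor fields, so that the fiber metric $\delta$ plays no role and the inner product $\langle \cdot, \cdot \rangle_\gamma$ reduces to the usual $\gamma$-contraction. First I would apply the analogue of Proposition \ref{integration2} to $\nabla^i \Psi_1$ with weight $\lambda_0 = \tfrac{i+1}{2}+s_2(\Psi_1)$ (so that $\lambda_1 = i + 2 s_2(\Psi_1)$) and independently apply the basic integration lemma \ref{integration} to $|\nabla^i \Psi_2|^2$ along $e_4$. Adding the two resulting identities expresses the sum of the flux on $H_u$ and the flux on $\Hbar_{\ubar}$ in terms of the initial fluxes plus the spacetime integrals $2\int |u'|^{2\lambda_1} \langle \nabla^i \Psi_1, (\hnabla_3 + \lambda_0 \tr\chibar) \nabla^i \Psi_1\rangle$ and $2\int |u'|^{2\lambda_1} \langle \nabla^i \Psi_2, \nabla_4 \nabla^i \Psi_2\rangle$, together with harmless zeroth-order terms involving $f$ and $(2\omega - \tr\chi)$ that already satisfy $|\cdot| \lesssim \Gamma |u|^{-2}$.

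Next I would substitute the Bianchi evolution equations from the hypothesis so that the two principal terms become $\mathcal{D}\nabla^i \Psi_2 + P_i$ and $\Hodge{\mathcal{D}}\nabla^i \Psi_1 + Q_i$. The key algebraic cancellation comes from the Hodge structure: integration by parts on $S_{u,\ubar}$ gives
\begin{equation*}
\int_{S_{u,\ubar}} \langle \nabla^i \Psi_1, \mathcal{D}\nabla^i\Psi_2\rangle + \int_{S_{u,\ubar}} \langle \nabla^i \Psi_2, \Hodge{\mathcal{D}}\nabla^i \Psi_1\rangle = -\int_{S_{u,\ubar}} (\eta + \etabar)\,\langle \nabla^i \Psi_1, \nabla^i \Psi_2\rangle,
\end{equation*}
so the only remaining bulk terms on the right-hand side are the error couplings $\langle \nabla^i \Psi_1, P_i\rangle$ and $\langle \nabla^i\Psi_2, Q_i\rangle$, together with lower-order pieces multiplied by $(\eta+\etabar)$, $\omega$, $\tr\chi$, or $f$, each of which decays like $|u|^{-1}$ or better.

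The remaining step is to handle these lower-order bulk terms using the bootstrap assumptions $|\eta|, |\etabar|, |f| \lesssim |u|^{-2}$ and $|\omega|, |\tr\chi| \lesssim |u|^{-1}$, which lets me bound them by $|u'|^{-2} \int_{H_{u'}} |u'|^{2\lambda_1}|\nabla^i\Psi_1|^2$ and $|u|^{-1} \int_{\Hbar_{\ubar'}} |u'|^{2\lambda_1}|\nabla^i\Psi_2|^2$ respectively, both of which are integrable in $u'$ (thanks to the $|u'|^{-2}$ factor) and in $\ubar'$ (over a region of length $\lesssim 1$). A Gr\"onwall argument in $u'$ on the $H_{u'}$-flux and then in $\ubar'$ on the $\Hbar_{\ubar'}$-flux (using Proposition \ref{prop54} type reasoning) yields a clean inequality. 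Finally, converting the resulting geometric inequality to the scale-invariant norms $\|\cdot\|_{\mathcal{L}^2_{sc}}$ requires a careful bookkeeping of signatures: one computes $s_2(\nabla^i \Psi_1) = \tfrac{i}{2} + s_2(\Psi_1)$, $s_2(\nabla^i \Psi_2) = \tfrac{i+1}{2} + s_2(\Psi_1)$, $s_2(P_i) = \tfrac{i}{2} + 1 + s_2(\Psi_1)$, $s_2(Q_i) = \tfrac{i+1}{2} + s_2(\Psi_1)$, and matches the resulting $a$-powers and $u'$-powers to the stated $\scaletwoSuubarprime{\cdot}^2$, $\tfrac{a}{|u'|^2}\scaletwoSuprime{\cdot}^2$, and $\tfrac{a}{|u'|}\scaleoneSuprimeubarprime{\cdot}$ factors.

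The main obstacle, in contrast to Proposition \ref{42}, will not be the integration by parts (since no gauge-covariant compatibility is needed here) but rather the careful signature bookkeeping to ensure the various powers of $a$ and $|u'|$ line up correctly when converting to scale-invariant norms; however, since $s_2(\Psi_2) = s_2(\Psi_1) + \tfrac{1}{2}$ for every Bianchi pair under consideration, exactly as in Proposition \ref{42}, the computation goes through essentially verbatim, and one obtains the claimed estimate. I would therefore largely omit the details and refer back to the proof of Proposition \ref{42}, emphasizing that the scalar case is strictly simpler.
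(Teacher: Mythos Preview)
Your proposal is correct and follows precisely the approach the paper intends: the paper in fact does not supply a separate proof for Proposition~\ref{curvprop} at all, treating it as the immediate non-gauge analogue of Proposition~\ref{42}, whose proof you have accurately summarized. Your observation that the scalar case is strictly simpler (no fiber-metric compatibility needed) and that one should simply refer back to Proposition~\ref{42} is exactly what the paper does implicitly.
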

With this Proposition as the main tool, we begin with $(\alpha, \tbeta)$.
\begin{proposition}
\label{alphaen} Under the assumptions of Theorem \ref{main1} and the bootstrap assumptions \eqref{bootstrap}, there holds, for all $0\leq i \leq N+4$:

\begin{equation}
    \begin{split}
        &\frac{1}{\al} \scaletwoHu{\aln \alpha} + \frac{1}{\al}\scaletwoHbaru{\aln \tbeta}\\& \lesssim  \frac{1}{\al} \scaletwoHzero{\aln \alpha} + \frac{1}{\al}\scaletwoHbarzero{\aln \tbeta} + \frac{1}{a^{\frac{1}{3}}}. 
    \end{split}
\end{equation}
\end{proposition}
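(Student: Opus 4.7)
The plan is to apply Proposition \ref{curvprop} to the Bianchi pair $(\alpha, \tbeta)$. First I would write down the null Bianchi equations for $\alpha$ (in the $\nabla_{3}$ direction) and $\tbeta$ (in the $\nabla_{4}$ direction), using the renormalization $\tbeta = \beta - \tfrac{1}{2}R_{A4}$ so that the principal derivative-of-Ricci term is absorbed and the right-hand sides become expressible schematically in terms of $\psi_g$, $\chihat$, $\chibarhat$, $\alpha$, $\tbeta$, $(\rho,\sigma)$, together with quadratic Yang-Mills contributions of the form $\alphaF\,\hnabla\Yub$, $\Yub\,\hnabla\alphaF$, and $\alphaF\,\hnabla_{4}\alphaF$ arising from the $\nabla_{4} R_{AB}$ and $D R_{44}$ pieces. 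Commuting with $\nabla^{i}$ via the formulae \eqref{c1}--\eqref{c2} then produces the hypotheses of Proposition \ref{curvprop}, with $P_{i}$ collecting all terms from the $\nabla_{3}$ equation and $Q_{i}$ all terms from the $\nabla_{4}$ equation.

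Next, having applied Proposition \ref{curvprop}, the LHS gives exactly the two norms we want to control, and the RHS reduces to the initial-data terms plus the two spacetime integrals
\[
\iint_{\mathcal{D}_{u,\ubar}}\frac{a}{\upr}\,\scaleoneSuprimeubarprime{\nabla^{i}\alpha\cdot P_{i}}\,\duprime\,\dubarprime,\qquad \iint_{\mathcal{D}_{u,\ubar}}\frac{a}{\upr}\,\scaleoneSuprimeubarprime{\nabla^{i}\tbeta\cdot Q_{i}}\,\duprime\,\dubarprime.
\]
For each of these I would split via Cauchy--Schwarz, pulling out $\sup_{u^{\prime}} \scaletwoHprime{\aln\alpha}$ or $\sup_{\ubar^{\prime}} \scaletwoHbarprime{\aln\tbeta}$, and then bound the remaining double integrals of $|P_{i}|^{2}$ and $|Q_{i}|^{2}$ in the manner already carried out for the Yang-Mills energies in the two previous propositions: top-order factors $(\al)^{N+4}\nabla^{N+5}\psi_g$ are controlled by $\Gammatop$ via bootstrap \eqref{ellboot} (together with the integral bounds from Remark \ref{usefulremark} for $\chibarhat, \tr\chibar$), middle-order factors of $\alphaF,(\rhoF,\sigmaF),\alphabarF$ by $M$, and factors of Ricci coefficients in $L^{\infty}_{(sc)}$ by $\Gamma$. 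The absorption of the $\sup$ factors against $1/a$ and $R,M,\Gamma\lesssim a^{1/16}$ gives the $a^{-1/3}$ remainder. Finally, dividing through by $a$ and taking square roots produces the stated inequality.

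The main obstacle I anticipate is handling the Yang-Mills contributions from $\nabla_{4}R_{AB}$ and $DR_{44}$ that appear in $P_{i}$, since after commuting with $\nabla^{i}$ they generate genuine top-order terms of the form $\sum\nablap\,\hnabla^{i_{3}}\alphaF\,\hnabla^{i_{4}}\hnabla_{4}\alphaF$ with either $i_{3}$ or $i_{4}$ reaching $N+4$. These cannot be closed purely within the curvature energy hierarchy; they have to be fed into the Yang-Mills mixed-derivative energy estimate for $\hnabla_{4}\alphaF$ established in the preceding subsection. Concretely, after a Cauchy--Schwarz step one obtains a factor $\scaletwoHprime{(\al\hnabla)^{i-1}\hnabla_{4}\alphaF}$, which is bounded above by $\al(1+a^{-1/6})$ through that earlier estimate; the remaining factor carries enough $|u^{\prime}|$-weight to produce the claimed $a^{-1/3}$ bound. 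A secondary (but standard) difficulty is the triple-anomalous term $\chibarhat\,\chibarhat\,\alpha$-type combinations: these are handled exactly as in the analogous Weyl-only argument of \cite{AnAth}, using the improved $L^{\infty}_{(sc)}$ control on $\chihat$ and $\chibarhat$ from Propositions \ref{chihats}--\ref{trchibarbound} together with \eqref{usefulhere}. Once these borderline contributions are isolated and bounded, the remaining terms are strictly sub-borderline and estimate by direct product counting as in Proposition \ref{psiuprop}.
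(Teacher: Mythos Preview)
Your approach is correct and follows essentially the same route as the paper: apply Proposition \ref{curvprop} to the pair $(\alpha,\tbeta)$, commute via \eqref{c1}--\eqref{c2} to obtain $P_i$ and $Q_i$, split the spacetime integrals by H\"older/Cauchy--Schwarz, and bound term by term using the bootstrap constants $\Gamma, R, M$.

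One small over-engineering: since $i\leq N+4$ here (not $N+5$), the commutation places at most $N+4$ derivatives on any Ricci coefficient, so neither the elliptic bootstrap \eqref{ellboot} nor Remark \ref{usefulremark} is needed in this proposition; those enter only in the subsequent $(\Psi_1,\Psi_2)$ pairs where $\nabla^{N+5}\psi_g$ genuinely appears. Likewise, no ``$\chibarhat\,\chibarhat\,\alpha$'' triple anomaly arises in the $\nabla_3\alpha$ equation---the borderline term is the single-anomalous $\chibarhat\,\alpha$ from commutation (the paper's $P_i^8$), which closes directly under $\bbGamma$. Your identification of the $\Yub\,\hnabla_4\alphaF$ contribution (the paper's $P_i^7$) as the distinctive Yang-Mills term is right; the paper bounds it by $\frac{\al\Gamma M}{\upr}$ via the bootstrap on $\mathbb{YM}^{\mathcal{C}}$ rather than feeding back into the mixed-derivative energy, but either route works.
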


\begin{proof}
We recall the (schematic) Bianchi equations for $\nabla^i \alpha, \nabla^i \tbeta$:

    \begin{align}  \nonumber
       & \nabla_3 \nabla^i \alpha + \frac{i+1}{2}\tr\chibar \nabla^i\alpha - \mathcal{D}\nabla^{i}\tbeta \\ \nonumber =& \sumitm \nablapp \nabla^{i_3+1}\tbeta +\sumitm \nablapp \nablat \alpha\\  \nonumber +& \sumif \nablap \nablat (\psi_g,\chihat) \nablaf (\rho,\sigma, \tbeta) \\ \nonumber &+ \sumif \nablap \hnablat \Yub \hnabla^{i_4+1}\alphaF \\  \nonumber &+ \sum_{i_1+i_2+i_3+i_4=i}\nablap \hnablat (\alphaF, \Yub) \hnabla^{i_4+1}\Yub \\ \nonumber &+\sumiF \nablap \nablat(\psi_g,\chihat,\chibarhat,\tr\chibar)\hnablaf(\alphaF, \rhoF,\sigmaF)\hnablaF(\alphaF,\Yub)\\ \nonumber &+ \sumif \nablap \hnablat\Yub \hnablaf \hnabla_4\alphaF \\ \nonumber &+ \sumif \nablap \nablat (\chibarhat,\tildetr) \nablaf \alpha \\ \nonumber &+ \sumifi \nablapp \nablat \tr\chibar \nablaf \alpha \\ &+\sumifim \nablapp \nablat(\chibarhat,\tr\chibar)\nablaf \alpha:=P_i^1 +\dots + P_i^{10}.
    \end{align}Similarly, we have 
    
    \begin{align}
         \nonumber    \nabla_4 \nabla^i \tbeta - \Hodge{\mathcal{D}}\nabla^i \alpha= &\sumif \nablap \nablat(\psig, \chihat)\nablaf(\tbeta,\alpha) \\  \nonumber &+ \sumif \nablap \hnablat \alphaF \hnabla^{i_4+1}\alphaF \\ \nonumber &+ \sumiF \nablap \nablat(\psig,\chihat)\hnablaf \Yub \hnablaF \alphaF \\ \nonumber &+ \sumiF \nablap \nablat \psig \hnablaf \alphaF \hnablaF \alphaF \\ \nonumber &+ \sumif \nablap \nablat(\chihat,\tr\chi)\nablaf \tbeta \\ \nonumber &+ \sumifim \nablapp \nablat (\chihat,\tr\chi)\nablaf \tbeta \\  &+ \sumiFi \nablap \hnablat \alphaF \hnablaf(\rhoF,\sigmaF)\nablaF \tbeta := Q_i^1 + \dots + Q_i^7.
        \end{align}
Applying Proposition \ref{curvprop}, we have

\begin{equation}
    \begin{split}
        &\scaletwoHu{\aln \alpha} + \scaletwoHbaru{\aln \tbeta} \\ \lesssim & \scaletwoHzero{\aln \alpha} + \scaletwoHbarzero{\aln \tbeta} \\&+  \intubar \intu  \frac{a}{\upr} \scaleoneSuprimeubarprime{\ali P_i \cdot \aln \alpha }  \duprime \dubarprime
        \\ &+\intubar \intu  \frac{a}{\upr} \scaleoneSuprimeubarprime{\ali Q_i \cdot \aln \tbeta }  \duprime \dubarprime.
        \end{split}
\end{equation}By H\"older's inequality, one has

\begin{equation}
    \begin{split}
         &\intubar \intu  \frac{a}{\upr} \scaleoneSuprimeubarprime{\ali P_i \cdot \aln \alpha }  \duprime \dubarprime \\\leq&\intu \frac{a}{\upr^2} \sum_{j=1}^{10}\left(  \intubar \scaletwoSuprimeubarprime{\ali P_i^j }^2\dubarprime \right)^{\frac{1}{2}}\duprime
        \cdot \sup_{u^{\prime}} \lVert \aln \alpha \rVert_{L^2_{(sc)}(H_{u^{\prime}}^{(0,\ubar)})},
    \end{split}
\end{equation}Let us focus on the sum in the above line. For the first three terms, there holds

\[ \sum_{j=1}^3 \left( \intubar \scaletwoSuprimeubarprime{\ali P_i^j}^2 \dubarprime \right)^{\frac{1}{2}}\lesssim \frac{\al \Gamma \cdot R}{\upr}. \]For the fourth and fifth terms, there holds

\[ \left( \intubar \scaletwoSuprimeubarprime{\ali P_i^4}^2 \dubarprime \right)^{\frac{1}{2}} + \left( \intubar \scaletwoSuprimeubarprime{\ali P_i^5}^2 \dubarprime \right)^{\frac{1}{2}}\lesssim \frac{\al \Gamma \cdot M}{\upr}. \] For the sixth term, there holds

\[ \left( \intubar \scaletwoSuprimeubarprime{\ali P_i^6}^2 \dubarprime \right)^{\frac{1}{2}}\lesssim \Gamma^3. \]For the seventh term, there holds 

\[ \left( \intubar \scaletwoSuprimeubarprime{\ali P_i^7}^2 \dubarprime \right)^{\frac{1}{2}}\lesssim \frac{\al \Gamma \cdot M}{\upr}.\] For the eighth term, there holds

\[ \left( \intubar \scaletwoSuprimeubarprime{\ali P_i^8}^2 \dubarprime \right)^{\frac{1}{2}}\lesssim \Gamma^2 + \Gamma R.\]For the ninth and tenth terms, there holds

\[ \left( \intubar \scaletwoSuprimeubarprime{\ali P_i^{9}}^2 \dubarprime \right)^{\frac{1}{2}} + \left( \intubar \scaletwoSuprimeubarprime{\ali P_i^{10}}^2 \dubarprime \right)^{\frac{1}{2}}\lesssim \Gamma^3 + \Gamma^2 R. \]
Putting everything together, there holds

\begin{equation} \label{alphabeta1} \intubar \intu \frac{a}{\upr} \scaleoneSuprimeubarprime{\ali P_i \aln \alpha} \duprime \dubarprime\lesssim \Gamma^3 + \Gamma^2 R + \Gamma R+1.\end{equation}Similarly, for the analogous term involving $\tbeta$, there holds

\begin{equation}
    \begin{split}
         &\intubar \intu  \frac{a}{\upr} \scaleoneSuprimeubarprime{\ali Q_i \cdot \aln \tbeta }  \duprime \dubarprime \\\leq&\sum_{j=1}^{7} \intu \frac{a}{\upr^2} \left(  \intubar \scaletwoSuprimeubarprime{\ali Q_i^j }^2\dubarprime \right)^{\frac{1}{2}}\duprime
        \cdot \sup_{u^{\prime}} \lVert \aln \tbeta \rVert_{L^2_{(sc)}(\Hb_{\ubar^{\prime}}^{(u_{\infty},u)})}.
    \end{split}
\end{equation}We estimate term by term. For the first term, there holds

\begin{equation}
    \intu \frac{a}{\upr^2}\left( \intubar \scaletwoSuprimeubarprime{\ali Q_i^1}^2 \dubarprime \right)^{\frac{1}{2}} \duprime \lesssim \frac{a\Gamma (R+\Gamma)}{\lvert u \rvert }
\end{equation}
For the second term, there holds

\begin{equation}
    \intu \frac{a}{\upr^2}\left( \intubar \scaletwoSuprimeubarprime{\ali Q_i^2}^2 \dubarprime \right)^{\frac{1}{2}} \duprime \lesssim \frac{a\Gamma (M+\Gamma)}{\lvert u \rvert }
\end{equation}For the third and fourth terms, there holds 
\begin{equation}
\begin{split}
    &\intu \frac{a}{\upr^2}\left( \intubar \scaletwoSuprimeubarprime{\ali Q_i^3}^2 \dubarprime \right)^{\frac{1}{2}} \duprime +    \intu \frac{a}{\upr^2}\left( \intubar \scaletwoSuprimeubarprime{\ali Q_i^4}^2 \dubarprime \right)^{\frac{1}{2}} \duprime  \\\lesssim& \frac{a\Gamma (M+\Gamma)}{\lvert u \rvert }
    \end{split}
\end{equation}
For the fifth term there holds 

\begin{equation}
    \intu \frac{a}{\upr^2}\left( \intubar \scaletwoSuprimeubarprime{\ali Q_i^5}^2 \dubarprime \right)^{\frac{1}{2}} \duprime \lesssim \frac{\al \Gamma (R+\Gamma)}{\lvert u \rvert}
\end{equation} For the sixth term, we have 
\begin{equation}
    \intu \frac{a}{\upr^2}\left( \intubar \scaletwoSuprimeubarprime{\ali Q_i^6}^2 \dubarprime \right)^{\frac{1}{2}} \duprime \lesssim \frac{a\Gamma^2 (R+\Gamma)}{\lvert u \rvert^2}
\end{equation}For the seventh term, we have
\begin{equation}
    \intu \frac{a}{\upr^2}\left( \intubar \scaletwoSuprimeubarprime{\ali Q_i^7}^2 \dubarprime \right)^{\frac{1}{2}} \duprime \lesssim \frac{\Gamma^2 (R+\Gamma)}{\lvert u \rvert^3}
\end{equation}

\par \noindent Putting everything together, we have

\begin{equation}\label{alphabeta2}
    \sum_{j=1}^7 \intu \frac{a}{\upr^2} \left( \intubar \scaletwoSuprimeubarprime{\ali Q_i^j}^2 \dubarprime\right)^{\frac{1}{2}} \duprime \lesssim \frac{a \Gamma(R+M+\Gamma)}{\lvert u \rvert}+1.
\end{equation}Combining \eqref{alphabeta1} and \eqref{alphabeta2}, we have 

\begin{equation}
    \begin{split}
        & \frac{1}{\al} \scaletwoHu{\aln \alpha} + \frac{1}{\al}\scaletwoHbaru{\aln \tbeta} \\ \lesssim & \frac{1}{\al} \scaletwoHzero{\aln \alpha} + \frac{1}{\al} \scaletwoHbarzero{\aln \tbeta} \\&+  \frac{1}{\al} \intubar \intu  \frac{a}{\upr} \scaleoneSuprimeubarprime{\ali P_i \cdot \aln \alpha }  \duprime \dubarprime
        \\ &+ \frac{1}{\al} \intubar \intu  \frac{a}{\upr} \scaleoneSuprimeubarprime{\ali Q_i \cdot \aln \tbeta }  \duprime \dubarprime \\ \lesssim &\frac{1}{\al} \scaletwoHzero{\aln \alpha} + \frac{1}{\al} \scaletwoHbarzero{\aln \tbeta} + \frac{1}{a^{\frac{1}{3}}}.
        \end{split}
\end{equation}The claim follows.
\end{proof}\noindent We now move on to energy estimates for the remaining pairs $(\tbeta, (\rho,\sigma)), ((\rho,\sigma),\tbetabar)$ and $(\tbetabar, \alphabar)$.

\begin{proposition}
Let $(\Psi_1, \Psi_2) \in \begin{Bmatrix} (\tbeta, (\rho,\sigma)), ((\rho,\sigma),\tbetabar), (\tbetabar, \alphabar) \end{Bmatrix}$. For $0\leq i \leq N+4$, there holds:

\begin{align}
    & \scaletwoHu{\aln \Psi_1} + \scaletwoHbaru{\aln \Psi_2} \\ \lesssim &  \scaletwoHzero{\aln \Psi_1} +  \scaletwoHbarzero{\aln \Psi_2}+  \frac{1}{a^{\frac{1}{3}}}.
\end{align}
\end{proposition}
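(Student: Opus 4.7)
The plan is to execute, case by case, the same scheme used in the proof of Proposition \ref{alphaen}, adapted to the three remaining Bianchi pairs. For each pair $(\Psi_1,\Psi_2)$, I would first commute the null Bianchi equations for $\Psi_1,\Psi_2$ with $\nabla^i$ using the commutation formulae \eqref{c1}--\eqref{c2}, which yields schematically
\[
\nabla_3\nabla^i\Psi_1+\Bigl(\tfrac{i+1}{2}+s_2(\Psi_1)\Bigr)\tr\chibar\,\nabla^i\Psi_1-\mathcal{D}\nabla^i\Psi_2 = P_i,\qquad \nabla_4\nabla^i\Psi_2-\Hodge{\mathcal{D}}\nabla^i\Psi_1=Q_i,
\]
where $P_i,Q_i$ are sums of multilinear expressions in $\psi_g$, $\chihat$, $\chibarhat$, $\tr\chibar$, the Weyl curvature components, and the Yang-Mills components (together with their angular derivatives, and, in the pair $(\tbetabar,\alphabar)$, the mixed derivatives $\hnabla_3\alphabarF$). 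After the commutation I would invoke Proposition \ref{curvprop} to reduce everything to bounding the two spacetime error integrals $\iint \frac{a}{\upr}\|\langle\ali P_i,\aln\Psi_1\rangle\|_{L^1_{(sc)}}$ and $\iint \frac{a}{\upr}\|\langle\ali Q_i,\aln\Psi_2\rangle\|_{L^1_{(sc)}}$.

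The next step would be to split each error integral into its individual multilinear summands, apply H\"older's inequality in the $S_{u,\ubar}$--direction (as in \eqref{alphabeta1}--\eqref{alphabeta2}), and estimate each factor using: (i) the bootstrap assumptions \eqref{bootstrap} and \eqref{ellboot}, (ii) the $L^2_{(sc)}(S_{u,\ubar})$ Ricci and Yang-Mills bounds from Section 4, and (iii) the energy estimates for $\alphaF$, $(\rhoF,\sigmaF)$, $\alphabarF$, $\hnabla_4\alphaF$ and $\hnabla_3\alphabarF$ proved in the preceding subsections of Section 6. Most terms will close with the expected $\frac{a^{\eps}}{|u|^{\eps'}}$ gains coming from the scale-invariant norms, which produce a contribution bounded by $a^{-1/3}$ after integration.

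The main obstacle will be the borderline terms. For the pair $(\tbeta,(\rho,\sigma))$ and $((\rho,\sigma),\tbetabar)$, these are the top-order expressions of the form $\nablap\nablat(\chibarhat,\tildetr,\tr\chibar)\hnabla^{i_4}(\rho,\sigma,\tbeta,\tbetabar)$ with $i_3=i$, which must be handled by bounding $\nabla^{N+5}(\chibarhat,\tr\chibar)$ through the elliptic estimates of Section 5 and in particular \eqref{usefulhere} from Remark \ref{usefulremark}; the required measure $\frac{a^2}{\upr^4}\duprime$ matches exactly the integrated norms available there. For the pair $(\tbetabar,\alphabar)$, the most delicate contribution is the Yang-Mills nonlinearity $\alphaF\cdot\hnabla_3\alphabarF$ arising in the $\nabla_4\alphabar$ equation; here I would control $\|\ali\hnabla^i\alphabar\|$ on $\Hb_{\ubar}$ by pairing the top-order $\hnabla_3\alphabarF$ factor in $L^2$ (via the energy estimate proved at the end of Section 6.2) against the $L^\infty_{(sc)}$ bound on $\alphaF$ inherited from Remark \ref{remarkF}, picking up only an $a^{-1/3}$-size loss.

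Finally, once each individual error term is shown to be at most $O(a^{-1/3})$ times $\sup_{u'}\|\ali\nabla^i\Psi_1\|_{L^2_{(sc)}(H_{u'})}+\sup_{\ubar'}\|\ali\nabla^i\Psi_2\|_{L^2_{(sc)}(\Hb_{\ubar'})}$, I would absorb these two suprema into the left-hand side via an application of Proposition \ref{prop54} (as a two-dimensional Gr\"onwall argument over $(u,\ubar)$), thus concluding
\[
\scaletwoHu{\aln\Psi_1}+\scaletwoHbaru{\aln\Psi_2}\lesssim \scaletwoHzero{\aln\Psi_1}+\scaletwoHbarzero{\aln\Psi_2}+\tfrac{1}{a^{1/3}}.
\]
Note that unlike the $(\alpha,\tbeta)$ case, no division by $\al$ is needed at the end since none of $\tbeta,(\rho,\sigma),\tbetabar,\alphabar$ is $\al$--anomalous in the same way as $\alpha$.
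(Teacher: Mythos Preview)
Your plan is essentially the paper's own argument: commute, apply Proposition~\ref{curvprop}, bound the spacetime error integrals term by term, and close with Proposition~\ref{prop54}. Two remarks on the details.

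First, your identification of the borderline terms is slightly off. In the commuted error terms $P_i,Q_i$ the Ricci coefficients appear with at most $\nabla^{i}=\nabla^{N+4}$ angular derivatives, never $\nabla^{N+5}$; so the elliptic estimates of Section~5 and Remark~\ref{usefulremark} are not needed here. The genuinely borderline contributions are instead the anomalous products such as $\tr\chibar\cdot(\alphaF)^2$ (fifth term of $P_i$) and $(\eta,\etabar)\cdot(\chibarhat,\tr\chibar)\cdot\Psi_1$ (eighth term), for which the naive bootstrap gives no gain. The paper closes these not via elliptic estimates but via the \emph{improved} $L^2_{(sc)}(S)$ and $L^\infty_{(sc)}(S)$ bounds already proved in Section~4 (Propositions~\ref{chihats}, \ref{trchibarbound}, \ref{alphaFprop}, \ref{psiuprop}) together with the just-established energy bound for $\alpha$ (Proposition~\ref{alphaen}). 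You should replace your appeal to Remark~\ref{usefulremark} by these improvements.

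Second, your observation about $\alphaF\cdot\hnabla_3\alphabarF$ in the $(\tbetabar,\alphabar)$ pair is correct and in fact more careful than the paper's unified schematic, which omits this term. Your plan to control it through the $\hnabla_3\alphabarF$ energy estimate of Section~6.2 combined with the $L^\infty_{(sc)}$ bound on $\alphaF$ from Remark~\ref{remarkF} is exactly right.
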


\begin{proof}
The schematic equations for $\Psi_1, \Psi_2$ are:

\begin{align}
       \nonumber  \nabla_3 \Psi_1 + \left( \frac{1}{2} + s_2(\Psi_1)\right) \tr\chibar \Psi_1 - \mathcal{D} \Psi_2 =& (\psi,\chihat)\Psi + \alphaF \hnabla\Yub + \Yub \hnabla (\alphaF, \Yub) \\ +&(\psi,\chihat,\chibarhat,\tr\chibar)(\alphaF,\Yub)(\alphaF,\Yub),
    \end{align}

\begin{equation}
    \begin{split}
        \nabla_4 \Psi_2- \Hodge{\mathcal{D}}\Psi_1 = (\psi,\chibarhat)(\Psi_u, \alpha) +(\alphaF, \Yub)\hnabla (\alphaF,\Yub) +(\psi,\chihat,\chibarhat,\tr\chibar)(\alphaF,\Yub)(\alphaF,\Yub).
    \end{split}
\end{equation}Commuting with $i$ angular derivatives, for $\Psi_1$, we have:

\begin{equation}
    \begin{split}
        &\nabla_3 \nabla^i \Psi_1 + \left(\frac{i+1}{2}+s_2(\Psi_1)\right)\tr\chibar \nabla^i \Psi_1 - \mathcal{D}\nabla^i \Psi_2 \\= &\sumitm \nablapp \nabla^{i_3+1}\Psi_2 +\sumif\nablap \nablat(\psi_g,\chihat)\nablaf\Psi \\ &+\sumif \nablap \hnablat \alphaF \hnabla^{i_4+1}\Yub + \sumif \nablap \hnablat \Yub \hnabla^{i_4+1}(\alphaF, \Yub)\\ &+\sumiF \nablap \nablat (\psi_g,\chihat,\chibarhat,\tr\chibar) \hnablaf (\alphaF, \Yub)\hnablaF (\alphaF,\Yub) \\&+ \sumif\nablap \nablat(\chibarhat,\tildetr) \nablaf \Psi_1 + \sumifi \nablapp \nablat \tr\chibar \nablaf \Psi_1 \\ &+\sumifim \nablapp \nablat (\chibarhat,\tr\chibar) \nablaf \Psi_1 \\&+ \sumiFi \nablap \hnablat \alphabarF \hnablaf(\rhoF,\sigmaF)\nablaF \Psi_1 := P_i.
    \end{split}
\end{equation}Analogously, for $\Psi_2$, we have 

\begin{equation}
    \begin{split}
        &\nabla_4 \nabla^i \Psi_2 -\Hodge{ \mathcal{D}} \nabla^i \Psi_1 \\= &\sumitm \nablapp \nabla^{i_3+1}\Psi_1 + \sumif \nablap \nablat(\psi_g,\chibarhat) \nablaf(\Psi, \alpha) \\&+ \sumif \nablap \hnablat( \alphaF, \Yub) \hnabla^{i_4+1}(\alphaF,\Yub) \\&+ \sumiF \nablap \nablat (\psi_g,\chihat,\chibarhat,\tr\chibar) \hnablaf (\alphaF, \Yub)\hnablaF (\alphaF,\Yub) \\ &+\sumif \nablap \nablat(\chihat,\tr\chi) \nablaf \Psi_2 \\ &+ \sumifim \nablapp \nablat (\chihat,\tr\chi) \nablaf \Psi_2 \\&+ \sumiFi \nablapp \hnablat \alphaF \hnablaf (\rhoF,\sigmaF)\nablaF \Psi_2 := Q_i.
    \end{split}
\end{equation}Making use of Proposition \ref{curvprop} once again, we arrive at 

\begin{equation}\label{mainpsu}
    \begin{split}
        &\scaletwoHu{\aln \Psi_1}^2 + \scaletwoHbaru{\aln \Psi_2}^2 \\ \lesssim & \scaletwoHzero{\aln \Psi_1}^2 + \scaletwoHbarzero{\aln \Psi_2}^2 \\&+  \intubar \intu  \frac{a}{\upr} \scaleoneSuprimeubarprime{\ali P_i \cdot \aln \Psi_1 }  \duprime \dubarprime
        \\ &+\intubar \intu  \frac{a}{\upr} \scaleoneSuprimeubarprime{\ali Q_i \cdot \aln \Psi_2 }  \duprime \dubarprime .
        \end{split}
\end{equation}For the first spacetime integral in the above, we estimate

\begin{equation}\label{comb1}
    \begin{split}
       & \intubar \intu  \frac{a}{\upr} \scaleoneSuprimeubarprime{\ali P_i \cdot \aln \Psi_1 }  \duprime \dubarprime \\\lesssim& \intu \frac{a}{\upr^2} \left( \intubar \scaletwoSuprimeubarprime{\ali P_i}^2 \dubarprime \right)^{\frac{1}{2}} \duprime \cdot \left( \intubar \scaletwoSuprimeubarprime{\aln \Psi_1}^2 \dubarprime \right)^{\frac{1}{2}} \duprime.
    \end{split}
\end{equation}For the first term:

\begin{equation}
    \begin{split}
        \intu \frac{a}{\upr^2} \left( \intubar \scaletwoSuprimeubarprime{\ali \sumitm \nablapp \nabla^{i_3+1}\Psi_2 }^2 \dubarprime \right)^{\frac{1}{2}} \duprime,
    \end{split}
\end{equation}if $i_3+1 \geq N+3$, we estimate

\begin{equation}
    \begin{split}
        &\intu \frac{a}{\upr^2} \left( \intubar \scaletwoSuprimeubarprime{\ali \sumitm \nablapp \nabla^{i_3+1}\Psi_2 }^2 \dubarprime \right)^{\frac{1}{2}} \duprime \\ \lesssim & \sup_{0 \leq \ubar^{\prime}\leq \ubar} \intu \frac{a}{\upr^3} \scaletwoSuprimeubarprime{     (\al \nabla)^{i_3+1} \Psi_2} \scaleinfinitySuprimeubarprime{(\al)^{i_1+i_2}\nablapp}  \duprime   
    \end{split}
\end{equation}and we can estimate $\scaleinfinitySuprimeubarprime{(\al)^{i_1+i_2}\nablapp} $ by $\frac{\upr}{\al}$ using the bootstrap assumption \eqref{bootstrap}, to obtain

\begin{equation}
    \begin{split}
         &\intu \frac{a}{\upr^2} \left( \intubar \scaletwoSuprimeubarprime{\ali \sumitm \nablapp \nabla^{i_3+1}\Psi_2 }^2 \dubarprime \right)^{\frac{1}{2}} \duprime \\ \lesssim & \sup_{0 \leq \ubar^{\prime}\leq \ubar}\intu \frac{\al}{\upr^2}\scaletwoSuprimeubarprime{     (\al \nabla)^{i_3+1} \Psi_2}  \duprime \lesssim \frac{R}{\lvert u \rvert^{\frac{1}{2}}}\lesssim 1. 
    \end{split}
\end{equation}If, however, $i_3+1 \leq N+2$, we can control the corresponding $L^2_{(sc)}(S)$ norm just by the bootstrap assumption \eqref{bootstrap} to get the bound 

\begin{equation}
    \begin{split}
         &\intu \frac{a}{\upr^2} \left( \intubar \scaletwoSuprimeubarprime{\ali \sumitm \nablapp \nabla^{i_3+1}\Psi_2 }^2 \dubarprime \right)^{\frac{1}{2}} \duprime \lesssim \frac{a \Gamma^2}{\lvert u \rvert^2} \lesssim 1.
         \end{split}
\end{equation}For the rest of the terms, we estimate using the same philosophy as appropriate. There holds  
\begin{equation}
    \begin{split}
        &\intu \frac{a}{\upr^2} \left( \intubar \scaletwoSuprimeubarprime{\ali \sumif \nablap \nabla^{i_3}(\psi_g,\chihat) \nablaf\Psi }^2 \dubarprime \right)^{\frac{1}{2}} \duprime \\ \lesssim & \frac{\al \Gamma (R+\Gamma)}{\lvert u \rvert}\lesssim 1.
    \end{split}
\end{equation}Here in particular, when $i_4\geq N+3$, we treat the cases $\Psi=\Psi_u$ and $\Psi= \Psi_{\ubar}$ separately. For the third and fourth terms, there holds  

\begin{equation}
    \begin{split}
         &\intu \frac{a}{\upr^2} \left( \intubar \scaletwoSuprimeubarprime{\ali \sumif \nablap \hnabla^{i_3}\alphaF \hnabla^{i_4+1}\Yub}^2 \dubarprime \right)^{\frac{1}{2}} \duprime \\ +   &\intu \frac{a}{\upr^2} \left( \intubar \scaletwoSuprimeubarprime{\ali \sumif \nablap \hnabla^{i_3}\Yub \hnabla^{i_4+1}(\alphaF,\Yub)}^2 \dubarprime \right)^{\frac{1}{2}} \duprime  \\\lesssim & \frac{ \Gamma (M+\Gamma)}{\lvert u \rvert}\lesssim 1.
    \end{split}
\end{equation}For the fifth term, we have 

\begin{equation}
    \begin{split}
         &\intu \frac{a}{\upr^2} \left( \intubar \scaletwoSuprimeubarprime{\ali \sum_{i} \nablap \nabla^{i_3}(\psi,\chihat,\chibarhat,\tr\chibar) \hnabla^{i_4}(\alphaF,\Yub)\hnablaF(\alphaF,\Yub)}^2 \dubarprime \right)^{\frac{1}{2}} \duprime \\ \lesssim& \frac{a}{\lvert u \rvert} \Gamma[\tr\chibar]\Gamma[\alphaF]^2\lesssim \frac{a}{\lvert u \rvert}\lesssim 1, 
         \end{split}
\end{equation}where we have used the improvements $\Gamma[\tr\chibar] +\Gamma[\alphaF]\lesssim 1$ from Propositions \ref{trchibarbound} and \ref{alphaFprop}. For the sixth and seventh terms, we can bound them by one as in previous calculations. For the eighth term, using the fact that $i-2 \leq N+2$ and the improvements from Proposition \ref{psiuprop}  and Proposition \ref{alphaen}, we arrive at

\begin{equation}
    \begin{split}
         &\intu \frac{a}{\upr^2} \left( \intubar \scaletwoSuprimeubarprime{\ali \sumifim \nablapp \nablat (\chibarhat,\tr\chibar) \nablaf \Psi_1}^2 \dubarprime \right)^{\frac{1}{2}} \duprime \\ \lesssim& \frac{a}{\lvert u \rvert} \Gamma[\tr\chibar]\Gamma[\alphaF]^2\lesssim \frac{a}{\lvert u \rvert}\Gamma[\eta,\etabar] \Gamma[\tr\chibar]\Gamma[\Psi_1] \lesssim \frac{a}{\lvert u \rvert}\scaletwoSu{\aln(\tbeta,\tbetabar)} \cdot 1 \cdot (\mathcal{R}[\alpha]+1)\\ \lesssim & (\mathcal{R}[\alpha]+1)^2 \lesssim 1,
         \end{split}
\end{equation}where in the last line we made use of Proposition \ref{alphaen}. For the final term in $P_i$, we have 

\begin{equation}
    \begin{split}
         &\intu \frac{a}{\upr^2} \left( \intubar \scaletwoSuprimeubarprime{\ali \sumiFi \nablap \hnablat \alphabarF \hnablaf (\rhoF,\sigmaF) \nablaF \Psi_1 }^2 \dubarprime \right)^{\frac{1}{2}} \duprime \\ \lesssim& \frac{a}{\lvert u \rvert} \cdot \frac{\al \Gamma^2 (R+\Gamma)}{\lvert u\rvert^2} \lesssim 1.
         \end{split}
\end{equation}This completes the estimates for the first spacetime integral in \eqref{mainpsu}. For the second and last one, a double application of H\"older's inequality yields
\begin{equation}
    \begin{split}
        &\intubar \intu  \frac{a}{\upr} \scaleoneSuprimeubarprime{\ali Q_i \cdot \aln \Psi_2 }  \duprime \dubarprime \\ \lesssim & \intubar \left( \intu \frac{a}{\upr^2} \scaletwoSuprimeubarprime{\ali Q_i}^2 \duprime \right)^{\frac{1}{2}} \lVert \aln \Psi_2 \rVert_{L^2_{(sc)}(\Hbar_{\ubar^{\prime}}^{(u_{\infty}, u)})} \\ \lesssim& \left(\intubar \intu \frac{a}{\upr^2} \scaletwoSuprimeubarprime{\ali Q_i}^2 \duprime \dubarprime \right)^{\frac{1}{2}} \left( \intubar \lVert \aln \Psi_2 \rVert^2_{L^2_{(sc)}(\Hbar_{\ubar^{\prime}}^{(u_{\infty}, u)})} \dubarprime \right)^{\frac{1}{2}} \\ \lesssim &\intubar \intu \frac{a}{\upr^2} \scaletwoSuprimeubarprime{\ali Q_i}^2 \duprime \dubarprime + \frac{1}{4} \intubar \lVert \aln \Psi_2 \rVert^2_{L^2_{(sc)}(\Hbar_{\ubar^{\prime}}^{(u_{\infty}, u)})} \dubarprime
    \end{split}
\end{equation}Define $B:= \intubar \intu \frac{a}{\upr^2} \scaletwoSuprimeubarprime{\ali Q_i}^2 \duprime \dubarprime$. We can then  estimate $B$ as follows:

\begin{equation}
\begin{split}
    &\intubar \intu \frac{a}{\upr^2}\scaletwoSuprimeubarprime{\ali \sumitm \nablapp \nabla^{i_3+1}\Psi_1}^2 \duprime \dubarprime \\ \lesssim &\intubar \intu \frac{a}{\upr^2}\scaletwoSuprimeubarprime{\ali \psi_g \nabla^i \Psi_1}^2 \duprime \dubarprime \\+& \intubar \intu \frac{a}{\upr^2}\scaletwoSuprimeubarprime{\ali \nabla \psi_g \nabla^{i-1}\Psi_1 }^2 \duprime \dubarprime \\+& \intubar \intu \frac{a}{\upr^2}\scaletwoSuprimeubarprime{\ali \psi_g \hsp  \psi_g  \nabla^{i-1}\Psi_1}^2 \duprime \dubarprime \\+&\intubar \intu \frac{a}{\upr^2}\scaletwoSuprimeubarprime{\ali \sum_{\substack{i_1+i_2+i_3=i-1\\ i_3< i-2}} \nablapp \nabla^{i_3+1}\Psi_1}^2 \duprime \dubarprime \\ \lesssim &1, 
\end{split}
\end{equation}where in the first three integrals we estimate $\psi_g, \nabla \psi_g$ in $L^{\infty}_{(sc)}(S_{u,\ubar})$ and $\nabla^i \Psi_1$ in the hypersurface norm $L^2_{(sc)}(H_u^{(0,\ubar)})$ and in the last integral, since $i-2\leq N+2$, we can estimate $\nabla^i \Psi_1$ in $L^2_{(sc)}(S_{u,\ubar})$ using the bootstrap assumption on the norm $\bbGamma$. For the second term, we similarly have

\begin{align}  \nonumber
    &\intubar \intu \frac{a}{\upr^2}\scaletwoSuprimeubarprime{\ali \sumif \nablap \nablat (\psi_g, \chihat) \nablaf(\Psi, \alpha) }^2 \duprime \dubarprime \\ \lesssim & \left(\mathcal{R}[\alpha]+1 \right)^2 \lesssim 1,
\end{align}where we have used the improvements on $\chihat$ from Proposition \ref{chihats} and the energy estimate from Proposition \ref{alphaen}. For the third term, we notice that the worst case is when the term is $\ali \sumif \nablap \hnablat \alphaF \hnabla^{i_4+1}\alphaF$. We therefore estimate

\begin{equation}
    \begin{split}
         &\intubar \intu \frac{a}{\upr^2}\scaletwoSuprimeubarprime{\ali \sumif \nablap \hnablat\alphaF \hnabla^{i_4+1}\alphaF}^2 \duprime \dubarprime\\ \lesssim & \intu \frac{a}{\upr^2} \intubar \scaletwoSuprimeubarprime{\ali \alphaF \hnabla^{i+1}\alphaF}^2 \dubarprime \duprime \\+&\intu \frac{a}{\upr^2} \intubar \scaletwoSuprimeubarprime{\ali \sum_{\substack{i_1+i_2+i_3+i_4=i\\ i_4 < i}}\nablap \hnablat\alphaF \hnabla^{i_4+1}\alphaF}^2 \dubarprime \duprime. \label{doublealphaf}
    \end{split}
\end{equation}When $i_4<i$, we can control $\hnabla^{i_4+1}\alphaF$ using the bootstrap assumption on the $\bbGamma$ norm, hence the second integral of \eqref{doublealphaf} is bounded above by $1$. For the first, we have

\begin{equation}
    \begin{split}
         & \intu \frac{a}{\upr^2} \intubar \scaletwoSuprimeubarprime{\ali \alphaF \hnabla^{i+1}\alphaF}^2 \dubarprime \duprime \\ \lesssim & \intu \frac{a}{\upr^2} \frac{a^2 \Gamma[\alphaF]^2}{\upr^2} \intubar \frac{1}{a}\scaletwoSuprimeubarprime{\ali \hnabla^{i+1}\alphaF}^2\dubarprime \duprime\\ \lesssim & \frac{a^3 \Gamma[\alphaF]^2 M[\alphaF]^2}{\lvert u \rvert^3}.
    \end{split}
\end{equation}We now make use of the improvements given by Proposition \ref{alphaFprop} and the Yang-Mills energy estimate on $(\alphaF,(\rhoF,\sigmaF))$ to deduce that 

\begin{equation}
    \Gamma[\alphaF]^2 \lesssim (\underline{\mathbb{Y}\mathbb{M}}[\rhoF,\sigmaF]+1)^2 \lesssim 1,
\end{equation}
\begin{equation}
    M[\alphaF]^2 \lesssim 1,
\end{equation}so that, finally,
\begin{equation}
    \intubar \intu \frac{a}{\upr^2}\scaletwoSuprimeubarprime{\ali \sumif \nablap \hnablat\alphaF \hnabla^{i_4+1}\alphaF}^2 \duprime \dubarprime \lesssim 1.
\end{equation}The rest of the terms can also be bounded above by $1$, using the same approach. We finally arrive at an estimate of the form 

\begin{equation}\label{comb2}
    \begin{split}
        &\intubar \intu  \frac{a}{\upr} \scaleoneSuprimeubarprime{\ali Q_i \cdot \aln \Psi_2 }  \duprime \dubarprime \\ \lesssim & 1 + \frac{1}{4} \intubar \lVert \aln \Psi_2 \rVert^2_{L^2_{(sc)}(\Hbar_{\ubar^{\prime}}^{(u_{\infty}, u)})} \dubarprime.
        \end{split}
\end{equation}From here, collecting all the terms together and using Proposition \ref{prop54}, we arrive at the desired result.
\end{proof}
\section{The formation of a trapped surface}

\noindent As in all trapped surface formation results in the absence of symmetry, the formation argument is an ODE argument. It relies heavily, as we shall see, on the estimates obtained during the proof of semi-global existence. In this section we prove Theorem \ref{main2}.

\begin{proof}
The idea is to obtain estimates for $\lvert \chihat \rvert_{\gamma}^2$ and $\lvert \alphaF \rvert_{\gamma,\delta}^2$. We begin with $\chihat$. Recall the null structure equation \begin{equation}
\nabla_3 \chihat + \frac{1}{2}\tr\chibar \chihat - 2\omegabar\chihat = \nabla \hat{\otimes}\eta - \frac{1}{2}\tr\chi \chibarhat + \eta \hat{\otimes}\eta. \end{equation} Contracting once with another $\chihat$ tensor, we obtain 

\begin{equation}
\frac{1}{2}\nabla_3 \lvert \chihat \rvert_{\gamma}^2 + \frac{1}{2}\tr\chibar \lvert \chihat \rvert_{\gamma}^2 - 2 \omega \lvert \chihat \rvert_{\gamma}^2 = \chihat ( \nabla \hat{\otimes}\eta - \frac{1}{2}\tr\chi \chibarhat + \eta \hat{\otimes}\eta).
\end{equation}Using the fact that $\omegabar = -\frac{1}{2} \Omega^{-1}\nabla_3 \Omega$, we can rewrite the  above as 

\begin{equation}
\nabla_3 (\Omega^2 \lvert \chihat \rvert_{\gamma}^2) + \tr\chibar \Omega^2 \lvert \chihat \rvert_{\gamma}^2 = 2 \Omega^2 \chihat ( \nabla \hat{\otimes}\eta - \frac{1}{2}\tr\chi \chibarhat + \eta \hat{\otimes}\eta).
\end{equation}Using $\nabla_3 = \frac{1}{\Omega}\left( \frac{\partial}{\partial u} + b^A \frac{\partial}{\partial \theta^A}\right)$, we have 

\begin{equation}
\frac{\partial}{\partial u} (\Omega^2 \lvert \chihat \rvert_{\gamma}^2) + \Omega \tr\chibar \cdot \Omega^2 \lvert \chihat \rvert_{\gamma}^2 =  2 \Omega^3 \chihat ( \nabla \hat{\otimes}\eta - \frac{1}{2}\tr\chi \chibarhat + \eta \hat{\otimes}\eta) - b^A \frac{\partial}{\partial \theta^A} (\Omega^2 \lvert \chihat \rvert_{\gamma}^2).
\end{equation}Substituting \[ \Omega \tr\chibar = \Omega \left( \tr\chibar + \frac{2}{\lvert u \rvert} \right) - (\Omega-1) \frac{2}{\lvert u \rvert} - \frac{2}{\lvert u \rvert},  \]we have 

\begin{equation}
\begin{split}
\frac{\partial}{\partial u} (\Omega^2 \lvert \chihat \rvert_{\gamma}^2) - \frac{2}{\lvert u \rvert} \Omega^2 \lvert \chihat \rvert_{\gamma}^2 = &2 \Omega^3 \chihat ( \nabla \hat{\otimes}\eta - \frac{1}{2}\tr\chi \chibarhat + \eta \hat{\otimes}\eta) - b^A \frac{\partial}{\partial \theta^A} (\Omega^2 \lvert \chihat \rvert_{\gamma}^2)\\ - &\Omega(\tr\chibar + \frac{2}{\lvert u \rvert})(\Omega^2 \lvert \chihat \rvert_{\gamma}^2 ) + (\Omega-1) \cdot \frac{2}{\lvert u \rvert}\cdot (\Omega^2 \lvert \chihat \rvert_{\gamma}^2).
\end{split}
\end{equation}We therefore have the following equation for $\chihat$:

\begin{equation}\label{er}
\begin{split}
\frac{\partial}{\partial u}\left( \lvert u \rvert^2 \Omega^2 \lvert \chihat \rvert_{\gamma}^2 \right) = & 2 \lvert u \rvert^2 \Omega^3 \chihat ( \nabla \hat{\otimes}\eta - \frac{1}{2}\tr\chi \chibarhat + \eta \hat{\otimes}\eta) - \lvert u \rvert^2 b^A \frac{\partial}{\partial \theta^A}(\Omega^2 \lvert \chihat \rvert_{\gamma}^2) \\ & - \lvert u \rvert^2 \Omega (\tr\chibar + \frac{2}{\lvert u \rvert}) (\Omega^2 \lvert \chihat \rvert_{\gamma}^2 ) + \lvert u \rvert^2 (\Omega-1) \cdot \frac{2}{\lvert u \rvert} (\Omega^2 \lvert \chihat \rvert_{\gamma}^2).
\end{split}
\end{equation}For $b^A$, there holds

\begin{equation}
\frac{\partial b^A}{\partial \ubar} = -4 \Omega^2 \zeta^A, 
\end{equation}where $\zeta = \frac{1}{2}(\eta- \etabar)$. Using the derived estimates on $\eta, \etabar$, one can then conclude that \begin{equation}
\lVert b^A \rVert_{L^{\infty}(S_{u,\ubar})} \lesssim \frac{\al}{\lvert u \rvert^2},
\end{equation}in the slab of existence. For the right-hand side of \eqref{er}, we can obtain, using the various estimates on the Ricci coefficients obtained during the proof of Theorem \ref{main1}, the following bounds:

\begin{equation}
\scaleinfinitySu{ 2 \lvert u \rvert^2 \Omega^3 \chihat ( \nabla \hat{\otimes}\eta - \frac{1}{2}\tr\chi \chibarhat + \eta \hat{\otimes}\eta) }\leq \frac{a}{\upr^2},
\end{equation}
\begin{equation}
\scaleinfinitySu{\lvert u \rvert^2 b^A \frac{\partial}{\partial \theta^A} (\Omega^2 \lvert \chihat \rvert_{\gamma}^2 ) } \leq \frac{a^{\frac{3}{2}}}{\lvert u \rvert^2},
\end{equation}
\begin{equation}
\scaleinfinitySu{\lvert u \rvert^2 \Omega (\tr\chibar + \frac{2}{\lvert u \rvert}) (\Omega^2 \lvert \chihat \rvert_{\gamma}^2 )}\leq \frac{a}{\upr^2},
\end{equation}

\begin{equation}
\scaleinfinitySu{\lvert u \rvert^2 (\Omega-1) \cdot \frac{2}{\lvert u \rvert} (\Omega^2 \lvert \chihat \rvert_{\gamma}^2)}\leq \frac{a}{\lvert u \rvert^2}.
\end{equation}Putting everything together, we have

\begin{equation}
\big \lvert \frac{\partial}{\partial u}\left( \lvert u \rvert^2 \Omega^2 \lvert \chihat \rvert_{\gamma}^2 \right) \big \rvert \lesssim \frac{a^{\frac{3}{2}}}{\lvert u \rvert^2}  \ll \frac{a^{\frac{7}{4}}}{\lvert u \rvert^2}.
\end{equation}As a consequence, one has

\begin{equation}
\lvert u \rvert^2 \Omega^2 \lvert \chihat \rvert_{\gamma}^2 (u, \ubar, \theta^1, \theta^2) -\lvert u_{\infty} \rvert^2 \Omega^2 \lvert \chihat \rvert_{\gamma}^2(u_{\infty},\ubar,\theta^1, \theta^2) \geq -\frac{a^{\frac{7}{4}}}{\lvert u \rvert} + \frac{a^{\frac{7}{4}}}{\lvert u_{\infty}\rvert}.
\end{equation}However, our gauge choice was $\restri{\Omega}{H_{u_{\infty}}} \equiv 1$, therefore 
\begin{equation}
\lvert u \rvert^2 \Omega^2 \lvert \chihat \rvert_{\gamma}^2 (u, \ubar, \theta^1, \theta^2) \geq \lvert u_{\infty} \rvert^2 \lvert \chihat \rvert_{\gamma}^2(u_{\infty},\ubar,\theta^1, \theta^2) - \frac{a^{\frac{7}{4}}}{\lvert u \rvert}.
\end{equation}Integrating from 0 to 1 with respect to $\ubar$, we have

\begin{equation}\label{616}
\int_{0}^{1} \lvert u \rvert^2 \Omega^2 \lvert \chihat \rvert_{\gamma}^2 (u, \ubar^{\prime}, \theta^1, \theta^2) \dubarprime \geq \int_0^1 \lvert u_{\infty} \rvert^2   \lvert \chihat \rvert_{\gamma}^2(u_{\infty},\ubar^{\prime},\theta^1, \theta^2) \dubarprime - \frac{a^{\frac{7}{4}}}{\lvert u \rvert}.
\end{equation}An analogous procedure using the null Yang-Mills equation for $\alphaF$ yields

\begin{equation}\label{617}
\big \lvert \frac{\partial}{\partial u}\left( \lvert u \rvert^2 \Omega^2 \lvert \alphaF \rvert_{\gamma,\delta}^2 \right) \big \rvert \lesssim  \frac{a^{\frac{3}{2}}}{\lvert u \rvert^2}  \ll \frac{a^{\frac{7}{4}}}{\lvert u \rvert^2},
\end{equation}whence we also get \begin{equation}\label{618}
\int_{0}^{1} \lvert u \rvert^2 \Omega^2 \lvert \alphaF \rvert_{\gamma, \delta}^2 (u, \ubar^{\prime}, \theta^1, \theta^2) \dubarprime \geq \int_0^1 \lvert u_{\infty} \rvert^2   \lvert \alphaF \rvert_{\gamma, \delta}^2(u_{\infty},\ubar^{\prime},\theta^1, \theta^2) \dubarprime - \frac{a^{\frac{7}{4}}}{\lvert u \rvert}.
\end{equation}Indeed, consider the equation 

\begin{equation}
    \hnabla_3 \alpha^F + \frac{1}{2}\tr\chibar\alpha^F = \hnabla \rho^F + \Hodge{\hnabla}\sigma^F -2 \Hodge{\eta}\sigma^F+ 2 \eta \rho^F +2 \omegabar \alpha^F - \chihat \cdot \alphabar^F
\end{equation}and contract with $\alphaF$ to get

\begin{equation}
\frac{1}{2}\hnabla_3 \lvert \alphaF \rvert_{\gamma, \delta}^2 + \frac{1}{2} \tr\chibar \lvert \alphaF \rvert_{\gamma, \delta}^2 - 2 \omegabar \lvert \alphaF \rvert_{\gamma, \delta}^2 = \alphaF (\hnabla \rho^F + \Hodge{\hnabla}\sigma^F -2 \Hodge{\eta}\sigma^F+ 2 \eta \rho^F +2 \omegabar \alpha^F - \chihat \cdot \alphabar^F).
\end{equation}
As before, we obtain

\begin{equation}
\begin{split}
&\frac{\partial}{\partial u} \left( \lvert u \rvert^2 \Omega^2 \lvert \alphaF \rvert_{\gamma, \delta}^2 \right)= \\ &2 \lvert u \rvert^2 \Omega^3 \alphaF ( \hnabla \rho^F + \Hodge{\hnabla}\sigma^F -2 \Hodge{\eta}\sigma^F+ 2 \eta \rho^F +2 \omegabar \alpha^F - \chihat \cdot \alphabar^F) -\lvert u \rvert^2 b^A \frac{\partial}{\partial \theta^A} \left( \lvert u \rvert^2 \Omega^2 \lvert \alphaF \rvert_{\gamma, \delta}^2 \right)\\ - &\lvert u \rvert^2 \Omega \left(\tr\chibar+ \frac{2}{\lvert u \rvert}\right) (\Omega^2  \lvert \alphaF \rvert_{\gamma, \delta}^2) + \lvert u \rvert^2 (\Omega-1)\frac{2}{\lvert u \rvert}(\Omega^2  \lvert \alphaF \rvert_{\gamma, \delta}^2).
\end{split}
\end{equation}Using the bounds obtained in the preceding sections, there holds

\begin{equation}\label{622}
\scaleinfinitySu{2 \lvert u \rvert^2 \Omega^3 \alphaF ( \hnabla \rho^F + \Hodge{\hnabla}\sigma^F -2 \Hodge{\eta}\sigma^F+ 2 \eta \rho^F +2 \omegabar \alpha^F - \chihat \cdot \alphabar^F)} \leq \frac{a}{\lvert u \rvert^2} + \frac{a^2}{\lvert u \rvert^3},
\end{equation}
\begin{equation}
\scaleinfinitySu{\lvert u \rvert^2 b^A \frac{\partial}{\partial \theta^A} \left( \lvert u \rvert^2 \Omega^2 \lvert \alphaF \rvert_{\gamma, \delta}^2 \right)} \leq \frac{a^{\frac{3}{2}}}{\lvert u \rvert^2},
\end{equation}
\begin{equation}
\scaleinfinitySu{\lvert u \rvert^2 \Omega \left(\tr\chibar+ \frac{2}{\lvert u \rvert}\right) (\Omega^2  \lvert \alphaF \rvert_{\gamma, \delta}^2) }\leq \frac{a}{\lvert u \rvert^2},
\end{equation}

\begin{equation}\label{625}
\scaleinfinitySu{\lvert u \rvert^2 (\Omega-1)\frac{2}{\lvert u \rvert}(\Omega^2  \lvert \alphaF \rvert_{\gamma, \delta}^2)} \leq \frac{a}{\lvert u \rvert^2}.
\end{equation}
Equations \eqref{622}-\eqref{625} yield \eqref{617} and \eqref{618}.
Combining \eqref{616} and \eqref{618},we conclude that 

\begin{equation}
\begin{split}
&\int_0^1 \lvert u \rvert^2 \Omega^2 \left(\lvert \chihat \rvert_{\gamma}^2 + \lvert \alphaF \rvert_{\gamma, \delta}^2 \right) (u, \ubar^{\prime},\theta^1, \theta^2) \dubarprime\\ \geq &\int_0^1 \lvert u_{\infty} \rvert^2 \Omega^2 \left(\lvert \chihat \rvert_{\gamma}^2 + \lvert \alphaF \rvert_{\gamma, \delta}^2 \right) (u_{\infty}, \ubar^{\prime},\theta^1, \theta^2) \dubarprime - \frac{2 a^{\frac{7}{4}}}{\lvert u \rvert} \geq a -  \frac{2 a^{\frac{7}{4}}}{\lvert u \rvert} \geq a - 8 \frac{a^{\frac{7}{4}}}{a} \geq \frac{7a}{8},
\end{split}\label{626}
\end{equation} uniformly for $u \in [u_{\infty}, - a/4]$. Pick $u = -a/4$. Then \eqref{626} yields 

\begin{equation}
\begin{split}
&(-\frac{a}{4})^2 \int_0^1  \left(\lvert \chihat \rvert_{\gamma}^2 + \lvert \alphaF \rvert_{\gamma, \delta}^2 \right) (-\frac{a}{4}, \ubar^{\prime},\theta^1, \theta^2) \dubarprime \\ \geq &\frac{6}{7}\int_0^1(-\frac{a}{4})^2 \Omega^2 \left(\lvert \chihat \rvert_{\gamma}^2 + \lvert \alphaF \rvert_{\gamma, \delta}^2 \right) (u, \ubar^{\prime},\theta^1, \theta^2) \dubarprime \geq \frac{6}{7}\cdot \frac{7a}{8} \geq \frac{3a}{4},
\end{split}
\end{equation}where we have made use of the bound $\lVert \Omega-1 \rVert_{L^{\infty}(S_{u,\ubar})} \lesssim \frac{1}{a}$, for $a$ sufficiently large. This implies 

\begin{equation} \label{628}
 \int_0^1  \left(\lvert \chihat \rvert_{\gamma}^2 + \lvert \alphaF \rvert_{\gamma, \delta}^2 \right) (-\frac{a}{4}, \ubar^{\prime},\theta^1, \theta^2) \dubarprime \geq \frac{3a}{4} \cdot \frac{16}{a^2} = \frac{12}{a}.
\end{equation}Consider now the equation

\begin{equation}
\nabla_4 \tr\chi + \frac{1}{2}(\tr\chi)^2 = -\lvert \chihat \rvert_{\gamma}^2 - 2 \omega\tr\chi - \lvert \alphaF \rvert_{\gamma, \delta}^2.
\end{equation}This can be rewritten, using $\omega =-\frac{1}{2}\nabla_4 (\text{log}\hsp \Omega)$, as 

\begin{equation}
\nabla_4 \tr\chi + \frac{1}{2}(\tr\chi)^2 = -\lvert \chihat \rvert_{\gamma}^2 + \frac{1}{\Omega}\nabla_4 \Omega \cdot \tr\chi  - \lvert \alphaF \rvert_{\gamma, \delta}^2,
\end{equation}which rewrites as

\begin{equation}
\nabla_4 (\Omega^{-1}\tr\chi) = \Omega^{-1}\left( \nabla_4 \tr\chi - \Omega^{-1}\nabla_4 \Omega \tr\chi\right) =  \Omega^{-1} \left( - \frac{1}{2}(\tr\chi)^2 - \lvert \chihat \rvert_{\gamma}^2- \lvert \alphaF \rvert_{\gamma, \delta}^2 \right).
\end{equation}Finally, using the fact that $e_4 = \Omega^{-1}\frac{\partial}{\partial \ubar}$, we arrive at the equation

\begin{equation}\label{632}
\frac{\partial}{\partial \ubar} (\Omega^{-1}\tr\chi) =  \frac{1}{2}(\tr\chi)^2 - \lvert \chihat \rvert_{\gamma}^2- \lvert \alphaF \rvert_{\gamma, \delta}^2.
\end{equation}Since at $\ubar =0$ the initial data are Minkowskian, we have 

\begin{equation}
(\Omega^{-1}\tr\chi)(-\frac{a}{4},0,\theta^1,\theta^2) = 1^{-1} \cdot \frac{2}{\frac{a}{4}}= \frac{8}{a}.
\end{equation}Integrating \eqref{632}, we obtain

\begin{equation}
\begin{split}
(\Omega^{-1}\tr\chi)(-\frac{a}{4},1,\theta^1,\theta^2) & \leq (\Omega^{-1}\tr\chi)(-\frac{a}{4},0,\theta^1,\theta^2) - \int_0^1 \lvert \chihat \rvert_{\gamma}^2+ \lvert \alphaF \rvert_{\gamma, \delta}^2(- \frac{a}{4},\ubar^{\prime},\theta^1, \theta^2)\dubarprime \\& \leq \frac{8}{a}-\frac{12}{a} <0. \label{634}
\end{split}
\end{equation}Notice that the above bound holds pointwise on the sphere $S_{-\frac{a}{4},1}$. Finally, recall the bound

\begin{equation}
\lVert \tr\chibar + \frac{2}{\lvert u \rvert} \rVert_{L^{\infty}(S_{u,\ubar})} \leq \frac{1}{\lvert u \rvert^2}.
\end{equation}As a result, 
\begin{equation}\label{636}
\tr\chibar(-\frac{a}{4},1, \theta^1,\theta^2) < 0, 
\end{equation} for all $(\theta^1,\theta^2)\in \mathbb{S}^2$. The bounds \eqref{634} and \eqref{636} finally imply that the sphere $S_{-\frac{a}{4},1}$ is trapped, which was what we wanted.
\end{proof}

\section{Concluding Remarks}
\noindent We conclude with a few brief remarks. The first one we wish to make is that, given the fact that all preceding estimates are independent of $u_{\infty}$, one can take (carefully) the limit $u_{\infty} \to \infty$ and thus obtain a trapped surface formation criterion from past null infinity. In that case, the present result could be interpreted as a formation of trapped surfaces due to the focusing of an amalgamation of Yang-Mills waves and gravitational waves. Secondly, going back to the statement of Theorem \ref{main1},  we see that conditions (1) and (2) are formally independent. This means that, in principle, both $\chihat$ and ${(\alphaF)^{P}}_Q$ can contribute equally to the largeness of the data. As a consequence, our result implies that in a spherically symmetric spacetime ($\chihat =0$) the mere presence of Yang-Mills "radiation" is enough to form a trapped surface, as long as this radiation is, of course, suitably large. Also, since we noticed in the work of \cite{smoller1991smooth} that Yang-Mills fields can counterbalance the gravitational attraction to yield regular solutions, it would be natural to investigate the precise mathematical condition on the Yang-Mills fields that would lead to a `non-trapping' scenario. Finally, we note that many subsequent problems naturally arise, for example the formation of apparent horizons for the Einstein-Yang-Mills system as well as trapped surface formation questions for different matter models. Important examples of such models are the Einstein-Klein-Gordon system and more ambitiously the Einstein-Euler system, which promises to be the most mathematically difficult, due to the possible formation of shocks and the breakdown of spacetime regularity before trapped surfaces have a chance to form. We intend to investigate these issues in the future.

\end{document}